\newcommand{\COMMENT}[1]{}
\DeclarePairedDelimiter\abs{\lvert}{\rvert}
\let\oldabs\abs
\def\abs{\@ifstar{\oldabs}{\oldabs*}}
\newcommand{\vast}{\bBigg@{4}}
\newcommand{\Vast}{\bBigg@{5}}
\newcommand{\ep}{\varepsilon}
\newcommand{\eps}{\varepsilon}
\newcommand{\N}{\mathbb{N}}
\newcommand{\R}{\mathbb{R}}
\newcommand{\Z}{\mathbb{Z}}
\newcommand{\bS}{\mathbb{S}}
\newcommand{\loc}{\textnormal{loc}}
\newcommand{\norm}[2][]{\left\|{#2}\right\|_{#1}}
\newcommand{\normsmall}[2][]{\|{#2}\|_{#1}}
\newcommand{\normbig}[2][]{\big\|{#2}\big\|_{#1}}
\newcommand{\set}[1]{\left\{#1\right\}}
\newcommand{\setsmall}[1]{\{#1\}}
\newcommand{\ceils}[1]{\lceil{#1}\rceil}
\newcommand{\floors}[1]{\lfloor{#1}\rfloor}
\newcommand{\dist}{{\rm dist}\, }
\newcommand{\cA}{\mathcal{A}}
\newcommand{\cB}{\mathcal{B}}
\newcommand{\ccB}{\mathfrak{B}}
\newcommand{\cC}{\mathcal{C}}
\newcommand{\cE}{\mathcal{E}}
\newcommand{\cF}{\mathcal{F}}
\newcommand{\cN}{\mathcal{N}}
\newcommand{\cG}{\mathcal{G}}
\newcommand{\cH}{\mathcal{H}}
\newcommand{\cI}{\mathcal{I}}
\newcommand{\ccI}{\mathfrak{I}}
\newcommand{\cJ}{\mathcal{J}}
\newcommand{\cK}{\mathcal{K}}
\newcommand{\cS}{\mathcal{S}}
\newcommand{\cT}{\mathcal{T}}
\newcommand{\cW}{\mathcal{W}}
\newcommand{\cX}{\mathcal{X}}
\newcommand{\cZ}{\mathcal{Z}}
\newcommand{\FB}{{\rm FB}}
\newcommand{\bc}{{\mathbf c}}
\newcommand{\rb}{r_\star}
\newcommand{\bN}{{\boldsymbol N}}
\newcommand{\bE}{{\boldsymbol E}}
\newcommand{\bA}{{\boldsymbol A}}
\newcommand{\dB}{d_\ccB}
\newcommand{\zz}{\mathbf{z}}
\newcommand{\epk}{{\widetilde \eps_k}}
\newcommand{\Rk}{{\widetilde R_k}}
\newcommand{\zk}{{\widetilde \zz_k}}
\newcommand{\rmin}{r_{{\rm min}} }
\newcommand{\cttg}{\gamma}
\newcommand{\ctta}{\alpha}
\newcommand{\cttb}{\beta_\circ}
\newcommand{\cttc}{\chi}
\newcommand{\valcttg}{\frac{11}{25}}
\newcommand{\valctta}{\frac{39}{50}}
\newcommand{\valcttb}{\frac{1}{20}}
\newcommand{\valcttc}{\frac{1}{500}}
\DeclareMathOperator{\Div}{div}
\DeclareMathOperator{\diag}{diag}
\DeclareMathOperator*{\argmax}{arg\,max}
\theoremstyle{plain}
\newtheorem{thm}{Theorem}[section]
\newtheorem{lem}[thm]{Lemma}
\newtheorem{cor}[thm]{Corollary}
\newtheorem{prop}[thm]{Proposition}
\newtheorem*{prop*}{Proposition}
\theoremstyle{definition}
\newtheorem{defn}[thm]{Definition}
\theoremstyle{remark}
\newtheorem{remark}[thm]{Remark}
\newcommand{\bremark}{\begin{remark} \em}
\newcommand{\eremark}{\end{remark} }
\numberwithin{equation}{section}
\definecolor{g1}{rgb}{0,0.5,0.1}
\definecolor{g2}{rgb}{0,0.6,0}
\definecolor{r2}{rgb}{0.8,0,0}
\title[
]{Global Stable Solutions to the Free Boundary Allen--Cahn and Bernoulli Problems in 3D are One-Dimensional}
\author{Hardy Chan}
\address{Universit\"at Basel, Spiegelgasse 1, 4051 Basel, 
Switzerland}
\email{hardy.chan@unibas.ch}
\author{Xavier Fern\'andez-Real}
\address{EPFL SB, Station 8, 1015 Lausanne, Switzerland}
\email{xavier.fernandez-real@epfl.ch}
\author{Alessio Figalli}
\address{ETH Zurich, R\"amistrasse 101, 8092 Zurich,
Switzerland}
\email{alessio.figalli@math.ethz.ch}
\author{Joaquim Serra}
\address{ETH Zurich, R\"amistrasse 101, 8092 Zurich,
Switzerland}
\email{joaquim.serra@math.ethz.ch}
\keywords{Stable solutions, Allen--Cahn equation, free boundary problems, Bernoulli problem, one-phase problem.}
\subjclass[2020]{35R35, 35J61, 35B35, 49Q20}
\begin{document}

\begin{abstract}
A long-standing conjecture of De Giorgi asserts that every monotone solution of the Allen--Cahn equation in \(\mathbb{R}^{n+1}\) is one-dimensional if \(n \leq 7\). A stronger version of the conjecture, also widely studied and often called ``the stable De Giorgi conjecture'', proposes that every stable solution in \(\mathbb{R}^n\) must be one-dimensional for \(n \leq 7\). To this date, both conjectures remain open for \(3 \leq n \leq 7\).  

An elegant variant of this problem, advocated by Caffarelli, C\'ordoba, and Jerison since the 1990s, considers a free boundary version of the Allen--Cahn equation. This variant features a step-like double-well potential, leading to multiple free boundaries. Locally, near each free boundary, the solution satisfies the Bernoulli free boundary problem. However, the interaction of the free boundaries causes the global behavior of the solution to resemble that of the Allen--Cahn equation.  

In this paper, we establish the validity of the stable De Giorgi conjecture in dimension \( 3\) for the free boundary Allen--Cahn equation and, as a corollary, we prove the corresponding De Giorgi conjecture for monotone solutions in dimension \(  4\).  
To obtain these results, a key aspect of our work is to address a classical open problem in free boundary theory of independent interest: the classification of global stable solutions to the one-phase Bernoulli problem in three dimensions. This result, which is the core of our paper, implies universal curvature estimates for local stable solutions to Bernoulli, and serves as a foundation for adapting some classical ideas from minimal surface theory---after significant refinements---to the free boundary Allen--Cahn equation.  
\end{abstract}

\maketitle

\setcounter{tocdepth}{1}
\tableofcontents

\section{Introduction}

The study of interfaces arising in nonlinear elliptic partial differential equations (PDEs) is a central theme in mathematical analysis, with significant implications for geometric analysis,  mathematical physics, and materials science. Interfaces---often also referred to as free boundaries, minimal surfaces, etc.---appear in models of phase transitions,  fluid dynamics, and other phenomena where different states or phases coexist and interact. 

A paradigmatic example of a PDE giving rise to interfaces is the classical Allen–Cahn equation. Originally proposed to describe phase separation in metal alloys \cite{AC72}, this equation has achieved mathematical prominence due to its profound connections with minimal surface theory (see, e.g., \cites{MM77, CC95, CM20}) and its close relation to several important phase field models—scalar, vectorial, or tensorial. Among the most closely related models are the Cahn–Hilliard equation, describing phase separation in binary fluids \cite{CH58}; the Peierls–Nabarro equation, modeling crystal dislocations \cites{Pei40, Nab47}; the Ginzburg–Landau theory, addressing superconductivity \cite{GL50}; and the Ericksen–Leslie model for liquid crystals \cites{Ericksen61, Leslie68}.

Interfaces also appear naturally in the study of free boundary problems such as the Bernoulli or one-phase problem. First studied from a mathematical viewpoint  by Alt and Caffarelli in 1981 \cite{AC81}, motivated by models in flame propagation and jet flows \cites{BL82, ACF82, ACF82b, ACF83}, it is related to shape optimization problems, capillary hypersurfaces, and minimal surfaces (among others; see, e.g., \cites{Buc12, KL18, KL19, CEL24, Tra14}). Also, it has been investigated as a one-phase problem, a two-phase problem, and in vectorial form \cites{ESV20, DSV21, FGKS24}.

During the past five decades, substantial progress has been made in understanding the structure of {\em absolute energy minimizers} for the Allen–Cahn equation, the Bernoulli problem, and related models. However, despite significant efforts, the structure of {\em stable solutions} remains largely elusive, even in the physically relevant three-dimensional space. Stable solutions are particularly important because they correspond to configurations observed in nature, representing physically stable states. Understanding stable solutions is thus a fundamental  challenging open question in the field.

In this paper, we introduce new analytical tools for studying three-dimensional stable solutions, focusing on two fundamental and deeply connected free boundary problems: the Allen--Cahn equation with a ``step potential'' and the Bernoulli problem.

\subsection{The variational model of phase transitions} The theory of minimal surfaces and phase transitions leads to considering energy functionals of the form
\[
    \cJ_{\varepsilon}(u;\Omega) = \int_\Omega \left\{ \varepsilon |\nabla u|^2 + \frac{1}{\varepsilon} W(u) \right\} \, dx,
\]
for $u\in H^1(\Omega)$, where $\varepsilon > 0$  is a small parameter, \( \Omega \subset \mathbb{R}^n \) is a bounded open set,  and $W :\mathbb R\to \mathbb R_+$ is a given (double well) potential. The function \( u \) is constrained (via the boundary datum) to satisfy \( -1 \leq u \leq 1 \), and the potential is such that $W(\pm 1) = 0$ and $W(t) > 0$ for $t\in (-1, 1)$.  

Prominent examples of such potentials \( W \) are given by the family of functions \((W_\alpha)_{0\le \alpha\le 2}\), 
\[ W_\alpha(u) :=
\begin{cases}
(1 - u^2)^\alpha &\qquad\text{for}\quad 0 < \alpha \le 2,\\
\mathbbm{1}_{(-1,1)}(u)& \qquad \text{for}\quad \alpha = 0,
\end{cases}\]
which give rise to the energy functionals
\begin{equation} \label{eq:energy-functional}
    \cJ^\alpha_{\varepsilon}(u;\Omega) := \int_\Omega \left\{ \varepsilon |\nabla u|^2 + \frac{1}{\varepsilon} W_\alpha(u) \right\} \, dx,\qquad\text{for}\quad \alpha\in [0, 2].
\end{equation}
This family of functionals was investigated by Caffarelli and C\'ordoba \cite{CC95} in what is considered one of the foundational papers in the Allen--Cahn literature.  The case \( \alpha = 2 \) corresponds to the classical Allen–Cahn energy \cite{AC72}---see, e.g.,  
\cite{Chodosh19} and references therein. The cases $\alpha\in [0,2)$ are considered, for example, in \cites{CC95, CC06, LWW17, Kam13, Val04, Val06, DGW22, Wan17} (and also mentioned in \cites{Sav09, Sav10, Savin17}).

\subsection{De Giorgi conjecture and its stable version} 

In 1978, De Giorgi stated the following celebrated conjecture \cite{DeG78}:
\begin{center}
{\em Every solution
$ u \colon \mathbb{R}^{n+1} \to [-1,1] $ to the Allen--Cahn equation $\Delta  u = u-u^3$\\
(equivalently, every critical point of the functional $\cJ_1^2$ defined in \eqref{eq:energy-functional})\\ that satisfies \( \partial_{n+1} u > 0 \) must be one-dimensional\footnote{That is, $u(x) = \phi(e\cdot x)$ for some $\phi:\R \to [-1,1]$ and $e\in \mathbb S^{n-1}$.}, at least for \( n \leq 7 \).}
\end{center}
This conjecture, often regarded as a PDE analogue of the classical Bernstein problem for minimal surfaces, has inspired substantial research over the past decades and has been resolved in certain cases: for \( n = 1 \) by Ghoussoub and Gui \cite{GG98}, and for \( n = 2 \) by Ambrosio and Cabré \cite{AC00} (see also \cite{AAC01}). In higher dimensions, Savin \cite{Sav09} proved the conjecture for \( n \leq 7 \) under the additional assumption that $u$ is an energy minimizer. For \( n \geq 8 \), del Pino, Kowalczyk, and Wei \cite{DKW11} constructed counterexamples showing that the conjecture fails in these dimensions.

It is well-known that solutions that are monotone in some direction are stable, namely, the second variation of $\cJ_1^2$ is non-negative (see \cite{AAC01}*{Corollary 4.3}).  Motivated by this fact, a stronger form of De Giorgi's conjecture---often called ``the stable De Giorgi conjecture''---asserts the following:
\begin{center}
{\em Every stable solution $ u \colon \mathbb{R}^{n} \to [-1,1] $  of the Allen–Cahn equation in \( \mathbb{R}^n \) must be one-dimensional for \( n \leq 7 \)}.
\end{center}
It is a known fact that the classical De Giorgi's conjecture for monotone solutions in $\R^{n+1}$ follows from its stable version in $\R^n$.\footnote{Indeed,
let $u:\R^{n+1}\to \R$ be a solution of the Allen--Cahn equation satisfying $\partial_{n+1}u>0$. First, since $u$ is stable (being monotone), one easily deduces that also the two functions \( u_{\pm}(y)= \lim_{x_{n+1} \to \pm\infty} u(y, x_{n+1}) \) are stable. Hence, if the conjecture for stable solutions is true in $\R^n$, then the two limits $u_{\pm}$ must be one-dimensional. One then checks that the one-dimensional solution is unique and increasing, so the functions $u_{\pm}$ are one-dimensional and increasing. This allows one to apply \cite[Theorem 1.3]{Jerison-Monneau} and deduce that $u$ is a minimizer, thus \cite{Sav09} applies and one concludes that also $u$ is one-dimensional.} 

The stable form of De Giorgi's conjecture has been proven only in dimension \( n = 2 \) in \cites{GG98, AC00}. For \( 3 \leq n \leq 7 \) it remains an open problem, while for \( n \geq 8 \) counterexamples exist \cite{PW13, LWW17b}. Again, Savin established the result for \( n \leq 7 \) under the additional assumption that \( u \) is an energy minimizer \cites{Sav09, Sav10, Savin17}.

It is worth emphasizing that both the De Giorgi's conjecture and its stable form, as well as the implication between them, are expected to hold for general double-well potentials. In fact, the majority of positive or partial results in the literature concerning either conjecture have been established directly in this more general setting.

Let us also mention that, in some applications (see, e.g., \cite{CM20}), it suffices to classify stable solutions to Allen--Cahn in \( \mathbb{R}^n \) satisfying the bounded energy growth condition  
\begin{equation}\label{Energygrowth}  
\sup_{R > 0} R^{1-n} \cJ_1^2(u, R) < +\infty.  
\end{equation}  
However, even under this additional assumption---which guarantees, using \cites{HutchinsonTonegawa00,TonegawaWickramasekera2012}, that blow-downs converge to hyperplanes with integer multiplicity in the appropriate sense\footnote{More precisely, 
Hutchinson and Tonegawa \cite{HutchinsonTonegawa00} showed that diffuse varifolds, constructed from the energy density and gradient direction of solutions to the Allen–Cahn equation, converge as $\eps\to 0$ to stationary integral varifolds, which generalize minimal surfaces and allow for singularities.}---the stable form of De Giorgi's conjecture has only been verified for \( n = 3 \) in \cite{AC00}.

\subsection{The connection to minimal surfaces}

Modica and Mortola \cite{MM77} rigorously established in 1977 the profound connection between phase transitions and minimal surfaces. They showed that, as \( \varepsilon \to 0 \),   minimizers of the energy \( \cJ_\varepsilon \) converge (in \( L^1_{\mathrm{loc}} \), up to subsequences) to the characteristic functions of sets with minimal perimeter.

Motivated by this result, De Giorgi proposed his conjecture in 1978 \cite{DeG78} as an analog of the classical Bernstein problem for area-minimizing graphs. Similarly, its stable version corresponds to the well-known problem of classifying complete, embedded, two-sided, stable minimal hypersurfaces in \( \mathbb{R}^n \) for \( n \leq 7 \), see \cites{DP79, FS80, Pog81,CL24, CL23, CMR24, CLMS24, Maz24}.

The influence of minimal surface theory is evident in many foundational developments in the study of the Allen–Cahn equation. Some examples are:
\begin{itemize}
    \item The Caffarelli–Córdoba density estimate for $ \{\cJ_\varepsilon^\alpha\}_{\alpha \in [0,2]}$ \cite{CC95} mirrors a similar property of minimal surfaces.
    \item The excess decay results of Savin \cite{Sav09} and Wang \cite{Wang17} for \( \cJ_\varepsilon^2 \) draw inspiration from the cornerstone theorems of De Giorgi and Allard in minimal surfaces.
    \item Modica’s monotonicity formula for the Allen–Cahn equation \cites{Modica85, Modica85b} is a clear analogue of Fleming’s monotonicity formula for minimal surfaces.
    \item The half-space theorems for the Allen–Cahn equation \cite{HLSWW21} are inspired by classical results for minimal surfaces.
\end{itemize}

It is worth emphasizing that these deep connections and analogies (of which the above points are just a few examples) between minimal surface theory and the class of  functionals \( \cJ_\varepsilon \) 
are valid for a very general class of double-well potentials $W$ that includes the family \(\{W_\alpha\}_{\alpha \in [0,2]} \). This fact has been well known to experts for some time (and has been confirmed in numerous works throughout the literature---see, e.g., \cites{CC95, CC06, LWW17, Kam13, Val04, Val06, Sav09, Sav10, Savin17, DGW22, Wan17}).
In particular, not only have techniques from minimal surface theory been adapted—often with significant modifications—to the study of phase transitions, but there are also striking instances where methods based on Allen–Cahn type equations have led to novel results in geometric analysis (see, for example, \cite{CM20,Mantoulidis21}).

\subsection{Recent progress and  challenges in stable phase transitions}

In recent years, significant progress has been made in the study of stable solutions to the Allen–Cahn equation: Wang and Wei \cite{WangWei18, WW19} and Chodosh and Mantoulidis \cite{CM20} have established key results on interface regularity and sheet separation estimates for stable solutions.

Even more recently, substantial advances have been achieved in understanding stable minimal hypersurfaces. The long-standing question of classifying complete minimal immersed hypersurfaces in \( \mathbb{R}^n \) for \( n \leq 7 \)—the analog of the stable De Giorgi conjecture—has been resolved in dimensions \( n \leq 6 \); see \cite{CL24, CL23, CMR24, CLMS24, Maz24}.

However, while the classification of complete stable minimal surfaces in \( \mathbb{R}^3 \) has been established since the 1980s \cites{DP79, FS80, Pog81}, the stable version of De Giorgi's conjecture in $\R^3$ remains unresolved.  
Put simply, despite significant progress and the development of sophisticated tools to study stable minimal surfaces and phase transitions, some deeper yet more rudimentary obstacles have prevented a proof of the conjecture in its stable form for decades. 
 
Even after Savin's breakthrough regularity results in 2009 \cite{Sav09}, which fully settled the case of \emph{minimizers} of $\cJ^\alpha_\varepsilon$ for \emph{all} $\alpha \in [0,2]$, the stable counterpart has remained elusive over the entire range of $\alpha$ (including the endpoint $\alpha = 2$). 
As we shall discuss now, addressing these challenges requires the creation of new techniques. 

A first deep heuristic obstacle is that while the large-scale behavior for \emph{absolute minimizers} of scale-dependent energies (which tend to the perimeter at large scales) mimics that of minimal surfaces, this correspondence does not need to hold for \emph{stable critical points}. As a concrete example, consider the functional
\begin{equation}
\label{eq:P eps}
\mathcal P_\varepsilon(E) 
\;:=\;
\int_{\partial E}
\bigl(1 + \varepsilon^2 \,|\mathrm{II}_{\partial E}|^2 \bigr)\,d\mathcal H^{2},\qquad E \subset \mathbb{R}^3,
\end{equation}
where $\mathrm{II}_{\partial E}$ denotes the second fundamental form of the boundary of $E$.
One can observe that $\mathcal P_\varepsilon$ behaves similarly to the perimeter on large scales (or, equivalently, as $\varepsilon \to 0$) and, in fact, it admits a Modica--Mortola-type $\Gamma$-convergence result. However, one can check that a catenoid of neck size $r>0$ is a stable critical point for this functional if and only if $r \le c \,\varepsilon$ (where $c$ is a universal constant). Hence, although the minimizers of $\mathcal{P}_\varepsilon$ do enjoy an $\varepsilon$-independent regularity theory (much like Savin’s theory for $\cJ^\alpha_\varepsilon$), stable critical points for $\mathcal{P}_\varepsilon$ do \emph{not} possess such uniform regularity; see \cite{Vassilev} for further discussion.

On a more technical level, whenever one attempts to adapt classification proofs from minimal surfaces to the Allen--Cahn setting, the following recurring difficulty arises: The elegant formulas and identities (e.g.\ Simons' identity, Gauss--Bonnet, etc.) that are fundamental in minimal surface theory:\\
- either do not admit  ``perturbative'' analogs for Allen--Cahn;\\
 - or, even in situations where they do, the usefulness of the ``generalized identities''  is far from clear.

These obstructions underscore why the classification of stable phase transitions remains both challenging and intriguing.
\subsection{The Allen--Cahn model with a step potential}

Motivated by the challenges described above, the primary objective of this work is to overcome, for the first time, the aforementioned barriers and introduce new methods and tools to prove a classification result for \emph{stable solutions} of a free boundary version of Allen--Cahn. Specifically, we consider stable critical points
$u \colon \mathbb{R}^3 \to [-1,1]$
of $\cJ^0_1$ (i.e., with the step potential $W_0$).
This corresponds to looking at solutions of the  free boundary problem
\begin{equation}\label{freebdryAC}
\left\{
\begin{aligned}
\Delta u \;&=\;  0 \quad &&\text{in } \{|u|<1\}\\
|\nabla u|\;&=\; 1 \quad &&\text{on } \partial\{|u|<1\}
\end{aligned}
\right.
\end{equation}
(corresponding to the first variation of the functional $\cJ_1^0$) that satisfy the stability inequality
\begin{equation}\label{freebdryAC-stability}
\int_{\{|u|<1\}}
\Bigl(
|D^2 u|^2 -|\nabla |\nabla u||^2\Bigr) \,\xi^2\,dx
\;\le\;
\int_{\{|u|<1\}}
|\nabla u|^2\, |\nabla \xi|^2\,dx
\quad
\text{for all }\;\xi \in C^\infty_c(\mathbb{R}^3)
\end{equation}
(see \Cref{defi:solutionsAC} and \Cref{lem:SZ_AC}).
For simplicity, to give a proper meaning to the equations above, we will assume that the free boundaries $\partial\{|u|<1\}$ are smooth surfaces and that $u$ is a classical solution of the PDE (namely, $u \in C^2(\{|u|<1\}) \cap C^1\big(\,\overline{\{|u|<1\}}\,\big)$).  However, these are mere qualitative assumptions (see also Remark~\ref{rmk:main thm}(iii) below).
 
\medskip

On a technical level, the case $\alpha = 0$ (and, more generally, when $0 \leq \alpha < 2$) has a substantial difference compared to the classical Allen--Cahn case $\alpha = 2$. Specifically, when $\alpha < 2$, the solutions satisfy a \emph{free boundary problem} rather than a global semilinear PDE.
A practical consequence of this distinction is that in a bounded region $\Omega \subset \mathbb{R}^3$ where the free boundaries are nearly flat, two neighboring ``layers'' or ``sheets'' (i.e., distinct connected components of $\{|u| < 1\} \cap \Omega$) do not interact via the PDE and therefore remain entirely independent. In this respect, the situation is more analogous to minimal surfaces, where different sheets do not influence one another. In contrast, for $\alpha = 2$ (the classical Allen--Cahn equation), even nearly flat layers interact through the underlying semilinear PDE. Analyzing these interactions requires sophisticated analytical tools, such as the Toda system, as developed in the works of Wang–Wei \cite{Wang17, WangWei18} and Chodosh–Mantoulidis \cite{Chodosh19}.

That said, the primary difficulties in establishing the stable De Giorgi conjecture in $\mathbb{R}^3$ do not arise (at least not exclusively) from layer interactions, which are now relatively well understood \cite{Wang17, WangWei18,Chodosh19}. Indeed, in this context, the $\mathcal{P}_\varepsilon$ example in \eqref{eq:P eps} is particularly revealing: despite the absence of interactions between distinct sheets, stability remains compatible with the presence of small necks in solutions. This demonstrates that the main obstructions lie elsewhere.

One of the main purposes of this paper is to shed light on obstructions beyond sheet interactions and develop new techniques to tackle them. 
In this paper, we will focus on the case $\alpha=0$, but we believe that combining the ideas developed here with the techniques from \cite{Wang17, WangWei18, Chodosh19} will ultimately pave the way to addressing the case $\alpha = 2$.

\subsection{Microscopic necks: a new ``enemy.''}
Even if the free boundary formulation avoids certain layer-interaction issues, it gives rise to another profound difficulty:  In principle, two nearly flat free boundaries corresponding to $u=+1$ (or $u=-1$) could be joined by a \emph{microscopic neck} of size $r \ll 1$. Such a tiny neck contributes only a small amount (proportional to $r$) to the left-hand side of the stability inequality \eqref{freebdryAC-stability}; hence, having (possibly many) such microscopic necks might still be compatible with stability.

Concretely, suppose that inside $B_1\subset \mathbb{R}^3$ we have $\{u=1\}\cap B_1=\varnothing$.  Then the function $1 + u$ is a stable solution of the one-phase Bernoulli problem in $B_1$ (see below).  Yet it is known---see \cite{LWW21}---that certain global Bernoulli solutions (when rescaled) produce free boundaries with necks of arbitrarily small radius $r \ll 1$.  Existing examples of this type tend to be \emph{unstable} (albeit with finite Morse index, hence ``not too unstable''), but the question remains whether such ``microscopic-neck'' configurations \emph{could} ever be stable.

A significant portion (circa 80\%) of this paper is devoted to investigating these microscopic-neck configurations for the Bernoulli problem and proving that they must be necessarily unstable.  This is a delicate problem requiring refined PDE estimates and geometric arguments, which we will describe more thoroughly later.  In essence, the challenge is to show that any purportedly stable configuration with infinitely many small necks leads to contradictions with certain integral inequalities or regularity properties.

\subsection{The one-phase Bernoulli problem}

The one-phase Bernoulli free boundary problem arises from the study of the Alt--Caffarelli energy functional, namely, 
\[
 \cE(u; \Omega) = \int_\Omega\left\{ |\nabla u|^2 + \mathbbm{1}_{\{u>0\}}\right\}\, dx,
\]
where $\Omega\subset \R^n$ is a bounded open domain, and $u\in H^1(\Omega)$. Here, the function $u$ is constrained to satisfy $u \ge 0$.

First studied in 1981 by Alt and Caffarelli \cite{AC81}, the problem has received a lot of attention to date (see the monographs \cites{CS05, Vel23} for a nice introduction). Serving also as a model for semilinear PDEs, the study of the Bernoulli problem has gathered many tools and ideas from the theory of minimal surfaces, to the point where there is a formally established connection between the Bernoulli problem in dimension 2 and minimal surfaces in dimension 3 \cite{Tra14}.  This interplay highlights a geometric variational structure in the problem, bridging techniques from elliptic PDEs and geometric analysis.

In this direction, the regularity theory for free boundaries in minimizers mirrors that for minimal surfaces (with a shift in one dimension): a monotonicity formula, paired with a blow-up argument and an improvement of flatness, reduces the study of regular free boundaries to the classification of 1-homogeneous global solutions. The currently known results assert that minimizers have smooth free boundaries up to dimension 4 \cite{CJK04, Jerison-Savin}, while there are singular solutions in dimension 7 \cite{DJ09}. In dimensions 5 and 6 it remains as a challenging open problem.

Most of the theory for the one-phase problem has been developed for minimizers, such as the study of graphical solutions \cites{DJ09, EFY23}, the uniqueness of blow-ups at isolated singular points \cite{ESV20}, generic regularity \cite{FY24}, vectorial problems \cite{FGKS24}, etc. In recent years there has been a shift in trying to understand other solutions that do not necessarily arise as absolute minimizers: the rectifiability of free boundaries for stationary solutions \cite{KW24}, the nondegeneracy of stable solutions \cite{kamburov2022nondegeneracy}, the study of solutions in the plane \cite{Tra14, JK16, HHP11} and higher dimensions \cite{LWW21}, solutions with infinite topology \cites{BSS76, JK19}, etc. Even so, some fundamental questions remain open, with one of the most important being:
\[
\textit{Do global classical stable solutions of the Bernoulli problem in $\R^n$ have flat free boundaries, if $n\leq 6$? }
\]
It is well known that such a global rigidity result is equivalent to a local regularity property on curvature estimates for the free boundary of stable solutions (see, e.g., \cite{kamburov2022nondegeneracy} or the proof of \Cref{thm:main2} below). For $n = 2$, the answer to the previous question is affirmative thanks to a log cut-off argument that works for any semilinear PDE \cite{FV09, FR19, kamburov2022nondegeneracy}. 
For $n \ge 7$, the answer is negative by the recent construction in \cite{DJS22}.  One of the main results of the present paper is to positively answer the previous question for $n = 3$. The other dimensions remain a major open problem.

\subsection{Contributions of the paper}

Our first main result establishes the validity of the  stable De Giorgi conjecture for the functional $\cJ^0_1$ when $n=3$:
\begin{thm}
\label{thm:FBAC-main}
    Let $u: \R^3 \to [-1,1]$ be a  classical stable critical point of $\cJ^0_1$ (i.e., a global classical stable solution of \eqref{freebdryAC}, see \Cref{defi:solutionsAC}). Then $D^2u \equiv 0$ in $\{|u|<1\}$ and $u$ is one-dimensional.
\end{thm}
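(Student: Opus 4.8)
The plan is to deduce \Cref{thm:FBAC-main} from the classification of global stable solutions to the one-phase Bernoulli problem in $\mathbb{R}^3$ (the ``core'' result announced in the abstract), together with classical ideas from the Allen--Cahn/minimal surface theory suitably adapted to the free boundary setting. The key mechanism is that, \emph{locally}, near each free boundary of a solution to \eqref{freebdryAC} the function $1\pm u$ solves the one-phase Bernoulli problem, and the stability inequality \eqref{freebdryAC-stability} is inherited (up to restricting the test functions), so one should be able to invoke the Bernoulli classification to conclude that \emph{each individual free boundary component is a hyperplane}, or at least that it satisfies universal curvature estimates.

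First I would set up the blow-down analysis. Since $u$ is stable, the truncated energy $R^{1-n}\cJ^0_1(u;B_R)$ should be monotone (a Modica-type monotonicity formula adapted to the free boundary problem), hence the blow-downs $u_R(x):=u(Rx)$ converge, along a subsequence, to a stable solution of \eqref{freebdryAC} that is $1$-homogeneous (equivalently, the free boundaries converge to a minimal cone, by the analogue of the minimal surface blow-down). In $\mathbb{R}^3$, combining the curvature estimates coming from the Bernoulli classification with the Bernstein-type statement should force every such blow-down free boundary to be a plane (or a union of parallel planes, with the components not touching since a microscopic neck would violate stability, which is precisely the $80\%$-of-the-paper input alluded to in the introduction). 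Once the blow-down is a hyperplane with multiplicity, a standard improvement-of-flatness / De Giorgi-type iteration---now in the free boundary category, using that distinct sheets do not interact through the PDE when $\alpha=0$---should upgrade this to: the original free boundaries of $u$ are themselves hyperplanes.

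Next I would translate ``all free boundaries are hyperplanes'' into the conclusion $D^2u\equiv0$ in $\{|u|<1\}$. With flat free boundaries, in each slab-like component of $\{|u|<1\}$ the function $u$ is harmonic with $|\nabla u|=1$ on two parallel flat pieces of $\partial\{|u|<1\}$ and $|u|=1$ there; the one-dimensional profile $\phi(t)=t$ (appropriately placed) is the unique such solution by a comparison/Liouville argument, so $u$ is affine on each component, giving $D^2u\equiv0$. Finally, to see that $u$ is \emph{globally} one-dimensional (not merely locally affine on each sheet with possibly different directions), I would use connectedness of $\mathbb{R}^3$ together with $C^1$ regularity of $u$ across $\partial\{|u|<1\}$: the gradient $\nabla u$ has unit length wherever $|u|<1$, is constant on each component, and must agree on the free boundaries it shares, which pins down a single direction $e\in\mathbb{S}^2$ for the whole configuration; hence $u(x)=\phi(e\cdot x)$.

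\textbf{Main obstacle.}
The hard part will be ruling out the microscopic-neck configurations: showing that a putative stable solution whose free boundary contains necks of radius $r\ll1$ is impossible. A single small neck contributes only $O(r)$ to the left-hand side of \eqref{freebdryAC-stability}, so stability alone does not obviously exclude it; one must instead argue that \emph{infinitely many} such necks (forced by a non-flat blow-down) produce a quantitative contradiction with an integral inequality or with the curvature/nondegeneracy estimates for Bernoulli. Everything downstream---the blow-down rigidity, the improvement of flatness, and ultimately $D^2u\equiv0$---rests on first establishing this delicate neck-exclusion, which is exactly why the classification of global stable solutions to the one-phase Bernoulli problem in $3$D is the genuine core of the argument.
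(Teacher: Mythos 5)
Your first step is right and matches the paper: near each free boundary component $1\pm u$ is a classical stable solution of the one-phase Bernoulli problem, so \Cref{thm:main1} (via \Cref{thm:main2}) yields universal curvature bounds and rules out microscopic necks; this is exactly \Cref{lem:global-curv-bd}. The gap is in what you do next. The step ``blow down, get a multiplicity-one or multiplicity-two plane, then a standard improvement-of-flatness / De Giorgi-type iteration upgrades this to flatness of the original free boundaries'' is not available for \emph{stable} critical points, and the paper's introduction flags precisely this obstruction with the functional $\mathcal P_\varepsilon$ in \eqref{eq:P eps}: a catenoid of neck size $r\le c\varepsilon$ is stable, has blow-down equal to a plane with multiplicity two, yet is not flat. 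In the present setting the curvature estimates only forbid necks at scales $\ll 1$; they do not forbid a unit-scale neck joining two sheets that are far apart, a configuration whose blow-down is again a plane with multiplicity two. Improvement of flatness is known for minimizers (Savin) and for single nearly-flat sheets (De Silva), but there is no such statement for stable solutions with higher-multiplicity blow-downs — if there were, the stable De Giorgi conjecture would follow by soft arguments. Your proposal also presupposes that blow-downs converge to integral varifolds that are cones, which requires an energy growth bound of type \eqref{Energygrowth}; this is not automatic here because $\{|u|<1\}$ may contain large harmonic regions with small gradient (the set $\cW(\delta_\circ)$ in \Cref{ssec:W}, which the paper must exclude by a separate stability argument, \Cref{prop:2W}).

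The paper's actual route after the curvature estimates is entirely different and does not pass through a blow-down classification: it fixes a small threshold $\delta_\circ$, shows via a logarithmic cut-off in the stability inequality that the ``bad set'' $\cX(\delta_\circ)$ admits arbitrarily thick clean annuli (\Cref{lem:freeannulus}), and then runs a Pogorelov-type argument intrinsically on the level sets $\{u=\lambda\}$: an integrated Gauss--Bonnet inequality (\Cref{lem:GB-rigor}) for the intrinsic distance to the bad set, combined with the Sternberg--Zumbrun inequality tested with $(r-\dB^\lambda)_+$ and a doubling lemma, forces $\cX(\delta_\circ)=\varnothing$ (\Cref{prop:Xisempty}), whence $|\cA(u)|\,|\nabla u|\equiv 0$ and $u$ is one-dimensional by unique continuation. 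To repair your argument you would need to supply the global mechanism that excludes unit-scale necks between distant sheets; some version of the Gauss--Bonnet/Pogorelov input (or an equivalent) appears unavoidable in dimension $3$.
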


As a first corollary, we obtain the corresponding result for monotone solutions in dimension 4:
\begin{cor}
\label{cor:DeG_main}
 Let $u: \R^4 \to [-1,1]$ be a classical solution of \eqref{freebdryAC} (see \Cref{defi:solutionsAC}) satisfying $\partial_4 u>0$ in $\{|u|<1\}$. Then $D^2u \equiv 0$ in $\{|u|<1\}$ and $u$ is one-dimensional.  
\end{cor}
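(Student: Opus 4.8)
The plan is to deduce Corollary~\ref{cor:DeG_main} from Theorem~\ref{thm:FBAC-main} by the same reduction that connects the monotone De Giorgi conjecture in $\R^{n+1}$ to its stable version in $\R^n$, adapted to the free boundary setting. Let $u:\R^4\to[-1,1]$ be a classical solution of \eqref{freebdryAC} with $\partial_4 u>0$ in $\{|u|<1\}$. The first step is to record that monotonicity implies stability: differentiating the free boundary problem in the $x_4$-direction shows that $\varphi:=\partial_4 u$ is a positive solution of the linearized problem (with the corresponding linearized Neumann-type condition on $\partial\{|u|<1\}$), and a standard argument---testing the stability form against $\xi=\varphi\eta$ for $\eta\in C_c^\infty$ and integrating by parts---then yields that $u$ satisfies \eqref{freebdryAC-stability}. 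This is exactly the free boundary analogue of \cite{AAC01}*{Corollary~4.3}, and the Lemma \Cref{lem:SZ_AC} alluded to in the excerpt should provide the precise form of the stability inequality and the linearized equation needed here.

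The second and main step is to pass to the limits $u_\pm(y):=\lim_{x_4\to\pm\infty}u(y,x_4)$, $y\in\R^3$. By monotonicity in $x_4$ and the uniform bound $-1\le u\le 1$, these limits exist pointwise and, using interior elliptic estimates together with the uniform gradient bound $|\nabla u|\le 1$ (which holds up to the free boundary, where $|\nabla u|=1$, and follows in the interior from harmonicity plus the maximum principle arguments standard for \eqref{freebdryAC}), the convergence is in $C^2_{\loc}$ on $\{|u_\pm|<1\}$ and $C^1_{\loc}$ up to the free boundary. Hence each $u_\pm$ is again a classical solution of \eqref{freebdryAC} in $\R^3$, and it inherits stability---one checks that \eqref{freebdryAC-stability} is stable under such translations-and-limits, since each translate $u(\cdot,\cdot+t)$ is stable with the same constant and the stability form passes to the limit by the $C^2_{\loc}$/$C^1_{\loc}$ convergence (the free boundaries also converge, so the integrals over $\{|u(\cdot,\cdot+t)|<1\}$ converge to the integral over $\{|u_\pm|<1\}$). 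Applying Theorem~\ref{thm:FBAC-main} in $\R^3$ to $u_\pm$ gives $D^2u_\pm\equiv0$ in $\{|u_\pm|<1\}$, so $u_\pm$ are one-dimensional; moreover one verifies that the one-dimensional profile of \eqref{freebdryAC} is unique up to translation (it is the truncated linear profile $\min\{1,\max\{-1,t\}\}$), so $u_+$ and $u_-$ are one-dimensional, increasing, and with parallel directions $e\in\bS^{2}$.

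The third step is to upgrade from ``the two limits are one-dimensional'' to ``$u$ itself is a minimizer, hence one-dimensional'' via a free boundary analogue of the Jerison--Monneau reduction \cite{Jerison-Monneau}. The idea is that once $u_\pm$ coincide (after the unique normalization they must be the \emph{same} one-dimensional function, since the profile is unique), one can sandwich $u$ between appropriate translates of this common one-dimensional solution and use the sliding method together with the strong maximum principle and Hopf lemma at the free boundary to conclude $u$ is one-dimensional directly; alternatively, one shows $u$ is a local minimizer of $\cJ^0_1$ in $\R^4$ and invokes Savin-type regularity/classification of minimizers of $\cJ^\alpha_\varepsilon$ for $\alpha=0$ in dimension $4$ (below the singular dimension), which forces flat free boundaries and hence one-dimensionality. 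The sliding argument is cleaner here because the free boundary formulation decouples the sheets, so I expect to run it directly rather than through minimality.

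The main obstacle I anticipate is not the abstract scheme but its execution at the free boundary: ensuring that the vertical limits $u_\pm$ are genuine \emph{classical} solutions with \emph{smooth} free boundaries (so that Theorem~\ref{thm:FBAC-main} applies as stated), and that stability is preserved in the limit, requires uniform-in-$t$ regularity estimates for $u(\cdot,\cdot+t)$ up to $\partial\{|u|<1\}$ that do not degenerate as the free boundary possibly drifts to infinity or develops flat portions. This is exactly where one needs the universal curvature estimates for stable Bernoulli solutions advertised in the introduction (the local counterpart of the $n=3$ classification): they give $t$-independent $C^{1,\alpha}$ (indeed $C^\infty$) control of the free boundaries of the translates, which both guarantees the limit is a classical solution and justifies passing \eqref{freebdryAC-stability} to the limit. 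With those estimates in hand, the rest of the argument is a relatively routine adaptation of the minimal-surface/De Giorgi reduction to the one-phase setting.
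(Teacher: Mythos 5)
Your overall scheme (monotonicity implies stability; pass to the vertical limits $u_\pm$; apply \Cref{thm:FBAC-main} in $\R^3$; upgrade to one-dimensionality of $u$) is the same as the paper's, and you correctly identify that the delicate point is obtaining $t$-independent regularity up to the free boundary so that the limits are genuine classical stable solutions --- this is exactly what the paper's \Cref{prop:main2bis} provides. The gap is in your third step. You assert that the one-dimensional profile of \eqref{freebdryAC} is unique up to translation, hence that $u_+$ and $u_-$ are increasing with parallel directions and, after normalization, coincide, so that $u$ can be sandwiched between translates of a single profile and one can slide. This fails on two counts. First, for the step potential the one-dimensional solutions with $D^2\equiv 0$ (which is all that \Cref{thm:FBAC-main} gives) include, besides the constants $\pm1$ and the monotone truncated linear profile, non-monotone ``tent''/``valley'' profiles obtained as maxima or minima of two monotone ones; these genuinely occur as possible limits here, unlike for the classical Allen--Cahn where the heteroclinic is unique and increasing. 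Second, even in the monotone case the two limits do not coincide: for the model solution $u=\min\{1,\max\{-1,x_4\}\}$ one has $u_-\equiv-1$ and $u_+\equiv+1$, so there is no common profile to slide against. The sliding argument you propose as your primary route therefore cannot be run.

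The viable conclusion is the route you relegate to an ``alternative,'' but it requires ingredients you omit. One must first reduce to the case where $\{|u|<1\}$ is connected (the paper's \Cref{lem:extrconccomp}); this is what rules out slabs on which $\underline u\equiv 1$ or $\overline u\equiv -1$ and guarantees, after a case analysis on the possible one-dimensional limits, that $\underline u$ is either a minimizer or a maximum of two minimizers (hence a lower barrier) and $\overline u$ a minimizer or a minimum of two minimizers (hence an upper barrier). With these barriers the translated graphs $\{x_5=u(x+te_4)\}_{t\in\R}$ foliate the region between $\underline u$ and $\overline u$, and the Jerison--Monneau argument yields that $u$ is an energy minimizer; one then still needs a classification of minimizers of $\cJ^0_1$ in $\R^4$ (the paper invokes \cite[Theorem 3]{Val06}) to conclude one-dimensionality. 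As written, your proposal neither performs the case analysis on the limits nor supplies the minimizer classification, and its preferred sliding route breaks down, so the proof is incomplete.
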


Combining \Cref{thm:FBAC-main} with the $\eps$-robust   $C^{1,1}$-to-$C^{2,\alpha}$  estimates for the level sets of solutions to \eqref{freebdryAC}---see \cite{An25}---we can also show the following:


\begin{cor}\label{cor:curvest}
Let $B_1\subset \R^3$ and  let $u_\varepsilon: B_1 \to [-1,1]$ be a classical stable critical point\footnote{Notice that $u_\eps$ is a classical stable critical point of $\cJ_\eps^0$ in $B_1$ if and only if $u_\eps(\eps\,\cdot\,)$ is a classical stable critical point of $\cJ_1^0$ in $B_{1/\eps}$.} of $\mathcal{J}_{\eps}^0$ in $B_1$, for $\eps> 0$ universally small. Assume that  $\partial\{|u_\eps|<1\}\cap B_{1/2}\neq \varnothing$.
Then, the principal curvatures of the level sets of $u_\eps$ inside $B_{1/2}$ are bounded by a universal constant.
\end{cor}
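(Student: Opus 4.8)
\textbf{Proof proposal for Corollary~\ref{cor:curvest}.}

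The plan is to argue by a blow-up/compactness contradiction, reducing the local curvature bound to the global rigidity supplied by \Cref{thm:FBAC-main}. Suppose the statement fails. Then there exist $\eps_k \downarrow 0$ (or $\eps_k$ bounded away from $0$, which is the easier case handled by standard interior estimates) and classical stable critical points $u_{\eps_k}$ of $\cJ^0_{\eps_k}$ in $B_1$ with $\partial\{|u_{\eps_k}|<1\}\cap B_{1/2}\neq\varnothing$, but such that the supremum of the principal curvatures of the level sets inside $B_{1/2}$ tends to $+\infty$. Passing to the rescaled functions $v_k(x):=u_{\eps_k}(\eps_k x)$, which are classical stable critical points of $\cJ^0_1$ in $B_{1/\eps_k}$, one has free boundary points in $B_{1/(2\eps_k)}$ with principal curvature blowing up after rescaling back; equivalently, there is a sequence of free boundary points $p_k$ and radii $\rho_k\to\infty$ (with $\rho_k = o(1/\eps_k)$) so that, recentering at $p_k$ and dilating by $\rho_k$, one obtains stable solutions $w_k$ of \eqref{freebdryAC} in balls of radius $\to\infty$ whose free boundary passes through the origin with principal curvature equal to $1$ there (a normalization achieved by choosing $\rho_k$ as the worst scale, in the spirit of a standard point-selection / Simon-type argument).

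Next I would set up the compactness. The key inputs are: (i) the Caffarelli--C\'ordoba-type density estimates for $\cJ^0_\eps$ \cite{CC95}, which give local energy bounds and nondegeneracy uniform in the scale; (ii) the qualitative $C^{1,1}$ (Lipschitz graph) regularity near free boundary points together with the $\eps$-robust $C^{1,1}$-to-$C^{2,\alpha}$ estimates of \cite{An25}; and (iii) the stability inequality \eqref{freebdryAC-stability}, which is scale-invariant under the parabolic-type rescaling used here. Using (i) and (ii), the $w_k$ (and their free boundaries) are precompact in $C^2_{\rm loc}$ away from a possible singular set; using the density estimates to rule out vanishing and the monotonicity formula for the Alt--Caffarelli energy to control the blow-up, one extracts a subsequential limit $w_\infty:\R^3\to[-1,1]$ which is a global classical solution of \eqref{freebdryAC} whose free boundary is nonempty and passes through the origin. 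Crucially, the stability inequality passes to the limit (by lower semicontinuity of the left-hand side and the $C^2_{\rm loc}$ convergence, testing with fixed $\xi\in C^\infty_c$), so $w_\infty$ is a \emph{global stable} classical solution. By \Cref{thm:FBAC-main}, $D^2 w_\infty\equiv 0$ in $\{|w_\infty|<1\}$, hence the free boundary of $w_\infty$ is a union of parallel hyperplanes and, in particular, has vanishing second fundamental form everywhere. This contradicts the normalization that a principal curvature of the free boundary of $w_\infty$ at the origin equals $1$, which is preserved under the $C^2_{\rm loc}$ convergence.

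The main obstacle I anticipate is the compactness step, specifically ensuring that the limiting object is genuinely a \emph{classical} solution to which \Cref{thm:FBAC-main} applies, rather than merely a viscosity or varifold solution with a possibly nonempty singular set. Two points require care. First, one must guarantee that the free boundary point $p_k$ and scale $\rho_k$ can be chosen so that the rescaled free boundaries are uniformly Lipschitz (indeed $C^{1,1}$) in a fixed ball around the origin; this is where the $\eps$-robust estimates of \cite{An25} combined with the curvature-normalization point selection must be invoked delicately, so that the $C^{1,1}$ bound does not degenerate as $k\to\infty$ — the normalization "worst principal curvature equals $1$ at the center" is precisely designed to provide a scale at which such a bound holds on a definite ball. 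Second, one must propagate regularity: a priori the limit solves \eqref{freebdryAC} in the classical sense only on the region where the convergence is $C^2$; one then uses the interior regularity theory for \eqref{freebdryAC} (harmonicity in $\{|w_\infty|<1\}$ plus the Neumann-type condition $|\nabla w_\infty|=1$, together with the Lipschitz/$C^{1,1}$ bound just established and elliptic bootstrapping) to upgrade $w_\infty$ to a genuine classical solution in all of $\R^3$, matching the hypotheses of \Cref{thm:FBAC-main}. Once this is in place, the contradiction is immediate. The case where $\eps_k$ does not go to zero is simpler: after rescaling by $1/\eps_k$ one works at a fixed (bounded) scale and the uniform curvature bound follows directly from the $\eps$-robust interior estimates of \cite{An25} applied on balls of fixed size, without needing the global classification.
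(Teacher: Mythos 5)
Your overall strategy (point selection, blow-up, pass stability to the limit, contradict a Liouville theorem) is the same as the paper's, but there is a genuine gap in the dichotomy at the end, and it is not the dichotomy you wrote down.

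After you recenter at $p_k$ and dilate so that the principal curvature at the origin equals $1$, the rescaled function $w_k$ is a classical stable critical point of $\cJ^0_{\widetilde\eps_k}$ with effective parameter $\widetilde\eps_k=\eps_k M_k$, where $M_k\to\infty$ is the blowing-up curvature. The relevant case distinction is on $\widetilde\eps_k$, not on $\eps_k$: even when $\eps_k\to 0$, the product $\eps_k M_k$ may stay bounded away from zero or may tend to zero, and these two regimes require different arguments. If $\widetilde\eps_k\to\widetilde\eps>0$, then (after one more fixed dilation) the limit is indeed a global classical stable critical point of $\cJ^0_1$ with nonzero Hessian at the origin, and \Cref{thm:FBAC-main} gives the contradiction, as you say. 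But if $\widetilde\eps_k\to 0$, the slab $\{|w_k|<1\}$ has width of order $\widetilde\eps_k\to 0$ near the origin, the gradients $|\nabla w_k|=1/\widetilde\eps_k$ blow up, and the limit is \emph{not} a classical solution of \eqref{freebdryAC}: the two free boundaries collapse onto a complete stable minimal surface in $\R^3$ with second fundamental form normalized to $1$ at the origin. \Cref{thm:FBAC-main} says nothing here; one must instead invoke the $\eps$-robust $C^{1,1}$-to-$C^{2,\alpha}$ estimates of \cite{An25} to get local graphical $C^2$ convergence of the free boundaries and then the classical classification of complete stable minimal surfaces in $\R^3$ \cites{DP79,FS80,Pog81}. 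Your closing paragraph splits cases according to whether $\eps_k\to 0$, which does not capture this; your compactness step asserts the limit is always a solution of \eqref{freebdryAC}, which is false in the degenerate regime.

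A second, smaller omission: you need the a priori bound $\widetilde\eps_k=\eps_k M_k\le C$ for the rescaling to make sense (otherwise the limit could degenerate into a one-phase configuration, or the normalization could fail). This is exactly \Cref{lem:global-curv-bd}, which in turn rests on the Bernoulli curvature estimate \Cref{thm:main2}; it should be cited explicitly rather than folded into generic "density estimates and monotonicity".
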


\begin{remark}
   As in the Allen--Cahn setting \cite{WW19, CM20}, once uniform curvature estimates for the level sets of the solutions $u_\varepsilon$ are established, the main result in \cite{An25} implies that their mean curvature goes to zero at an algebraic rate in $\varepsilon$. This reflects the natural expectation that the level sets approximate minimal surfaces in the limit as $\varepsilon\to 0$.
\end{remark}

As mentioned above, one of the most delicate estimates is to show that, thanks to stability,  the free boundaries satisfy universal curvature bounds. This can be phrased as a regularity result for the Bernoulli problem, which is of independent interest:

\begin{thm}
\label{thm:main1}
    Let $u:\R^3\to [0,\infty)$ be a classical  stable solution to the one-phase Bernoulli problem  (see \Cref{defi:solutions}). 
    Then $D^2u \equiv 0$ in $\{u>0\}$. In particular, the free boundary consists of either one or two hyperplanes.
\end{thm}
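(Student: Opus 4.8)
\textbf{Proof proposal for Theorem \ref{thm:main1}.}

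The plan is to run a dimension-reduction / blow-down argument, reducing the classification of global stable solutions in $\R^3$ to the classification of stable cones, and then rule out nontrivial $1$-homogeneous stable solutions in $\R^3$ by a stability computation on the associated link. First I would fix the basic regularity facts: by the assumed smoothness of the free boundary, $\{u>0\}$ has a smooth boundary and $u\in C^2(\{u>0\})\cap C^1(\overline{\{u>0\}})$, with $|\nabla u|=1$ on $\partial\{u>0\}$ and $\Delta u=0$ inside; moreover $u$ is globally Lipschitz (gradient bound from the overdetermined condition and interior elliptic estimates), so the rescalings $u_r(x):=u(rx)/r$ form a precompact family in $C^1_{\loc}$ away from the free boundary, and are stable solutions in every ball as $r\to\infty$. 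I would like to then say that a blow-down $u_\infty=\lim_{r\to\infty}u_{r}$ exists and is a $1$-homogeneous global stable solution (a ``stable cone''), using the Weiss-type monotonicity formula for the one-phase problem to get homogeneity of the blow-down; stability passes to the limit since the stability inequality \eqref{freebdryAC-stability} (with $|\nabla u|\equiv 1$-type terms replaced by the Bernoulli second variation) is stable under $C^1_{\loc}$ convergence of solutions with locally uniformly nondegenerate free boundaries — here the nondegeneracy of stable solutions from \cite{kamburov2022nondegeneracy} is what prevents the free boundary from disappearing in the limit.

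Next I would classify $1$-homogeneous stable solutions in $\R^3$. Writing $u(x)=|x|\,g(x/|x|)$ with $g\ge 0$ defined on $\bS^2$, harmonicity of $u$ forces $\Delta_{\bS^2}g = 2g$ on the (open) set $\{g>0\}$, the free boundary condition becomes a Neumann-type condition $|\nabla_{\bS^2}g|^2 + g^2 = 1$ on $\partial\{g>0\}\subset\bS^2$, and $1$-homogeneity means the free boundary of $u$ is a cone over a curve $\gamma=\partial\{g>0\}$ in $\bS^2$. Plugging the homogeneous test functions $\xi(x)=|x|^{-1/2}\varphi(x/|x|)\eta(|x|)$ into the stability inequality and optimizing in the radial cut-off $\eta$ (the standard trick that produces a Hardy/Poincaré inequality on the link, exactly as in the minimal-cone and Alt--Caffarelli-minimizer theory) should yield a spectral inequality on $\bS^2\cap\{g>0\}$ forcing the first eigenvalue of the relevant Jacobi-type operator (Laplacian on $\{g>0\}$ with the natural Robin boundary term coming from the curvature of $\gamma$) to be $\ge$ a definite constant; in $\R^3$ the Gauss--Bonnet theorem on $\bS^2$ controls the total geodesic curvature of $\gamma$, and this should be incompatible with stability unless $\gamma$ is a great circle (equivalently $\nabla^2 u\equiv 0$). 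This is precisely the place where the low dimension $n=3$ enters, mirroring the classical proof that area-minimizing or stable cones in $\R^3$ are flat.

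Finally, having shown the blow-down $u_\infty$ is a half-space solution (one or two parallel hyperplanes, i.e. $u_\infty(x)=(e\cdot x)_+$ or $u_\infty(x)=|e\cdot x|$), I would upgrade this to the same conclusion for $u$ itself: a half-space blow-down combined with the monotonicity formula forces $u$ to have zero ``excess'' at infinity, hence by the improvement-of-flatness / rigidity for the one-phase problem (or directly by the argument that $u-u_\infty$ is a bounded harmonic function in a half-space with controlled boundary behavior, forcing $D^2u\equiv 0$) one concludes $D^2u\equiv 0$ in $\{u>0\}$, so the free boundary is one or two hyperplanes. The main obstacle I expect is the passage to the blow-down together with the persistence of a \emph{nontrivial} free boundary in the limit: one must rule out both degeneration (the free boundary escaping to infinity, handled by nondegeneracy of stable solutions) and the loss of stability or of the free boundary condition under the limit, which requires quantitative control — uniform Lipschitz bounds, uniform nondegeneracy, and a clean monotonicity formula valid for merely \emph{stable} (not minimizing) solutions. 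A secondary but still substantial difficulty is making the stability-on-the-link computation rigorous when $\{g>0\}$ is not a priori smooth up to $\gamma$, which may require first establishing that stable cones in $\R^3$ have smooth free boundary (again via the nondegeneracy and a clean blow-up at points of $\gamma$).
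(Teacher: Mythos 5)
Your steps 1 and 2 are broadly consistent with what the paper actually does in its preliminary sections: blow-downs exist and are $1$-homogeneous by Weiss monotonicity, stability and the free boundary survive the limit thanks to the nondegeneracy of stable solutions, and the $1$-homogeneous global stable solutions in $\R^3$ are classified as half-plane solutions $(e\cdot x)_+$ and \emph{vees} $|e\cdot x|$ (the paper does this in \Cref{lem:W-compact-H1} and \Cref{prop:W}, using the Jerison--Savin classification for the smooth case plus a separate Sternberg--Zumbrun argument to pin down the slope of the vee). But note that the relevant cone for a \emph{non-flat} solution is necessarily the vee, not a half-space: if the blow-down were $(e\cdot x)_+$ the solution would already be flat by $\eps$-regularity. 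The vee is itself stable, so no spectral/Gauss--Bonnet computation on the link can exclude it, and your step 2 cannot conclude anything beyond ``the blow-down is a vee.''

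The genuine gap is your step 3. Knowing that $u$ is close to the vee $|e\cdot x|$ at all large scales does \emph{not} allow an improvement-of-flatness or a ``bounded harmonic function in a half-space'' rigidity argument, because the two nearly flat sheets of $\{u>0\}$ on either side of the slab $\{|e\cdot x|=o(R)\}$ may be joined by microscopic necks (catenoid-like bridges of radius $r\ll R$), which contribute only $O(r)$ to the stability inequality and are therefore not obviously excluded by stability. The paper's \Cref{cor_closetoV_disc_reg} makes this precise: closeness to a vee upgrades to curvature bounds only under the \emph{additional} hypothesis that the two sides lie in different connected components of $\{u>0\}$, and that hypothesis is exactly what fails in the presence of necks. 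Ruling out these neck configurations is the actual content of the theorem and occupies Sections~\ref{sec:neck_set}--\ref{sec:closing}: one must introduce neck centers and a ball-tree decomposition of $\{u>0\}$, use the Jerison--Savin test function to show neck radii decay superlinearly in the symmetric excess, select an optimal center and scale, run a dichotomy between scales where linearization is possible and scales where the necks are too dense, carry out the linearization with only $L^p$ (not $L^\infty$) control of the Neumann data at the necks, and close the argument with a Monneau--Weiss-type almost-monotonicity formula with logarithmic errors. None of this is a routine upgrade from the cone classification, so as written your proposal does not prove the theorem.
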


As a consequence, we obtain two corollaries. The first one is a classification of monotone solutions in $\R^4$:
\begin{cor}
\label{cor:DeG_Bernoulli}
 Let $u: \R^4 \to [0,\infty)$ be a classical solution to the one-phase Bernoulli problem (see \Cref{defi:solutions}) satisfying $\partial_4 u>0$ in $\{u>0\}$. Then $D^2u \equiv 0$ in $\{u>0\}$ and $u$ is one-dimensional.  
\end{cor}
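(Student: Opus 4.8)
The plan is to deduce Corollary~\ref{cor:DeG_Bernoulli} from Theorem~\ref{thm:main1} by a dimension-reduction argument in the spirit of the classical reduction of De Giorgi's conjecture to its stable version, adapted to the free boundary setting. First I would observe that a solution $u:\R^4\to[0,\infty)$ of the one-phase Bernoulli problem with $\partial_4 u>0$ in $\{u>0\}$ is automatically stable: differentiating the equation in the $x_4$-direction shows that $\partial_4 u$ is a positive solution of the linearized problem (harmonic in $\{u>0\}$, with the linearized Neumann-type condition on $\partial\{u>0\}$), and a positive solution of the linearized equation furnishes, via the usual substitution $\xi\mapsto \xi^2/\partial_4 u$ in the stability quadratic form, a proof that the second variation of $\cE$ is nonnegative along all $\xi\in C_c^\infty(\R^4)$. (This is the free boundary analogue of \cite{AAC01}*{Corollary 4.3}; the boundary term produced by the integration by parts is exactly the one appearing in the stability inequality for Bernoulli — see \Cref{defi:solutions}.) Thus $u$ is a global classical stable solution to the one-phase Bernoulli problem in $\R^4$ — but Theorem~\ref{thm:main1} is stated only in $\R^3$, so one more step is needed.

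The second step is therefore to pass to the traces at $x_4\to\pm\infty$. Since $0<\partial_4 u$, for each fixed $(x_1,x_2,x_3)$ the map $x_4\mapsto u(x_1,x_2,x_3,x_4)$ is increasing, hence the monotone limits
\[
u_\pm(x') := \lim_{x_4\to\pm\infty} u(x',x_4), \qquad x'\in\R^3,
\]
exist (with $u_+$ possibly $+\infty$; one first rules this out, or works with the translates $u(\,\cdot\,, t)$ and extracts a locally uniform limit using interior gradient estimates and nondegeneracy for stable solutions, e.g.\ \cite{kamburov2022nondegeneracy}). A standard compactness argument — interior $C^{1,\alpha}$ estimates in $\{u>0\}$, nondegeneracy up to the free boundary, and stability being preserved under locally uniform limits — shows that $u_\pm$ are again global classical stable solutions of the one-phase Bernoulli problem, now in $\R^3$. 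Applying Theorem~\ref{thm:main1} to each of $u_\pm$ gives that $D^2u_\pm\equiv 0$ in $\{u_\pm>0\}$, so each $u_\pm$ is one-dimensional, of the form $(c_\pm\cdot x' + d_\pm)_+$ or a two-plane solution; but a two-plane solution is not a monotone limit of a single increasing family unless it degenerates, so $u_\pm(x')=(e_\pm\cdot x' + a_\pm)_+$ for unit vectors $e_\pm$ (with the convention that $u_\pm\equiv 0$ is also allowed).

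The third step is to upgrade from the two one-dimensional traces to one-dimensionality of $u$ itself in $\R^4$. Here the natural route is to show $u$ is a \emph{minimizer} of $\cE$ in $\R^4$ and invoke the minimizer classification in dimension $4$ \cite{CJK04, Jerison-Savin} (the Bernoulli analogue of the strategy recalled in the footnote for Allen--Cahn using \cite{Jerison-Monneau}). To establish minimality, I would check that $\{u_+>0\}\subset\{u>0\}\subset\{u_-\ge 0\}$ and that the one-dimensional profiles at $\pm\infty$ force, via a calibration/sliding argument (comparing $u$ with its translates in the $x_4$-direction and using the strong maximum principle together with Hopf's lemma at the free boundary), the full solution to be trapped between minimizing half-space solutions; a Caffarelli--De Giorgi type sliding argument then yields that $u$ is itself a minimizer. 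Once $u$ is a minimizer in $\R^4$, \cite{CJK04, Jerison-Savin} gives $D^2u\equiv 0$ in $\{u>0\}$ and $u$ one-dimensional.

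The main obstacle I expect is the third step: the free boundary analogue of the ``monotone $\Rightarrow$ minimizer'' implication is not as clean as in the Allen--Cahn case, because one must handle the free boundary carefully in the sliding/calibration argument — in particular controlling the behavior where $\{u>0\}$ meets $\{u_-=0\}$ and ensuring the traces are genuinely one-dimensional rather than merely ``half-space-like with a lower-dimensional defect.'' The use of nondegeneracy for stable solutions \cite{kamburov2022nondegeneracy} and the fact (guaranteed by Theorem~\ref{thm:main1}) that the traces have \emph{flat} free boundaries should be exactly what makes the sliding argument close; but verifying that $u$ inherits minimality — as opposed to just stability — from its one-dimensional traces, uniformly up to the free boundary, is the delicate point. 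A secondary technical issue is ruling out $u_+\equiv+\infty$ and more generally obtaining the locally uniform convergence needed to preserve both the PDE and the stability inequality in the limit; this is routine given the a priori estimates for classical stable solutions but must be stated with care.
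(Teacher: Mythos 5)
Your overall strategy coincides with the paper's: monotone $\Rightarrow$ stable, pass to the limits at $x_4\to\pm\infty$, classify the traces via \Cref{thm:main1}, and then upgrade to minimality by a sliding/foliation argument (the paper cites \cite{Jerison-Monneau}) followed by the regularity theory for minimizers. Your first step and your assessment of where the delicacy lies are both accurate.

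There is, however, one genuine gap in your second step. You claim that the traces $u_\pm$ are \emph{classical} stable solutions in $\R^3$ by "interior $C^{1,\alpha}$ estimates, nondegeneracy, and stability being preserved under limits." But the compactness machinery available (\Cref{lem:compact}) only guarantees that a locally uniform limit of classical stable solutions is a \emph{stable solution in the weak sense} of \Cref{defi:solutions}; it does not produce a classical solution, because without a uniform bound on $D^2u$ the free boundaries of the translates $u(\cdot,\cdot+t)$ may degenerate in the limit (e.g.\ two sheets collapsing onto a crease). Since \Cref{thm:main1} is stated for classical stable solutions, you cannot apply it to $u_\pm$ as constructed. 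Note that you cannot repair this by invoking curvature estimates for stable solutions in $\R^4$ with $\partial_4 u\ge 0$, since that statement (\Cref{prop:main2bis}) is itself proved \emph{using} \Cref{cor:DeG_Bernoulli} — that route is circular. The paper's fix is to first run the reduction of \Cref{lem:reduction}: assuming $D^2u\not\equiv 0$, one produces a normalized global classical stable solution with $|D^2u|\le 1$ in $\{u>0\}$, $|D^2u(0)|=1$, and $\partial_4 u\ge 0$. The uniform Hessian bound then yields uniform curvature bounds on the free boundaries of the translates, so the monotone limits $\underline u,\overline u$ are genuinely classical stable solutions in $\R^3$ (or $0$, resp.\ $+\infty$), and \Cref{thm:main1} applies. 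The argument then closes as a contradiction with $|D^2u(0)|=1$ once minimality and hence flatness of the reduced solution is established. Two smaller inaccuracies: the possibility $u_+\equiv+\infty$ cannot be "ruled out" (it occurs, e.g., for translates of $(x_4)_+$) and must simply be allowed as an upper barrier; and the lower trace may legitimately be a maximum of two half-space solutions with disjoint supports — the paper keeps this case and uses it as a lower barrier rather than excluding it.
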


The second are curvature estimates for local stable solutions to the Bernoulli problem:

\begin{cor}
\label{thm:main2}
    Let  $B_1\subset \R^3$ and let $u: B_1 \to [0,\infty)$ be a classical stable solution to the one-phase Bernoulli problem (see \Cref{defi:solutions}). 
    Then $|D^2u |\le C$ in $\overline {B_{1/2}\cap \{u>0\}}$, with $C$ universal. In particular, the principal curvatures of the free boundary are universally bounded. 
\end{cor}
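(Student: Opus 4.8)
The plan is to deduce the local curvature estimate from the global classification in Theorem~\ref{thm:main1} by a standard blow-up/compactness argument, exploiting the scale invariance of the one-phase Bernoulli problem and the stability inequality. Suppose the estimate fails. Then there is a sequence of classical stable solutions $u_k : B_1 \to [0,\infty)$ and points $x_k \in \overline{B_{1/2}\cap\{u_k>0\}}$ (in fact points on the free boundary, since $D^2 u_k \equiv 0$ where $u_k$ is harmonic away from the free boundary, so the second derivatives blow up only near $\partial\{u_k>0\}$) such that the scaled quantity $M_k := \sup_{x\in \overline{B_{1/2}\cap\{u_k>0\}}} \dist(x,\partial B_1)\,|D^2 u_k(x)| \to \infty$. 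A point-selection (Hölder/doubling) argument — choosing a point $y_k$ nearly realizing the supremum of $d_k(y)|D^2u_k(y)|$ where $d_k(y) := \dist(y,\partial B_1)$ — produces $y_k$ with $r_k := |D^2 u_k(y_k)|^{-1} \to 0$, $r_k \ll d_k(y_k)$, and $|D^2 u_k| \le 2\,r_k^{-1}$ on $B_{d_k(y_k)/2}(y_k)$.

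Next I would rescale: set $\tilde u_k(x) := r_k^{-1}\big(u_k(y_k + r_k x) - u_k(y_k)\big)$ if $y_k$ is an interior point of $\{u_k>0\}$, or more simply $\tilde u_k(x) := r_k^{-1} u_k(z_k + r_k x)$ with $z_k$ a nearby free boundary point, so that $\tilde u_k$ is a classical stable solution to the one-phase Bernoulli problem in $B_{R_k}$ with $R_k := d_k(y_k)/(2r_k) \to \infty$, with $|D^2\tilde u_k| \le 2$ on $B_{R_k}$, $0$ on the free boundary, and $|D^2 \tilde u_k(0)| = 1$. The uniform $C^{1,1}$ bound on $\{\tilde u_k > 0\}$, together with the nondegeneracy and Lipschitz bounds intrinsic to the one-phase problem (and the fact that $\tilde u_k$ is harmonic in $\{\tilde u_k>0\}$, hence locally bounded in $C^\infty$ there), gives $C^{1,\alpha}_{\loc}$ compactness of the functions and Hausdorff/$C^{1}$ convergence of the free boundaries. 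The limit $\tilde u_\infty : \R^3 \to [0,\infty)$ is a classical global solution of the one-phase Bernoulli problem, and the stability inequality passes to the limit (test functions are compactly supported, the integrands converge, and $|D^2 \tilde u_k|^2 - |\nabla|\nabla \tilde u_k||^2$ is controlled), so $\tilde u_\infty$ is stable. By Theorem~\ref{thm:main1}, $D^2 \tilde u_\infty \equiv 0$ in $\{\tilde u_\infty > 0\}$; but $|D^2\tilde u_k(0)| = 1$ passes to $|D^2\tilde u_\infty(0)| = 1$ (using $C^{1,\alpha}$, hence interior $C^2$, convergence away from — and by the $C^{1,1}$-to-$C^{2,\alpha}$ estimates also up to — the free boundary), a contradiction. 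The final assertion about principal curvatures is immediate: on the free boundary $|\nabla u| = 1$, so the curvature of $\partial\{u>0\}$ is controlled by tangential derivatives of $|\nabla u|$, hence by $|D^2 u|$, which is now universally bounded on $\overline{B_{1/2}\cap\{u>0\}}$.

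The main obstacle is the compactness and passage to the limit near the free boundary. One must ensure the convergence is strong enough that $|D^2\tilde u_\infty(0)| = 1$ genuinely survives, and that the limit free boundary is a classical smooth hypersurface so that Theorem~\ref{thm:main1} applies in the stated form; this requires $\eps$-independent $C^{2,\alpha}$ control of the level/free boundary up to the boundary, for which one invokes the uniform $C^{1,1}$ bound obtained from $M_k$-selection together with the free-boundary regularity theory — precisely the kind of $C^{1,1}$-to-$C^{2,\alpha}$ improvement referenced for \eqref{freebdryAC} via \cite{An25}, applied here to the pure Bernoulli problem. A secondary technical point is the point-selection argument itself: because the natural scaling quantity involves $|D^2 u|$ rather than a Hölder seminorm, one should set it up carefully (e.g. working with $d(y)\,|D^2u(y)|$ and a factor-$2$ near-maximization) to guarantee both $r_k \to 0$ and $r_k \ll d_k(y_k)$, so that the rescaled domains exhaust $\R^3$.
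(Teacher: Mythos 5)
Your proposal is correct and is essentially the paper's own argument: the paper deduces this corollary by extending $u$ cylindrically to $\R^4$ and invoking \Cref{prop:main2bis}, whose proof is exactly the point-selection/blow-up/compactness reduction to the global classification of \Cref{thm:main1} that you describe (carried out via \Cref{lem:reduction} and \Cref{lem:compact}). The one detail worth adding to your compactness step is that, to guarantee the blow-up limit is a \emph{classical} solution, one must also rule out tangential touching of two graphical components of the limiting positivity set, which the paper does using the mean-concavity of the free boundary (\Cref{lem:vbounds}) together with Hopf's lemma and unique continuation.
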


Some comments are in order:
\begin{remark} 
\label{rmk:main thm}
{\em (i) Sharpness of the result:} For every $n \geq 2$, there exist classical solutions to the one-phase Bernoulli problem with catenoid-like free boundaries that have finite Morse index. In particular, these solutions are stable outside a compact set; see \cite{Tra14, LWW21}. Moreover, for $n = 2$, the Morse index has been shown to be exactly $1$ \cite{BK23}. In view of these examples, the stability assumption in Theorem~\ref{thm:main1} is necessary.

\noindent
{\em (ii) The role of $n=3$:} 
    Concerning Theorem~\ref{thm:main1}, the assumption $n=3$ is used to exploit a test function introduced by  Jerison and Savin in \cite{Jerison-Savin} to classify minimizing homogeneous solutions in $\R^4$. In view of this connection, it seems likely to us that if one could prove that minimizing homogeneous solutions in $\R^{k+1}$ are flat (which can be true only for $k\leq 5$), then our result could be extended to $\R^k$.
    Less crucially, the fact that $n = 3$ is also used in the classification of blow-downs in \Cref{prop:W}.\\
Regarding \Cref{thm:FBAC-main}, the dimensional assumption is used both to apply \Cref{thm:main2} and to exploit an argument from \cite{Pog81} based on Gauss--Bonnet. Still, if one could extend \Cref{thm:main1} (and thus \Cref{thm:main2}) to higher dimensions, in view of the recent breakthroughs in the classification of stable minimal surfaces \cite{CL24, CL23, CMR24, CLMS24, Maz24}, it seems plausible to us that one could attack \Cref{thm:FBAC-main} in higher dimensions as well.\\
\noindent
   {\em (iii) About the ``classical solution'' assumption:} 
   Since in $\R^3$ local minimizers (i.e., solutions that minimize the energy with respect to sufficiently small perturbations\footnote{That is, a function $u$ that minimizes the energy among all functions $v \in u  + H^1_0(B_1)$ with $\|v - u\|_{H^1(B_1)} < \delta$ for some $\delta > 0$.}) are classical stable solution,
   Corollaries~\ref{cor:curvest} and \ref{thm:main2} apply to them. More generally, the curvature estimates from Corollaries~\ref{cor:curvest} and \ref{thm:main2}, as well as the classification results from Theorems~\ref{thm:FBAC-main} and \ref{thm:main1}, apply to all weak solutions that arise as local limits of classical stable solutions. This constitutes the largest class for which such results can be expected to hold\footnote{For instance, in the Bernoulli problem, the function $\mathbb{R}^2 \ni (x_1, x_2) \mapsto |x_1x_2|$ is a stationary critical point that is stable under domain variations. However, this is a spurious example. In particular, because of our results, 
it cannot be locally approximated by classical solutions, nor can it arise as a limit of solutions to a regularized problem  
$-\Delta u + \varepsilon F'(u/\varepsilon) = 0$,  
where $F$ is a mollified version of the indicator function of $(0, +\infty)$.}.
\end{remark}
 
\subsection{Structure of the paper}
The paper is organized as follows. 
To better guide the reader through the main arguments and techniques employed in the paper, in the next section we present a 
detailed overview of the key ideas and structure of the proofs of our main results.
Then, Sections \ref{sec:Bernoulli}--\ref{sec:closing} are dedicated to proving \Cref{thm:main1} and Corollaries~\ref{cor:DeG_Bernoulli} and \ref{thm:main2}, which form the backbone of our analysis. Building on these results, Section~\ref{sec:FB_AC} addresses the proofs of \Cref{thm:FBAC-main} and Corollaries~\ref{cor:DeG_main} and \ref{cor:curvest}, which crucially depend on \Cref{thm:main2}.

\subsection*{Acknowledgments}
H. C. was supported by the Swiss National Science Foundation (SNF grant
PZ00P2\_202012),  and by the Grant
CEX2019-000904-S funded by MCIN/AEI/10.13039/501100011033 and PID2020-113596GB-I00 (Spain).
X. F. was supported by the Swiss National Science Foundation (SNF grant
PZ00P2\_208930), by the Swiss State Secretariat for Education, Research and Innovation (SERI) under
contract number MB22.00034, and by the AEI project PID2021-125021NA-I00 (Spain).
A. F. is grateful to the Marvin V. and Beverly J. Mielke Fund for supporting his stay at IAS
Princeton, where part of this work has been done. 
J. S. was supported by the European Research Council under the Grant Agreement No 948029.

\section{Overview of the Proofs}

We now give a detailed overview of the structure of the proofs and the main ideas involved. The paper is structured following the logical dependencies of the results, and as such, the first part (Sections \ref{sec:Bernoulli}--\ref{sec:closing}) is devoted to showing \Cref{thm:main1} and Corollaries~\ref{cor:DeG_Bernoulli} and \ref{thm:main2}.

\subsection{The Bernoulli problem: Sections \ref{sec:Bernoulli}--\ref{sec:closing}} The main goal of these sections is to prove \Cref{thm:main1}. The key steps and main components of the proofs, along with their corresponding locations in the paper, are outlined below.

\subsubsection{The objects and their properties} In Sections \ref{sec:Bernoulli}--\ref{sec:LpHessian}, we introduce the basic objects of interest and definitions, and all the necessary properties that will be used in a contradiction argument.

\begin{enumerate}[1.]

\item {\em A useful reduction}.  In Lemmas~\ref{lem:Lipbound} and~\ref{lem:reduction} we show that if there exists a classical stable solution $\bar u$ to the Bernoulli free boundary problem in $\R^3$ with $D^2 \bar u \not\equiv 0$, then there must exist a classical stable solution $u:\R^3\to [0,\infty)$ satisfying  the   bounds 
\[
|\nabla u| \le 1\quad \text{ in }  \R^3,\qquad |D^2 u|\leq 1
	\quad \text{ in }  \quad \{u>0\},
	\qquad 0 \in \FB(u),\qquad
|D^2 u(0)|=1,
\]
where $\FB(u)=\partial\{u>0\}$ denotes the free boundary of $u$
(cf. \cite{kamburov2022nondegeneracy}).  
Throughout the paper, we will assume that $u$ is as above and our goal will be to reach a contradiction.

\smallskip

\item {\em Preliminary results}. In \Cref{sec:Bernoulli} we start with some preliminary results on classical solutions to the Bernoulli problem: e.g., some variants of $\eps$-regularity (Lemmas \ref{lem:eps-reg-classical}, \ref{lem:L1Linfty_2}, and \ref{lem:eps-reg-Hess}) and a density estimate (\Cref{lem:cleanball}).

Then, in \Cref{sec:blowdown} we recall and establish some facts about classical \emph{stable} solutions. For example, it is well-known that the stability inequality (see  \eqref{eq:stab_ineq_H}--\eqref{eq:stab_ineq_H_2}) can be written in a {\em Sternberg--Zumbrun} form (as in \cite{SZ98}; cf. \Cref{lem:Stern_Zum} below). An immediate consequence is that, for all $y\in \R^3$ and $R>0$, we have
\[
R^2\fint_{B_R(y)\cap \{u>0\}} |D^2 u |^2\,dx
 \le  C,
\]
where $C>0$ is a universal constant. 

\smallskip

\item \noindent {\em Blow-down to vee}.  A first  consequence of stability is that the blow-down of a non-trivial global solution must be a \emph{vee} (\Cref{prop:W}). More precisely, we show that there exists a universal modulus of continuity $\omega$ for which the following holds:
If $y\in \FB(u)$ is such that $|D^2u(y)| \ge 1/\varrho >0$ ($\varrho \geq 1$ can be thought of as a radius of curvature), then for all $R\ge 1$ there exists $e =e(y,R)\in \mathbb S^{2}$
such that 
\[
\bigl\|
	u-V_{y,e}
\bigr\|_{L^\infty(B_R(y))}
\leq \omega(\varrho/ R)R,
\]
where $V_{y,e}$ is a {\em vee}, namely, a solution of the form
\[V_{y,e}(x) : = |e\cdot(x-y)|.\]

\smallskip

\item \label{it:ivabove} {\em Threshold radius, neck centers, neck radii, and ball tree}. Given $y\in \FB(u)$ we will define its associated {\em threshold radius} $\rb(y)$  as follows: 
\[
\rb(y) : = \sup\Big\{ r>0\ : \  \int_{B_{r}(x)\cap \{u>0\}}|D^2 u|^3\,dx <\eta_0^3\Big\},
\]
where $\eta_0>0$ will be a (fixed) small universal constant.

In Subsection~\ref{ssec:neck-set-def} we establish the existence of a discrete set $\cZ \subset \R^3$ (countable and locally finite), which we refer to as \emph{neck centers}, satisfying the following:  
\[
|D^2u(x)| \leq \frac{C}{\dist(x,\cZ)} \quad \forall\, x\in\{u>0\}, \qquad \text{and} \qquad |D^2u | \le \frac{C}{\rb(\zz)}
\quad \text{in } B_{\rb(\zz)}(\zz) \quad \forall\, \zz\in \cZ,
\]
see \Cref{lem:Hess-decay} and \Cref{cor:Hess-bd-neck}. The threshold radius at a neck center is called the \emph{neck radius}.   

The term ``neck'' is motivated by the following properties:
\begin{itemize}
    \item Away from necks, the free boundary consists of two (regular, nearly flat) disconnected sheets, and the positivity set $\{u>0\}$ consists of two disjoint connected components. (See \Cref{lemregball} for a precise statement.)
    \item Within a ball centered at a neck center with a radius comparable to the neck radius, the positivity set becomes connected through a \emph{neck-like} region. (See \Cref{lem:neckcenters_necks} for a precise statement.)
\end{itemize}
In addition, in \Cref{lem:X-blowdown} we show that, when centering at any given neck center and observing at a scale much larger than the neck radius, the solution $u$ becomes arbitrarily close to a vee. These structural properties imply that $\{u>0\}$ can be covered by a hierarchy of balls organized into a rooted tree structure, which we call the \emph{ball tree}. This covering consists of three types of balls:
\begin{itemize}
    \item \emph{Branching balls}: regions where the free boundary is concentrated within a thin slit, requiring further subdivision into smaller balls.
    \item \emph{Neck balls}: regions where the two disconnected positivity components merge, and the free boundary has a radius of curvature comparable to the ball radius.
    \item \emph{Regular balls}: regions where the free boundary has two regular nearly flat components.
\end{itemize}
See \Cref{fig:pictree} and \Cref{prop:geomtree} for further details.

\smallskip

\item{\em Symmetric $L^2$ excess}.  
For $\zz\in \cZ$ and $R>0$, we introduce the dimensionless quantity 
\begin{equation}
    \label{eq:Ez_def_intro}
\bE_\zz(u,R) : = 
\min_{e\in\bS^{2}}
 \sqrt{\frac{1}{R^2}\fint_{B_R(\zz)} |u- V_{\zz,e}|^2 \,dx}.
\end{equation}

\noindent {\em Small excess, small neck radii}: 
The  goal of Section~\ref{sec:LpHessian} (see \Cref{prop:E_to_rho}) is to show that neck radii are controlled by $\bE_\zz$. More precisely, we prove that for any  $\cttg\in (0, \frac49)$, $\zz\in \cZ$, and $R > 0$,  we have 
\begin{equation}\label{boundneckra2intro0}
\sup \bigg\{\frac{\rb(\zz')}{R} \ : \ \zz'\in \cZ\cap B_{3R/2}(\zz) \bigg\}  \le   C_\gamma \bE_\zz(u, 8R)^{3\cttg}.
\end{equation} 
Note that, by choosing $\cttg > \frac{1}{3}$, \eqref{boundneckra2intro0} implies that neck radii decay superlinearly with the symmetric excess---a crucial insight that lays the groundwork for the rest of the proof.

The proof of \eqref{boundneckra2intro0}  builds on the {\em Jerison--Savin}  test function in \cite{Jerison-Savin}: there exists a   1-homogeneous function $F$ of the Hessian such that $\bc= F(D^2u)^{1/3}$ is a subsolution of the linearized equation. In particular, 
\begin{equation}\label{eq:stab-intro}
\begin{split}
     \ccI(u,B_R(y)):&=\int_{B_R(y)\cap \{u>0\}} \bc\Delta \bc\,dx   
+ \int_{B_R(y)\cap \partial \{u>0\}} \bc(\bc_\nu + H\bc)\,d\cH^2 
 \le  \frac{C}{R^2} \int_{B_{2R}(y)\cap \{u>0\}} \bc^2 \,dx,
\end{split}
\end{equation}
where $H$ is the mean curvature of the free boundary, and all the integrands are non-negative. This motivates the definition of yet another dimensionless quantity: 
\begin{equation}
    \label{eq:varrho_def_intro}
\varrho_\zz(u,R) : =  
\frac{1}{R}\ccI(u, B_R(\zz))^3.
\end{equation}
From here, to establish \eqref{boundneckra2intro0}, we argue as follows:
\begin{enumerate}[(i)]
\item first, we bound the left-hand side of \eqref{boundneckra2intro0} by $\varrho_\zz(u,2R)$;
\item then, we bound $\varrho_\zz(u,2R)$ by the right-hand side in \eqref{boundneckra2intro0}.
\end{enumerate}
Step (i) is done in \Cref{prop:rb-l13}, by estimating  $\ccI\big(u,B_{\rb(\zz)}(\zz)\big)$ from below in a neck ball (\Cref{lem:I_neck_ball}). \\
Step (ii)
 is done in \Cref{prop:E_to_rho}, where the stability inequality \eqref{eq:stab-intro} is combined with a new ingredient: a local
$L^{\cttg'}$  estimate, with $\cttg'\in (0,1/2)$, for $D^2u$ in $B_{2R}(\zz)\cap \{u>0\}$ in terms of the excess $\bE_\zz(u, 4R)$ (see \Cref{lem:joaquim}).
This new delicate estimate strongly relies on the  ball tree structure described in point \ref{it:ivabove} above.
\end{enumerate}

\subsubsection{Sketch of the global contradiction argument}  Very roughly, our strategy to prove \Cref{thm:main1} by contradiction in Sections~\ref{sec:selection}--\ref{sec:closing} can be summarized as follows: 
\begin{enumerate}[(i)]
    \item Assuming the neck set is non-empty, we pick a sequence of carefully chosen balls \(B_{R_k}(\zz_k)\) ---with $\zz_k$ neck centers and $R_k\to \infty$ as $k\to \infty$---  for which $\bE_{\zz_k}(u, 8R_k) =: \eps_k \downarrow 0$.
    \item By exploiting some special properties of the balls $B_{R_k}(\zz_k)$ we prove the existence of new centers $\zz'_k \in B_{R_k}(\zz_k)$ and scales $R'_k \ll R_k$ (as $k \to \infty$) such that
\begin{equation}
    \label{eq:chiintro}
\bE_{\zz'_k}(u, R'_k) \leq  \eps_k(R'_k/R_k)^\chi,\qquad \mbox{for some } \chi>0.
\end{equation}
\item Then, by using a new {\it Monneau--Weiss}-like approximate monotonicity formula with logarithmic errors, we show that the smallness of the excess in  $B_{R'_k}(\zz'_k)$ necessarily propagates to the larger ball, up to logarithmic errors: 
\begin{equation}\label{eq:iiiintro}
\eps_k = \bE_{\zz_k}(u,R_k) \leq C\bE_{\zz_k'}(u,4R_k) \le C\log(R_k/R_k')\bE_{\zz_k'}(u,R'_k) \leq C(R'_k/R_k)^\chi\log(R_k/R_k') \, \eps_k. 
\end{equation}
For $k$ sufficiently large this provides a contradiction, since $R'_k/R_k \to 0$ as $k \to \infty$.
\end{enumerate}

\vspace{5pt}
One of the cornerstones of the strategy outlined above is establishing a geometric excess decay between $B_{R_k}(\zz_k)$ and $B_{R_k'}(\zz_k')$. This decay is typically obtained through a linearization procedure, akin to those developed by De~Giorgi or Allard for minimal surfaces.
In this case, however---as explained further below---the situation is considerably more intricate: for a potentially large subset of center-scale pairs, the linearization approach may not be applicable.

Indeed, given a neck center $\zz$ and a scale $R > 0$, suppose we aim to improve the excess from $B_R(\zz)$ to $B_{R/4}(\zz)$. If the neck balls are very small (relative to $R$) and densely scattered throughout $B_R(\zz)$---a scenario that is entirely consistent with the estimate \eqref{boundneckra2intro0}---any attempt at linearization within this ball would be futile. A strategy completely different from linearization is required at these scales: we must harness instead the density of neck balls in a way that works to our advantage, enabling some form of improvement that can then be leveraged at smaller scales.

To address this challenge, we develop a new dichotomy-type argument that improves the excess for fundamentally different reasons at scales where linearization is possible and at scales where it is not. Interestingly, we are able to establish such ``dichotomic'' excess decay only around certain carefully selected neck centers; however, this suffices for our purposes.

\smallskip

We now describe in greater detail the main steps involved in the strategy described above.

\begin{enumerate}[1.]
\item[6.] {\em Careful selection of optimal center and scale}. Fix constants $\ctta\in (\frac 3 4,1)$ and $\cttg\in (0,\frac 4 9)$ such that $3\ctta \cttg>1$.
In Subsection~\ref{ssec:selection}, by suitably optimizing (with respect to $\zz$ and $R$) the quantity
\[
\frac{\bE_\zz(u, 8R)}{ \varrho_\zz(u,2R)^\ctta},
\]
we will show that there exist sequences $R_k\to \infty$ and $\zz_k\in \cZ$ such that
\begin{equation}
\label{eq:eps_kintro}
\eps_k : =  \bE_{\zz_k}(u,8R_k) \to 0\qquad \mbox{as } k\to \infty
\end{equation}
and for which, in addition, the  following crucial property holds:
\begin{equation}\label{key1intro}
\bE_\zz(u,8R) \le 2\eps_k \frac{\varrho_{\zz}(u,R)^\ctta }{\varrho_{\zz_k}(u,R_k)^\ctta } \qquad \mbox{for all $\zz\in \cZ$, $ R\le R_k$,}
\end{equation}
see \Cref{lem:zkRk}. It is worth emphasizing that the decay via dichotomy from the next steps crucially relies on the property \eqref{key1intro}, which holds only because of the careful selection of centers $\zz_k$ and scales   $R_k$. 

\smallskip

\item[7.] {\em Excess improvement when linearization is not possible}.
For \(\zeta \in (0, \tfrac14)\) and a given ball \(B_R(\zz) \subset \R^3\), we define the following quantity:
\begin{equation}
\label{eq:Ndefintro}
\bN\big(\zeta, B_R(\zz)\big) := (\zeta R)^{-3} \bigg|\bigcup_{\zz' \in \mathcal{A}^\zeta_{\zz, R}} B_{\zeta R}(\zz') \bigg|, \qquad \text{where} \quad \mathcal{A}^\zeta_{\zz, R} := \Big\{ \zz' \in B_R(\zz) \cap \cZ :  \rb(\zz') \leq \zeta R\Big\}.
\end{equation}
Essentially, $\bN(\zeta, B_R(\zz)\big)$ is the number of balls of radius $\zeta R$ needed to cover all neck centers with associated neck radius smaller than $\zeta R$. In some sense, the map $\zeta\mapsto \bN(\zeta, B_R(\zz)\big)$ quantifies the effective Minkowski dimension of the neck centers inside $B_R(\zz)$. This is essential because---as explained above---if neck centers are too densely scattered, any linearization attempt would be futile, and we need a different argument at that scale. 

Here a key idea is that---thanks to the careful selection of $B_{R_k}(\zz_k)$---whenever \( \bN(\zeta, B_{R_k}(\zz_k)) \) is ``too large'' for some resolution parameter \( \zeta \in (0,1) \), which would obstruct linearization, one can always find a smaller ball of radius \( \zeta R_k \) contained within $B_{R_k}(\zz_k)$ that still satisfies the key property \eqref{key1intro}, possibly in an even stronger form.

This observation allows us to show (see \Cref{lem:tildezktildeRk}) the existence of a new ball  $B_{\Rk}(\zk) \subset B_{R_k}(\zz_k) $, and $\epk \le\eps_k$, such that, for some $\cttb>0$ small:
\begin{equation}\label{wtiowhioh123}
    \begin{aligned}
        &\bN\big(\zeta, B_{\Rk}( \zk)\big) < C\zeta^{-\frac{1+\cttb}{3}} \qquad \mbox{for all }\zeta\in (0,\tfrac14),  \\
        &\bE_\zz(u,8R)  \le   2\Big(\tfrac{\Rk}{R}\Big)^\ctta    \widetilde\eps_k 
        \qquad \text{for all} \quad  \zz\in \cZ  \ \ \text{with}\ B_{R}(\zz)\subset B_{\Rk}(\zk)\text{ and }\ R\ge  \epk^{1/\alpha} \Rk.
    \end{aligned}
\end{equation}
Thanks to \eqref{wtiowhioh123}, we can show that linearization can be performed within $B_{\Rk}(\zk)$ (see point 8 below), therefore obtaining a geometric decay of the excess.

It is interesting to emphasize that this procedure to pass from $B_{R_k}(\zz_k)$ to $B_{\Rk}(\zk)$ involves a change in the center. This is non-standard in excess decay schemes, but it is necessary and effective for our purposes.

\smallskip

\item[8.] {\em Linearization regime: splitting of $\{u > 0\}$ and decay of an asymmetric excess.}
As already mentioned, thanks to \eqref{wtiowhioh123}, we can perform a linearization argument inside $B_{\Rk}(\zk)$ to improve the excess, and this can be iterated for a number of scales comparable to $|\log \epk|$.
However, for this linearization step, the symmetric excess is not the appropriate quantity to consider, and we will need to define an asymmetric $L^1$ excess as follows.

First, exploiting the tree structure described in point \ref{it:ivabove} above, in Subsection~\ref{ssec:Upm} we construct two disjoint open subsets $U_\pm$ such that
\[
U_\ast \subset U : =  \{ u>0\}\cap B_{\Rk}(\zk), \qquad \ast\in \{+, -\}.
\]
The sets $U_+$ and $U_-$ are essentially the two components (roughly half-spaces) in which the positivity set is split when removing all the necks (see \Cref{lem:obsUpm}).

Then, in \Cref{sec:linear} we define the asymmetric excess $\bA_{\zz}(u, R)$ for balls $B_R(\zz) \subset B_{\widetilde{R}_k}(\widetilde{\zz}_k)$ as
\begin{equation}
\label{eq:Az_def_intro}
\begin{split}
\bA_{\zz}(u, R) & := \max_{\ast\in \{+, -\}} \min_{a \in \mathbb{S}^2, \, b \in \mathbb{R}} \frac{1}{R |B_R(\zz)|} \int_{U_\ast \cap B_R(\zz)} \big| u(x) - a \cdot x - b \big| \, dx.
\end{split}
\end{equation}
Notice that, since we first minimize in $a$ and $b$, and only later compute the maximum in $\ast$, the `optimal' approximating planes that achieve the value of the asymmetric excess will have independent coefficients `on each side'.

Now, for fixed $\ast\in \{+, -\}$, it is natural to define
\[
v_\ast(x) := \frac{ u(x) -  a_\ast \cdot x -b_\ast }{\epk \Rk}, \qquad x\in U_\ast\cap B_{\Rk}(\zk).
\]
In \Cref{sec:linear} we prove that, thanks to \eqref{wtiowhioh123}, the function  $v_{\ast}$ is ``approximately'' a bounded weak solution  of 
\[
\mbox{
$\Delta w =0$ \quad in $B_{\Rk/2}(\zk) \cap\{a_\ast\cdot x+b_* >0\}$ \quad with \quad  $\partial_{a_\ast} w =0$ \quad on $B_{\Rk/2}(\zk) \cap\{a_\ast\cdot x+b_* =0\}$
}\] 
(see the proof of \Cref{prop:decaysqrt}). For this, the main challenge will be to prove estimates of the type 
\begin{equation}\label{wthuiwhiouwh}
\Rk^{p-3}\int_{U_\ast \cap B_{\Rk}(\zk)/2} |\nabla v_{\ast}|^p \,dx \lesssim 1  \quad \mbox{for some $p>1$}\qquad \mbox{and} \qquad \int_{\partial U_\ast \cap B_{\Rk}(\zk)/2} |(v_\ast)_\nu|^2 \,d\cH^{2}\ll1 
\end{equation}
(see \Cref{prop:neumann_bound} and \Cref{lem:hyp_comp}). 
Although, from a very ``low-resolution'' perspective, this approach may appear similar to the classical linearization methods of Caffarelli and De Silva \cite{Caf89, DeSilva}, the key distinction is that, in our case, the difference in normal derivatives between solutions is small only in an \( L^p \) sense, rather than the usual \( L^\infty \) bound used in the viscosity approach. This \( L^p \) control is crucial because, at necks, the normal derivative is not small, leading to a large \( L^\infty \) norm. However, since the necks are relatively sparse and very small compared to the scale under consideration, the \( L^p \) approach remains effective. 

While the previous heuristic explanation justifies the use of $L^p$ topology, the actual proof of the linearization in this setup is much more subtle and requires utilizing all the properties of the ball $B_{\Rk}(\zz_k)$ described above---see Section \ref{sec:linear} for more details.
\smallskip

\item[9.] {\em Conclusion.} Thanks to the linearization step we establish \eqref{eq:chiintro} (see \Cref{prop:decaysqrt} and \Cref{lem:weiss_small}), and then we can conclude as in \eqref{eq:iiiintro} using the Monneau--Weiss-type monotonicity formula with a logarithmic error from \Cref{lem:Monneau}.

\end{enumerate}
\smallskip

Once  \Cref{thm:main1} is established, Corollaries~\ref{cor:DeG_Bernoulli} and \ref{thm:main2} follow (see Subsection~\ref{ssec:proofcorollaries}).
\subsection{The free boundary Allen--Cahn: Section~\ref{sec:FB_AC}}

Having now \Cref{thm:main2} at our disposal, we can proceed to describe the steps of the proof of \Cref{thm:FBAC-main}, which is done in Section~\ref{sec:FB_AC}. We argue by contradiction and assume that there exists a classical stable critical point of $\cJ^0_1$ in $\R^3$, denoted by $u$, that is not one-dimensional.
\begin{enumerate}[1.]
\item We start by recalling the Sternberg--Zumbrun inequality for stable solutions (\Cref{lem:SZ_AC}) and Modica's inequality  (\Cref{lem:Modica}), both in the context of the free boundary Allen--Cahn. Thanks to  \Cref{thm:main2}, we also observe that we have quantitative regularity in the set $\{|u| < 1\}$ (\Cref{lem:global-curv-bd}).  In particular, there are no `microscopic necks' in the free boundary.

\smallskip

\item We fix $\delta_\circ > 0$ small and  define the set $\cX(\delta_\circ)$ in \eqref{eq:Xdef}
 as those points $z\in \{|u| < 1\}$ for which the left-hand side in the Sternberg--Zumbrun inequality \eqref{eq:stab} is larger than $\delta_\circ$ in a ball $B_2(z)$. We want to show that, even if $\delta_\circ$ is chosen arbitrarily small,  $\cX(\delta_\circ)$ must be empty, which will directly yield our desired result.

To this aim, we  define $\cG(\delta_\circ)$ as the complement\footnote{In fact, we need to further divide 
$\{|u|<1\}\setminus \cX(\delta_\circ)$ into two sets, that we denote $\cG(\delta_\circ)$ and $\cW(\delta_\circ)$, according to whether the points are respectively close or far from the free boundary. Using stability (see \Cref{ssec:W}), an argument similar to the one described in point 3 here allows us to consider only the set $\cG(\delta_\circ)$ of points not in $\cX(\delta_\circ)$ that are close to the free boundary.} of $\cX(\delta_\circ)$ in $\{|u|< 1\}$, see \eqref{eq:Gdef}. In particular, it corresponds to points around which $u$ has flat level sets (in an $L^2$ sense). As a consequence, in \Cref{prop:good} we prove pointwise curvature  bounds of the solution $u$ around points in $\cG(\delta_\circ)$. 

\smallskip

\item The stability inequality in the form of Sternberg--Zumbrun also allows us to show that, for any given $\Lambda > 0$, there exist $z_\Lambda\in \cX(\delta_\circ)$ and $R_\Lambda > 1$ such that 
\begin{equation}
    \label{eq:cleanannulusintro}
\cX(\delta_\circ)\cap B_{R_\Lambda+\Lambda}(z_\Lambda)\subset B_{R_\Lambda}(z).
\end{equation}
That is, we can find arbitrarily thick annuli clean from $\cX(\delta_\circ)$ (even if their radius could be much larger than the thickness). This is done in \Cref{lem:freeannulus}.

\item For $\Lambda > 0$ fixed, given $z$ and $R > 0$ as in \eqref{eq:cleanannulusintro}, the curvature estimates in $\{|u| < 1\} \setminus \cX(\delta_\circ)$ ensure that the level sets are smooth submanifolds in this region. This makes it conceivable to test stability using a test function related to the intrinsic distance along the level sets $\{u = \lambda\}$ to $\cX(\delta_\circ) \cap B_R(z)$—an approach inspired by Pogorelov’s argument \cite{Pog81} for stable minimal surfaces in $\mathbb{R}^3$.

However, before proceeding in this direction, it is necessary to enlarge the set $\cX(\delta_\circ)$ by evolving it under the vector flow generated by $\nabla u$. Such a redefinition is possible due to the validity of curvature estimates at points on the boundary between $\cX(\delta_\circ)$ and $\cG(\delta_\circ)$. We denote the resulting enlarged set by $\mathfrak{B}$.

The intrinsic distance function along the level set $\{u = \lambda\}$ is then denoted by $\dB^\lambda$ (see \eqref{eq:dB-def}).

\smallskip

\item Using stability again,  by means of a cut-off function with gradient supported on suitably chosen dyadic scales, we show that for any $\lambda\in (-1, 1)$ there is some  $r\in (\Lambda^{1/4}, \Lambda/8)$ such that
\begin{equation}
\label{eq:asa}
\cH^2(\{u = \lambda\}\cap \{0 < \dB^\lambda < 2\}) \le \frac{C}{\delta_\circ |\log\Lambda|}\frac{\cH^2(\{u = \lambda\}\cap \{0 < \dB^\lambda < r\})}{r^2}
\end{equation}
(see Lemmas~\ref{lem:stab-on-bad} and \ref{lem:doubling-FBAC}). 
Moreover, we also obtain in \Cref{lem:doubling-FBAC} a precise doubling property.

\smallskip

\item In \Cref{prop:Xisempty} we conclude the proof of \Cref{thm:FBAC-main} as follows.
First, we use an integrated version of Gauss--Bonnet (see \Cref{lem:GB-rigor}) to obtain, roughly, that for any level set $\Sigma_\mu = \{u = \mu\}$,
  \begin{multline*}
  \cH^2(\Sigma_\mu \cap \{1<\dB^\mu<r\})\leq
r\cH^1(\Sigma_\mu \cap \{\dB^\mu=1\})
-\int_{1}^{r}\hspace{-1.5mm}\int_{1}^s\hspace{-1.5mm}\int_1^2 \hspace{-1.5mm}\int_{\{\tau <\dB^\mu<t\}}\hspace{-4mm}K_{\Sigma_\mu}\,d\cH^2\,d\tau\,dt\, ds
+C{r^2}  \cH^2(\Sigma_\mu\cap \{1<\dB^\mu<2  \}),
\end{multline*}
where $K_{\Sigma_\mu}$ is the Gauss curvature of $\Sigma_\mu$. From here one finds that, for all $r\in (1, \Lambda/8)$,
\[
\cH^2(\Sigma_\mu\cap \{0 <\dB^\mu<r\})
\leq
\frac{1}{4}\int_{\Sigma_\mu \setminus \cX(\delta_\circ)} |A_{\Sigma_\mu}|^2 (r-\dB^\mu)_+^2 \,d\cH^2
+Cr^2 \cH^2(\Sigma_\mu\cap \{0<\dB^\mu<2\}),
\]
where $|A_{\Sigma_\mu}|^2$ is the sum of the squares of the principal curvatures. 

Observe now that, due to the existence of a clean annulus  (see \eqref{eq:cleanannulusintro}), $(r-\dB^\mu)_+^2$ is an admissible test function for the stability inequality, which can be restricted to $\Sigma_\mu$ (up to a small multiplicative error) in view of the comparison across different level sets (see \Cref{lem:comp-dist}). 
Hence, thanks to stability, the co-area formula, and \eqref{eq:asa}, we find the existence of a level set $\{ u = \nu\}$ and $r\in (\Lambda^{1/4}, \Lambda/8)$ for which
\begin{equation} 
\begin{split}
    \frac{\cH^2(\Sigma_\nu\cap \set{0<\dB^\nu<r})}{r^2}
&\leq \frac{1+C\eta_\circ}{2}\cdot
    \frac{\cH^2(\Sigma_\nu \cap \set{0<\dB^\nu < r})}{r^2}+\frac{C}{\delta_\circ |\log\Lambda|}
    \frac{\cH^2(\Sigma_\nu \cap \set{0<\dB^\nu < r})}{r^2},
\end{split}
\end{equation}
where $\eta_\circ = o_{\delta_\circ}(1)$.  Choosing first $\eta_\circ$ small and then $\Lambda$ large, we  deduce that $\cH^2(\Sigma_\nu\cap \set{0<\dB^\nu<r}) = 0$, from which we easily get a contradiction.
\end{enumerate}

\smallskip

Again, once \Cref{thm:FBAC-main} is established, Corollaries~\ref{cor:DeG_main} and \ref{cor:curvest} follow (see Subsection~\ref{proofs:FBAC}).

\subsection{Notation}
\label{ssec:notation}
Throughout the paper, $C>1$ and $c\in(0,1)$ denote generic constants chosen conveniently large and small, respectively. Dependencies are denoted by subscripts or parentheses.

With $B_r(y)$ we denote the ball of radius $r>0$ centered at $y$. When $y=0$, we also write $B_r$ in place of $B_r(0)$. 
By $B_r(A)$ we denote the $r$-fattening of a set $A\subset \R^n$, namely $B_r(A):=\{x\in \R^n:\dist(x,A)<r\}$, which can also be seen as the Minkowski sum $B_r+A$.

Given three sets $A_1, A_2, A_3\subset \R^n$, we say 
\[
A_1 \subset A_2\qquad\text{in}\quad A_3\quad\stackrel{{\rm def}}{\Longleftrightarrow}\quad A_1 \cap A_3 \subset A_2\cap A_3. 
\]
We always assume that a modulus of continuity $\omega$ satisfies
\begin{equation}
\label{eq:w}
\omega:[0, +\infty)\to [0, +\infty)\ \text{is increasing and concave, with $\omega(t) \ge t$ for all $t\ge 0$}. 
\end{equation}
Given $y\in \R^n$ and $e\in \mathbb{S}^{n-1}$, we denote by $V_{y,e}$  a {\em vee}, namely, a function of the form 
\[\R^n \ni x\mapsto V_{y,e}(x) : = |e\cdot(x-y)|.\]
Given a ball $B_r(y)$, a unit vector $e\in \mathbb S^{2}$, and  $\eps\in (0,1)$, we define a \emph{slab} as
\begin{equation}
    \label{eq:Slab}
    {\rm Slab}\!\left(B_r(y), e, \eps\right)  : = \{x\in B_{r}(y) \ : \ |e\cdot(x-y)| \le \eps r\} = B_r(y) \cap \{V_{y, e}\le \eps r\}.
\end{equation}
Finally, $\mathcal{H}^{k}$ denotes the $k$-dimensional Hausdorff measure.

\section{The Bernoulli Problem: Preliminaries}
\label{sec:Bernoulli}
\subsection{The notions of solution}
\label{ssec:notion}
Given $u : B_R \to \R_+ : = [0,\infty)$ (where $B_R\subset \R^n$ denotes the  ball of radius $R>0$ centered at $0$), we define the Alt--Caffarelli energy functional by:
\[
\cE(u;B_R)
=\int_{B_R}
	\left\{|\nabla u|^2+\mathbbm{1}_{\set{u>0}}\right\}
\,dx.
\]
With this definition, critical points of $\cE$ solve the so-called one-phase Bernoulli problem.

In this paper we are interested in  \emph{classical solutions} of the Bernoulli problem: these are functions $u: B_R \to \R_+ $ such that 
\begin{equation}\label{eq:Bernoulli-main}
\text{$\{u > 0\}$ is locally a smooth domain in $B_R$}\qquad\text{and}\qquad
\begin{cases}
\Delta u=0
	& \text{ in } B_R \cap \set{u>0},\\
|\nabla u|=1
	& \text{ on } B_R \cap  \partial\set{u>0}.
\end{cases}
\end{equation}
The set $\partial\{u > 0\}$ is called the \emph{free boundary} and will also be denoted $\FB(u)$. In particular, a classical solution satisfies that $\{u > 0\}$ is locally the subgraph of a smooth function around each free boundary point (up to a rotation).

Classical solutions $u$ are \emph{stationary critical points} of $\cE$, that is, they satisfy 
\begin{equation}
\label{eq:stationary}
\frac{d}{dt}\bigg|_{t = 0} \cF(u\circ\Psi_t;B_R) = 0
	\quad \mbox{ for every $\Psi_t(x) := x+t \xi (x)$ with $\xi \in C^\infty_c(B_R; \R^n)$.}
\end{equation}
with $\cF = \cE$.  Stationary critical points $u$ are called \emph{stable} if they have non-negative second (inner) variations, i.e., they satisfy 
\begin{equation}\label{eq:stability-var}
\dfrac{d^2}{dt^2}\bigg|_{t=0} \cF(u\circ\Psi_t;B_R)\geq 0,
	\quad \mbox{ for every $\Psi_t(x) := x+t \xi (x)$ with $\xi \in C^\infty_c(B_R; \R^n)$,}
\end{equation}
for $\cF = \cE$.

In Sections \ref{sec:Bernoulli}--\ref{sec:closing}, a \emph{solution} will always refer to the one-phase Bernoulli (or Alt--Caffarelli)  problem. Moreover, we will distinguish among the following notions:

\begin{defn}
\label{defi:solutions}
Let $n \ge 2$ and $B_R\subset \R^n$ . In relation to the one-phase Bernoulli problem (i.e., taking $\cF = \cE$ in \eqref{eq:stationary}-\eqref{eq:stability-var}), we say that $u\in H^1(B_R)$ is:
\begin{itemize}
\item   a \emph{stationary solution} (or simply \emph{stationary}) in $B_R$ if if satisfies \eqref{eq:stationary};
\item a \emph{classical solution} or a \emph{classical critical point} in $B_R$ if it satisfies \eqref{eq:Bernoulli-main} (in particular, it is stationary);
\item a \emph{stable solution} in $B_R$ if it is stationary and satisfies \eqref{eq:stability-var};
\item a \emph{classical stable solution} or \emph{classical stable critical point} in $B_R$ if it satisfies \eqref{eq:Bernoulli-main} and \eqref{eq:stability-var}.
\end{itemize}
If a function satisfies one of the previous definitions for all $R > 0$, we call it \emph{global}. 
\end{defn}

\subsection{Basic geometric properties of the free boundary}

We start by presenting some geometric properties of the free boundary for classical solutions to the Bernoulli problem. 

Before that, we recall the following well-known global boundedness of solutions (see, e.g., \cite[Proposition A.5]{kamburov2022nondegeneracy}):

\begin{lem}\label{lem:Lipbound}
Let $n \ge 2$ and let $u$ be a classical solution to the Bernoulli problem in $\R^n$. Then $|\nabla u|\le 1$ in~$\R^n$. 
\end{lem}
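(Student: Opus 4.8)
The statement is a global gradient bound: any classical solution $u$ of the one-phase Bernoulli problem on all of $\R^n$ satisfies $|\nabla u|\le 1$. The natural strategy is a maximum-principle argument applied to $|\nabla u|^2$ (or, cleanly, to the subharmonic function $|\nabla u|$) on the open set $\{u>0\}$, combined with a Bernstein-type rescaling to handle the fact that the domain is unbounded. Since $u$ is harmonic in $\{u>0\}$, each partial derivative $\partial_i u$ is harmonic there, so $|\nabla u|^2=\sum_i(\partial_i u)^2$ is subharmonic in $\{u>0\}$; equivalently $|\nabla u|$ is subharmonic wherever it is positive. Thus $|\nabla u|$ cannot attain an interior maximum in $\{u>0\}$ unless it is locally constant, and on the free boundary $\partial\{u>0\}$ the condition $|\nabla u|=1$ pins the boundary value. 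So the only way $|\nabla u|$ could exceed $1$ is "at infinity."

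To rule that out, I would argue by contradiction and rescaling (a Bernstein-type blow-down). Suppose $\sup_{\R^n}|\nabla u|=L>1$ (allowing $L=+\infty$ a priori). Pick points $x_k\in\{u>0\}$ with $|\nabla u(x_k)|\to L$, and let $d_k:=\mathrm{dist}(x_k,\partial\{u>0\})$. The key dichotomy is whether $d_k$ stays bounded or not along a subsequence. If $d_k\to\infty$, then on balls $B_{d_k/2}(x_k)\subset\{u>0\}$ the function $u$ is harmonic and (after subtracting $u(x_k)$ and using interior gradient estimates for harmonic functions, which give $|\nabla u|$ bounded by $C/d_k$ times the oscillation, hence force control) one deduces $u$ is affine on larger and larger balls, so $|\nabla u|$ is eventually constant near $x_k$; but a global harmonic function with bounded gradient that is affine has $|\nabla u|$ constant everywhere on the component, contradicting the interaction with the free boundary where $|\nabla u|=1$ unless the component is a half-space with $u$ linear of slope $1$. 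If instead $d_k$ stays bounded, consider the rescalings $u_k(x):=\tfrac{1}{d_k}u(x_k+d_k x)$; these are classical solutions on $B_{R}$ for every fixed $R$ once $k$ is large, with a free boundary point at distance $1$ from the origin, and $|\nabla u_k(0)|\to L$. By the interior regularity theory for classical solutions (harmonicity plus the $|\nabla u|=1$ boundary condition give uniform $C^{1,\alpha}$ estimates up to the free boundary on compact sets — this is standard and can be invoked), a subsequence converges in $C^1_{\mathrm{loc}}$ to a global classical solution $u_\infty$ with $|\nabla u_\infty(0)|=L$ and a free boundary point in $\overline{B_1}$. Applying the subharmonicity of $|\nabla u_\infty|$ on $\{u_\infty>0\}$ together with the boundary value $1$ on $\partial\{u_\infty>0\}$, a strong maximum principle / Hopf argument forces $|\nabla u_\infty|\equiv 1$ on the connected component of $\{u_\infty>0\}$ containing the origin (the nearby free boundary point provides the boundary value), contradicting $|\nabla u_\infty(0)|=L>1$.

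A cleaner alternative avoiding the dichotomy: work directly with $w:=\max(|\nabla u|,1)$, which equals $|\nabla u|$ on $\{|\nabla u|\ge 1\}$ and is subharmonic on all of $\{u>0\}$ as a max of a subharmonic function and a constant; extend it by $1$ to the whole of $\R^n$, obtaining a globally defined bounded-or-unbounded subharmonic-type function (one checks the extension is subharmonic across the free boundary using $|\nabla u|\le 1$ from the Hopf lemma on the $u$-side — actually this is what we want to prove, so care is needed), and then a Liouville theorem for subharmonic functions with sublinear growth forces $w\equiv 1$. The honest version of this requires first the local Lipschitz bound for classical solutions near the free boundary (so that the extension is legitimately subharmonic across $\partial\{u>0\}$), which is itself a consequence of the classical solution structure.

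**Main obstacle.** The genuine difficulty is not the subharmonicity — that is immediate from harmonicity of $u$ in $\{u>0\}$ — but controlling the behavior across the free boundary and at infinity simultaneously. Concretely: making rigorous that $|\nabla u|$ does not jump above $1$ as one approaches $\partial\{u>0\}$ from inside (which near a smooth free boundary point follows from the Hopf lemma, since $u\ge 0$ vanishes on the boundary with $|\nabla u|=1$ there, so $\partial_\nu u = -1$ and tangential derivatives vanish, giving $|\nabla u|\to 1$), and then upgrading the local maximum principle to a global one on the non-compact domain $\{u>0\}$, which is exactly where the Bernstein-type rescaling argument and the uniform interior/boundary regularity estimates for classical solutions are needed. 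I expect the compactness step — extracting a $C^1_{\mathrm{loc}}$-convergent subsequence of rescalings and verifying the limit is again a classical solution with the asserted free boundary point — to be the part requiring the most care, though it is by now routine in this field.
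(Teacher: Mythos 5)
The paper itself does not prove this lemma (it is quoted from the literature, cf.\ \cite[Proposition A.5]{kamburov2022nondegeneracy}), but the closest in-paper model is the proof of \Cref{lem:Modica}, which establishes the analogous bound for the free boundary Allen--Cahn problem by (i) first invoking a global Lipschitz bound with \emph{some} dimensional constant, (ii) blowing up at nearest free boundary points, identifying the limit as an affine function of slope $1+\kappa$ via Liouville, and (iii) deriving the contradiction by sliding an explicit harmonic sub-barrier until it touches the \emph{approximating} solutions at a free boundary point. Your proposal has the right overall shape (subharmonicity of $|\nabla u|^2$ plus a rescaling/compactness argument), but it has two genuine gaps.

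First, the preliminary Lipschitz bound $|\nabla u|\le C_n$ (equivalently $u(x)\le C_n\,\dist(x,\FB(u))$) is not established; you only gesture at it as ``a consequence of the classical solution structure.'' This bound is not free: it is proved by combining Harnack's inequality with an annulus barrier and the condition $|\nabla u|=1$ at the nearest free boundary point, and without it neither your dichotomy on $d_k$ nor the compactness of the rescalings goes through (in particular, in the case $d_k\to\infty$ your claim that interior gradient estimates ``force control'' fails, because without an a priori bound on $u(x_k)/d_k$ the oscillation of $u$ on $B_{d_k/2}(x_k)$ is not controlled, and the blow-down limit forgets the free boundary entirely).

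Second, and more seriously, the contradiction you extract in the limit is not justified. You assume ``uniform $C^{1,\alpha}$ estimates up to the free boundary'' for classical solutions; no such uniform estimates exist without a flatness hypothesis, so the limit of the rescalings $v_i(x)=u(z_i+\tau_i x)/\tau_i$ need not be a classical solution and need not satisfy $|\nabla v_\infty|=1$ on its free boundary. Moreover, the maximum principle does not ``force $|\nabla u_\infty|\equiv 1$'': on the unbounded set $\{u_\infty>0\}$ a subharmonic function may well exceed its boundary values. The correct deduction is the opposite one: since $|\nabla v_\infty|^2$ is subharmonic, bounded by $L^2$, and attains the value $L^2$ at the interior point $0$, it is constant, hence $v_\infty$ is affine of slope $L$ and (using $v_\infty\ge 0$ and the limit free boundary point) equals $L(x_n+1)$ near the origin. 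To contradict $L>1$ one must then return to the approximating solutions: slide a harmonic sub-barrier of slope strictly between $1$ and $L$ underneath $v_i$ until it touches at a point which, by the maximum principle, must lie on $\FB(v_i)$, where $|\nabla v_i|=1$ is smaller than the barrier's gradient. This touching argument is the essential missing device; without it (or an equivalent viscosity-comparison step) the proof does not close. Your ``cleaner alternative'' also fails independently: you acknowledge its circularity yourself, and in any case bounded subharmonic functions on $\R^n$ with $n\ge 3$ need not be constant, so the Liouville step you invoke is false.
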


A first useful consequence of \Cref{lem:Lipbound} above is the following dimensional estimate for the area of the free boundary inside a ball: 

\begin{lem}\label{lem:perbound}
Let $n\ge 2$, and let $u$ be a global classical solution to the Bernoulli problem in $\R^n$. Then, we have that for any $R> 0$ and $y\in \{u=0\}\cap B_R$,
\[
\cH^{n-1} \left(  \FB(u)\cap B_\varrho(y)\right) \le C \varrho^{n-1}\qquad\text{for all}\quad \varrho \in (0, R/2),
\]
for some $C$ depending only on $n$.
\end{lem}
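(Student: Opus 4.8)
The plan is to prove this density-type bound for the area of the free boundary by testing the stationarity of $u$ against a well-chosen radial vector field and using the Lipschitz bound $|\nabla u|\le 1$ from \Cref{lem:Lipbound}. The key observation is that for classical solutions the stationarity identity \eqref{eq:stationary} encodes a domain-variation (or Pohozaev-type) identity relating the $(n-1)$-dimensional area of the free boundary inside a region to bulk and boundary integrals of $|\nabla u|^2$ and $\mathbbm 1_{\{u>0\}}$, all of which are controlled by $\varrho^{n}$ or $\varrho^{n-1}$ once we know $|\nabla u|\le 1$.

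\emph{Step 1: set up the domain variation.} Fix $y\in\{u=0\}\cap B_R$ and $\varrho\in(0,R/2)$. Choose a cutoff $\phi\in C^\infty_c(B_\varrho(y))$ with $0\le\phi\le 1$, $\phi\equiv 1$ on $B_{\varrho/2}(y)$, and $|\nabla\phi|\le C/\varrho$, and use the vector field $\xi(x):=\phi(x)\,(x-y)$ in \eqref{eq:stationary}. Differentiating $\cE(u\circ\Psi_t;B_R)$ in $t$ at $t=0$ and setting the result to zero produces the identity
\[
\int_{B_\varrho(y)}\Big\{\big(|\nabla u|^2+\mathbbm 1_{\{u>0\}}\big)\,\big(\phi\,(n) + (x-y)\cdot\nabla\phi\big) - 2\,\nabla u\cdot D\xi\cdot\nabla u\Big\}\,dx = 0,
\]
where $D\xi = \phi\,\mathrm{Id} + (x-y)\otimes\nabla\phi$. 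For a classical solution this can be rewritten, after an integration by parts using $\Delta u = 0$ in $\{u>0\}$ and $|\nabla u|=1$ on $\FB(u)$, as a formula in which $\int_{\FB(u)\cap B_\varrho(y)}\phi\,d\cH^{n-1}$ appears with a fixed positive coefficient, and all the remaining terms involve either $\phi$ or $(x-y)\cdot\nabla\phi$ times quantities bounded by $|\nabla u|^2+\mathbbm 1_{\{u>0\}}\le 2$.

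\emph{Step 2: estimate the remainder terms.} Every volume term is bounded in absolute value by $2\int_{B_\varrho(y)}\big(|\phi|\cdot n + |x-y|\,|\nabla\phi|\big)\,dx \le C\varrho^{\,n}\cdot\varrho^{-1}\cdot\varrho = C\varrho^{n-1}$ (using $|x-y|\le\varrho$ and $|\nabla\phi|\le C/\varrho$ on the annulus $B_\varrho(y)\setminus B_{\varrho/2}(y)$, which has measure $\le C\varrho^n$). Hence
\[
\cH^{n-1}\big(\FB(u)\cap B_{\varrho/2}(y)\big) \le \int_{\FB(u)\cap B_\varrho(y)}\phi\,d\cH^{n-1} \le C\varrho^{n-1},
\]
with $C=C(n)$. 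Replacing $\varrho$ by $2\varrho$ (legitimate since $2\varrho<R$) gives the claimed bound on $B_\varrho(y)$.

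\emph{Main obstacle.} The one genuinely delicate point is justifying the integration by parts in Step 1 at the free boundary: one must verify that the boundary terms coming from $\nabla u\cdot D\xi\cdot\nabla u$ and from $(|\nabla u|^2+\mathbbm 1_{\{u>0\}})\,\xi\cdot\nu$ combine to produce exactly the area integral $\int_{\FB(u)}(\phi + (x-y)\cdot\nabla\phi\,/\,n)\cdot(\cdots)\,d\cH^{n-1}$ with the right sign, using that on $\FB(u)$ one has $\nabla u = -|\nabla u|\,\nu = -\nu$ (with $\nu$ the outward normal to $\{u>0\}$) and the jump relation $|\nabla u|^2 - \mathbbm 1_{\{u>0\}}$ being constant across the free boundary. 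Since $u$ is a \emph{classical} solution ($\{u>0\}$ locally a smooth domain, $u\in C^2(\{u>0\})\cap C^1(\overline{\{u>0\}})$), the divergence theorem applies on $\{u>0\}\cap B_\varrho(y)$ without any further regularity input, so this is a routine — if slightly lengthy — computation rather than a real difficulty. (Alternatively, one can avoid the explicit domain-variation computation entirely: compare $u$ in $B_\varrho(y)$ with the solution having zero boundary data on $\partial B_\varrho(y)\cap\{u>0\}$, or invoke the standard fact that classical solutions have the same monotonicity-formula and perimeter-density bounds as minimizers; but the Pohozaev route above is the most self-contained given only \Cref{lem:Lipbound}.)
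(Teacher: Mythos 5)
There is a genuine gap in Step 1: the domain-variation (Pohozaev) identity does \emph{not} produce the area $\int_{\FB(u)}\phi\,d\cH^{n-1}$ with a fixed positive coefficient. If you actually carry out the integration by parts on $\{u>0\}$ using $\Delta u=0$, the Dirichlet part contributes the free-boundary term $\int_{\FB(u)}|\nabla u|^2\,\xi\cdot\nu_{\rm out}\,d\cH^{n-1}-2\int_{\FB(u)}(\partial_{\nu_{\rm out}}u)(\xi\cdot\nabla u)\,d\cH^{n-1}=-\int_{\FB(u)}\xi\cdot\nu_{\rm out}\,d\cH^{n-1}$ (since $\nabla u=-\nu_{\rm out}$ there), while the volume part $\int_{\{u>0\}}\Div\xi\,dx$ contributes $+\int_{\FB(u)}\xi\cdot\nu_{\rm out}\,d\cH^{n-1}$. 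These cancel exactly: this cancellation \emph{is} the free boundary condition $|\nabla u|=1$, so for a classical solution the stationarity identity is vacuous and carries no information about $\cH^{n-1}(\FB(u))$. Moreover, even if a term $\int_{\FB(u)}\phi\,(x-y)\cdot\nu\,d\cH^{n-1}$ survived, it could not bound the area from below, because $(x-y)\cdot\nu$ has no sign and vanishes identically when the free boundary is a cone with vertex $y$ (e.g.\ a vee through $y$). So the route as described fails, and the "routine computation" you defer in the last paragraph is precisely where the argument breaks.

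The correct (and shorter) mechanism, which is what the paper uses, is to exploit the \emph{equation} rather than stationarity: for a classical solution, $\Delta u=\cH^{n-1}\llcorner\FB(u)$ as a distribution, since for nonnegative $\varphi\in C^\infty_c$ one has $-\int\nabla\varphi\cdot\nabla u\,dx=\int_{\{u>0\}}\varphi\,\Delta u\,dx+\int_{\FB(u)}\varphi\,\partial_\nu u\,d\cH^{n-1}=\int_{\FB(u)}\varphi\,d\cH^{n-1}$ (with $\nu$ the inward normal, $\partial_\nu u=1$). Testing with a cutoff $\varphi_\rho$ equal to $1$ on $B_\rho(y)$, supported in $B_{2\rho}(y)$, with $|\nabla\varphi_\rho|\le C/\rho$, and using $|\nabla u|\le 1$ from \Cref{lem:Lipbound}, gives $\cH^{n-1}(\FB(u)\cap B_\rho(y))\le -\int\nabla\varphi_\rho\cdot\nabla u\,dx\le C\rho^{-1}|B_{2\rho}|=C\rho^{n-1}$. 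In effect you should replace your vector field $\phi(x)(x-y)$ by $\varphi\,\nabla u$; the former sees only the normal component $(x-y)\cdot\nu$, the latter sees $|\nabla u|^2=1$ on $\FB(u)$ and hence the full surface measure.
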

\begin{proof}
Since $\Delta u =  \cH^{n-1}|_{\FB(u)}$,
it suffices to consider a smooth non-negative cut-off function $\varphi_\rho \in C^\infty_c(B_{2\rho})$ which satisfies $\varphi_\rho\equiv 1$ inside $B_\rho$ and $|\nabla \varphi_\rho|\leq C\rho^{-1}$ to obtain (recall \Cref{lem:Lipbound})
$$
\cH^{n-1} \left(  \FB(u)\cap B_\varrho(y)\right) \leq \int \varphi_\rho\,\Delta u =-\int \nabla\varphi_\rho\cdot\nabla u\,dx
\leq C\rho^{-1}\|\nabla u\|_{L^\infty(\R^n)} |B_{2\rho}| \leq C \rho^{n-1},
$$
as desired.
\end{proof}

More generally, we have: 
\begin{lem}[Area of level sets]
\label{lem:area-level-sets}
Let $n \ge 2$, and let $u:\R^n\to [0, \infty)$ be $1$-Lipschitz, $\Delta u=0$ on $\set{u>0}$, and $|\nabla u|\geq c_\circ$ in some $\Omega\subset \R^n$. Then,
for any $B_R(x_\circ)\subset \R^n$ and $t>0$,
\[
c_\circ\cH^{n-1}\left(
	B_R(x_\circ)
	\cap \set{u=t} 
	\cap \Omega
\right)
\leq C R^{n-1},
\]
for some $C$ depending only on $n$.
\end{lem}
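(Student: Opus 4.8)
The plan is to mimic the proof of \Cref{lem:perbound}: instead of integrating $\Delta u = \mathcal H^{n-1}|_{\FB(u)}$ against a cutoff, we integrate a suitable identity that produces the surface measure of the level set $\{u=t\}$ weighted by $|\nabla u|$, and then use the lower bound $|\nabla u|\ge c_\circ$ to convert it into an honest area bound.

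First I would fix a smooth non-negative cutoff $\varphi = \varphi_R\in C^\infty_c(B_R(x_\circ))$ with $\varphi\equiv 1$ on $B_{R/2}(x_\circ)$ and $|\nabla\varphi|\le C/R$ (and note that it suffices to prove the estimate on $B_{R/2}$, or equivalently replace $R$ by $2R$ throughout, since covering $B_R(x_\circ)$ by boundedly many balls of half the radius is harmless). The idea is to test the equation $\Delta u = 0$ on $\{u>0\}$ against $\varphi\,\psi(u)$ for an appropriate choice of $\psi$. Concretely, since $\Delta u = 0$ in $\{u>0\}$ and $u$ is harmonic there (so in particular smooth), for any Lipschitz $\psi:\R_+\to\R$ one gets, after an integration by parts,
\[
\int_{\{u>0\}} \nabla\big(\varphi\,\psi(u)\big)\cdot\nabla u\,dx = \int_{\{u>0\}} \varphi\,\psi(u)\,\Delta u\,dx = 0
\]
(the boundary term on $\FB(u)$ vanishes because $\varphi\psi(u)$ is supported where $u>0$, or can be handled by noting $\psi(0)=0$ below, together with $u\in C^1(\overline{\{u>0\}})$). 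Expanding the left-hand side yields
\[
\int_{\{u>0\}} \varphi\,\psi'(u)\,|\nabla u|^2\,dx = -\int_{\{u>0\}} \psi(u)\,\nabla\varphi\cdot\nabla u\,dx.
\]
Now choose $\psi = \psi_\delta$ to be a smooth non-negative approximation of $(2\delta)^{-1}\mathbbm 1_{(t-\delta,t+\delta)}$ with $\psi_\delta(0)=0$ (legitimate once $\delta<t$), and $\psi_\delta\le C/\delta$, $\int\psi_\delta = 1$, so that $\psi_\delta'$ plays the role of the derivative of this bump. By the coarea formula, as $\delta\to 0$ the left-hand side converges to $\int_{\{u=t\}}\varphi\,|\nabla u|\,d\mathcal H^{n-1}$, while on the right-hand side we bound $|\psi_\delta(u)|\le C/\delta$ only on the set $\{t-\delta<u<t+\delta\}$ and use $|\nabla u|\le 1$, $|\nabla\varphi|\le C/R$ to get, again by coarea, a bound by $C R^{-1}\cdot\frac{1}{2\delta}\int_{t-\delta}^{t+\delta}\mathcal H^{n-1}(\{u=s\}\cap B_R(x_\circ))\,ds$, which for a.e.\ $t$ converges to $C R^{-1}\mathcal H^{n-1}(\{u=t\}\cap B_R(x_\circ))$. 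Hmm — that last quantity is exactly what appears with an extra factor; so I would instead bound the right-hand side more crudely: $\big|\int \psi_\delta(u)\nabla\varphi\cdot\nabla u\big|\le \|\nabla\varphi\|_\infty\|\nabla u\|_\infty \int_{B_R(x_\circ)}\psi_\delta(u)\,dx$ and observe $\int_{B_R(x_\circ)}\psi_\delta(u)\,dx\le |B_R(x_\circ)| = c_n R^n$ since $\int_\R\psi_\delta = 1$ — wait, that is false since $\psi_\delta\le C/\delta$ need not integrate to $1$ against Lebesgue measure in $x$. The correct crude bound is $\int_{B_R}\psi_\delta(u)\,dx\le \frac{C}{\delta}\,|\{t-\delta<u<t+\delta\}\cap B_R|$, and by Lipschitzness ($|\nabla u|\le 1$) combined with coarea this is $\le \frac{C}{\delta}\int_{t-\delta}^{t+\delta}\mathcal H^{n-1}(\{u=s\}\cap B_R)\,ds\le C\sup_{|s-t|<\delta}\mathcal H^{n-1}(\{u=s\}\cap B_R)$. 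So, to close the argument cleanly, I would first apply \Cref{lem:perbound}-type reasoning to $\Delta u = \mathcal H^{n-1}|_{\FB(u)}$ together with the maximum principle to get a rough a priori bound $\mathcal H^{n-1}(\{u=s\}\cap B_R(x_\circ))\le C R^{n-1}$ for \emph{all} $s$ in a sufficiently small range — actually the translation-invariance trick does this directly: for $s>0$, $\{u=s\}$ is the free boundary of $(u-s)_+$ restricted to $\{u>s\}$... no, $(u-s)_+$ is not a Bernoulli solution.

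Let me take the cleanest route, which is the one I'd actually write: test against $\varphi\,u$ directly (i.e.\ $\psi(u)=u$). This gives
\[
\int_{\{u>0\}} \varphi\,|\nabla u|^2\,dx = -\int_{\{u>0\}} u\,\nabla\varphi\cdot\nabla u\,dx \le \frac{C}{R}\int_{B_R(x_\circ)} u\,|\nabla u|\,dx\le \frac{C}{R}\,(\text{osc } u)\,|B_R|,
\]
but $\text{osc}\,u$ over $B_R$ is at most $C R$ since $u$ is $1$-Lipschitz, so the right-hand side is $\le C R^{n-1}$. On the other hand, on the region $\Omega\cap\{u>0\}$ we have $|\nabla u|^2\ge c_\circ^2$, hence $c_\circ^2\,|\Omega\cap\{u>0\}\cap B_{R/2}(x_\circ)|\le C R^{n-1}$ — that bounds the \emph{volume}, not the level-set area, so this is not yet enough. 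To get the level set, I return to the coarea approach but with the a priori bound now in hand: having $|\Omega\cap\{u>0\}\cap B_R|$ controlled doesn't immediately give $\mathcal H^{n-1}(\{u=t\})$ either. So the genuinely correct argument is the bump-function one above, and the one missing ingredient is a \emph{uniform} rough bound $\sup_s \mathcal H^{n-1}(\{u=s\}\cap B_{2R}(x_\circ))\le C R^{n-1}$; this follows by the monotonicity/structure of harmonic functions, or more simply by applying the co-area formula to $\int_{B_{2R}}|\nabla u|\,dx\le |B_{2R}|$ which gives $\int_0^\infty \mathcal H^{n-1}(\{u=s\}\cap B_{2R})\,ds\le C R^n$, hence for \emph{most} $s$ the area is $\le C R^{n-1}/\sigma$ on intervals of length $\sigma$ — combined with the bump argument this is exactly what is needed after choosing $\delta$ appropriately relative to a good slice. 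Putting these together: fix a Lebesgue point $t$ of $s\mapsto \mathcal H^{n-1}(\{u=s\}\cap B_{2R}(x_\circ))$, run the bump test function argument, pass $\delta\to 0$ using coarea on the left to get $c_\circ\,\mathcal H^{n-1}(\{u=t\}\cap B_{R}(x_\circ)\cap\Omega)\le \liminf_\delta\int\varphi\psi_\delta'(u)|\nabla u|^2$...

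The main obstacle, and the step I'd spend the most care on, is this passage to the limit $\delta\to 0$: making rigorous that $\int_{\{u>0\}}\varphi\,\psi_\delta'(u)\,|\nabla u|^2\,dx \to c_\circ\text{-weighted }\mathcal H^{n-1}(\{u=t\})$ from below while simultaneously keeping the right-hand side $\le CR^{-1}\cdot CR^n/(\text{something})$ uniformly bounded. The honest way is: integrate the identity $\int\varphi\psi'(u)|\nabla u|^2 = -\int\psi(u)\nabla\varphi\cdot\nabla u$ once more in $t$ over a short interval $(t_0,t_0+\rho)$ to avoid non-Lebesgue-point pathologies, use Sard/coarea to identify the averaged left-hand side with $\fint_{t_0}^{t_0+\rho}\int_{\{u=t\}}\varphi|\nabla u|\,d\mathcal H^{n-1}\,dt$ and the averaged right-hand side with $\le \frac{C}{R}\fint_{t_0}^{t_0+\rho}\int_{\{u>0\}}\mathbbm 1_{\{u>t\}}|\nabla u|\cdots$, which by Lipschitzness and $\int|\nabla u|\le|B_{2R}|$ is $\le \frac{C}{R}|B_{2R}| = CR^{n-1}$; then use $|\nabla u|\ge c_\circ$ on $\Omega$ and choose a good $t$ in $(t_0,t_0+\rho)$, finally noting $t$ was an arbitrary positive value so we may take $t_0\to t^-$. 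Everything else — the choice of cutoff, the algebra of the test-function identity, the two applications of coarea — is routine bookkeeping of the kind already carried out in the proof of \Cref{lem:perbound}.
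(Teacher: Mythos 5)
You end up circling the correct mechanism --- a flux balance across the level set, with $|\nabla u|\le 1$ controlling the outer flux and $|\nabla u|\ge c_\circ$ converting the inner flux into area --- but the argument you finally commit to has a genuine gap at its last step. The averaged identity only yields the bound for a ``good'' $t$ in each short interval $(t_0,t_0+\rho)$ (equivalently, for a.e.\ $t$), and you then try to reach an arbitrary prescribed $t$ by sending $t_0\to t^-$ along good values. That requires lower semicontinuity of $s\mapsto \cH^{n-1}(\{u=s\}\cap B_R(x_\circ)\cap\Omega)$, which you do not justify and which is false for a general $\Omega$: the hypothesis only asks that $|\nabla u|\ge c_\circ$ on $\Omega$, so one may take, say, $\Omega=\{u=t\}\cap\{|\nabla u|\ge c_\circ\}$, in which case $\cH^{n-1}(\{u=s\}\cap\Omega)=0$ for every $s\neq t$ and your limit along good values says nothing about $s=t$. (There is also a smaller internal inconsistency: you introduce $\psi_\delta$ as an approximation of $(2\delta)^{-1}\mathbbm{1}_{(t-\delta,t+\delta)}$, but for the coarea limit of $\int\varphi\,\psi_\delta'(u)|\nabla u|^2$ to be $\int_{\{u=t\}}\varphi|\nabla u|\,d\cH^{n-1}$ you need $\psi_\delta'$, not $\psi_\delta$, to be that bump, i.e.\ $\psi_\delta$ must be a smoothed Heaviside function jumping at $t$.)

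The fix is to drop the mollification and the averaging altogether: the identity you are approximating is simply the divergence theorem for $\nabla u$ on $B_R(x_\circ)\cap\{u\ge t\}$, where $u$ is harmonic, and it is available for \emph{every} $t>0$. It gives
\[
\int_{\{u=t\}\cap B_R(x_\circ)}|\nabla u|\,d\cH^{n-1}
=\int_{\partial B_R(x_\circ)\cap\{u\ge t\}}\tfrac{x-x_\circ}{|x-x_\circ|}\cdot\nabla u\,d\cH^{n-1}
\le C(n)R^{n-1},
\]
and restricting the left-hand side to $\Omega$, where $|\nabla u|\ge c_\circ$, finishes the proof. This is exactly the paper's three-line argument; your $\varphi\,\psi(u)$ computation is a harder, mollified version of it, and once the exact identity is used for the given $t$ the a.e.-to-everywhere passage that breaks your write-up never arises.
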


\begin{proof}
Integrating $\Delta u$ by parts inside $B_R(x_\circ) \cap \{u \ge t\}$ (notice that $u$ is harmonic there), we have 
\[
0 = \int_{B_R(x_\circ) \cap \{u \ge t\}} \Delta u\,dx
= \int_{\partial B_R(x_\circ) \cap \{u \ge t\}} \frac{x}{|x|}\cdot \nabla u \,d\cH^{n-1}
- \int_{\{u = t\}\cap B_R(x_\circ) }\partial_\nu u\,d\cH^{n-1},
\]
where $\nu$ is the normal unit vector to $\{u = t\}$ towards $\{u \ge t\}$ (so $\partial_\nu u=|\nabla u|$). Therefore,
\[
c_\circ\cH^{n-1}\left(
	B_R(x_\circ)
	\cap \set{u=t}
	\cap \Omega
\right)
\leq \int_{\{u = t\}\cap B_R(x_\circ) }
	\partial_\nu u
\,d\cH^{n-1}
=\int_{\partial B_R(x_\circ) \cap \{u \ge t\}} 
	\frac{x}{|x|}\cdot \nabla u 
\,d\cH^{n-1}
\leq C(n)R^{n-1},
\]
as we wanted.
\end{proof}

The following is a weak nondegeneracy property:

\begin{lem}[Clean ball property]\label{lem:cleanball}
Let $n\ge 2$. There exists $\eps_\circ=\eps_\circ(n)>0$ such that the following holds. 

Let $\varrho > 0$, $y\in \R^n$, and let $u$ be a   classical solution to the Bernoulli problem in $B_{2\varrho}(y)\subset \R^n$. 
Suppose that there is a connected component $U$ of $\{u>0\} \cap B_{2\varrho}(y)$ such that 
\[
|U \cap B_{2\varrho}(y)| \le \eps_\circ \varrho^n.
\]
Then,  $U \cap B_{\varrho}(y)= \varnothing$.
\end{lem}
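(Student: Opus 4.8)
The plan is to argue by contradiction, combining the fact that each connected component $U$ of $\set{u>0}$ carries a positive harmonic function with the linear growth forced by the free boundary condition $|\nabla u|=1$. Let $v := u\cdot\mathbbm{1}_U$, so that $v$ is harmonic and positive in $U$, vanishes continuously on $\partial U$, and $|\nabla v|=1$ on $\FB(u)\cap\partial U\cap B_{2\varrho}(y)$. Suppose for contradiction that there is a point $x_0\in U\cap B_\varrho(y)$; then by the Lipschitz bound (\Cref{lem:Lipbound}, available since we may extend $u$ — or simply use that a classical solution is $1$-Lipschitz locally) together with the free boundary condition, $v$ does not vanish identically, and one expects $v$ to be comparable to the distance to $\partial U$ at scale $\varrho$.

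\textbf{Key steps.} First, establish a lower bound on the measure of $U$ near $x_0$: by harmonicity and positivity, $v(x_0)>0$; since $v$ is $1$-Lipschitz, $v>0$ on a ball $B_{v(x_0)}(x_0)$, which is contained in $U$, giving $|U\cap B_{2\varrho}(y)|\gtrsim v(x_0)^n$. So it suffices to show $v(x_0)\gtrsim\varrho$, i.e.\ to prove a \emph{nondegeneracy} estimate: a classical solution that is positive at the center of a ball grows at least linearly. This is the standard Alt--Caffarelli nondegeneracy, which for classical solutions can be seen as follows: on $U\cap B_{2\varrho}(y)$, test harmonicity against the fundamental-solution-type barrier, or more simply, note that $w(x):=v(x)-\tfrac{1}{2}\bigl(|x-z|-r\bigr)_+^2\cdot c(n)$ style comparison does not immediately apply because $v$ is harmonic not subharmonic; instead use that $|\nabla v|=1$ on the free boundary together with the representation $v(x)=\int_{\FB(u)\cap\partial U} P(x,\zeta)\,d\mu$ is awkward. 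The cleanest route: apply Harnack in $U$ — but $U$ need not be nicely connected. So instead I would argue directly that if $|U\cap B_{2\varrho}(y)|\le\eps_\circ\varrho^n$ is very small, then by the isoperimetric / relative isoperimetric inequality the free boundary portion $\cH^{n-1}(\FB(u)\cap\partial U\cap B_{2\varrho}(y))$ is small compared to $\varrho^{n-1}$; on the other hand, integrating $\Delta v=\cH^{n-1}\lfloor\FB(u)$ over $U\cap B_{\varrho}(y)$ against a cutoff $\varphi$ with $\varphi\equiv 1$ on $B_\varrho(y)$, $\supp\varphi\subset B_{2\varrho}(y)$, $|\nabla\varphi|\le C/\varrho$, gives
\[
\cH^{n-1}\bigl(\FB(u)\cap\partial U\cap B_\varrho(y)\bigr)\le\int\varphi\,d(\Delta v)=-\int_U\nabla\varphi\cdot\nabla v\,dx\le\frac{C}{\varrho}|U\cap B_{2\varrho}(y)|\le C\eps_\circ\varrho^{n-1}.
\]
Combined with the reverse inequality (a nondegeneracy-type lower bound $\cH^{n-1}(\FB(u)\cap\partial U\cap B_\varrho(y))\ge c\varrho^{n-1}$ whenever $U\cap B_\varrho(y)\ne\varnothing$, which itself follows from the fact that $v$ is harmonic in $U$, vanishes on $\FB(u)$, and $|\nabla v|=1$ there, so by a standard argument $v$ cannot be positive in the interior of $B_\varrho(y)$ while having negligibly small free boundary), choosing $\eps_\circ$ small forces a contradiction.

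\textbf{Main obstacle.} The delicate point is the reverse (nondegeneracy) inequality: showing that $U\cap B_\varrho(y)\ne\varnothing$ implies $\cH^{n-1}(\FB(u)\cap\partial U\cap B_{\varrho}(y))\ge c(n)\varrho^{n-1}$, or equivalently the linear growth $\sup_{B_\varrho(y)\cap U} v\ge c\varrho$. For \emph{minimizers} this is the classical Alt--Caffarelli lemma proved by comparison; for general classical (stationary) solutions one must instead exploit $|\nabla v|=1$ on the free boundary directly — e.g.\ pick $x_0\in U\cap B_\varrho(y)$, let $d=\dist(x_0,\partial U)$; if $d\ge\varrho/2$ then $B_{\varrho/2}(x_0)\subset U$ already has measure $\gtrsim\varrho^n$ and we are done; otherwise there is a free boundary point $\zeta_0$ with $|x_0-\zeta_0|=d\le\varrho/2$, hence $\zeta_0\in B_{3\varrho/2}(y)$, and near $\zeta_0$ the free boundary condition $|\nabla v|=1$ plus harmonicity give (via boundary Harnack / the fact that $v$ is comparable to the distance function near a classical free boundary point) that $v\ge c\cdot\dist(\cdot,\partial U)$ in $B_{c\varrho}(\zeta_0)\cap U$, so $\{v>0\}\cap B_{c\varrho}(\zeta_0)$ occupies a definite fraction of that ball, again contradicting smallness of $|U\cap B_{2\varrho}(y)|$. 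Making this last step quantitative and scale-invariant — in particular ensuring the constant $c$ does not degenerate with the (a priori unknown) smoothness of $\FB(u)$ — is where the real work lies; one likely wants to phrase it so that only the $1$-Lipschitz bound, harmonicity, and $|\nabla v|=1$ on $\FB(u)$ are used, with no quantitative regularity input, so that $\eps_\circ$ depends on $n$ alone.
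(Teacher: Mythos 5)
Your proposal contains a genuine gap, and it sits exactly where you locate it: the ``reverse'' (nondegeneracy) inequality. The upper bound $\cH^{n-1}(\FB(u)\cap\partial U\cap B_\varrho(y))\le C\eps_\circ\varrho^{n-1}$ via integration by parts against a cutoff is fine (it is the paper's \Cref{lem:perbound} in disguise), but the matching lower bound is not something you can extract from harmonicity, the Lipschitz bound, and $|\nabla v|=1$ on $\FB(u)$ in the way you sketch. The claim that $v\ge c\,\dist(\cdot,\partial U)$ on $B_{c\varrho}(\zeta_0)\cap U$ with a \emph{universal} $c$ requires quantitative control of $\FB(u)$ at scale $\varrho$ (the expansion $u=\dist+O(\dist^2)$ near a classical free boundary point has an error governed by $|D^2u|$, which is a priori unbounded; the positivity set near $\zeta_0$ could be a neck of width $\delta\ll\varrho$, on which $v\le\delta$ by the Lipschitz bound). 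Your fallback conclusion---``$\{v>0\}\cap B_{c\varrho}(\zeta_0)$ occupies a definite fraction of that ball''---is precisely the volume-density statement that the lemma is designed to prove (see \Cref{rem:nondeg}), so the argument is circular. Note also that pointwise nondegeneracy $\sup_{B_r}u\ge cr$ is only available in this paper for \emph{stable} solutions (\Cref{lem:nondegen}, a nontrivial recent result), not for general classical ones; the whole point of the clean ball lemma is that its measure-theoretic version holds without stability.

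The paper closes the loop without any lower bound on $u$ or on the free-boundary measure. After rescaling to $B_2$, the divergence theorem applied to $\nabla\bar u$ on $B_r\cap\{\bar u>0\}$ gives
\[
\cH^{n-1}\bigl(\partial\{\bar u>0\}\cap B_r\bigr)\;\le\;C\,\cH^{n-1}\bigl(\{\bar u>0\}\cap\partial B_r\bigr)
\]
for all $r\in(0,2)$, i.e.\ the free-boundary area is controlled by the \emph{lateral} area on the sphere, which by coarea equals $V'(r)$ with $V(r)=|\{\bar u>0\}\cap B_r|$. Feeding this into the isoperimetric inequality for $\{\bar u>0\}\cap B_r$ yields the differential inequality $c(n)V(r)^{(n-1)/n}\le(C+1)V'(r)$, and the ODE analysis with $V(2)\le\eps_\circ$ forces $V(1)=0$. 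This is the De Giorgi density-estimate scheme: the isoperimetric inequality plays the role of the nondegeneracy you were missing. If you want to repair your proof, replace the attempted lower bound on the free-boundary measure by this isoperimetric/ODE step.
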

\begin{proof}
Let $\bar u(x) : = \frac{1}{\varrho} (u {\mathbbm{1}}_{U})(y + \varrho x)$. Notice that $\bar u$ is a classical solution to the Bernoulli problem in $B_{2}$ with $\FB(\bar u)\cap B_1 \neq \varnothing$ for $\eps_\circ$ small,   with $|\nabla \bar u|\le C$ and $C$ depending only on $n$ (thanks to \cite[Lemma 11.19]{CS05}).
Therefore, for all $r\in (0,2)$, the divergence theorem applied to $\nabla\bar u$ inside the domain $B_r \cap \{\bar u>0\}$ gives
\begin{equation}\label{wnuibi1bu41biu1}
    \cH^{n-1} (\partial \{\bar u>0\}\cap B_r) \le  C\cH^{n-1} (\{\bar u>0\} \cap  \partial B_r).
\end{equation}
We can then use the argument in the classical proof of the density estimate for sets of minimal perimeter to conclude that $\{\bar u>0\}\cap B_{1}$ is empty, which is equivalent to $U  \cap B_{\varrho}(y)= \varnothing$.

More precisely, let $V(r)=|\{\bar u>0\}\cap B_r|$. Then, by coarea, $V(r) = \int_0^r \cH^{n-1} (  \{\bar u>0\}\cap \partial B_s )\,ds$. 
Combining the isoperimetric inequality in $\R^n$  (we denote by $c(n)$ the isoperimetric constant) with $\eqref{wnuibi1bu41biu1}$ this implies
\[
c(n) V(r)^{(n-1)/n} \le {\rm Per}(\{\bar u>0\}\cap B_r) \le (C+1) V'(r)
\]
for all $ r\in (0,2)$. Moreover,  by assumption, $V(2) \le \eps_\circ$. Then, a simple ODE analysis reveals that choosing $\eps_\circ$ small enough forces $V(1)=0$, that is,  $\{\bar u>0\}\cap B_{1} = \varnothing $.
\end{proof}
 \begin{remark}\label{rem:nondeg}
    The previous lemma is actually a nondegeneracy property of the positivity set for classical solutions. Namely, if $u$ is a classical solution to the Bernoulli problem and $x_\circ\in \{u > 0\}$, then by 
   \Cref{lem:cleanball}  applied to the connected component of $\{u > 0\}$ containing $x_\circ$ we have that, for any $r > 0$, 
    \[
    |\{u > 0\}\cap B_r(x_\circ)|\ge 2^{-n} \eps_\circ r^n.
    \]
\end{remark} 

\subsection{Regularity estimates for classical solutions to the Bernoulli problem}

In this section we present some basic regularity results for classical solutions. Several of these results actually hold for viscosity solutions, but we will not discuss this here.

The first result is a classical $\eps$-regularity estimate.

\begin{lem}[$\eps$-regularity]
\label{lem:eps-reg-classical}
Let $n\geq 2$. There exists $\eps_\circ=\eps_\circ(n) > 0$  such that the following holds.

Let $u$ be a classical solution to the Bernoulli problem in $B_1\subset \R^n$. If
\begin{equation}\label{eq:eps-reg-classical-flat}
\norm[L^\infty(B_1 \cap \set{u>0})]{u-x_n}\leq \eps\le \eps_{\circ},
\end{equation}
then, for any $k\in \N$, there exists $C_{n,k} > 0$, depending only on $n$ and $k$, such that
\[
\norm[C^k(B_{1/2} \cap \set{u>0})]{u-x_n}\leq C_{n,k}\eps\qquad \text{and}\qquad \text{$\FB(u)\cap B_{1/2}$ is a $C^k$ graph, with $C^k$-norm bounded by $C_{n,k}\eps$.}
\]
Moreover, $u$ is analytic in $B_{1/2} \cap \set{u>0}$.
\end{lem}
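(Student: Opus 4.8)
The plan is to run the classical Alt--Caffarelli/De~Silva flatness-to-smoothness scheme, adapted to the already-classical (hence qualitatively smooth) setting, where it takes a particularly clean form. First I would use the flatness hypothesis \eqref{eq:eps-reg-classical-flat} together with the clean ball property (\Cref{lem:cleanball}) and \Cref{lem:Lipbound} to conclude that in $B_{3/4}$ the positivity set $\{u>0\}$ is precisely the region above a Lipschitz graph $x_n = g(x')$ with small Lipschitz constant, and that $\FB(u)\cap B_{3/4}$ coincides with the graph of $g$. Indeed, if $u$ were positive (resp.\ zero) at a point lying well below (resp.\ well above) the hyperplane $\{x_n=0\}$, nondegeneracy (\Cref{rem:nondeg}, via \Cref{lem:cleanball}) would force a sizeable positivity density there, contradicting \eqref{eq:eps-reg-classical-flat}; this pins the free boundary inside an $\eps$-slab and gives the graph structure and nondegeneracy of $|\nabla u|$ near $\FB(u)$.

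Next I would prove an improvement-of-flatness (compactness) statement: there is $\theta\in(0,1/2)$ universal such that if $\|u-x_n\|_{L^\infty(B_1\cap\{u>0\})}\le\eps\le\eps_\circ$ then, after a rotation sending $e_n$ to a nearby unit vector $e'$, one has $\|u-x\cdot e'\|_{L^\infty(B_\theta\cap\{u>0\})}\le\tfrac12\theta\eps$. This is the usual argument by contradiction: rescale the normalized errors $v_\eps:=(u_\eps-x_n)/\eps$; using the graph structure above, the interior harmonicity of $u_\eps$, and the Bernoulli condition $|\nabla u_\eps|=1$ on $\FB(u_\eps)$ linearized around the flat solution, one shows $v_\eps$ converges locally uniformly to a bounded solution $v$ of the Neumann problem $\Delta v=0$ in $B_{1/2}\cap\{x_n>0\}$, $\partial_n v=0$ on $\{x_n=0\}$; interior/boundary estimates for this limit problem give a linear function $x'\cdot a$ approximating $v$ at scale $\theta$ with a definite gain, which transfers back to $u_\eps$ for $\eps$ small. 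Iterating dyadically yields a $C^{1,\alpha}$ bound for $\FB(u)$ in $B_{1/2}$ together with $\|u-x_n\|_{C^{0}}\le C\eps$ on the (decreasing) slabs, controlling the unit normal to $\FB(u)$ with modulus $C\eps$.

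Once $\FB(u)\in C^{1,\alpha}$ with small norm, I would bootstrap: the Bernoulli system is, after a $C^{1,\alpha}$ flattening of the free boundary (hodograph/partial-hodograph transform, or the von Mises-type change of variables), an elliptic system with an oblique/Neumann-type nonlinear boundary condition; Schauder theory then upgrades $\FB(u)$ from $C^{1,\alpha}$ to $C^{2,\alpha}$, then to $C^{k,\alpha}$ for every $k$, each time with norm bounded by $C_{n,k}\eps$, and correspondingly $\|u-x_n\|_{C^k(B_{1/2}\cap\{u>0\})}\le C_{n,k}\eps$ (for the $C^k$ estimate on $u$ up to the boundary one uses that $u$ vanishes on the now-$C^k$ free boundary with $|\nabla u|=1$, plus interior harmonicity). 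Finally, analyticity of $u$ in $B_{1/2}\cap\{u>0\}$ is immediate since $u$ is harmonic there; analyticity up to (and of) the free boundary, if one wants it in that form, follows from the classical Kinderlehrer--Nirenberg--Spruck analytic hodograph argument for Bernoulli-type free boundary problems, which applies once the free boundary is known to be $C^{1,\alpha}$.

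The main obstacle is the improvement-of-flatness step: making rigorous the passage from the fully nonlinear Bernoulli condition on a wiggly free boundary to the linear Neumann condition for the limit $v$, and controlling the error terms uniformly — in particular justifying the compactness of $v_\eps$ up to the free boundary and the validity of the linearized boundary condition in the limit. Everything downstream (dyadic iteration, Schauder bootstrap, analyticity) is then routine; everything upstream (graph structure, nondegeneracy) follows from the clean-ball lemma already at our disposal.
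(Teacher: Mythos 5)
Your proposal is correct and follows essentially the same route as the paper, which simply invokes the classical De Silva improvement-of-flatness and higher-order regularity machinery (the paper cites \cite{DeSilva, Kinderlehrer-Nirenberg} and outsources the only delicate point --- the \emph{linear} dependence $C_{n,k}\eps$ of the $C^k$ norms, rather than mere boundedness --- to \cite{Lian-Zhang} via its \Cref{prop:linear-estimate} in \Cref{app:linear-estimate}). Your sketch of the Schauder bootstrap with linear bounds is exactly what that cited estimate makes precise, so the argument is sound; just be aware that tracking the $\eps$-linearity through the bootstrap is the one step that is not entirely routine.
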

\begin{proof}
The regularity of the free boundary as well as the nonlinear bounds (i.e. without the dependence on $\eps$) on the $C^k$ norm of $u-x_n$ follow from the classical improvement of flatness and higher order regularity for the Bernoulli problem (see \cites{DeSilva,Kinderlehrer-Nirenberg}). The precise linear estimate (i.e., with the bound $C_{n,k}\eps$) stated here follows, e.g., from the recent results in \cite{Lian-Zhang} (see \Cref{prop:linear-estimate} in \Cref{app:linear-estimate}). Alternatively, see \cite[Proposition~5.1]{DJS22} combined with \Cref{lem:equivv}.\end{proof}

The next result states that a $C^2$ control follows from $L^1$-flatness. 
The main part of its proof is presented in  \Cref{app:linear-estimate}.

\begin{lem}[$L^1$ to $C^2$ estimate]
\label{lem:L1Linfty_2}
Let $n \ge 2$. There exists $\eps_\circ=\eps_\circ(n) > 0$ such that the following holds. 

Let $u$ be a classical solution to the Bernoulli problem in $B_1\subset \R^n$, with
\[
\int_{B_1\cap \{u > 0\}}|u -a\cdot x - b|\,dx \le \eps
\le \eps_{\circ}
\qquad\text{for some}\,\, a\in \mathbb{S}^{n-1},  \ b \in \R.
\]
Then  
\[
\|u - a\cdot x - b\|_{C^2(B_{1/2}\cap \{u > 0\})} \le C_n \eps,
\]
for some  $C_n$ depending only on $n$.
\end{lem}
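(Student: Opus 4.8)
The plan is to bootstrap from $L^1$-flatness to the quantitative $C^2$ estimate in three stages. After a rotation and the translation $x\mapsto x-be_n$ — both of which preserve the class of classical solutions to the Bernoulli problem — I may assume $a=e_n$ and $b=0$, so that the hypothesis reads $\int_{B_1\cap\{u>0\}}|u-x_n|\,dx\le\eps$, the candidate flat profile is $x_n^+:=\max(x_n,0)$ with free boundary $\{x_n=0\}$, and I write $w:=u-x_n$. First I would pass from $L^1$- to $L^\infty$-flatness at the cost of a power of $\eps$. Since $u$ is a classical solution, the standard interior Lipschitz estimate for the one-phase problem (cf.\ \cite{CS05}) gives $\|\nabla u\|_{L^\infty(B_{3/4}\cap\{u>0\})}\le C_n$; extending $u$ by $0$ across the free boundary, $w$ is $C_n$-Lipschitz on $B_{3/4}$ (and, being $u$ subharmonic with $L^1$-norm $\lesssim 1$ there, also bounded). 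If $|w(x_0)|\ge M$ for some $x_0\in B_{5/8}\cap\{u>0\}$, then $|w|\ge M/2$ on $B_{cM}(x_0)$ for a dimensional $c>0$; combining this with the nondegeneracy bound $|\{u>0\}\cap B_r(x_0)|\ge c_n r^n$ from Remark~\ref{rem:nondeg} gives $\eps\ge\int_{B_1\cap\{u>0\}}|w|\ge c_n' M^{n+1}$, hence
\[
\|u-x_n\|_{L^\infty(B_{5/8}\cap\{u>0\})}\le C_n\,\eps^{1/(n+1)}=:\delta .
\]

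Next I would turn this into qualitative regularity. Choosing $\eps$ small enough that $\delta$ lies below the threshold $\eps_\circ$ of Lemma~\ref{lem:eps-reg-classical}, applying that lemma (after rescaling) centered at every free boundary point in $B_{1/2}$, together with interior estimates for harmonic functions inside $\{u>0\}$, a routine covering argument gives that $u$ is smooth in $B_{1/2}\cap\{u>0\}$, that $\FB(u)\cap B_{1/2}$ is (in the rotated frame) a graph $\{x_n=g(x')\}$, and that
\[
\|u-x_n\|_{C^k(B_{1/2}\cap\{u>0\})}+\|g\|_{C^k}\le C_{n,k}\,\delta\qquad\text{for every }k .
\]
The only role of this stage is to make the change of variables below licit and to guarantee the smallness of the error terms that appear there.

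The third and decisive stage is a hodograph transform combined with a sharp linear estimate. Working near a free boundary point, I would pass to partial hodograph coordinates $(y',y_n)=(x',u(x))$; since $\partial_n u$ is close to $1$ by the previous stage this is a diffeomorphism onto a half-ball, the free boundary becomes $\{y_n=0\}$, and the inverse $x_n=\psi(y',y_n)$ satisfies a quasilinear uniformly elliptic equation $\Div\big(\mathbf{a}(\nabla\psi)\big)=0$ in $B^+_\rho$ together with the conormal condition $\psi_{y_n}=\sqrt{1+|\nabla'\psi|^2}$ on $\{y_n=0\}$ — coming respectively from $\Delta u=0$ and $|\nabla u|=1$ (this is the transform used for the one-phase problem, cf.\ \cite{Kinderlehrer-Nirenberg}). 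Setting $\varphi:=\psi-y_n$ and linearizing, $\varphi$ solves $\Delta\varphi=\Div\big(Q(\nabla\varphi)\big)$ in $B^+_\rho$ with $\partial_n\varphi=q(\nabla'\varphi)$ on $\{y_n=0\}$, where $Q$ and $q$ vanish to second order at the origin. By the previous stage $\|\varphi\|_{C^{1,\alpha}(B^+_\rho)}=:M'$ is small, so the Schauder estimate for the flat Neumann problem gives
\[
\|\varphi\|_{C^{2,\alpha}(B^+_{\rho/2})}\le C\,\|\varphi\|_{L^1(B^+_\rho)}+C\,\|Q(\nabla\varphi)\|_{C^{1,\alpha}(B^+_\rho)}+C\,\|q(\nabla'\varphi)\|_{C^{1,\alpha}}\le C\,\|\varphi\|_{L^1(B^+_\rho)}+CM'\,\|\varphi\|_{C^{2,\alpha}(B^+_\rho)},
\]
and a standard interpolation/absorption argument (using that $M'$ is small) upgrades this to $\|\varphi\|_{C^{2,\alpha}(B^+_{\rho/2})}\le C\|\varphi\|_{L^1(B^+_\rho)}$. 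Since the Jacobian of the hodograph map is bounded above and below, $\|\varphi\|_{L^1(B^+_\rho)}\le C\int_{B_1\cap\{u>0\}}|u-x_n|\le C\eps$, and undoing the change of variables (and covering $B_{1/2}$) yields $\|u-x_n\|_{C^2(B_{1/2}\cap\{u>0\})}\le C_n\eps$, which is the claim. This stage — making the Neumann Schauder estimate quantitative with the correct scaling and sharp linear dependence, and bookkeeping the errors of the hodograph transform — is the technical heart, and is the part carried out in Appendix~\ref{app:linear-estimate} on the basis of the linear estimates of \cite{Lian-Zhang} (Proposition~\ref{prop:linear-estimate}).

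The hard part is exactly this third stage, specifically the requirement of \emph{linear} (rather than merely $C^{1,\alpha}$- or $\eps^{1/(n+1)}$-) dependence: this forces one to arrange the linearized free boundary problem so that the remainder terms are genuinely \emph{quadratic} in the controlling norm — which is why flattening via the hodograph transform, rather than a crude graph straightening (whose Jacobian introduces a source term linear in $g$), is essential — and then to have a Neumann Schauder estimate with constants that do not degenerate, so that these quadratic remainders can be absorbed once the qualitative smallness of the second stage is in place. By contrast, the first two stages are routine given the nondegeneracy estimate and the $\eps$-regularity lemma.
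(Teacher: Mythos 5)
Your proof is correct, but it organizes the quantitative step differently from the paper. The paper's argument is a two-line composition of two \emph{linear} estimates: \Cref{prop:L1estimate} first upgrades $L^1$-flatness to $L^\infty$-flatness with linear dependence, $\|u-a\cdot x-b\|_{L^\infty(B_{1/2}\cap\{u>0\})}\le C\|u-a\cdot x-b\|_{L^1(B_1\cap\{u>0\})}$ (this is obtained from De Silva's linear gradient estimate, \Cref{thm:DeSilva-estimate}, combined with the $L^1$--Lipschitz interpolation of \Cref{lem:L1_Lip} and an absorption/covering argument at all scales), and then \Cref{prop:linear-estimate} (tracking the $\eps$-dependence in the Lian--Zhang higher-order estimates) converts $L^\infty$-flatness into $C^k$ bounds linearly. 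You instead accept a lossy $L^1\to L^\infty$ step of order $\eps^{1/(n+1)}$ -- used only to get below the $\eps$-regularity threshold and secure qualitative smoothness -- and then recover the sharp linear dependence in one shot via the partial hodograph transform: since the transformed problem has genuinely quadratic errors in both the interior equation and the boundary condition, the flat Neumann Schauder estimate with the $L^1$ norm on the right (after interpolation and absorption of the quadratic remainders, which is legitimate because your second stage guarantees the relevant norms are finite and small) yields $\|\varphi\|_{C^{2,\alpha}}\le C\|\varphi\|_{L^1}\le C\eps$, and the bounded Jacobian lets you undo the change of variables. Both routes are valid; yours is more self-contained in that it bypasses the linear $L^1\to L^\infty$ proposition and the black-boxed estimates of \cite{Lian-Zhang}, at the price of carrying out the hodograph/Schauder bookkeeping yourself. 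Two points to make explicit in a full write-up: the hodograph estimate only covers a fixed-size neighborhood of $\FB(u)$, so it must be supplemented by interior harmonic estimates (which give $|D^2(u-a\cdot x)|\lesssim \rho^{-n-2}\,\eps$ at distance $\ge\rho$ from the free boundary) together with a covering of $B_{1/2}\cap\{u>0\}$; and when undoing the transform, the term $\partial_{y_n}\varphi\cdot D^2u$ appearing in the chain rule must be handled using the qualitative Hessian bound from your second stage, which is indeed available.
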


\begin{proof}
Thanks to \Cref{prop:L1estimate}, $L^1$-flatness implies $L^\infty$-flatness, so the result follows from \Cref{thm:DeSilva-estimate} and \Cref{prop:linear-estimate}.
\end{proof}

In the next result, we show that a bound on the Hessian implies higher regularity as well, with estimates that are linear once the Hessian is bounded.

\begin{lem}[Higher regularity from the Hessian]
\label{lem:eps-reg-classical-2}
Let $n\geq 2$, and let $u$ be a classical solution to the Bernoulli problem in $B_1\subset \R^n$ satisfying
\begin{equation}\label{eq:Hess-C0}
\norm[L^\infty(B_1\cap\{u>0\})]{D^2u}\leq C_0 
\end{equation}
for some $C_0 > 0$. Then, for any $k \ge 2$,
\[
\normbig[L^\infty(B_{1/2} \cap \set{u>0})]{D^k u}
\leq C_{n,k} \max\{C_0^{k-2},1\}
	\norm[L^\infty(B_1\cap\{u>0\})]{D^2u},
\]
for some $C_{n,k}$ depending only on $n$ and $k$. 
\end{lem}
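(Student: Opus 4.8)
The statement is a bootstrapping result: once the Hessian of a classical solution is bounded, all higher derivatives are controlled, with estimates that are \emph{linear} in $\|D^2u\|_{L^\infty}$ (modulo a constant depending on $C_0$). The natural strategy is to differentiate the governing PDE and the free boundary condition, and to iterate Schauder-type estimates. First I would normalize: set $M := \|D^2 u\|_{L^\infty(B_1\cap\{u>0\})}$. Since $|\nabla u|=1$ on $\FB(u)$ and $D^2 u$ is bounded by $C_0$, the free boundary $\FB(u)\cap B_{3/4}$ is a $C^{1,1}$ graph with norm controlled by $C_0$ (the graph direction is $\nabla u/|\nabla u|$, and its derivatives are controlled by $D^2u$ on the boundary via the eikonal relation $\nabla u\cdot D^2 u\, \nu = 0$-type identities). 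This gives us a fixed reservoir of geometric regularity: the domain $\{u>0\}\cap B_{3/4}$ is, near each free boundary point, the subgraph of a $C^{1,1}$ function with norm $\le C(n)C_0$.

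The core of the argument is then an induction on $k$. Differentiating $\Delta u = 0$ gives $\Delta (D^\beta u) = 0$ in $\{u>0\}$ for any multi-index $\beta$, so the higher derivatives are harmonic in the interior. Near the free boundary one must flatten: straighten $\FB(u)$ by a $C^{1,1}$ (indeed, by \Cref{lem:eps-reg-classical} and analyticity, locally smooth) change of variables $\Phi$ whose $C^{k}$-norm is controlled by $\max\{C_0^{k-1},1\}$; the transformed function $\tilde u = u\circ\Phi^{-1}$ solves a uniformly elliptic equation $\mathrm{div}(A\nabla\tilde u)=0$ in a half-ball with $\|A\|_{C^{k-1}}\lesssim \max\{C_0^{k-1},1\}$, together with a Neumann-type boundary condition coming from $|\nabla u|^2=1$, namely $\langle A\nabla\tilde u,\nabla\tilde u\rangle = 1$ on the flat boundary. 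One then applies boundary Schauder estimates (e.g. $C^{k-2,\alpha}$ estimates for the oblique/Neumann problem, as in Agmon--Douglis--Nirenberg or the Lian--Zhang results already cited) iteratively: at each stage the new derivative $D^{k}u$ is estimated in terms of $D^{k-1}u$ times a power of $C_0$ from the coefficients, and the base case $k=2$ is the hypothesis \eqref{eq:Hess-C0}. Tracking the powers of $C_0$ across the $k-2$ iterations — each step costs one factor of $\max\{C_0,1\}$ from differentiating the flattening map and the coefficient matrix — yields exactly the stated bound $C_{n,k}\max\{C_0^{k-2},1\}\,M$, with linear dependence on $M = \|D^2u\|_{L^\infty}$ because the boundary condition and the equation are, after flattening, linear in $\tilde u$ with the forcing scaling like $M$.

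A cleaner alternative, which I would actually favor for writing up, is to use the analyticity from \Cref{lem:eps-reg-classical} directly: the hypothesis $\|D^2u\|_{L^\infty}\le C_0$, combined with $|\nabla u|=1$ on $\FB(u)$ and $\Delta u = 0$, already puts us (after a universal rescaling by a factor $\sim \max\{C_0,1\}$) in the $\eps$-regularity regime of \Cref{lem:eps-reg-classical}. Indeed, on balls of radius $r\sim \min\{1,C_0^{-1}\}$ centered at free boundary points, $u$ is $C^{1,1}$-close to a half-plane solution with the appropriate smallness, so \Cref{lem:eps-reg-classical} gives $\|u - (\text{plane})\|_{C^k}\le C_{n,k}\eps$ on the half-size ball; scaling back to $B_1$ and absorbing the rescaling factors produces the powers $C_0^{k-2}$ in front. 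Interior estimates handle points far from $\FB(u)$ by plain interior estimates for harmonic functions (distance to the boundary is bounded below on compact subsets of $\{u>0\}$, or one covers by free-boundary balls).

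\textbf{Main obstacle.} The delicate point is \emph{bookkeeping the power of $C_0$} and ensuring the final dependence on $\|D^2u\|_{L^\infty}$ is genuinely linear, not merely polynomial. Scaling is the right tool — replacing $u(x)$ by $\lambda^{-1}u(\lambda x)$ with $\lambda \sim \min\{1, C_0^{-1}\}$ makes the rescaled Hessian bounded by $1$, hence uniformly in the $\eps$-regularity class — but one must then carefully undo the scaling: a $C^k$ bound on the rescaled function translates to a $\lambda^{k-1}$-weighted $C^k$ bound on $u$, i.e. $\|D^k u\|\lesssim \lambda^{-(k-1)} \cdot (\text{rescaled }C^k\text{ bound})$, and the rescaled $C^k$ bound is itself $\lesssim \|D^2 u\|_{L^\infty}\cdot\lambda$ (one power of $\lambda$ from the linear dependence on the rescaled Hessian). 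The product gives $\lambda^{-(k-2)}\|D^2 u\| \sim \max\{C_0^{k-2},1\}\,\|D^2 u\|$, matching the statement. The only real care needed is that the rescaling radius stays $\le 1$ so that the hypothesis is available on the rescaled ball, which is why the $\max\{C_0^{k-2},1\}$ (rather than $C_0^{k-2}$) appears when $C_0\le 1$. Everything else — interior harmonic estimates, boundary Schauder for the eikonal/Neumann condition, the covering argument splitting near-boundary and interior regions — is standard once this scaling is set up correctly.
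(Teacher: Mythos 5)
Your favored route (the ``cleaner alternative'' in your third and fourth paragraphs) is exactly the paper's proof: split into points within distance $\sim\eps=\eps_\circ\max\{C_0,1\}^{-1}$ of $\FB(u)$ and points farther away; at the former, rescale by $\eps$ around the nearest free boundary point so that the Hessian bound verifies the flatness hypothesis of \Cref{lem:eps-reg-classical}, apply its linear $C^k$ estimate, and scale back to pick up $\eps^{2-k}\sim\max\{C_0^{k-2},1\}$; at the latter, use interior estimates for the harmonic function $D^2u$ at scale $\eps$. Your bookkeeping of the powers of $C_0$ matches the paper's, so the proposal is correct and essentially identical in approach.
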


\begin{proof}
Let $\eps=\max\{C_0,1\}^{-1}\eps_{\circ}$, where $\eps_{\circ}$ comes from \Cref{lem:eps-reg-classical}, and let $x_\circ\in B_{1/2} \cap \overline{\set{u>0}}$. We separate into two cases:
\begin{itemize}   
    \setlength{\itemindent}{0pt}       
    \setlength{\leftskip}{-20pt}         
    \setlength{\labelwidth}{0pt}       
    \setlength{\labelsep}{5pt}   
  \item If $\dist(x_\circ,\FB(u))< \eps/4$, we let $y_\circ\in B_{3/4}\cap \FB(u)$ be the closest free boundary point to $x_\circ$, and we choose coordinates such that $e_n=\nabla u(y_\circ)$. Then $u_{y_\circ,\eps}=\frac{u(y_\circ+\eps\cdot)}{\eps}$ satisfies
\[
\norm[L^\infty(B_1 \cap \set{u_{y_\circ,\eps}>0})]{u_{y_\circ,\eps}-x_n}
\leq \norm[L^\infty(B_1 \cap \set{u_{y_\circ,\eps}>0})]{D^2 u_{y_\circ,\eps}}
= \eps\norm[L^\infty(B_\eps(y_\circ)\cap \set{u>0})]{D^2u}
\leq \eps_{\circ}.
\]
Thus \Cref{lem:eps-reg-classical} applies and gives
\[
\norm[C^k(B_{1/2}\cap \set{u_{y_\circ,\eps}>0})]{u_{y_\circ,\eps}-x_n}
\leq C_{n,k} \eps \norm[L^\infty(B_1\cap \set{u>0})]{D^2u}.
\]
In particular,
\[
|D^k u(x_\circ)|\hspace{-0.25mm}
\leq \hspace{-0.25mm}
\normbig[L^\infty(B_{\eps/2}(y_\circ)\cap \set{u>0})]{D^k u}
\hspace{-0.25mm}\leq\hspace{-0.25mm} \eps^{1-k} \norm[C^k(B_{1/2}\cap \set{u_{y_\circ,\eps}>0})]{u_{y_\circ,\eps}-x_n}
\hspace{-0.25mm}\leq \hspace{-0.25mm}C_{n,k} \eps^{2-k} \norm[L^\infty(B_1\cap \set{u>0})]{D^2u}.
\]

  \item If $\dist(x_\circ,\FB(u))\geq \eps/4$ or $\FB(u) = \varnothing$, the harmonicity of $D^2u$ in $B_{\eps/8}(x_\circ)$ gives the result. \qedhere
  \end{itemize}
\end{proof}

As a consequence, we also obtain linear bounds with respect to the $L^1$ norm of the Hessian, once it is bounded:

\begin{cor}[$\dot{W}^{2,1}$ controls $\dot{W}^{2,\infty}$]
\label{cor:Hess-W21}
Let $n\ge 2$, and let $u$ be a classical solution to the Bernoulli problem in $B_1\subset \R^n$ satisfying \eqref{eq:Hess-C0}. Then, 
\[
\normbig[L^\infty(B_{1/2} \cap \set{u>0})]{D^2 u}
\leq C_n
	\max\left\{
		C_0^n,1
	\right\}
	\normbig[L^1(B_1 \cap \set{u>0})]{D^2 u},
\]
for some $C_n$ depending only on $n$. 
\end{cor}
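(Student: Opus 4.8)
The plan is to derive the corollary from \Cref{lem:eps-reg-classical-2} and \Cref{lem:L1Linfty_2} via a rescaling-plus-dichotomy argument that reduces the statement to a \emph{scale-invariant} version for solutions whose Hessian lies below the $\eps$-regularity threshold. Fix $x_0\in B_{1/2}\cap\overline{\{u>0\}}$ and set $\delta:=c_n/\max\{C_0,1\}$ for a small dimensional constant $c_n$ to be chosen; the aim is to bound $|D^2u(x_0)|$. If $\dist(x_0,\FB(u))\ge\delta$ (in particular if $\FB(u)=\varnothing$), then $u$---and hence every entry of $D^2u$---is harmonic in $B_\delta(x_0)\subset\{u>0\}\cap B_1$, so the mean value property gives $|D^2u(x_0)|\le\fint_{B_\delta(x_0)}|D^2u|\le C_n\delta^{-n}\normbig[L^1(B_1\cap\{u>0\})]{D^2u}$, and since $\delta^{-n}\simeq\max\{C_0^n,1\}$ this is exactly the desired estimate. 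If instead $\dist(x_0,\FB(u))<\delta$, I pick a closest free boundary point $y_0$, rotate so that $\nabla u(y_0)=e_n$, and rescale $w(z):=(c\delta)^{-1}u(y_0+c\delta z)$ (for a suitable absolute constant $c$, with $B_{c\delta}(y_0)\subset B_1$); then $0\in\FB(w)$, $\nabla w(0)=e_n$, and $\normbig[L^\infty(B_1\cap\{w>0\})]{D^2w}=c\delta\normbig[L^\infty(B_{c\delta}(y_0)\cap\{u>0\})]{D^2u}\le c\delta C_0$, which is smaller than any prescribed dimensional constant once $c_n$ is chosen small enough. Arguing as in the proof of \Cref{lem:eps-reg-classical-2}, this forces $w$ to be $\eps_\circ$-flat in $B_1$, so \Cref{lem:eps-reg-classical} applies and in particular $\{w>0\}\cap B_{1/2}$ is a uniformly (almost-flat) Lipschitz subgraph domain.

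It therefore suffices to establish the following scale-invariant claim: if $w$ solves the Bernoulli problem in $B_1$, $0\in\FB(w)$, and $\normbig[L^\infty(B_1\cap\{w>0\})]{D^2w}$ is at most a sufficiently small dimensional constant, then $\normbig[L^\infty(B_{1/4}\cap\{w>0\})]{D^2w}\le C_n\normbig[L^1(B_{1/2}\cap\{w>0\})]{D^2w}$. Rescaling this back (and recalling $\delta\simeq\max\{C_0,1\}^{-1}$) gives $|D^2u(x_0)|\le C_n\max\{C_0^n,1\}\normbig[L^1(B_1\cap\{u>0\})]{D^2u}$, and a supremum over $x_0$ concludes. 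To prove the claim I would combine a Poincar\'e-type inequality with \Cref{lem:L1Linfty_2}. Write $\Omega:=\{w>0\}\cap B_{1/2}$: by the previous paragraph it is a bounded Lipschitz domain bi-Lipschitz to a half-ball---hence with universal Poincar\'e and trace constants---with $\cH^{n-1}(\FB(w)\cap B_{1/2})\simeq1$. Applying the $W^{1,1}$-Poincar\'e inequality on $\Omega$ twice produces an affine function $\ell_0(z)=a_0\cdot z+b_0$, with $a_0=\fint_\Omega\nabla w$, such that $\normbig[L^1(\Omega)]{w-\ell_0}\le C_n\normbig[L^1(\Omega)]{D^2w}$. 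Since $|\nabla w|\equiv1$ on $\FB(w)$, the $W^{1,1}$-trace inequality forces $\big||a_0|-1\big|\le C_n\normbig[L^1(\Omega)]{D^2w}$, so renormalizing $\ell_0$ to a unit-slope affine function $\ell(z)=a\cdot z+b$ with $a\in\mathbb{S}^{n-1}$ changes its $L^1(\Omega)$-distance to $w$ by at most $C_n\normbig[L^1(\Omega)]{D^2w}$. Hence there are $a\in\mathbb{S}^{n-1}$ and $b\in\R$ with $\int_{B_{1/2}\cap\{w>0\}}|w-a\cdot z-b|\le C_n\normbig[L^1(B_{1/2}\cap\{w>0\})]{D^2w}\le C_n\normbig[L^\infty(B_1\cap\{w>0\})]{D^2w}\,|B_{1/2}|$; after rescaling $B_{1/2}$ to the unit ball this is below the smallness threshold of \Cref{lem:L1Linfty_2}, which then yields $\normbig[L^\infty(B_{1/4}\cap\{w>0\})]{D^2w}\le C_n\int_{B_{1/2}\cap\{w>0\}}|w-a\cdot z-b|\le C_n\normbig[L^1(B_{1/2}\cap\{w>0\})]{D^2w}$, proving the claim.

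I expect the only genuinely non-routine point to be the step that extracts a \emph{unit-slope} approximating plane whose $L^1$-distance to $w$ is still controlled by $\normbig[L^1]{D^2w}$: this is exactly where the free boundary condition $|\nabla w|=1$ enters (it pins down the slope of the plane), while everything else reduces to the mean value property for harmonic functions, the already-established $\eps$-regularity, and standard Poincar\'e/trace inequalities on Lipschitz domains. One could also sidestep this step by recording a variant of \Cref{lem:L1Linfty_2} allowing near-unit slopes. The remaining work is the routine bookkeeping of radii in the rescalings, which I have suppressed.
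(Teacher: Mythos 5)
Your proof is correct in outline but takes a genuinely different route from the paper's. The paper's argument is a short absorption scheme: it applies the interpolation inequality of \Cref{lem:L1_Lip} to $D^2u$, giving $\|D^2u\|_{L^\infty}^{n+1}\le C\|D^2u\|_{L^1}\|D^3u\|_{L^\infty}^{n}$, bounds $\|D^3u\|_{L^\infty}$ by $C\max\{C_0,1\}\|D^2u\|_{L^\infty}$ via \Cref{lem:eps-reg-classical-2}, and then absorbs the remaining $L^\infty$ factor by Young's inequality together with a standard covering argument. You instead run a dichotomy on $\dist(x_0,\FB(u))$: in the interior the mean value property for the harmonic function $D^2u$ suffices, and near the free boundary you flatten by $\eps$-regularity and prove the scale-invariant estimate directly, by manufacturing a unit-slope affine function $L^1$-close to $w$ (Poincar\'e twice, plus a trace inequality exploiting $|\nabla w|=1$ on $\FB(w)$ to pin $|a_0|$ to $1$) and feeding it into the linear $L^1$-to-$C^2$ estimate of \Cref{lem:L1Linfty_2}. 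Your identification of the slope normalization as the one non-routine point is accurate: \Cref{lem:L1Linfty_2} genuinely requires $a\in\mathbb{S}^{n-1}$, and the trace argument is a clean way to obtain it. What the paper's route buys is brevity (no approximating plane has to be constructed); what yours buys is transparency, since it exhibits the underlying mechanism (flatness plus linear estimates) explicitly and avoids the absorption/covering step.

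One point needs a patch, though it is essentially the same patch the paper leaves implicit when invoking \Cref{lem:eps-reg-classical-2} and \Cref{lem:L1_Lip}: after rescaling, $\{w>0\}\cap B_{1/2}$ need not be a single subgraph, because a second nearly parallel sheet of the free boundary can lie at distance much smaller than $\delta$ from $y_0$ (think of a solution close to a vee). In that case the Poincar\'e inequality on the full positivity set fails (the set is disconnected), and the hypothesis of \Cref{lem:L1Linfty_2}, which integrates over all of $B_{1/2}\cap\{w>0\}$, cannot hold for a single plane. The fix is standard and already used in \Cref{cor_closetoV_disc_reg}: restrict $w$ to the connected component $U$ of $\{w>0\}$ whose closure contains the point of interest; $w\,\mathbbm{1}_{U}$ is again a classical (stable) solution, its $L^1$ Hessian norm is dominated by that of $w$, and your argument applies verbatim to it. With that adjustment, and the radius bookkeeping you already flagged, the proof is complete.
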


\begin{proof}
Combining the interpolation estimates from \Cref{lem:L1_Lip} with the regularity estimate in \Cref{lem:eps-reg-classical-2}, we get
\begin{align*}
\norm[L^\infty(B_{1/2}\cap \set{u>0})]{D^2u}^{n+1}
&\leq C\norm[L^1(B_{1/2} \cap \set{u>0})]{D^2u}
	\norm[L^\infty(B_{1/2} \cap \set{u>0})]{D^3u}^{n}\\
&\leq C\norm[L^1(B_{1/2} \cap \set{u>0})]{D^2u}
	\max\left\{
		C_0^n,1
	\right\}
	\norm[L^\infty(B_1 \cap \set{u>0})]{D^2u}^{n}.
\end{align*}
Applying this estimate to the rescalings $u_{z,r}=\frac{u(z+r\cdot)}{r}$ for $B_r(z)\subset B_1$, it follows that for any $\delta\in(0,1)$ there is $C_\delta>0$ such that
\[
r^n\norm[L^\infty(B_{r/2}(z)\cap \set{u>0})]{D^2u}
\leq C_\delta
	\max\left\{
		C_0^n,1
	\right\}
	\norm[L^1(B_1 \cap \set{u>0})]{D^2u}
	+\delta r^n\norm[L^\infty(B_r(z) \cap \set{u>0})]{D^2u}.
\]
By a standard covering argument (e.g. \cite[Lemma 2.27]{FR22}), the result follows.
\end{proof}

The following lemma provides an $\eps$-regularity result for solutions that are small in $\dot{W}^{2,n}$.

\begin{lem}[$\eps$-regularity for the Hessian] 
\label{lem:eps-reg-Hess}
Let $n\ge 2$. There exists $\eta_*=\eta_*(n)>0$ such that, for all $\eta\le \eta_*$, the following holds. 

Let $u$  be a classical solution to the Bernoulli problem in $B_1\subset \R^n$. Then
\begin{equation}\label{eq:eps_reg_Hess}
	\int_{B_1 \cap \set{u>0}}|D^2u|^n\,dx \leq \eta^n \quad \implies \quad \norm[L^\infty(B_{1/2} \cap \set{u>0})]{D^2u}
\leq C_n \eta,
\end{equation}
for some $C_n$ depending only on $n$. More generally, for $r>0$ and $k\ge 2$,  we have
\begin{equation}
\label{eq:eps_reg_Hess_k}
\int_{B_r\cap \set{u>0}}
	|D^2u|^n
\,dx
\leq \eta^n
	\quad \implies \quad
\normbig[L^\infty(B_{r/2}\cap \set{u>0})]{D^k u}
\leq \frac{C_{n,k}\eta}{r^{k-1}},
\end{equation}
for some $C_{n,k}$ depending only on $n$ and $k$. 
\end{lem}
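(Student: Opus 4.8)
\emph{Overall strategy.} The plan is to separate a soft, purely qualitative a priori bound from the sharp quantitative estimate. Concretely, I would first prove that there is a dimensional $M_n$ such that $\normsmall[L^n(B_1\cap\setsmall{u>0})]{D^2u}\le\eta_*$ forces $\normsmall[L^\infty(B_{3/4}\cap\setsmall{u>0})]{D^2u}\le M_n$, and then upgrade this to $\normsmall[L^\infty(B_{1/2}\cap\setsmall{u>0})]{D^2u}\le C_n\eta$ by feeding the a priori bound into \Cref{cor:Hess-W21}. The statements for general $r$ and for $k\ge 3$ in \eqref{eq:eps_reg_Hess_k} would then follow by rescaling and by invoking \Cref{lem:eps-reg-classical-2}.

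\emph{Step 1: the a priori bound by point selection.} Since $u$ is classical, $D^2u$ extends continuously up to the free boundary and is bounded on compact subsets of $B_1$, so the weighted supremum
\[
\mathcal M:=\sup\Big\{\big(\tfrac78-|x|\big)\,|D^2u(x)|\ :\ x\in B_{7/8}\cap\overline{\setsmall{u>0}}\Big\}
\]
is finite, and if $\mathcal M>0$ it is attained at some $x_\star$ with $|x_\star|<7/8$. I would argue by contradiction, assuming $\mathcal M\ge 6$. Writing $\delta:=\tfrac78-|x_\star|\in(0,\tfrac78]$, $\mu:=|D^2u(x_\star)|=\mathcal M/\delta$ and $r:=1/\mu$, one has $r\,\mathcal M/2=\delta/2$, hence $B_{\delta/2}(x_\star)\subset B_{7/8}$ and the rescaling $w(y):=r^{-1}u(x_\star+ry)$ is a classical one-phase solution on $B_{\mathcal M/2}(0)\supset B_3(0)$ with: $|D^2w(0)|=r\mu=1$; $|D^2w|\le 2$ throughout $B_{\mathcal M/2}(0)$ (because $\tfrac78-|x|\ge\delta/2$ on $B_{\delta/2}(x_\star)$); and, by scale invariance, $\int_{B_3(0)\cap\setsmall{w>0}}|D^2w|^n\le\int_{B_1\cap\setsmall{u>0}}|D^2u|^n\le\eta_*^n$. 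Applying \Cref{cor:Hess-W21} to $w$ on $B_3(0)$ (after the harmless rescaling $B_3\to B_1$), together with H\"older's inequality $\normsmall[L^1(B_3(0)\cap\setsmall{w>0})]{D^2w}\le c_n\normsmall[L^n(B_3(0)\cap\setsmall{w>0})]{D^2w}$, yields $\normsmall[L^\infty(B_{3/2}(0)\cap\setsmall{w>0})]{D^2w}\le C_n\eta_*$, which by continuity of $D^2w$ up to the free boundary bounds $|D^2w(0)|\le C_n\eta_*$ --- contradicting $|D^2w(0)|=1$ as soon as $\eta_*$ is dimensionally small. Hence $\mathcal M<6$, and since $\tfrac78-|x|\ge\tfrac18$ on $B_{3/4}$ we conclude $\normsmall[L^\infty(B_{3/4}\cap\setsmall{u>0})]{D^2u}\le 8\mathcal M< 48=:M_n$.

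\emph{Step 2: quantitative bound, then scaling and higher derivatives.} Given the a priori bound on $B_{3/4}$, I would fix $x_0\in B_{1/2}\cap\setsmall{u>0}$, rescale $u$ from $B_{1/8}(x_0)\subset B_{3/4}$ to the unit ball so that the a priori bound becomes exactly the hypothesis \eqref{eq:Hess-C0} of \Cref{cor:Hess-W21} with dimensional $C_0$, apply that corollary, and estimate the resulting $L^1$ norm of $D^2u$ by $\normsmall[L^n(B_1\cap\setsmall{u>0})]{D^2u}\le\eta$ via H\"older; this gives $|D^2u(x_0)|\le C_n\eta$ for every such $x_0$, i.e. \eqref{eq:eps_reg_Hess}. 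For \eqref{eq:eps_reg_Hess_k} with $k=2$, apply \eqref{eq:eps_reg_Hess} to $u_r(x):=r^{-1}u(rx)$, whose $\dot W^{2,n}$-energy on $B_1$ equals that of $u$ on $B_r$ by scale invariance. For $k\ge 3$, a slightly-larger-ball version of \eqref{eq:eps_reg_Hess} (which costs only dimensional constants, by a standard covering argument) makes the scale-invariant Hessian bound $(2r/3)\,\normsmall[L^\infty(B_{2r/3}\cap\setsmall{u>0})]{D^2u}$ at most $C_n\eta\le 1$ once $\eta\le\eta_*$, so \Cref{lem:eps-reg-classical-2} applied on $B_{2r/3}$ (again in a slightly larger ball) gives $\normsmall[L^\infty(B_{r/2}\cap\setsmall{u>0})]{D^ku}\le C_{n,k}\,\eta\,r^{1-k}$, as claimed.

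\emph{Main obstacle.} The delicate point is Step 1, and specifically that the worst scaled-curvature point $x_\star$ may lie on, or arbitrarily near, the free boundary, so interior estimates alone do not close the argument. What makes it work is that the normalization $|D^2w(0)|=1$ buys a rescaled solution $w$ living on a \emph{large} ball (radius $\gtrsim\mathcal M$) with a bounded Hessian but a \emph{small} $\dot W^{2,n}$ norm; \Cref{cor:Hess-W21} --- which itself already encodes the regularity theory up to the free boundary --- then forces $|D^2w|$ to be small at the origin, contradicting the normalization. The only genuinely technical care is the bookkeeping of radii: the weight $\tfrac78-|x|$ is chosen precisely so that, once $\mathcal M$ exceeds the threshold, the unit-scale ball around the origin sits comfortably inside the domain of $w$, while $B_{\delta/2}(x_\star)$ remains inside $B_1$.
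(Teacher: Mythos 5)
Your proof is correct. It shares the paper's skeleton — a point-selection/blow-up argument: pick the maximizer of a weighted Hessian supremum, rescale so the Hessian is normalized to $1$ at the origin and bounded by $2$ on a large ball, and derive a contradiction from the smallness of the $\dot W^{2,n}$ energy — but the way the contradiction is closed is genuinely different. The paper propagates a third-derivative bound via \Cref{lem:eps-reg-classical-2} to show that $|D^2u|$ remains $\geq L_0/2$ on a ball of radius $\sim\min\{L_0^{-1},r_0\}$, and then integrates $|D^2u|^n$ there to contradict the hypothesis $\int|D^2u|^n\le\eta^n$; this forces the contradiction hypothesis $r_0L_0>C_*\eta$ and so delivers the quantitative linear bound in a single pass. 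You instead feed the rescaled solution $w$ directly into \Cref{cor:Hess-W21} (plus H\"older), contradicting the normalization $|D^2w(0)|=1$ against $C_n\eta_*$; since your contradiction threshold $\mathcal M\ge 6$ is an absolute constant, you only get a soft a priori bound $|D^2u|\le M_n$ on $B_{3/4}$ and must apply \Cref{cor:Hess-W21} a second time to upgrade it to the linear estimate $C_n\eta$. There is no circularity, since \Cref{cor:Hess-W21} rests only on \Cref{lem:eps-reg-classical-2} and the interpolation \Cref{lem:L1_Lip}. Your route is more modular (it reuses \Cref{cor:Hess-W21} rather than re-deriving its mechanism inside the blow-up) at the cost of the extra bootstrap step; your choice of the weight $(\tfrac78-|x|)$ supported in $\overline{B_{7/8}}\Subset B_1$ also makes the attainment of the supremum cleaner than the paper's weight $(1-|x|)$. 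The reductions for general $r$ and for $k\ge 3$ via rescaling, a covering argument, and \Cref{lem:eps-reg-classical-2} coincide with the paper's.
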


\begin{proof}
We first show \eqref{eq:eps_reg_Hess}. Let
\[
x_0\in \argmax_{B_1\cap \{u > 0\}}(1-|x|)|D^2u(x)|,\qquad r_0:=1-|x_0|, \qquad L_0:=|D^2u(x_0)|, 
\]
and we suppose by contradiction that, for some $C_*$ to be chosen later, 
\begin{equation}\label{eq:eps-reg-contra}
r_0L_0 > C_* \eta.
\end{equation}
Consider now 
$
v(y):=L_0 u(x_0+L_0^{-1}y).
$
Then $v$ is a classical solution to the Bernoulli problem in its domain. Also, since $|x_0+L_0^{-1}y| \leq |x_0|+\frac{r_0}{2}=1-\frac{r_0}{2}$ for $|y|\leq \frac{r_0L_0}{2}$, it follows from the definition of $x_0$ that
\[
|D^2v(y)|
=
	L_0^{-1}
	\frac{
		(1-|x_0+L_0^{-1}y|)|D^2u(x_0+L_0^{-1}y)|
	}{
		1-|x_0+L_0^{-1}y|
	}
\leq
	L_0^{-1}
	\frac{
		r_0L_0
	}{
		r_0/2
	}
= 2,\qquad\text{for}\quad y \in B_{r_0 L_0/2} \cap \{v > 0\}. 
\]
This implies that the curvature of the free boundary of $v$ is universally bounded inside $B_{r_0 L_0/2}$. Since $0\in \{v >0 \}$ and $|\nabla v| = 1$ on $\partial\{v > 0\}$, there exist a point $\bar y_0$ and a dimensional constant $c_n$ such that $0\in B_{c_n r_0 L_0}(\bar y_0)\subset B_{r_0 L_0/2} \cap \{v > 0\}$. Thus  $x_0 \in B_{c_n r_0}(\bar x_0)\subset B_{r_0/2}(x_0)\cap \{u > 0\}$ with $\bar x_0 = x_0+L_0^{-1}\bar y_0$. In particular, we can apply \Cref{lem:eps-reg-classical-2} with $k=3$ to the function $r_0^{-1} u(x_0+r_0x)$ to deduce that
\[
|D^3 u (x) |\leq C_{n,3} \frac{L_0}{r_0}\max\{1, L_0 r_0\} ,\qquad\text{for}\quad x\in  B_{c_n r_0}(\bar x_0).
\]
This implies that there exists a constant $c_*=c_*(n)>0$ such that 
\[
|D^2 u (x) |\ge L_0 - C_{n,3} \frac{L_0}{r_0}\max\{1, L_0 r_0\} |x-x_0| \ge \frac{L_0}{2},\qquad\text{for}\quad x\in  B_{c_n r_0}(\bar x_0)\cap B_{c_*\min\{L_0^{-1}, r_0\}}(x_0),
\]
therefore
\[
\eta^n \ge \int_{B_{c_n r_0}(\bar x_0)\cap B_{c_*\min\{L_0^{-1}, r_0\}}(x_0)} |D^2 u|^n \,dx \ge 2^{-n} |B_{c_n r_0}(\bar x_0)\cap B_{c_*\min\{L_0^{-1}, r_0\}}(x_0)|L_0^n. 
\]
Noticing now that $|B_{c_n r_0}(\bar x_0)\cap B_{c_*\min\{L_0^{-1}, r_0\}}(x_0)|\ge c \min\{L_0^{-n}, r_0^n\}\ge c L_0^{-n} \min\{1, C_*^n \eta^n\}$ (recall \eqref{eq:eps-reg-contra}), we obtain
\[
\eta^n \ge \hat c \min\{1, C_*^n \eta^n\},
\]
for some dimensional constant $\hat c=\hat c(n)>0$. However, choosing $C_*$ large enough so that $\hat cC_*^n\geq 2$, this inequality is impossible if $\eta$ is small enough. Thus \eqref{eq:eps-reg-contra} does not hold, and we obtain \eqref{eq:eps_reg_Hess}.

Rescaling by a factor of $r$, we get \eqref{eq:eps_reg_Hess_k} with $k=2$. Finally, \Cref{lem:eps-reg-classical-2} (together with a covering argument) yields \eqref{eq:eps_reg_Hess_k} for all $k\geq 3$.
\end{proof}

\subsection{Structural results for classical solutions} 
Here, we present the mean convexity of the free boundary and the regularity of solutions close to a vee.

\begin{lem}
\label{lem:vbounds}
Let $n\ge 2$, and let $u$ be a global classical solution to the Bernoulli problem in $\R^n$. Let $\nu$ denote the inward unit normal vector to $\partial\{u > 0\}$ at a given point, and let 
\begin{equation}
\label{eq:def_v}
v(x) := 1-|\nabla u(x)|^2\qquad\text{for}\quad x\in \R^n. 
\end{equation}
Then $v$ satisfies $0 \le v \le 1$ in $\R^n$, $v = 0$ on $\partial\{u > 0\}$, and 
\[
\left\{
\begin{array}{rcll}
\Delta v & \le & 0,&\qquad\text{in}\quad \{u > 0\},\\
\partial_\nu v & = & -2\partial_{\nu\nu}^2 u&\quad\text{on}\quad \FB(u).
\end{array}
\right.
\]
In particular, whenever $u$ is not a half-space solution $(x\cdot e)_+$ or a vee $|x\cdot e|$ for some $e\in \mathbb{S}^{n-1}$, then $H = \tfrac12 \partial_\nu v > 0$ on $\FB(u)$, where $H$ denotes the mean curvature of $\FB(u)$ at a given point with respect to the outer unit normal $-\nu$.
\end{lem}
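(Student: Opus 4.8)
The plan is to compute $\Delta v$ and $\partial_\nu v$ directly from the harmonicity of $u$ in $\{u>0\}$ and from the free boundary condition $|\nabla u|=1$ on $\FB(u)$, and then invoke the strong maximum principle together with Hopf's lemma to draw the conclusion about $H$.

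\textbf{Step 1: the interior differential inequality.} Since $u$ is harmonic in $\{u>0\}$, the classical Bochner-type identity gives, for the function $w:=|\nabla u|^2$,
\[
\Delta w = 2\,|D^2 u|^2 + 2\,\nabla u\cdot \nabla(\Delta u) = 2\,|D^2u|^2 \ge 0 \qquad\text{in }\{u>0\}.
\]
Hence $\Delta v = -\Delta w = -2|D^2u|^2 \le 0$ in $\{u>0\}$, which is the asserted superharmonicity of $v$. Combined with \Cref{lem:Lipbound} (which gives $|\nabla u|\le 1$ globally, hence $v\ge 0$), and with the free boundary condition $|\nabla u|=1$ on $\FB(u)$ (hence $v=0$ there), this settles $0\le v\le 1$, $v=0$ on $\FB(u)$, and the interior inequality.

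\textbf{Step 2: the boundary derivative identity.} Fix a free boundary point $x_0\in\FB(u)$ and choose coordinates so that the inward normal is $\nu=e_n$; since $u$ is a classical solution, $\{u>0\}$ is locally a smooth domain and $u$ is smooth up to the boundary there with $\nabla u(x_0)=e_n$ (the gradient is parallel to $\nu$ because $u=0$ on $\FB(u)$ and $|\nabla u|=1$). Differentiating $v=1-|\nabla u|^2$ in the normal direction gives $\partial_\nu v = -2\sum_i \partial_i u\,\partial_{\nu i}u = -2\,\partial_{\nu}(\partial_\nu u)$ at $x_0$, using that $\nabla u=\partial_\nu u\,\nu$ there; this is exactly $\partial_\nu v = -2\,\partial^2_{\nu\nu}u$. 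To relate this to the mean curvature, differentiate the boundary identity $|\nabla u|^2\equiv 1$ \emph{tangentially} along $\FB(u)$: this yields $\partial^2_{\tau\tau}u + (\text{terms involving }\partial_\nu u\cdot \II) = 0$ in the appropriate frame, i.e. the tangential part of $D^2u$ on $\FB(u)$ equals $-\partial_\nu u\cdot \II_{\FB(u)} = -\II_{\FB(u)}$ (the second fundamental form with respect to $-\nu$). On the other hand, $u$ being harmonic forces $\operatorname{tr}D^2u=0$ up to the boundary, so $\partial^2_{\nu\nu}u = -\operatorname{tr}(\text{tangential }D^2u) = \operatorname{tr}\II_{\FB(u)} = H$. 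Therefore $\partial_\nu v = -2\,\partial^2_{\nu\nu}u = 2H$... wait — sign bookkeeping: with $H$ defined with respect to the \emph{outer} normal $-\nu$, the identity reads $\partial_\nu v = -2\,\partial^2_{\nu\nu}u$ and $H=\tfrac12\partial_\nu v$, consistent with the statement; I would carry out this sign check carefully in coordinates, writing $u$ near $x_0$ as $u(x',x_n)=\big(x_n-\varphi(x')\big) + O(|x-x_0|^2)$ with $\varphi$ the graph function of $\FB(u)$, $D^2\varphi(x_0)=-\II$, and reading off $\partial^2_{\nu\nu}u$ from harmonicity.

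\textbf{Step 3: strict mean convexity via maximum principle.} If $H(x_0)=0$ at some $x_0\in\FB(u)$ but $u$ is not a half-space solution or a vee, then $\partial_\nu v(x_0)=0$; since $v\ge 0$ in $\{u>0\}$, $v=0$ on $\FB(u)$, $\Delta v\le 0$, and $v$ attains its minimum value $0$ at the boundary point $x_0$ with vanishing normal derivative, Hopf's lemma forces $v\equiv 0$ in the connected component of $\{u>0\}$ containing $x_0$ near $x_0$, hence $|\nabla u|\equiv 1$ and $\Delta v\equiv 0$, so $|D^2u|\equiv 0$ there; a harmonic function with unit gradient and vanishing Hessian is affine, $u=x\cdot e + c$, and matching $u\ge 0$, $u=0$ on a nonempty free boundary forces $u=(x\cdot e)_+$ or (after reflecting across the component structure) $u=|x\cdot e|$, a contradiction. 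Therefore $H>0$ everywhere on $\FB(u)$.

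\textbf{Main obstacle.} The analytically substantive point is Step 2 — making the identity $\partial^2_{\nu\nu}u = H$ (equivalently $\partial_\nu v = 2H$ with the stated sign convention) rigorous up to the free boundary, which requires combining the tangential differentiation of $|\nabla u|^2=1$ with harmonicity and correctly tracking the second fundamental form and its sign relative to the chosen normal. Everything else is a direct application of the Bochner identity and of Hopf's lemma; the rigidity argument in Step 3 is standard once one knows $v\equiv 0$ forces $u$ affine on each component.
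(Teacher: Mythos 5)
Your proof is correct and follows essentially the same route as the paper: the Bochner computation $\Delta v=-2|D^2u|^2$, the identity $\partial_\nu v=-2\partial^2_{\nu\nu}u$ (using $\nu=\nabla u$ on $\FB(u)$), the identification $H=\sum_i\partial^2_{\tau_i\tau_i}u=-\partial^2_{\nu\nu}u$ via harmonicity up to the boundary, and Hopf's lemma plus the strong minimum principle for the strict positivity and rigidity. The only slip is in Step 2: the tangential Hessian identity $\partial^2_{\tau\tau}u=-\partial_\nu u\,\II(\tau,\tau)$ comes from differentiating $u\equiv 0$ twice along $\FB(u)$ (tangentially differentiating $|\nabla u|^2\equiv 1$ instead gives $D^2u(\tau,\nu)=0$), but the coordinate check you propose with the graph representation of $\FB(u)$ settles the sign bookkeeping correctly.
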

\begin{proof}
The bound $0 \leq v\leq 1$ comes from \Cref{lem:Lipbound}. Also, since $\Delta u(x) = 0$ in $\{u > 0 \}$, a simple computation yields
\[
\Delta v(x) = -2\, {\rm div} (D^2 u(x) \nabla u(x)) = -2|D^2 u(x)|^2\le 0 \qquad\text{for}\quad x\in \{u > 0\}
\]
and  
\[
\partial_{\nu} v(x) = \nabla u(x) \cdot \nabla v(x) = -2\nabla u(x) \cdot D^2 u(x) \nabla u(x) = - 2\, \partial_{\nu\nu}^2 u(x)
\qquad\text{for}\quad x\in \FB(u). 
\]
In particular, since $v$ is superharmonic,
either $v\equiv 0$ (in which case $u$ is either a half space $(x\cdot e)_+$ or a vee $|x\cdot e|$), or
$\partial_\nu v(x) > 0$ on $\partial \{u > 0\}$ by Hopf's lemma. Finally,  noticing that for $x\in \FB(u)$ we have $\partial^2_{\nu(x)\nu(x)} u = - \sum_{i = 1}^{n-1} \partial^2_{\tau_i(x)\tau_i(x)} u$ for some orthonormal basis $\{\tau_i(x)\}_{1\le i \le n-1}$ of the tangent plane to $\FB(u)$ at $x$, we deduce that 
\[
H(x) = \sum_{i = 1}^{n-1} \partial^2_{\tau_i(x)\tau_i(x)} u(x) = \frac12 \partial_{\nu(x)} v(x) > 0 \qquad\text{for}\quad x\in \FB(u), 
\]
as we wanted. 
\end{proof}

As a consequence of \Cref{lem:cleanball}, and thanks to the improvement of flatness, one obtains  additional properties needed to upgrade closeness to a vee into regularity: 
 
\begin{lem}[Closeness to vee and disconnectedness implies regularity]
\label{cor_closetoV_disc_reg}
Let $n\ge 2$. There exists $\eps_\circ=\eps_\circ(n)>0$ such that the following holds. 

Let $u$ be a global classical solution to the Bernoulli problem in $\R^n$ satisfying
\begin{equation}\label{sjiowoihw}
\big |u- V_{0,e_n}\big|\le \eps\varrho \le \eps_\circ\varrho\quad \mbox{in } B_{2\varrho},
\end{equation}
where $e_n$ is the $n$-th vector in the canonical basis. Suppose, in addition, that   the two points  $\varrho e_n$ and $-\varrho e_n$ lie in different connected components of the open set $\{u>0\}\cap B_{2\varrho}$. 

Then
\[\varrho^2\|D^2 u\|_{L^\infty(\{u > 0\}\cap B_{\varrho})}\leq C \eps \varrho\]
for some $C$ depending only on $n$. Moreover, 
\[
\{u>0\} = \{x_n > g^{(+)}(x_1, \dots , x_{n-1})\} \cup \{x_n < g^{(-)}(x_1, \dots , x_{n-1})\} \qquad\text{in}\quad B_\varrho,
\]
where $g^{(\pm)} : D_\varrho \to \mathbb{R}$ with $D_\varrho$ being the lower dimensional ball $\{ x_1^2 + \cdots  + x_{n-1}^2 < \varrho^2 \}$ in $\mathbb{R}^{n-1}$,  $g^{(-)}< g^{(+)}$, and 
\[
\|g^{(\pm)}\|_{L^\infty(D_\varrho)} + \varrho^2 \|D^2 g^{(\pm)}\|_{L^\infty(D_\varrho)} \leq C \eps \varrho.
\]
for some $C$ depending only on $n$. 
\end{lem}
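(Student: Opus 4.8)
The plan is to use the disconnectedness hypothesis precisely to split $\{u>0\}$ near the origin into two disjoint ``sheets,'' each of which is $O(\eps)$-close to a \emph{half-space} solution $(x\cdot e)_+$; on each sheet separately one may then invoke the half-space $\eps$-regularity of \Cref{lem:eps-reg-classical}. By the rescaling $u\mapsto \varrho^{-1}u(\varrho\,\cdot\,)$ we may assume $\varrho=1$.

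First I would identify the two relevant positivity components. Since $|u-V_{0,e_n}|\le\eps$ in $B_2$ we have $u\ge |x_n|-\eps$, so $O_+:=\{x_n>\eps\}\cap B_2$ and $O_-:=\{x_n<-\eps\}\cap B_2$ are contained in $\{u>0\}$; for $\eps$ small these are exactly the two connected components of $\{|x_n|>\eps\}\cap B_2$, and in particular each is connected. Let $U_\pm$ be the connected component of $\{u>0\}\cap B_2$ containing $O_\pm$ (so $\pm e_n=\pm\varrho e_n\in U_\pm$). By hypothesis $U_+\neq U_-$, hence they are disjoint; since $O_\mp\subset U_\mp$, this forces $U_+\cap B_2\subset\{x_n\ge-\eps\}$ and $U_-\cap B_2\subset\{x_n\le\eps\}$. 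Any further component $U'$ of $\{u>0\}\cap B_2$ is disjoint from $U_+\cup U_-$, hence $U'\subset\{|x_n|\le\eps\}\cap B_2$, so $|U'|\le C_n\eps$; choosing $\eps_\circ(n)$ small enough, the clean ball property (\Cref{lem:cleanball}, applied with center $0$ and radius $1$) gives $U'\cap B_1=\varnothing$. Therefore
\[
\{u>0\}\cap B_1=(U_+\cup U_-)\cap B_1.
\]

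Next I would run $\eps$-regularity on each sheet. Set $\tilde u:=u\,\mathbbm{1}_{U_+}$. As $\partial U_+$ is a union of connected components of the smooth hypersurface $\FB(u)$, the function $\tilde u$ is again a classical solution of the Bernoulli problem in $B_2$, with $\{\tilde u>0\}=U_+\cap B_2$; on this set $\tilde u=u$ and, using $x_n\ge-\eps$, one checks $|x_n|\le x_n+2\eps$, whence $\|\tilde u-x_n\|_{L^\infty(\{\tilde u>0\}\cap B_2)}\le 3\eps$. Rescaling $B_2\to B_1$ and applying \Cref{lem:eps-reg-classical} (legitimate once $\eps_\circ(n)$ is chosen small), $\FB(\tilde u)\cap B_1$ is a smooth graph $\{x_n=g^{(+)}(x')\}$ over $D_1$, with $U_+\cap B_1=\{x_n>g^{(+)}(x')\}$ (it lies above since $\tilde u>0$ for $x_n$ large), $\|g^{(+)}\|_{L^\infty(D_1)}+\|D^2g^{(+)}\|_{L^\infty(D_1)}\le C_n\eps$, and $\|\tilde u-x_n\|_{C^2(\{\tilde u>0\}\cap B_1)}\le C_n\eps$; in particular $\|D^2u\|_{L^\infty(U_+\cap B_1)}\le C_n\eps$. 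Applying the same argument to $u\,\mathbbm{1}_{U_-}$ after the reflection $x_n\mapsto-x_n$ yields $U_-\cap B_1=\{x_n<g^{(-)}(x')\}$ with the analogous bounds on $g^{(-)}$ and on $D^2u$ on $U_-\cap B_1$. Combining with the displayed identity, in $B_1$ we obtain $\{u>0\}=\{x_n>g^{(+)}\}\cup\{x_n<g^{(-)}\}$ and $\|D^2u\|_{L^\infty(\{u>0\}\cap B_1)}\le C_n\eps$; since $\{x_n>g^{(+)}\}=U_+$ and $\{x_n<g^{(-)}\}=U_-$ are disjoint, necessarily $g^{(-)}\le g^{(+)}$ (with strict inequality unless $u$ coincides with the vee in $B_1$). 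Undoing the rescaling restores the factors of $\varrho$ in the statement.

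The main obstacle is the passage from the qualitative ``disconnectedness'' hypothesis to the quantitative claim that each sheet is $O(\eps)$-flat over a half-space: one must verify that no third positivity component enters $B_1$ (which is exactly where the clean ball property is used, with a careful comparison of the two small dimensional constants), that each restriction $u\,\mathbbm{1}_{U_\pm}$ is still a genuine classical Bernoulli solution to which \Cref{lem:eps-reg-classical} applies, and that $L^\infty$-closeness of $u$ to the vee upgrades to $L^\infty$-closeness of each restriction to $\pm x_n$. Once these points are in place the rest is routine scaling bookkeeping; the only other slightly delicate point is the strict inequality $g^{(-)}<g^{(+)}$, i.e.\ excluding the degenerate case in which $u$ is a pure vee in $B_\varrho$.
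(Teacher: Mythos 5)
Your proposal is correct and follows essentially the same route as the paper: identify the two components $U_\pm$ containing $\pm\varrho e_n$, use \Cref{lem:cleanball} to exclude any further positivity component from $B_\varrho$, restrict $u$ to each component to get two classical solutions that are $O(\eps)$-flat with respect to $\pm x_n$, and apply \Cref{lem:eps-reg-classical} to each. The only (shared) loose end is the strict ordering $g^{(-)}<g^{(+)}$, which you correctly flag and which the paper also treats only implicitly.
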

\begin{proof}
Recalling that $V_{0,e_n}(x) = |x_n|$, it follows from \eqref{sjiowoihw} that
\[
\{u = 0\}\cap B_\varrho \subset \{x\in \R^n \ : \ |x_n|\le \eps \varrho\}.
\]
Let $U_+$ and $U_-$ be the connected components of $B_{2\varrho}\cap \{u>0\}$
respectively containing the two points $\pm \varrho e_n$. Then they necessarily contain the two sets $\{x\in B_\varrho: x_n > \eps \varrho\}$ and $\{x\in B_\varrho: x_n < -\eps \varrho\}$ respectively. Also, by assumption $U_+\cap  U_- =\varnothing$. 

Let $\bar u_\pm := u {\mathbbm{1}_{U_\pm}}$ and observe that $\bar u_+$ and $\bar u_-$ are classical solutions to the Bernoulli problem in  $B_{2\varrho}$ which satisfy
\[
\|\bar u_{\pm } \mp x_n\|_{L^\infty(B_{2\varrho}\cap \{\bar u_{\pm} > 0\})}\le \eps \varrho. 
\]
In particular, we can apply the classical epsilon-regularity theory in \Cref{lem:eps-reg-classical} to both $\bar u_+$ and $\bar u_-$ and deduce  the graphicality (hence ordering) of $\FB(\bar u_\pm)$ and the bound 
\[
\varrho|D^2\bar u_\pm|\le C\eps \qquad\text{in}\quad \{\bar u_\pm  >0\}\cap B_{\varrho}. 
\]
Moreover, thanks to \Cref{lem:cleanball}, for $\eps_\circ$ small enough we have 
\[\{u>0\} \cap B_\varrho =  \big(\{\bar u_+>0\} \cup \{\bar u_->0\}\big) \cap B_{\varrho}, 
\]
or, in other words, $u= \bar u_+ + \bar u_-$  in $B_\varrho$. The lemma now follows by \Cref{lem:eps-reg-classical} applied both to $\bar u_+$ and $\bar u_-$.
\end{proof}

The following is a useful auxiliary lemma  (recall the notion of Slab introduced in \eqref{eq:Slab}):

\begin{lem}\label{treeprelim1}
Let $n\ge 2$, and let $u$ be a  global classical solution to the Bernoulli problem in $\R^n$. Suppose that for some $y_1\in \R^n$, $r_1>0$, and $\bar e\in \mathbb S^{n-1}$, we have 
\begin{equation}\label{closenessep100}
\| u -V_{y_1, \bar e} \|_{L^\infty(B_{r_1}(y_1))} \le \eps r_1.
\end{equation}
Then
\begin{equation}\label{closenessepzero}
            \{u=0\}\cap B_{r_1}(y_1) \subset {\rm Slab}(B_{r_1}(y_1), \bar e, \eps) = \{x\in B_{r_1}(y_1)\ : \   |\bar e\cdot(x-y_1)|\le  \eps r_1\}.
\end{equation}
Moreover:

\begin{enumerate}[(a)]
\item For all $y_2\in \{u=0\}$ and $r_2>0$ such that $B_{r_2}(y_2)\subset B_{r_1}(y_1)$, we have 
\[
\| u -V_{y_2, \bar e} \|_{L^\infty(B_{r_2}(y_2))} \le 2\eps r_1. 
\]
\item We have 
\[
\dist(x, \{u = 0\}) \le C_n \eps r_1,\qquad\text{for all}\quad x\in {\rm Slab}\!\left(B_{r_1/2}(y_1), \bar e, 2\eps\right),
\]
for some $C_n$ depending only on $n$ (in particular, for $n=3$ one can choose $C_3= 16$). 
\end{enumerate}
\end{lem}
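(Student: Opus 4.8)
The plan is to peel off the statements in the order presented, using only the hypothesis \eqref{closenessep100} together with the triangle inequality and the nondegeneracy/clean-ball machinery already established.

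\emph{Step 1: the slab inclusion \eqref{closenessepzero}.}  If $x \in \{u=0\}\cap B_{r_1}(y_1)$, then $|V_{y_1,\bar e}(x)| = |V_{y_1,\bar e}(x) - u(x)| \le \eps r_1$ by \eqref{closenessep100}, i.e.\ $|\bar e \cdot (x - y_1)| \le \eps r_1$, which is exactly the definition of the slab.  This is immediate and requires no further input.

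\emph{Step 2: part (a).}  Fix $y_2 \in \{u=0\}$ with $B_{r_2}(y_2)\subset B_{r_1}(y_1)$.  For $x \in B_{r_2}(y_2)$ I would estimate $|u(x) - V_{y_2,\bar e}(x)|$ by adding and subtracting $V_{y_1,\bar e}(x)$ and using that both vees differ by a constant along $\bar e$:  explicitly, $V_{y_1,\bar e}(x) - V_{y_2,\bar e}(x) = |\bar e\cdot(x-y_1)| - |\bar e\cdot(x-y_2)|$, and since $y_2\in\{u=0\}\cap B_{r_1}(y_1)$, Step~1 gives $|\bar e\cdot(y_2 - y_1)| \le \eps r_1$, so by the reverse triangle inequality the two vees differ pointwise by at most $\eps r_1$ on all of $B_{r_1}(y_1)$.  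Combining with \eqref{closenessep100} yields $\|u - V_{y_2,\bar e}\|_{L^\infty(B_{r_2}(y_2))} \le \eps r_1 + \eps r_1 = 2\eps r_1$.

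\emph{Step 3: part (b), the nondegeneracy statement.}  Take $x_0 \in {\rm Slab}(B_{r_1/2}(y_1), \bar e, 2\eps)$ and suppose for contradiction that $\dist(x_0, \{u=0\}) = d > C_n \eps r_1$ for a large dimensional constant $C_n$ to be chosen.  Then $B_d(x_0) \subset \{u>0\}$, and one connected component $U$ of $\{u>0\}\cap B_{2\varrho}(x_0)$ (with $\varrho := d/2$, say) contains $x_0$; I want to apply \Cref{lem:cleanball} (equivalently \Cref{rem:nondeg}) to get $|U \cap B_\varrho(x_0)| \ge c_n \varrho^n$, which is automatic, so that is not yet a contradiction.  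Instead the right argument is: since $B_d(x_0)\subset\{u>0\}$ and $u$ is $1$-Lipschitz with $u(x_0) \le$ (value on the slab, which is $\le 2\eps r_1 + \eps r_1 = 3\eps r_1$ by \eqref{closenessep100} since $V_{y_1,\bar e}(x_0)\le 2\eps r_1$), harmonicity of $u$ in $B_d(x_0)$ together with $u>0$ there and the Harnack inequality would force $u$ to be bounded below on, say, $B_{d/2}(x_0)$ by a dimensional multiple of $u(x_0)$ — but this does not directly close either.  The cleanest route, and the one I would actually carry out: use nondegeneracy in the form of \Cref{rem:nondeg} at a nearby free boundary point.  By Step~1, $\{u=0\}\cap B_{r_1}(y_1)$ lies in the slab $\{|\bar e\cdot(x-y_1)|\le \eps r_1\}$; since $x_0$ is in the $2\eps$-slab inside $B_{r_1/2}(y_1)$, the segment from $x_0$ in the direction $-\bar e\,\mathrm{sign}(\bar e\cdot(x_0-y_1))$ of length $r_1/4$ stays in $B_{r_1}(y_1)$ and its endpoint has $\bar e$-coordinate of opposite sign and magnitude $\gtrsim r_1$, hence by the intermediate value theorem combined with \eqref{closenessep100} (which forces $u$ to be close to $|\bar e\cdot(\cdot - y_1)|$, in particular large near that endpoint), $u$ must vanish somewhere along a short sub-segment; more precisely, on the portion of that segment where $V_{y_1,\bar e} < \eps r_1$ we would have $u < 2\eps r_1$, and by nondegeneracy (\Cref{rem:nondeg}) applied at the nearest free boundary point $z_0 \in \{u=0\}$ to $x_0$, we get a free boundary point within distance $O_n(\eps r_1)$ of the slab's center line; tracking this gives $\dist(x_0, \{u=0\}) \le C_n \eps r_1$.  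In dimension $n=3$ the constants can be made explicit ($C_3 = 16$ works) by a direct computation using that $u \le 3\eps r_1$ on the $2\eps$-slab, $u$ is $1$-Lipschitz, and nondegeneracy gives $\sup_{B_\rho(x_0)} u \ge c\,\rho$ for a universal $c$ once $B_\rho(x_0)$ meets $\{u=0\}$.

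\emph{Main obstacle.}  The only genuinely delicate point is part (b): turning the $L^\infty$-closeness to a vee into a quantitative bound on the distance to the zero set.  Closeness to a vee alone is not enough — one really needs nondegeneracy of $u$ (equivalently, of the positivity set), which for classical solutions is exactly \Cref{rem:nondeg}/\Cref{lem:cleanball}.  The bookkeeping of the explicit dimensional constant (the claim $C_3 = 16$) is routine but must be done carefully: one chases through the chain "$u \le 3\eps r_1$ on the $2\eps$-slab $\Rightarrow$ by $1$-Lipschitz and linear nondegeneracy, any ball $B_\rho(x_0)\subset\{u>0\}$ has $\rho \lesssim \eps r_1$," optimizing constants at each inequality.
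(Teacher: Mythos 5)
Your Steps 1 and 2 are correct and coincide with the paper's proof of \eqref{closenessepzero} and of part (a) (triangle inequality plus $|\bar e\cdot(y_2-y_1)|=V_{y_1,\bar e}(y_2)\le \eps r_1$).

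Part (b), however, has a genuine gap, and you sense it yourself: you try two routes, discard both, and the third is not a proof. Concretely: (i) the quantitative nondegeneracy you ultimately invoke, ``$\sup_{B_\rho(x_0)}u\ge c\rho$ once $B_\rho(x_0)$ meets $\{u=0\}$,'' is \Cref{lem:nondegen}, which is proved only for \emph{stable} solutions; the present lemma assumes only a classical solution, so that tool is unavailable (the clean-ball property \Cref{lem:cleanball} concerns the volume of the positivity set and, as you note, gives nothing here). (ii) The intermediate-value-theorem idea cannot work because $u\ge 0$ everywhere: along your segment $u$ passes from small to large with no sign change, so nothing forces it to vanish, and the zero set could a priori avoid a large neighborhood of $x_0$ while staying inside the $\eps$-slab. (iii) Even granting the linear nondegeneracy, the bookkeeping does not close: with $d=\dist(x_0,\{u=0\})$ one gets $\sup_{B_{2d}(x_0)}u\ge c\,d$ from below and, by $1$-Lipschitzness, $\sup_{B_{2d}(x_0)}u\le u(x_0)+2d\le 3\eps r_1+2d$ from above, which is vacuous since $c<2$.

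The missing idea is the mean value property of harmonic functions, which makes (b) elementary and requires no nondegeneracy at all. Suppose $y\in{\rm Slab}(B_{r_1/2}(y_1),\bar e,2\eps)$ and $B_r(y)\cap\{u=0\}=\varnothing$ with $r<r_1/2$. Then $u>0$, hence harmonic, in $B_r(y)\subset B_{r_1}(y_1)$, so $u(y)=\fint_{B_r(y)}u$. On one hand $u(y)\le V_{y_1,\bar e}(y)+\eps r_1\le 3\eps r_1$. On the other hand, since a vee centered (up to an error $\le 2\eps r_1$ in the $\bar e$-coordinate) at $y$ has average $c_n r$ over $B_r(y)$,
\begin{equation*}
\fint_{B_r(y)}u\,dx\;\ge\;\fint_{B_r(y)}|\bar e\cdot(x-y)|\,dx-3\eps r_1\;=\;c_n r-3\eps r_1 .
\end{equation*}
Combining, $c_n r\le 6\eps r_1$, i.e.\ $r\le \tfrac{6}{c_n}\eps r_1$; computing $c_3=\tfrac38$ gives $C_3=16$. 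This is exactly how the paper argues, and it is the step your proposal does not supply.
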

\begin{proof}
Equation \eqref{closenessepzero} is an immediate consequence of \eqref{closenessep100}. We now prove (a) and (b).
\begin{enumerate}[(a)]
\item It follows from
\[
\| u -V_{y_2, \bar e} \|_{L^\infty(B_{r_2}(y_2))} \le \| u -V_{y_1, \bar e} \|_{L^\infty(B_{r_1}(y_1))} + \| V_{y_2, \bar e} -V_{y_1, \bar e} \|_{L^\infty(B_{r_2}(y_2))},
\]
observing  that $\|V_{y_2, \bar e} -V_{y_1, \bar e} \|_{L^\infty(B_{r_2}(y_2))} 
\le |\bar e \cdot(y_2-y_1)|  = V_{y_1, \bar e}(y_2) \le \eps r_1$.

\item  Given $r<r_1/2$, we need to prove the following implication: 
\begin{equation}\label{jhgiohoiwhohith2}
   \{u=0\}\cap B_r(y) = \varnothing \mbox{ for some $y\in {\rm Slab}(B_\varrho(y_1), \bar e, 2\eps )$} \quad \implies \quad r< C_n\eps r_1.
\end{equation}
Indeed, $y\in {\rm Slab}\!\left(B_{r_1/2}(y_1), \bar e, 2\eps\right)$ is equivalent to $|y-y_1|<r_1/2$ and $
|(y-y_1)\cdot \bar e|
\le 2\eps r_1$. Also,  since $r< r_1/2$ we have $B_r(y)\subset B_{r_1}(y_1)$. 
Thus, from  \eqref{closenessep100} we obtain 
\[
u(y) \leq V_{y_1,\bar e}(y) + \eps r_1 = |\bar e\cdot(y-y_1)|  + \eps r_1 \leq 3\eps r_1. 
\]
On the other hand, still using \eqref{closenessep100} and the triangle inequality, we get
\[
\fint_{B_r(y)} u(x)\,dx \ge \fint_{B_r(y)} |\bar e\cdot(x-y_1)| \,dx -\eps r_1 \ge \fint_{B_r(y)} |\bar e\cdot(x-y)| \,dx -3\eps r_1 = r\fint_{B_1} |x_1| \,dx -3\eps r_1 = c_n r - 3\eps r_1.
\]
Since 
$ 
\fint_{B_r(y)} u = u(y)  
$ 
(recall that $u$ is harmonic in $B_r(y)$), this proves that 
$3\eps r_1 \ge c_n r - 3\eps r_1$, or equivalently $r \le \frac{6}{c_n} \eps r_1$, as wanted. (An explicit computation shows that $c_3 = \tfrac38$.) \qedhere
\end{enumerate}
\end{proof}

A variant of \Cref{cor_closetoV_disc_reg} that we will also use in the sequel is the following:  

\begin{lem}[Closeness to vee and bounded Hessian implies regularity]
\label{cor_closetoV_disc_reg2}
Let $n\ge 2$. Given $C_1\ge 1$ there exists $\eps_1>0$, depending only on $n$ and $C_1$, such that the following holds. 

Let $u$ be a global classical solution to the Bernoulli problem in $\R^n$. Suppose that $|D^2u| \le C_1\varrho^{-1}$ in $B_{2\varrho}\cap \{u>0\}$ and
\begin{equation}\label{sjiowoihw2}
\big |u- V_{0,e_n}\big|\le \eps\varrho \le \eps_1\varrho\quad \mbox{in } B_{2\varrho},
\end{equation}
where $e_n$ is the $n$-th vector in the canonical basis.
Then, the same conclusions as in \Cref{cor_closetoV_disc_reg} hold true.
\end{lem}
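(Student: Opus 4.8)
The plan is to deduce \Cref{cor_closetoV_disc_reg2} from \Cref{cor_closetoV_disc_reg} by showing that, under the assumption $|D^2u|\le C_1\varrho^{-1}$ in $B_{2\varrho}\cap\{u>0\}$, closeness to a vee forces the two points $\varrho e_n$ and $-\varrho e_n$ to lie in different connected components of $\{u>0\}\cap B_{2\varrho}$ (possibly after slightly shrinking $\varrho$), which is precisely the extra hypothesis needed to invoke \Cref{cor_closetoV_disc_reg}. Once the disconnectedness is in place, \Cref{cor_closetoV_disc_reg} gives all the stated conclusions (the $L^\infty$ bound on $D^2u$, the graph decomposition of $\{u>0\}$, and the $C^2$ bounds on $g^{(\pm)}$) directly, with $\eps_\circ$ from that lemma; one only needs $\eps_1\le \eps_\circ$ and $\eps_1$ small enough (depending on $n$ and $C_1$) so that the auxiliary closeness hypotheses in the argument below hold.

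To establish the disconnectedness, first note that by \eqref{sjiowoihw2} and \Cref{treeprelim1}, the zero set satisfies $\{u=0\}\cap B_{2\varrho}\subset {\rm Slab}(B_{2\varrho}, e_n, \eps)$, so the strip $\{|x_n|<2\eps\varrho\}$ separates the ``upper'' region $\{x\in B_{3\varrho/2}: x_n>2\eps\varrho\}$ from the ``lower'' one $\{x\in B_{3\varrho/2}: x_n<-2\eps\varrho\}$, and $\varrho e_n$, $-\varrho e_n$ belong to $\{u>0\}$ and to these two regions respectively (for $\eps_1<\tfrac14$). It therefore suffices to show that no connected component of $\{u>0\}\cap B_{2\varrho}$ can travel from the upper region to the lower region; equivalently, that each connected component $U$ of $\{u>0\}\cap B_{2\varrho}$ meeting $B_{3\varrho/2}$ stays in one of the two half-slabs. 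Here is where the Hessian bound enters: along $U$ the function $|\nabla u|$ starts equal to $1$ on the piece of free boundary bounding $U$, and $\big|\,|\nabla u|(x)-|\nabla u(y)|\,\big|\le C_1\varrho^{-1}|x-y|$, so $|\nabla u|\ge 1-C C_1\eps$ on a neighborhood of size $\sim \varrho$ of that free-boundary piece once $\eps_1$ is small. More efficiently, combine the Hessian bound with the $L^\infty$-closeness via interpolation (as in \Cref{lem:eps-reg-classical-2} / \Cref{cor:Hess-W21}, applied to $u-V_{0,e_n}$ on a slightly smaller ball): we get $\|\nabla u - \nabla V_{0,e_n}\|_{L^\infty(B_{3\varrho/2}\cap\{u>0\})}\le C(C_1)\sqrt{\eps}$, say. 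Hence on $\{u>0\}\cap B_{3\varrho/2}$ the gradient $\nabla u$ is within $C(C_1)\sqrt\eps$ of $\pm e_n$, and since $\nabla u$ is continuous on the connected set $U$ and never vanishes there (its value is near $\pm e_n$), $\nabla u$ is near $+e_n$ throughout $U$ or near $-e_n$ throughout $U$. In the first case, for $x\in U$ with $x_n$ small we would have, integrating from a free boundary point, $u(x)=u(x)-0\le \int_0^1 \nabla u(\gamma(t))\cdot\gamma'(t)\,dt$ along a suitable path staying in $U$, forcing $x_n\gtrsim u(x)-C(C_1)\sqrt\eps\varrho>0$; thus $U\subset\{x_n>-C(C_1)\sqrt\eps\varrho\}$, so $U$ cannot contain $-\varrho e_n$. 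The symmetric argument rules out the other case for $\varrho e_n$, giving disconnectedness.

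The main obstacle is making the path argument in the previous paragraph clean: one must produce, for any $x\in U\cap B_{3\varrho/2}$, a path inside $U$ from a free-boundary point of $U$ to $x$ whose length is controlled (comparable to $\varrho$), so that integrating $\partial_{e_n}u\approx 1$ along it yields the desired one-sided bound $x_n\gtrsim -C\sqrt\eps\,\varrho$. This is where the bounded-Hessian hypothesis is truly used, beyond the gradient estimate: it guarantees (via \Cref{lem:eps-reg-classical-2}) $C^k$ control of $u$ in $U$, hence that the free boundary of $U$ is, near each of its points, a Lipschitz-with-small-constant graph over $\{x_n=0\}$, so that $U\cap B_{3\varrho/2}$ is itself essentially the region above (or below) such a graph and is path-connected by short vertical-ish paths. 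Alternatively, and perhaps more robustly, one avoids paths entirely: set $\bar u := u\,\mathbbm 1_U$; the gradient estimate shows $|\nabla \bar u|$ is bounded and $\bar u$ is a classical solution in a slightly smaller ball with $\|\bar u \mp x_n\|_{L^\infty}$ small (the sign depending on which of $\pm e_n$ the gradient of $u$ on $U$ is near), whence \Cref{lem:eps-reg-classical} applies to $\bar u$, forcing $\FB(\bar u)$ to be a small $C^k$ graph and $U$ to be one-sided; then disconnectedness of $\{u>0\}=\{u>0\}\cap\bigcup_j U_j$ in $B_\varrho$ follows, and one is reduced exactly to the situation of \Cref{cor_closetoV_disc_reg}. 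Either way, once disconnectedness holds, the remaining conclusions are immediate from \Cref{cor_closetoV_disc_reg}; the only quantitative bookkeeping is to track how $\eps_1$ depends on $n$ and $C_1$ through the interpolation constants, which is routine.
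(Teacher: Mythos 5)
Your overall strategy — use the Hessian bound to show that $\pm\varrho e_n$ must lie in different connected components of $\{u>0\}\cap B_{2\varrho}$, then quote \Cref{cor_closetoV_disc_reg} — is viable and genuinely different from the paper's proof, which instead argues directly that $\FB(u)\cap B_{2\varrho}$ is a hypersurface with principal curvatures bounded by $CC_1\varrho^{-1}$ trapped (by \eqref{sjiowoihw2} and \Cref{treeprelim1}) in the slab $\{|x_n|\le\eps_1\varrho\}$, and hence consists of flat ordered graphs. However, the key quantitative step in your reduction is not justified as written. You claim $\|\nabla u-\nabla V_{0,e_n}\|_{L^\infty(B_{3\varrho/2}\cap\{u>0\})}\le C(C_1)\sqrt\eps$ "by interpolation as in \Cref{lem:eps-reg-classical-2} / \Cref{cor:Hess-W21} applied to $u-V_{0,e_n}$". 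Neither of those results applies here: both presuppose (via $\eps$-regularity) exactly the flat-graph structure of the free boundary you are trying to establish, and the interpolation inequality \Cref{lem:L1_Lip} is stated for Lipschitz epigraph domains, which $\{u>0\}$ near $\FB(u)$ is not known to be at this stage. Moreover $V_{0,e_n}$ is not $C^2$ across $\{x_n=0\}$, so even interior interpolation on balls meeting the kink fails. The point is that the heart of the matter is precisely to show that the inward normal $\nu(y_0)=\nabla u(y_0)$ at a free boundary point $y_0\in B_{3\varrho/2}$ is within $C(C_1)\sqrt\eps$ of $\pm e_n$; your second "alternative" (apply \Cref{lem:eps-reg-classical} to $\bar u=u\,\mathbbm 1_U$ after noting $\|\bar u\mp x_n\|_{L^\infty}$ is small) is circular, since knowing which sign to take and that $\bar u$ is close to $x_n$ rather than to $|x_n|$ on $\{\bar u>0\}$ already requires knowing that $U$ is one-sided.

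The step can be repaired, but by an argument that essentially reproduces the paper's: the Hessian bound together with $|\nabla u|=1$ on $\FB(u)$ gives an interior ball condition of radius $c\varrho/C_1$ for $\{u>0\}$, so the ray $y_0+t\nu(y_0)$ stays in $\{u>0\}$ for $t\le c\varrho/C_1$ with $u(y_0+t\nu)\ge t-C_1t^2/(2\varrho)$; comparing with the upper bound $u\le|x_n|+\eps\varrho\le t|\nu_n|+2\eps\varrho$ at $t=\sqrt\eps\,\varrho$ yields $1-|\nu_n|\le C(C_1)\sqrt\eps$, i.e. the normal is nearly vertical. Once this is known at free boundary points, your dichotomy for $\nabla u$ on all of $\{u>0\}\cap B_{3\varrho/2}$ follows by the Hessian bound near $\FB(u)$ and harmonic estimates for $u-x_n$ away from it (one also needs the observation, via the mean value property and \eqref{sjiowoihw2}, that any point of $\{u>0\}\cap B_{3\varrho/2}$ with $|x_n|\le\sqrt\eps\varrho$ is within $C(\sqrt\eps\varrho)$ of $\FB(u)$, so no "interior but near the kink" case survives). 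With that dichotomy in hand your connectedness argument and the integration along the segment to the nearest free boundary point do close the proof. As written, though, the proposal has a genuine gap at its central estimate.
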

\begin{proof}
On the one hand,  the bound on the Hessian  implies that the principal curvatures of the free boundary inside $B_{2\varrho}$  are bounded by $C C_1\varrho^{-1}$ (recall that
$u=0$ and $\partial_\nu u=1$ on $\FB(u)$). 
On the other hand, \eqref{sjiowoihw2} implies that  $\FB(u)\cap B_{2\varrho}$ is contained in the slab $|x_n|\le \eps_1 \varrho$ (and it is non-empty, by \Cref{treeprelim1}(b)). 
The result follows.
\end{proof}

\section{Blow-down of global stable solutions}
\label{sec:blowdown}

The goal of this section is to prove that, in $\R^3$, non-flat global stable solutions  to the Bernoulli problem look like a vee at large scales. This is the content of the next:

\begin{prop}[Blow-down of non-flat solutions]
\label{prop:W}
 Given $\eps>0$, there exists $R_{\eps}>0$ depending only on $\eps$ such that for any $R\geq R_{\eps}$, the following holds.

Let $u$ be a global classical stable solution to the Bernoulli problem in $\R^3$, and $0\in \FB(u)$. If
\begin{equation}\label{eq:lem-W-Hess}
\norm[L^\infty(B_1 \cap \set{u>0})]{D^2 u}
\geq 1,
\end{equation}
then there exists $e_R\in \bS^{2}$ such that
\begin{equation}\label{eq:W-like-abs}
\big\| u-|e_R\cdot x| \big\|_{L^\infty(B_R)} \leq \eps R.
\end{equation}
In other words, there exists a universal modulus of continuity $\omega$ (of the form \eqref{eq:w}) such that 
\begin{equation}
	\label{eq:omega_mod}
	\big\| u-|e_R\cdot x| \big\|_{L^\infty(B_R)} \leq \omega(R^{-1}) R,\qquad\text{for all}\quad R>0.
\end{equation}
\end{prop}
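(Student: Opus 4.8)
The plan is to argue by contradiction and a compactness/blow-down scheme. Suppose the statement \eqref{eq:W-like-abs} fails: then there exist $\eps_0>0$, radii $R_k\to\infty$, and global classical stable solutions $u_k$ with $0\in\FB(u_k)$ and $\|D^2u_k\|_{L^\infty(B_1\cap\{u_k>0\})}\ge 1$, such that $\|u_k-|e\cdot x|\|_{L^\infty(B_{R_k})}>\eps_0 R_k$ for every $e\in\bS^2$. Consider the rescalings $\tilde u_k(x):=R_k^{-1}u_k(R_k x)$, which are again global classical stable solutions (stability and the Bernoulli conditions are scale invariant), with $0\in\FB(\tilde u_k)$ and $|\nabla\tilde u_k|\le 1$ by \Cref{lem:Lipbound}. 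By the Sternberg--Zumbrun form of the stability inequality (the bound $R^2\fint_{B_R(y)\cap\{u>0\}}|D^2u|^2\le C$ recalled in point 2 of the overview, applied to $u_k$ and rescaled), we get for $\tilde u_k$ the universal estimate $\fint_{B_\rho(y)\cap\{\tilde u_k>0\}}|D^2\tilde u_k|^2\le C/\rho^2$ on every fixed ball; combined with \Cref{lem:eps-reg-Hess} and the clean-ball property \Cref{lem:cleanball}, this yields local uniform $C^{2,\alpha}$ bounds for $\tilde u_k$ on compact subsets of $\R^3$ away from a controlled ``bad set'' where the Hessian concentrates. Passing to a subsequence, $\tilde u_k\to u_\infty$ in $C^2_{\loc}$ (on the good part) and locally uniformly, where $u_\infty$ is a global classical stable solution to the Bernoulli problem in $\R^3$, $1$-Lipschitz, with $0\in\overline{\{u_\infty>0\}}$.

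The next step is to identify $u_\infty$. Here I would use the fact that at scale $R_k$ the curvature scale is $\approx 1$, so $\tilde u_k$ has Hessian of size $O(R_k^{-1})\to 0$ on unit balls; more precisely, the $L^2$-Hessian bound forces the blow-down limit $u_\infty$ to satisfy $D^2u_\infty\equiv 0$ on $\{u_\infty>0\}$. Indeed, testing the Sternberg--Zumbrun inequality for $\tilde u_k$ with a cutoff at scale $R$ and letting $k\to\infty$ gives $\int_{B_R\cap\{u_\infty>0\}}|D^2u_\infty|^2\le C$ for all $R$, hence (after a further scaling-down argument, or directly from Liouville-type reasoning for the superharmonic function $v_\infty=1-|\nabla u_\infty|^2$ of \Cref{lem:vbounds}, which is bounded and whose Laplacian is $-2|D^2u_\infty|^2$) one concludes $D^2u_\infty\equiv 0$, so $\nabla u_\infty$ is constant of unit length on each component and $\{u_\infty>0\}$ is a union of half-spaces: $u_\infty$ is either a half-plane solution $(e\cdot x - t)_+$, a vee $|e\cdot x - t|$, or (if $\{u_\infty>0\}=\R^3$) a linear function, or identically zero. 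The clean-ball property \Cref{lem:cleanball} passes to the limit and rules out $u_\infty\equiv 0$ near the origin once one checks $0$ stays on (or near) the free boundary; and the lower Hessian bound $\|D^2u_k\|_{L^\infty(B_1)}\ge 1$, together with the monotonicity of the curvature radius along the blow-down (the Hessian of $\tilde u_k$ at the rescaled point of size-$1$ curvature does not vanish), should be leveraged to show that the free boundary of $u_\infty$ is nonempty and genuinely two-sided, i.e.\ that $u_\infty$ is a \emph{vee}, not a half-plane or a linear function. This is the heart of the argument: excluding the half-plane limit. I would exclude the half-plane by a separate compactness argument at scale $1$: if the blow-down were a half-plane, then for large $k$, $u_k$ would be $\eps$-flat in $B_{R_k}$ on one side, and by \Cref{lem:eps-reg-classical} (flatness $\Rightarrow$ smoothness with linear estimates) the free boundary of $u_k$ in, say, $B_{R_k/2}$ would be a graph with $C^2$-norm $\lesssim \eps$, forcing $\|D^2u_k\|_{L^\infty(B_1\cap\{u_k>0\})}\to 0$ as $k\to\infty$ (using $R_k\to\infty$), contradicting \eqref{eq:lem-W-Hess}.

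Once $u_\infty=|e\cdot x - t|$ is a vee, the contradiction is immediate: $C^0_{\loc}$ convergence gives $\|\tilde u_k-|e\cdot(\cdot)-t|\|_{L^\infty(B_2)}\to 0$, and passing from the vee $|e\cdot x-t|$ to one centered on a nearby free-boundary point (which exists and lies within $o(1)$ of the hyperplane $\{e\cdot x=t\}$, since $0\in\FB(\tilde u_k)$ forces $|t|\to 0$) via \Cref{treeprelim1}(a), we obtain $e_{R_k}\in\bS^2$ with $\|\tilde u_k-|e_{R_k}\cdot x|\|_{L^\infty(B_1)}\to 0$, i.e.\ $\|u_k-|e_{R_k}\cdot x|\|_{L^\infty(B_{R_k})}=o(R_k)$, contradicting the choice of $u_k$. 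Finally, \eqref{eq:omega_mod} follows from \eqref{eq:W-like-abs} by a routine argument: for each integer $m$ set $\omega(R^{-1})$ to be (a concave majorant of) the infimum of $\eps$ for which $R\ge R_\eps$; for small $R$ one uses instead the trivial bound $\|u-|e\cdot x|\|_{L^\infty(B_R)}\le 2R$ (valid for any $e$, since both functions are $1$-Lipschitz and vanish near $0$) and takes the concave envelope, which is legitimate because $\omega$ is only required to satisfy \eqref{eq:w}.

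\textbf{Main obstacle.} The delicate point is the compactness step: obtaining locally uniform $C^2$ estimates for the rescaled solutions $\tilde u_k$ despite the \emph{a priori} possibility of Hessian concentration (small necks), and then showing the limit's free boundary is a genuine vee rather than a half-plane. The $L^2$-Hessian bound from stability only controls the Hessian in an integral sense, so one must combine it carefully with $\eps$-regularity (\Cref{lem:eps-reg-Hess}) and a covering/bad-set argument to extract the $C^2_{\loc}$ limit, and then use the normalization \eqref{eq:lem-W-Hess} at the unit scale — which does \emph{not} rescale away — as the essential ingredient that forces two-sidedness of the limiting free boundary. Handling the interaction between these two scales (the fixed unit curvature scale and the diverging scale $R_k$) is the technical crux; I expect this is precisely where the dimension $n=3$ (or at least some a priori structural input on global stable solutions) enters, presumably through a preliminary classification of blow-downs that the paper develops in the surrounding sections.
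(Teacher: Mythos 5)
Your overall architecture (contradiction, rescaling $\tilde u_k=R_k^{-1}u_k(R_k\cdot)$, compactness, exclusion of the half-plane limit via $\eps$-regularity at unit scale, and the final reduction to a vee centered at a nearby free boundary point) matches the spirit of the paper, and the half-plane exclusion is essentially the paper's Step. But there is a genuine gap at the crucial step: your justification that the limit $u_\infty$ satisfies $D^2u_\infty\equiv 0$ does not work. The Sternberg--Zumbrun bound for $n=3$ gives $\int_{B_R\cap\{u>0\}}|D^2u|^2\,dx\le CR$, which rescales to $\int_{B_\rho\cap\{\tilde u_k>0\}}|D^2\tilde u_k|^2\le C\rho$ — bounded, not vanishing — and is exactly the growth rate exhibited by $1$-homogeneous profiles and by the catenoid-type solutions of \cite{LWW21} (for which $|D^2u|\sim|x|^{-1}$). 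Likewise, the proposed Liouville argument for $v_\infty=1-|\nabla u_\infty|^2$ fails: a bounded non-negative superharmonic function on $\{u_\infty>0\}$ vanishing on the free boundary need not vanish identically (e.g. $\min(x_n,1)$ on a half-space), so superharmonicity plus boundedness gives nothing. Ruling out these non-flat limits is precisely where the real work lies, and your sketch supplies no mechanism for it.

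There is also a structural circularity: the limit $u_\infty$ of your rescaled sequence is merely \emph{some} global classical stable solution — it is not a blow-down of a fixed solution and has no reason to be $1$-homogeneous — so classifying it is exactly the content of \Cref{thm:main1}, whose proof relies on \Cref{prop:W}. The paper breaks this circle with the Weiss monotonicity formula: it shows (\Cref{lem:W-compact-H1}) that almost-constancy of ${\bf W}$ on an annulus forces closeness to either a half-space or a vee, by reducing to $1$-homogeneous limits, classifying their blow-ups at points of $\mathbb S^2$ as two-dimensional profiles, invoking the Jerison--Savin classification of $1$-homogeneous stable cones in $\R^3$ in the smooth case, and — in a separate delicate step using the $L^2$ Hessian bound and a Fubini/slicing argument — showing the limiting vee has slope exactly $1$. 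The Hessian lower bound \eqref{eq:lem-W-Hess} then pins ${\bf W}(u,4C_\circ)\ge\alpha_3+\delta_\circ$ (\Cref{lem:W-lower}), and a dyadic iteration of the dichotomy, with the half-space branch excluded at every scale, pinches ${\bf W}$ near $2\alpha_3$ for $R\ge R_\eps$, yielding \eqref{eq:W-like-abs}. To repair your proof you would need to import all of this: Weiss monotonicity to gain homogeneity of blow-downs, the classification of $1$-homogeneous stable solutions, and the normalization $\tilde a=1$.
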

To prove this result, we will need to develop a variety of tools that are of independent interest.

We first focus on results that are valid for classical stable solutions. We start by recalling the nondegeneracy of stable solutions recently obtained in \cite{kamburov2022nondegeneracy}. It is proved using a De Giorgi iteration with Michael--Simon--Sobolev inequality, where the mean curvature integral is estimated using the stability inequality with test function $|\nabla u|$.

\begin{lem}[Nondegeneracy of stable solutions \cite{kamburov2022nondegeneracy}]
\label{lem:nondegen}
Let $n\geq 2$, and let $u$ be a  global classical stable solution to the Bernoulli problem in $\R^n$. Then,
for all 
   $y\in \partial{\{u > 0\}}$ and $r>0$,
\begin{equation}\label{eq:density-B}
\fint_{\partial B_r(y)}u \,d\cH^{n-1}
\geq c r
	\quad \mbox{ and }\quad 
\mathcal{H}^{n-1}(\partial\{u > 0\}\cap B_r(y)) \ge c r^{n-1},
\end{equation}
for some $c> 0$ depending only on $n$.
\end{lem}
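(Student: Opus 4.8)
The plan is to reduce the statement to a single scale-invariant inequality and then establish it by combining one consequence of stability --- an $L^1$-type control on the mean curvature of the free boundary --- with a De~Giorgi iteration built on the Michael--Simon--Sobolev inequality on $\FB(u)$. First I would use the scaling $u\mapsto u(y+r\,\cdot\,)/r$, which preserves the class of global classical stable solutions, to reduce to $y=0$, $r=1$, and I would dispose of the trivial cases in which $u$ is a half-space $(e\cdot x)_+$ or a vee $|e\cdot x|$, so that by \Cref{lem:vbounds} one may assume $H>0$ on $\FB(u)$. Since $\Delta u=\cH^{n-1}|_{\FB(u)}$, the function $u$ is subharmonic, and from $u(0)=0$ one gets
\[
\fint_{\partial B_\rho}u\,d\cH^{n-1}=c_n\int_0^\rho s^{1-n}\,\cH^{n-1}\!\bigl(\FB(u)\cap B_s\bigr)\,ds,\qquad\rho>0 .
\]
Combined with the area bound $\cH^{n-1}(\FB(u)\cap B_s)\le Cs^{n-1}$ of \Cref{lem:perbound} and the monotonicity of $s\mapsto\cH^{n-1}(\FB(u)\cap B_s)$, this identity makes the two estimates in \eqref{eq:density-B} quantitatively equivalent, so it suffices to prove $\fint_{\partial B_1}u\,d\cH^{n-1}\ge c$ whenever $0\in\FB(u)$.

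The only step where stability is used is to establish the bounds
\[
\int_{\FB(u)\cap B_\rho}H\,d\cH^{n-1}\le C\rho^{n-2}\qquad\text{and}\qquad\int_{B_\rho\cap\{u>0\}}|D^2u|^2\,dx\le C\rho^{n-2},\qquad\rho>0 .
\]
For the first, I would take the Sternberg--Zumbrun form of the stability inequality \eqref{eq:stability-var} --- obtained by using $|\nabla u|$ (times a cutoff) as the test perturbation, cf.\ \cite{SZ98} and \Cref{lem:Stern_Zum} ---
\[
\int_{\{u>0\}}\!\bigl(|D^2u|^2-|\nabla|\nabla u||^2\bigr)\xi^2\,dx+\int_{\FB(u)}\!H\,\xi^2\,d\cH^{n-1}\le\int_{\{u>0\}}\!|\nabla u|^2|\nabla\xi|^2\,dx ,
\]
observe that both terms on the left are nonnegative (by the Kato inequality $|D^2u|^2\ge|\nabla|\nabla u||^2$ and by $H>0$), use $|\nabla u|\le1$ (\Cref{lem:Lipbound}), and plug in $\xi$ equal to a standard cutoff between $B_\rho$ and $B_{2\rho}$. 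The second bound then follows from the first together with a Bochner-type integration by parts in $\{u>0\}$ (using $\Delta u=0$ in $\{u>0\}$ and $|\nabla u|=1$ on $\FB(u)$).

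The core of the proof would be a contradiction argument. Suppose $\delta:=\fint_{\partial B_1}u\,d\cH^{n-1}$ is arbitrarily small. Then subharmonicity gives $\sup_{B_{1/2}}u\le C\delta$, and a cutoff energy estimate (using $\Delta u=0$ in $\{u>0\}$ and $u=0$ on $\FB(u)$) gives $\int_{B_{1/4}\cap\{u>0\}}|\nabla u|^2\,dx\le C\delta^2$. On the other hand $|\nabla u|=1$ on $\FB(u)$, so $u$ climbs at unit rate off the free boundary; heuristically, the free boundary in $B_{1/8}$ would then have to be concentrated in a thin region on which $|\nabla u|$ is close to $1$, whose volume is therefore $\lesssim\int|\nabla u|^2\lesssim\delta^2$ --- too small to accommodate a free boundary through the origin. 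I would make this rigorous via a De~Giorgi iteration: a nonlinear iteration of energy estimates over shrinking sets, in the spirit of the clean-ball argument in the proof of \Cref{lem:cleanball}, but with the Michael--Simon--Sobolev inequality on $\FB(u)$ (and on the nearby level sets $\{u=t\}$) in place of the Euclidean isoperimetric inequality, so as to accommodate the free-boundary contribution to the ``perimeter'', and absorbing the mean-curvature error terms $\int_{\FB(u)}H\,f$ produced by Michael--Simon--Sobolev by means of the bounds of the previous step. The outcome would be $\FB(u)\cap B_{1/8}=\varnothing$, contradicting $0\in\FB(u)$.

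The step I expect to be the main obstacle is exactly this last De~Giorgi iteration: it must propagate the smallness of $u$ --- equivalently of $\int_{B_{1/4}\cap\{u>0\}}|\nabla u|^2$, equivalently of $\cH^{n-1}(\FB(u)\cap B_1)$ --- from one dyadic scale to the next \emph{without} any a~priori curvature bound on $\FB(u)$; such a bound is precisely \Cref{thm:main2}, which is proved later \emph{using} the present lemma, and so cannot be invoked here. The only geometric handle is therefore the $L^1$ control of $H$ above, and the delicate point is to balance the mean-curvature error coming from Michael--Simon--Sobolev against the unit-speed growth of $u$ off $\FB(u)$. This is the content of \cite{kamburov2022nondegeneracy}, whose argument I would follow for the details.
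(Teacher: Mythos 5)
The paper does not prove this lemma at all: it simply cites \cite{kamburov2022nondegeneracy} and describes the argument in one sentence (``a De Giorgi iteration with Michael--Simon--Sobolev inequality, where the mean curvature integral is estimated using the stability inequality with test function $|\nabla u|$''), which is exactly the strategy your sketch expands on before you too defer the iteration to that reference --- so this is essentially the same approach. One small inaccuracy worth noting: plugging $\xi=|\nabla u|\eta$ into \eqref{eq:stab_ineq_H} makes the boundary term $\bc(\bc_\nu+H\bc)=-H+H$ vanish identically, so your displayed inequality with \emph{both} the Sternberg--Zumbrun interior term and $\int_{\FB(u)}H\xi^2$ on the left is not what that test function yields; the two bounds you actually need ($\int_{\FB(u)\cap B_\rho}H\le C\rho^{n-2}$ and $\int_{B_\rho\cap\{u>0\}}|D^2u|^2\le C\rho^{n-2}$) are each correct but come from two separate choices of test function.
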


The following lemma is a direct consequence of a general result first obtained in the semilinear setting by Sternberg--Zumbrun \cite{SZ98}.

\begin{lem}[Sternberg--Zumbrun inequality]
\label{lem:Stern_Zum}
Let $n \ge 2$, $R > 0$, and let $u$ be a classical stable solution to the Bernoulli problem in $B_{2R}\subset \R^n$. Then, 
\[
\int_{B_{R}\cap \{u > 0\}} |D^2 u|^2 \,dx \le CR^{n-2},\quad\text{and therefore}\quad \fint_{B_R\cap \{u>0\}}|D^2 u|^p \,dx \le C R^{-p}\qquad\text{for any}\quad p\in [0, 2],
\]
for some $C$ depending only on $n$. 
\end{lem}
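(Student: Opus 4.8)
The plan is to combine a one-sided control of the (weighted) mean curvature of $\FB(u)$, furnished by stability, with a reverse bound obtained directly from the equation through the function $v:=1-|\nabla u|^2$.

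The stability inequality \eqref{eq:stability-var}, in its Sternberg--Zumbrun (geometric Poincar\'e) form, states that for every $\zeta\in C^\infty_c(B_{2R})$,
\[
\int_{\{u>0\}\cap B_{2R}}\!\bigl(|D^2u|^2-|\nabla|\nabla u||^2\bigr)\zeta^2\,dx\;+\;\int_{\FB(u)\cap B_{2R}}\!H\,|\nabla u|^2\,\zeta^2\,d\cH^{n-1}\;\le\;\int_{\{u>0\}\cap B_{2R}}\!|\nabla u|^2\,|\nabla\zeta|^2\,dx.
\]
Since $|\nabla|\nabla u||=|D^2u\,\nabla u|/|\nabla u|\le|D^2u|$ a.e.\ in $\{u>0\}$, the first integrand is nonnegative and can be discarded; using $|\nabla u|=1$ on $\FB(u)$ and $|\nabla u|\le1$ (\Cref{lem:Lipbound}; for a merely local classical solution one uses instead the interior Lipschitz estimate near the free boundary), this reduces to
\[
\int_{\FB(u)\cap B_{2R}}H\,\zeta^2\,d\cH^{n-1}\;\le\;\int_{\{u>0\}\cap B_{2R}}|\nabla\zeta|^2\,dx.
\]

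For the reverse estimate I would use that $v=1-|\nabla u|^2$ satisfies $\Delta v=-2|D^2u|^2$ in $\{u>0\}$, $v=0$ on $\FB(u)$, and $\partial_\nu v=-2\,\partial^2_{\nu\nu}u=2H$ on $\FB(u)$ (with $\nu$ the inner normal), exactly as in \Cref{lem:vbounds}; all of this uses only that $u$ is a classical solution. Multiplying $\Delta v=-2|D^2u|^2$ by $\zeta^2$ and integrating by parts over $\{u>0\}$ --- legitimate because $u$ is smooth up to the smooth free boundary, $\cH^{n-1}(\FB(u)\cap B_{2R})<\infty$ by \Cref{lem:perbound}, and $\zeta$ is compactly supported so the outer boundary contributes nothing --- gives (the boundary term of $\{u>0\}$ being $\int_{\FB(u)}H\zeta^2$, since the outward normal is $-\nu$)
\[
\int_{\{u>0\}}\zeta^2|D^2u|^2\,dx\;=\;\int_{\FB(u)}H\,\zeta^2\,d\cH^{n-1}\;+\;\int_{\{u>0\}}\zeta\,\nabla\zeta\cdot\nabla v\,dx.
\]
Since $|\nabla v|=2|D^2u\,\nabla u|\le2|D^2u|$, Young's inequality bounds the last term by $\tfrac12\int\zeta^2|D^2u|^2\,dx+2\int|\nabla\zeta|^2\,dx$, whence $\tfrac12\int_{\{u>0\}}\zeta^2|D^2u|^2\,dx\le\int_{\FB(u)}H\zeta^2\,d\cH^{n-1}+2\int|\nabla\zeta|^2\,dx$. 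Inserting the stability bound of the previous paragraph yields $\int_{\{u>0\}}\zeta^2|D^2u|^2\,dx\le 6\int|\nabla\zeta|^2\,dx$, and taking $\zeta$ a standard cutoff with $\zeta\equiv1$ on $B_R$, $\supp\zeta\subset B_{2R}$, $|\nabla\zeta|\le C/R$, gives $\int_{B_R\cap\{u>0\}}|D^2u|^2\,dx\le CR^{n-2}$. The $L^p$ statement for $p\in[0,2]$ then follows from Jensen's inequality together with the nondegeneracy bound $|\{u>0\}\cap B_R|\ge cR^n$ (\Cref{rem:nondeg}): $\fint_{B_R\cap\{u>0\}}|D^2u|^p\le\bigl(\fint_{B_R\cap\{u>0\}}|D^2u|^2\bigr)^{p/2}\le\bigl(CR^{n-2}/(cR^n)\bigr)^{p/2}=CR^{-p}$.

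I do not expect a genuine obstacle once the Sternberg--Zumbrun form of the stability inequality is available: that rewriting of the second inner variation of $\cE$ is the only substantive ingredient, and everything else is the short computation above. The points deserving a little care are the sign bookkeeping in the boundary identity $\partial_\nu v=2H$ and the justification of the integration by parts, both of which are routine for classical solutions with smooth free boundary, with \Cref{lem:perbound} ensuring that the free-boundary integrals are finite.
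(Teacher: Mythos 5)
Your proof is correct in substance, but it takes a genuinely different route from the paper's. The paper proves this via \Cref{lem:Stern_Zum_App}: it substitutes $\bc=|\nabla u|$ into the stability inequality in the form \eqref{eq:stab_ineq_H_2}, where the free-boundary terms cancel exactly (since $\partial_\nu|\nabla u|=-H$ and $|\nabla u|=1$ on $\FB(u)$), yielding $\int(|D^2u|^2-|\nabla|\nabla u||^2)\eta^2\le\int|\nabla u|^2|\nabla\eta|^2$ with \emph{no} boundary term; it then recovers the full Hessian from the defective quantity $|D^2u|^2-|\nabla|\nabla u||^2$ by a pointwise algebraic argument using harmonicity, namely $(\partial^2_{\nu\nu}u)^2\le(n-1)\sum_{i}(\partial^2_{\tau_i\tau_i}u)^2$, which gives $|D^2u|^2-|\nabla|\nabla u||^2\ge\frac1n|D^2u|^2$. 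You instead keep the stability inequality in its raw form $\int_{\FB(u)}H\zeta^2\le\int_{\{u>0\}}|\nabla\zeta|^2$ (this is \eqref{eq:stab_ineq_H}) and convert the boundary curvature bound into a bulk Hessian bound through the divergence identity for $v=1-|\nabla u|^2$, absorbing the error term $\int\zeta\nabla\zeta\cdot\nabla v$ by Young's inequality. Your second step is essentially the computation the paper performs in \Cref{lem:control_D2u} (there with $H$ bounded in $L^\infty$ rather than in the $L^1$ sense provided by stability), so the two approaches are complementary: yours avoids the algebraic step with harmonicity entirely, at the price of needing the integration by parts on $\{u>0\}$ and the finiteness of $\cH^{n-1}(\FB(u)\cap B_{2R})$, both of which are routine for classical solutions as you note.

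One caveat: the ``Sternberg--Zumbrun form'' you display at the outset, with \emph{both} the bulk term $\int(|D^2u|^2-|\nabla|\nabla u||^2)\zeta^2$ \emph{and} the boundary term $\int_{\FB(u)}H|\nabla u|^2\zeta^2$ on the left, does not follow from stability: when one plugs $\xi=|\nabla u|\zeta$ into \eqref{eq:stab_ineq_H} the curvature terms cancel, and what survives is \eqref{eq:prev_SZ}, which has no boundary term. Your displayed inequality is therefore strictly stronger than what stability gives and is unjustified as written. This does not damage the argument, because the only consequence you actually use --- $\int_{\FB(u)}H\zeta^2\le\int_{\{u>0\}}|\nabla\zeta|^2$ --- is precisely the primitive stability inequality \eqref{eq:stab_ineq_H} itself, which you could (and should) cite directly rather than derive from the incorrect intermediate display.
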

\begin{proof}
Recalling \Cref{lem:Lipbound}, to prove the first inequality we apply \Cref{lem:Stern_Zum_App} to $\frac1{2R} u(2R\,\cdot)$ with $\eta\in C^\infty_c(B_{1})$ non-negative and satisfying $\eta\equiv 1$ in $B_{1/2}$. Then, the second one follows from  H\"older's inequality. 
\end{proof}

We now introduce an important monotone quantity:
for $u\in H^1(B_r)$ and $0\in \FB(u)$, the Weiss boundary-adjusted energy (see \cite{Weiss1998})  is given by
\begin{equation}
\label{eq:W-def}
{\bf W}(u,r)
=\frac{1}{r^n}
	\int_{B_r}
		(|\nabla u|^2+\mathbbm{1}_{\set{u>0}})
	\,dx
-\frac{1}{r^{n+1}}
	\int_{\partial B_r}
		u^2
	\,d\cH^{n-1} = {\bf W}(u_r, 1),
\end{equation}
where $u_r$ denotes the natural dilation of $u$, namely  
\[
u_r(x):=\frac{u(rx)}{r},
	\quad \text{ for } r>0.
\]
Due to the Weiss monotonicity formula (see \cite[Theorem 3.1]{Weiss1998}), given  $u\in H^1(B_R)$ a  stationary solution to the Bernoulli problem,
then
$$
r\mapsto {\bf W}(u,r)\quad\text{ is non-decreasing on $(0,R)$}
$$
and
\begin{align}\label{eq:W-monotone}
\partial_r{\bf W}(u,r)
&= \frac{2}{r^{n+2}}
	\int_{\partial B_r}
		(u-x\cdot \nabla u)^2
	\,d\cH^{n-1} = \frac{2}{r}\int_{\partial B_1} (u_r - x\cdot \nabla u_r)^2 d\mathcal{H}^{n-1} \ge 0\quad \text{ for a.e.}\quad r\in(0,R)
\end{align}
(see also \cite[Section 9]{Vel23}).
In particular, any blow-down limit $u_\infty=\lim_{r_k\uparrow\infty}u_{r_k}$ satisfies  ${\bf W}(u_\infty,r)={\bf W}(u,\infty)$ (because $\lim_{r_k\uparrow\infty}{\bf W}(u,r_kr)=\lim_{r\uparrow\infty}{\bf W}(u,r)$). This implies that $r\partial_r {\bf W}(u_\infty,r)=0$, thus $u_\infty$ is 1-homogeneous.

Let us denote
\begin{equation}
\label{eq:alpha_n}
\alpha_n:={\bf W}((x_n)_+,1)
=2\int_{B_1 \cap \set{x_n>0}}\,dx
	-\int_{\partial B_1 \cap \set{x_n>0}}x_n^2\,d\cH^{n-1}\\
=\frac{\cH^{n-1}(\bS^{n-1})}{2n} = \frac12 |B_1|.
\end{equation}
It is clear that ${\bf W}(|x_n|,1)=2\alpha_n$. As a consequence of the next result, any classical stable solution $u$ to \eqref{eq:Bernoulli-main}--\eqref{eq:stability-var} in $\R^3$ with $0\in \FB(u)$ satisfies
\begin{equation}
\label{eq:W a 2a}
\alpha_3 \leq {\bf W}(u,1)\leq 2\alpha_3.
\end{equation}

\begin{lem}[Almost homogeneous solutions]
\label{lem:W-compact-H1}
For any $\eps\in(0,\frac{\alpha_3}{2})$, there exists $\delta\in(0,\frac{\alpha_3}{2})$ such that the following hold.

Let $u$ be a classical stable solution to the Bernoulli problem in $\R^3$ such that $0\in \FB(u)$ and
\begin{equation}\label{eq:H1-compact-W}
{\bf W}(u,2)-{\bf W}(u,1)<\delta.
\end{equation}
Then, either  
\begin{equation}\label{eq:H1-compact-half}
\normbig[L^\infty(B_1\cap\{u>0\})]{u-e\cdot x}<\eps
	\quad \text{ for some $e\in \bS^2$ and } \quad
{\bf W}(u,2)<\alpha_3+\eps,
\end{equation}
or
\begin{equation}\label{eq:H1-compact-abs}
\normbig[L^\infty(B_1)]{u-|e\cdot x|}<\eps
	\quad \text{ for some $e\in \bS^2$ and } \quad
{\bf W}(u,1)>2\alpha_3-\eps.
\end{equation}
\end{lem}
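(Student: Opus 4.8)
The plan is to argue by compactness and contradiction. Suppose the statement fails for some fixed $\eps\in(0,\tfrac{\alpha_3}{2})$: then there is a sequence $(u_k)$ of classical stable solutions to the Bernoulli problem in $\R^3$ with $0\in\FB(u_k)$ and ${\bf W}(u_k,2)-{\bf W}(u_k,1)<\tfrac1k$, yet such that for every $k$ neither \eqref{eq:H1-compact-half} nor \eqref{eq:H1-compact-abs} holds (with this $\eps$). I would first extract a limit. By \Cref{lem:Lipbound} the $u_k$ are uniformly Lipschitz, by \Cref{lem:nondegen} uniformly nondegenerate, and by \Cref{lem:Stern_Zum} they satisfy $\int_{B_R\cap\{u_k>0\}}|D^2u_k|^2\le CR$ for every $R$; hence, along a subsequence, $u_k\to u_\infty$ locally uniformly, with $\{u_k=0\}\to\{u_\infty=0\}$ and $\FB(u_k)\to\FB(u_\infty)$ locally in Hausdorff distance, and $u_\infty$ a global stable solution in the sense of \Cref{defi:solutions} with $0\in\FB(u_\infty)$, $u_\infty\not\equiv0$ and $|\nabla u_\infty|\le1$ (nondegeneracy being what prevents mass collapse, and both stationarity and the stability inequality passing to the limit).

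Next I would show $u_\infty$ is $1$-homogeneous. On compact subsets of $\{u_\infty>0\}$ the $u_k$ are eventually positive, harmonic and uniformly bounded, so $u_k\to u_\infty$ in $C^\infty_{\loc}(\{u_\infty>0\})$; since $|\FB(u_\infty)|=0$ and $|\nabla u_k|\le1$, dominated convergence yields $u_k\to u_\infty$ strongly in $H^1_{\loc}$, while the density estimates together with the Hausdorff convergence of the zero sets give $|\{u_k>0\}\cap B_r|\to|\{u_\infty>0\}\cap B_r|$ for a.e.\ $r$. Hence ${\bf W}(u_k,r)\to{\bf W}(u_\infty,r)$ for a.e.\ $r\in(1,2)$, so ${\bf W}(u_\infty,\cdot)$ is constant on $[1,2]$, and by the Weiss monotonicity formula \eqref{eq:W-monotone} the solution $u_\infty$ is $1$-homogeneous in $B_2\setminus\overline{B_1}$; a standard argument (harmonicity of $u_\infty$ in $\{u_\infty>0\}$, unique continuation, and a rescaling) then upgrades this to $1$-homogeneity on all of $\R^3$.

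The heart of the argument — and the step I expect to be the main obstacle — is the classification of the limit: \emph{every $1$-homogeneous global stable solution of the Bernoulli problem in $\R^3$ with $0\in\FB$ equals $(x\cdot e)_+$ or $|x\cdot e|$ for some $e\in\bS^2$.} This is exactly where the dimension $n=3$ is used. On each connected component $W$ of $\{u_\infty>0\}$ the restriction $u_\infty|_W$ is again a $1$-homogeneous stable solution with connected positivity set, so its trace $g$ on $\bS^2$ is a first Dirichlet eigenfunction of $\Omega:=W\cap\bS^2$ with eigenvalue $2$ obeying the overdetermined condition $|\nabla_{\bS^2}g|\equiv1$ on $\partial\Omega$; one then shows, using stability to exclude the non-flat (cap-complement) cones in $\R^3$ via the Jerison--Savin test function \cite{Jerison-Savin}, that $\Omega$ must be a hemisphere, i.e.\ $u_\infty|_W=(x\cdot e)_+$. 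Since two distinct open half-spaces $\{x\cdot e>0\}$ can be disjoint only when they are complementary, $\{u_\infty>0\}$ has at most two components, giving $u_\infty=(x\cdot e)_+$ or $u_\infty=|x\cdot e|$.

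Finally I would derive the contradiction. If $u_\infty=(x\cdot e)_+$, then ${\bf W}(u_\infty,r)\equiv\alpha_3$ by \eqref{eq:alpha_n}, so ${\bf W}(u_k,2)\to\alpha_3<\alpha_3+\eps$; moreover the Hausdorff convergence $\{u_k=0\}\cap B_1\to\{x\cdot e\le0\}\cap B_1$ gives $\{u_k>0\}\cap B_1\subset\{x\cdot e>-\eps/2\}$ for $k$ large, whence $\|u_k-e\cdot x\|_{L^\infty(B_1\cap\{u_k>0\})}<\eps$, i.e.\ \eqref{eq:H1-compact-half} holds for large $k$ — a contradiction. If instead $u_\infty=|x\cdot e|$, then ${\bf W}(u_\infty,1)=2\alpha_3$, so ${\bf W}(u_k,1)\to2\alpha_3>2\alpha_3-\eps$, and $u_k\to|x\cdot e|$ uniformly on $B_1$ gives $\|u_k-|e\cdot x|\|_{L^\infty(B_1)}<\eps$ for large $k$, i.e.\ \eqref{eq:H1-compact-abs} holds — again a contradiction. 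Taking $\delta:=1/k_0$ for $k_0$ large enough (so in particular $\delta<\tfrac{\alpha_3}{2}$) completes the proof.
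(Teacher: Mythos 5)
Your overall architecture (compactness, passage of the Weiss energy to the limit, $1$-homogeneity via \eqref{eq:W-monotone}, classification of the homogeneous limit, contradiction) matches the paper's, and the first two steps are essentially correct. The gap is in the classification step.

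You assert that every $1$-homogeneous global stable solution in $\R^3$ with $0\in\FB$ is $(x\cdot e)_+$ or $|x\cdot e|$, and your argument for this reads off the overdetermined condition $|\nabla_{\bS^2}g|\equiv 1$ on $\partial\Omega$. But the limit $u_\infty$ is only a stationary, stable solution in the weak sense of \Cref{defi:solutions}; compactness does not give you the pointwise free boundary condition $|\nabla u_\infty|=1$. As the paper notes explicitly, every function $\tilde a\,|x\cdot e|$ with $0<\tilde a\le 1$ is stationary \emph{and} stable in that weak sense, so your claimed classification statement is false as stated, and your eigenfunction/overdetermined argument has nothing to rule out these degenerate vees. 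This matters for the endgame: if $u_\infty=\tilde a|x\cdot e|$ with $\tilde a$ bounded away from $1$, one still has ${\bf W}(u_\infty,\cdot)\equiv 2\alpha_3$ (the Weiss energy of any $1$-homogeneous function harmonic on its positivity set equals the volume fraction of $\{u_\infty>0\}$), so the Weiss part of \eqref{eq:H1-compact-abs} would hold for large $k$, but the $L^\infty$ closeness $\|u_k-|e\cdot x|\|_{L^\infty(B_1)}<\eps$ would fail, and no contradiction is reached.

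Ruling out $\tilde a<1$ is precisely Step 3 of the paper's proof and requires going back to the approximating \emph{classical} solutions $u_k$: one combines the uniform Sternberg--Zumbrun bound $\int_{B_1\cap\{u_k>0\}}|D^2u_k|^2\le C$ with a Fubini slicing argument. Along lines hitting $\FB(u_k)$, the quantity $|\nabla u_k|^2$ must drop from $1$ (its value on the free boundary) to roughly $\tfrac{1+\tilde a^2}{2}$ within a thin slab of width $\delta$, which by Cauchy--Schwarz forces the projection $A_k$ of the free boundary to have measure $\le C\delta/(1-\tilde a^2)$; along the remaining lines, $\partial_1 u_k$ must transition from about $-\tilde a$ to about $+\tilde a$ across the slab, forcing $|B'_{1/2}\setminus A_k|\le C\delta$. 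Taking $\delta$ small yields the contradiction, hence $\tilde a=1$. Your proof needs this (or an equivalent) additional argument; also note that the paper's dichotomy in Step 2 (blow-ups at points of $\bS^2\cap\FB(u_\infty)$, then Jerison--Savin in the everywhere-regular case) is the careful way to avoid assuming regularity of $\partial\Omega\subset\bS^2$, which your eigenfunction formulation implicitly presupposes.
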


To prove \Cref{lem:W-compact-H1} we will need the following compactness result for sequences of stable solutions. 
\begin{lem}[Compactness] \label{lem:compact}
Let \( n \geq 2 \), and let \( v_k \in C^{0,1}_{\rm loc}(B_k) \) be a sequence of classical stable solutions to the Bernoulli problem in \( B_k \subset \R^n\), with \( 0 \in \FB(v_k) \) for all \( k \in \N \). Then the following hold:
\begin{enumerate}
    \item Up to a subsequence, \( v_k \) converges to some function \( v_\infty \) satisfying \( |\nabla v_\infty| \leq 1 \) in \( \R^n \), with strong convergence in \( (H^1_{\rm loc} \cap C^{0,\alpha}_{\rm loc})(\R^n) \) for all \( \alpha \in (0,1) \).
    
    \item \label{it:compact-2} The sets \( \overline{\{v_k>0\}} \), \( \{v_k=0\} \), and the free boundaries \( \FB(v_k) \), converge locally in the Hausdorff distance in \( \R^n \) to their corresponding sets for \( v_\infty \) (up to a subsequence). Specifically, 
    \[
    \overline{\{v_k>0\}} \to \overline{\{v_\infty>0\}}, \quad \{v_k=0\} \to \{v_\infty=0\}, \quad \text{and} \quad \FB(v_k) \to \FB(v_\infty),\quad\text{locally}.
    \]
    
    \item The limit function \( v_\infty \) is a stable solution in the sense of \Cref{defi:solutions}.
\end{enumerate}
\end{lem}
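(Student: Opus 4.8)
\emph{Proof plan.} The plan is to run the standard elliptic compactness scheme for the one-phase problem, being careful about which convergences are actually needed to pass to the limit in the inner variations. The three nontrivial inputs are the interior Lipschitz estimate for classical Bernoulli solutions, the nondegeneracy/clean-ball estimates (\Cref{lem:nondegen}, \Cref{lem:cleanball}, \Cref{rem:nondeg}), and the free-boundary area bound (\Cref{lem:perbound}); I expect the only genuinely non-soft point to be the passage to the limit in the energy, i.e.\ the strong $L^1_{\rm loc}$ convergence of the indicators $\mathbbm{1}_{\{v_k>0\}}$, which is precisely what nondegeneracy prevents from failing.

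\emph{Step 1: extraction of $v_\infty$ and strong $H^1_{\rm loc}$ convergence (item (1)).} First I would use the interior gradient estimate for classical Bernoulli solutions (e.g.\ \cite[Lemma~11.19]{CS05}; combined with \Cref{lem:Lipbound}, which yields the sharp Lipschitz constant $1$ when the $v_k$ are global) together with $v_k(0)=0$ to see that $\{v_k\}$ is bounded in $C^{0,1}_{\rm loc}(\R^n)$. By the Arzel\`a--Ascoli theorem, along a subsequence $v_k\to v_\infty$ in $C^{0,\alpha}_{\rm loc}$ for all $\alpha<1$ and $\nabla v_k\rightharpoonup\nabla v_\infty$ weakly in $L^2_{\rm loc}$, with $|\nabla v_\infty|\le 1$. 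Any ball $\overline B\Subset\{v_\infty>0\}$ lies in $\{v_k>0\}$ for $k$ large, where $v_k$ is harmonic, so $v_\infty$ is harmonic in the open set $\{v_\infty>0\}$; hence $\Delta v_\infty$ is a nonnegative measure supported on $\{v_\infty=0\}$ and $v_\infty\,\Delta v_\infty=0$. To upgrade to strong $H^1_{\rm loc}$ convergence of the gradients (needed below, since the inner variations are quadratic in $\nabla v_k$), I would test $\Delta v_k=\cH^{n-1}|_{\FB(v_k)}$ against $v_k\varphi^2$: using $v_k\Delta v_k=0$ one gets $\int|\nabla v_k|^2\varphi^2=-\int v_k\,\nabla v_k\cdot\nabla(\varphi^2)$, whose right-hand side converges to $-\int v_\infty\,\nabla v_\infty\cdot\nabla(\varphi^2)=\int|\nabla v_\infty|^2\varphi^2$; thus $\|\varphi\nabla v_k\|_{L^2}\to\|\varphi\nabla v_\infty\|_{L^2}$, which together with the weak convergence gives the claimed strong convergence.

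\emph{Step 2: Hausdorff convergence of the sets (item (2)).} The ``outer'' halves of the Hausdorff convergences $\overline{\{v_k>0\}}\to\overline{\{v_\infty>0\}}$ and $\{v_k=0\}\to\{v_\infty=0\}$ are immediate from local uniform convergence. For the ``inner'' halves I would use the nondegeneracy $\fint_{\partial B_r(y)}v_k\ge cr$ at free-boundary points (\Cref{lem:nondegen}) together with the density bounds $|\{v_k>0\}\cap B_r(x)|\ge cr^n$ for $x\in\overline{\{v_k>0\}}$ (\Cref{rem:nondeg}) and the analogous lower bound for $\{v_k=0\}$ at free-boundary points (\Cref{lem:cleanball}); being stable under local uniform convergence, these prevent the positivity and zero sets from degenerating, and yield the local Hausdorff convergence of $\overline{\{v_k>0\}}$, $\{v_k=0\}$, and hence of $\FB(v_k)=\overline{\{v_k>0\}}\cap\{v_k=0\}$, to the corresponding sets for $v_\infty$; in particular $0\in\FB(v_\infty)$.

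\emph{Step 3: $v_\infty$ is a stable solution (item (3)) and the main obstacle.} By \Cref{lem:perbound} the $\cH^{n-1}$-measures of $\FB(v_k)$ are locally uniformly bounded, so by lower semicontinuity and Step~2 the set $\FB(v_\infty)$ has locally finite $\cH^{n-1}$-measure, hence Lebesgue measure zero; together with Step~2 this gives $\mathbbm{1}_{\{v_k>0\}}\to\mathbbm{1}_{\{v_\infty>0\}}$ in $L^1_{\rm loc}$. Then I would pass to the limit in the inner variations: the first inner variation of $\cE$ at $v_k$ along $\Psi_t(x)=x+t\xi(x)$, after the change of variables $y=\Psi_t(x)$, reads $\int_{B_R}\big(|\nabla v_k|^2\Div\xi-2\,\nabla v_k\cdot D\xi\,\nabla v_k+\mathbbm{1}_{\{v_k>0\}}\Div\xi\big)\,dx=0$, where the terms quadratic in $\nabla v_k$ converge by the strong $L^2_{\rm loc}$ convergence from Step~1 and the volume term by the $L^1_{\rm loc}$ convergence of the indicators, so $v_\infty$ is stationary; the second inner variation $\tfrac{d^2}{dt^2}\big|_{t=0}\cE(v_k\circ\Psi_t;B_R)$, after the same change of variables, is an integral whose integrand is polynomial in $\nabla v_k$ of degree $\le 2$ and in $\mathbbm{1}_{\{v_k>0\}}$ of degree $\le 1$, with smooth compactly supported coefficients built from $D\xi$ and $D^2\xi$ (no second derivatives of $v_k$ appear), so it passes to the limit and nonnegativity is preserved, giving \eqref{eq:stability-var} for $v_\infty$. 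Together with stationarity, this is precisely that $v_\infty$ is a stable solution in the sense of \Cref{defi:solutions}. As anticipated, the delicate point is the $L^1_{\rm loc}$ convergence of $\mathbbm{1}_{\{v_k>0\}}$ (equivalently, Hausdorff convergence of the free boundaries plus $|\FB(v_\infty)|=0$): one must rule out loss or gain of volume in the limit, and this is exactly where \Cref{lem:nondegen} and \Cref{lem:cleanball} are indispensable, everything else being a routine adaptation of elliptic compactness.
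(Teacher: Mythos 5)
Your Steps 1 and 3 are fine and essentially match the paper (your route to strong $H^1_{\rm loc}$ convergence, testing $\Delta v_k=\cH^{n-1}|_{\FB(v_k)}$ against $v_k\varphi^2$ to get convergence of the Dirichlet norms, is a legitimate alternative to the paper's appeal to compactness of subharmonic functions). The gap is in Step 2, and it is exactly the point the paper flags as nontrivial: the ``inner'' half of $\{v_k=0\}\to\{v_\infty=0\}$, i.e.\ ruling out that the positivity sets $\{v_k>0\}$ \emph{invade} the interior of $\{v_\infty=0\}$. The bad scenario is: $v_k>0$, harmonic, and uniformly small on a fixed ball $B_\eps(x_\infty)$ while $v_\infty\equiv 0$ there. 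In that scenario $v_k$ has \emph{no} free boundary points in $B_\eps(x_\infty)$ for large $k$ (indeed nondegeneracy forbids them once $v_k\to 0$ uniformly), so \Cref{lem:nondegen}, \Cref{rem:nondeg} and \Cref{lem:cleanball} are all vacuous there: they constrain $v_k$ only near its own free boundary or inside its own positivity set. Moreover, the ``analogous lower bound for $\{v_k=0\}$ at free boundary points'' that you attribute to \Cref{lem:cleanball} is false for classical solutions: for $u=(x_n-\eps)_++(-x_n-\eps)_+$ the zero set near a free boundary point has measure $\sim\eps r^{n-1}\ll r^n$ in $B_r$ (and for the vee it has measure zero), so no such density estimate can be ``stable under local uniform convergence.'' None of your listed tools addresses the invasion scenario.

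This is precisely where the paper spends the ``main part'' of its proof of item (2) and where the stability hypothesis enters item (2) (not only item (3)). The paper shows the set $\cC$ of points of $\{v_\infty=0\}$ that are accumulation points of $\{v_k>0\}$ has empty interior by: (i) upgrading \Cref{lem:perbound} and \Cref{lem:nondegen} to a uniform Ahlfors--David regularity of $\FB(v_k)$, which passes to the limit and gives $\{v_\infty>0\}$ locally finite perimeter; (ii) applying De Giorgi's structure theorem to find a reduced boundary point of $\cC$ where, after blow-up, $\cC$ and $\{v_\infty>0\}$ look like complementary half-spaces; and (iii) invoking \Cref{lem:halfspacestable}, which uses stability through the Sternberg--Zumbrun bound $\int_{B_1\cap\{u>0\}}|D^2u|^2\le C$ to force a stable classical solution that is positive above $\{x_n=\delta\}$ and $\le\delta$ below $\{x_n=-\delta\}$ to vanish on $\{x_n<-C\delta\}$, contradicting the invasion. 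Your proposal never uses stability in Step 2. The omission propagates: without ruling out the invasion, $|\{v_k>0\}\cap\mathrm{int}\{v_\infty=0\}|$ need not tend to zero, so the $L^1_{\rm loc}$ convergence of $\mathbbm{1}_{\{v_k>0\}}$ and hence the stationarity of $v_\infty$ in Step 3 are also unproven. (The paper explicitly notes that this step repairs an incomplete argument in \cite{CS05}*{Lemma 1.21} used by \cite{kamburov2022nondegeneracy}; your Step 2 essentially reproduces that incomplete argument.)
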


\begin{proof}
The proof is postponed to \Cref{sec:compact-stable-sol}.
\end{proof}

We can now prove \Cref{lem:W-compact-H1}.

\begin{proof}[Proof of \Cref{lem:W-compact-H1}]
We divide the proof into three steps. 

{\medskip \noindent \bf Step 1:} We argue by contradiction, and assume that there exists $\eps_0>0$ and a sequence of classical stable solutions $u_k$ 
with $0\in\FB(u_k)$ and
\begin{equation}\label{eq:H1-compact-W-not}
{\bf W}(u_k,2)-{\bf W}(u_k,1)<\frac{1}{k},
\end{equation}
but
\begin{equation}\label{eq:H1-compact-not}
\begin{dcases}
\min_{e\in\bS^{2}}\normbig[L^\infty(B_1\cap \{u > 0\})]{u_k-e\cdot x}\geq \eps_0
	\quad \text{ or } \quad
{\bf W}(u_k,2)\geq \alpha_3+\eps_0,\\
\min_{e\in\bS^{2}}\normbig[L^\infty(B_1)]{u_k-|e\cdot x|}\geq \eps_0
	\quad \text{ or } \quad
{\bf W}(u_k,1)\leq 2\alpha_3-\eps_0.
\end{dcases}
\end{equation}
By \Cref{lem:compact}, along a subsequence we have
\[
u_k\to u_\infty
	\quad \text{ strongly in } \ (H^1_{\loc} \cap C^{0}_{\loc})(\R^n),
\]
for some global stationary and inner stable solution $u_\infty$ with $0\in \FB(u_\infty)$. 
Taking the limit in \eqref{eq:H1-compact-W-not} (using \Cref{lem:compact}) we obtain
\[
{\bf W}(u_\infty,2)-{\bf W}(u_\infty,1)=0
	\quad \implies \quad
r\partial_r {\bf W}(u_\infty,r)=0
	\quad \text{ for } r\in(1,2).
\]
In particular, $u_{\infty}$ is $1$-homogeneous in the open annulus $B_2\setminus \overline{B_1}$,  and therefore in $B_2$ by unique continuation.   Up to extending $u_\infty$ outside of $B_2$ in a 1-homogenous way we can assume that it is defined in the whole $\R^3$. In the next two steps, we will show that there exists $e\in \bS^{2}$ such that
\begin{equation}
\label{eq:glob_class}
\text{ either } \quad
\begin{cases}
u_\infty=(e\cdot x)_+,\\
{\bf W}(u_\infty,\cdot)\equiv \alpha_3,
\end{cases}
	\quad \text{ or } \quad
\begin{cases}
u_\infty=|e\cdot x|,\\
{\bf W}(u_\infty,\cdot)\equiv 2\alpha_3,
\end{cases}
\end{equation}
which is in direct contradiction with \eqref{eq:H1-compact-not} in the limit $k\to\infty$ (using again \Cref{lem:compact}).

{\medskip \noindent \bf Step 2:} We first prove the validity of \eqref{eq:glob_class} ``up to a multiplicative constant''.

Since $u_\infty$ is 1-homogeneous, $\FB(u_\infty)$ is a cone. Let $y_\circ\in \mathbb{S}^2\cap \FB(u_\infty)$, and consider $\tilde u_\infty$ to be any blow-up of  $u_\infty$ at $y_\circ$ along a sequence $r_k \downarrow 0$, namely,
\[
\tilde u_\infty(x) = \lim_{k \to \infty} \frac{u_\infty(y_\circ + r_k x)}{r_k} = \lim_{k \to \infty}  u_\infty\left(\frac{y_\circ}{r_k} +  x\right).
\] 
Then $\tilde u_\infty$ is invariant in the $y_\circ$ direction. In particular,   $\tilde u_\infty$ is actually a 2-dimensional, 1-homogeneous, non-negative harmonic function. Hence, it must be of the form
\[
\tilde u_\infty(x) = a_+(x\cdot e)_+ + a_- (x\cdot e)_-\quad\text{for some}\quad a_+, a_- \ge 0,\quad e \in \mathbb{S}^2. 
\]
Also, up to changing $e$ with $-e$, we can assume that $a_-\leq a_+$.
We now distinguish two cases.\\
- If $a_- = 0$, since $0$ is a free boundary point for $\tilde u_\infty$ it must be $a_+ > 0$, and since it is a stationary solution then necessarily $a_+ = 1$.\\
- On the other hand, if $0<a_- \leq a_+$, then by stationarity we must have $a_+ = a_- = \tilde a$, and by the uniform 1-Lipschitz bound $\tilde a \le 1$. Observe also that, by the nondegeneracy of classical stable solutions \Cref{lem:nondegen}, we also have\footnote{We remark that any function of the form $\tilde u_\infty(x) = \tilde a |x\cdot e|$ for $\tilde a \ge 0$ is stationary and stable, according to \Cref{defi:solutions}.} that   $\tilde a \ge c > 0$ for some universal $c$. 

As a consequence of this discussion, we have two cases:
\begin{itemize}
\item If $\tilde u_\infty(x) = (x\cdot e_{y_\circ})_+$ for all $y_\circ \in \FB(u_\infty) \cap \mathbb{S}^2$, then the free boundary of $u$ is smooth everywhere outside of the origin,\footnote{This follows from the fact that if a stable solution is close to $(x_n)_+$ then it is close to $x_n$ inside its positivity set (see \Cref{lem:halfspacestable}), so the improvement of flatness in
\Cref{lem:eps-reg-classical} applies. Also, note that blow-ups of limits of classical solutions are themselves limits of classical solutions (by a diagonal argument).} so $u_\infty$ is a classical stable solution outside of the origin. Then, the classification of 1-homogeneous stable solutions in $\R^3$ from \cite{CJK04, Jerison-Savin} applies to our solution and implies that $u_\infty(x) = (x\cdot e')_+$ for some $e'\in \mathbb{S}^2$. Hence, we are in the first case of \eqref{eq:glob_class}.
\item Alternatively, if $\tilde u_\infty(x) = \tilde a_{y_\circ} |x\cdot e_{y_\circ}|$ for some $y_\circ \in \FB(u_\infty) \cap \mathbb{S}^2$ and $\tilde a_{y_\circ} \in (0, 1]$, then ${\bf W}(u_\infty(\cdot + y_\circ), 0^+) = 2\alpha_3$. On the other hand, the Weiss energy is also upper bounded by $2\alpha_3$: indeed, any blow-down of $u_\infty$ around any point is equal to $u_\infty$, which is 1-homogeneous, and for any $1$-homogeneous solution $v$ we have ${\bf W}(v, r) = \frac{1}{r^n} |\{v > 0\}\cap B_r| \leq |B_1| =2\alpha_3$. Therefore $$2\alpha_3=
{\bf W}(u_\infty(\cdot + y_\circ), 0^+) \le {\bf W}(u_\infty(\cdot + y_\circ), r)\leq {\bf W}(u(\cdot +y_\circ), \infty) \le 2\alpha_3,$$ which implies that the Weiss energy is constant, so $u_\infty$ is homogeneous around $y_\circ$. This implies that $u_\infty(x) = \tilde a |x\cdot e|$ for some $\tilde a\in  [c, 1]$ and some $e\in\mathbb{S}^2$ such that $e\cdot y_\circ = 0$. So, to conclude the proof, we only need to show that $\tilde a = 1$. This is the purpose of the next step.
\end{itemize}

{\medskip \noindent \bf Step 3:} It remains to prove that, in the second case, $\tilde a = 1$.

Up to subsequences and after a rotation, we know $u_k\to \tilde a |x_1|$ strongly in $(H^1_{\loc} \cap C^{0}_{\loc})(\R^3)$ for some $\tilde a\in [c, 1]$. Also, thanks to \Cref{lem:Stern_Zum},
\begin{equation}
\label{eq:contr_atilde}
\int_{B_1\cap \{u_k > 0\}} |D^2 u_k|^2\,dx 
\le C,
\end{equation}
for some $C > 0$ universal, independent of $k$.  

Now, assume by contradiction that $\tilde a < 1$. By harmonic estimates we have 
\begin{equation}\label{eq:limitatilde}
u_k\to \tilde a |x_1|	  
\qquad\text{
   in }\quad L^\infty(B_1)\cap C^1_{\rm loc}(B_1\setminus\{x_1 = 0\}), \quad\text{ for some }\quad 0<c< \tilde a< 1.
 \end{equation}
 The proof now follows along the lines of that of \Cref{lem:halfspacestable}. By Fubini's theorem, we know 
\[
\begin{split}
\int_{B_1\cap\{u_k  >0\}}|D^2 u_k|^2\,dx 
& \ge \int_{B'_{1/2}}\int_{[-1/2, 1/2]\cap \{u_k(t,\sigma) > 0\}} |D^2u_k|^2(t,\sigma)\, dt\, d\sigma \ge \int_{B'_{1/2}}\int_{t_{\sigma, k}}^{1/2} |D^2u_k|^2(t,\sigma)\, dt\, d\sigma,
\end{split}
\] 
where $B_r'\subset \R^2$ denotes the ball of radius $r$ in $\R^2$ and,  given $\sigma \in B_{1/2}'$ and $k\in \N$, $t_{\sigma, k}$ is the minimal value $t_*\in [-1/4, 1/4]$ (for $k$ large enough) such that $(t_{*}, 1/2)\subset \{u_k(\cdot, \sigma) > 0\}$. 

Let $\Pi_1:\R^3\to \R^2$ denote the orthogonal projection in the last two variables, that is $\Pi_1((x_1, x_2, x_3)) = (x_2, x_3)$, and define
 \[
 A_k := \Pi_1\big(\FB(u_k)\cap((-1/2, 1/2)\times B'_{1/2})\big).
 \]
Also, let $\delta>0$ be a small fixed constant. Note that $|\nabla u_k|^2=1$ on $\FB(u_k)$, while   $|\nabla u_k(\delta, \sigma) |^2\le \frac{1+\tilde a^2}{2}$ for $k \gg 1$ large enough (due to \eqref{eq:limitatilde} and harmonic estimates), therefore
$$
\int^{\delta }_{t_{\sigma, k}}
\bigl|
	\partial_1 |\nabla u|^2(t, \sigma)\big|  dt\geq 1-\frac{1+\tilde a^2}{2}=\frac{1-\tilde a^2}{2} \qquad \text{for all}\quad \sigma \in A_k
$$
(note that, if $k \gg 1$, then  $t_{\sigma, k}\in (-\delta, \delta)$ for $\sigma \in A_k$). Thus, thanks to the bound
 $\bigl|\nabla |\nabla u_k|^2\bigr|^2 \le 4|D^2 u_k|^2$, Cauchy--Schwarz, and \eqref{eq:contr_atilde}, this implies that 
\[
\frac{1-\tilde a^2}{2} |A_k| \le \int_{A_k} \int^{\delta }_{t_{\sigma, k}}
\bigl|
	\partial_1 |\nabla u|^2(t, \sigma)\big|  dt\, d\sigma\le  C \big(|A_k|\delta\big)^{1/2}\left( \int_{B_1\cap\{u_k  >0\}}|D^2 u_k|^2\right)^{1/2} \le C\big(|A_k|\delta\big)^{1/2},
\]
which proves 
\begin{equation}
    \label{eq:Ak}
    |A_k|\le \frac{C\delta}{1-\tilde a^2}.
\end{equation}
Consider now instead $\sigma \in B_{1/2}' \setminus A_k$. Then $t_{\sigma, k} =-\tfrac14 \le \delta$. Also, by \eqref{eq:limitatilde} we know $\partial_1 u_k(-\delta, \sigma) < -\frac{c}{2}$ and $\partial_1 u_k(\delta, \sigma) > \frac{c}{2}$, so that 
\[
\int_{-\delta}^{\delta} |\partial^2_{11} u_k(t, \sigma)|dt > c > 0,\quad\text{for $k$ large and $\sigma\in B_{1/2}'\setminus A_k$}.
\]
Hence, by $|\partial_{11}^2 u_k|^2 \le |D^2 u_k|^2$, Cauchy--Schwarz, and \eqref{eq:contr_atilde}, similarly to before we obtain 
\[
c |B'_{1/2}\setminus A_k| \le \int_{B_{1/2}'\setminus A_k} \int_{-\delta }^{\delta}
\bigl|
	\partial^2_{11} u_k\big|(t, \sigma)  dt\, d\sigma\le  C \big(|B_{1/2}'\setminus A_k|\delta\big)^{1/2}
\]
therefre $|B'_{1/2}\setminus A_k| \le C\delta$. Combining this bound with \eqref{eq:Ak}, we get a contradiction for $\delta$ sufficiently small.
\end{proof}

As a consequence of the previous result, if we can lower bound the Hessian of a solution at one point, then the solution cannot be energetically close to a half-space.

\begin{lem}[Lower bound of Weiss energy]
\label{lem:W-lower}
Let $\eps_\circ=\eps_\circ(3)$ and $C_{3,2}$ (i.e., $n=3$ and $k=2$) be the constants from \Cref{lem:eps-reg-classical}, and set $C_\circ := C_{3,2}\eps_\circ$. Let $\delta_\circ\in (0,\frac{\alpha_3}{2})$ be chosen from \Cref{lem:W-compact-H1} with $\eps=\eps_\circ$.

Let $u$ be a global classical stable solution to the Bernoulli problem in $\R^3$, with $0\in \FB(u)$. If
\begin{equation}\label{eq:lem-W-lower-Hess}
\norm[L^\infty(B_{1/2} \cap \set{u>0})]{D^2u}
\geq 2C_\circ,
\end{equation}
then
\begin{equation}\label{eq:lem-W-lower}
{\bf W}(u,2)
\geq \alpha_3+\delta_\circ.
\end{equation}
\end{lem}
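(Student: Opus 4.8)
The plan is to argue by contradiction: assume ${\bf W}(u,2)<\alpha_3+\delta_\circ$ and derive a contradiction with the Hessian hypothesis \eqref{eq:lem-W-lower-Hess}. The guiding heuristic is that a small Weiss energy at unit scale should force $u$ to be almost flat there, and then $\eps$-regularity forces $|D^2u|$ to be small in $B_{1/2}$ — incompatible with $\|D^2u\|_{L^\infty(B_{1/2}\cap\{u>0\})}\ge 2C_\circ$. The compactness dichotomy of \Cref{lem:W-compact-H1} leaves open only one other scenario (closeness to a vee), which however carries Weiss energy close to $2\alpha_3$, far above $\alpha_3+\delta_\circ$, so it is ruled out too.

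First I would record that, since $u$ is a \emph{classical} solution with $0\in\FB(u)$, the free boundary is a smooth hypersurface near the origin and $u\in C^1$ up to it with $|\nabla u(0)|=1$; hence the blow-up of $u$ at $0$ is the half-space solution $x\mapsto(\nabla u(0)\cdot x)_+$, whose Weiss density is exactly $\alpha_3$ (by the computation in \eqref{eq:alpha_n}). By the Weiss monotonicity formula \eqref{eq:W-monotone}, $r\mapsto{\bf W}(u,r)$ is nondecreasing, so ${\bf W}(u,1)\ge{\bf W}(u,0^+)=\alpha_3$. I would prove this by hand rather than quote \eqref{eq:W a 2a}, which is itself a consequence of the present lemma. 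Combining with monotonicity and the contradiction hypothesis yields ${\bf W}(u,2)-{\bf W}(u,1)\le{\bf W}(u,2)-\alpha_3<\delta_\circ$.

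Next I would apply \Cref{lem:W-compact-H1} with $\eps=\eps_\circ$ (so that the associated $\delta$ equals $\delta_\circ$); the bound just obtained places $u$ in one of the two alternatives. If alternative \eqref{eq:H1-compact-abs} holds, then ${\bf W}(u,1)>2\alpha_3-\eps_\circ$, whereas monotonicity forces ${\bf W}(u,1)\le{\bf W}(u,2)<\alpha_3+\delta_\circ$; since $\eps_\circ$ and $\delta_\circ$ are small compared with $\alpha_3=\tfrac12|B_1|$ (concretely $\eps_\circ+\delta_\circ<\alpha_3$), this is impossible. If alternative \eqref{eq:H1-compact-half} holds, then $\|u-e\cdot x\|_{L^\infty(B_1\cap\{u>0\})}<\eps_\circ$ for some $e\in\bS^2$; after a rotation this is precisely the flatness hypothesis of \Cref{lem:eps-reg-classical} in $B_1$ (with $n=3$, $k=2$), which gives $\|u-x_n\|_{C^2(B_{1/2}\cap\{u>0\})}\le C_{3,2}\eps_\circ=C_\circ$, hence $\|D^2u\|_{L^\infty(B_{1/2}\cap\{u>0\})}\le C_\circ<2C_\circ$, contradicting \eqref{eq:lem-W-lower-Hess}. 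Either way we reach a contradiction, establishing \eqref{eq:lem-W-lower}.

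I do not expect a genuine obstacle here: all the substantive work is packaged in \Cref{lem:W-compact-H1} (compactness of stable solutions together with the classification of $1$-homogeneous stable cones in $\R^3$) and in the $\eps$-regularity of \Cref{lem:eps-reg-classical}. The only two points that need a sentence of care are (i) justifying ${\bf W}(u,1)\ge\alpha_3$ directly from the smoothness of $\FB(u)$ at the origin, so as to avoid a circular use of \eqref{eq:W a 2a}; and (ii) the elementary numerical check $2\alpha_3-\eps_\circ>\alpha_3+\delta_\circ$ used to discard the ``close-to-a-vee'' alternative.
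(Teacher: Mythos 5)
Your proposal is correct and follows essentially the same route as the paper: reduce to ${\bf W}(u,2)-{\bf W}(u,1)<\delta_\circ$, invoke \Cref{lem:W-compact-H1}, rule out the half-space alternative via \Cref{lem:eps-reg-classical} against \eqref{eq:lem-W-lower-Hess}, and rule out the vee alternative by the numerical comparison $2\alpha_3-\eps_\circ\geq\alpha_3+\delta_\circ$. Your extra care about circularity is harmless but unnecessary, since \eqref{eq:W a 2a} is derived in the paper from \Cref{lem:W-compact-H1} (blow-up/blow-down plus monotonicity), not from the present lemma.
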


\begin{proof}
Recalling that ${\bf W}(u,1)$ is always bounded from below by $\alpha_3$ (see \eqref{eq:W a 2a}),
if \eqref{eq:lem-W-lower} does not hold then 
\begin{equation}\label{eq:lem-W-lower-not}
{\bf W}(u,2)-{\bf W}(u,1)<(\alpha_3+\delta_\circ)-\alpha_3=\delta_\circ.
\end{equation}
Thus, by \Cref{lem:W-compact-H1}, either \eqref{eq:H1-compact-half} or \eqref{eq:H1-compact-abs} holds. The alternative \eqref{eq:H1-compact-half} can be ruled out, since \Cref{lem:eps-reg-classical} implies
\[
\norm[L^\infty(B_{1/2} \cap \set{u>0})]{D^2u}
\leq C_\circ,
\]
contradicting assumption \eqref{eq:lem-W-lower-Hess}. Thus we are left with the case \eqref{eq:H1-compact-abs}, in which case
\[
{\bf W}(u,2)
\geq {\bf W}(u,1)
>2\alpha_3-\eps_\circ
\geq \alpha_3+\delta_\circ,
\]
contradicting \eqref{eq:lem-W-lower-not}.
\end{proof}

We can finally upgrade the previous lemma to solutions close to vees and prove \Cref{prop:W}.

\begin{proof}[Proof of \Cref{prop:W}]
Let $C_\circ$ be as in \Cref{lem:W-lower}, and recall the notation $u_R(x)=\frac{1}R u(Rx)$. Then  $u_{2C_\circ}$ satisfies
\[
\norm[L^\infty(B_{1/2}\cap\setsmall{u_{2C_\circ}>0})]{D^2u_{2C_\circ}}
=2C_\circ\norm[L^\infty(B_{C_\circ}\cap\setsmall{u>0})]{D^2u}
\geq 2C_\circ\norm[L^\infty(B_{1}\cap\setsmall{u>0})]{D^2u}
\geq 2C_\circ
\]
therefore
\[
{\bf W}(u,4C_\circ)={\bf W}(u_{2C_\circ},2)\geq \alpha_3+\delta_\circ
\]
by \Cref{lem:W-lower}.

Now, given $\eps>0$, let $\delta_0:=\delta(\eps)>0$ be determined by \Cref{lem:W-compact-H1}.
Also, given $\eps_1:=\delta_0$, let $\delta_1:=\delta(\eps_1)$ be determined by applying \Cref{lem:W-compact-H1} one second time.

Let us now apply \Cref{lem:W-compact-H1} with $\eps_1,\delta_1$ to the functions $u_{2^{k+1}C_\circ}$ with $k=1,\dots, K$ (where $K=K(\eps_1)$ is to be chosen). We first check that the alternative \eqref{eq:H1-compact-half} does not hold for any $k$. Indeed, by \Cref{lem:eps-reg-classical}, \eqref{eq:H1-compact-half} implies
\[
\norm[L^\infty(B_{2^{k}C_\circ}\cap \setsmall{u>0})]{D^2 u}
=\frac{1}{2^{k+1}C_\circ}\norm[L^\infty(B_{1/2}\cap \setsmall{u_{2^{k+1}C_\circ}>0})]{D^2 u_{2^{k+1}C_\circ}}
\leq \frac{C_\circ}{2^{k+1}C_\circ}
<1,
\]
contradicting \eqref{eq:lem-W-Hess}. Hence, either
\begin{equation}\label{eq:lem-W-k}
{\bf W}(u,2^{k+2}C_\circ)-{\bf W}(u,2^{k+1}C_\circ)\geq \delta_1
	\qquad \text{for all}\quad k=1,\dots,K,
\end{equation}
or, by \eqref{eq:H1-compact-abs}, there exist $k\leq K$  
such that
\begin{equation}\label{eq:lem-W-stop}
{\bf W}(u,2^{k+1}C_\circ)>2\alpha_3-\eps_1.
\end{equation}
If \eqref{eq:lem-W-k} holds, then summing over $k$ from $1$ to $K:=\floors{\frac{\alpha_3}{\delta_1}}+1$ yields
\[
{\bf W}(u,2^{K+2}C_\circ)
\geq {\bf W}(u,4C_\circ)+K\delta_1
\geq \alpha_3+\delta_\circ+K\delta_1
\geq 2\alpha_3>2\alpha_3-\eps_1.
\]
In either case, recalling \eqref{eq:W a 2a} and choosing $R_\eps=2^{K+2}C_\circ$, for any $R\geq R_\eps$ it holds
\[
2\alpha_3\geq {\bf W}(u,R)\geq {\bf W}(u,R_\eps)>2\alpha_3-\eps_1.
\]
This implies that ${\bf W}(u_R,2)-{\bf W}(u_R,1)<\eps_1=\delta(\eps)$ and ${\bf W}(u_R,2)>2\alpha_3-\eps_1>\alpha_3+\eps$, so by applying  \Cref{lem:W-compact-H1} again we obtain
\[
\normbig[L^\infty(B_1)]{u_R-|e_R\cdot x|}<\eps
\]
for some $e_R\in \bS^{2}$, as desired.
\end{proof}

\section{Necks: definition and properties}
\label{sec:neck_set}
In this section we begin our study of global classical stable solutions.
We will need to properly define the ``neck'' regions (i.e., regions where the free boundary is not flat) and study their properties.

\subsection{Reduction}

Let us begin with the following reduction lemma.
From now on, whenever $u$ is a classical solution and $x_\circ \in \FB(u)$, we write $D^2u(x_\circ)$ to denote the limit of $D^2u$ from the positivity set: more precisely, $D^2u(x_\circ):=\lim_{\{u>0\}\ni x\to x_\circ} D^2 u(x).$

\begin{lem}[Reduction]
\label{lem:reduction}
Let $n \ge 2$, and suppose there exists a global classical stable  solution $v$ to the Bernoulli problem in $\R^n$  such that $|D^2 v|\not\equiv 0$ in $\{v > 0\}$. Then, there exists a global classical stable solution $u$ such that $0\in \FB(u)$, $|D^2 u|\le 1$ in  $\overline{\{u>0\}}$, and $|D^2 u|(0) = 1$. 
\end{lem}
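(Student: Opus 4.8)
The plan is to run a standard point-selection/blow-up argument, normalizing so that the Hessian becomes bounded by~$1$ and attains this bound at a free boundary point. First I would observe that, by \Cref{lem:Lipbound}, $|\nabla v|\le 1$ globally, so $v$ is $1$-Lipschitz; moreover $|D^2v|$ is finite at every point of $\{v>0\}$ and, by our convention, extends continuously up to $\FB(v)$. Since $|D^2v|\not\equiv 0$, there is a point $x_1\in\overline{\{v>0\}}$ with $|D^2v(x_1)|=:M_0>0$. The issue is that $M_0$ could be arbitrarily large, and also $|D^2v|$ is a priori unbounded on all of $\R^n$, so a naive rescaling at $x_1$ need not produce a solution with \emph{globally} bounded Hessian. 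To fix this I would use the classical ``largest ball'' device: for each $j\in\N$ set
\[
M_j:=\sup_{B_j\cap\{v>0\}}|D^2v|,
\]
which is finite for every $j$ because $v$ is a classical solution (so $D^2v$ is continuous up to $\FB(v)$, by \Cref{lem:eps-reg-classical}, on the compact set $\overline{B_j}$). If $(M_j)_j$ stays bounded, then $|D^2v|\le M_\infty:=\sup_j M_j<\infty$ on all of $\R^n$ and one can simply take $u(x):=M_\infty\, v\big(x_\star+M_\infty^{-1}x\big)$ after translating a near-maximizing free boundary point $x_\star$ to the origin; one then adjusts by a further small ``almost-maximizer'' argument so that the supremum of $|D^2u|$ is exactly~$1$ and is (nearly) attained, and passes to a limit as in \Cref{lem:compact}. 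The delicate case is $M_j\to\infty$.

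In the case $M_j\uparrow\infty$, I would pick, for each $j$, a point $x_j\in\overline{B_j\cap\{v>0\}}$ with $|D^2v(x_j)|\ge \tfrac12 M_j$ (a near-maximizer), and then apply the standard doubling/telescoping selection to the quantity $x\mapsto \mathrm{dist}(x,\partial B_{2j})\,|D^2v(x)|$ on $B_{2j}$: this produces a point $y_j\in B_{2j}\cap\overline{\{v>0\}}$ with $L_j:=|D^2v(y_j)|\ge |D^2v(x_j)|\to\infty$ and with the key property that
\[
\sup_{B_{r_j}(y_j)\cap\{v>0\}}|D^2v|\le 2L_j,\qquad r_j:=\tfrac14\,\mathrm{dist}(y_j,\partial B_{2j})\,,\quad r_jL_j\to\infty .
\]
(This is exactly the mechanism used inside the proof of \Cref{lem:eps-reg-Hess}.) Next I would translate a near-maximizing \emph{free boundary} point close to $y_j$ to the origin: since $v$ is $1$-Lipschitz and the free boundary curvature at scale $L_j^{-1}$ around $y_j$ is bounded (by $CL_j\cdot L_j^{-1}=C$, using $v=0,\ \partial_\nu v=1$ on $\FB(v)$), there is a free boundary point $\tilde y_j$ with $|y_j-\tilde y_j|\le CL_j^{-1}$ and $|D^2v(\tilde y_j)|\ge \tfrac34 L_j$ for $j$ large. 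Rescale:
\[
u_j(x):=L_j\, v\big(\tilde y_j + L_j^{-1}x\big).
\]
Then $u_j$ is a global classical stable solution (scaling and translation preserve all three properties in \Cref{defi:solutions}), $0\in\FB(u_j)$, $|\nabla u_j|\le 1$, and on $B_{r_jL_j-C}(0)\cap\{u_j>0\}$ — a ball whose radius tends to $\infty$ — we have $|D^2u_j|\le 2$, with $|D^2u_j(0)|=|D^2v(\tilde y_j)|/L_j\ge \tfrac34$. Finally I would invoke the compactness statement \Cref{lem:compact}: along a subsequence $u_j\to u_\infty$ in $C^{0,\alpha}_{\mathrm{loc}}\cap H^1_{\mathrm{loc}}$, with $\FB(u_j)\to\FB(u_\infty)$ locally in Hausdorff distance and $u_\infty$ a global classical stable solution, $0\in\FB(u_\infty)$. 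By the interior estimates of \Cref{lem:eps-reg-classical-2} the bound $|D^2u_j|\le 2$ upgrades to $C^3_{\mathrm{loc}}$ bounds away from nothing (the bound being uniform up to the free boundary), so $D^2u_j\to D^2u_\infty$ locally uniformly on $\overline{\{u_\infty>0\}}$; hence $|D^2u_\infty|\le 2$ everywhere in $\overline{\{u_\infty>0\}}$ and $|D^2u_\infty(0)|\ge\tfrac34>0$. A trivial rescaling $u(x):=\lambda\,u_\infty(\lambda^{-1}x)$ with $\lambda=\sup_{\overline{\{u_\infty>0\}}}|D^2u_\infty|\in[\tfrac34,2]$ then gives $|D^2u|\le 1$ in $\overline{\{u>0\}}$, $0\in\FB(u)$, and — since the supremum is attained (the Hessian is continuous and, by the blow-down/vee structure or simply by the global bound, a maximizing sequence stays in a compact set after recentering, or one repeats the selection once more) — $|D^2u(0')|=1$ at some $0'\in\overline{\{u>0\}}$; translating $0'$ to the origin, and noting the maximum point must lie on $\FB(u)$ since $v$ would otherwise be harmonic hence $|D^2u|$ subharmonic and non-constant, contradicting the maximum being interior, we conclude $0\in\FB(u)$ and $|D^2u(0)|=1$.

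The main obstacle is the case $M_j\to\infty$: one must ensure that the rescaled solutions $u_j$ have Hessian bounded \emph{on balls of radius growing to infinity}, which is precisely what the ``largest ball'' selection of $y_j$ (with the weight $\mathrm{dist}(\cdot,\partial B_{2j})\,|D^2v|$) delivers, and then that the limit inherits a \emph{global} bound — this uses the local $C^3$ estimates of \Cref{lem:eps-reg-classical-2} together with the Hausdorff convergence of free boundaries from \Cref{lem:compact} to pass $|D^2u_j|\le 2$ to the limit up to $\FB(u_\infty)$. A secondary point requiring a little care is that the supremum of $|D^2u_\infty|$ is actually attained at a free boundary point (so that the final normalization yields equality at a point of $\FB(u)$ rather than merely a supremum); this follows because $|D^2u_\infty|^2=-\tfrac12\Delta(1-|\nabla u_\infty|^2)$ is subharmonic in $\{u_\infty>0\}$ (cf. \Cref{lem:vbounds}), so its maximum over $\overline{\{u_\infty>0\}}$ is attained on $\FB(u_\infty)$ unless $u_\infty$ is affine — which is excluded since $|D^2u_\infty(0)|>0$ — and one last recentering/rescaling places that point at the origin with Hessian norm exactly $1$.
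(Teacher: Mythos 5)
Your overall strategy is the same as the paper's: a weighted point-selection (your $\mathrm{dist}(\cdot,\partial B_{2j})\,|D^2v|$ is the paper's $(1-|x|/k)\,|D^2v(x)|$) to produce rescalings with Hessian bounded by a fixed constant on balls of radius $r_jL_j\to\infty$, followed by compactness and a final normalization. That part is sound, and your observation that the maximum of $|D^2u|$ must sit on the free boundary (subharmonicity of $|D^2u|^2$ plus the strong maximum principle) matches the paper's Case 2.

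However, there is a genuine gap at the limiting step: you assert that the limit $u_\infty$ produced by \Cref{lem:compact} is a global \emph{classical} stable solution, but \Cref{lem:compact} only yields a \emph{stationary, stable} solution in the sense of \Cref{defi:solutions}, together with Hausdorff convergence of the free boundaries. The uniform Hessian bound does give uniform curvature estimates for each locally connected component of $\{u_j>0\}$ (which, near a free boundary point and at unit scale, is a union of at most two smooth hypographs by \Cref{lem:cleanball}), and each hypograph converges smoothly. What this does \emph{not} rule out is that two distinct components of $\{u_j>0\}$ collapse onto each other and their boundaries \emph{touch tangentially} in the limit; at such a point $\{u_\infty>0\}$ is not locally a smooth domain and $u_\infty$ fails to be classical, so the lemma's conclusion would not hold. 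The paper closes this gap with a separate argument: by \Cref{lem:vbounds} each component of $\{u_\infty>0\}$ is mean concave, so a tangency point would force the mean curvature (hence, by Hopf applied to $1-|\nabla u_\infty|^2$) to vanish, giving $D^2u_\infty\equiv 0$ on the two touching components by unique continuation; this contradicts the fact that $D^2u_\infty$ is nonzero in an open set near the origin. Your proof needs this (or an equivalent) step. A secondary, fixable point: your claim that the nearest free boundary point $\tilde y_j$ satisfies $|D^2v(\tilde y_j)|\ge \tfrac34 L_j$ requires a $D^3$ bound and a distance of order $cL_j^{-1}$ with $c$ small, which the harmonic estimate only gives with a (possibly large) constant $C$; it is cleaner to recenter at $\tilde y_j$ without insisting on near-maximality there and let the final renormalization absorb the discrepancy.
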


\begin{proof} 
Notice that $v$ must have a free boundary; indeed, if not, it would be a positive harmonic function, so it would be constant, contradicting the assumption that  $|D^2 v| \not \equiv 0$.

Let us suppose first  $\sup_{\{v>0\}} |D^2 v|= \infty$. Consider $x_k\in B_k\cap \overline{\{ v > 0\}}$ such that
\[
h_k := |D^2 v(x_k)|\bigg(1 - \frac{|x_k|}k\bigg) = \max_{x\in B_k\cap \overline{\{v > 0\}}} |D^2 v(x)|\bigg(1 -\frac{|x|}k\bigg),
\]
which satisfies $h_k \geq \frac12 \max\limits_{B_{k/2}}|D^2 v|\to \infty$ as $k\to\infty$.
Let $d_k := |D^2 v(x_k)|$ and $\rho_k = 1-\frac{|x_k|}k$, and define the classical stable solutions
\[
u_k(y) := d_k \,v\bigg(x_k + \frac{y}{d_k}\bigg)\qquad\text{for}\quad y\in B_{d_k \rho_k }. 
\]
We have $0\in \overline{\{u_k > 0\}}$ and $|D^2 u_k(0)| = 1$. Also, by definition of $h_k$, 
for $x=x_k + \frac{y}{d_k}\in \{v>0\}$  with $|y|< d_k \rho_k$ we have 
\[
\left|D^2 v\left(x_k + \frac{y}{d_k}\right)\right| \le |D^2 v(x_k)| \frac{1-\frac{|x_k|}{k}  }{1-\frac{|x_k + y/d_k|}{k}} \le |D^2 v(x_k)| \frac{\rho_k}{\rho_k -\rho_k/k}.
\] 
Therefore,
\[
|D^2 u_k(y)| = \frac{1}{d_k} \abs{D^2 v\left(x_k + \frac{y}{d_k}\right)}
\le \frac{1}{1-1/k} \qquad\text{in}\quad B_{d_k \rho_k} \cap \overline {\{u_k > 0\}}. 
\]
Since $d_k\rho_k = h_k \to \infty$ as $k \to \infty$, \Cref{lem:compact} implies that (up to a subsequence) $u_k$ converges to some global stable solution $u$ with $0\in \overline {\{u>0\}}$. Moreover, thanks to the upper bound on the Hessian, the free boundaries are uniformly smooth and $|D^2 u|\le 1$ in $\overline{\{u>0\}}$.

We observe that \( u \) is a classical stable solution satisfying $|D^2u|(0)=1$. Indeed, given \( x_\circ \in \FB(u) \) and \( r > 0 \), the uniform bound \( |D^2 u_k| \leq 1 \)  implies—using the condition \( \partial_\nu u_k = 1 \) on \( \FB(u_k) \) and \Cref{lem:cleanball}—that the positivity sets \( \{ u_k > 0 \} \) are locally the union of at most two smooth hypographs (in opposite directions) inside \( B_r(x_\circ) \). Moreover, these hypographs have uniform curvature estimates for their boundaries.
Hence, applying the Arzelà-Ascoli theorem, these hypographs converge (up to a subsequence) to smooth hypographs as \( k \to \infty \), and the free boundaries \( \FB(u_k) \) converge smoothly to \( \FB(u) \). 
Also, since $|D^2u_k(0)|=1$ and the free boundaries converge smoothly, the Hessian of $u$ must be nonzero in an open set near $0$.

To verify that $u$ is classical we only need to rule out tangency situations: i.e., we must show that the boundaries two locally connected components of \( \{ u > 0 \} \cap B_r(x_\circ) \) cannot touch. To show this, note that because of \Cref{lem:vbounds}, each component of \( \{ u > 0 \} \cap B_r(x_\circ) \) is mean concave. Thus, the presence of a tangency point \( x_\circ \) would force the mean curvature to be zero at \( x_\circ \), and therefore (again by \Cref{lem:vbounds}) \( D^2 u \equiv 0 \) in a neighborhood of \( x_\circ \). By unique continuation, this would imply \( D^2 u \equiv 0 \) in all of these two connected components of \( \{ u > 0 \} \), which would imply that $u$ is a vee. However, this contradicts the fact that the Hessian of $u$ is nonzero in an open set near $0$. Thus, no tangency point can exist, completing the argument.

Note now that, since $u$ is a classical solution, the bound $|D^2u_k(0)|=1$ implies in the limit that $|D^2u(0)|=1$.

It remains to consider the case $M: = \sup_{\{v>0\}} |D^2 v|\in (0, \infty)$.  In this case, it suffices to 
choose $x_k \in \{ v > 0\}$ such that $|D^2 v(x_k)|\to M $ and define $u$ as the limit of $u_k(x):=M\, v\left(x_k+\frac{x}{M}\right)$. 
Arguing similarly to above, $u_k$ converges locally uniformly to a  classical solution $u$ satisfying $|D^2 u|\le 1$ on $\overline {\{u>0\}}$  and $|D^2 u|(0)=1$. Again by the strong maximum principle ($|D^2 u|^2$ is subharmonic) we obtain $0\in \FB(u)$.
\end{proof}

\subsection{Fixing global assumptions}
\label{ssec:fixinghyp} 

  Let us now fix some global assumptions and variables. Throughout the rest of the paper, and until otherwise stated, we set $n=3$
and $u\in {\rm Lip}(\R^3)$ to be a fixed global classical stable solution to the Bernoulli problem in $\R^3$, with 
\begin{equation}
    \label{eq:assump_glob_u}
\text{$0\in \FB(u)$, \quad  $|\nabla u|\le 1$, \quad $|D^2 u(0)| = 1$, \quad  and \quad  $|D^2 u|\le 1$ in $\{u > 0\}$,}
\end{equation}
(as in \Cref{lem:reduction}). 

We also fix the global universal constant
\begin{equation}
    \label{eq:eta0}
    \eta_0 : =  \eta_*(3)
\end{equation}
where $\eta_*(3)$ the  constant in \Cref{lem:eps-reg-Hess} for dimension $n=3$.

\subsection{Definition of neck centers}
\label{ssec:neck-set-def}
Given  $u$ and $\eta_0$ as above, we define the set of \emph{neck centers} $\cZ$  as follows. 
\begin{itemize}
\item First, for any $y\in \FB(u)$, we define its \emph{threshold radius} as 
\begin{equation}
\label{eq:def_rB}
\rb(y) :=  \inf\bigg\{r > 0 : \int_{B_r(y)\cap \{u > 0\}}|D^2 u|^3 \,dx \ge \eta_0^3 \bigg\}.
\end{equation}
Observe that, since we are assuming $|D^2 u|$ to be globally and universally bounded, we know that 
\begin{equation}\label{eq:rmin_def}
\rb(x) \geq \rmin = \rmin ( \eta_0):= c \, \eta_0 > 0\qquad\text{for all}\quad x\in \FB(u). 
\end{equation}

\item Then, for any $k\in \N_0$, we define
\begin{equation}\label{eq:Xktilde}
\tilde \cZ_k := \left\{ x \in \FB(u)  : \rb(x) \in [\rmin 2^k, \rmin 2^{k+1})\right\}.
\end{equation}
 
\item Given $\lambda > 0$ and $\mathcal Y \subset \FB(u)$, we denote 
\begin{equation}
\label{eq:cBlambdadef}
\cB_\lambda(\mathcal Y) := \bigcup_{y\in \mathcal Y} B_{\lambda \rb(y)}(y)
\end{equation}
(not to be confused with the notation $B_r(A) = A+B_r$ in \Cref{ssec:notation}). Thanks to Vitali's covering lemma, we can consider a countable subset of centers $\cZ_0\subset \tilde \cZ_0$ such that
\[
B_{\rb(z_1)}(z_1)\cap B_{\rb(z_2)}(z_2) = \varnothing\qquad\text{for all}\quad z_1, z_2\in \cZ_0,\quad z_1\neq z_2,
\]
and
\[
\tilde \cZ_0\subset \cB_1(\tilde \cZ_0)\subset \cB_4(\cZ_0). 
\]

\item Then, for $k\ge 1$, we recursively define
\begin{equation}
\label{eq:Xkprime}
\cZ_k' := \{x \in \tilde \cZ_k : B_{4\rb(x)}(x) \cap \cZ_{<k} = \varnothing\},
\end{equation}
where we have denoted $\cZ_{<k} := \bigcup_{i = 0}^{k-1} \cZ_i$. We take $\cZ_k$ to be the centers of a Vitali subcovering of $\cB_1(\cZ_k')$, namely, $\cZ_k\subset \cZ_k'$ is a countable subset such that
\[
B_{\rb(z_1)}(z_1)\cap B_{\rb(z_2)}(z_2) = \varnothing\qquad\text{for all}\quad z_1, z_2\in \cZ_k,\quad z_1\neq z_2,
\]
and 
\[
\cZ_k'\subset \cB_1(\cZ_k')\subset \cB_4(\cZ_k).
\]

\item Finally, we define 
\[
\cZ := \bigcup_{k\ge 0} \cZ_k.
\] 
We call the points in $\cZ$ {\em neck centers} and denote the points in $\cZ$ by $\zz$, $\zz_k$, etc. The threshold radii of neck centers are simply called  \emph{neck radii}. 
\end{itemize}

The first observation is that $\cZ$ exists:

\begin{lem}
\label{lem:Znonempty}
There holds $\rb(0)\leq C\eta_0$. Consequently, the set of neck centers $\cZ$ is nonempty. 
\end{lem}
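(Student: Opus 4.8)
The plan is to rescale $u$ at small scales and combine the normalization $|D^2u(0)|=1$ from \eqref{eq:assump_glob_u} with the $\eps$-regularity for the Hessian \Cref{lem:eps-reg-Hess}. The point is simple: at any scale $r$ strictly below $\rb(0)$ the rescaled solution has $L^3$-Hessian below the $\eta_*(3)$ threshold, so $\eps$-regularity bounds its Hessian by $C\eta_0$ \emph{everywhere} in a half-size ball, in particular at $0$; but at $0$ the rescaled Hessian has size exactly $r$, forcing $r\le C\eta_0$.

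Concretely, I would fix an arbitrary $r<\rb(0)$ and set $u_r(x):=\tfrac1r u(rx)$, which is again a global classical stable solution of the Bernoulli problem with $0\in\FB(u_r)$. By the definition of the threshold radius \eqref{eq:def_rB} and scaling (recall $\eta_0=\eta_*(3)$ by \eqref{eq:eta0}),
\[
\int_{B_1\cap\{u_r>0\}}|D^2u_r|^3\,dx=\int_{B_r(0)\cap\{u>0\}}|D^2u|^3\,dx<\eta_0^3=\eta_*(3)^3 ,
\]
so \Cref{lem:eps-reg-Hess} (with $n=3$ and $\eta=\eta_*(3)$) gives $\|D^2u_r\|_{L^\infty(B_{1/2}\cap\{u_r>0\})}\le C\eta_0$ for a universal $C$. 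Since $D^2u_r(x)=r\,D^2u(rx)$ on $\{u_r>0\}$ and $D^2u_r(0)$ denotes the limit of $D^2u_r$ along $\{u_r>0\}\ni x\to 0$, evaluating at $0$ yields $r=r\,|D^2u(0)|=|D^2u_r(0)|\le C\eta_0$. As $r<\rb(0)$ was arbitrary, letting $r\uparrow\rb(0)$ gives $\rb(0)\le C\eta_0$, which is the first assertion.

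For the nonemptiness of $\cZ$: by \eqref{eq:rmin_def} and the bound just proved, $\rb(0)\in[\rmin,\,C\eta_0]$, so $0\in\tilde\cZ_k$ for the index $k$ with $\rb(0)\in[\rmin 2^k,\rmin 2^{k+1})$; hence there is a smallest $k_0$ with $\tilde\cZ_{k_0}\neq\varnothing$. Since $\cZ_j\subset\tilde\cZ_j$ for all $j$, we have $\cZ_{<k_0}=\varnothing$, so by \eqref{eq:Xkprime} $\cZ_{k_0}'=\tilde\cZ_{k_0}\neq\varnothing$ (for $k_0=0$ one just has $\tilde\cZ_0\neq\varnothing$), and the Vitali covering built in \Cref{ssec:neck-set-def}, which forces $\cZ_{k_0}'\subset\cB_4(\cZ_{k_0})$, then gives $\cZ_{k_0}\neq\varnothing$; in particular $\cZ\neq\varnothing$.

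I do not expect any real obstacle here: the argument is short and essentially computational. The only points requiring (minor) care are choosing the subcritical scales $r<\rb(0)$ — where the $L^3$ norm of the Hessian is still below $\eta_*(3)$ — and the legitimacy of evaluating $D^2u_r$ at the free boundary point $0$, which is fine precisely because \Cref{lem:eps-reg-Hess} yields an $L^\infty$ bound on the whole set $B_{1/2}\cap\{u_r>0\}$. This lemma is really just a warm-up certifying that the neck centers introduced in \Cref{ssec:neck-set-def} are not defined over an empty collection.
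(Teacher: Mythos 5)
Your proof is correct and takes essentially the same route as the paper: the paper applies the rescaled form \eqref{eq:eps_reg_Hess_k} of \Cref{lem:eps-reg-Hess} directly at radius $\rb(0)$ to get $1=|D^2u(0)|\le C\eta_0/\rb(0)$, whereas you rescale at subcritical radii $r<\rb(0)$ and let $r\uparrow\rb(0)$ — the same idea with an extra limiting step. Your explicit verification that the Vitali construction propagates nonemptiness from $\tilde\cZ_{k_0}$ to $\cZ_{k_0}$ is a correct elaboration of what the paper leaves implicit.
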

\begin{proof}
    Recalling \eqref{eq:assump_glob_u}--\eqref{eq:eta0}, \Cref{lem:eps-reg-Hess} gives $1=|D^2u(0)|\leq \normsmall[L^\infty(B_{\rb(0)/2}\cap \set{u>0})]{D^2u}\leq \frac{C\eta_0}{\rb(0)}$, as desired.
\end{proof}

From now on, the set $\cZ$ is fixed as above. 

\subsection{Basic properties of the neck centers and neck radii}
Given the previous definitions, we start to discuss some basic properties of the neck centers.

\begin{lem}[Covering omitted neck centers]
	\label{lem:X<ktilde}
For any $k\geq 1$, we have
\[
\tilde{\cZ}_{<k}:=\bigcup_{j=0}^{k-1}\tilde{\cZ}_j
\subset  B_{2^{k+2}\rmin }(\cZ_{<k}) = \cZ_{<k}+B_{2^{k+2}\rmin } .
\]
\end{lem}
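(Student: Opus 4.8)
The plan is to fix $k\geq 1$, take an arbitrary point $x\in\tilde\cZ_{<k}$, so that $x\in\tilde\cZ_j$ for some $j\in\{0,1,\dots,k-1\}$ and in particular $\rb(x)<2^{j+1}\rmin$ by \eqref{eq:Xktilde}, and then show $\dist(x,\cZ_{<k})<2^{k+2}\rmin$ by splitting into cases according to the dichotomy built into the recursive definition \eqref{eq:Xkprime} of $\cZ_j'$. The whole argument is elementary set-chasing through the construction of $\cZ$, together with one small numerology check at the end; in particular I would not invoke any PDE input.

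The first case is $j\geq 1$ and $B_{4\rb(x)}(x)\cap\cZ_{<j}\neq\varnothing$: here I would simply pick $z$ in that intersection, and since $\cZ_{<j}\subset\cZ_{<k}$ and $|x-z|<4\rb(x)<4\cdot 2^{j+1}\rmin=2^{j+3}\rmin\leq 2^{k+2}\rmin$ (using $j\leq k-1$), we are done. The second case is $j=0$, or $j\geq 1$ with $B_{4\rb(x)}(x)\cap\cZ_{<j}=\varnothing$. In the latter situation the condition $B_{4\rb(x)}(x)\cap\cZ_{<j}=\varnothing$ is precisely the membership criterion for $x\in\cZ_j'$ from \eqref{eq:Xkprime}, so $x\in\cZ_j'\subset\cB_4(\cZ_j)$ by the Vitali-covering property of $\cZ_j$; in the former case ($j=0$) one uses directly $\tilde\cZ_0\subset\cB_4(\cZ_0)$ from the construction. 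Either way there is $z\in\cZ_j\subset\cZ_{<k}$ with $|x-z|<4\rb(z)$, and since $z\in\tilde\cZ_j$ gives $\rb(z)<2^{j+1}\rmin$, we again get $|x-z|<2^{j+3}\rmin\leq 2^{k+2}\rmin$. Since $x$ was arbitrary, this proves $\tilde\cZ_{<k}\subset B_{2^{k+2}\rmin}(\cZ_{<k})$.

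I do not expect a genuine obstacle here: the lemma is purely combinatorial bookkeeping about the recursive covering. The only points requiring (minor) attention are recognizing that ``$B_{4\rb(x)}(x)\cap\cZ_{<j}=\varnothing$'' is exactly the defining condition for $x\in\cZ_j'$, so that the dichotomy is exhaustive, and checking the exponent inequality $j+3\leq k+2$, which holds precisely because $j\leq k-1$; this last inequality is also the reason the constant $2^{k+2}$ (and not a smaller power) appears in the statement.
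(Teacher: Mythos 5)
Your proof is correct and follows essentially the same route as the paper: the dichotomy $x\in\cZ_j'$ versus $x\in\tilde\cZ_j\setminus\cZ_j'$ (with $j=0$ folded into the first alternative), the Vitali property $\cZ_j'\subset\cB_4(\cZ_j)$, and the bound $4\cdot 2^{j+1}\rmin=2^{j+3}\rmin\leq 2^{k+2}\rmin$ are exactly the ingredients of the paper's argument, which merely phrases them as set inclusions rather than pointwise.
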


\begin{proof}
For any $j\geq 1$ and $x \in \tilde{\cZ}_j\setminus \cZ_j'$, it follows from \eqref{eq:Xkprime} that $x \in B_{4\rb(x)}(\cZ_{<j})$.
Recalling \eqref{eq:Xktilde}, this implies
\[
\tilde{\cZ}_j \setminus \cZ_j'
\subset B_{4\rmin \cdot 2^{j+1}}(\cZ_{<j}).
\]
Thus, using \eqref{eq:Xktilde} again for all $j\leq k-1$ (recall \eqref{eq:cBlambdadef}),
\begin{align*}
\tilde{\cZ}_{<k}
&\subset \bigcup_{j=0}^{k-1} \big( \cZ_j' \cup (\tilde{\cZ}_j \setminus \cZ_j') \big)
\subset \bigcup_{j=0}^{k-1} \Big( \cB_4(\cZ_j) \cup B_{4\rmin\cdot 2^k}(\cZ_{<j}) \Big)  
\subset \cB_4(\cZ_{<k}) \cup B_{4\rmin\cdot 2^k}(\cZ_{<k-1}).
\end{align*}
The result follows.
\end{proof}

 \begin{cor}\label{cor:Hess-bd-neck}
For any $\zz \in \cZ$, we have%
\[
\rb(\zz)\norm[L^\infty(B_{2 \rb(\zz)}(\zz)\cap \{u > 0\})]{D^2u}
\leq  C,
\]
for some $C$ universal.
\end{cor}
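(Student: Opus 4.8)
The plan is to reduce the bound to the local $\eps$-regularity at the scale of the threshold radius, upgrade it from a half-ball to the doubled ball $B_{2\rb(\zz)}(\zz)$ by a soft combinatorial argument on the construction of $\cZ$, and treat deep interior points via Sternberg--Zumbrun.

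First I would record the basic consequence of the definition of $\rb$: for every $y\in\FB(u)$, monotone convergence and \eqref{eq:def_rB} give $\int_{B_{\rb(y)}(y)\cap\{u>0\}}|D^2u|^3\le\eta_0^3=\eta_*(3)^3$, so \eqref{eq:eps_reg_Hess_k} (with $k=2$, $n=3$, radius $\rb(y)$, recentered at $y$) yields $\|D^2u\|_{L^\infty(B_{\rb(y)/2}(y)\cap\{u>0\})}\le C\eta_0/\rb(y)$. This already proves the corollary on $B_{\rb(\zz)/2}(\zz)$; the real issue is to bound $|D^2u|$ at the points of $B_{2\rb(\zz)}(\zz)\cap\{u>0\}$ that are comparatively far from $\zz$, possibly near a different portion of $\FB(u)$.

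Next I would extract two purely combinatorial facts from the construction of $\cZ$ (in the spirit of \Cref{lem:X<ktilde}). \emph{(a)} If $\zz,\zz'\in\cZ$ and $|\zz-\zz'|<4\rb(\zz)$, then $\rb(\zz')\ge\rb(\zz)/2$: writing $\zz\in\cZ_k$, \eqref{eq:Xkprime} (trivially if $k=0$) gives $B_{4\rb(\zz)}(\zz)\cap\cZ_{<k}=\varnothing$, so $\zz'\in\cZ_m$ with $m\ge k$, hence $\rb(\zz')\ge\rmin2^k>\rb(\zz)/2$ since $\rb(\zz)<\rmin2^{k+1}$. \emph{(b)} Every $y\in\FB(u)$ has a neck center $\zz'\in\cZ$ with $|y-\zz'|<8\rb(y)$ and $\rb(\zz')<2\rb(y)$: with $y\in\tilde\cZ_j$, either $y\in\cZ_j'$ (or $y\in\tilde\cZ_0$ if $j=0$) and the Vitali property $\cZ_j'\subset\cB_4(\cZ_j)$ (resp.\ $\tilde\cZ_0\subset\cB_4(\cZ_0)$; see \eqref{eq:cBlambdadef}) puts $y$ within $4\rb(z)$ of some $z\in\cZ_j$ with $\rb(z)<2\rb(y)$; or else, by \eqref{eq:Xkprime}, $B_{4\rb(y)}(y)$ meets $\cZ_{<j}$, which gives $z\in\cZ_{<j}$ with $|y-z|<4\rb(y)$ and $\rb(z)<\rb(y)$. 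Combining \emph{(a)} and \emph{(b)} I would then obtain the key estimate: for every $\zz\in\cZ$ and every $y\in\FB(u)\cap B_{3\rb(\zz)}(\zz)$ one has $\rb(y)\ge\rb(\zz)/8$ --- otherwise \emph{(b)} produces $\zz'\in\cZ$ with $|y-\zz'|<8\rb(y)<\rb(\zz)$ and $\rb(\zz')<2\rb(y)<\rb(\zz)/4$, hence $|\zz-\zz'|<4\rb(\zz)$, contradicting \emph{(a)}.

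To conclude, fix $\zz\in\cZ$, put $R:=\rb(\zz)$, take $x\in B_{2R}(\zz)\cap\{u>0\}$, and set $\delta:=\dist(x,\FB(u))$ (note $\delta<2R$). If $\delta\ge R/16$ then $B_{R/16}(x)\subset\{u>0\}$, where $|D^2u|^2$ is subharmonic ($\Delta|D^2u|^2=2|D^3u|^2\ge0$), so the mean value inequality together with \Cref{lem:Stern_Zum} gives $|D^2u(x)|^2\le\fint_{B_{R/16}(x)}|D^2u|^2\le C/R^2$. If $\delta<R/16$, pick $y\in\FB(u)$ with $|x-y|=\delta$; then $y\in B_{3R}(\zz)$, so $\rb(y)\ge R/8$ by the key estimate, hence $\delta<R/16\le\rb(y)/2$, so $x\in B_{\rb(y)/2}(y)$ and the first step at $y$ gives $|D^2u(x)|\le C\eta_0/\rb(y)\le 8C\eta_0/R$. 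Either way $|D^2u(x)|\le C/R$, as claimed. I expect the only real difficulty to be bookkeeping: propagating the covering radii ($4$, $8$, $16$) consistently through \emph{(a)}, \emph{(b)} and the key estimate --- in particular ensuring that the ball $B_{3\rb(\zz)}(\zz)$, which is exactly where the nearest free boundary point to a point of $B_{2\rb(\zz)}(\zz)$ may sit, is small enough to feed \emph{(b)} into \emph{(a)} --- the analytic ingredients ($\eps$-regularity, Sternberg--Zumbrun, subharmonicity of $|D^2u|^2$ up to $\FB(u)$) all being at hand.
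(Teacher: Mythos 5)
Your proposal is correct and follows essentially the same strategy as the paper: both proofs first extract from the construction of $\cZ$ a lower bound on the threshold radius $\rb(y)$ for free boundary points $y$ near $\zz$ (the paper via \Cref{lem:X<ktilde}, you via your facts (a) and (b), which re-derive the same conclusion with slightly different constants), then apply the $\eps$-regularity \Cref{lem:eps-reg-Hess} near the free boundary and a separate soft argument for interior points. The only cosmetic differences are that the paper also records the (unneeded) upper bound $\rb(y)\le 4\rb(\zz)$ and handles interior points via the Lipschitz bound plus harmonic estimates, whereas you use subharmonicity of $|D^2u|^2$ together with \Cref{lem:Stern_Zum}; both routes work.
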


\begin{proof}
Let $\zz\in \cZ_k$. By  \Cref{lem:X<ktilde} together with \eqref{eq:Xktilde}--\eqref{eq:Xkprime},
\[
  \tilde{\cZ}_{< k-1}
\subset B_{2^{k+1}\rmin}(\cZ_{<k-1})
\subset B_{2 \rb(\zz)}(\cZ_{<k})
\subset \R^3\setminus B_{2\rb(\zz)}(\zz),
\]
therefore
\begin{equation}\label{eq:doubling-claim-10}
B_{2\rb(\zz)}(\zz) \cap \tilde{\cZ}_{<k-1} = \varnothing\quad\Longrightarrow\quad \rb(y) \ge \rb(\zz) /4\qquad\text{for all}\quad y\in \FB(u) \cap B_{2\rb(\zz)}(\zz). 
\end{equation}
(Here we understand that $\tilde{\cZ}_{<k-1}=\varnothing$ if $k-1\le 0$.) We now claim that
\begin{equation}\label{eq:doubling-claim-20}
\rb(y) \leq 4\rb(\zz)\qquad\text{for all}\quad y\in \FB(u) \cap B_{2\rb(\zz)}(\zz).
\end{equation}
Indeed, if not, then \eqref{eq:def_rB} yields\footnote{Here the strict inequality follows from the fact that, if $|D^2u|$ were to vanish in $\big(B_{\rb(y)}(y)\setminus B_{4\rb(\zz)}(y)\big)\cap \{u > 0\}$, then it would be zero inside $B_{4\rb(\zz)}(y)\cap \{u > 0\}$ by unique continuation.}
\[
\eta_0^3
=\int_{B_{\rb(\zz)}(\zz)\cap \{u > 0\}}
	|D^2u|^3
\,dx
\leq \int_{B_{4\rb(\zz)}(y)\cap \{u > 0\}}
	|D^2u|^3
\,dx
< \int_{B_{\rb(y)}(y)\cap \{u > 0\}}
	|D^2u|^3
\,dx
=\eta_0^3,
\]
a contradiction. Hence, \eqref{eq:doubling-claim-20} holds.

After a rescaling (considering $\frac{u(\zz+\rb(\zz) x)}{\rb(\zz)}$ instead of $u$), let us assume $\rb(\zz) = 1$. Then  $\rb(x)\in [\tfrac14, 4]$ for all $x\in \FB(u) \cap B_2$. Also, by \Cref{lem:eps-reg-Hess}, $|D^2 u|\le C \eta_0$ in $\{u > 0\}\cap B_{2}\cap \{\dist(\cdot, \FB(u)) < \tfrac18\}$.
Furthermore, since $u$ is $1$-Lipschitz, harmonic estimates imply that $|D^2 u|\le C$ in $\dist(\cdot, \FB(u)) \ge \tfrac18$. 
This shows that $|D^2 u|\le C$ in $\{u > 0\}\cap B_{2}$, which is the desired result (once one rescales the solution back).
\end{proof}

We now observe that the threshold radius controls the distance to the set $\cZ$ of neck centers:

\begin{lem}[$\rb$ controls distance to neck centers]
\label{lem:distX-r0}
Let $x \in \FB(u)$ and $\cZ$ be defined as above. Then
\[
\dist(x,\cZ) \leq 8 \rb(x).
\]
\end{lem}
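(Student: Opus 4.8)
The plan is to argue by cases according to which ``generation'' $\tilde\cZ_k$ the point $x$ belongs to. Fix $x\in\FB(u)$ and let $k\ge 0$ be the unique integer with $x\in\tilde\cZ_k$, i.e. $\rb(x)\in[\rmin 2^k,\rmin 2^{k+1})$ (recall \eqref{eq:Xktilde} and \eqref{eq:rmin_def}). The construction in Subsection~\ref{ssec:neck-set-def} is designed precisely so that either $x$ is captured by a neck center of its own generation through the Vitali covering ($\tilde\cZ_0\subset\cB_4(\cZ_0)$ when $k=0$, and $\cZ_k'\subset\cB_4(\cZ_k)$ when $k\ge1$), or---when $x\notin\cZ_k'$---it was discarded from $\cZ_k'$ exactly because $B_{4\rb(x)}(x)$ already meets $\cZ_{<k}$. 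In either situation one produces a neck center $\zz\in\cZ$ with $|x-\zz|$ controlled by $\rb(x)$; the only bookkeeping is to compare $\rb(\zz)$ with $\rb(x)$.

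First I would dispatch the case $k=0$: since $\tilde\cZ_0\subset\cB_4(\cZ_0)$, there is $\zz\in\cZ_0$ with $x\in B_{4\rb(\zz)}(\zz)$. As $\zz\in\cZ_0\subset\tilde\cZ_0$ we have $\rb(\zz)<2\rmin\le 2\rb(x)$, hence $\dist(x,\cZ)\le|x-\zz|<4\rb(\zz)<8\rb(x)$.

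Next, for $k\ge1$ I would split into two subcases. If $x\in\cZ_k'$, then $\cZ_k'\subset\cB_1(\cZ_k')\subset\cB_4(\cZ_k)$ gives some $\zz\in\cZ_k$ with $x\in B_{4\rb(\zz)}(\zz)$; since $\zz\in\cZ_k\subset\tilde\cZ_k$ we get $\rb(\zz)<\rmin 2^{k+1}\le 2\rb(x)$ (using $\rb(x)\ge\rmin 2^k$), so again $\dist(x,\cZ)<4\rb(\zz)<8\rb(x)$. If instead $x\in\tilde\cZ_k\setminus\cZ_k'$, then by the definition \eqref{eq:Xkprime} of $\cZ_k'$ we have $B_{4\rb(x)}(x)\cap\cZ_{<k}\ne\varnothing$, so there is $\zz\in\cZ_{<k}\subset\cZ$ with $|x-\zz|<4\rb(x)$, whence $\dist(x,\cZ)<4\rb(x)\le 8\rb(x)$. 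Collecting the cases yields $\dist(x,\cZ)\le 8\rb(x)$.

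There is no genuine analytic difficulty here---the statement is pure bookkeeping about the covering. The one point that must be handled carefully is the comparison $\rb(\zz)\le 2\rb(x)$, which relies on the fact that the selected neck center $\zz$ lies in the \emph{same} generation $\tilde\cZ_k$ as $x$, so that both threshold radii sit in a single dyadic band of ratio $2$; without isolating this observation one would otherwise have to sum the geometric series of discarded radii, as in the proof of \Cref{lem:X<ktilde}.
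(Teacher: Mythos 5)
Your proof is correct and follows essentially the same argument as the paper: split according to whether $x$ was discarded from $\cZ_k'$ (in which case $B_{4\rb(x)}(x)$ already meets $\cZ_{<k}$) or retained (in which case the Vitali covering $\cZ_k'\subset\cB_4(\cZ_k)$ produces a center $\zz$ in the same dyadic generation, so $4\rb(\zz)\le 4\rmin 2^{k+1}\le 8\rb(x)$). The separate treatment of $k=0$ is cosmetic; the paper handles it uniformly.
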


\begin{proof}
By construction, there exists $k\in \N$ such that $x\in \tilde{\cZ}_k$. If $\dist(x,\cZ_{<k}) < 4\rb(x)$ then we are done, since $\cZ_{<k}\subset \cZ$. Otherwise,   $x\in \cZ_k'$ and there exists $\bar{x} \in \cZ_{k} \subset \cZ$ such that
\[
\dist(x,\cZ)
\leq |x-\bar{x}|
\le 4\rb(\bar{x}) \leq 4\rmin 2^{k+1}
=8\rmin 2^{k}
\leq 8\rb(x).
\qedhere
\] 
\end{proof}

Next, we show that the Hessian is controlled by its distance to $\cZ$:

\begin{lem}[Global Hessian decay]
	\label{lem:Hess-decay}
	We have:
	\[
	|D^2 u(x)|
	\leq C \min\biggl\{\frac{1}{\dist(x,\cZ)},1\biggr\},\qquad\text{for all}\quad x\in \{u > 0\},
	\]
 for some $C$ universal. 
\end{lem}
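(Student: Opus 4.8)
The plan is the following. Since the global bound $|D^2u|\le 1$ in $\{u>0\}$ is part of the standing assumptions \eqref{eq:assump_glob_u}, it suffices to prove $|D^2u(x)|\le C/\dist(x,\cZ)$ for every $x\in\{u>0\}$: combined with $|D^2u|\le 1$ this gives the bound by $C\min\{1/\dist(x,\cZ),1\}$. So I fix $x\in\{u>0\}$, write $d:=\dist(x,\cZ)$ and $\rho:=\dist(x,\FB(u))$, and note that $\rho\le d$ (because $\cZ\subset\FB(u)$) and that $B_\rho(x)\subset\{u>0\}$ (it is a connected set containing $x$ and missing $\partial\{u>0\}$). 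The argument then divides into the regime where $x$ lies well inside the positivity set and the regime where $x$ is close to the free boundary.

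In the first regime, say $\rho\ge d/4$, the function $u$ is harmonic in $B_{d/4}(x)\subset\{u>0\}$, and since $\nabla u$ is harmonic and bounded by $1$ thanks to \Cref{lem:Lipbound}, interior gradient estimates give $|D^2u(x)|\le C/d$. In the second regime, $\rho<d/4$, I would pick $y\in\FB(u)$ with $|x-y|=\rho$; then $\dist(y,\cZ)\ge d-\rho>\tfrac34 d$, so \Cref{lem:distX-r0} forces the threshold radius to satisfy $\rb(y)\ge\tfrac18\dist(y,\cZ)>\tfrac3{32}d$. Now one splits once more: if $\rho\ge\rb(y)/2$, then $\rho$ is already comparable to $d$ and the harmonic interior estimate on $B_\rho(x)\subset\{u>0\}$ again gives $|D^2u(x)|\le C/\rho\le C'/d$; if instead $\rho<\rb(y)/2$, then by the very definition of $\rb(y)$ in \eqref{eq:def_rB} one has $\int_{B_r(y)\cap\{u>0\}}|D^2u|^3\,dx<\eta_0^3=\eta_*(3)^3$ for every $r<\rb(y)$ (recall \eqref{eq:eta0}), so \Cref{lem:eps-reg-Hess} (applied on $B_r(y)$, with $n=3$ and $k=2$) gives $\|D^2u\|_{L^\infty(B_{r/2}(y)\cap\{u>0\})}\le C\eta_0/r$; choosing $r\in(2\rho,\rb(y))$ puts $x$ into $B_{r/2}(y)\cap\{u>0\}$, and letting $r\uparrow\rb(y)$ yields $|D^2u(x)|\le C\eta_0/\rb(y)\le C'/d$. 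In all cases $|D^2u(x)|\le C/d$, with $C$ universal since $\eta_0$ is a fixed universal constant.

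I do not anticipate a real obstacle: the statement is a fairly direct consequence of \Cref{lem:distX-r0}, \Cref{lem:eps-reg-Hess}, and the definition of the threshold radius. The only delicate point is the bookkeeping in the case analysis — in particular making sure that, in the sub-case where $\varepsilon$-regularity is invoked, one genuinely has $\rho<\rb(y)/2$, so that $x$ lands in the inner half-ball $B_{r/2}(y)$ for $r$ arbitrarily close to $\rb(y)$, and then passing to the limit $r\uparrow\rb(y)$ to recover the sharp $1/\rb(y)$ (hence $1/d$) rate rather than a weaker rate in $\rho$, which would degenerate as $x$ approaches the free boundary.
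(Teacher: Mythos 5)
Your proof is correct and follows essentially the same route as the paper: split according to whether $x$ is close to the free boundary relative to $\dist(x,\cZ)$, control $\rb(y)$ from below via \Cref{lem:distX-r0} and invoke the $\eps$-regularity of \Cref{lem:eps-reg-Hess} near the boundary, and use interior harmonic estimates otherwise. The only difference is cosmetic: the paper calibrates the splitting constant ($\tfrac1{25}$ instead of $\tfrac14$) so that $x$ automatically lands in $B_{\rb(y)/2}(y)$, which avoids your extra sub-case, but your version is equally valid.
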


\begin{proof}
We divide the proof into two cases. Recall that $\eta_0 = \eta_*(3)$,  from \Cref{lem:eps-reg-Hess}. 

{\medskip \noindent \bf Case 1: $x_\circ \in \set{u>0}$ and $\dist(x_\circ,\FB(u)) \leq \tfrac{1}{25} \dist(x_\circ,\cZ)$.}
In this case, choose $y_\circ \in \FB(u)$ closest to $x_\circ$ so that, by triangle inequality, $$\dist(x_\circ,\cZ)\leq \dist(y_\circ,\cZ)+|x_\circ-y_\circ| \leq \dist(y_\circ,\cZ)+\tfrac{1}{25}\dist(x_\circ,\cZ).$$ By \Cref{lem:distX-r0}, this gives the chain of inequalities
\[
24|x_\circ-y_\circ|
\leq \frac{24}{25} \dist(x_\circ,\cZ)
\leq \dist(y_\circ,\cZ)
\leq 8\rb(y_\circ),
\]
and therefore
\[
x_\circ\in B_{\frac32 |x_\circ-y_\circ|}(y_\circ)
\subset B_{\rb(y_\circ)/2}(y_\circ).
\]
Now, using \eqref{eq:eps_reg_Hess_k} around $y_\circ$ and \Cref{lem:distX-r0},   
\[
|D^2u(x_\circ)|
\leq \norm[L^\infty(B_{ \rb(y_\circ)/2}(y_\circ)\cap \set{u>0})]{D^2u}
\leq \frac{C \eta_0}{\rb(y_\circ)}
\leq \frac{C \eta_0}{\dist(x_\circ,\cZ)}.
\]

{\medskip \noindent \bf Case 2: $x_\circ \in \set{u>0}$ and $\dist(x_\circ,\FB(u)) > \tfrac{1}{25} \dist(x_\circ,\cZ)$.}
In this case, we apply harmonic estimates to $u$ inside $B_{\frac{1}{2}\dist(x_\circ,\FB(u))}(x_\circ)$. Since $u(x) \leq \dist(x,\FB(u))$ (recall that $|\nabla u|\leq 1$), this yields
\[
	|D^2 u(x_\circ)|\le \frac{C }{\dist(x_\circ,\FB(u))}
	\leq \frac{C}{\dist(x_\circ,\cZ)}.
\]
This proves that $|D^2 u(x)|
	\leq \frac{C}{\dist(x,\cZ)}$. Recalling that $|D^2u|\leq 1$ (see \eqref{eq:assump_glob_u}), the result follows. 
\end{proof}

The next lemma says that, around a neck center and at scales much larger than the neck radius, the solution increasingly resembles a vee:

\begin{lem}[Blow-down around neck center]
\label{lem:X-blowdown}
For any $\eps>0$ there exists $M=M(\eps)\ge 1$ such that the following holds.

For every $\zz \in \cZ$ and every $R\geq M \rb(\zz)$, we have 
\begin{equation}\label{epscloseness11}
\min_{e\in \mathbb S^2}\big\|
u - V_{\zz, e}
\big\|_{L^\infty(B_{R}(\zz))}
\leq \eps R. 
\end{equation}
In particular (recall \eqref{eq:Ez_def_intro})
\begin{equation}\label{epscloseness11bis}
\bE_{\zz}(u,R)
\leq \eps . 
\end{equation}
More precisely, choosing $M_*:=(2|B_1|)^{\frac{1}{3}}\eta_0^{-1}\ge 1$ and $\omega$ as in \eqref{eq:omega_mod}, the relation between $\eps$ and $M$ is implicitly given by
\[
\omega\big(M_*/M\big) = \eps
\]
\end{lem}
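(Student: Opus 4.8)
The plan is to deduce this from the blow-down result \Cref{prop:W} by rescaling around the neck center. Fix $\zz\in\cZ$, write $\rho:=\rb(\zz)$, and set $\mu:=M_*\rho$ with $M_*:=(2|B_1|)^{1/3}\eta_0^{-1}$. I would then introduce the rescaled function $v(x):=\tfrac1\mu\,u(\zz+\mu x)$. Since rescaling and translation preserve the Bernoulli equation, the bound $|\nabla u|\le 1$, and the stability inequality, $v$ is again a global classical stable solution, and $0\in\FB(v)$ because $\zz\in\FB(u)$. The task then splits into: (a) checking that $v$ satisfies the hypothesis $\norm[L^\infty(B_1\cap\{v>0\})]{D^2v}\ge 1$ of \Cref{prop:W}; (b) applying \Cref{prop:W} to $v$; (c) scaling back to $u$ and optimizing the threshold scale.

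For (a) I would use the scaling $D^2v(x)=\mu\,D^2u(\zz+\mu x)$ together with the change of variables $y=\zz+\mu x$, which gives $\int_{B_{\rho'}\cap\{v>0\}}|D^2v|^3\,dx=\int_{B_{\mu\rho'}(\zz)\cap\{u>0\}}|D^2u|^3\,dy$ for every $\rho'>0$. Taking $\rho'=1/M_*$, so that $\mu\rho'=\rho=\rb(\zz)$, and using the definition \eqref{eq:def_rB} of the threshold radius (recall $\partial B_\rho$ has zero Lebesgue measure), this equals $\eta_0^3$. Since $|B_{1/M_*}|=|B_1|M_*^{-3}=\eta_0^3/2$ by the choice of $M_*$, the elementary bound $\int_A|f|^3\le \normsmall[L^\infty(A)]{f}^3|A|$ forces $\norm[L^\infty(B_{1/M_*}\cap\{v>0\})]{D^2v}\ge 2^{1/3}$, and a fortiori $\norm[L^\infty(B_1\cap\{v>0\})]{D^2v}\ge 2^{1/3}\ge 1$. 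For (b), \Cref{prop:W} in the form \eqref{eq:omega_mod} then yields, for the universal modulus $\omega$ and every $\tilde R>0$, a direction $\tilde e(\tilde R)\in\bS^2$ with $\norm[L^\infty(B_{\tilde R})]{v-V_{0,\tilde e}}\le \omega(\tilde R^{-1})\tilde R$. For (c), using the identity $v(x)-V_{0,e}(x)=\tfrac1\mu\big(u(\zz+\mu x)-V_{\zz,e}(\zz+\mu x)\big)$ and choosing $\tilde R=R/\mu$ converts this into $\min_{e\in\bS^2}\norm[L^\infty(B_R(\zz))]{u-V_{\zz,e}}\le \omega(\mu/R)\,R=\omega\big(M_*\rb(\zz)/R\big)\,R$ for every $R>0$; choosing $M=M(\eps)$ through $\omega(M_*/M)=\eps$ and using monotonicity of $\omega$, any $R\ge M\rb(\zz)$ gives \eqref{epscloseness11}, while \eqref{epscloseness11bis} follows at once from $\fint_{B_R(\zz)}|u-V_{\zz,e}|^2\le\normsmall[L^\infty(B_R(\zz))]{u-V_{\zz,e}}^2$.

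The only step carrying real content is (a): everything hinges on calibrating the dilation factor $\mu$ so that, at the unit scale, the rescaled solution $v$ carries enough Hessian mass to activate \Cref{prop:W}. This is exactly what $M_*=(2|B_1|)^{1/3}\eta_0^{-1}$ is designed to do — it makes $|B_{1/M_*}|=\eta_0^3/2$, so that the fixed amount $\eta_0^3$ of $|D^2v|^3$-mass guaranteed on that ball by the definition of $\rb$ forces $\sup|D^2v|$ there to exceed $1$. All the rest — scale-invariance of the notion of classical stable solution, the scaling identities for $D^2u$ and for $V_{\zz,e}$, and transplanting the $\omega$-estimate from $v$ back to $u$ — is routine bookkeeping. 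One minor technical point to record is that $\omega(0)=0$ (forced by the "$R_\eps$" formulation of \Cref{prop:W}, which asserts $\normsmall[L^\infty(B_R)]{u-V}/R\to0$), so that $\omega^{-1}(\eps)$, and hence $M=M_*/\omega^{-1}(\eps)$, is well defined for small $\eps$; for large $\eps$ one may simply take $M$ larger.
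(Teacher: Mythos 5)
Your proof is correct and follows essentially the same route as the paper: rescale by $\mu = M_*\rb(\zz)$, use the definition of the threshold radius together with the calibration of $M_*$ to verify the Hessian hypothesis \eqref{eq:lem-W-Hess} of \Cref{prop:W}, apply \eqref{eq:omega_mod} to the rescaled solution, and scale back. The only (harmless) difference is in bookkeeping: the paper's proof actually takes $M_*=|B_1|^{1/3}\eta_0^{-1}$ and bounds $\int_{B_{\rb}}|D^2u|^3$ directly by $\|D^2u\|^3_{L^\infty(B_\rho)}\rho^3$, whereas you use the value $(2|B_1|)^{1/3}\eta_0^{-1}$ from the lemma statement together with the scale invariance of the $\dot W^{2,3}$ seminorm on $B_{1/M_*}$ — both normalizations yield $\|D^2v\|_{L^\infty(B_1\cap\{v>0\})}\ge 1$.
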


\begin{proof}
Let $u_{\zz,\rho}(x):=\frac{u(\zz+\rho x)}{\rho}$ with $\rho = M_* \rb(\zz)$ and $M_*:=|B_1|^{\frac{1}{3}}\eta_0^{-1}$. Then $0\in \FB(u_{\zz, \rho})$ and 
\[
1= \eta_0^{-3} \int_{B_{\rb(\zz)}(\zz)\cap \{u > 0\}}
|D^2u|^3
\,dx \le \|D^2 u\|^3_{L^\infty(B_{\rho}(\zz)\cap \{u > 0\})} \rho^3 = \norm[L^\infty(B_1\cap\set{u_{\zz,\rho}>0})]{D^2u_{\zz,\rho}}^3.
\]
By \Cref{prop:W}, for any $r>0$, 
\[
 \min_{e\in \mathbb S^2}	\big\| u_{\zz,\rho}-V_{0,e} \big\|_{L^\infty(B_r)} \leq \omega(r^{-1}) r,
	\quad \text{ or equivalently } \quad
	 \min_{e\in \mathbb S^2}	 \big\| u-V_{\zz,e} \big\|_{L^\infty(B_{\rho r}(\zz))} \leq \omega(r^{-1}) \rho r,
\]
In particular, given $\eps>0$, by choosing $M$ so large that $\omega(M_*/M)<\eps$ we obtain that \eqref{epscloseness11} holds for $R\geq M\rb(\zz)$.
\end{proof}

Up to now, it may not be completely clear why we introduced the notions of neck centers and neck radii. If $\zz\in \cZ$ is a given neck radius, then the previous lemma shows that, for $R\gg \rb(\zz)$, the positivity set will be contained in some very thin strip ---see \Cref{treeprelim1}(b):
\[
\{u=0\} \cap B_{R}(\zz) \subset \{x\in \R^3 \ \ :\ \ |e\cdot (x-\zz)| = o(R)\},
\]
for some $e\in \mathbb S^2$ (depending on $\zz$ and $R$).

The next lemma actually shows that neck radii detect `necks' or `bridges' of the positivity set $\{u>0\}$ between the two sides of the set $\{|e\cdot (x-\zz)|> o(R)\}$. In other words, at scales  $R\gg \rb(\zz)$, $\{u>0\} \cap B_{R}(\zz)$ becomes connected. 

\begin{lem}[neck centers detect `necks']
\label{lem:neckcenters_necks} There exists a large universal constant $\bar M\ge 1$ such that whenever $\zz\in \cZ$ and $\varrho\ge \bar M \rb(\zz)$, any two points of $\{u>0\}\cap B_{\varrho}(\zz)$ can be joined by a continuous path contained in $\{u>0\}\cap B_{2\varrho}(\zz)$.
\end{lem}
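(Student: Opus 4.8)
The plan is to argue by contradiction, playing off the fact that a neck center carries, by definition, a fixed amount $\eta_0^3$ of $|D^2u|^3$-mass at scale $\rb(\zz)$, against \Cref{lem:X-blowdown}, which says that at scales $\varrho\gg\rb(\zz)$ the solution is uniformly close to a vee. First I would fix once and for all a small universal constant $\eps>0$ (how small is dictated by the finitely many conditions that appear below), and set $\bar M:=\max\{1,M(\eps)\}$, where $M(\eps)$ is the constant from \Cref{lem:X-blowdown}. Given $\zz\in\cZ$ and $\varrho\ge\bar M\rb(\zz)$, \Cref{lem:X-blowdown} applied at scale $R=2\varrho$ produces $e=e(\zz,2\varrho)\in\bS^2$ with $\|u-V_{\zz,e}\|_{L^\infty(B_{2\varrho}(\zz))}\le 2\eps\varrho$, and then \Cref{treeprelim1} confines $\{u=0\}\cap B_{2\varrho}(\zz)$ to the slab $\{x:|e\cdot(x-\zz)|\le 2\eps\varrho\}$. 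Hence the two disjoint, connected ``half-ball'' pieces
\[
U_\pm:=B_{2\varrho}(\zz)\cap\{x:\pm e\cdot(x-\zz)>2\eps\varrho\}
\]
are contained in $\{u>0\}$, and for $\eps$ small each of them meets $B_\varrho(\zz)$.

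The crux of the argument is to show that $U_+$ and $U_-$ lie in the \emph{same} connected component of $\{u>0\}\cap B_{2\varrho}(\zz)$. I would assume not: then $\zz+\varrho e\in U_+$ and $\zz-\varrho e\in U_-$ lie in different components, so, after translating and rotating so that $\zz=0$ and $e=e_n$, \Cref{cor_closetoV_disc_reg} applies and yields
\[
\varrho^2\,\|D^2u\|_{L^\infty(\{u>0\}\cap B_\varrho(\zz))}\le C\eps\varrho.
\]
Since $\bar M\ge1$ we have $\rb(\zz)\le\varrho$, hence $B_{\rb(\zz)}(\zz)\subset B_\varrho(\zz)$, and then the very definition \eqref{eq:def_rB} of the threshold radius forces
\[
\eta_0^3\le\int_{B_{\rb(\zz)}(\zz)\cap\{u>0\}}|D^2u|^3\,dx\le|B_{\rb(\zz)}|\Big(\tfrac{C\eps}{\varrho}\Big)^3\le C\eps^3\Big(\tfrac{\rb(\zz)}{\varrho}\Big)^3\le C\eps^3,
\]
which is impossible once $\eps$ is chosen small enough (universally). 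Therefore $U_+$ and $U_-$ belong to a common component $W$ of $\{u>0\}\cap B_{2\varrho}(\zz)$.

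It remains to check that $W$ contains all of $\{u>0\}\cap B_\varrho(\zz)$, and here the only tool needed is the clean ball / nondegeneracy property. By \Cref{lem:cleanball} (cf.\ \Cref{rem:nondeg}), any connected component $V$ of $\{u>0\}\cap B_{2\varrho}(\zz)$ meeting $B_\varrho(\zz)$ has measure $|V|\ge c\varrho^3$ for a universal $c>0$: choosing $y\in V\cap B_\varrho(\zz)$, the component of $\{u>0\}\cap B_\varrho(y)$ through $y$ is contained in $V$ and, by \Cref{lem:cleanball}, has measure at least $2^{-3}\eps_\circ\varrho^3$. On the other hand, a component disjoint from $U_+\cup U_-$ would lie inside the slab $\{|e\cdot(x-\zz)|\le 2\eps\varrho\}\cap B_{2\varrho}(\zz)$, whose measure is $\le C\eps\varrho^3$; if $\eps$ is small enough that $C\eps<c$, no such component can meet $B_\varrho(\zz)$. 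Consequently every component of $\{u>0\}\cap B_{2\varrho}(\zz)$ meeting $B_\varrho(\zz)$ intersects $U_+\cup U_-$ and hence equals $W$, so $\{u>0\}\cap B_\varrho(\zz)\subset W$. Since $W$ is open and connected it is path-connected, and any two points of $\{u>0\}\cap B_\varrho(\zz)$ are joined by a path in $W\subset\{u>0\}\cap B_{2\varrho}(\zz)$, as claimed.

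The step I expect to be the genuine obstacle is the contradiction in the second paragraph: it is the only place where the neck radius actually enters, and it is precisely where the construction ``pays off''---a pair of sheets that remains disconnected out to the scale $\varrho\gg\rb(\zz)$ must, by \Cref{cor_closetoV_disc_reg}, be far too flat to support the prescribed $|D^2u|^3$-mass at scale $\rb(\zz)$. Everything else is essentially bookkeeping: tracking the (finitely many, universal) smallness requirements on $\eps$ coming from \Cref{lem:X-blowdown}, \Cref{treeprelim1} and \Cref{cor_closetoV_disc_reg}, and carrying out the elementary volume comparison in the last paragraph.
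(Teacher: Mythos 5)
Your proof is correct and follows essentially the same route as the paper's: both confine $\{u=0\}\cap B_{2\varrho}(\zz)$ to a thin slab via \Cref{lem:X-blowdown}, derive a contradiction with the definition of $\rb(\zz)$ through the Hessian bound of \Cref{cor_closetoV_disc_reg} if the two sides were disconnected, and dispose of spurious components with \Cref{lem:cleanball}. The only differences are cosmetic (you take $U_\pm$ to be the explicit half-regions rather than the connected components containing them, and you integrate $|D^2u|^3$ over $B_{\rb(\zz)}(\zz)$ directly instead of over $B_{\varrho/2}(\zz)$).
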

\begin{proof}
Let $\bar \eps  > 0$ be a small constant that will be fixed later and consider $\bar  M=M(\bar\eps)$ given by \Cref{lem:X-blowdown}. Then, for any $\varrho \ge \bar M \rb(\zz)$ we have
\[B_{2\varrho}(\zz)  \cap\{u=0\} \subset \{x \ :\    |e_{2\varrho, \zz} \cdot (x - \zz)| \le \bar \eps  \varrho\}.\]
Let $U_+$ and $U_-$ denote the connected components of $B_{2\varrho}(\zz) \cap \{u > 0\}$ that contain the sets 
\[\{x \in B_{2\varrho}(\zz) :  e_{2\varrho, \zz} \cdot (x - \zz) > \bar \eps \varrho\} \qquad \mbox{and}\qquad \{x \in B_{2\varrho}(\zz) :  e_{2\varrho, \zz} \cdot (x - \zz) < - \bar \eps  \varrho\} ,
\] 
respectively.  Suppose by contradiction that $U_+ \cap U_- = \varnothing$ and define $\bar u_\pm := u {\mathbbm{1}_{U_\pm}}$. 
By \Cref{lem:cleanball}, if $\bar \eps $ is chosen small enough, we have $u = \bar u_+ + \bar u_-$ inside $B_{\varrho}(\zz)$ (that is, there are no other connected components of $\{u>0\}$). Thus,   \Cref{lem:X-blowdown} and \Cref{cor_closetoV_disc_reg} imply
\[
\varrho|D^2 u|\le C \bar \eps  \qquad\text{in}\quad \{u > 0\}\cap B_{\varrho/2}(\zz). 
\]
In particular
\[
\eta_0^3 \le \int_{B_{\rb(\zz)}(\zz)\cap \{u > 0\}}|D^2 u|^3 \le \int_{B_{\varrho/2}(\zz)\cap \{u > 0\}} |D^2 u|^3 \,dx
\le C\bar \eps^3,
\]
which is a contradiction for $\bar \eps$ small enough. 
This proves that $U_+=U_-$ are the same connected component of $B_{2\varrho}(\zz)\cap \{u > 0\}$. Since by \Cref{lem:cleanball} we have already seen that any other connected component of $B_{2\varrho}(\zz)\cap \{u > 0\}$ lies outside of $B_\varrho(\zz)$, we obtain the desired result. 
\end{proof}

We finish this subsection with the following two related lemmas:

\begin{lem}\label{lem_aux_rb1}
For any $M\ge 1$ the following holds. 
    Given $\zz\in \cZ$ and $\varrho = M \rb(\zz)$, for all $\zz' \in \cZ \cap B_{\varrho} (\zz)$ we have
    \[
    \rb(\zz')\ge \frac{\rb(\zz)}{C_M}\qquad\text{and}\qquad \|D^2 u \|_{L^\infty(\{u>0\}\cap B_\varrho(\zz))} \le \frac{C_M}{\rb(\zz)},
    \]    
    for some  $C_M$ depending only on $M$.
\end{lem}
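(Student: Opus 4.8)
The plan is to note that the two bounds are both consequences of a single statement on neck radii, which is then proved by induction on the scale $M$.

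\emph{Reduction.} It suffices to show: for every $M\ge1$ there is $c_M>0$ (depending only on $M$) such that $\zz\in\cZ$ and $\zz'\in\cZ\cap B_{M\rb(\zz)}(\zz)$ imply $\rb(\zz')\ge c_M\rb(\zz)$. Granting this, pick $x\in\{u>0\}\cap B_{M\rb(\zz)}(\zz)$ and a neck center $\zz^\ast$ realizing $\dist(x,\cZ)$; since $\zz\in\cZ$ one has $\dist(x,\cZ)\le|x-\zz|\le M\rb(\zz)$, hence $|\zz^\ast-\zz|\le 2M\rb(\zz)$, so the claim at scale $2M$ gives $\rb(\zz^\ast)\ge c_{2M}\rb(\zz)$. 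If $|x-\zz^\ast|\le 2\rb(\zz^\ast)$ then \Cref{cor:Hess-bd-neck} gives $|D^2u(x)|\le C/\rb(\zz^\ast)$; otherwise $\dist(x,\cZ)>2\rb(\zz^\ast)\ge 2c_{2M}\rb(\zz)$ and \Cref{lem:Hess-decay} gives $|D^2u(x)|\le C/\dist(x,\cZ)$. In both cases $|D^2u(x)|\le (C/c_{2M})\rb(\zz)^{-1}$, and the neck-radius bound of the statement is the claim itself.

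\emph{Induction on $M$.} For $M$ below a universal threshold the claim is contained in the proof of \Cref{cor:Hess-bd-neck}: writing $j$ for the generation of $\zz$ (so $\zz\in\cZ_j\subset\cZ_j'$), one has $B_{4\rb(\zz)}(\zz)\cap\cZ_{<j}=\varnothing$ by \eqref{eq:Xkprime}, so any $\zz'\in\cZ\cap B_{M\rb(\zz)}(\zz)$ with $M<4$ has generation $\ge j$ and hence $\rb(\zz')\ge\rmin 2^{j}>\tfrac12\rb(\zz)$. Assume now the claim for $M$; by the reduction this is equivalent to the Hessian bound $\|D^2u\|_{L^\infty(B_{M\rb(\zz'')}(\zz'')\cap\{u>0\})}\le C_M/\rb(\zz'')$ for all neck centers $\zz''$, which in turn (arguing as above) forces $\rb(y)\ge c_M'\rb(\zz'')$ for every $y\in\FB(u)\cap B_{(M-1)\rb(\zz'')}(\zz'')$. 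To prove the claim at scale $2M$, take $\zz'\in\cZ\cap B_{2M\rb(\zz)}(\zz)$, rescale so that $\rb(\zz)=1$, and assume $|\zz'-\zz|\ge M$ (otherwise the claim at scale $M$ applies). By \Cref{lem:neckcenters_necks} the set $\{u>0\}\cap B_{4M}(\zz)$ is connected, and by \Cref{lem:X-blowdown} there is $e\in\bS^2$ with $\|u-V_{\zz,e}\|_{L^\infty(B_{4M}(\zz))}\le 4M\,\omega(M_*/(4M))$; since $u(\zz')=0$ this yields $V_{\zz,e}(\zz')\le 4M\,\omega(M_*/(4M))$ and hence $\|u-V_{\zz',e}\|_{L^\infty(B_M(\zz'))}\le 8M\,\omega(M_*/(4M))$, so $\zz'$ lies in a slab about the plane $\{(x-\zz')\cdot e=0\}$. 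One then shows $\rb(\zz')\gtrsim1$ by feeding this flatness, together with the connectedness of $\{u>0\}$ near $\zz'$ from \Cref{lem:neckcenters_necks}, into the structural lemmas \Cref{cor_closetoV_disc_reg} and \Cref{cor_closetoV_disc_reg2} and the Sternberg--Zumbrun bound \Cref{lem:Stern_Zum}: a neck at $\zz'$ of radius $r\ll1$ pinches the (connected) positivity set at a scale invisible at scale $1$, forcing on one side the concentration $\int_{B_r(\zz')}|D^2u|^3=\eta_0^3$, while on the other side the free boundary is flat and, by the inductive Hessian bound applied around the neck centers of $\FB(u)$ inside $B_M(\zz)$, smooth with controlled curvature away from $\zz'$, so that $\int|D^2u|^2$ near $\zz'$ is of size $O(r)$; carrying this comparison through a covering of the relevant region by neck-centered balls on which the inductive bound is available gives $r\ge c_{2M}$.

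\emph{The main obstacle.} The crux — and the reason the induction is essential — is that one cannot simply feed the trivial global bound $|D^2u|\le\rb(\zz)$ into \Cref{cor_closetoV_disc_reg2}: the modulus $\omega$ produced by \Cref{prop:W} carries no quantitative rate, and a vee is genuinely non-flat at every scale, so the closeness of $u$ to a vee at scale $R$ is only of order $R$ in absolute terms, which is never small relative to $1/\|D^2u\|_\infty$. Thus the entire difficulty of the inductive step lies in propagating the \emph{good}, inductively established Hessian bound $C_M$ from $B_{M\rb(\zz)}(\zz)$ into the surrounding annulus, while bookkeeping the connectedness dichotomy of \Cref{lem:neckcenters_necks} against \Cref{cor_closetoV_disc_reg} and the curvature budget of \Cref{lem:Stern_Zum}; the individual estimates are routine, but this propagation is the delicate part.
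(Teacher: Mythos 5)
Your reduction of the Hessian bound to the neck-radius bound (via \Cref{cor:Hess-bd-neck} and \Cref{lem:Hess-decay}) is correct and matches what the paper does, and your base case for small $M$ is fine. But the heart of the lemma --- why $\zz'\in\cZ\cap B_{M\rb(\zz)}(\zz)$ cannot have $\rb(\zz')\ll\rb(\zz)$ --- is not established. The contradiction you propose in the inductive step is between the concentration $\int_{B_{\rb(\zz')}(\zz')\cap\{u>0\}}|D^2u|^3=\eta_0^3$ and the bound $\int|D^2u|^2=O(r)$ near $\zz'$. These two facts are perfectly compatible: at any genuine neck of radius $r$ one has $|D^2u|\sim r^{-1}$ on a set of volume $\sim r^3$, so the $L^3$ integral is of order one while the $L^2$ integral is of order $r$. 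No contradiction can be extracted from this comparison, however carefully one covers the region, so the inductive step does not close.

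The obstacle you diagnose in your final paragraph is also illusory, and recognizing this gives the (much shorter) correct argument, which is the one the paper uses. Suppose $\rb(\zz')<\rb(\zz)/K$. Apply \Cref{lem:X-blowdown} \emph{around $\zz'$} at the scale $3\varrho=3M\rb(\zz)\ge 6K\rb(\zz')$: the relative closeness to a vee there is $\omega\big(M_*/(6K)\big)=:\eps(K)$, which tends to $0$ as $K\to\infty$ --- no quantitative rate for $\omega$ is needed, since the final constant is allowed to depend on $M$ and one simply chooses $K=K(M)$ large. By \Cref{treeprelim1}(a) this transfers to
\[
\big\|u-V_{\zz,e}\big\|_{L^\infty(B_{2\rb(\zz)}(\zz))}\le \eps(K)\,M\,\rb(\zz),
\]
i.e.\ relative flatness $\eps(K)M$ at the scale $\rb(\zz)$ centered at $\zz$. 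Combining this with the universal Hessian bound $|D^2u|\le C/\rb(\zz)$ in $B_{2\rb(\zz)}(\zz)$ from \Cref{cor:Hess-bd-neck}, the hypotheses of \Cref{cor_closetoV_disc_reg2} are met (the relevant smallness is the closeness to the \emph{vee} relative to the scale, not flatness in your sense), and its conclusion yields $\rb(\zz)|D^2u|\le C\eps(K)M\le\eta_0/100$ in $B_{\rb(\zz)}(\zz)\cap\{u>0\}$ for $K$ large. Integrating $|D^2u|^3$ over $B_{\rb(\zz)}(\zz)\cap\{u>0\}$ then contradicts the definition \eqref{eq:def_rB} of $\rb(\zz)$. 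Note the contradiction lives at $\zz$, not at $\zz'$: a tiny neck nearby forces $u$ to be so vee-like at scale $\rb(\zz)$ that the Hessian there cannot carry the mass $\eta_0^3$ that the threshold radius demands. With this in hand no induction on $M$ is needed.
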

\begin{proof}
    For $M\le 2$, the comparability of the neck radii and the Hessian estimate follows from \eqref{eq:doubling-claim-10}--\eqref{eq:doubling-claim-20} and  \Cref{cor:Hess-bd-neck}, respectively.
    So let us assume $M>2$.
    
    Suppose for the sake of contradiction that there is  $\zz' \in \cZ \cap B_{\varrho} (\zz)$ with $
    \rb(\zz')< \frac{\rb(\zz)}{K}$, for $K$ sufficiently large to be chosen later (depending on $M$). 
    Then, since $3\varrho = 3M \rb(\zz) \ge 6\rb(\zz) \ge 6 K \rb(\zz')$, \Cref{cor:Hess-bd-neck} implies that
\[
\big\|u - V_{\zz', e}
\big\|_{L^\infty(B_{3\varrho}(\zz'))}
\leq \eps(K) \varrho/2, 
\]
for some $e\in \mathbb{S}^2$, where $\eps(K)\downarrow 0$ as $K\uparrow \infty$. Therefore, thanks to  \Cref{treeprelim1}(a),
\[
\big\|u - V_{\zz, e}
\big\|_{L^\infty(B_{2\rb(\zz)}(\zz))}
\leq \eps(K) \varrho = \eps(K) M \rb(\zz).
\]
Thus, recalling \Cref{cor:Hess-bd-neck} and \Cref{cor_closetoV_disc_reg2}, if we choose $K$ large so that $\eps(K) M$ is sufficiently small we get
\[
\rb(\zz) |D^2 u|\le \frac{\eta_0}{100} \quad \mbox{in }B_{\rb(\zz)}(\zz)\cap \{u>0\}.
\]
Integrating in  $B_{\rb(\zz)}(\zz)\cap \{u>0\}$ we reach a contradiction with the definition of $\rb(\zz)$.

The second point is then a consequence of \Cref{cor:Hess-bd-neck} and \Cref{lem:Hess-decay}.
\end{proof}

\begin{lem}\label{lem_aux_rb2}
There exists $M_\circ>0$ universal such that if $\zz\in \cZ$ and $R\ge M_\circ\rb(\zz)$, then $\rb(\zz')\le \frac{R}{8}$ for all $\zz'\in \cZ\cap B_{3R/4}(\zz)$.
\end{lem}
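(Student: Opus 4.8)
The plan is to obtain \Cref{lem_aux_rb2} essentially for free from the comparability of neck radii already proved in \Cref{lem_aux_rb1}, simply by reading that estimate from the point with the \emph{larger} neck radius. The one-line idea is the following: if some $\zz'\in\cZ\cap B_{3R/4}(\zz)$ had a neck radius larger than $R/8$, then $\zz$ would lie inside the ball $B_{6\rb(\zz')}(\zz')$ on which \Cref{lem_aux_rb1} controls neck radii from below, forcing $\rb(\zz)\gtrsim\rb(\zz')>R/8$, which is incompatible with $R\ge M_\circ\rb(\zz)$ once $M_\circ$ is chosen sufficiently large.

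Concretely, the steps are as follows. First, apply \Cref{lem_aux_rb1} with the fixed choice $M=6$, let $C_6$ denote the resulting universal constant, and set $M_\circ:=8C_6$. Then argue by contradiction: suppose there exist $\zz\in\cZ$, a radius $R\ge M_\circ\rb(\zz)$, and a neck center $\zz'\in\cZ\cap B_{3R/4}(\zz)$ with $\rb(\zz')>R/8$. The only computation needed is the elementary distance bound
\[
|\zz-\zz'|<\tfrac{3R}{4}=6\cdot\tfrac{R}{8}<6\,\rb(\zz'),
\]
which shows that $\zz\in\cZ\cap B_{6\rb(\zz')}(\zz')$. Applying \Cref{lem_aux_rb1} with base point $\zz'$ and $M=6$ therefore gives $\rb(\zz)\ge\rb(\zz')/C_6$, i.e.\ $\rb(\zz')\le C_6\,\rb(\zz)$. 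On the other hand, by the contradiction hypothesis and the choice of $R$, one has $\rb(\zz')>R/8\ge M_\circ\rb(\zz)/8=C_6\,\rb(\zz)$, contradicting the previous inequality. Hence no such $\zz'$ exists, which proves the claim.

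I do not expect a genuine obstacle: the substance is already packaged in \Cref{lem_aux_rb1}, and the only care needed is bookkeeping of constants — one must calibrate $M_\circ$ against the universal constant $C_6$, and note that the hypothesis $\rb(\zz')>R/8$ is exactly what places $B_{3R/4}(\zz)$ (and hence $\zz$) inside the controlled ball $B_{6\rb(\zz')}(\zz')$ around $\zz'$. (A more hands-on argument is also possible, using \Cref{lem:X-blowdown} around $\zz'$ together with \Cref{treeprelim1}, \Cref{cor:Hess-bd-neck} and \Cref{cor_closetoV_disc_reg2} to force $|D^2u|$ to be arbitrarily small on a ball of radius comparable to $\rb(\zz)$ near $\zz$, contradicting the definition \eqref{eq:def_rB}; but it is less direct, so the argument above is preferable.)
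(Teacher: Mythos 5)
Your proof is correct, and it is genuinely simpler than the paper's. The paper argues by contradiction via regularity: it applies \Cref{lem:X-blowdown} around $\zz$ to get $L^\infty$-closeness to a vee $V_{\zz,e}$ at scale $R$ (transferred to $\zz'$ via \Cref{treeprelim1}(a)), then uses the Hessian bound from \Cref{lem_aux_rb1} around $\zz'$ together with \Cref{cor_closetoV_disc_reg2} to conclude that $\rb(\zz)\,|D^2u|\le C\eps_\circ$ on $B_{\rb(\zz)}(\zz)\cap\{u>0\}$, which upon integrating $|D^2u|^3$ contradicts the definition \eqref{eq:def_rB} of $\rb(\zz)$ once $\eps_\circ$ is small. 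You instead exploit only the \emph{first} conclusion of \Cref{lem_aux_rb1} (the lower bound $\rb(\zz'')\ge \rb(\zz')/C_M$ for $\zz''\in\cZ\cap B_{M\rb(\zz')}(\zz')$), read from the point $\zz'$ with the putatively large neck radius: the hypothesis $\rb(\zz')>R/8$ places $\zz$ inside $B_{6\rb(\zz')}(\zz')$, giving $\rb(\zz')\le C_6\rb(\zz)$, which clashes with $\rb(\zz')>R/8\ge C_6\rb(\zz)$ for $M_\circ=8C_6$. This bypasses \Cref{lem:X-blowdown}, \Cref{treeprelim1}, and \Cref{cor_closetoV_disc_reg2} entirely; since \Cref{lem_aux_rb1} is proved before \Cref{lem_aux_rb2} and does not rely on it, there is no circularity, and the constant bookkeeping checks out. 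The only cost is that your argument leans on the symmetric (two-sided) comparability packaged in \Cref{lem_aux_rb1}, whereas the paper's route re-derives the contradiction directly from the definition of the threshold radius; both buy the same universal $M_\circ$.
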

\begin{proof}Let $\eps_\circ>0$ be a small constant to be fixed, and apply \Cref{lem:X-blowdown} to find $M_\circ>0$ such that, for $R\ge M_\circ\rb(\zz)$,
there exists $e\in \mathbb S^2$ such that 
\begin{equation}\label{Linfty000bis}
  \|u-V_{\zz,e}\|_{L^\infty(B_R(\zz))}\le  \tfrac{1}2\eps_\circ R\qquad \mbox{and}\qquad  \|u-V_{\zz',e}\|_{L^\infty(B_{R/4}(\zz'))}\le  \eps_\circ R,
\end{equation}
where the second bound follows from \Cref{treeprelim1}(a).

Now, assume by contradiction that $\rb(\zz') \ge \frac{R}{8}$. Then  \Cref{lem_aux_rb1}  implies that $|D^2 u|\le C/R$ in $B_{2R}(\zz')\cap \{u>0\}$,  with $C$ universal.  
By \Cref{cor_closetoV_disc_reg2} this gives $\rb(\zz) |D^2u|\le C\eps_\circ$ in $B_{\rb(\zz)}(\zz)\cap\{u>0\}$,
which integrated over $B_{\rb(\zz)}(\zz)$ contradicts the definition of $\rb(\zz)$ if $\eps_\circ$ is chosen small enough. 
\end{proof}

\subsection{Ball tree: `soft' geometric description of the zero set} 
\label{ssec:tree}

The goal of this section is to show  \Cref{prop:geomtree} below, which shall be very useful in the sequel.
To state it, we first recall the notion of {\em rooted tree}:

\begin{defn}[rooted tree]\label{defrootedtree}
    Let $\mathcal N$ be some given a (finite, for simplicity) set. The elements $\nu \in \mathcal N$ will be called {\em nodes}. 
    Suppose that there exist a distinguished node $\nu_0\in \mathcal N$ (the {\em root}) and a map $p:\mathcal N\setminus \{\nu_0\}\to \mathcal N$ (the {\em predecessor map}) for which the following property holds: for all $\nu \in \mathcal N$ there is $\ell\in \N_{\geq 1}$ such that $p^\ell(\nu) =\nu_0$. 
    We then call the pair $(\mathcal N,p)$ a {\em rooted tree}. 

    Notice that $(\mathcal N,p)$ becomes naturally `graded' or `stratified' as follows: $\mathcal N= \bigcup_{\ell\ge 0} \mathcal N^{(\ell)}$ where $\mathcal N^{(0)} := \{\nu_0\}$ and $\mathcal N^{(\ell)} : = \{\nu\in \cN \ : \ p^\ell(\nu) =\nu_0\}$.  Notice also that, by definition, $p$ maps $\mathcal N^{(\ell)}$ to a subset of $\mathcal N^{(\ell-1)}$ (here $\ell\ge1$).

    Given $\nu\in \mathcal N$ we put ${\rm desc}(\nu) := p^{-1}(\{\nu\})$ and call it the {\em descendants} of $\nu$. Nodes $\nu$ with ${\rm desc}(\nu)= \varnothing$ are called {\em leaves} or {\em terminal nodes}.  Nodes $\nu$ with ${\rm desc}(\nu)\neq \varnothing$ are called {\em internal} or {\em branching nodes}.
\end{defn}

Intuitively, a node will be a given large ball. Then, then free boundary inside the node will be covered by the node's descendants in the next (smaller) scales, and such branching taking place  in balls that are large with respect to neck radii. In other words, one keeps zooming in until a neck or two regular phases are seen at the threshold scale, while keeping track of the intermediate balls, the closedness of $u$ to a vee as well as the tilting. See \Cref{fig:pictree}.

\begin{figure}[h!]
    \centering
    \includegraphics[width=0.8\textwidth]{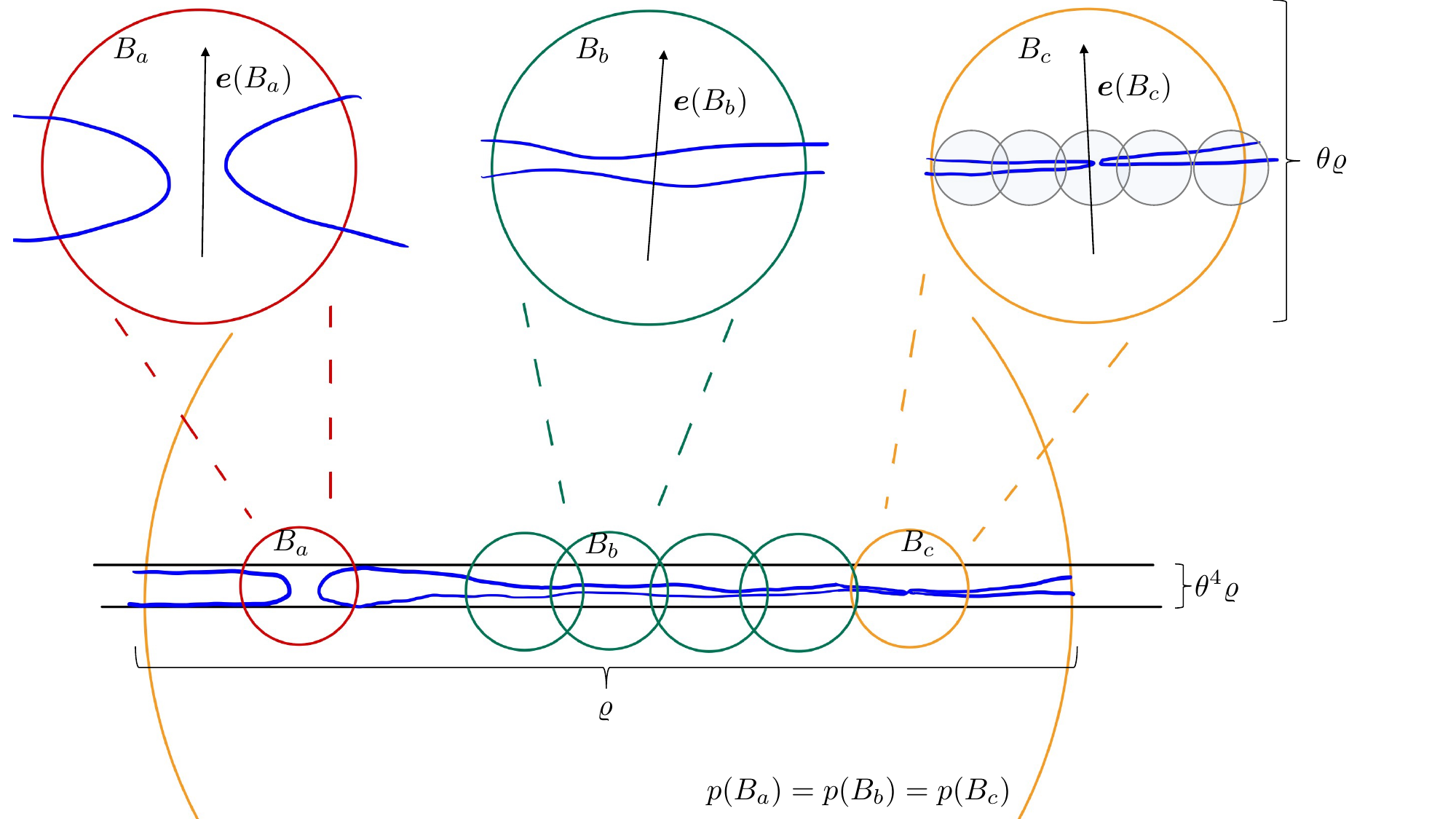}
    \caption{Illustration of branching structure in ball tree: \Cref{prop:geomtree} and  \Cref{defiregball}. From left to right: a neck-type terminal ball, a regular terminal ball, and a branching ball.}
    \label{fig:pictree}
\end{figure}
We can now give the following result concerning the geometric structure of $\{u>0\}$ (recall the definition of Slab in \eqref{eq:Slab}): 
\begin{prop}\label{prop:geomtree}
    There exists a small universal constant $\theta_\circ>0$ such that, for any given $\theta\in (0, \theta_\circ)$, there exists $M= M(\theta)\ge 1$ (large) such that the following holds true. 
    
    For any given $\zz\in \cZ$ and $R> M\rb(\zz)$, there exist:
    \begin{itemize}
        \item A finite collection $\mathcal N$ of balls of $\R^3$ with  $B_R(\zz)\in \mathcal N$. 
        \item A predecessor map $p:\mathcal N\setminus \{B_R(\zz)\}\to \mathcal N$ such that $(\mathcal N, p)$ is a rooted tree with root $B_R(\zz)$.
        \item A map $\boldsymbol e: \cN \to \mathbb S^2$ called {\em polarity map}.
    \end{itemize}
    The previous objects satisfy the following properties:
    \begin{enumerate}
        \item Every ball (or node) $B\in \mathcal N^{(\ell)}$, $\ell \ge 0$,  has radius $\varrho_\ell := \theta^\ell R$ and is centered at some point in $\{u=0\}\cap B_{R}(\zz)$.
        
        \item For every node $B=B_{\varrho} (y) \in \mathcal N^{(\ell)}$ (so that $\varrho= \varrho_\ell$) we have  
        \begin{equation}\label{closeness1}
            \big\| u - V_{y,e} \big\|_{L^\infty(B_{2 \varrho}(y))}\le  \theta^4 \varrho,
        \end{equation}
        where $e=\boldsymbol e(B)$ is the polarity of $B$.
        In particular, 
        \begin{equation}\label{closeness2}
            \{u=0\}\cap B_{2\varrho}(y) \subset \{x\in \R^3\ : \ |e\cdot(x-y)|\le  \theta^4 \varrho\}.
        \end{equation}

        \item A ball  $B= B_\varrho(y)$ in  $\mathcal N$ is an internal or  branching node whenever
        \begin{equation}\label{branchingcondition}
            \mbox{there exists $\zz\in B_{2\varrho}(y)\cap \cZ$ such that $  M\rb(\zz)\le \varrho$.}
        \end{equation}
               Otherwise, the ball is a terminal node.

        \item For every branching node $B \in \mathcal N$,  each of its descendants in ${\rm desc}(B)$ is centered at some point in $\{u=0\}\cap {\rm Slab}\!\left(B, e, \theta^4\right)$, where $e= \boldsymbol e(B)$ is the polarity of $B$. 
        Moreover, the union of the balls in ${\rm desc}(B)$ are a ``Vitali covering'' of ${\rm Slab}\!\left(B, e,   \theta^2\right)$ (namely, they cover ${\rm Slab}\!\left(B, e,   \theta^2\right)$ and the balls with the same centers and radii scaled by a factor $1/4$ are pairwise disjoint).
        In particular, the number of balls in ${\rm desc}(B)$ is bounded by $2^8\theta^{-2}$.

        \item For any $B'\in {\rm desc}(B)$ we have $|\boldsymbol e(B)- \boldsymbol e(B')|\le  \theta^3$. 
  
    \end{enumerate}
\end{prop}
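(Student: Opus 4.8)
The plan is to build $(\mathcal N,p,\boldsymbol e)$ recursively from the root downwards, one scale at a time, using \Cref{lem:X-blowdown} as the sole source of polarities. The guiding principle — which is really the only conceptual point — is that a closeness estimate must never be propagated more than one scale: transferring $\|u-V_{y,e}\|_{L^\infty(B_{2\varrho}(y))}\le\eps\varrho$ to a sub-ball of radius $\theta\varrho$ via \Cref{treeprelim1}(a) degrades the relative error by a factor $\sim\theta^{-1}$, so iterating it along a branch would be fatal; hence every closeness estimate that will be re-used has to come from a \emph{fresh} blow-down, and the branching condition \eqref{branchingcondition} is precisely the hypothesis that makes such a fresh blow-down at scale $\sim\varrho$ available. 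Accordingly, I fix a small universal constant $c_\circ$ and take $M=M(\theta):=M\big(c_\circ\theta^{7}/8\big)$, with $M(\cdot)$ as in \Cref{lem:X-blowdown}; one checks that then a blow-down around a neck center $\zz'$ with $\rb(\zz')\le\varrho/M$, performed at scale $4\varrho$ and re-centered at a nearby zero point via \Cref{treeprelim1}(a), yields a vee that is $c_\circ\theta^{7}\varrho$-close to $u$ on the corresponding ball. The threshold $\theta_\circ$ will be a small universal constant, chosen at the end so that $c_\circ\theta^{7}\le\theta^{4}$, $2c_\circ\theta^{6}\le\theta^{4}$, and the volume bounds below, all hold for $\theta\le\theta_\circ$.

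\emph{Root and the dichotomy.} Since $R>M\rb(\zz)$, \Cref{lem:X-blowdown} applied at $\zz$ and scale $2R$ gives $e_0$ with $\|u-V_{\zz,e_0}\|_{L^\infty(B_{2R}(\zz))}\le c_\circ\theta^{7}R$; this is $\mathcal N^{(0)}=\{B_R(\zz)\}$ with $\boldsymbol e(B_R(\zz))=e_0$, and it gives \eqref{closeness1}--\eqref{closeness2} (in particular $\{u=0\}\cap B_R(\zz)$ sits in the super-thin slab $\{|e_0\cdot(x-\zz)|\le c_\circ\theta^{7}R\}$). The recursive rule is: each node $B=B_\varrho(y)$, created together with a polarity $e$ for which \eqref{closeness1} holds, is declared \emph{branching} exactly when \eqref{branchingcondition} holds and \emph{terminal} otherwise; terminal nodes receive no descendants.

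\emph{Descendants of a branching $B=B_\varrho(y)$.} Let $e=\boldsymbol e(B)$. From \eqref{closeness1} and \Cref{treeprelim1}(b) on $B_{2\varrho}(y)$, every point of ${\rm Slab}(B_\varrho(y),e,\theta^{2})$ lies within $2\theta^{2}\varrho$ of $\{u=0\}$, while $\{u=0\}\cap B_\varrho(y)\subset\{|e\cdot(x-y)|\le c_\circ\theta^{7}\varrho\}$. I pick a maximal $(\theta\varrho/2)$-separated set $\{z_1,\dots,z_N\}\subset\{u=0\}\cap B_\varrho(y)$. Then, for $\theta\le\theta_\circ$, the balls $B_{\theta\varrho}(z_i)$ cover ${\rm Slab}(B_\varrho(y),e,\theta^{2})$, the balls $B_{\theta\varrho/4}(z_i)$ are pairwise disjoint, and — since the latter lie in a slab of thickness $\lesssim\theta\varrho$ and cross-section $\lesssim\varrho^{2}$ — one gets $N\le 2^{8}\theta^{-2}$; these balls form ${\rm desc}(B)$, each with predecessor $B$, giving property (4) and the cardinality bound. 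For the polarity of $B'=B_{\theta\varrho}(z_i)$: one first tests \eqref{branchingcondition} for $B'$. If $B'$ is branching, pick $\zz'\in B_{2\theta\varrho}(z_i)\cap\cZ$ with $M\rb(\zz')\le\theta\varrho$ and define $\boldsymbol e(B')$ by a fresh blow-down at $\zz'$ and scale $4\theta\varrho$, which (re-centering at $z_i$ by \Cref{treeprelim1}(a)) gives $\|u-V_{z_i,\boldsymbol e(B')}\|_{L^\infty(B_{2\theta\varrho}(z_i))}\le c_\circ\theta^{7}(\theta\varrho)$. If $B'$ is terminal, set $\boldsymbol e(B'):=e$, and \Cref{treeprelim1}(a) transfers the estimate for $B$ down one scale to $\|u-V_{z_i,e}\|_{L^\infty(B_{2\theta\varrho}(z_i))}\le 2c_\circ\theta^{6}(\theta\varrho)$; in both cases \eqref{closeness1} holds at the new level and the constant is $\le\theta^{4}(\theta\varrho)$. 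Finally, the polarity increment: restricting to $B_{\theta\varrho/2}(z_i)$, both $V_{z_i,e}$ (the parent's vee re-based at $z_i$) and $V_{z_i,\boldsymbol e(B')}$ are $\lesssim c_\circ\theta^{6}(\theta\varrho)$-close to $u$, hence $\lesssim c_\circ\theta^{6}(\theta\varrho)$-close to each other; evaluating this at $z_i\pm\tfrac{\theta\varrho}{4}\boldsymbol e(B')$ forces $1-|e\cdot\boldsymbol e(B')|\lesssim c_\circ\theta^{6}$, hence (after fixing the sign of $\boldsymbol e(B')$) $|\boldsymbol e(B)-\boldsymbol e(B')|\le\theta^{3}$, i.e.\ property (5).

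\emph{Finiteness and conclusion.} Because $\rb\ge\rmin$ on $\FB(u)$ by \eqref{eq:rmin_def}, no node of radius $<M\rmin$ can satisfy \eqref{branchingcondition}, so the tree has finite depth ($\lesssim\log(R/(M\rmin))/\log(1/\theta)$), and together with the branching bound $N\le 2^{8}\theta^{-2}$ this makes $\mathcal N$ finite; by construction $(\mathcal N,p)$ is a rooted tree with root $B_R(\zz)$. Property (1) is built into the construction, property (2) is \eqref{closeness1} verified at each creation, property (3) is the definition of the dichotomy, and (4)--(5) were checked above. I expect the only real difficulty to be the bookkeeping of exponents that makes \eqref{closeness1} self-propagating — a $\theta^{7}$-gain at each fresh blow-down, which survives as $\le\theta^{4}$ after one transfer to a terminal child and still controls the polarity increment at level $\theta^{3}$ — together with two harmless technicalities: ensuring the Vitali balls cover the parent slab up to $\partial B_\varrho(y)$ (cover instead a slightly shrunken, and in any case super-thin, slab) and treating the nodes as abstract objects carrying a ball, a predecessor and a polarity, so that coincidences between the balls of distinct branches cause no ambiguity.
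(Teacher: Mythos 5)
Your proposal is correct and follows essentially the same route as the paper's proof: a recursive construction in which the branching condition \eqref{branchingcondition} guarantees a \emph{fresh} blow-down (via \Cref{lem:X-blowdown}) at each generation, closeness estimates are transferred at most one scale via \Cref{treeprelim1}, descendants are a maximal separated family of zero points whose Vitali covering and area count give property (4), and finiteness follows from $\rb\ge\rmin$ in \eqref{eq:rmin_def}. The only (harmless) difference is bookkeeping of polarities: the paper derives a single shared polarity for \emph{all} children of a branching node from the neck center witnessing the \emph{parent's} branching condition, whereas you give each branching child its own fresh polarity from its own witness and let terminal children inherit the parent's --- both yield the stated $\theta^4$-closeness and $\theta^3$-tilting bounds.
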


\begin{proof} 
We will construct the tree $(\mathcal N, p)$ using an iterative procedure. The process begins at the root $B_R(\zz)$, which will always serve as a branching node. For any given node, we will define the criteria that determine whether it is branching or terminal, along with the procedure for constructing its descendants in the branching case.

This construction is divided into two steps:

\vspace{3pt}
\noindent{\bf Step 1.} We present a claim that acts as a fundamental step in the construction process. It governs the selection of the constant $M$ and outlines the procedure for determining descendants from a branching node.

\begin{quote}
    {\bf Claim.} For any given $\theta>0$ sufficiently small, there is $M=M(\theta)$ such that the following holds. 
    Suppose that $u(y)=0$ and $B= B_\varrho(y)\subset \R^3$ is some ball such that \eqref{branchingcondition} holds. Assume in addition that $e\in \mathbb S^2$ is a unit vector such that \eqref{closeness1} holds. 
    Then, there exists a collection of points $\{y_j\}_{1\le j \le N}$ in  $\{u=0\}\cap{\rm Slab}\!\left(B_\varrho(y), e, \theta^4\right)$ satisfying the following properties:
    \begin{enumerate}[(i)]
    \item  The balls $\{B_{\theta \varrho/4}(y_j)\}_{1\le j\le N}$ are disjoint. In particular, the number of points $N$ is bounded by $2^8\theta^{-2}$.
    \item The balls $\{B_{\theta \varrho}(y_j)\}_{1\le j\le N}$ cover ${\rm Slab}\!\left(B_\varrho(y), e, \theta^2\right)$
    \item There exists $\tilde e\in \mathbb S^{2}$ with $|e-\tilde e|\le \theta^3$ such that for all $1\le j\le N$ we have
        \begin{equation}\label{closeness1j}
            \big\| u(x) -|\tilde e\cdot(x-y_j)| \big\|_{L^\infty(B_{2\theta\varrho}(y_j))}\le  \theta^4 (\theta \varrho) = \theta^5\varrho,
        \end{equation}
    \end{enumerate}
\end{quote}

Let us prove this claim. By assumption \eqref{branchingcondition}, there exists $\zz\in B_{2\varrho}(y)\cap \cZ$ such that $   M\rb(\zz)\le  \varrho$. Within the setup of \Cref{lem:X-blowdown}, choose  $M=M(\theta)$ such that
\[
8\omega\big(M_*/M \big) \le \theta^6.
\]
Then,  \Cref{lem:X-blowdown} guarantees the existence of $\tilde e\in\mathbb S^2$ 
  such that 
\[
\big\|
u(x)-|\tilde e\cdot (x-\zz)|
\big\|_{L^\infty(B_{4\varrho}(\zz))}
\leq  \omega\big(M_*/M \big) 4 \varrho \le \tfrac 1 2 \theta^6 \varrho.
\] 
Since changing the sign to $\tilde e$ does not change the previous bound, we choose the sign giving $e\cdot \tilde e\ge0$.

Then using \Cref{treeprelim1}(a) (with $y_1=\zz$, $y_2=y$, $r_1=4\varrho$, $r_2 = 2\varrho$, $\eps=\theta^6/8$)  
\begin{equation}\label{closenesseps}
 \big\| u(x) -|\tilde e\cdot(x-y)| \big\|_{L^\infty(B_{2\varrho}(y))} \le \theta^6\varrho, 
\end{equation}
  therefore
\begin{equation}\label{closenesseps2}
    \{u=0\}\cap B_{2\varrho}(y) \subset \{x\in \R^3\ : \ |\tilde e\cdot(x-y)|\le  \theta^6 \varrho\}.
\end{equation}
Next, define the set \( \{y_j\}_{1 \le j \le N} \) as a subset of \( \{u = 0\} \cap \text{Slab}\left(B_\varrho(y), \tilde{e}, \theta^6\right) \) such that the balls \( B_{\theta \varrho / 4}(y_j) \) are pairwise disjoint. Furthermore, this subset is chosen to be maximal with respect to this disjointness property. Observe also that by \eqref{closeness2},  we have $$\{y_j\}_{1\le j \le N}\subset \{u=0\} \cap B_\varrho(y)\subset {\rm Slab}\!\left(B_\varrho(y), e, \theta^4\right).$$

Now, maximality implies (by a usual Vitali-type argument) that the triple balls are a cover:
\begin{equation}\label{jhgiohoiwhohith1}
   \{u=0\}\cap{\rm Slab}\!\left(B_\varrho(y), \tilde e, \theta^6\right) \subset 
\bigcup_{1\le j\le N} B_{3\theta \varrho/4}(y_j). 
\end{equation}
Also, by \eqref{closenesseps} and \Cref{treeprelim1}(b) we know  
\[
 {\dist(x, \{u = 0\}) \le 8 \theta^6 \varrho \qquad\text{for all}\quad x\in {\rm Slab}\!\left(B_\varrho(y), \tilde e, \theta^6\right)},
\]
i.e. ${\rm Slab}\!\left(B_\varrho(y), \tilde e, \theta^6\right) \subset \{u=0\} + \overline{B_{8\theta^6 \varrho}}$, upgrading \eqref{jhgiohoiwhohith1} to

\begin{equation*}%
   {\rm Slab}\!\left(B_\varrho(y), \tilde e, \theta^6\right) 
   \subset \bigcup_{1\le j\le N} B_{4\theta \varrho/5}(y_j),
\end{equation*}
provided $\theta$ is small so that $3\theta/4 + 8\theta^6 < 4\theta/5$.

Also for $\theta$ small enough (such that $2\theta^2+4\theta/5 \leq 5\theta/6$) %
\begin{equation}\label{jhgiohoiwhohith1upgrade}
{\rm Slab}\!\left(B_\varrho(y), \tilde e, 2\theta^2\right) \subset 
\bigcup_{1\le j\le N} B_{5\theta \varrho/6}(y_j). 
\end{equation}
Since the intersections of the balls $B_{\theta \varrho/4}(y_j)$ with the plane $\{x\in \R^3  \ :  \ \tilde e\cdot(x-y) =0\}$ are disjoint disks of radius $\ge \theta \varrho/8$, and they are all contained in ${\rm Slab}\!\left(B_{(1+\theta/3)\varrho}(y), \tilde e, 0\right)$ a simple comparison of areas gives 
\[
N (\theta\varrho/8)^2 \le (1+\theta/3)^2\varrho^2.
\]
Therefore since $\theta/3\le1$ we obtain $N \theta^2 \le 2^8$ as claimed. We have thus established (i).

To establish (iii) we observe first that (repeating similar triangle inequality arguments as above) from \eqref{closenesseps} and using $|\tilde e \cdot(y_j-y)| \le \theta^6\varrho$ we obtain that  \eqref{closeness1j} is automatically satisfied for all $j$  provided $2\theta^6\le \theta^5$. 

Similarly,  combining  \eqref{closeness1} and  \eqref{closenesseps} using the triangle inequality
we obtain 
\[
 \big\| V_{y,e} - V_{y,\tilde e} \big\|_{L^\infty(B_{2\varrho}(y))} = \big\|\, |e\cdot(\,\cdot\,-y)| -|\tilde e\cdot(\,\cdot\,-y)|\, \big\|_{L^\infty(B_{2\varrho}(y))}\le (\theta^4 + \theta^6)\varrho < 2\theta^4 \varrho.
\]
Recalling $e\cdot \tilde e\ge 0$ this implies $|e-\tilde e|\le 2\theta^4$, which is less than the claimed $\theta^3$ ($\theta$ is small). 

Finally, (ii) follows from \eqref{jhgiohoiwhohith1upgrade} 
together with $|e-\tilde e|\le \theta^3$. This finishes the proof of the claim.

\vspace{3pt}
\vspace{3pt}

\noindent {\bf Step 2.} We now use the claim to construct the tree $(\mathcal N, p)$.

We start by defining the root $\mathcal N^{(0)} := \{ B_R(\zz) \}$. Since $R > M \rb(\zz)$ by assumption, the conditions of the claim are satisfied for $B_R(\zz)$, thanks to \Cref{lem:X-blowdown}. This allows us to apply the branching procedure from the claim to $B_R(\zz)$, producing a finite collection of balls $\{ B_{\theta R}(y_j) \}_{1 \le j \le N}$, each centered at a point in $\{u = 0\} \cap {\rm Slab}\!\left(B_R(\zz), e, \theta^4\right)$ and satisfying the covering and disjointness properties of the claim in Step~1.

Next, for each branching node $B_\varrho(y) \in \mathcal N^{(k)}$ (at level $k$ of the tree), we apply the claim to generate its descendants, forming the next generation of nodes $\mathcal N^{(k+1)}$. If a ball satisfies the branching condition \eqref{branchingcondition}, it branches into a finite collection of descendants, where each ball in the descendant set satisfies the same geometric properties as the initial node. If a node fails the branching condition, it becomes a terminal node, and no further descendants are generated.

The predecessor map $p: \cN \setminus \{ B_R(\zz) \} \to \mathcal N$ is defined by setting $p(B') = B$ whenever $B'$ branches out from $B$. This establishes the rooted tree structure of $(\mathcal N, p)$, where $B_R(\zz)$ is the root.

The polarity map $\boldsymbol{e}: \cN \to \mathbb S^2$ is defined iteratively: for the root $B_R(\zz)$, we assign polarity $e$ given by \Cref{lem:X-blowdown}, and
for each descendant $B_{\theta \varrho}(y_j)$ of a branching node $B_\varrho(y)$, we assign the polarity $\tilde{e}$ from the claim, satisfying $|\boldsymbol{e}(B) - \boldsymbol{e}(B')| \le \theta^3$ for any descendant $B'$ of $B$.

Note that for the root $B_R(\zz)$, we may also arbitrarily assign the polarity $-e$. Once this sign is chosen, however, the signs of the polarities for all descendants are uniquely determined.

The iterative process continues until all nodes in the tree are either terminal or have their descendants constructed. The covering, disjointness, and approximation properties of the descendants are guaranteed by the claim, which ensures that every ball in $\mathcal N$ satisfies the conditions in \Cref{prop:geomtree}.

Finally, the number of descendants at each branching node is bounded by $2^8 \theta^{-2}$, and the radii of the balls decrease geometrically by a factor of $\theta$ at each generation. This ensures that the process terminates after a finite number of steps (since $\rmin > 0$), yielding a well-defined, finite tree structure.
\end{proof}

The following definition and lemmas extract the relevant analytic information from the rooted tree constructed in \Cref{prop:geomtree} to be used  in the following sections: 
\begin{defn}\label{defiregball}
Given $\theta > 0$ (sufficiently small), let $M = M(\theta)$ be the constant provided by \Cref{prop:geomtree}. Suppose $\zz \in \cZ$ and $R > M \rb(\zz)$. Let $(\mathcal N, p)$ denote the ball tree rooted at $B_R(\zz)$, and let $\boldsymbol{e}$ be the associated polarity map, both as described in \Cref{prop:geomtree}.
We partition $\cN$ into two sets: 
\[
\cN = \cI \cup \cT,
\]
where $\mathcal I$ consists of the {\em internal nodes} (branching balls), and $\mathcal T$ consists of the {\em terminal nodes} (balls that do not branch further).
A terminal ball $B = B_\varrho(y) \in \mathcal T$ is called {\em regular} if
\[
B_{2\varrho}(y) \cap \cZ = \varnothing.
\]
The set of regular terminal balls will be denoted as $\cT^{\rm reg}$. The non-regular terminal balls will be called {\em neck balls}.
We denote them by $\mathcal T^{\rm neck}$, so that $\cT= \cT^{\rm reg}\cup \cT^{\rm neck}$.
\end{defn}

We have the following:
\begin{lem}\label{lemregball}
   In the setting of \Cref{defiregball}, let $\theta \in (0, \theta_\circ)$, where $\theta_\circ > 0$ is the universal constant provided by \Cref{prop:geomtree}.
   For every regular terminal ball $B = B_\varrho(y)\in \cT^{\rm reg}$, the set $\{ u > 0 \} \cap B_{3\varrho /2}(y)$ can be written as
\[
\{ u > 0 \} \cap B_{3\varrho/2} = B^{(+,3/2)} \cup B^{(-,3/2)} ,
\]
where $B^{(+,3/2)} $ and $B^{(-,3/2)} $ are two disjoint connected components of $\{ u > 0 \}\cap B_{3\varrho/2}(y)$, characterized by containing the points $y \pm \varrho e$, where $e = \boldsymbol{e}(B)$ is the polarity of $B$.
In addition, we have:
\begin{equation}\label{gradvspolar}
    |\nabla u(x)-e|\le \theta^3 \quad \forall \,x\in B^{(+,3/2)} \qquad \mbox{and}\qquad  |\nabla u(x)+e|\le \theta^3 \quad \forall \,x\in B^{(-,3/2)}.
\end{equation}
Moreover, the two free boundaries $\partial \{u > 0\} \cap \partial B^{(\pm, 3/2)}$ are flat $C^{1,1}$ graphs. More precisely, if we choose an Euclidean coordinate system $(X_1, X_2, X_3)$ with origin at $y$ and $X_3$ pointing in the direction of $e$, we have
\[
B^{(+,3/2)}  = \{ X_3 > g^{(+)}(X_1, X_2) \} \cap B_{3\varrho/2}(y) \quad \text{and} \quad B^{(-,3/2)}  = \{ X_3 < g^{(-)}(X_1, X_2) \} \cap B_{3\varrho/2}(y),
\]
where the functions $g^{(\pm)} : D_{3\varrho/2} \to \mathbb{R}$, with $D_{3\varrho/2}$ being the disk $\{ X_1^2 + X_2^2 < (3\varrho/2)^2 \}$ in $\mathbb{R}^2$, are ordered ---that is   $g^{(-)}< g^{(+)}$--- and satisfy the estimates:
\[
\|g^{(\pm)}\|_{L^\infty(D_{3\varrho/2})} + \varrho^2 \|D^2 g^{(\pm)}\|_{L^\infty(D_{3\varrho/2})}
\leq \theta^3 \varrho.
\]
\end{lem}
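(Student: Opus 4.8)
The plan is to run the argument behind \Cref{cor_closetoV_disc_reg} and \Cref{cor_closetoV_disc_reg2} directly on the enlarged ball $B_{7\varrho/4}(y)\supset B_{3\varrho/2}(y)$, on which we control \emph{both} the closeness to a vee and the Hessian of $u$. One cannot invoke those lemmas as black boxes: they lose a factor $2$ in scale, while here we must reach all of $B_{3\varrho/2}(y)$, and the regular-terminal-ball hypothesis $B_{2\varrho}(y)\cap\cZ=\varnothing$ gives the Hessian bound only on $B_{2\varrho}(y)$, not on $B_{3\varrho}(y)$. Set $e:=\boldsymbol{e}(B)$. By \Cref{prop:geomtree}(1)--(2), $u(y)=0$ and $\|u-V_{y,e}\|_{L^\infty(B_{2\varrho}(y))}\le\theta^4\varrho$; in particular $\{u=0\}\cap B_{2\varrho}(y)\subset\{|e\cdot(x-y)|\le\theta^4\varrho\}$ and $u(y\pm\varrho e)\ge\varrho-\theta^4\varrho>0$. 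Since $B\in\cT^{\rm reg}$, \Cref{defiregball} gives $B_{2\varrho}(y)\cap\cZ=\varnothing$, so $\dist(x,\cZ)\ge\varrho/4$ for $x\in B_{7\varrho/4}(y)$, whence \Cref{lem:Hess-decay} yields $|D^2u|\le C/\varrho$ in $B_{7\varrho/4}(y)\cap\{u>0\}$; using $|\nabla u|=1$ on $\FB(u)$, the principal curvatures of $\FB(u)\cap B_{7\varrho/4}(y)$ are at most $C/\varrho$. Finally, \Cref{treeprelim1}(b) shows $\{u=0\}$ meets $B_\varrho(y)$, so $\FB(u)\cap B_{3\varrho/2}(y)\neq\varnothing$.

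\textbf{Graph decomposition.} Work in coordinates $(X_1,X_2,X_3)$ with origin $y$ and $X_3=e\cdot(x-y)$. The surface $\FB(u)\cap B_{7\varrho/4}(y)$ is smooth and embedded, has principal curvatures $\le C/\varrho$, and lies in the slab $\{|X_3|\le\theta^4\varrho\}$ of width $\ll\varrho$. A standard argument (the one behind \Cref{cor_closetoV_disc_reg2}) then shows that each connected component has tangent planes making angle $O(\theta^4)$ with $\{X_3=0\}$, hence is a graph $\{X_3=g(X_1,X_2)\}$; such a graph cannot terminate in the interior of $B_{7\varrho/4}(y)$ (it would have to become vertical or exit through $\partial B_{7\varrho/4}(y)$, both excluded by slab confinement), so it is defined over all of $D_{7\varrho/4}\supset D_{3\varrho/2}$, with $\|g\|_{L^\infty(D_{7\varrho/4})}\le\theta^4\varrho$. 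These graphs are pairwise disjoint, hence totally ordered, and $u$ alternates sign between consecutive layers, the top and bottom ones (containing $y+\varrho e$ and $y-\varrho e$) being $\{u>0\}$. A $\{u>0\}$-layer \emph{strictly between} two graphs would be a connected component of $\{u>0\}$ contained in $\{|X_3|\le\theta^4\varrho\}$, hence of volume $\le C\theta^4\varrho^3$ in any ball $B_{\varrho/2}(x_\circ)$ centered on it; by \Cref{lem:cleanball} (equivalently \Cref{rem:nondeg}) such a component must be empty for $\theta$ small, a contradiction. Therefore there are exactly two graphs $g^{(-)}<g^{(+)}$ over $D_{3\varrho/2}$, and with $B^{(+,3/2)}:=\{X_3>g^{(+)}\}\cap B_{3\varrho/2}(y)\ni y+\varrho e$ and $B^{(-,3/2)}:=\{X_3<g^{(-)}\}\cap B_{3\varrho/2}(y)\ni y-\varrho e$ one gets $\{u>0\}\cap B_{3\varrho/2}(y)=B^{(+,3/2)}\sqcup B^{(-,3/2)}$ and $\{u=0\}\cap B_{3\varrho/2}(y)=\{g^{(-)}\le X_3\le g^{(+)}\}\cap B_{3\varrho/2}(y)$.

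\textbf{Quantitative bounds.} Let $B^{(\pm,7/4)}$ be the analogous components in $B_{7\varrho/4}(y)$; then $\bar u_+:=u\,\mathbbm{1}_{B^{(+,7/4)}}$ and $\bar u_-:=u\,\mathbbm{1}_{B^{(-,7/4)}}$ are classical solutions in $B_{7\varrho/4}(y)$ with $\|\bar u_+-e\cdot(x-y)\|_{L^\infty(\{\bar u_+>0\})}\le C\theta^4\varrho$ and $\|\bar u_-+e\cdot(x-y)\|_{L^\infty(\{\bar u_->0\})}\le C\theta^4\varrho$. For $p\in\FB(u)\cap B_{3\varrho/2}(y)$ on the $+$ side, the rescaling $z\mapsto\tfrac8\varrho\bar u_+(p+\tfrac\varrho8 z)$ is a classical solution in $B_2$ that is $C\theta^4$-close to the half-plane $(e\cdot z)_+$ on its positivity set, so \Cref{lem:eps-reg-classical} shows that near $p$ the free boundary is a smooth ($C^{1,1}$, in particular) graph over $\{X_3=0\}$ with second derivatives bounded by $C\theta^4/\varrho$, and $|\nabla u-e|\le C\theta^4$ on $B^{(+,3/2)}\cap B_{\varrho/8}(p)$; symmetrically on the $-$ side. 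Covering $\FB(u)\cap B_{3\varrho/2}(y)$ by such balls, and controlling $\nabla u$ at points of $B^{(\pm,3/2)}$ far from $\FB(u)$ by interior harmonic estimates (there $u$ is $\theta^4\varrho$-close to an affine function on a ball of radius comparable to $\varrho$), we obtain $\varrho^2\|D^2g^{(\pm)}\|_{L^\infty(D_{3\varrho/2})}\le C\theta^4\varrho$ and $\|\nabla u-e\|_{L^\infty(B^{(+,3/2)})}+\|\nabla u+e\|_{L^\infty(B^{(-,3/2)})}\le C\theta^4$, while $\|g^{(\pm)}\|_{L^\infty(D_{3\varrho/2})}\le\theta^4\varrho$ comes from the slab. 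Since $\theta$ is a small universal constant, $C\theta^4\le\theta^3$, and all claimed estimates follow.

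\textbf{Main obstacle.} The delicate step is the graph decomposition: upgrading ``bounded curvature $+$ confinement to a thin slab'' to ``\emph{exactly} two disjoint, ordered graphs over the full disk $D_{3\varrho/2}$''. This needs a careful continuation/degree argument for each graph together with the volume-based exclusion of spurious $\{u>0\}$-layers via the clean-ball property; once this is in place, the $\varepsilon$-regularity theory applied pointwise at scale $\sim\varrho/8$ (which is what sidesteps the usual factor-$2$ loss) delivers the quantitative bounds routinely.
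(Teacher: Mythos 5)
Your proof is correct and follows essentially the same route as the paper: the paper's (three-line) proof combines \Cref{prop:geomtree}(2), the Hessian bound from \Cref{lem:Hess-decay} on $B_{7\varrho/4}(y)$ (using $B_{2\varrho}(y)\cap\cZ=\varnothing$), and \Cref{cor_closetoV_disc_reg2} with a covering argument, which is precisely what you do — you merely unpack the content of \Cref{cor_closetoV_disc_reg}/\Cref{cor_closetoV_disc_reg2} and the covering step instead of citing them.
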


\begin{proof}
It follows by combining \Cref{prop:geomtree} with \Cref{lem:Hess-decay} and \Cref{cor_closetoV_disc_reg}.
Indeed, if $B = B_\varrho(y)$ is a regular terminal ball then, by definition, $B_{2\varrho}(y)\cap \cZ =\varnothing$.
Hence, by \Cref{lem:Hess-decay} we obtain 
\[
|D^2 u|\le \frac{C_1}{\varrho} \qquad \mbox{in }\{u>0\}\cap B_{7\varrho/4}(y), 
\]
with $C_1$ universal. 
Recalling  \eqref{closeness1}---which holds thanks to \Cref{prop:geomtree}(2)--- we can use \Cref{cor_closetoV_disc_reg2} (with a covering argument) to conclude.
\end{proof}

\begin{defn}\label{def:Bpm_Omegapm}
For given $B = B_\varrho(y)\in \cN\setminus \cT^{\rm neck}$ we define $B^{(+)}$, $B^{(-)}$, as follows:  

\begin{itemize}
    \item If $B\in \mathcal I$ is an  internal ball and $e = \boldsymbol{e} (B)$, we define (the regular regions)
\[
    B^{(+)} := \{x \in B_\varrho(y) \ :\   e \cdot (x - y) > \theta^2\varrho\},  \quad 
    B^{(-)} := \{x \in B_{ \varrho}(y) \ :\   e \cdot (x - y) <- \theta^2\varrho\}.
\]
Similarly, for given $\lambda\in [1,3/2]$ we define
\[
 B^{(\pm,\lambda)} := \{x \in B_{\lambda\varrho}(y) \ :\  \pm e \cdot (x - y) > \theta^2\varrho\}.
\]

    \item If $B\in \cT^{\rm reg}$ is a regular terminal ball we define
    \[ 
    B^{(+)}:=  B^{(+,3/2)}  \cap B,  \quad B^{(-)}:=  B^{(+,3/2)} \cap B,
    \]   
    where $B^{(+,3/2)} $ and $B^{(+,3/2)} $ are as in \Cref{lemregball}. Also, for for given $\lambda\in [1,3/2]$ we  define
    \[B^{(\pm,\lambda)}:=  B^{(\pm ,3/2)}  \cap B_{\lambda \varrho}(y).\]
\end{itemize}
Finally, given a ball tree $\cN$ with root $B_R(\zz)$, we define the two subsets $\Omega^{(+)}=\Omega^{(+)}(B_R(\zz))$ and $\Omega^{(-)}=\Omega^{(-)}(B_R(\zz))$ as
\[
\Omega^{(\pm)} :=  \bigcup\{B^{(\pm)} \ : \  B\in \mathcal I \cup \mathcal T^{\rm reg} \}.
\]
\end{defn}

\begin{remark}[Reversed polarity map]\label{reversed_polarity}
Notice that if $(\mathcal{N}, p)$ and $\boldsymbol{e}: \mathcal{N} \to \mathbb{S}^2$ are the tree and polarity map constructed in \Cref{prop:geomtree}, then replacing the map $\boldsymbol{e}$ with $-\boldsymbol{e}$ results in a new polarity map that satisfies exactly the same properties. 
In other words, we can always change the sign of the polarity at one node (e.g., the root), but this change must be propagated to all other nodes accordingly. 

It is also useful to observe that for each  ball $B$ the set $B^{(-)}$ defined using  the polarity $\boldsymbol{e}$ is the same as $B^{(+)}$ for the polarity $-\boldsymbol{e}$. Thus, the set $\Omega^{(-)}$ defined using the polarity $\boldsymbol{e}$ is the same as $\Omega^{(+)}$ for the polarity $-\boldsymbol{e}$.
\end{remark}

In the previous definition, we have a lower bound on the gradient inside $\Omega^{(\pm)}$:
 
\begin{lem}
    \label{lem:grad_lower_bound}
    In the setting of Definitions~\ref{defiregball} and \ref{def:Bpm_Omegapm}, let $\theta \in (0, \theta_\circ)$.    For all $B\in\cI \cup \cT^{\rm reg}$ we have:
\begin{equation}\label{gradvspolar2}
    |\nabla u(x)-e|\le C\theta^2 \quad \forall \,x\in B^{(+,3/2)} \qquad \mbox{and}\qquad  |\nabla u(x)+e|\le C\theta^2 \quad \forall \,x\in B^{(-,3/2)},
\end{equation}
where $e = \boldsymbol e(B)$ and $C$ is universal. In particular, $|\nabla u|\ge \tfrac{9}{10}$ in $\Omega^{(\pm)}$. 
\end{lem}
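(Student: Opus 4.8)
The plan is to obtain \eqref{gradvspolar2} as a direct consequence of the closeness-to-a-vee estimates \eqref{closeness1}--\eqref{closeness2} provided by \Cref{prop:geomtree}(2), combined with interior gradient estimates for harmonic functions; the case of regular terminal balls will simply be quoted from \Cref{lemregball}, so that all the actual work concerns internal balls.

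First I would dispose of a regular terminal ball $B=B_\varrho(y)\in\cT^{\rm reg}$ with polarity $e=\boldsymbol e(B)$. For such a ball, \Cref{lemregball} already gives $|\nabla u-e|\le\theta^3$ on $B^{(+,3/2)}$ and $|\nabla u+e|\le\theta^3$ on $B^{(-,3/2)}$; since $\theta<1$ we have $\theta^3\le\theta^2$, so \eqref{gradvspolar2} holds in this case with constant $1$, and nothing further is needed.

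Next, for an internal ball $B=B_\varrho(y)\in\mathcal I$ with polarity $e$, I would argue as follows. On the set $\Gamma:=\{x\in B_{2\varrho}(y):e\cdot(x-y)>\theta^4\varrho\}$ the function $u$ is strictly positive by \eqref{closeness2}, hence harmonic there, while $V_{y,e}(\cdot)=e\cdot(\cdot-y)$ is affine on $\Gamma$ with constant gradient $e$; therefore $w:=u-V_{y,e}$ is harmonic in $\Gamma$ and, by \eqref{closeness1}, satisfies $\|w\|_{L^\infty(\Gamma)}\le\theta^4\varrho$. For any $x\in B^{(+,3/2)}$ one has $e\cdot(x-y)>\theta^2\varrho$ and $|x-y|<\tfrac32\varrho$, so, taking $\theta_\circ$ small, $\dist(x,\partial\Gamma)\ge\min\{\varrho/2,\ \theta^2\varrho-\theta^4\varrho\}\ge\tfrac12\theta^2\varrho$, whence $B_{\theta^2\varrho/2}(x)\subset\Gamma$. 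The interior gradient estimate for the harmonic function $w$ then yields
\[
|\nabla u(x)-e|=|\nabla w(x)|\le\frac{C}{\theta^2\varrho}\,\|w\|_{L^\infty(B_{\theta^2\varrho/2}(x))}\le C\theta^2,
\]
which is the first inequality in \eqref{gradvspolar2}. The second is proved identically after replacing $\Gamma$ by $\{x\in B_{2\varrho}(y):e\cdot(x-y)<-\theta^4\varrho\}$, on which $V_{y,e}(\cdot)=-e\cdot(\cdot-y)$, so that $\nabla(u-V_{y,e})=\nabla u+e$.

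Finally, the ``in particular'' assertion follows at once: by \Cref{def:Bpm_Omegapm} every point of $\Omega^{(+)}$ (resp.\ $\Omega^{(-)}$) lies in $B^{(+)}\subset B^{(+,3/2)}$ (resp.\ $B^{(-)}\subset B^{(-,3/2)}$) for some $B\in\mathcal I\cup\cT^{\rm reg}$, so \eqref{gradvspolar2} gives $|\nabla u|\ge 1-C\theta^2\ge\tfrac{9}{10}$ there, provided $\theta_\circ$ is chosen so that $C\theta_\circ^2\le\tfrac1{10}$. I do not expect any genuine obstacle here; the only points requiring a little care are verifying that the balls used in the interior estimate remain inside $\{u>0\}\cap B_{2\varrho}(y)$ (which is exactly the content of \eqref{closeness2}) and carrying out the elementary bookkeeping of the various distances in powers of $\theta$.
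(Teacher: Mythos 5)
Your proof is correct and follows essentially the same route as the paper: the regular terminal case is quoted from \Cref{lemregball}, and the internal case is handled by interior gradient estimates for the harmonic function $u-V_{y,e}$ on the region $\{\pm e\cdot(x-y)>\theta^4\varrho\}$, using \eqref{closeness1}--\eqref{closeness2} and the fact that points of $B^{(\pm,3/2)}$ lie at distance $\gtrsim\theta^2\varrho$ from the slab. The only (harmless) extra remark is your observation that $\theta_\circ$ may need to be shrunk so that $C\theta_\circ^2\le\tfrac1{10}$ for the final gradient lower bound, which the paper leaves implicit.
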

\begin{proof}
   For $B\in\cT^{\rm reg}$, this is implied by \eqref{gradvspolar}. For $B=B_\varrho(y)\in\cI$ this follows from interior estimates for the harmonic function $u$ using that 
$ 
\| u - V_{y,e} \|_{L^\infty(B_\varrho(y))} \le \theta^4\varrho 
$ 
and $B^{(\pm)} = \{x \ : \ \pm e\cdot(x-z) >  \theta^2\rho \}$.
\end{proof}

Notice also that, thanks to the tree structure, $\Omega^{(\pm)}$ cover the whole $\{u > 0\}\cap B_{R}(\zz)$ except $\cT^{\rm neck}$:
\begin{lem}
    \label{lem:Omegapmcovers}
    In the setting of Definitions~\ref{defiregball} and \ref{def:Bpm_Omegapm}, we have 
    \[
    \{u > 0\}\cap B_{R}(\zz) \subset \Omega^{(+)}\cup \Omega^{(-)} \cup \bigcup\cT^{\rm neck}.
    \]
\end{lem}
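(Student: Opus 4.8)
The plan is to prove the inclusion by a \emph{descent through the ball tree}. As a first (purely set‑theoretic) step, not involving $\{u>0\}$ at all, I would establish that
\[
B_R(\zz) \subset \Big(\bigcup_{B\in\cI}\big(B^{(+)}\cup B^{(-)}\big)\Big)\cup\Big(\bigcup_{B\in\cT}B\Big).
\]
To see this, fix an arbitrary point $x\in B_R(\zz)$ and build a chain of nodes $B_R(\zz)=B_0,B_1,B_2,\dots$ with $x\in B_i$ for all $i$, as follows. If $B_i$ is a terminal node, stop. If $B_i=B_{\varrho}(y)\in\cI$ is internal with polarity $e=\boldsymbol e(B_i)$, then by the very definition of $B^{(\pm)}$ in \Cref{def:Bpm_Omegapm} one has $B_\varrho(y)=B_i^{(+)}\cup B_i^{(-)}\cup{\rm Slab}(B_i,e,\theta^2)$; hence either $x\in B_i^{(+)}\cup B_i^{(-)}$ (and we stop), or $x\in{\rm Slab}(B_i,e,\theta^2)$, in which case \Cref{prop:geomtree}(4)—which asserts that the balls in ${\rm desc}(B_i)$ cover ${\rm Slab}(B_i,e,\theta^2)$—produces a node $B_{i+1}\in{\rm desc}(B_i)$ with $x\in B_{i+1}$, and we continue. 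Since $\cN$ is a finite rooted tree by \Cref{prop:geomtree}, this chain terminates, and the terminal node it reaches witnesses the displayed inclusion for $x$.

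Given the set inclusion, I would intersect it with $\{u>0\}$ and handle the terminal balls one at a time. For a regular terminal ball $B=B_\varrho(y)\in\cT^{\rm reg}$, \Cref{lemregball} gives $\{u>0\}\cap B_{3\varrho/2}(y)=B^{(+,3/2)}\cup B^{(-,3/2)}$, so intersecting with $B_\varrho(y)$ and recalling $B^{(\pm)}:=B^{(\pm,3/2)}\cap B$ from \Cref{def:Bpm_Omegapm} yields $\{u>0\}\cap B=B^{(+)}\cup B^{(-)}$. For a neck ball $B\in\cT^{\rm neck}$ there is nothing to do: trivially $\{u>0\}\cap B\subset B\subset\bigcup\cT^{\rm neck}$. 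Combining these with the definition $\Omega^{(\pm)}=\bigcup\{B^{(\pm)}:B\in\cI\cup\cT^{\rm reg}\}$ then gives precisely $\{u>0\}\cap B_R(\zz)\subset\Omega^{(+)}\cup\Omega^{(-)}\cup\bigcup\cT^{\rm neck}$.

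I do not expect a genuine obstacle here: the argument is bookkeeping built directly on \Cref{prop:geomtree} and \Cref{lemregball}. The only points needing a little care are (a) the exact identity ${\rm Slab}(B_\varrho(y),e,\theta^2)=B_\varrho(y)\setminus(B^{(+)}\cup B^{(-)})$ for internal $B$, immediate from the definition of $B^{(\pm)}$ in \Cref{def:Bpm_Omegapm}; and (b) ensuring the descent terminates, which is guaranteed by the finiteness of $\cN$ in \Cref{prop:geomtree} (the radii decay geometrically and the construction stops because $\rmin>0$).
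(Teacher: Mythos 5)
Your proposal is correct and follows essentially the same route as the paper: the paper also proves the inclusion by descending through the tree level by level, using \Cref{prop:geomtree}(4) (descendants cover ${\rm Slab}(B,e,\theta^2)$) inductively, with your pointwise chain being just a reformulation of that induction. The handling of the terminal balls via \Cref{lemregball} for regular ones and the trivial inclusion for neck balls matches the intended argument.
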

\begin{proof}
We reason by induction using the tree structure. Indeed, let 
$\mathcal N = \bigcup_{\ell\ge 0} \mathcal N^{(\ell)}$ be as in \Cref{defrootedtree} and define 
\begin{equation}
\label{eq:Omegaleqell}
\Omega^{(\ell, \pm )} : = \bigcup \{ B^{(\pm )} \  :\  B \in \mathcal N^{(\ell)} \cap (\mathcal I\cup \mathcal T^{\rm reg})\},\qquad\text{and}\qquad \Omega^{(\le \ell, \pm )}: = \bigcup_{k=0}^\ell \Omega^{(k, \pm )}.
\end{equation}
Then the result follows noticing that
\[
 B_{R}(\zz)\subset \Omega^{(\le \ell, \pm )}\cup \bigcup \cN^{(\ell)} \cup\bigcup \mathcal T.
\]
which is established for all $\ell\ge 0$  using \Cref{prop:geomtree}(4) ---more precisely, we use that the union of the balls in ${\rm desc}(B)$ covers all ${\rm Slab}\!\left(B, e, \theta^2\right)$---   and induction.
\end{proof}
 
\section{Estimating neck radii from symmetric excess} 
\label{sec:LpHessian}
The goal of this section is to estimate the size of neck radii using a test function introduced by Jerison and Savin in \cite{Jerison-Savin}.
More precisely,
given a global classical stable solution $u:\R^3\to \R$ to Bernoulli satisfying \eqref{eq:assump_glob_u},
following Jerison and Savin we define the functions 
$w$ and $\bc$ as
\begin{equation}
\label{eq:def_w}
w:=F(D^2u)=f(\lambda_1,\lambda_2, \lambda_3)
=\sqrt{\sum_{\lambda_i>0}\lambda_i^2+4\sum_{\lambda_i<0}\lambda_i^2},\qquad\text{and}\qquad \bc := w^{1/3}\qquad\text{in}\quad \overline{\{u > 0\}}, 
\end{equation}
where $\lambda_i$ are the eigenvalues of $D^2 u$ at a given point. We define (see \eqref{eq:stab-intro})
\begin{equation}
\label{eq:def_I}
\ccI(u, U) := \int_{\{u > 0\}\cap U} \bc \Delta \bc \, dx + \int_{\partial\{u > 0\}\cap U} \bc (\bc_\nu + H\bc) d\cH^2,
\qquad\text{for any open set }\quad U\subset \R^3. 
\end{equation}
We recall that $H $ denotes the mean curvature of the free boundary at a point, and in particular, $H(x) = -\partial^2_{\nu\nu} u(x)>0$ for $x\in \FB(u)$ (see \Cref{lem:vbounds}). Notice that, by the stability inequality \eqref{eq:stab-intro}, we have
\[
\ccI(u, B_1) 
\le C \norm[L^1(B_2\cap \set{u>0})]{|D^2u|^{\frac23}}.
\]
In addition, it follows from \cite{Jerison-Savin} that the two integrands in the definition of $\ccI$ are non-negative. In particular,  
\begin{equation}\label{eq:I monotone}
\ccI(u, U') \le \ccI(u, U)
\qquad\text{for any }\quad U'\subset U. 
\end{equation}
Finally, $\ccI$ enjoys the following scaling property:
\begin{equation}
\label{eq:rescaleI}
\ccI(u, U) = r^{\frac13} \ccI\left(u_{z,r}, \tfrac{1}{r} (U-z)\right),
\qquad\text{where}\quad u_{z,r}(x) := \frac{u(z+rx)}{r},\quad r > 0. 
\end{equation}
For notational convenience, for $\zz\in \cZ$ we define (recall \eqref{eq:varrho_def_intro}) 
\[
\varrho_\zz(u, R) := \frac{1}{R} \ccI(u, B_R(\zz))^3. 
\]
The goal of this section is to establish the following two propositions. 
The first one provides a control on $\varrho_\zz$ by the symmetric excess (recall \eqref{eq:Ez_def_intro}):

\begin{prop}\label{prop:E_to_rho} For any  $\gamma\in (0, \frac49)$ there exists $C_\gamma>0$ such that 
\[ 
\varrho_\zz(u, R)\le C_{\cttg} \bE_\zz(u,  4R)^{3\cttg}\qquad\text{for all  $~\zz\in \cZ$ and $R>0$}.
\]
\end{prop}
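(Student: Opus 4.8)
The aim is to bound the dimensionless quantity $\varrho_\zz(u,R) = R^{-1}\ccI(u,B_R(\zz))^3$ by a power $3\cttg$ of the symmetric excess $\bE_\zz(u,4R)$, for any $\cttg \in (0,\tfrac49)$. By the scaling property \eqref{eq:rescaleI} and the definitions \eqref{eq:Ez_def_intro}, \eqref{eq:varrho_def_intro}, we may rescale so that $R=1$ and $\zz = 0$ (after translation); thus we must show $\ccI(u,B_1)^3 \le C_\cttg \bE_0(u,4)^{3\cttg}$, i.e. $\ccI(u,B_1) \le C_\cttg \bE_0(u,4)^\cttg$. The starting point is the stability inequality in the Jerison--Savin form \eqref{eq:stab-intro}: $\ccI(u,B_1) \le C\int_{B_2\cap\{u>0\}} \bc^2\,dx = C\int_{B_2\cap\{u>0\}} |D^2u|^{2/3}\,dx$ (since $\bc = F(D^2u)^{1/3}$ and $F$ is $1$-homogeneous comparable to $|D^2u|$). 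So everything reduces to an $L^{2/3}$ estimate on $D^2u$ over $B_2\cap\{u>0\}$ in terms of a power of the excess.

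The heart of the argument is to establish a local $L^{\cttg'}$ estimate for $D^2u$ on $B_2\cap\{u>0\}$, with $\cttg' \in (0,\tfrac12)$ to be chosen (note $\tfrac23 < \tfrac12 \cdot \tfrac43$, so after interpolation with the universal $L^2$ bound from \Cref{lem:Stern_Zum} one can reach the $L^{2/3}$ norm — more precisely, write $\tfrac23 = \theta\cttg' + (1-\theta)\cdot 2$ and Hölder-interpolate). The key claim, which the excerpt anticipates as \Cref{lem:joaquim}, is that $\int_{B_2\cap\{u>0\}} |D^2u|^{\cttg'}\,dx \lesssim \bE_0(u,4)^{\text{(power)}}$. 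To prove this I would use the ball tree structure of \Cref{prop:geomtree} and \Cref{defiregball}: cover $B_2 \cap \{u>0\}$ by internal balls, regular terminal balls, and neck balls. On regular terminal balls $B_\varrho(y)$, \Cref{lemregball} gives $|D^2u| \le \theta^3\varrho^{-1}$, and since the free boundary is a flat $C^{1,1}$ graph with $C^{1,1}$-norm $\lesssim \theta^3\varrho$, one gets that the contribution of $B_\varrho(y)$ to $\int|D^2u|^{\cttg'}$ is at most $C\varrho^{3-\cttg'}\cdot(\theta^3\varrho^{-1})^{\cttg'} \cdot(\text{excess-type weight})$; summing over the Vitali-type covering and using that the total number/volume of such balls at each scale is controlled (each branching node has $\le 2^8\theta^{-2}$ descendants, radii shrink by $\theta$), one produces a geometric series. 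On internal balls, \Cref{lem:Hess-decay} and \Cref{cor:Hess-bd-neck} give $|D^2u| \le C/\rb$, but crucially the branching only happens at scales $\ge M\rb$, and closeness to a vee at each node (inequalities \eqref{closeness1}, \eqref{closeness2}) forces the free boundary into a thin slit of width $\theta^4\varrho$ — so the measure of $\{u>0\}$ near the free boundary inside an internal ball, where $D^2u$ is large, is small. The neck balls are the genuinely delicate pieces: there $D^2u$ can be as large as $C/\rb(\zz)$ but $\rb(\zz)$ is not controlled at this stage (that is precisely what \eqref{boundneckra2intro0} will later give); however, the $L^{\cttg'}$-integrability with $\cttg' < 1$ together with the dimensional free-boundary area bound \Cref{lem:perbound} means each neck ball contributes at most $C\rb(\zz)^{3-\cttg'}\cdot\rb(\zz)^{-\cttg'} = C\rb(\zz)^{3-2\cttg'}$, and one controls the sum of these using the geometry of $\cZ$ inside the tree (the centers with $\rb \le \varrho$ are separated, so $\sum \rb^{3-2\cttg'}$ over neck balls contained in $B_2$ converges and is bounded by the ambient volume as long as $3-2\cttg' > 0$, i.e. $\cttg' < \tfrac32$, which is comfortably true). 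The factor of the excess enters because the whole tree is built in $B_4$ where $\bE_0(u,4)$ small means the configuration is globally close to a single vee $V_{0,e}$, so the polarities $\boldsymbol e(B)$ stay within $O(\bE^\alpha)$ of $e$ across the tree, and the $C^{1,1}$ free-boundary corrections in regular balls scale with (a power of) $\bE$; one must carefully track how the excess smallness propagates down the tree using \Cref{treeprelim1}(a) and \Cref{lem:X-blowdown} so that the summed contribution carries the factor $\bE_0(u,4)^{3\cttg}$ with $\cttg$ any fixed exponent below $\tfrac49$ (the ceiling $\tfrac49$ coming from the interpolation exponent balance $\cttg' \to \tfrac12$ combined with the $2/3$ target, since $\tfrac23/(\tfrac12 + \tfrac32)$-type bookkeeping, and the factor $3$ from $\bc = w^{1/3}$).

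The plan concretely: (1) reduce to $R=1$, $\zz=0$ by scaling; (2) invoke the Jerison--Savin stability inequality \eqref{eq:stab-intro} to bound $\ccI(u,B_1) \le C\|D^2u\|_{L^{2/3}(B_2\cap\{u>0\})}^{2/3}$; (3) prove the local $L^{\cttg'}$-Hessian estimate (the content of \Cref{lem:joaquim}) via the ball tree decomposition of \Cref{prop:geomtree}, treating regular terminal balls by \Cref{lemregball}, internal balls by the thin-slit confinement \eqref{closeness2} plus \Cref{lem:Hess-decay}, and neck balls by \Cref{lem:perbound} plus the separation of $\cZ$; (4) interpolate the resulting $L^{\cttg'}$ bound with the universal $L^2$ bound of \Cref{lem:Stern_Zum} to recover $L^{2/3}$; (5) assemble the constants, verifying the exponent $3\cttg$ works for every $\cttg<\tfrac49$. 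The main obstacle is unquestionably step (3): controlling the neck-ball contributions without yet knowing that neck radii are small (that is the circular-looking but avoidable point — here one uses only the weak $L^{\cttg'}$-integrability with $\cttg'<1$, which tolerates unbounded $1/\rb$, whereas an $L^1$ estimate would fail), and carefully propagating the excess smallness through the potentially deep tree so that all the geometric series converge with the correct power of $\bE_0(u,4)$ — this is where the restriction $\cttg < \tfrac49$ is forced and where all the structural lemmas of Section~\ref{sec:neck_set} must be combined with care.
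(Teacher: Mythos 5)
At the level of the proposition itself, your argument is exactly the paper's: reduce $\ccI(u,B_R(\zz))$ to $\|D^2u\|_{L^{2/3}}$ via the Jerison--Savin stability inequality \eqref{eq:stab-intro} (using $\bc^2\le C|D^2u|^{2/3}$), and obtain the $L^{2/3}$ bound by H\"older-interpolating the excess-controlled $L^{\cttg'}$ estimate of \Cref{lem:joaquim} (valid for $\cttg'<\tfrac12$) against the universal Sternberg--Zumbrun $L^2$ bound of \Cref{lem:Stern_Zum}, which yields the exponent $\cttg=\tfrac{4\cttg'}{3(2-\cttg')}\to(\tfrac49)^-$. If \Cref{lem:joaquim} is taken as given, your steps (1), (2), (4), (5) are a correct proof.

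The gap is in your step (3), your sketch of \Cref{lem:joaquim}. Your plan bounds each ball's contribution by (volume)$\times$(pointwise Hessian bound)$^{\cttg'}$ and sums; for neck balls this gives $\sum\rb^{3-2\cttg'}\lesssim|B_{2R}|$, a bound that carries \emph{no} factor of the excess, and your proposed source of the excess factor (``the $C^{1,1}$ free-boundary corrections scale with a power of $\bE$'') applies neither to neck balls nor to internal balls, where $|D^2u|$ need not be small. The paper's mechanism is different: it introduces the \emph{nonnegative} harmonic function $w=u-e\cdot(x-\zz)+C\eps R$ (nonnegativity from \Cref{lem:boundbelow}), bounds $\int_B|D^2w|^{\cttg'}$ on each tree ball by $\varrho^{3-2\cttg'}\,w(\boldsymbol P^+(B))^{\cttg'}$ via the flat-Lipschitz estimate \Cref{lem_basic2} (with a Harnack-chain argument through the necks, using \Cref{lem:neckcenters_necks}), and then propagates the pole values down the tree by harmonic measure, $\sum_{B\in\cN^{(\ell)}}w(\boldsymbol P^+(B))\le(K\theta^{-2})^\ell\,w(\boldsymbol P^+(B_R(\zz)))$, so that the excess enters once, through $w(\boldsymbol P^+(B_R(\zz)))\le C\eps R$. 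This is also where the restriction $\cttg'<\tfrac12$ really comes from (convergence of the geometric series $\sum_\ell(\theta^{1-2\cttg'}K^{\cttg'}2^{8(1-\cttg')})^\ell$ after choosing $\theta$ small), not merely from the interpolation bookkeeping; your claim that $\cttg'<\tfrac32$ would suffice for the neck sum is a symptom of the missing mechanism. As written, step (3) does not produce the factor $\bE_\zz(u,4R)^{\cttg'}$, so the proposal is complete only modulo citing \Cref{lem:joaquim} as an external input.
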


The second proposition gives a control on the neck radii by $\varrho_\zz$: 

\begin{prop}
\label{prop:rb-l13}
There exists $C\ge 1$ universal such that, for any $\zz\in \cZ$ and $R \ge  \rb(\zz)$,
\begin{equation}\label{boundneckra1}
    \frac{\rb(\zz')}{R} \leq 
C \varrho_{\zz}(u, 2R)\qquad\text{for all}\ \zz'\in \cZ\cap B_{ 3R/2 }(\zz).
\end{equation}
\end{prop}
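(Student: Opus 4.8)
The plan is to show that, for a neck center $\zz' \in \cZ \cap B_{3R/2}(\zz)$, the quantity $\ccI$ restricted to a ball of size comparable to the neck radius $\rb(\zz')$ is bounded below by a universal positive multiple of $\rb(\zz')^{1/3}$; once this is established, the monotonicity property \eqref{eq:I monotone} together with the scaling identity \eqref{eq:rescaleI} immediately yields the claim. Concretely, I would first reduce (by translating and rescaling via $u_{\zz',r}$, using \eqref{eq:rescaleI}) to the normalized situation where $\rb(\zz') = 1$, so that $\zz'$ is a free boundary point with $\int_{B_1(\zz') \cap \{u>0\}} |D^2 u|^3 = \eta_0^3$ and $|D^2 u| \le C$ in $B_2(\zz') \cap \{u>0\}$ by \Cref{cor:Hess-bd-neck}. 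The heart of the argument is then a lower bound
\[
\ccI\big(u, B_{\rb(\zz')}(\zz')\big) \ge c\, \rb(\zz')^{1/3}
\]
for some universal $c>0$ — this is exactly the content of the "I neck ball" lemma (\Cref{lem:I_neck_ball}) referenced in the overview, so I may invoke it.

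The way I would prove that lower bound (if it were not already cited): on the normalized ball $B_1(\zz')$, both integrands defining $\ccI$ in \eqref{eq:def_I} are nonnegative (this is the Jerison--Savin subsolution property), so it suffices to bound one contribution from below. Since $|D^2 u|^3$ has total mass exactly $\eta_0^3$ on $B_1(\zz') \cap \{u>0\}$ and $|D^2 u|$ is uniformly bounded there, the function $w = F(D^2 u)$ is comparable to $|D^2u|$ and satisfies $\int_{B_1(\zz')\cap\{u>0\}} w^3 \gtrsim \eta_0^3$; hence $\bc = w^{1/3}$ has a definite amount of "mass" in the $L^3$ sense. Using the quantitative regularity available at a neck center (two smooth nearly-flat sheets joined by a neck of radius comparable to $1$, via \Cref{lem:neckcenters_necks} and \Cref{cor_closetoV_disc_reg2}), one controls $\bc$ and its derivatives on $B_1(\zz')$ and shows that the bulk term $\int \bc \Delta \bc$ plus the boundary term $\int \bc(\bc_\nu + H\bc)$ cannot both be small: if they were, $\bc$ would be nearly constant (nearly a stationary configuration of the associated energy), forcing $w$ — hence $D^2u$ — to be small on $B_1(\zz')$, contradicting $\int w^3 \gtrsim \eta_0^3$. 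This gives $\ccI(u,B_1(\zz')) \ge c > 0$ in the normalized variables.

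Finally I would unwind the normalization: by \eqref{eq:rescaleI}, $\ccI(u, B_{\rb(\zz')}(\zz')) = \rb(\zz')^{1/3} \ccI(u_{\zz',\rb(\zz')}, B_1) \ge c\, \rb(\zz')^{1/3}$. Since $\zz' \in B_{3R/2}(\zz)$ and $\rb(\zz') \le C R$ — which follows because, by \Cref{lem:X<ktilde} and \Cref{cor:Hess-bd-neck}-type bounds (or directly from \Cref{lem_aux_rb2} once $R \ge M_\circ \rb(\zz)$, and for $R$ smaller one argues that $\zz'$ cannot be too far in scale from $\zz$) — we get $B_{\rb(\zz')}(\zz') \subset B_{2R}(\zz)$, so by monotonicity \eqref{eq:I monotone},
\[
\ccI(u, B_{2R}(\zz)) \ge \ccI\big(u, B_{\rb(\zz')}(\zz')\big) \ge c\, \rb(\zz')^{1/3}.
\]
Cubing and dividing by $R$ gives $\varrho_\zz(u,2R) = R^{-1}\ccI(u,B_{2R}(\zz))^3 \ge c^3 \rb(\zz')/R$, which is \eqref{boundneckra1} after renaming the constant. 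The main obstacle I anticipate is the lower bound on $\ccI$ at the neck scale: making rigorous that a neck of definite size forces a definite amount of "curvature energy" $\bc^2$ near the neck, and that this energy is genuinely detected by the combination $\int \bc\Delta\bc + \int_{\FB} \bc(\bc_\nu + H\bc)$ rather than cancelling — this requires the careful geometry of neck balls and the precise form of the Jerison--Savin test function, which is presumably why it is isolated as \Cref{lem:I_neck_ball}.
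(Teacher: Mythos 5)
Your overall strategy coincides with the paper's: lower-bound $\ccI$ on a ball of neck-radius scale around $\zz'$ via \Cref{lem:I_neck_ball}, then use the monotonicity \eqref{eq:I monotone} after establishing a containment in $B_{2R}(\zz)$, and unwind the scaling via \eqref{eq:rescaleI}. Invoking \Cref{lem:I_neck_ball} is legitimate (it is stated and proved before the proposition), and your closing remark correctly identifies that this lemma --- resting on \Cref{thm:deltakappa} --- is where the real work lies. Your fallback sketch of it (``near-constancy of $\bc$ forces $D^2u$ to be small'') would not survive scrutiny as written, since a nearly constant $\bc$ need not be small, but you are not actually relying on that sketch.

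The gap is in the endgame. First, a minor point: \Cref{lem:I_neck_ball} bounds $\ccI$ on $B_{2\rb(\zz')}(\zz')$, not on $B_{\rb(\zz')}(\zz')$, and monotonicity goes the wrong way to transfer the bound to the smaller ball; so you must work with $B_{2\rb(\zz')}(\zz')$, which requires $\rb(\zz')\le R/4$ for the containment in $B_{2R}(\zz)$ given $|\zz'-\zz|<3R/2$. More seriously, ``$\rb(\zz')\le CR$'' with a possibly large universal $C$ does \emph{not} imply $B_{\rb(\zz')}(\zz')\subset B_{2R}(\zz)$, and in the regime $\rb(\zz)\le R\le M_\circ\rb(\zz)$ the needed containment can genuinely fail: \Cref{lem_aux_rb2} only yields $\rb(\zz')\le M_\circ\rb(\zz)/4$ there, which may well exceed $R/2$. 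The paper splits into two cases. For $R\ge M_\circ\rb(\zz)$, \Cref{lem_aux_rb2} applied at radius $2R$ gives $\rb(\zz')\le R/4$ and your containment-plus-monotonicity argument goes through verbatim. For $\rb(\zz)\le R\le M_\circ\rb(\zz)$ it abandons the containment route entirely and instead observes that \emph{both} sides of \eqref{boundneckra1} are controlled by universal constants: $\varrho_\zz(u,2R)\ge c>0$ by applying \Cref{lem:I_neck_ball} at $\zz$ itself (where $B_{2\rb(\zz)}(\zz)\subset B_{2R}(\zz)$ does hold), and $\rb(\zz')/R\le M_\circ/4$ by \Cref{lem_aux_rb2}, so the inequality is trivial in that range. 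Your phrase ``for $R$ smaller one argues that $\zz'$ cannot be too far in scale from $\zz$'' points in the right direction for the second of these bounds but does not supply the needed lower bound on $\varrho_\zz(u,2R)$ in that regime.
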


\subsection{Hessian estimates in $L^{\cttg'}$: Proof of \Cref{prop:E_to_rho}}

 We start by proving some estimates for positive harmonic functions in half-balls or flat-Lipschitz domains. The next two results follow from standard arguments, and we present their proofs in \Cref{appendixD} for the reader's convenience. 

\begin{lem}\label{lem_basic1}
Let $r > 0$, $n \ge 2$, and $w:B_{2r}\cap \{x_n > 0\}\to (0, \infty)$ a positive harmonic function. Then, denoting $x = (x', x_n)\in \R^{n-1}\times \R$, we have
\[
\int_{\{|x'|< 3r/2\}} w(x', t) \,dx' \le C\,  w( re_n) r^{n-1}\qquad\text{for all}\quad t\in (0, r),
\] 
where $C=C(n)$ is a dimensional constant.
\end{lem}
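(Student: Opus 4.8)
The statement is a standard Harnack-type bound for positive harmonic functions in a half-ball, comparing the integral of $w$ over a hyperplane slice $\{x_n = t\}$ (intersected with a slightly larger cross-section) to the value $w(re_n)$ at a single interior point. I would prove it via a reduction to boundary Harnack / Carleson-estimate ideas, but since the domain is a flat half-ball the argument can be made quite elementary. First I would rescale so that $r = 1$; we then need $\int_{\{|x'| < 3/2\}} w(x', t)\,dx' \le C\, w(e_n)$ for all $t \in (0,1)$, where $w > 0$ is harmonic in $B_2 \cap \{x_n > 0\}$. Note that this is a \emph{one-sided} bound — there is no boundary condition imposed on $w$ along $\{x_n = 0\}$, so $w$ could blow up near the flat portion of the boundary, which is why the integral is only over $|x'| < 3/2$ (safely inside $B_2$) rather than all the way out.

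The cleanest route is to use the mean value property cleverly. For a fixed $t \in (0,1)$ and $x' \in \mathbb{R}^{n-1}$ with $|x'| < 3/2$, the point $(x', t)$ lies in $B_2 \cap \{x_n > 0\}$, but the ball of radius $t$ around it may touch $\{x_n = 0\}$, so I cannot directly apply the mean value property on a ball reaching to the boundary. Instead, I would integrate: $\int_{\{|x'| < 3/2\}} w(x', t)\,dx'$ is the integral of $w$ over a disk in the slice $\{x_n = t\}$. Reflect the problem to a genuine interior estimate by the following standard trick — cover the slice disk $\{(x', t) : |x'| < 3/2\}$ by finitely many balls $B_{\delta}(z_i)$ with $z_i$ at height $x_n = t$ or slightly above, of radius $\delta = \delta(n)$ small, contained in $B_2 \cap \{x_n > 0\}$... but this still requires controlling $w$ near the flat boundary when $t$ is small.

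So the real content, and the step I expect to be the main obstacle, is handling $t$ close to $0$. Here one uses the classical \emph{Carleson estimate} (boundary Harnack inequality for non-negative harmonic functions vanishing nowhere prescribed — actually the correct tool here is: a non-negative harmonic function in $B_2^+$ satisfies, for points $(x', t)$ with $|x'| \le 3/2$, the bound $w(x', t) \le C\, w(x', 1)$ \emph{provided} $w$ is controlled, which fails without a vanishing hypothesis). The resolution is that we do \emph{not} need a pointwise bound: we need the bound on the \emph{slice integral}, and this follows from the sub-mean-value property of the subharmonic extension, or more directly from the following. Consider the function $h(x) := \int_{\{|x'| < 3/2\}} w(x', x_n)\,dx'$ for $x_n \in (0,1)$; since $w$ is harmonic and the $x'$-integration commutes with $\partial_{x_n}^2$ while $\int \Delta_{x'} w\,dx' = \int_{\{|x'| = 3/2\}} \partial_\nu w$, the function $h$ is convex in $x_n$ modulo a controlled boundary flux term; combined with a uniform $L^1$-trace bound at $x_n = 1$ (obtained from interior Harnack: $\int_{\{|x'| < 3/2\}} w(x', 1)\,dx' \le C w(e_n)$ by covering the slice at height $1$ by $O(1)$ interior balls and using Harnack chains) and the sign of $w$, one concludes $h(t) \le C\, h(1) \le C\, w(e_n)$ for all $t \in (0,1)$. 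I would phrase the flux term via the representation of $w$ by the Poisson kernel of $B_2^+$ or simply cite the boundary Harnack machinery; the appendix reference (\Cref{appendixD}) suggests the authors intend a short self-contained argument along exactly these lines. The delicate point to get right is the boundary flux on $\{|x'| = 3/2\}$, which is where the strict inequality $3/2 < 2$ is used: there $w$ is uniformly comparable to $w(e_n)$ by interior Harnack (the cylinder $\{|x'| = 3/2, 0 < x_n < 1\}$ stays a definite distance from $\partial B_2$), so the flux contributes at most $C\,w(e_n)$ and the convexity-plus-boundary-data argument closes.
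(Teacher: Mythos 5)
There is a genuine gap in your main argument, located exactly where you yourself flag the delicate point: the control of the lateral flux on the cylinder $\{|x'|=3/2\}$. You claim that on $\{|x'|=3/2,\ 0<x_n<1\}$ the function $w$ is uniformly comparable to $w(e_n)$ by interior Harnack because this cylinder "stays a definite distance from $\partial B_2$". It does stay away from the spherical boundary, but it does \emph{not} stay away from the flat boundary $\{x_n=0\}$, which is also part of the boundary of the domain of harmonicity. As $x_n\to 0$ along the cylinder, Harnack chains from $(x',x_n)$ to $e_n$ degenerate, giving only $w(x',x_n)\le C x_n^{-\alpha}w(e_n)$, and interior gradient estimates give $|\nabla w(x',x_n)|\le C x_n^{-1-\alpha}w(e_n)$, which is not integrable in $x_n$. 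Indeed, taking $w$ to be the half-space Poisson kernel with pole at a point $(y',0)$ with $|y'|=3/2$ shows that $\int_{\{|x'|=3/2,\,0<x_n<1\}}|\nabla w|\,d\cH^{n-1}=+\infty$, so the flux term $h''(x_n)=-\int_{\{|x'|=3/2\}}\partial_{\nu'}w$ is genuinely uncontrolled and the "convexity modulo flux" scheme does not close. (Your other ingredients are fine: the $L^1$ bound on the slice at height $1$ by interior Harnack, and the fact that convexity plus positivity plus a bound at one interior height would yield the conclusion, since the slice stays inside $B_2$ up to height $\sqrt{7}/2>1$.)

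The alternative you mention only in passing — the Poisson kernel representation — is in fact the paper's entire proof, and it sidesteps the flux issue completely. One represents $w$ on the \emph{raised} half-ball $B_2\cap\{x_n>t\}$: positivity of $w$ gives
\[
w\bigl(\tfrac54 e_n\bigr)\ \ge\ \int_{\{|y'|\le 3/2\}} P_t\bigl(\tfrac54 e_n,(y',t)\bigr)\,w(y',t)\,dy',
\]
and the key (and only nontrivial) point is the uniform lower bound $P_t(\tfrac54 e_n,(y',t))\ge c_n>0$ for all $t\in[0,1]$ and $|y'|\le 3/2$, obtained by comparison with the half-space Poisson kernel; one concludes with $w(\tfrac54 e_n)\le C\,w(e_n)$ by interior Harnack. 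If you want to rescue your writeup, you should replace the flux argument by this representation inequality and prove the uniform kernel lower bound, rather than treating the Poisson kernel as an optional rephrasing of the flux computation.
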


\begin{lem}\label{lem_basic2}
Let $n\ge 2$ and $w:B_{2r}\cap D \to (0, \infty)$ a positive harmonic function, where $D = \{x_n > \varphi(x')\}$ for some $\varphi:B_{2r}'\subset \R^{n-1}\to \R$ with 
\[ 
|\varphi|  + r|\nabla \varphi| \le c_\circ r.
\]
Let $\gamma' \in (0, \frac{1}{2})$. Then, for $c_\circ$ small enough depending only on $n$ and $\gamma'$, we have
\begin{equation}
\label{eq:twoeqs}
\quad r^{2\gamma'-n}\int_{B_{r}\cap D} |D^2 w|^{\gamma'} \,dx
\le C_{n,\gamma'} \, w(re_n)^{\gamma'},  
\end{equation}
for  some  $C_{n,\gamma'}$ depending only on $n$ and $\gamma'$. 
\end{lem}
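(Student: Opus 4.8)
\textbf{Proof plan for Lemma~\ref{lem_basic2}.} The plan is to reduce the interior Hessian estimate to a boundary Harnack-type comparison combined with the classical $L^p$-theory for harmonic functions in Lipschitz domains, and then use a dyadic decomposition near the flat boundary to absorb the singular behaviour of $D^2w$ there. First I would flatten the boundary: set $\Psi(x',x_n) := (x', x_n - \varphi(x'))$, so that $D := \{x_n > \varphi(x')\}$ is mapped to $\{y_n > 0\}$, and $\tilde w := w \circ \Psi^{-1}$ solves a uniformly elliptic divergence-form equation $\mathrm{div}(A(y)\nabla \tilde w) = 0$ with coefficients $A$ that are $C^{0,1}$-close to the identity (the closeness controlled by $c_\circ$, since $\|\nabla \varphi\|_\infty \le c_\circ$). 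Because $\gamma' < \tfrac12 < 1$, we do not need $L^2$ (or even $L^1$) control of second derivatives up to the boundary; it suffices to control $D^2 \tilde w$ on a Whitney decomposition of $\{y_n > 0\} \cap B_r$ into cubes $Q_j$ with $\mathrm{side}(Q_j) \sim \mathrm{dist}(Q_j, \{y_n=0\}) =: d_j$.

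The key step is the following interior gradient/Hessian estimate on each Whitney cube, combined with the boundary Harnack inequality. On a cube $Q_j$ at height $d_j$, interior estimates for the elliptic equation give $\|D^2 \tilde w\|_{L^\infty(Q_j)} \le C d_j^{-2} \sup_{2Q_j} \tilde w$, and by the boundary Harnack principle in the half-space (applied to the positive solution $\tilde w$ vanishing nowhere, compared with the solution $y_n$), $\sup_{2Q_j}\tilde w \le C\, \tilde w(y_j^\ast)$ where $y_j^\ast$ is a corkscrew point at unit scale; a Harnack chain then bounds $\tilde w(y_j^\ast) \le C\, \tilde w(re_n)$ with a constant independent of $j$ (here one uses $r^{-1}$-scaling and that the chain length is universally bounded once we are at scale comparable to $r$). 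Actually one must be slightly more careful: for cubes deep inside at height $d_j \sim r$ the bound $\tilde w \lesssim \tilde w(re_n)$ is immediate by Harnack, while for cubes with $d_j \ll r$ one uses $\sup_{2Q_j} \tilde w \lesssim (d_j/r)\, \tilde w(re_n)$ — this is the crucial gain coming from $\tilde w$ vanishing on $\{y_n = 0\}$ (boundary Harnack comparison with the linear profile $y_n$). Consequently $\|D^2\tilde w\|_{L^\infty(Q_j)} \le C d_j^{-2}(d_j/r)\,\tilde w(re_n) = C d_j^{-1} r^{-1}\tilde w(re_n)$.

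Then I would sum: grouping Whitney cubes by generation, there are $\sim (r/d)^{n-1}$ cubes of side $d$ (for each dyadic $d \le r$), each of measure $d^n$, so
\[
\int_{B_r \cap D}|D^2 w|^{\gamma'}\,dx \le C\sum_{d = 2^{-k}r} (r/d)^{n-1} d^n \left(d^{-1}r^{-1}w(re_n)\right)^{\gamma'} = C\, r^{n-1-\gamma'} w(re_n)^{\gamma'}\sum_{d=2^{-k}r} d^{1-\gamma'}.
\]
Since $1 - \gamma' > 0$ the geometric series converges and is bounded by $C r^{1-\gamma'}$, yielding $\int_{B_r\cap D}|D^2 w|^{\gamma'} \le C r^{n-2\gamma'}w(re_n)^{\gamma'}$, which is exactly \eqref{eq:twoeqs}. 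One must also handle the change of variables $\Psi$: since $\Psi$ and $\Psi^{-1}$ are bi-Lipschitz with constants $1 + O(c_\circ)$ and the Jacobian is bounded above and below, $\int |D^2 w|^{\gamma'}$ over $B_r \cap D$ is comparable to $\int |D^2 \tilde w|^{\gamma'}$ over the corresponding flattened region (modulo lower-order terms involving $\nabla \tilde w \cdot D^2\varphi$, which are handled the same way using $r|D^2\varphi| \le$ — wait, we only assume $r|\nabla\varphi| \le c_\circ r$, so $D^2\varphi$ need not be controlled; hence it is cleaner to keep the equation in divergence form and estimate $D^2 \tilde w$ directly rather than transplanting $D^2 w$, then relate pointwise $|D^2 w| \le C(|D^2 \tilde w| + |\nabla \tilde w|)$ only where $\varphi$ is twice differentiable — alternatively, mollify $\varphi$ at scale $d_j$ on each cube, which is the standard device).

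\textbf{Main obstacle.} The delicate point is obtaining the height-linear decay $\sup_{2Q_j}\tilde w \le C(d_j/r)\tilde w(re_n)$ uniformly, i.e.\ the quantitative boundary Harnack comparison of $\tilde w$ with the linear profile in the flattened (variable-coefficient, merely bounded-measurable-after-flattening, but here Lipschitz) half-space; this is what makes the exponent $1-\gamma'$ appear and the sum converge. Everything else — Whitney decomposition, interior estimates, counting cubes — is routine. I expect this is why the authors assume $c_\circ$ small (so the flattened operator is a small Lipschitz perturbation of the Laplacian, making the boundary Harnack constant universal) and why they restrict to $\gamma' < \tfrac12$ is not actually needed for convergence of the sum (any $\gamma' < 1$ works for that) but rather is imposed for compatibility with the later $L^{\gamma'}$-Hessian estimates; within this lemma itself the only role of $\gamma' < \tfrac12$ beyond $\gamma'<1$ is cosmetic.
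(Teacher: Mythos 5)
Your argument breaks at its key step. The lemma imposes \emph{no boundary condition} on $w$: it is only a positive harmonic function in $B_{2r}\cap D$, not one vanishing on $\partial D$. The ``height-linear decay'' $\sup_{2Q_j}\tilde w\le C(d_j/r)\,\tilde w(re_n)$ that you extract from boundary Harnack comparison with the profile $y_n$ is false without zero boundary data (already $w\equiv 1$ violates it), and in the paper's application (\Cref{lem:joaquim}) the function fed into this lemma is $w=u-e\cdot(x-\zz)+C\eps R$, which does not vanish on the free boundary, so the hypothesis cannot be strengthened. Moreover, your remark that $\gamma'<\tfrac12$ is ``cosmetic'' and that any $\gamma'<1$ would do is a symptom of the same error: the threshold $\tfrac12$ is sharp for the statement as given. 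Indeed, with $\varphi\equiv 0$ and $r=1$, the functions $w_N(x)=1+\tfrac12 e^{-Nx_n}\sin(Nx_1)$ are positive and harmonic with $w_N(e_n)=1$, yet
\[
\int_{B_1\cap\{x_n>0\}}|D^2w_N|^{\gamma'}\,dx\;\gtrsim\; N^{2\gamma'}\int_0^{1}e^{-\gamma' Nt}\,dt\;\sim\; N^{2\gamma'-1}\;\longrightarrow\;\infty
\]
for every $\gamma'>\tfrac12$. Any proof valid for all $\gamma'<1$ therefore proves something false.

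The paper's proof never uses decay of $w$ toward the boundary; it only uses that the horizontal slice integrals of a positive harmonic function are controlled, $\int_{\{|x'|<3r/2\}}w(x',t)\,dx'\le C\,w(re_n)r^{n-1}$ (\Cref{lem_basic1}), which holds with no boundary condition. Concretely, on the dyadic shells $S_k=\{2^{-k}<\dist(\cdot,D^c)\le 2^{-k+1}\}$ one combines the interior estimate $|D^2w|\lesssim d^{-2}\sup w$ with Jensen's inequality (concavity of $t\mapsto t^{\gamma'}$) to get $\int_{S_k}|D^2w|^{\gamma'}\lesssim 2^{-k(1-3\gamma')}\bigl(\int_{S_{k-1}\cup S_k\cup S_{k+1}}w\,dx\bigr)^{\gamma'}$, and then the slice estimate (iterated along the Lipschitz graph at scales $\tau^i$) gives $\int_{S_k}w\lesssim 2^{-k(1-C/|\log\tau|)}w(re_n)$. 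The resulting series $\sum_k 2^{-k(1-2\gamma'-C\gamma'/|\log\tau|)}$ converges exactly when $\gamma'<\tfrac12$, after choosing $\tau$ small. Your Whitney decomposition, interior estimates, and cube counting are all fine; what is missing is a correct substitute for the mass estimate on shells, and that substitute is precisely what forces $\gamma'<\tfrac12$.
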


With these two preliminary results at hand, we now focus on a series of estimates for our solution $u:\R^3\to \R$ to Bernoulli.
First, we prove that $L^1$-closeness to a vee implies $L^\infty$-closeness from below: 
\begin{lem}
\label{lem:boundbelow}
    Let  $y\in \R^3$, $e\in \mathbb{S}^2$, and $R> 0$. Then, the following implication holds for all $\eps\in (0,1)$:
    \[
     \frac{1}{R}\fint_{\{x\in B_R(y)\ :\ e\cdot(x-y)>R/8\}} (u(x)- e\cdot(x-y))_-\,dx\le \eps\qquad\implies\qquad   u(x) + C\eps R \ge e\cdot(x-y) \quad \forall \,x\in B_{3R/4}(y), 
    \]
    where $(t)_-= \max(-t,0)$ denotes the negative part of $t\in \R$ and $C$ is universal.

    In particular:
    \[
    \frac{1}{R}\fint_{B_R(y)} |u- V_{y, e}|\,dx \le \eps\qquad\implies\qquad  u+C\eps R \ge V_{y, e}\qquad\text{in}\quad B_{3R/4}(y).
    \]
\end{lem}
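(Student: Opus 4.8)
The plan is to prove the main implication, and then to deduce the ``in particular'' statement from it by a symmetry trick. After a translation, a rotation, and the rescaling $u\mapsto u(R\,\cdot)/R$ (under which $u$ is still a global classical stable solution with $|\nabla u|\le 1$), we may assume $y=0$, $e=e_3$, and $R=1$. Set $w:=(x_3-u)_+=(u-x_3)_-$ and $v:=x_3-u$, so $w=v_+$; the hypothesis then reads $\int_{\{x_3>1/8\}\cap B_1}w\,dx\le|B_1|\varepsilon$, and the goal is $w\le C\varepsilon$ on $B_{3/4}$. We may moreover assume $\varepsilon$ smaller than a universal $\varepsilon_1$, since otherwise $C\varepsilon R\ge 3R/4>e\cdot(x-y)$ for $C$ large and the conclusion is trivial. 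The only features of $u$ used below are: $u\ge0$, $u$ is harmonic in $\{u>0\}$, and $u$ is $1$-Lipschitz (by \Cref{lem:Lipbound}).

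\emph{Step 1 (no zero set high up).} If $u(z)=0$ with $z_3>1/4$ and $z\in B_{15/16}$, then $1$-Lipschitzness gives $u\le 1/16$ on $B_{1/16}(z)\subset\{x_3>3/16\}\cap B_1$, hence $w>1/8$ throughout $B_{1/16}(z)$, forcing $\int_{\{x_3>1/8\}\cap B_1}w\ge\tfrac18|B_{1/16}|>0$; this contradicts the hypothesis once $\varepsilon<\varepsilon_1$ for a universal $\varepsilon_1$. Thus $u>0$ on $\{x_3>1/4\}\cap B_{15/16}$.

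\emph{Step 2 (smallness of $w$ in a ``good region'').} The key point is the pointwise identity $\max(u,x_3)=u+w$ together with the fact that on any ball $B_\rho(x_0)\subset\{u>0\}$ the function $u$ is harmonic, so $\max(u,x_3)$ is subharmonic there; combining the sub-mean-value inequality for $\max(u,x_3)$ with the mean-value identity for $u$ yields $w(x_0)\le\fint_{B_\rho(x_0)}w$ whenever $u>0$ on $B_\rho(x_0)$. Taking $\rho=1/32$ and $x_0\in\{x_3>5/16\}\cap B_{7/8}$, Step 1 gives $u>0$ on $B_\rho(x_0)\subset\{x_3>1/4\}\cap B_{15/16}$, and $B_\rho(x_0)\subset\{x_3>1/8\}\cap B_1$, so $w(x_0)\le\fint_{B_\rho(x_0)}w\le|B_1|\varepsilon/|B_{1/32}|=:C_\star\varepsilon$.

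\emph{Step 3 (propagation downward) and conclusion.} Let $z\in B_{3/4}$. If $z_3\le C_\star\varepsilon$ then $w(z)\le(z_3)_+\le C_\star\varepsilon$. Otherwise set $\hat z:=(z',\tfrac13)$; a direct check gives $\hat z\in\{x_3>5/16\}\cap B_{7/8}$, so $w(\hat z)\le C_\star\varepsilon$ by Step 2. Since $\hat z$ lies directly above $z$ and $u$ is $1$-Lipschitz, $v(\hat z)-v(z)=(\hat z_3-z_3)+(u(z)-u(\hat z))\ge(\hat z_3-z_3)-|\hat z-z|=0$, whence $w(z)=v(z)_+\le v(\hat z)_+=w(\hat z)\le C_\star\varepsilon$. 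This proves $w\le C_\star\varepsilon$ on $B_{3/4}$, i.e. the main implication with universal $C=C_\star$. For the ``in particular'' statement, note that on $\{\pm e\cdot(x-y)>R/8\}$ one has $V_{y,e}=\pm e\cdot(x-y)$ and $(u\mp e\cdot(x-y))_-\le|u-V_{y,e}|$, and that this set has measure $\ge c|B_R|$; hence $\tfrac1R\fint_{B_R(y)}|u-V_{y,e}|\le\varepsilon$ implies the main hypothesis for both $e$ and $-e$ (with $\varepsilon/c$ in place of $\varepsilon$), and applying the main implication to each sign yields $u+C'\varepsilon R\ge|e\cdot(x-y)|=V_{y,e}$ on $B_{3R/4}(y)$.

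The crux is Step 2: a direct maximum-principle argument for $v=x_3-u$ on $\{u>0\}\cap B_1$ fails because we have no control of $v$ on the lateral boundary $\partial B_1$, and it is precisely the identity $\max(u,x_3)=u+w$ plus subharmonicity of the maximum that furnishes the sub-mean-value inequality for $w$ without touching that boundary. Once this is available, Step 1 secures positivity of $u$ where the sub-mean-value inequality is applied, and the downward propagation in Step 3 is essentially free, being nothing more than the Lipschitz bound together with the elementary fact that $v$ is nondecreasing along upward vertical segments up to a self-cancelling Lipschitz error.
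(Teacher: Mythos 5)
Your argument is correct and follows essentially the same route as the paper's proof: establish positivity of $u$ in the upper region $\{e\cdot(x-y)\gtrsim R\}$, use there the subharmonicity of $(u-e\cdot(x-y))_-$ (your identity $\max(u,x_3)=u+w$ is just a repackaging of the fact that $v_-$ is subharmonic where $v=u-x_3$ is harmonic) together with the $L^1$ hypothesis to get a pointwise bound, and then propagate it downward using $\partial_e\big(u-e\cdot(x-y)\big)\le 0$, which is exactly the paper's final step. The only substantive variation is that you replace the paper's $L^1$--Lipschitz interpolation (which yields $\|v_-\|_{L^\infty}\le C\eps^{1/4}R$) by the more hands-on observation that a zero of $u$ high up forces a definite amount of $L^1$ mass of $w$; both serve only to guarantee $u>0$ in the upper region, so this is a cosmetic difference. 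One small omission in Step 3: your chain $v(\hat z)-v(z)\ge(\hat z_3-z_3)-|\hat z-z|=0$ uses $\hat z_3\ge z_3$, which fails when $z_3>1/3$; but in that case $z$ itself lies in $\{x_3>5/16\}\cap B_{7/8}$ and Step 2 applies directly, so the fix is immediate.
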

\begin{proof}
    Throughout the proof, we will assume without loss of generality that $\eps>0$ is sufficiently small, since otherwise the conclusion is trivially true (for some appropriately large $C$). 
    Defining 
    \[
        v(x) := u(x) - e\cdot(x-y),
    \]
   our goal is to show that
    \begin{equation}
        \label{eq:boundfrombelow}
        v\ge - C \eps R\qquad\text{in}\quad B_{7R/8}(y), 
    \end{equation}
    with $C$ universal. 
    
    Since $|\nabla u|\le 1$ we have $|\nabla v_-|\le 2$, so by our assumption and $L^1$-${\rm Lip}$ interpolation\footnote{Here we use a rescaled version of the classical interpolation inequality
\[
\|w\|^{n+1}_{L^\infty(B_1)}\le C(n) \|w\|_{L^1(B_1)} \|\nabla w\|^n_{L^\infty( B_1)}, 
\]
which holds true for all Lipschitz functions $w$ defined in the unit ball of $\R^n$; cf. \Cref{lem:L1_Lip}.} we have
$$
  \|v_-\|_{L^\infty(B_R(y))}\le C R \eps^{1/4} \le R/8,  
$$
provided $\eps$ is small enough. In particular, $u > 0$ in ${D_{R, y}:=B_R(y)\cap \{  x: e\cdot(x-y)>R/8\}}$. 
Thus $v$ is harmonic and $v_-$ subharmonic in $D_{R, y}$. 
Hence, since by assumption
\[
\fint_{D_{R, y}} v_- \,dx \le C \eps R, 
\]
the $L^1$ to $L^\infty$ estimates for subharmonic functions give
$$v_-\le C \eps R\qquad \text{in}\quad B_{7R/8} (y) \cap \{x : (x-y)\cdot e  > R/6\}.$$
Since 
\[
\partial_e v(x) = \partial_e u(x) - 1 \le |\nabla u(x)|-1\le 0\qquad\text{in }\, \R^3,
\]
(recall that $|\nabla u|\le 1$), \eqref{eq:boundfrombelow} follows. 
\end{proof}

One of the cornerstones of this section is the following result, which strongly relies on the geometric information about $\{u > 0\}$ provided by the tree structure constructed in \Cref{prop:geomtree}. :
\begin{lem}
\label{lem:joaquim}
Let $n=3$. Given $\cttg'\in (0, \tfrac 12)$, there exist $M= M(\cttg')\ge 1$ and $C_{\cttg'}\ge 1$ (both large constants depending only on $\cttg'$) such that the following implication holds for every $\zz\in \cZ$,  $e\in \mathbb S^2$, and $R\ge M \rb(\zz)$:
\begin{equation}
\label{eq:joaq}
\frac{1}{R}
\fint_{B_{2R}(\zz)}
	\big|u - V_{\zz,e}\big| \,dx
\leq \eps\qquad\implies\qquad R^{\cttg'-3}\int_{B_{R}(\zz)\cap \{u>0\}} |D^2 u |^{\cttg'} \,dx 
\le C_{\cttg'} \eps^{\cttg'}.
\end{equation} 
\end{lem}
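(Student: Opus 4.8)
\textbf{Proof plan for Lemma~\ref{lem:joaquim}.}
The plan is to control the $L^{\gamma'}$-norm of $D^2u$ by splitting $B_R(\zz)\cap\{u>0\}$ according to the ball tree $(\mathcal N,p)$ of \Cref{prop:geomtree} applied at scale, say, $4R$ (so that $B_R(\zz)$ is comfortably inside the root). Every point of $\{u>0\}\cap B_R(\zz)$ lies either in some $B^{(\pm)}$ for a branching or regular terminal ball (by \Cref{lem:Omegapmcovers}), or in a neck ball $B\in\mathcal T^{\rm neck}$; on the regular regions the gradient is bounded away from zero (\Cref{lem:grad_lower_bound}), so $u$ is a positive harmonic function on a flat-Lipschitz domain and \Cref{lem_basic2} applies, while on the neck balls we use the crude universal Hessian bound $|D^2u|\le C/\rb(\zz')\le C/\varrho$ from \Cref{cor:Hess-bd-neck}/\Cref{lem_aux_rb1} together with the fact that neck balls are non-regular terminal nodes, hence $\rb$-comparable to their own radius and of small total volume.

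\medskip
First I would fix the polarity $e_0=\boldsymbol e(B_{4R}(\zz))$ and, \emph{after possibly flipping sign}, assume $e_0\cdot e\ge 0$; combining the hypothesis of \eqref{eq:joaq} with \Cref{lem:X-blowdown} (which forces $\bE_\zz$ small once $M$ is large) and \Cref{treeprelim1}(a) one gets $\|u-V_{\zz,e_0}\|_{L^\infty(B_{2R}(\zz))}\lesssim \eps R$, so in fact $|e-e_0|\lesssim\eps$ and we may work with $e_0$ throughout. Next, for each node $B=B_\varrho(y)\in\mathcal I\cup\mathcal T^{\rm reg}$ I would apply \Cref{lem_basic2} to $w:=u$ on $B^{(\pm,3/2)}\cap B_{3\varrho/2}(y)$, which is a graph domain over the hyperplane $\{e_B\cdot(x-y)=0\}$ with small $C^{1,1}$-norm by \Cref{prop:geomtree}(2) and \Cref{lemregball}; this yields
\[
\varrho^{2\gamma'-3}\int_{B^{(\pm)}}|D^2u|^{\gamma'}\,dx\;\le\; C_{\gamma'}\,u(y\pm c\varrho\, e_B)^{\gamma'}.
\]
The right-hand side is then bounded by $C(\mathrm{dist}$ to $\FB)^{\gamma'}\lesssim (\,|e_B\cdot(y-\zz)|+\,$(vertical deviation)$\,)^{\gamma'}$, and the $L^1$-flatness hypothesis (via \Cref{lem:boundbelow} and the obvious upper bound $u\le V_{\zz,e_0}+C\eps R$) shows that on $B_R(\zz)$ one has $u(x)\lesssim |e_0\cdot(x-\zz)|+\eps R$ pointwise; plugging a point at height $\sim\varrho$ above the relevant sheet, the value is $\lesssim \varrho+\eps R$. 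Summing $\varrho^{3-2\gamma'}\cdot(\varrho+\eps R)^{\gamma'}$ over all tree nodes — here using that the nodes of a fixed generation have pairwise essentially-disjoint cores and comparable radii, so $\sum_{B\in\mathcal N^{(\ell)}}\varrho_\ell^3\lesssim R^3$, i.e.\ the generation count is $\lesssim \theta^{-3\ell}$ (wait — rather, $\lesssim$ the volume bound $R^3/\varrho_\ell^3$ up to the slab geometry, giving $\sum_B\varrho_\ell^{3-2\gamma'}\lesssim R^{3}\varrho_\ell^{-2\gamma'}$) — and summing the geometric-type series in $\ell$ (convergent because $\gamma'<1/2<3/2$ makes $\varrho_\ell^{-2\gamma'}$... ) is the delicate bookkeeping step: one must check the series $\sum_\ell\theta^{-2\gamma'\ell}\cdot$(stuff) actually converges, which is why the slab geometry (branching only inside an $O(\theta^2\varrho)$-slab, giving an extra $\theta^2$ per generation in the volume) is essential.

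\medskip
\textbf{The main obstacle} is precisely this summation over generations: a naive bound loses a factor at each branching and the series diverges, so one must genuinely exploit that at a branching node $B_\varrho(y)$ the descendants all lie in the thin slab $\{|e_B\cdot(x-y)|\le\theta^4\varrho\}$ (\Cref{prop:geomtree}(4)), which both limits the number of descendants to $O(\theta^{-2})$ \emph{and} — more importantly — means the part of $\{u>0\}$ inside the slab has volume $\lesssim\theta^2\varrho^3$, so that the "bad" contribution at generation $\ell$ carries a gain $\theta^{2(\ell)}$ that beats the $\theta^{-2\gamma'\ell}$ loss provided $\gamma'<1$ (indeed $\gamma'<1/2$ leaves plenty of room). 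One then splits the integral over each node into the part outside the children's slab (estimated by \Cref{lem_basic2} with a definitive, non-recursive bound $\lesssim\varrho^{3-2\gamma'}(\varrho+\eps R)^{\gamma'}$, summed with the favorable volume weight) and the part inside the slab (passed down to the next generation), so only the terminal contributions accumulate and they form a convergent geometric series. For the neck balls, \Cref{cor:Hess-bd-neck} gives $|D^2u|^{\gamma'}\lesssim\varrho^{-\gamma'}$ on a set of measure $\lesssim\varrho^3$, contributing $\lesssim\varrho^{3-\gamma'}$, and since neck balls are terminal these also sum to a convergent series dominated by $R^{3-\gamma'}\lesssim R^{3-\gamma'}\eps^{\gamma'}\cdot\eps^{-\gamma'}$ — here one must additionally absorb the $\eps^{-\gamma'}$, which works because terminal balls have $\varrho\gtrsim\rmin$ is \emph{not} enough; instead one notes that on a neck ball the nondegeneracy (\Cref{lem:neckcenters_necks}, \Cref{rem:nondeg}) combined with the $L^1$-flatness forces $\varrho\lesssim\eps R$, supplying the missing power of $\eps$. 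Collecting the regular-region and neck contributions and dividing by $R^{3-\gamma'}$ gives $R^{\gamma'-3}\int_{B_R(\zz)\cap\{u>0\}}|D^2u|^{\gamma'}\le C_{\gamma'}\eps^{\gamma'}$, as claimed; the constants $M(\gamma')$ and $C_{\gamma'}$ depend on $\gamma'$ only through \Cref{lem_basic2} and the requirement (from \Cref{lem:X-blowdown}) that the blow-down error be small enough to trigger the tree construction and make all the slab-smallness estimates hold.
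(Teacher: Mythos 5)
Your skeleton — decompose $\{u>0\}\cap B_R(\zz)$ via the ball tree of \Cref{prop:geomtree}, apply \Cref{lem_basic2} on the regular pieces, and exploit the slab geometry plus $\cttg'<\tfrac12$ to sum the generations — is the same as the paper's. But there is a fatal gap in how you extract the factor $\eps^{\cttg'}$. You apply \Cref{lem_basic2} to $u$ itself and bound the pole value by $\varrho+\eps R$. Since $D^2u$ is unchanged by subtracting a linear function but $u(\text{pole})\approx\varrho/2$ carries no smallness, the term $\varrho_\ell^{3-2\cttg'}\cdot\varrho_\ell^{\cttg'}$ summed over the tree produces $O(R^{3-\cttg'})$ with \emph{no} power of $\eps$, so your final bound degenerates to $O(1)$ rather than $O(\eps^{\cttg'})$. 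The paper instead applies \Cref{lem_basic2} to the non-negative harmonic function $w:=u-e\cdot(x-\zz)+C\eps R$ (non-negative by \Cref{lem:boundbelow}, and $D^2w=D^2u$). One then faces the genuine difficulty that $w$ at the pole of a deep node cannot be bounded pointwise from the $L^1$ hypothesis (interpolation only yields $\|u-V_{\zz,e}\|_{L^\infty}\lesssim\eps^{1/4}R$). The missing idea is the generation-wise mass-propagation inequality
\[
\sum_{B\in\cN^{(\ell)}}w\big(\boldsymbol P^+(B)\big)\le (K\theta^{-2})^{\ell}\,w\big(\boldsymbol P^+(B_R(\zz))\big)\le (K\theta^{-2})^{\ell}\,C\eps R,
\]
proved with \Cref{lem_basic1} and Harnack, followed by Jensen's inequality (concavity of $t\mapsto t^{\cttg'}$) to get $\sum_B w(\boldsymbol P^+(B))^{\cttg'}\le |\cN^{(\ell)}|^{1-\cttg'}(K\theta^{-2})^{\cttg'\ell}(C\eps R)^{\cttg'}$; combined with $\varrho_\ell^{3-2\cttg'}=\theta^{(3-2\cttg')\ell}R^{3-2\cttg'}$ this gives a geometric series with ratio $\theta^{1-2\cttg'}\cdot K^{\cttg'}2^{8(1-\cttg')}$, convergent for small $\theta$ precisely because $\cttg'<\tfrac12$. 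This averaged (rather than pointwise) control of the pole values is where $\eps^{\cttg'}$ comes from, and it is absent from your argument.

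The neck-ball treatment is also wrong. Your claim that the $L^1$-flatness forces a neck ball radius $\varrho\lesssim\eps R$ is circular: a bound of that type on neck radii in terms of the excess is exactly what Propositions \ref{prop:rb-l13} and \ref{prop:E_to_rho} — which depend on this lemma — are designed to establish, and it is not available here. The paper handles $B\in\cT^{\rm neck}$ by the same pole-value estimate as regular nodes: it covers the neck region by finitely many flat-Lipschitz pieces (each admitting \Cref{lem_basic2}) plus interior balls, and then uses \Cref{lem:neckcenters_necks} and \Cref{lem_aux_rb1} to build a Harnack chain through the neck connecting every reference point back to $\boldsymbol P^+(B)$, so that the neck contribution is again $\le C_{\cttg'}(\theta^\ell R)^{3-2\cttg'}w(\boldsymbol P^+(B))^{\cttg'}$ and enters the same convergent sum. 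The crude bound $|D^2u|\le C/\varrho$ on necks, which you propose, cannot work without the unavailable smallness of $\varrho$.
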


\begin{proof} 
Note that, by \Cref{lem:Stern_Zum} with $p=\cttg'$, 
\[
R^{\cttg'-3}\int_{B_{R}(\zz)\cap \{u>0\}} |D^2 u |^{\cttg'} \,dx 
\le C,
\]
with $C$ universal. Hence, to prove the lemma, we may assume without loss of generality that $\eps$ is sufficiently small.

As mentioned above, we will rely on the geometric information about $\{u > 0\}$ provided by the tree structure constructed in \Cref{prop:geomtree}. For a given $\cttg' \in (0, \tfrac{1}{2})$, we will choose an appropriately small $\theta > 0$ and consider the ball tree from \Cref{prop:geomtree}. The precise choice of $\theta$ in terms of $\cttg'$ will become clear later in the proof, but we can already note that $\theta \to 0^+$ as $\cttg' \to \left( \tfrac{1}{2} \right)^-$. 

 For $\theta>0$  small depending on $\cttg'$, we use \Cref{prop:geomtree} to obtain a ball tree $(\cN,p)$ rooted at $B_R(\zz)$ and polarity map $\boldsymbol e: \cN \to \mathbb S^2$. Also, we set $M(\cttg')$ equal to $M(\theta)$, the (large) constant from \Cref{prop:geomtree}.

We divide the proof into three steps:

\noindent {\bf Step 1:}  Define the function $v:B_{2R}(\zz) \to \R$ as 
\[
v(x) = u(x) - e\cdot(x-\zz).
\]
By \Cref{lem:boundbelow} we have
\begin{equation}\label{bybelow000}
    v \ge -C \eps R \quad \mbox{in } B_{3R/2}(\zz) \qquad\text{and}\qquad \{u = 0\}\cap B_{3R/2}(\zz) \subset {\rm Slab}\!\left(B_{3R/2}(\zz), e, C\eps\right)
\end{equation}
with $C$ universal. 
Since both $\varepsilon$ and $\theta$ are small enough, we can choose the sign of the polarity   $\boldsymbol{e}(B_R(\zz))$ so that
\begin{equation}\label{angle000}
    e \cdot \boldsymbol{e}(B_R(\zz)) \ge 1 - \frac{1}{100}.
\end{equation}
For a given ball $B = B_\varrho(y) \in \mathcal{N}$  we define its `positive pole' and `negative pole' as:
\[
\boldsymbol{P}^+(B) := y + \frac{\varrho}{2} \boldsymbol{e}(B)\qquad\text{and}\qquad \boldsymbol{P}^-(B) := y - \frac{\varrho}{2} \boldsymbol{e}(B).
\]
Define $U := B_{3R/2}(\zz) \cap \{u > 0\}$.  We claim that, if $w: U \to [0, \infty)$ is a non-negative harmonic function, then
\begin{equation}\label{thekeyequation111}
    \sum_{B \in \cN^{(\ell)}} w\big(\boldsymbol{P}^+(B)\big) \le (K\theta^{-2} )^\ell w\big(\boldsymbol{P}^+(B_R(\zz))\big),
\end{equation}
where $K$ is a universal constant; and analogously with $\boldsymbol{P}^-$.

Indeed, for any branching ball $B' \in \cN^{(\ell)}$, thanks to Harnack's inequality and \Cref{prop:geomtree}(4) we have
\[
\sum_{B \in \text{desc}(B')} w\big(\boldsymbol{P}^+(B)\big) \le  C(\theta\varrho_\ell)^{-3} \sum_{B \in \text{desc}(B')} \int_{B_{\theta\varrho_\ell/8}(\boldsymbol{P}^+(B))} w \,dx \le C (\theta\varrho_\ell)^{-3} \int_{B_{3\varrho_\ell/2}\cap S(B')} w\,dx,
\]
where $\varrho_\ell=\theta^\ell R$, $S(B') = \{\theta\varrho_\ell/16\le \boldsymbol e(B')\cdot (x-{\rm cent}(B')) \le 2\theta\varrho_\ell\}$, and ${\rm cent}(B')$ is the center of $B'$. 
Thus, applying   \Cref{lem_basic1} (integrated for $t\in \theta\varrho_\ell/16, 2\theta\varrho_\ell)$) using again the Harnack inequality we obtain
$$
    \sum_{B \in \text{desc}(B')} w\big(\boldsymbol{P}^+(B)\big) \le K \theta^{-2}   w\big(\boldsymbol{P}^+(B')\big).  
$$
This implies
$$
    \sum_{B \in \cN^{(\ell)}} w\big(\boldsymbol{P}^+(B)\big) \le K \theta^{-2}   \sum_{B \in \cN^{(\ell-1)}} w\big(\boldsymbol{P}^+(B)\big) \qquad \text{for all } \ell \ge 1,
$$
from which \eqref{thekeyequation111} follows.

\medskip 
    
\noindent {\bf Step 2:} We now show the existence of a constant \(C_{\cttg'}\), depending only on \(\cttg'\), such that:

\begin{enumerate}[(a)]
    \item For every internal or regular terminal ball \(B \in \cN^{(\ell)} \cap (\cI \cup \cT^{\rm reg})\), we have
    \begin{equation}\label{internal123}
        \int_{B^{(+)}} |D^2 w |^{\cttg'} \,dx \leq C_{\cttg'} (\theta^\ell R)^{3 - 2\cttg'} w\big(\boldsymbol{P}^+(B)\big)^{\cttg'}.
    \end{equation}
    
    \item For every neck-type terminal ball \(B \in \cN^{(\ell)} \cap \cT^{\rm neck}\), we have
    \begin{equation}\label{internal123_b}
        \int_{B \cap \{ u > 0 \}} |D^2 w |^{\cttg'} \,dx \leq C_{\cttg'} (\theta^\ell R)^{3 - 2\cttg'} w\big(\boldsymbol{P}^+(B)\big)^{\cttg'}.
    \end{equation}
\end{enumerate}
Indeed, provided that \(\theta > 0\) is chosen small enough, (a) follows from a direct application of \Cref{lem_basic2} to the non-negative harmonic function \(w\) in \(B^{(+,3/2)}\). 

The proof of (b) is much more involved and relies on Lemmas~\ref{lem:neckcenters_necks}, \ref{lem_aux_rb1}, and \ref{lem_basic2}, together with a suitable covering argument and a Harnack chain. We now provide the details of the proof.

Note first that, by the definition of a neck-type terminal ball, given \(B = B_\varrho(y) \in \cT^{\rm neck}\) there exists \(\zz \in B_{2\varrho}(y) \cap \cZ\) with \(M\rb(\zz) > \varrho\), $M = M(\theta)$. 
Thus, setting \(\tilde \varrho := 4 \max\{M, \bar M\}\rb(\zz)\) (where $\bar M$ is the universal constant from \Cref{lem:neckcenters_necks}),  we have
\begin{equation}\label{hessbd123}
    |D^2 u| \leq \frac{C(M)}{\tilde \varrho} \quad \text{in } B_{3\tilde\varrho}(\zz) \cap \{ u > 0 \}\qquad \mbox{and}\qquad B\subset B_{\tilde\varrho}(\zz), 
\end{equation}
where the Hessian bound follows from \Cref{lem_aux_rb1}.
As a consequence of this (note that \(u = 0\) and \(\partial_\nu u = 1\) on $\partial \{ u > 0 \}$), we deduce the following:\\ For any given \(y' \in B_{2\varrho}(\zz) \cap \partial \{ u > 0 \}\), the connected component of \(B_{c\varrho}(y') \cap \{ u > 0 \}\) whose boundary contains \(y'\) satisfies the assumptions of \Cref{lem_basic2}  (in an appropriate Euclidean coordinate frame), where \(c > 0\) is a small constant depending only on \(M\) (i.e., depending only on \(\gamma'\)).

Thanks to this observation, we can argue as follows: first, we cover \(B_{2\tilde\varrho}(\zz) \cap \{ {\rm dist}(\cdot,\partial \{ u > 0 \}\leq c^2\tilde\varrho\} \) by a finite collection of balls \(\{B_{c\tilde\varrho/10}(y'_j)\}_{1\leq j \leq N=N(\cttg')}\) with \(y_j' \in B_{2\varrho}(\zz) \cap \partial \{ u > 0 \}\), and for each \(j\) we let \(y''_j\) to be a point such that \(B_{c\tilde\varrho/4}(y''_j) \subset B_{c\tilde\varrho}(y'_j) \cap \{ u > 0 \}\).

Then, we cover $B_{2\tilde\varrho}(\zz) \cap \{ {\rm dist}(\cdot, \{ u= 0 \}>c^2\tilde\varrho\}$ with finitely many balls $\{B_{c\tilde\varrho/8}(\hat y_j'')\}_{1\leq j\leq M} $ such that $B_{c\tilde\varrho/4}(\hat y_j'')\subset \{ u > 0 \}$ and the balls $\{B_{c\tilde\varrho/4}(\hat y_j'')\}_{1\leq j\leq M}$ have bounded overlapping. This guarantees that $M$ is bounded by a constant depending only on $c$ (and thus only on $\gamma'$).
Now, by applying \Cref{lem_basic2}  inside each of the balls $\{B_{c\tilde\varrho/10}(y'_j)\}_{1\leq j \leq N}$, and interior harmonic estimates inside each of the balls $\{B_{c\tilde\varrho/4}(\hat y_j'')\}_{1\leq j \leq M}$, since $\tilde \varrho$ is comparable to $\theta^\ell R$ we get
\begin{equation}
\label{eq:y''}
\int_{B \cap \{ u > 0 \}} |D^2 w |^{\cttg'} \,dx \leq C_{\cttg'} (\theta^\ell R)^{3 - 2\cttg'} \biggl(\sum_{1\leq j \leq N}w(y_j'')^{\cttg'}+\sum_{1\leq j\leq M}w(\hat y_j'')^{\cttg'}\biggr).
\end{equation}
We now claim that, for any given point \(y'' \in B_{\tilde\varrho}(\zz)\) such that \(B_{c\tilde\varrho/4}(y'') \subset \{ u > 0 \}\), we have 
\begin{equation}\label{ahio174572}
    w(y'') \leq C_1 w(\boldsymbol{P}^+(B)),
\end{equation}
where \(C_1\) depends only on \(\gamma'\). Combining this claim with \eqref{eq:y''}, (b) follows immediately. So, we only need to prove  
\eqref{ahio174572}.

To this aim, we first apply \Cref{lem:neckcenters_necks} to find continuous path \(\Gamma : [0,1] \to B_{2\tilde \varrho(\zz)} \cap \{ u > 0 \}\) such that
\[
\Gamma(0) = y'', \quad \Gamma(1) = \boldsymbol{P}^+(B).
\]
Next, we show that there exists another continuous path \(\widetilde \Gamma : [0,1] \to B_{3\tilde \varrho(\zz)} \cap \{ u > 0 \}\) such that 
\[
\widetilde \Gamma(0) = y'', \quad \widetilde \Gamma(1) = \boldsymbol{P}^+(B), \quad \text{and} \quad \tilde \Gamma(s) + B_{c^4\tilde \varrho} \subset B_{3\tilde \varrho(\zz)} \cap \{ u > 0 \} \qquad \text{for all } s \in [0,1].
\]
Indeed, we can consider the vector field \(F = h(u) \nabla u\), where \(h : \mathbb{R} \to [0,\infty)\) is a smooth cut-off such that
\[
h(u) =
\begin{cases}
    1 & \text{if } u \leq c^3 {\tilde\varrho}, \\
    0 & \text{if } u \geq c^2{\tilde\varrho},
\end{cases}
\]
and define \(\widetilde \Gamma(s) := \Phi^F(\Gamma(s), c\tilde \varrho)\), where \(\Phi^F = \Phi^F(x,t)\) is the flow of $F$.\footnote{That is, $\Phi^F$ satisfies $\dot{\Phi}^F(x, t) =  F(\Phi^F(x, t))$ for $t > 0$, with $\Phi^F(x, 0) = x$. }

Finally, to establish \eqref{ahio174572}, we cover \(\widetilde \Gamma([0,1])\) by a number \(\tilde N\) (depending only on \(\gamma'\)) of balls of radius \(c^4\tilde\varrho/2\). This gives a `chain of balls' \(B_{c^4\tilde\varrho/2}(x_j)\) of length \(\tilde N\) such that 
\[
B_{c^4\tilde\varrho/2}(x_j) \cap B_{c^4\tilde\varrho/2}(x_{j+1}) \neq \varnothing, \quad 1\leq j<\tilde N.
\]
Then, applying Harnack inequality along the chain of balls \(\{B_{c^4\tilde\varrho}(x_j)\}_{1\leq j \leq \tilde N}\) (which have sufficient overlap between consecutive balls), we obtain \eqref{ahio174572}.

\medskip 
    
\noindent {\bf Step 3:} Consider the function  $w= v + C\eps R$.
As proved in Step 1, $w$ is non-negative in $B_{3R/2}(\zz)$. 
We then apply the estimates from Step 2 to $w$. 

More precisely, in $\Omega^{(+)}$ 
we sum over $\ell$ the estimate proved in (a),  recalling \Cref{def:Bpm_Omegapm} and that the number of descendants of each node in $\mathcal N$ is bounded by $2^8\theta^{-2}$ and thus $|\cN^{(\ell)}|\le (2^8\theta^{-2})^\ell$. In this way, by the concavity of $t\mapsto t^{\gamma'}$ and \eqref{thekeyequation111}, we obtain: 
\begin{equation}\label{boundhessiaOmega+}
\begin{split}
      \int_{\Omega^{(+)}} |D^2 v|^{\cttg'} \,dx &\le \sum_{\ell\ge0} \sum_{B\in \cN^{(\ell)}\cap (\cI\cup \cT^{\rm reg})}  \int_{B^{(+)}} |D^2 w |^{\cttg'} \,dx \\
      & \le C_{\cttg'}R^{3-2\cttg'}\sum_{\ell\ge0} 
      \theta^{(3-2\cttg')\ell} 
      \sum_{B\in \cN^{(\ell)}} w\big(\boldsymbol{P}^+(B)\big)^{\cttg'}
      \\
      &\le C_{\cttg'}R^{3-2\cttg'}\sum_{\ell\ge0} 
      \theta^{(3-2\cttg')\ell}  \,|\cN^{(\ell)}| \bigg(
      \frac{1}{|\cN^{(\ell)}|}
      \sum_{B\in \cN^{(\ell)}} w\big(\boldsymbol{P}^+(B)\big) \bigg)^{\cttg'}\\
      &\le C_{\cttg'} R^{3-2\cttg'}\sum_{\ell\ge0} 
      \theta^{(3-2\cttg')\ell}  \,((2^8\theta^{-2})^\ell) ^{1-\cttg'} 
      (K \theta^{-2})^{\cttg' \ell} 
      w\big(\boldsymbol{P}^+(B_R(\zz))\big) ^{\cttg'} 
      \\
      &\leq C_{\cttg'}R^{3-2\cttg'}\sum_{\ell\ge 0} \left( \theta^{1-2\cttg'} K^{\gamma'}2^{8(1-\gamma')}\right)^\ell \big(C\eps R\big)^{\cttg'}.
\end{split} 
\end{equation}
In the last line we used that, by the Harnack inequality and \eqref{bybelow000}, we have $w\big(\boldsymbol{P}^+(B_R(\zz))\big)\le C\eps R$, where $C$ is universal. Notice that, since $\cttg'<1/2$, we can choose $\theta= \theta(\cttg')$ sufficiently small so that $\theta^{1-2\cttg'} K^{\gamma'}2^{8(1-\gamma')}<1$ and the geometric series above converges to a constant depending only on $\cttg'$. 

The assumptions of the lemma do not change if we replace  $e$ by $-e$. But, by \eqref{angle000}, doing so reverses the polarity of the tree and therefore we obtain the same bounds over $\Omega^{(-)}$. 

Finally, we obtain a similar estimate for $\sum_{B\in \cT^{\rm neck}} \int_{B} |D^2 v|^{\cttg'} \,dx$ reasoning exactly as above but using (b) instead of (a).
Since $\{u>0\}\cap B_R(\zz) \subset \Omega^{(+)} \cup  \Omega^{(-)}\cup \big(\bigcup \cT^{\rm neck}\big)$ (by \Cref{lem:Omegapmcovers}), the proof is complete. 
\end{proof}

We can finally prove \Cref{prop:E_to_rho}.

\begin{proof}[Proof of \Cref{prop:E_to_rho}]
On the one hand, the Hessian estimate in \Cref{lem:joaquim} implies:
\begin{equation}\label{wthiowhiowhoi2}
R^{\cttg'} \fint_{B_{2R}(\zz) \cap \{u>0\}} |D^2 u|^{\cttg'} \, dx \le C_{\cttg'} \bE_\zz(u, 4R)^{\cttg'}, \quad \text{for any } \cttg' \in (0,1/2),
\end{equation}
where \( C_{\cttg'} \) is a constant depending on \( \cttg' \).
On the other hand, the Sternberg--Zumbrun stability inequality from \Cref{lem:Stern_Zum} gives:
\[
R^2 \fint_{B_{2R}(\zz) \cap \{u>0\}} |D^2 u|^2 \, dx \le C.
\]
Noticing that $\frac{2}{3} = \frac{4}{3(2-\cttg')}\cdot \cttg'  + \frac{2-3\cttg'}{3(2-\cttg')}\cdot 2$, we can combine these two inequalities using H\"older's inequality to  obtain
\[
R^{2/3} \fint_{B_{2R}(\zz) \cap \{u>0\}} |D^2 u|^{2/3} \, dx \le C_{\cttg} \bE_\zz(u, 4R)^{\cttg},
\]
where $\cttg := \frac{4\cttg'}{3(2-\cttg')} \to \left(\frac{4}{9}\right)^-$ as $\cttg' \to \left(\frac{1}{2}\right)^-$.

Finally, applying Jerison--Savin's stability inequality \eqref{eq:stab-intro},  and noting that \( \bc^2 = F(D^2u)^{2/3} \le C |D^2 u|^{2/3} \),   we get 
\[
R^{-1/3} \ccI(u, B_{R}(\zz)) \le C \bE_\zz(u, 4R)^{\cttg},
\]
as desired.
\end{proof}

 \subsection{The left-hand side of Jerison--Savin controls neck radii: Proof of \Cref{prop:rb-l13}} The goal of this subsection is to show the following result, which will imply \Cref{prop:rb-l13}: 
 
\begin{prop}
\label{thm:deltakappa}
There exists a large universal constant $\kappa > 0$ such that the following holds.

Let $u$ be a classical solution to the Bernoulli problem in $B_2\subset \R^3$ with $|\nabla u|\le 1$ in $B_2$ and $u(0) = 0$. If
$ 
\|D^2 u\|_{L^\infty(B_2\cap \set{u>0})} \le C_0
$ 
for some $C_0 > 0$, then 
\[
\|D^2 u\|^\kappa_{L^\infty(B_1 \cap \set{u>0})}\le C\,\ccI(u, B_2),
\]
for some $C$ depending only on $C_0$. 
\end{prop}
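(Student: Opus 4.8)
The plan is to argue by contradiction and compactness, exploiting the fact that $\ccI$ is both scale-covariant (via \eqref{eq:rescaleI}) and monotone under inclusion (via \eqref{eq:I monotone}), together with the strict positivity of the two integrands in the definition of $\ccI$ coming from \cite{Jerison-Savin}. First I would suppose that the conclusion fails: there exist $C_0>0$ and a sequence of classical solutions $u_k$ in $B_2\subset\R^3$ with $|\nabla u_k|\le 1$, $u_k(0)=0$, $\|D^2u_k\|_{L^\infty(B_2\cap\{u_k>0\})}\le C_0$, but
\[
\|D^2 u_k\|_{L^\infty(B_1\cap\{u_k>0\})}^\kappa \ge k\, \ccI(u_k,B_2),
\]
so in particular $\ccI(u_k,B_2)\to 0$ (since the left-hand side is bounded by $C_0^\kappa$). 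By the uniform Hessian bound, the free boundaries $\FB(u_k)$ have uniformly bounded curvature, so by the clean ball property \Cref{lem:cleanball} the positivity sets are locally unions of at most two smooth hypographs with uniform $C^{1,1}$ estimates; Arzelà–Ascoli then gives a subsequential limit $u_\infty$ in $C^{1,\alpha}_{\loc}$, which is a classical solution in $B_2$ with $u_\infty(0)=0$, $|\nabla u_\infty|\le 1$, and $\|D^2u_\infty\|_{L^\infty(B_2\cap\{u_\infty>0\})}\le C_0$. The convergence of $D^2u_k\to D^2u_\infty$ is locally uniform on $\{u_\infty>0\}$ and (using the uniform curvature bound) also up to the free boundary, so $\bc_k = F(D^2u_k)^{1/3}\to \bc_\infty=F(D^2u_\infty)^{1/3}$ and, passing to the limit in the (non-negative!) integrands of $\ccI$ via Fatou,
\[
\ccI(u_\infty,B_{3/2})\le \liminf_k \ccI(u_k,B_{3/2})\le \liminf_k \ccI(u_k,B_2)=0.
\]
Since both integrands are non-negative, this forces $\bc_\infty\Delta\bc_\infty\equiv 0$ on $\{u_\infty>0\}\cap B_{3/2}$ and $\bc_\infty(\partial_\nu\bc_\infty+H\bc_\infty)\equiv 0$ on $\FB(u_\infty)\cap B_{3/2}$.

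The second step is to deduce from the vanishing of these integrands that $D^2u_\infty\equiv 0$ in $B_{3/2}\cap\{u_\infty>0\}$. This is where I expect to invoke the structure of the Jerison–Savin test function: $\bc_\infty=w_\infty^{1/3}$ with $w_\infty=F(D^2u_\infty)$, and the identity of \cite{Jerison-Savin} shows that wherever $w_\infty>0$ one has a pointwise differential inequality whose equality case forces rigidity. Concretely, $\bc_\infty$ is a non-negative subsolution of the linearized equation with the stated Neumann-type boundary condition, and $\bc_\infty\Delta\bc_\infty=0$ on the open set $\{w_\infty>0\}\cap\{u_\infty>0\}$ together with the Jerison–Savin pointwise inequality $\Delta\bc_\infty\ge (\text{nonneg.})$ forces $\nabla\bc_\infty\equiv 0$ there, hence $w_\infty$ locally constant; if this constant is positive one contradicts, e.g., the decay of the Hessian coming from harmonicity and the clean-ball nondegeneracy near a free boundary point, while if it is zero we are done. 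So $w_\infty\equiv 0$ on $B_{3/2}\cap\{u_\infty>0\}$, i.e.\ $D^2u_\infty\equiv 0$, meaning $u_\infty$ is affine on each component of its positivity set; combined with $|\nabla u_\infty|=1$ on $\FB(u_\infty)$, with $0\in\FB(u_\infty)$, and with $\{u_\infty>0\}$ being locally at most two hypographs, $u_\infty$ must be a half-space solution $(e\cdot x)_+$ or a vee $|e\cdot x|$ inside $B_{3/2}$.

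The final step converts this limiting rigidity into a quantitative statement contradicting the assumption. By the $\eps$-regularity of \Cref{lem:eps-reg-classical} (or \Cref{lem:eps-reg-classical-2}, \Cref{lem:L1Linfty_2}) applied near $0$: since $u_k\to u_\infty$ uniformly on $B_{3/2}$ and $u_\infty$ equals a half-space solution or a vee with vertex at $0$, for $k$ large $u_k$ is $\eps$-close in $L^\infty$ to $(e_k\cdot x)_+$ (resp.\ $|e_k\cdot x|$) on $B_{3/2}$ — in the vee case one uses \Cref{cor_closetoV_disc_reg2} together with the uniform Hessian bound $\|D^2u_k\|\le C_0$ to upgrade closeness to regularity — and hence $\|D^2u_k\|_{L^\infty(B_1\cap\{u_k>0\})}\le C\eps\to 0$. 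But then the contradiction hypothesis reads $C\eps^\kappa\ge k\,\ccI(u_k,B_2)\ge 0$, which by itself is not yet absurd; the clean way to close it is to note that we may normalize: rescaling $u_k$ by $L_k:=\|D^2u_k\|_{L^\infty(B_1\cap\{u_k>0\})}$ at an almost-maximum point (exactly as in the proof of \Cref{lem:eps-reg-Hess}) we may assume $L_k\equiv 1$, so the left side is bounded below by a fixed constant $c>0$ while $\ccI(u_k,B_2)\to 0$ by \eqref{eq:rescaleI} and monotonicity — contradiction. The main obstacle, and the step needing the most care, is the rigidity argument in the middle: extracting from ``both Jerison–Savin integrands vanish'' the conclusion $D^2u_\infty\equiv 0$, since this must use the precise form of the Jerison–Savin inequality and handle the boundary term $\bc(\bc_\nu+H\bc)$ and the possibility $w_\infty\equiv$ (positive const.) on a component not touching the free boundary; a secondary delicate point is ensuring the Hessians converge up to the free boundary so that Fatou applies to the boundary integral.
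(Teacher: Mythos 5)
Your proposal has a genuine gap, and it is the central one: a compactness argument of the type you describe can only yield the qualitative statement ``for every $\eps>0$ there is $\delta>0$ such that $\ccI(u,B_2)\le\delta$ implies $\|D^2u\|_{L^\infty(B_1\cap\{u>0\})}\le\eps$'', which is strictly weaker than the power law $\|D^2u\|^\kappa\le C\,\ccI(u,B_2)$. You notice this yourself (``which by itself is not yet absurd'') and try to repair it by normalizing $L_k:=\|D^2u_k\|_{L^\infty(B_1\cap\{u_k>0\})}$ to $1$, but this fails because the claimed inequality is \emph{not} scale invariant: under $u\mapsto u_{z,r}$ the left-hand side scales like $r^\kappa$ while $\ccI$ scales like $r^{-1/3}$ by \eqref{eq:rescaleI}. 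Concretely, setting $v_k(y)=L_ku_k(x_k+y/L_k)$ with $L_k\to0$, the function $v_k$ is only a solution on a ball of radius comparable to $L_k\to 0$, so there is nothing to pass to the limit on, and no contradiction is reached. The paper avoids compactness entirely: it quantifies the \emph{remainders} in the Jerison--Savin inequalities (Lemmas~\ref{lem:stab-rem-int} and~\ref{lem:stab-rem-bdry}), selects a doubling point for the mean curvature, and produces the explicit lower bound $\ccI(u,B_1)\ge\delta^{64}$ with $\delta^3$ comparable to $\max H$, which combined with \Cref{prop:H_to_Hessian} gives the power law with $\kappa=128$.

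A secondary but substantive gap is your rigidity step. The vanishing of the boundary integrand at a point with $H>0$ and $w>0$ does \emph{not} force $D^2u_\infty=0$ there: by \Cref{lem:stab-rem-bdry} it only forces $\mu=1$, i.e.\ the eigenvalue configuration $(2H,-H,-H)$, which is the exact equality case and is perfectly compatible with $H>0$. Ruling this configuration out is precisely the hardest part of the paper's proof (Step~4 of the actual argument): one must perform a third-order Taylor expansion of $u$ at the free boundary and show that the \emph{interior} remainder cannot simultaneously vanish, via the computation $ww_3(0)=[-288+O(\eps_E)]\delta^9\neq0$. Likewise, the coefficients in the interior remainder of \Cref{lem:stab-rem-int} degenerate for certain eigenvalue configurations (the numerators $(\lambda_i-\lambda_j)(f_{\lambda_i}-f_{\lambda_j})$ with $i,j\neq k$ can vanish), so ``$\bc_\infty\Delta\bc_\infty\equiv0$ implies $\nabla\bc_\infty\equiv0$'' is not immediate. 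In short, even the qualitative limiting rigidity you invoke already requires the quantitative core of the paper's proof, and the quantitative power law requires it in a form that survives without passing to a limit.
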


Before proving this result we note that, as its consequence, $\ccI$ is bounded from below at \emph{neck balls} (see \Cref{ssec:neck-set-def}):

\begin{lem}
\label{lem:I_neck_ball}
There exists $c  > 0$ universal such that
\[
\ccI(u, B_{2\rb(\zz)}(\zz)) \ge c\, \rb^{\frac13}(\zz) > 0,\qquad\text{for all}\quad \zz\in \cZ.
\]
\end{lem}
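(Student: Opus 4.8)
The plan is to deduce \Cref{lem:I_neck_ball} directly from \Cref{thm:deltakappa} after rescaling to unit scale. Fix a neck center $\zz\in\cZ$ and set $r:=\rb(\zz)$. By the lower bound \eqref{eq:rmin_def} we have $r\geq\rmin>0$, so $r$ is always positive. Consider the rescaled function $u_{\zz,r}(x):=\tfrac1r u(\zz+rx)$, which is again a global classical stable solution to the Bernoulli problem, satisfies $|\nabla u_{\zz,r}|\le 1$ everywhere (by \Cref{lem:Lipbound}), has $u_{\zz,r}(0)=0$ since $\zz\in\FB(u)$, and, by \eqref{eq:assump_glob_u} together with the scaling of the Hessian, obeys $\|D^2 u_{\zz,r}\|_{L^\infty(\{u_{\zz,r}>0\})}\le r\le 1$ (recall $r\ge\rmin$ only gives a lower bound, but the global bound $|D^2u|\le 1$ gives $|D^2 u_{\zz,r}|=r|D^2u(\zz+r\cdot)|\le r$; in any case $|D^2 u_{\zz,r}|\le C_0$ with a universal $C_0$ on $B_2$).

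The key quantitative input is the definition of the threshold radius \eqref{eq:def_rB}: since $\rb(\zz)=r$ is the infimum of radii $\rho$ for which $\int_{B_\rho(\zz)\cap\{u>0\}}|D^2u|^3\,dx\ge\eta_0^3$, and since $\int_{B_\rho(\zz)\cap\{u>0\}}|D^2u|^3$ is continuous and nondecreasing in $\rho$ (and strictly increasing past any radius where $|D^2u|\not\equiv 0$, by unique continuation as in the footnote of \Cref{cor:Hess-bd-neck}), we get
\[
\int_{B_r(\zz)\cap\{u>0\}}|D^2u|^3\,dx=\eta_0^3.
\]
Rescaling, $\int_{B_1\cap\{u_{\zz,r}>0\}}|D^2 u_{\zz,r}|^3\,dx=\eta_0^3$ as well (the integral $\int|D^2u|^3$ over a ball of radius $r$ scales like $r^3\cdot r^{-3}=1$ under $u\mapsto u_{\zz,r}$, so it is exactly invariant at matching radii). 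In particular $\|D^2 u_{\zz,r}\|_{L^\infty(B_1\cap\{u_{\zz,r}>0\})}$ is bounded below by a universal positive constant: otherwise, if this $L^\infty$ norm were smaller than some tiny universal $\delta$, then $\int_{B_1\cap\{u_{\zz,r}>0\}}|D^2 u_{\zz,r}|^3\le\delta^3|B_1|<\eta_0^3$, a contradiction once $\delta$ is chosen with $\delta^3|B_1|<\eta_0^3$. Hence there is a universal $c_1>0$ with $\|D^2 u_{\zz,r}\|_{L^\infty(B_1\cap\{u_{\zz,r}>0\})}\ge c_1$.

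Now apply \Cref{thm:deltakappa} to $u_{\zz,r}$ on $B_2$: it gives $\|D^2 u_{\zz,r}\|^\kappa_{L^\infty(B_1\cap\{u_{\zz,r}>0\})}\le C\,\ccI(u_{\zz,r},B_2)$ with $C$ depending only on the universal upper bound $C_0$ for $\|D^2 u_{\zz,r}\|_{L^\infty(B_2\cap\{u_{\zz,r}>0\})}$, hence universal. Combining with the lower bound just established,
\[
\ccI(u_{\zz,r},B_2)\ge C^{-1}\|D^2 u_{\zz,r}\|^\kappa_{L^\infty(B_1\cap\{u_{\zz,r}>0\})}\ge C^{-1}c_1^\kappa=:c_2>0,
\]
a universal constant. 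Finally, the scaling property \eqref{eq:rescaleI} of $\ccI$ gives $\ccI(u,B_{2r}(\zz))=r^{1/3}\ccI(u_{\zz,r},B_2)\ge c_2\,r^{1/3}=c_2\,\rb(\zz)^{1/3}>0$, which is the claimed inequality with $c=c_2$. The only mild care needed is to make sure the hypotheses of \Cref{thm:deltakappa} are literally satisfied on $B_2$ (not just $B_1$): this is fine because $u_{\zz,r}$ is a \emph{global} solution, so it is in particular a classical solution on $B_2$ with $|\nabla u_{\zz,r}|\le 1$ there and $|D^2 u_{\zz,r}|\le C_0$ on $B_2\cap\{u_{\zz,r}>0\}$ by \eqref{eq:assump_glob_u}. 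The main (and essentially only) obstacle here is having \Cref{thm:deltakappa} available; granting it, the lemma is a routine rescaling plus the definition of the threshold radius.
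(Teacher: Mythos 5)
Your proposal follows essentially the same route as the paper: rescale by the threshold radius, use the definition of $\rb(\zz)$ to get $\int_{B_1\cap\{u_{\zz,r}>0\}}|D^2u_{\zz,r}|^3=\eta_0^3$, deduce a universal lower bound on $\|D^2u_{\zz,r}\|_{L^\infty(B_1\cap\{u_{\zz,r}>0\})}$, apply \Cref{thm:deltakappa}, and rescale back via \eqref{eq:rescaleI}.

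There is one flawed justification: you claim $\|D^2u_{\zz,r}\|_{L^\infty}\le r\le 1$ from the global bound $|D^2u|\le 1$, but $r=\rb(\zz)$ has \emph{no} universal upper bound (neck centers lie in $\tilde\cZ_k$ for arbitrary $k\ge 0$, so $\rb(\zz)$ can be arbitrarily large wherever the Hessian is small), and hence $|D^2u_{\zz,r}|\le r$ is useless when $r\gg1$. Your fallback assertion that ``in any case $|D^2u_{\zz,r}|\le C_0$ on $B_2$'' is true but not for the reason you give: it is exactly the content of \Cref{cor:Hess-bd-neck} (which rests on the Vitali construction of $\cZ$ and the $\eps$-regularity \Cref{lem:eps-reg-Hess}), and this is the reference the paper invokes at this point. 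With that citation substituted, the rest of your argument is correct.
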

\begin{proof}
For $r = \rb(\zz)$ define
$\tilde u(x) = \frac{u(\zz+rx)}{r}.$
Then, by the definition of neck radius and \Cref{cor:Hess-bd-neck}, we have
\[
\int_{B_1 \cap \{ \tilde u>0 \}}|D^2\tilde u|^3 \,dx = \eta^3_0\qquad\text{and}\qquad |D^2\tilde u|\le C \qquad\text{in}\quad B_{2}\cap \{\tilde u > 0\},
\]
for some $C$ universal. Thus,  we can apply  \Cref{thm:deltakappa}, and obtain
\[
\eta_0^\kappa = \|D^2 \tilde u\|^{\kappa}_{L^3(B_1\cap\{\tilde u>0\})} 
\le C \|D^2 \tilde u\|^\kappa_{L^\infty(B_1\cap \{\tilde u>0\})}
\le C \,\ccI(\tilde u, B_2)
=C r^{-\frac13} \ccI(u, B_{2r}(\zz)),
\]
where we have also used \eqref{eq:rescaleI}. This is our desired result. 
\end{proof}

Hence, we can prove \Cref{prop:rb-l13}.
\begin{proof}[Proof of \Cref{prop:rb-l13}]
Suppose first that $R \ge M_\circ \rb(\zz)$, with $M_\circ$ given by \Cref{lem_aux_rb2}. By the choice of $M_\circ$ we know that $B_{2\rb(\zz')}(\zz')\subset B_{2R}(\zz)$  for every $\zz'\in B_{3R/2}(\zz)\cap \cZ$. Thus, thanks to \Cref{lem:I_neck_ball} and the definition of $\ccI$ \eqref{eq:stab-intro}, we have
\[
c \rb^{1/3}(\zz')\le \ccI(u, B_{2\rb(\zz')}(\zz'))\le \ccI(u, B_{2R}(\zz))\qquad\text{for all}\quad \zz'\in B_{3R/2}(\zz)\cap \cZ,
\]
as desired.

Suppose now $\rb(\zz)\le R \le M_\circ \rb(\zz)$. On the one hand, \Cref{lem:I_neck_ball} gives
\[
\varrho_\zz(u, 2R)  = \frac{\ccI(u, B_{2R}(\zz))^3}{2R} \ge \frac{\ccI(u, B_{2\rb(\zz)}(\zz))^3}{2M_\circ \rb(\zz)} \ge c > 0,
\]
while on the other hand, thanks to \Cref{lem_aux_rb2}, 
$$
\rb(\zz') \le \tfrac14 M_\circ \rb(\zz) \le \tfrac14 M_\circ R\qquad\text{for all}\quad \zz'\in B_{3R/2}(\zz)\cap \cZ \subset B_{3 M_\circ \rb(\zz)/2}(\zz)\cap \cZ.$$
This gives the desired result. 
\end{proof}

The rest of this subsection will now be dedicated to the proof of \Cref{thm:deltakappa}.

\subsubsection{The test function revisited}
Until the end of the subsection, we assume that 
\begin{equation}
    \label{eq:u_ssec}
\text{$u$ is a classical solution to the Bernoulli problem in $B_2\subset \R^3$, with $\Omega = \{u > 0\}$}.
\end{equation}
We recall that, as a consequence of  \cite[Theorem 4.1]{Jerison-Savin}, the function $w = F(D^2 u)$ (as defined in \eqref{eq:def_w}) satisfies 
\begin{align*}
w\Delta w-\frac{2}{3}|\nabla w|^2 \geq 0
& \qquad \text{ in } \Omega=\set{u>0},\\
w_\nu+3Hw\geq 0
& \qquad \text{ on } \partial\Omega=\partial\set{u>0},
\end{align*}
where $\nu$ denotes the inward normal towards $\set{u>0}$, and $H$ is the mean curvature of $\partial \Omega$. In particular, $\Delta(w^{\alpha})\geq 0$ in $\Omega$ for $\alpha \geq \frac{1}{3}$, and $(w^{\alpha})_{\nu}+Hw^{\alpha}\geq 0$ on $\partial\Omega$ for $0\leq\alpha\leq \frac{1}{3}$. We aim to keep track of the remainder 
in the previous inequalities.

We start with the interior inequality. Notice that the function $f(\lambda_1,\lambda_2, \lambda_3)
=\sqrt{\sum_{\lambda_i>0}\lambda_i^2+4\sum_{\lambda_i<0}\lambda_i^2}$ is convex, therefore $(\lambda_i - \lambda_j)(f_{\lambda_i}-f_{\lambda_j}) \ge 0$ (here and in the sequel, $f_{\lambda_i} = \partial_{\lambda_i} f$). We recall that $(\lambda_i)_i$ denote the eigenvalues of $D^2 u$.

\begin{lem}[Remainder of interior inequality]
\label{lem:stab-rem-int}
Let $u$ be as in \eqref{eq:u_ssec}. At all points where  $\lambda_1,\lambda_2,\lambda_3$ are not all equal (to $0$),
\begin{align*}
w\Delta w -\frac{2}{3}|\nabla w|^2
&\geq 
\frac{2}{3}
\sum_{k=1}^{3}
	\frac{
		\sum_{1\leq i<j \leq 3,\,i,j\neq k}(\lambda_i-\lambda_j)(f_{\lambda_i}-f_{\lambda_j})
	}{
		\sum_{1\leq i\leq 3,\,i\neq k}(\lambda_i-\lambda_k)(f_{\lambda_i}-f_{\lambda_k})
	}
	w_k^2,\qquad\text{in}\quad \Omega.
\end{align*}
\end{lem}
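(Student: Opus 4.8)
The statement is a purely algebraic/pointwise identity-plus-inequality about the function $w=f(\lambda_1,\lambda_2,\lambda_3)$ evaluated on the Hessian $D^2u$ of a harmonic function, so the plan is to work entirely at a fixed point, choosing coordinates that diagonalize $D^2u$ there. The starting point is the pointwise differential identity for $F(D^2u)$ at a diagonalizing frame: writing $u_{ijk}=\partial_i\partial_j\partial_k u$, one has
\[
w_k = \sum_{i} f_{\lambda_i}\, u_{iik},\qquad
w\,\Delta w = \sum_k w\, w_{kk}
= \sum_k\Big(\sum_i f_{\lambda_i} u_{iikk} + \sum_{i\neq j} \tfrac{f_{\lambda_i}-f_{\lambda_j}}{\lambda_i-\lambda_j}\, u_{ijk}^2\Big)w,
\]
valid where the $\lambda_i$ are distinct (the degenerate cases are handled by continuity, since $f\in C^\infty$ away from the origin and the right-hand side has removable singularities on the diagonal). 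Since $\Delta u=0$ in $\Omega$, the third-derivative tensor is harmonic, $\sum_i u_{iikk}$ summed appropriately vanishes, so the term $\sum_{k,i} f_{\lambda_i}u_{iikk}\,w$ drops out (this is exactly where harmonicity of $u$ enters, as in \cite{Jerison-Savin}). What remains is a quadratic form in the entries $u_{ijk}$ of the third-derivative tensor, against which we must compare $\tfrac{2}{3}|\nabla w|^2=\tfrac23\big(\sum_i f_{\lambda_i}u_{iik}\big)^2$ summed over $k$.

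The heart of the argument is then the following: fix $k$ and consider the "slice" quadratic form in the variables $(u_{11k},u_{22k},u_{33k},u_{12k},u_{13k},u_{23k})$. The off-diagonal variables $u_{ijk}$ with $i\neq j$ contribute $\sum_{i\neq j}\tfrac{f_{\lambda_i}-f_{\lambda_j}}{\lambda_i-\lambda_j}u_{ijk}^2$ to $w\,w_{kk}$ and do not appear in $w_k$, so they contribute a nonnegative surplus that we simply discard — only the diagonal variables matter for the comparison. For the diagonal variables, using the constraint $u_{11k}+u_{22k}+u_{33k}=0$ (again harmonicity), we must bound $\tfrac23\big(\sum_i f_{\lambda_i}u_{iik}\big)^2$ from below-complement; i.e., we must show that
\[
w\,\partial_k\!\big(\text{diagonal part of }w_k\big) - \tfrac23\Big(\sum_i f_{\lambda_i}u_{iik}\Big)^2
\]
dominates the claimed expression $\tfrac23\,\dfrac{\sum_{i<j,\ i,j\neq k}(\lambda_i-\lambda_j)(f_{\lambda_i}-f_{\lambda_j})}{\sum_{i\neq k}(\lambda_i-\lambda_k)(f_{\lambda_i}-f_{\lambda_k})}\,w_k^2$. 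This is a finite-dimensional constrained quadratic optimization: after eliminating $u_{33k}=-u_{11k}-u_{22k}$, one is comparing two quadratic forms in two real variables, and the claimed bound should come out of completing the square (or, equivalently, computing the minimum of the discarded quantity subject to $w_k$ being held fixed, via a Lagrange multiplier). The specific rational combination on the right-hand side is exactly what emerges as the Schur complement / minimum in this $2\times 2$ problem, and the homogeneity degree-one property of $f$ together with the 1-homogeneous Euler relation $\sum_i \lambda_i f_{\lambda_i}=f=w$ will be used to simplify the algebra.

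\textbf{Main obstacle.} The genuinely delicate part is the constrained quadratic-form computation for the diagonal variables: one must carefully exploit the precise form of $f$ — in particular that $f_{\lambda_i}=\lambda_i/w$ when $\lambda_i>0$ and $f_{\lambda_i}=4\lambda_i/w$ when $\lambda_i<0$, so that $\sum_i f_{\lambda_i}^2 = (\sum_{\lambda_i>0}\lambda_i^2 + 16\sum_{\lambda_i<0}\lambda_i^2)/w^2$ — and track how the second-derivative terms $f_{\lambda_i\lambda_j}u_{iik}u_{jjk}$ (which are supported off the diagonal of $f$ in a certain sense, since $f$ restricted to each orthant is essentially the Euclidean norm with weights) combine with the $\tfrac{f_{\lambda_i}-f_{\lambda_j}}{\lambda_i-\lambda_j}$ coefficients. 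The factor $\tfrac23$ and the exponent $\tfrac13$ in $\bc=w^{1/3}$ are dictated by this computation, so the bookkeeping must be exact; I expect this to require splitting into cases according to the signs of the $\lambda_i$ (all positive; two positive one negative; one positive two negative; etc.), each giving a slightly different but parallel quadratic estimate. The convexity of $f$, which gives $(\lambda_i-\lambda_j)(f_{\lambda_i}-f_{\lambda_j})\ge 0$ and hence that both numerator and denominator of the claimed fraction are nonnegative (so the right-hand side is well-defined and nonnegative), is the structural fact that makes the final inequality consistent, and it should fall out of the same case analysis.
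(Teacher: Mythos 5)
There is a genuine gap, and it sits exactly in the step you single out as harmless. You propose to discard, for each slice $k$, the contribution $\sum_{i\neq j}\frac{f_{\lambda_i}-f_{\lambda_j}}{\lambda_i-\lambda_j}u_{ijk}^2$ to $w\,w_{kk}$ as a ``nonnegative surplus'' and to beat $\frac23 w_k^2$ using only the quadratic form in the diagonal variables $u_{iik}$, which (after the fourth-derivative terms vanish by harmonicity) is the Hessian form $\sum_{i,j}f_{\lambda_i\lambda_j}u_{iik}u_{jjk}$. This cannot work: since $f$ is $1$-homogeneous its Hessian annihilates the radial direction, so choosing $u_{iik}=\lambda_i$ (admissible, as $\sum_i\lambda_i=0=\sum_iu_{iik}$) makes $\sum_{i,j}f_{\lambda_i\lambda_j}u_{iik}u_{jjk}=0$ while $w_k=\sum_if_{\lambda_i}\lambda_i=f>0$ by Euler's relation; your two-variable constrained optimization would then have to prove $0-\frac23 f^2\geq 0$. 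The Jerison--Savin computation the paper invokes does the opposite bookkeeping: the Hessian-of-$f$ terms and the term in $u_{123}^2$ are the ones discarded (both nonnegative by convexity), while the terms $2\frac{f_{\lambda_i}-f_{\lambda_k}}{\lambda_i-\lambda_k}u_{iik}^2$ with $i\neq k$ --- which live in the \emph{off-diagonal} part of $w_{ii}$ for $i\neq k$, not of $w_{kk}$, since $u_{iik}=u_{iki}$ --- are kept and, combined with $w_k=\sum_{i\neq k}(f_{\lambda_i}-f_{\lambda_k})u_{iik}$ via Cauchy--Schwarz, give $2f\sum_{i\neq k}\frac{f_{\lambda_i}-f_{\lambda_k}}{\lambda_i-\lambda_k}u_{iik}^2\geq\frac{2f}{D_k}w_k^2$ with $D_k=\sum_{i\neq k}(\lambda_i-\lambda_k)(f_{\lambda_i}-f_{\lambda_k})$. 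The indispensable cross-slice reindexing is absent from your plan, and the quadratic form you retain is the wrong one.

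For comparison, the paper's proof is far lighter than what you set out to do: it quotes the inequality $w\Delta w\geq\frac23\sum_kw_k^2\,\frac{3f}{D_k}$ from the proof of Jerison--Savin's Theorem~4.1, and then only observes that on the trace-free cone $3f=\sum_{i<j}(\lambda_i-\lambda_j)(f_{\lambda_i}-f_{\lambda_j})$, so that $3f-D_k$ equals the numerator in the statement; writing $\frac{3f}{D_k}=1+\frac{3f-D_k}{D_k}$ and subtracting $\frac23|\nabla w|^2$ gives the lemma. Even setting aside the inverted discarding, your write-up stops at a ``main obstacle'' (the sign case analysis and the completion of the square) that is never carried out, so what you have is a plan rather than a proof.
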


\begin{proof}
Following the proof of \cite[Theorem 4.1]{Jerison-Savin}, we write 
\begin{align*}
w\Delta w
&\geq \frac{2}{n}\sum_{k=1}^{n}
	w_k^2
	\frac{nf}{
		\sum_{i\neq k}(\lambda_i-\lambda_k)(f_{\lambda_i}-f_{\lambda_k})
	}
=\frac{2}{n}\sum_{k=1}^{n}
	w_k^2
	\left (1+
		\frac{
			nf-\sum_{i\neq k}(\lambda_i-\lambda_k)(f_{\lambda_i}-f_{\lambda_k})
		}{
			\sum_{i\neq k}(\lambda_i-\lambda_k)(f_{\lambda_i}-f_{\lambda_k})
		}
	\right ).
\end{align*}
From \cite[eq. (4.7)]{Jerison-Savin}, we have that for each $k=1,\dots,n$,
\begin{align*}
nf-\sum_{i\neq k}(\lambda_i-\lambda_k)(f_{\lambda_i}-f_{\lambda_k})
&=\sum_{1\leq i<j\leq n}
	(\lambda_i-\lambda_j)(f_{\lambda_i}-f_{\lambda_j})
-\left (
	\sum_{1\leq i<k\leq n}
	+\sum_{1\leq k<i\leq n}
\right )
	(\lambda_i-\lambda_k)(f_{\lambda_i}-f_{\lambda_k})\\
&=
	\sum_{\substack{1\leq i<j\leq n\\i,j\neq k}}
	(\lambda_i-\lambda_j)(f_{\lambda_i}-f_{\lambda_j}),
\end{align*}
Rearranging gives the desired result, in particular for $n=3$.
\end{proof}

Next, we refine the lower bound on the boundary inequality.

\begin{lem}[Remainder of boundary inequality]
\label{lem:stab-rem-bdry}
 Let $u$ as in \eqref{eq:u_ssec}. Let $(\lambda_1,\lambda_2,\lambda_3)$ be the eigenvalues of $D^2 u$ evaluated at points on $\partial\{u>0\}$, and let us write $(\lambda_1,\lambda_2,\lambda_3)= ((\mu+1)H,-\mu H,-H)$ for some $\mu\geq -\frac12$. Then,
\[
w_\nu+3Hw
\geq \min\set{\frac{1}{4}(\mu-1)^2,1} Hw\qquad\text{on}\quad \partial\Omega.
\]
\end{lem}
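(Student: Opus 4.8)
The plan is to fix an arbitrary point $p\in\FB(u)$, reduce the inequality to a one–variable polynomial statement in the parameter $\mu$, and then close with elementary algebra. First I would choose Euclidean coordinates centered at $p$ so that the inward unit normal $\nu$ equals $e_3$ (hence $\nabla u(p)=e_3$, since $|\nabla u|=1$ on $\FB(u)$), and so that $D^2u(p)=\diag(\lambda_1,\lambda_2,\lambda_3)$ is diagonal. Since $\Delta u=0$ in $\{u>0\}$ and $\partial_{\nu\nu}u=-H$ on $\FB(u)$ (see \Cref{lem:vbounds}), this forces $\lambda_3=-H$ and $\lambda_1+\lambda_2=H$. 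If $H=0$ at $p$, then by \Cref{lem:vbounds} (or a local Hopf/unique–continuation argument applied to $v=1-|\nabla u|^2$) we get $D^2u\equiv0$ in a neighbourhood of $p$, so $w\equiv0$ there and the inequality is trivial; hence I may assume $H>0$, set $\mu:=\lambda_1/H-1$, and relabel so that $\lambda_1\ge\lambda_2$, which gives $\mu\ge-\tfrac12$ and $(\lambda_1,\lambda_2,\lambda_3)=((\mu+1)H,-\mu H,-H)$, exactly as in the statement.

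Next I would compute the third derivatives of $u$ at $p$. Writing $\{u>0\}=\{x_3>g(x')\}$ locally, differentiating $u(x',g(x'))=0$ twice at the origin gives $g(0)=0$, $\nabla g(0)=0$, $g_{ij}(0)=-\lambda_i\delta_{ij}$; differentiating $|\nabla u(x',g(x'))|^2=1$ twice at the origin, using $\partial_3|\nabla u|^2(0)=2\lambda_3=-2H$, then yields
\[
u_{ii3}=-\lambda_i(\lambda_i+H)\quad(i=1,2),\qquad u_{123}=0,
\]
and $u_{333}=\lambda_1(\lambda_1+H)+\lambda_2(\lambda_2+H)$ from $\Delta u_3=0$. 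Since $F$ is a $C^1$ symmetric function of the matrix argument (smooth away from the locus where some eigenvalue vanishes, and still $C^1$ across $\lambda_i=0$ because $t\mapsto\sqrt{c+t^2}$ and $t\mapsto\sqrt{c+4t^2}$ glue $C^1$ at $t=0$), the directional derivative along $\nu=e_3$ is $w_\nu=\sum_i f_{\lambda_i}u_{ii3}$ — no off‑diagonal third derivatives enter, both because $u_{123}=0$ and because $\nabla F$ at a diagonal matrix is diagonal. This gives the closed form
\[
w_\nu=-\lambda_1(\lambda_1+H)(f_{\lambda_1}-f_{\lambda_3})-\lambda_2(\lambda_2+H)(f_{\lambda_2}-f_{\lambda_3}),
\]
where $f_{\lambda_i}=\lambda_i/w$ if $\lambda_i>0$ and $f_{\lambda_i}=4\lambda_i/w$ if $\lambda_i<0$.

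Finally I would substitute $(\lambda_1,\lambda_2,\lambda_3)=((\mu+1)H,-\mu H,-H)$, splitting into the regimes $\mu\ge0$ (then $\lambda_2\le0$ and $w^2=(5\mu^2+2\mu+5)H^2$) and $-\tfrac12\le\mu\le0$ (then $\lambda_2\ge0$ and $w^2=(2\mu^2+2\mu+5)H^2$); the two resulting formulas agree at $\mu=0$, consistently with the $C^1$ regularity of $F$ there. A direct simplification yields
\[
w_\nu+3Hw=\frac{H^3}{w}(\mu-1)^2(3\mu+5)\quad(\mu\ge0),\qquad w_\nu+3Hw=\frac{H^3}{w}\,(5-7\mu-7\mu^2)\quad(-\tfrac12\le\mu\le0),
\]
while $Hw$ equals $\tfrac{H^3}{w}(5\mu^2+2\mu+5)$, resp. $\tfrac{H^3}{w}(2\mu^2+2\mu+5)$. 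Dividing by the common positive factor $H^3/w$, the claim becomes $(\mu-1)^2(3\mu+5)\ge\min\{\tfrac14(\mu-1)^2,1\}(5\mu^2+2\mu+5)$ for $\mu\ge0$, and $5-7\mu-7\mu^2\ge\tfrac14(\mu-1)^2(2\mu^2+2\mu+5)$ for $\mu\in[-\tfrac12,0]$ (there $\tfrac14(\mu-1)^2\le1$, so the minimum is the first term). The first splits at $\mu=3$ (where $\tfrac14(\mu-1)^2=1$) into $5(3-\mu)(\mu+1)\ge0$ on $[0,3]$ (after cancelling $(\mu-1)^2$) and $3\mu(\mu-3)(\mu+1)\ge0$ on $[3,\infty)$; the second follows because the quartic $-2\mu^4+2\mu^3-31\mu^2-20\mu+15$ is bounded below on $[-\tfrac12,0]$ by a positive constant via crude term‑by‑term estimates. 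I expect the only real obstacle to be bookkeeping: deriving the third‑derivative identities carefully from the free‑boundary conditions, and handling the non‑smooth points of $F$ and the repeated‑eigenvalue cases ($\mu=1$, where $\lambda_2=\lambda_3$ and both sides vanish; $\mu=-\tfrac12$, where $\lambda_1=\lambda_2$) — in each of these the identity $w_\nu=\sum_i f_{\lambda_i}u_{ii3}$ remains valid by an eigenspace‑trace/limiting argument, or the right‑hand side degenerates to zero and the already known bound $w_\nu+3Hw\ge0$ suffices.
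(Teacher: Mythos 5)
Your proof is correct, and at its core it follows the same route as the paper: diagonalize $D^2u$ at a free boundary point with $\nu$ as an eigenvector, express $w_\nu+3Hw$ in closed form in terms of $\mu$, and verify a polynomial inequality in the two regimes $\mu\geq 0$ and $-\tfrac12\leq\mu\leq 0$. The one substantive difference is that the paper simply imports the boundary identity
\[
\frac{w_\nu}{Hw}+3=2-\frac{\sum_{\lambda_i>0}\lambda_i^3+4\sum_{\lambda_s<0}\lambda_s^3+4H\sum_k\lambda_k^2}{Hw^2}
\]
from \cite{Jerison-Savin}*{Section 4.3}, whereas you re-derive it from scratch via the third-derivative relations $u_{ii3}=-\lambda_i(\lambda_i+H)$, $u_{123}=0$, $u_{333}=-(u_{113}+u_{223})$ and $w_\nu=\sum_i f_{\lambda_i}u_{ii3}$; your closed forms $w_\nu+3Hw=\frac{H^3}{w}(\mu-1)^2(3\mu+5)$ (for $\mu\geq 0$) and $\frac{H^3}{w}(5-7\mu-7\mu^2)$ (for $\mu\leq 0$) are exactly equivalent to the paper's expressions for $g(\mu)Hw$. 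The only other (cosmetic) divergence is in closing the case $\mu\in[-\tfrac12,0]$: the paper observes the one-line bound $g(\mu)\geq 1$ (equivalent to $-9\mu(1+\mu)\geq 0$), while you bound $g(\mu)\geq\tfrac14(\mu-1)^2$ directly via a quartic; both suffice, though the paper's is shorter. Your extra care about the degenerate case $H=0$ and about the $C^1$ regularity of $F$ at repeated or vanishing eigenvalues is not in the paper's proof (the lemma's parametrization hypothesis implicitly sidesteps it), but it does no harm.
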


\begin{proof}
Recall from \cite[Section 4.3]{Jerison-Savin} that
\[
\frac{w_\nu}{Hw}+3
=2-\frac{
	\sum_{\lambda_i>0}\lambda_i^3
	+4\sum_{\lambda_s<0}\lambda_s^3
	+4H\sum_{k=1}^{n}\lambda_k^2
}{Hw^2}.
\]
We want to find a positive lower bound of the right-hand side, which in both cases we denote by $g(\mu)$.

Suppose first that $\mu \ge 0$. Then, we have 
\begin{align*}
g(\mu)
&=2-\frac{(1+\mu)^3-4\mu^3+4((1+\mu)^2+\mu^2)}{(1+\mu)^2+4\mu^2+4}
 =\frac{(\mu-1)^2(3\mu+5)}{(1+\mu)^2+4\mu^2+4}
\geq \min\left\{\frac14 (\mu-1)^2, 1\right\}.
\end{align*}
Suppose now $\mu < 0$. In this case,  we get
\begin{align*}
g(\mu)
&=2-\frac{(1+\mu)^3-\mu^3+4((1+\mu)^2+\mu^2)}{(1+\mu)^2+\mu^2+4}
=\frac{5-7(1+\mu)\mu}{5+2(1+\mu)\mu}
\geq 1.
\end{align*}
In both cases, the proof is complete.
\end{proof}

\subsubsection{Mean curvature controls Hessian}
\label{ssec:mean_cuv_hessian}
 
Our next step consists in proving that the mean curvature of the free boundary controls the Hessian nearby. This is the purpose of the next:

\begin{prop}
\label{prop:H_to_Hessian}
 Let $u$ be as in \eqref{eq:u_ssec} with $|\nabla u|\leq 1$, $0\in \FB(u)$, and $
|D^2 u|\le C_0$ in $ B_2$. Then
\[
\normbig[L^\infty(B_{1/2} \cap \set{u>0})]{D^2 u}^2
\leq C\max\{C_0^{3},1\}
	\normsmall[L^\infty(B_2\cap \FB(u))]{H},
\]
for some $C>0$ universal. 
\end{prop}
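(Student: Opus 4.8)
\emph{Overview of the plan.} Write $M:=\normsmall[L^\infty(B_2\cap\FB(u))]{H}$. Since $|D^2u|\le C_0$ and $C_0^2\le\max\{C_0^3,1\}$, the inequality is trivial whenever $M$ is bounded below by a universal constant, so we may assume $M$ small. The statement then reduces to the pointwise \emph{free boundary} estimate
\[
|D^2u(y)|^2\le C\max\{C_0^3,1\}\,M\qquad\text{for all }y\in\FB(u)\cap B_{3/2},\qquad(\star)
\]
the deduction of the full bound from $(\star)$ being standard: each entry of $D^2u$ is harmonic in $\set{u>0}$, so $\normsmall[L^\infty(B_{1/2}\cap\set{u>0})]{D^2u}$ is attained, after pushing to $\partial B_r$ for $r\uparrow 2$, on $\FB(u)\cap B_{3/2}$ (no connected component of $\set{u>0}\cap B_2$ can avoid $\FB(u)$, since $0\in\FB(u)$ excludes $\set{u>0}=B_2$); away from $\FB(u)$ one has $|D^2u|\lesssim\dist(\cdot,\FB(u))^{-1}$, and $(\star)$ makes $\nabla u$ almost constant—hence $v:=1-|\nabla u|^2\ge0$ small—on balls of size $\gtrsim(C_0^3M)^{-1/2}$ around free boundary points, so superharmonicity of $v$ (recall $\Delta v=-2|D^2u|^2\le0$ by \Cref{lem:vbounds}) and the Newtonian bound $\int_{B_\rho(z)}|D^2u|^2\lesssim\rho\,\sup v$ transfer the smallness inward.

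\emph{Proof of $(\star)$: a one-dimensional argument along normals.} Fix $y\in\FB(u)\cap B_{3/2}$ and rescale to the flat regime: with $r_0:=\eps_\circ/\max\{C_0,1\}$ the function $\tilde u(x):=r_0^{-1}u(y+r_0x)$ satisfies $\normsmall[L^\infty(B_1\cap\set{\tilde u>0})]{D^2\tilde u}\le\eps_\circ$, so (after a rotation) $\|\tilde u-x_n\|_{L^\infty}\le\eps_\circ$, and \Cref{lem:eps-reg-classical}--\Cref{lem:eps-reg-classical-2} give $\|\tilde u-x_n\|_{C^k(B_{1/2}\cap\set{\tilde u>0})}\le C_k\eps_\circ$, with $\FB(\tilde u)\cap B_{1/2}$ a small $C^\infty$ graph and $\set{\tilde u>0}$ locally a hypograph (\Cref{lem:cleanball}); set $\tilde M:=r_0M\ge\normsmall[L^\infty(\FB(\tilde u)\cap B_{1/2})]{\tilde H}$. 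By \Cref{lem:vbounds}, $\tilde v:=1-|\nabla\tilde u|^2\ge0$ is superharmonic in $\set{\tilde u>0}$, vanishes on $\FB(\tilde u)$, and $\partial_\nu\tilde v=2\tilde H\ge0$ there. Let $\nu$ be the inward normal to $\FB(\tilde u)$ at $0$ and $\phi(t):=\tilde v(t\nu)$ for $t\in[0,\delta_0]$, $\delta_0$ small universal. Then $\phi(0)=0$, $\phi'(0)=2\tilde H(0)\in[0,2\tilde M]$, $\phi\ge0$, $\|\phi'''\|_{L^\infty[0,\delta_0]}\le\Lambda\lesssim\|D^4\tilde u\|_{L^\infty(B_{1/2}\cap\set{\tilde u>0})}\lesssim1$ (by \Cref{lem:eps-reg-classical-2}), and, decomposing the Laplacian along $\FB(\tilde u)$ and using $-\tfrac12\Delta\tilde v=|D^2\tilde u|^2$ and $\tilde v|_{\FB(\tilde u)}=0$, we get $\phi''(0)=\partial^2_{\nu\nu}\tilde v(0)=-2|D^2\tilde u(0)|^2+O(\tilde M^2)=:-A$. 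Since $0\le\phi(t)=2\tilde H(0)\,t-\tfrac A2 t^2+O(\Lambda t^3)$ on $[0,\delta_0]$, the quadratic $2\tilde H(0)-\tfrac A2 t+O(\Lambda t^2)$ is $\ge0$ there, and a short dichotomy (minimum attained inside $[0,\delta_0]$ versus at $\delta_0$) forces $A\le C\bigl(\sqrt{\Lambda\tilde M}+\tilde M\bigr)$, hence $|D^2\tilde u(0)|^2\le\tfrac12A+O(\tilde M^2)\lesssim\sqrt{\tilde M}$. Undoing the rescaling, $|D^2u(y)|^2\lesssim r_0^{-3/2}\sqrt M\lesssim\sqrt{\max\{C_0^3,1\}\,M}$, which is already $(\star)$ whenever $\max\{C_0^3,1\}M\gtrsim1$.

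\emph{The main obstacle.} What remains—and this is the delicate part—is to upgrade the exponent from the lossy $M^{1/2}$ (produced by the $\sqrt{\Lambda\tilde M}$ term) to the sharp $M^{1}$, i.e.\ to obtain $(\star)$ also in the regime $\max\{C_0^3,1\}M\ll1$. The gap amounts precisely to the bound $\partial^2_{\nu\nu}\tilde v\ge-C\tilde M$ on $\FB(\tilde u)$, equivalently (via $|D^2\tilde u|^2|_{\FB}=2\tilde H^2-2K_{\FB}$) to a lower bound $K_{\FB}\ge-C\tilde M$ for the Gauss curvature of the free boundary. I expect to close it by a self‑improving iteration: the crude estimate $|D^2\tilde u|\lesssim\tilde M^{1/4}$ just obtained makes $\FB(\tilde u)$ extremely flat, which lets one replace the $O(1)$ bound on $\Lambda$ by $\Lambda\lesssim\|D^2\tilde u\|$ (again through \Cref{lem:eps-reg-classical-2} with the smaller Hessian bound), feeding $A\lesssim\sqrt{\Lambda\tilde M}\lesssim\sqrt{A\,\tilde M}$ and hence $A\lesssim\tilde M$; alternatively, one uses the Jerison--Savin remainders (\Cref{lem:stab-rem-int}--\Cref{lem:stab-rem-bdry}), which control the positive remainder in $\Delta(w^{1/3})\ge0$ and pin $|D^2\tilde u|^2|_{\FB}=2(\mu^2+\mu+1)\tilde H^2$, to bound $\partial^2_{\nu\nu}\tilde v$ directly by $\tilde H$. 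Carrying out this bootstrap (and the accompanying interior propagation) while tracking the powers of $C_0$ through the rescalings is where the real work lies; everything else is routine.
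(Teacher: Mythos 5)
Your reduction of the statement to a pointwise bound at free boundary points is workable in principle (modulo the imprecision that the sup of the subharmonic function $|D^2u|^2$ is controlled by its values on $\partial(B_r\cap\{u>0\})$, not only on $\FB(u)$), but the core estimate $(\star)$ is not proven, and the one--dimensional argument you build it on \emph{cannot} prove it. From the data $\phi\ge 0$, $\phi(0)=0$, $\phi'(0)=2\tilde H$, $\|\phi'''\|_{\infty}\le\Lambda$ alone, the sharp conclusion is $-\phi''(0)\le C\sqrt{\tilde H\Lambda}$ and no better: the cubic $\phi(t)=2\tilde Ht-\sqrt{\tilde H\Lambda/2}\,t^2+\tfrac{\Lambda}{6}t^3$ is nonnegative (its quotient by $t$ has negative discriminant) yet has $\phi''(0)=-\sqrt{2\tilde H\Lambda}$. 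So the square--root loss is intrinsic to working along a single normal line, not an artifact to be bootstrapped away. Moreover the bootstrap as written contains an algebraic error: $\Lambda$ controls $\partial^3_{\nu\nu\nu}\tilde v$, which contains the term $-2\,\nabla\tilde u\cdot D^4\tilde u\,[\nu,\nu,\nu]$; by \Cref{lem:eps-reg-classical-2} this is only $O(\|D^2\tilde u\|)=O(\sqrt{A})$, \emph{linear} in the size of the perturbation (take $\tilde u\approx x_n+\eps p_4$ with $p_4$ an even harmonic quartic: then $D^4\tilde u\sim\eps\sim\|D^2\tilde u\|$, not $\eps^2$). Hence $\sqrt{\Lambda\tilde M}\lesssim A^{1/4}\tilde M^{1/2}$, not $\sqrt{A\tilde M}$, and the iteration $A\lesssim A^{1/4}\tilde M^{1/2}+\tilde M$ stalls at $A\lesssim\tilde M^{2/3}$. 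The alternative via \Cref{lem:stab-rem-int}--\Cref{lem:stab-rem-bdry} is not available either: those are consequences of the Jerison--Savin subsolution computation whose use in an estimate requires the \emph{stability} inequality, which is not among the hypotheses of this proposition, and in any case the identity $|D^2u|^2|_{\FB}=2(\mu^2+\mu+1)H^2$ gives no bound on $\mu$ by itself.

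The missing ingredient is genuinely multidimensional: superharmonicity of $v=1-|\nabla u|^2$ on full balls, fed into the quantitative Hopf lemma (\Cref{lem:Hopf-quant}). For $y$ at distance $d$ from $\FB(u)$ one touches $\{v>0\}$ from inside with $B_d(y)$ and gets $\fint_{B_{d/2}(y)}v\le C\,d\,\partial_\nu v(z_0)\le C\,d\,M$ --- a bound \emph{linear} in $M$ that a single normal line cannot see (on a line, $\phi\ge0$, $\phi(0)=0$, $\phi'(0)\le 2M$ does not imply $\phi(t)\lesssim Mt$). Summing this over a dyadic slab decomposition of $B_{1/2}\cap\{u>0\}$ yields $\int_{B_{1/2}}v\le C\max\{C_0^3,1\}M$ (\Cref{lem:anal-exp-cover0}); then the integration by parts
\begin{equation*}
\int_{\{u>0\}}|D^2u|^2\eta\,dx=-\tfrac12\int_{\{u>0\}}\Delta v\,\eta\,dx=\tfrac12\int_{\FB(u)}\partial_\nu v\,\eta\,d\cH^{n-1}-\tfrac12\int_{\{u>0\}}v\,\Delta\eta\,dx
\end{equation*}
together with the perimeter bound gives $\int_{B_{1/4}}|D^2u|^2\lesssim\max\{C_0^3,1\}M$ (\Cref{lem:control_D2u}), and the $L^\infty$ bound follows from \Cref{cor:Hess-W21}. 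If you want to salvage your structure, replace the Taylor expansion of $\phi$ by this Hopf/integration-by-parts mechanism; as it stands, the proof only reaches $\|D^2u\|^2\lesssim M^{1/2}$ (at best $M^{2/3}$), which is strictly weaker than the statement.
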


To show this result, we first prove some preliminary lemmas. The first one shows that the mean curvature controls the $L^1$ norm of~$1-|\nabla u|^2 \in [0, 1]
$:

\begin{lem}[Controlling $1-|\nabla u|^2$]
	\label{lem:anal-exp-cover0}
Under the assumptions of \Cref{prop:H_to_Hessian}, let ${v}$ be defined as in \eqref{eq:def_v}.  Then
\[
\int_{B_{1/2}\cap \set{u>0}}
	 {v}
	\,dx
\leq C\max\left\{C_0^3,1\right\} \normsmall[L^\infty(B_2\cap \FB(u))]{H},
\]
for some universal $C$.
\end{lem}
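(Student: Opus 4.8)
Write $g:=\|H\|_{L^\infty(B_2\cap\FB(u))}$. From \Cref{lem:vbounds} we know that $0\le v\le 1$ in $B_2$, that $v=0$ on $\FB(u)\cap B_2$, that $\Delta v=-2|D^2u|^2\le 0$ in $\Omega=\{u>0\}$, and that $\partial_\nu v=-2\partial^2_{\nu\nu}u=2H$ on $\FB(u)$, where $\nu$ is the inward unit normal. Since $v\le 1$ trivially gives $\int_{B_{1/2}\cap\Omega}v\le|B_{1/2}|$, we may assume $g$ is as small as we like, because otherwise the desired bound is automatic (with a large universal $C$, using $\mathcal H^2(\FB(u)\cap B_{3/2})\le C$, which follows from $\Delta u=\mathcal H^2|_{\FB(u)}$ tested against a cutoff, exactly as in the proof of \Cref{lem:perbound}, together with $|\nabla u|\le1$). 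Thus the content is the regime $g\ll 1/\max\{C_0^3,1\}$.

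\textbf{An integral identity.} The natural first step is to extract the mean‑curvature term by a Rellich/Green identity. Using $1=\Delta\big(\tfrac{|x|^2}{2n}\big)$ and Green's second identity on $D_r:=\Omega\cap B_r$ (with $1/2\le r\le 3/2$), together with $v=0$ on $\FB(u)$, $\partial_\nu v=2H$ there, and the divergence identity $\int_{\partial B_r\cap\Omega}\partial_r v=-2\int_{D_r}|D^2u|^2+2\int_{\FB(u)\cap B_r}H$, one obtains
\[
\int_{D_r}v\,dx=\frac1n\int_{D_r}(r^2-|x|^2)\,|D^2u|^2\,dx+\frac1n\int_{\FB(u)\cap B_r}(|x|^2-r^2)\,H\,d\mathcal H^{2}+\frac{r}{n}\int_{\partial B_r\cap\Omega}v\,d\mathcal H^{2}.
\]
Here the middle term is $\le 0$ (as $|x|\le r$ and $H\ge0$ on $\FB(u)\cap B_r$), so it only helps; the sphere term is handled by choosing, via the coarea formula, a good radius $r_*\in(1,\tfrac32)$ for which $\int_{\partial B_{r_*}\cap\Omega}v\le C\int_{B_{3/2}\cap\Omega}v$. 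The remaining, and decisive, task is to control the weighted Hessian term $\int_{D_{r_*}}|D^2u|^2$. A companion identity, obtained by testing $\Delta v=-2|D^2u|^2$ against a cutoff $\varphi\in C_c^\infty(B_{3/2})$ with $\varphi\equiv1$ on $B_1$, namely $\int_{B_1\cap\Omega}\varphi\,|D^2u|^2=-\tfrac12\int v\,\Delta\varphi+\int_{\FB(u)\cap B_1}\varphi\,H$, shows that $\int_{B_1\cap\Omega}|D^2u|^2\le C\int_{B_{3/2}\cap\Omega}v+Cg$. Thus $\int v$ and $\int|D^2u|^2$ control each other up to universal constants, and $g$; breaking this coupling is the heart of the matter.

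\textbf{Decoupling via the free boundary geometry (the main obstacle).} To close the argument one has to show that $\int_{B_{3/2}\cap\Omega}|D^2u|^2$ is genuinely \emph{small} (proportional to $g$, with an error blowing up at most like $\max\{C_0^3,1\}$), rather than merely bounded by $C_0^2|B_{3/2}|$. This is where the hypothesis $|D^2u|\le C_0$ enters decisively: it forces $\FB(u)$ to be a $C^{1,1}$ hypersurface with principal curvatures $\le CC_0$, and by the clean ball property \Cref{lem:cleanball} (and \Cref{cor_closetoV_disc_reg2}) the set $\{u>0\}$ is, near the free boundary and at scale $\delta_0:=c/\max\{C_0,1\}$, the union of at most two $C^{1,1}$ hypographs; rescaling by $\delta_0$ and invoking the $\varepsilon$‑regularity estimates \Cref{lem:eps-reg-classical} and \Cref{lem:eps-reg-classical-2} one sees that there $u$ is $O(\delta_0 C_0)$‑close to a half‑space solution or a vee, with quantitative control of $D^3u$. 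The plan is then to combine this local near‑flatness with a one‑dimensional concavity estimate along the normal direction (exploiting that $v\ge0$ is superharmonic, vanishes on $\FB(u)$, and has normal derivative $2H\le 2g$ there) and a compactness/rigidity input: a classical Bernoulli solution in $B_2$ with $0\in\FB(u)$, $|D^2u|\le C_0$ and $H\equiv 0$ on $\FB(u)\cap B_2$ must, by Hopf's lemma applied to $v$ and unique continuation, have $v\equiv0$ near $\FB(u)$ and hence be affine (a half‑space solution or a vee) inside $B_{1/2}$; the quantitative version of this, localized and rescaled, yields $\int_{B_{3/2}\cap\Omega}|D^2u|^2\le C\max\{C_0^3,1\}\,g$. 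Feeding this back into the Rellich identity and the coarea choice of $r_*$ gives $\int_{B_{1/2}\cap\Omega}v\le C\max\{C_0^3,1\}\,g$, as claimed. I expect the delicate point to be precisely this quantitative rigidity step—turning the crude bound $v\lesssim C_0\,\mathrm{dist}(\cdot,\FB(u))$ near the free boundary into a bound linear in $g$—since it is here, and in the accompanying rescalings, that the power $\max\{C_0^3,1\}$ is produced.
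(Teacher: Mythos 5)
Your reduction to small $g$, your Rellich/Green identity on $D_r=\Omega\cap B_r$ (the signs check out: the free boundary term $\tfrac1n\int(|x|^2-r^2)H$ is indeed $\le 0$), and the good-radius selection for the sphere term are all fine. The problem is the decisive step: you need $\int_{B_{3/2}\cap\Omega}|D^2u|^2\le C\max\{C_0^3,1\}\,g$ \emph{before} you know the conclusion, and you do not prove it — you explicitly defer it to a ``quantitative rigidity'' input that is only named, not established. Worse, that Hessian bound is exactly \Cref{lem:control_D2u} in the paper, whose proof (testing $-\Delta v=2|D^2u|^2$ against a cutoff) \emph{uses} the present lemma to absorb the term $\int v\,(-\Delta\eta)$; so your route is circular unless you supply an independent argument. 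The compactness/unique-continuation rigidity you sketch (if $H\equiv0$ then $v\equiv0$, hence $u$ affine) is qualitatively correct — it is \Cref{lem:vbounds} plus Hopf — but compactness only yields $\int|D^2u|^2=o(1)$ as $g\to0$, not the linear-in-$g$ rate you need to feed back into the Rellich identity. As your own ``companion identity'' shows, $\int v$ and $\int|D^2u|^2$ control each other up to $O(g)$ with constants of order one, so no bootstrap closes; a genuinely new quantitative input at the free boundary is required, and your proposal does not contain it.

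The paper supplies that input directly and bypasses the Rellich identity entirely: cover $B_{1/2}\cap\{u>0\}$ by dyadic slabs $\cS_k=\{2^{-k-1}\le\dist(\cdot,\FB(u))\le 2^{-k}\}$, each covered by $C2^{2k}$ balls $B_{d_y}(y)$ (or by $CC_0^3$ balls when $2^{-k}\ge\tfrac1{2C_0}$); on each such ball, $B_{d_y}(y)$ touches $\FB(u)$ from inside, $v\ge0$ is superharmonic, vanishes at the touching point, and has normal derivative $2H\le 2g$ there, so the quantitative Hopf lemma (\Cref{lem:Hopf-quant}) gives $\fint_{B_{d_y/2}(y)}v\le C\,d_y\,g$ — this is precisely the bound linear in $g$ that your approach lacks. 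Summing $C2^{2k}\cdot 2^{-4k}g$ over $k$ (plus the finitely many large balls contributing $C_0^3\,g$) gives the claim. If you want to rescue your scheme, replace the compactness step by this Hopf-on-touching-balls estimate; at that point, however, the Rellich identity becomes superfluous.
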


\begin{proof}
For simplicity, let us denote $H_0 := \normsmall[L^\infty(B_2\cap \FB(u))]{H}$. We divide $B_{1/2}\cap \set{u>0}$ into slabs $\cS_k:=\setsmall{y\in B_{1/2}:\,2^{-k-1}\leq d_y \leq 2^{-k}}$, $k\ge 1$, where $d_y = \dist(y, \FB(u))$. Then:
\begin{itemize}
\item Since $|D^2u|\leq C_0$, we have $|\nabla u|\geq \frac12$ in $\{d_y \le \tfrac{1}{2C_0}\}$ and the area of $B_{1/2}\cap \setsmall{d_y=t}$ for $t\leq\frac{1}{2C_0}$ is comparable to the area of $B_{1/2}\cap \FB(u)$ (see e.g. \cite[eq. (3.4)]{WW19}), hence universally bounded (see \Cref{lem:perbound}). 
In particular, for $2^{-k}\leq \frac{1}{2C_0}$, $\cS_k$ can be covered by $C 2^{2k}$ balls of radius $2^{-k}$.

\item For $2^{-k}\geq \frac{1}{2C_0}$, $\cS_k$ can be (trivially) covered by $C C_0^3$ balls of radius $\frac{1}{4C_0}$.
\end{itemize}
We now observe the validity of the following
Hopf-type estimate: given $y \in B_{1/2}$, consider the superharmonic function $ {v}_{y,d_y}(z)= {v}(y+d_y z)$ for $z\in B_1$, and note that $B_1$ touches $\partial \{ v_{y,d_y}>0\}$ from the interior at some point $z_0$. Then, combining \Cref{lem:Hopf-quant} and \Cref{lem:vbounds}, we get
\[
\fint_{B_{d_{y}/2}(y)}
	{v}(x)\,dx = \fint_{B_{1/2}} {v}_{y,d_y}(z)\,dz
\leq C\,\partial_{\nu} {v}_{y,d_y}(z_0)
\leq C d_y H_0.
\]
Applying this bound inside each of the balls $B_{d_{y_{k,j}}}(y_{k,j})$ constructed above to cover the slabs $\cS_k$, since $d_{y_{k,j}}=2^{-k}$ we get  
\begin{align*}
\int_{B_{1/2}}
	 \hspace{-0.3mm} {v}(x)
\,dx
&
\leq \hspace{-0.3mm}
\bigg(
	\sum_{2^k=2C_0}^{\infty} \hspace{-1mm}
	\sum_{j=1}^{C 2^{2k}}
	\hspace{-1mm}+\hspace{-0.5mm}\sum_{2^k=1}^{4C_0}
	\sum_{j=1}^{CC_0^3}
\bigg)
	\int_{B_{d_{y_{k,j}}/{2}}(y_{k,j})}
		\hspace{-0.6mm} v(x)
	\,dx
\leq C 
\bigg(
	\sum_{2^k=2C_0}^{\infty} \hspace{-1.5mm}
	2^{2k}
	d_{y_{k,j}}^{4} 
	 \hspace{-0.6mm}+
	\sum_{2^k=1}^{4C_0}
	C_0^3
	d_{y_{k,j}}^{4}
\bigg)H_0\\
&\leq
	C \max\left\{C_0^3,1\right\}
	\sum_{k=1}^{\infty}
		2^{2k}2^{-4k}H_0
\leq C\max\left\{C_0^3,1\right\} H_0.
\qedhere
\end{align*}
\end{proof}

Thanks to the previous lemma, we now show that the mean curvature controls $L^2$ norm of the Hessian.

\begin{lem}[Controlling $D^2u$]
\label{lem:control_D2u}
Under the assumptions of \Cref{prop:H_to_Hessian}, we have 
\[
\int_{B_{1/4}\cap \set{ {u}>0}}
	|D^2 u|^2
\,dx
\leq C\max\left\{C_0^3,1\right\} \normsmall[L^\infty(B_2\cap \FB(u))]{H},
\]
for some $C$ universal.
\end{lem}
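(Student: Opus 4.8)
The plan is to turn the desired Hessian bound into a Caccioppoli-type identity for the function $v:=1-|\nabla u|^2$, exploiting that by \Cref{lem:vbounds} one has $\Delta v=-2|D^2u|^2$ in $\set{u>0}$ and $v=0$ on $\FB(u)$, and then to feed in the $L^1$ bound on $v$ already proved in \Cref{lem:anal-exp-cover0}. Note first that, since $|D^2u|\le C_0$ in $B_2$, \Cref{lem:eps-reg-classical-2} (applied on, say, $B_1$) shows that $u$ is smooth up to $\FB(u)$ and that $\FB(u)$ is a smooth hypersurface there; hence all the integrations by parts below are legitimate.

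First I would fix a cutoff $\varphi\in C^\infty_c(B_{1/2})$ with $\varphi\equiv 1$ on $B_{1/4}$, $0\le\varphi\le 1$ and $\|\varphi\|_{C^2}\le C$, set $\Omega:=B_{1/2}\cap\set{u>0}$, multiply $\Delta v=-2|D^2u|^2$ by $\varphi$, and integrate over $\Omega$. Since $\varphi$ has compact support in $B_{1/2}$ the contribution on $\partial B_{1/2}$ vanishes, while on $\FB(u)$ one uses $\partial_\nu v=2H$ (with $\nu$ the inward normal, again from \Cref{lem:vbounds}); this yields
\[
2\int_{\Omega}\varphi\,|D^2u|^2\,dx
=\int_{\Omega}\nabla\varphi\cdot\nabla v\,dx+2\int_{\FB(u)\cap B_{1/2}}\varphi\,H\,d\cH^2.
\]
The free-boundary term is bounded by $2\|H\|_{L^\infty(B_2\cap\FB(u))}\,\cH^2(\FB(u)\cap B_{1/2})\le C\|H\|_{L^\infty(B_2\cap\FB(u))}$, using the universal area bound of \Cref{lem:perbound}.

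The point — and the only step requiring a bit of care — is the term $\int_{\Omega}\nabla\varphi\cdot\nabla v$. Rather than estimating it via $|\nabla v|\le 2|D^2u|$ and then trying to absorb $\int\varphi|D^2u|^2$ (which would leave behind a purely universal constant, not one proportional to $\|H\|_{L^\infty}$), I would integrate by parts a second time: since $\varphi$ is compactly supported in $B_{1/2}$ and $v=0$ on $\FB(u)$, all boundary terms vanish and $\int_{\Omega}\nabla\varphi\cdot\nabla v=-\int_{\Omega}v\,\Delta\varphi$, which is controlled by $\|\Delta\varphi\|_{L^\infty}\int_{B_{1/2}\cap\set{u>0}}v\,dx$. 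Now \Cref{lem:anal-exp-cover0} gives $\int_{B_{1/2}\cap\set{u>0}}v\,dx\le C\max\{C_0^3,1\}\|H\|_{L^\infty(B_2\cap\FB(u))}$.

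Combining the three estimates and using $\varphi\equiv 1$ on $B_{1/4}$ gives
\[
\int_{B_{1/4}\cap\set{u>0}}|D^2u|^2\,dx\le\int_{\Omega}\varphi\,|D^2u|^2\,dx\le C\max\{C_0^3,1\}\,\|H\|_{L^\infty(B_2\cap\FB(u))},
\]
which is the claim. I do not anticipate a genuine obstacle here; the only thing to get right is the double integration by parts (so as to invoke the $L^1$ bound on $v$ instead of a Caccioppoli absorption), plus the routine verification that the divergence theorem applies, which is guaranteed by the smoothness of $u$ and of $\FB(u)$ coming from \Cref{lem:eps-reg-classical-2} under the hypothesis $|D^2u|\le C_0$.
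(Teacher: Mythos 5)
Your proposal is correct and follows essentially the same route as the paper: test $\Delta v=-2|D^2u|^2$ against a cutoff, use Green's identity so that the interior term becomes $-\int v\,\Delta\varphi$ (controlled by \Cref{lem:anal-exp-cover0}) and the boundary term becomes $\int \varphi\,\partial_\nu v=2\int\varphi H$ (controlled by $\|H\|_{L^\infty}$ together with the area bound of \Cref{lem:perbound}). The only cosmetic difference is that you perform the two integrations by parts separately where the paper writes the Green identity in one step.
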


\begin{proof}
Let $\eta \in C_c^\infty(B_{1/2})$ be a non-negative
cut-off function satisfying $\eta\equiv 1$ inside $B_{1/4}$, and let $v$ be defined as in \eqref{eq:def_v}. We compute
\begin{align*}
\int_{B_{1/4}\cap \set{ {u}>0}}
	|D^2 {u}|^2
\,dx
&\leq \int_{B_{1/2}\cap \set{u>0}}
	|D^2{u}|^2 \eta
\,dx
=\frac{1}{2}
\int_{B_{1/2}\cap \set{u>0}}
	-\Delta {v}\cdot\eta
\,dx\\
&=\frac{1}{2}
	\int_{B_{1/2} \cap  \FB(u)}
		\partial_{\nu} {v}\cdot \eta
	\,d\cH^{n-1}
-\frac{1}{2}
	\int_{B_{1/2}\cap \set{ {u}>0}}
		 {v}\, (-\Delta \eta)
	\,dx\\
&\leq
	C \int_{B_{1/2} \cap  \FB(u)}
		 {H}
	\,d\cH^{n-1}
	+C \int_{B_{1/2}\cap \set{ {u}>0}}
		 {v}
	\,dx.
\end{align*}
Combining \Cref{lem:perbound} and \Cref{lem:anal-exp-cover0}, the result follows.
\end{proof}

We can now proceed with the proof of \Cref{prop:H_to_Hessian}.

\begin{proof}[Proof of \Cref{prop:H_to_Hessian}]
By a scaling and covering argument,
\Cref{lem:control_D2u} holds replacing $B_{1/4}$ with $B_1$ in the left-hand side. Hence, thanks to 
H\"older's inequality, we get
\[
\bigg( \int_{B_{1}\cap \set{ {u}>0}}
	|D^2 u|
\,dx
\bigg)^2 
\le C\int_{B_{1}\cap \set{ {u}>0}}
	|D^2 u|^2
\,dx
\leq C\max\left\{C_0^3,1\right\}
\normsmall[L^\infty(B_2\cap \FB(u))]{H},
\]
with $C$ universal. Recalling \Cref{cor:Hess-W21}, this concludes the proof.
\end{proof}

\subsubsection{Conclusion}
\label{ssec:analytic}
We can now finally prove
\Cref{thm:deltakappa}.
\begin{proof}[Proof of \Cref{thm:deltakappa}]  
Without loss of generality we can assume that 
\[
C_0 \leq \delta_0
\]
for some $\delta_0$ small universal constant to be chosen.

In fact, once the result is known when $C_0$ is sufficiently small, the general case follows by replacing $u$ with $u_r(x)=\frac{1}r u(rx)$ with $r=\delta_0/C_0$. 
Indeed $|D^2u_r|\leq \delta_0$ inside $B_{2/r}$, so applying the result to $u_r$ together with a covering   yields the desired estimate for $u$ near the free boundary. Using \Cref{prop:H_to_Hessian} to relate a Hessian bound on the boundary with a Hessian bound in the interior, yields the desired result (up to redefining $\kappa$).

So, from now on, we assume that $\|D^2 u\|_{L^\infty(B_2\cap \set{u>0})} \le \delta_0$ for some $\delta_0$ sufficiently small, to be fixed later.
We divide the proof into five steps.

\medskip 
\noindent {\bf Step 1:} We first perform an expansion of $u$ around the origin.

Since $\|D^2 u\|_{L^\infty(B_2\cap \set{u>0})} \le \delta_0$, we have
\begin{equation}\label{eq:anal-exp-H}
0<H\leq  C \delta_0
	\quad \text{ on } B_2 \cap \partial\set{u>0}.
\end{equation}
We select a subset on which $H$ satisfies a doubling property as follows. Let
\[
2\delta^3:=\max_{B_{3/2} \cap \partial\{u>0\}}
	\left(\tfrac32-|x|\right) H(x) \le C \delta_0
\]
be attained at $x_0$. Then, by \eqref{eq:anal-exp-H}, $r_0:=\frac32-|x_0|\geq \frac{1}{C\delta_0}\delta^3\geq \delta^3$. For $y\in B_{1/2}$, set
\[
\bar{u}(y)=\frac{1}{r_0}u(x_0+r_0y),
	\qquad
\bar{H}(y)=r_0H(x_0+r_0y)\quad \text{for }y \in \partial\{\bar u>0\}.
\]
Note that
\begin{equation}\label{eq:anal-exp-H-bar}
\bar{H}(0)=2\delta^3,
	\qquad
\bar{H}(y)\leq 4\delta^3
	\quad \text{ for } y\in B_{1/2}.
\end{equation}
In addition, since $|D^2 \bar u|\le C\delta_0$ in $B_{1}$, \Cref{lem:control_D2u} gives
\[
\int_{B_{1/8}\cap \{\bar u > 0 \}} |D^2 \bar u|^3 \,dx \le C \delta_0 \int_{B_{1/8}\cap \{\bar u > 0 \}}|D^2 \bar u|^2 \,dx \le C \delta^3, 
\]
and therefore (recall \Cref{lem:eps-reg-Hess}, assuming $\delta$ small),    
\begin{equation}
\label{eq:Dkbaru_bounds}
|D^k\bar u|\le C\delta \qquad\text{in}\quad B_{1/16}\cap \{\bar u > 0\},\quad\text{for}\quad k = 2, 3, 4. 
\end{equation}
Since $\delta^3 \le C \delta_0$, in order to have $\delta$ sufficiently small it is enough to assume $\delta_0$ small. 

Let us use expansions of $\bar u$ around $0$. Up to a rotation, we also assume $\nu(0)=\nabla \bar u(0)=e_3$. Thus, in principal coordinates, we have 
\begin{equation}
\label{eq:expbaru}
 \bar u(x)=x_3+ \sum_{i=1}^{3}a_ix_i^2
+\sum_{i=1}^{3}A_{iii}x_i^3
+\sum_{\substack{1\leq i \neq j\leq 3 }}A_{iij}x_i^2x_j
+A_{123}x_1x_2x_3
+O( |x|^4)\qquad\text{in}\quad \{\bar u > 0\}.
\end{equation}
where all the coefficients are bounded, and the big $O$ notation is with universal constants.

Thanks to this expansion, it follows that
\begin{equation}\label{eq:anal-exp-FBu}
\FB(\bar u)=\set{x_3=- \sum_{i=1}^{3}a_ix_i^2+O(|x|^3)}
=\set{x_3=- \sum_{i=1}^{2}a_ix_i^2+O(|x|^3)}.
\end{equation}
Also $a_3=-\delta^3$ (since $\bar H(0)=2\delta^3$), and the harmonicity of $\bar u$ inside $B_1\cap \set{\bar u>0}$ gives
\begin{equation}\label{eq:anal-exp-D2u}
0=\frac12\Delta \bar u
= (a_1+a_2-\delta^3)+3\sum_{i=1}^{3}A_{iii}x_i
+\hspace{-2mm}\sum_{\substack{1\leq i\neq j\leq 3 }}\hspace{-2mm}A_{iij}x_j+O(|x|^2)\ \Rightarrow \  \begin{cases}
a_1+a_2=\delta^3,\\
3A_{333}+A_{113}+A_{223}=0.
\end{cases}
\end{equation}
Let us now obtain a relation from the fact that $|\nabla \bar u|=1$ on 
$\FB(\bar u)$.
Since
\begin{align*}
\nabla \bar u
=\big( 
2  a_1x_1+O(|x|^2), \ \
2  a_2x_2+O(|x|^2) ,\ \
1+2  a_3x_3+A_{113}x_1^2+A_{223}x_2^2+A_{123}x_1x_2+O(x_3^2)+O(|x|^3)
\big)^\top,
\end{align*}
 then on $\FB(\bar u)$ we have (recall \eqref{eq:anal-exp-FBu} and that $a_3=-\delta^3$)
\begin{align*}
	(\nabla \bar u)_3
	=\begin{pmatrix}
1+2\delta^3(a_1x_1^2+a_2x_2^2)+A_{113}x_1^2+A_{223}x_2^2+A_{123}x_1x_2+O(|x|^3)
	\end{pmatrix},
\end{align*}
and therefore
\begin{align*}
1=|\nabla \bar u|^2
&=4 (a_1^2x_1^2+a_2^2x_2^2)
+1+4\delta^3(a_1x_1^2+a_2x^2)
+2A_{113}x_1^2+2A_{223}x_2^2+2A_{123}x_1x_2
+O(|x|^3)
\end{align*}
for $x \in \FB(\bar u)$.
Looking at the coefficients of $x_1^2$ and $x_2^2$, 
we deduce that
\begin{equation}
\label{eq:toapprox}
\begin{array}{rclrcl}
  \bar u_{113}(0)
=2A_{113}
=-4 a_1^2-4\delta^3 a_1,\qquad
\bar u_{223}(0)
=2A_{223}
=-4 a_2^2-4 \delta^3 a_2.
\end{array}
\end{equation}

\medskip 
\noindent {\bf Step 2:} By analyzing the remainder in the stability inequality, we identify two possible regimes.

More precisely, since the remainder of the boundary inequality in \Cref{lem:stab-rem-bdry} vanishes whenever $(\lambda_1,\lambda_2,\lambda_3)=(2\bar H,-\bar H,-\bar H)$, we analyze two cases depending on the closeness of $D^2\bar u(0)$ to $\diag(2\bar H(0),-\bar H(0),-\bar H(0))$.

To this aim, let us write the eigenvalues of $D^2\bar u$ on the $\FB(\bar u )$ as
\begin{equation}
\label{eq:mu_in_some_case}
\lambda_1=(\mu+1)\bar H,
\quad
\lambda_2=-\mu \bar H, \quad
\lambda_3=- \bar H,
\end{equation}
where $\mu:\FB(\bar u)\to [-\tfrac12,+\infty)$ is defined as 
\[
\mu(x)+1:=\max_{\tau \in \mathbb{S}^2 : \tau \cdot\nabla \bar u(x) = 0}\frac{\partial^2_{\tau \tau}\bar u (x)}{\bar H(x)}.
\]
Then, thanks to \Cref{lem:stab-rem-bdry}, $w := F(D^2 \bar u)$ satisfies
\begin{equation}\label{eq:stab-rem-bdry}
(w^{\frac13})_{\nu}+\bar Hw^{\frac13}
={\frac13}w^{-{\frac23}}\Big(w_\nu+3\bar Hw\Big)
\geq {\frac{1}{12}}\min\set{(\mu-1)^2,1}\bar Hw^{\frac13}.
\end{equation}
Consider a small threshold $\eps_{\rm E}\in(0,1)$ to be fixed later. At $0\in\FB(\bar u)$, we will distinguish between two cases:
\begin{enumerate}
\item \label{it:E-far} $|\mu(0)-1| \geq \eps_{\rm E}$;
\item \label{it:E-near} $|\mu(0)-1| < \eps_{\rm E}$.
\end{enumerate}

\medskip 
\noindent {\bf Step 3:} Case \eqref{it:E-far} holds.

We observe first that, in a neighborhood of $0$, $\mu(x)$ is a Lipschitz function. More precisely, since $\bar H (0) = 2\delta^3$ and $|D^3 \bar u|\le C \delta$, we have $\bar H(x) \ge  \delta^3 $ on $\FB(\bar u)\cap B_{c\delta^2}$ for some $c>0$ small. Therefore 
\[
\left|\nabla_{\tau'}\frac{\partial^2_{\tau \tau}\bar u(x)}{\bar H(x)} \right|\le C\bigg(\frac{\delta}{\bar H(x)}+\frac{\delta^2}{\bar H^2(x)}\bigg) \le C \delta^{-4} \qquad\text{for all}\quad x\in  \FB(\bar u)\cap B_{c\delta^2},\quad  \tau,\tau'\in \mathbb{S}^2\cap \nabla \bar u(x)^\perp.
\]
This implies that $\mu(x)$ is obtained as the maximum of Lipschitz functions with gradient bounded by $C \delta^{-4}$, thus
\[
|\nabla_{\tau'} \mu(x)|\le  C  \delta^{-4}\qquad\text{for all}\quad \FB(\bar u)\cap B_{c\delta^2},\quad  \tau'\in \mathbb{S}^2\cap \nabla \bar u(x)^\perp,
\]
for some universal $C$.
Thus, since we are in case \eqref{it:E-far},
\begin{equation}\label{eq:stab-rem-bdry-mu}
|\mu(x) - 1|\ge \frac{\eps_E}{2},\qquad\text{for all}\quad x\in \FB(\bar u)\cap B_{\delta^5},
\end{equation}
where $\delta$ is small enough depending on $\eps_E$, which will be fixed universal. Hence, since $w \ge \bar H$, thanks to \eqref{eq:stab-rem-bdry}--\eqref{eq:stab-rem-bdry-mu} we get
\[
\ccI(\bar u, B_1) \ge \ccI(\bar u, B_{\delta^5}) \geq \int_{\FB(\bar u)\cap B_{\delta^5}}  w^{\frac13}\left((w^{\frac13})_{\nu}+\bar Hw^{\frac13}\right)\,d\mathcal H^2\ge  \frac{\eps_E^2}{48}\int_{\FB(\bar u)\cap B_{\delta^5}} \bar H w^{\frac23}d\cH^2 \ge  c \eps^2_E \delta^{15}.
\]

\medskip 
\noindent {\bf Step 4:} Case \eqref{it:E-near} holds.

In this case we have that 
\begin{equation}\label{eq:stab-rem-2-1-1}
|D^2\bar u(0)-2\delta^3{\rm diag}(2,-1,-1)|\leq 2\eps_E \delta^3.
\end{equation}
Let $A(x) := \delta^{-3}D^2 \bar u(\delta^2 x)$.
Then, recalling \eqref{eq:Dkbaru_bounds}, 
\begin{equation}
    \label{eq:AboundsD2}
|A(0)  - 2{\rm diag}(2, -1, -1)|\le 2\eps_E,\qquad\text{and}\qquad |DA(x)|+|D^2 A(x)|\le C\quad\text{in}\quad B_1\cap \{\bar u(\delta^2\, \cdot\,) > 0\}. 
\end{equation}
In particular, if we denote by $\lambda_1^A(x)$ and $e_1^A(x)$ respectively the largest eigenvalue and the corresponding (unit) eigenvector of $A(x)$,
then $\lambda_1^A(x)$ is simple near the origin. Hence, because of \eqref{eq:AboundsD2},
\[
|\nabla \lambda_1^A(x)| + |D^2\lambda_1^A(x)| + |D e_1^A(x)|\le C\quad\text{for}\quad x\in B_{c}\cap \{\bar u(\delta^2\, \cdot\, ) > 0\},
\]
for some $c$ and $C$ universal. 
Thus, if we denote by $(\lambda_1(x), \lambda_2(x), \lambda_3(x))$ the eigenvalues of $D^2\bar u(x)$ at $x\in B_{\delta^3}\cap \overline{\{\bar u > 0\}}$, with $\lambda_1(x)$ being the largest one,  and by $e_1(x)$ the (unit) eigenvector corresponding  to $\lambda_1(x)$, then 
\begin{equation}
\label{eq:lambda1e1}
\delta^{-1}|\nabla \lambda_1(x)| + \delta |D^2\lambda_1(x)| + \delta^2|D e_1(x)|\le C\quad\text{for}\quad x\in B_{c\delta^2}\cap \{\bar u > 0\}.
\end{equation}
Furthermore, by \eqref{eq:stab-rem-2-1-1} and \eqref{eq:lambda1e1},
\begin{equation}
\label{eq:lambdabounds}
|\lambda_1(x) - 4\delta^3| \le 4\eps_E \delta^3,\qquad|\lambda_2(x) + 2\delta^3|\le 4\eps_E \delta^3,\qquad |\lambda_3(x) +2\delta^3|\le 4\eps_E \delta^3\qquad\text{for}\quad x\in B_{c\delta^2}\cap \{\bar u > 0\}.
\end{equation}
Recalling the interior inequality from  \Cref{lem:stab-rem-int}, since $ff_{\lambda_1}=\lambda_1$, $ff_{\lambda_2}=4\lambda_2$, $ff_{\lambda_3}=4\lambda_3$, and $(\lambda_i - \lambda_j) (f_{\lambda_i}- f_{\lambda_j})\ge 0$, we get (notice $w^{\frac13} \Delta w^{\frac13} = \tfrac13 w^{-\frac43}\left(w\Delta w -\tfrac23 |\nabla w|^2\right))$
\begin{equation}\label{eq:stab-remain}
\begin{split}
\ccI(\bar u, B_1) \ge \ccI(\bar u, B_{\delta^3}) &\ge \frac{2}{9}
\int_{\{\bar u > 0\}\cap B_{\delta^3} }
	\sum_{\substack{1\leq k\leq 3,\,i<j\\\set{i,j,k}=\set{1,2,3}}}
	\frac{
		(\lambda_i-\lambda_j)(f_{\lambda_i}-f_{\lambda_j})
	}{
		(\lambda_i-\lambda_k)(f_{\lambda_i}-f_{\lambda_k})
		+(\lambda_j-\lambda_k)(f_{\lambda_j}-f_{\lambda_k})
	}\,w_k^2 w^{-\frac43} 
\,dx\\
&\geq
\frac{2}{9}
\int_{\{\bar u > 0\}\cap B_{\delta^3}}
	 \sum_{\{j, k\} = \{2, 3\}} \frac{
		(\lambda_1-\lambda_j)
		(\lambda_1-4\lambda_j)}
	{
		(\lambda_1-\lambda_k)
		(\lambda_1-4\lambda_k)
		+4(\lambda_j-\lambda_k)^2
	} \,w_k^2 w^{-\frac43}\, dx
 \\
&\geq\frac{2}{9}
\int_{\{\bar u > 0\}\cap B_{\delta^3}}\hspace{-2mm}
	\frac{
		(6-8\eps_{\rm E})
		(12-20\eps_{\rm E})
	}{
		(6+8\eps_{\rm E})
		(12+20\eps_{\rm E}) + 256\eps_E^2
	}
	 \,(w_2^2 + w_3^2) w^{-\frac43}
\,dx \geq
	\frac{1}{9}
	\int_{\{\bar u > 0\}\cap B_{\delta^3}}\hspace{-2mm}
		 g\, w^{-\frac43-2} 
	\,dx,
\end{split}\end{equation}
for $\eps_E$ small universal, where we have denoted 
\[
g = w^2(w_2^2+w_3^2) = w^2(|\nabla w|^2 - w_1^2) = \frac14 \left(\big|\nabla (w^2)\big|^2-\big(e_1(x)\cdot\nabla (w^2)\big)^2\right).
\]
Notice that the function $g$ is well-defined, since the eigenvector $e_1(x)$ is simple around 0. Let us show it is Lipschitz. Indeed, since $w^2(x) = 4|D^2\bar u(x)|^2-3\lambda^2_1(x)$, it follows from \eqref{eq:Dkbaru_bounds} and \eqref{eq:lambda1e1} that
\[
|\nabla (w^2)|+|D^2 (w^2)|\le C\delta^2\quad\text{in}\quad B_{c\delta^2}\cap \{\bar u > 0\},
\]
and hence, using \eqref{eq:lambda1e1} again,
\begin{equation}
\label{eq:nablag0}
|\nabla g| \le C\delta^2\quad\text{in}\quad B_{c\delta^2}\cap \{\bar u > 0\}.
\end{equation}
We now want to evaluate $ww_3$ at the origin using that, at $0$, we can take $e_3$ as an eigenvector (with eigenvalue $-2\delta^3$).

Recalling \eqref{eq:toapprox} and observing that $2a_1 = \lambda_1(0)$ and $2a_2 = \lambda_2(0)$,
it follows that
$$
\bar u_{113}(0)=-\lambda_1(0)^2-2\delta^3 \lambda_1(0),\qquad \bar u_{223}(0)=-\lambda_2(0)^2-2\delta^3\lambda_2(0).
$$
Given that we are in Case \eqref{it:E-near}, this implies that
\[
|\bar u_{113}(0)+24\delta^6| + |\bar u_{223}(0)|   \le  C\eps_E \delta^6.
\]
Since $w^2 = f^2=\lambda_1^2+4\lambda_2^2+4\lambda_3^2$ and $ww_3
=\sum_{i = 1, 2, 3}f f_{\lambda_i} \bar u_{ii3}$ (see \cite[Section 4.1 and Eq. (4.4)]{Jerison-Savin}) we get
\begin{align*}
ww_3
&=\sum_{i = 1, 2, 3}f f_{\lambda_i} \bar u_{ii3} = \sum_{i=1, 2}(ff_{\lambda_i}-ff_{\lambda_3})\bar u_{ii3}
=(\lambda_1-4\lambda_3)\bar u_{113}
	+(4\lambda_2-4\lambda_3)\bar u_{223}\\
&=[12+O(\eps_E)]\delta^3
	\cdot [-24+O(\eps_E)]\delta^6
+O(\eps_E)\delta^3
	\cdot O(\eps_E) \delta^6=[-288+O(\eps_E)]\delta^9
\le -2\delta^9\qquad \text{at}\quad y = 0,
\end{align*}
for $\eps_E$ small  universal. In particular,
\[
g(0) \ge (ww_3)^2(0) \ge 4\delta^{18}. 
\]
Together with \eqref{eq:nablag0}, this implies
\[
g\ge \delta^{18}\quad\text{in}\quad B_{c\delta^{16}}\cap \{\bar u > 0\}. 
\]
Inserting this estimate in \eqref{eq:stab-remain} we obtain (notice also $w\le C\delta$)
\[
\ccI(\bar u, B_1) \ge \ccI(\bar u, B_{c\delta^{16}}) \ge 
	\frac{1}{9}
	\int_{\{\bar u > 0\}\cap B_{c\delta^{16}}}\hspace{-2mm}
		 g w^{-\frac43-2}\,dx \ge \delta^{15+16\cdot 3}.
\]

\medskip
\noindent{\bf Step 5:} We can now conclude the proof.

By Steps 3 and 4 we get that, in all cases,
\begin{equation}
\label{eq:cIdelta6}
\ccI(\bar u, B_1) \ge \delta^{63}. 
\end{equation}
We finally have all the ingredients to proceed with the proof of \Cref{thm:deltakappa}.
Indeed, using the notation from the previous steps, we have 
\[
\ccI(u, B_1) \ge \ccI(u, B_{r_0}(x_0)) = r_0^{\frac13} \ccI(\bar u, B_1) \ge \delta^{64}
\] 
where we used \eqref{eq:rescaleI}, \eqref{eq:cIdelta6}, and the fact that $r_0 \ge \delta^3$. 
Thus, by the definition of $\delta$ and \Cref{prop:H_to_Hessian} we obtain (after a covering argument)
\[
C\, \ccI(u, B_1)^{\frac{1}{64}}
\ge \max_{x\in B_{4/3}\cap \partial\{u > 0\}} H(x)
\geq C^{-1}\normbig[L^\infty(B_{1/2} \cap \set{u>0})]{D^2 u}^2,
\]
as desired (in particular, we may take $\kappa = 128$). 
\end{proof}

\section{Selection of center and scale}

\label{sec:selection}

In the present section (and until the end of \Cref{sec:closing}), we fix the following universal constants:
\begin{equation}
    \label{eq:gamma_alpha_beta}
    \cttg := \valcttg = \frac{4}{9}-\frac{1}{225}, \qquad \ctta := \valctta = \frac{3}{4} + \frac{3}{100},\qquad \cttb : = \valcttb.
\end{equation}

We remark that 
\[
3\alpha \gamma = 1+\frac{37}{1250}  > 1. 
\]

\subsection{Selection of center and scale}\label{ssec:selection}
We now set up the contradiction argument that will yield the desired result.

For $\zz\in \cZ$ and $ R\ge  \rb(\zz)$, recall $\bE_\zz(u,R)$ and $\varrho_\zz(u,R)$ defined in \eqref{eq:Ez_def_intro} and \eqref{eq:varrho_def_intro}. By Propositions~\ref{prop:E_to_rho} and~\ref{prop:rb-l13}, we have 
\begin{equation}\label{eq:rbeps}
\frac{\rb(\zz')}{R} \le C \varrho_\zz(u, 2R)\le C \bE_\zz(u, 8R)^{3\cttg}\qquad\text{for any}\quad \zz'\in B_{3R/2}(\zz)\cap \cZ,\quad  R\ge \rb(\zz).
\end{equation}
In this section, it will be convenient to introduce the following definition: for given $R > 0$, we define
\begin{equation}
\label{eq:ZR}
\cZ_R := \{\zz\in \cZ : \rb(\zz) \le R\},
\end{equation}
which is nonempty for large $R$ due to \Cref{lem:Znonempty}.
 
The next lemma provides suitable centers and scales where we can start our argument:  
\begin{lem}
 \label{lem:zkRk}
  There exist sequences $R_k>0$ and $\zz_k\in \cZ_{R_k}$,
  with $R_k\to \infty$ as $k\to \infty$, such that 
 \begin{equation}
\label{eq:eps_k}
\frac{\rb(\zz_k)}{R_k} \le \varrho_{\zz_k}(u,2R_k)  \to 0 \quad \mbox{and} \quad \eps_k : =  \bE_{\zz_k}(u,8R_k)\to 0\qquad\text{as}\quad k \to \infty,
\end{equation}
and  
\begin{equation}\label{key1}
\bE_\zz(u,8R) \le 2 \frac{\varrho_{\zz}(u,2R)^\ctta }{\varrho_{\zz_k}(u,2R_k)^\ctta }\,\eps_k \qquad \text{for all}\quad  \zz\in \cZ_R,   \   R\le R_k.
\end{equation}
 \end{lem}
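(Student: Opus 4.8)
## Proof proposal for Lemma~\ref{lem:zkRk}

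The plan is to obtain $R_k$, $\zz_k$ by a quasi-extremal (almost-minimizing) selection applied to the dimensionless ratio
\[
\Theta(\zz,R) := \frac{\bE_\zz(u,8R)}{\varrho_\zz(u,2R)^\ctta},
\]
defined for $\zz\in\cZ$, $R\ge\rb(\zz)$. First I would check this ratio is finite and bounded below away from $0$ on a meaningful range: the denominator is strictly positive for $R\ge\rb(\zz)$ by \Cref{prop:rb-l13} and \Cref{lem:I_neck_ball} (which give $\varrho_\zz(u,2R)\ge c>0$ when $R$ is not too large compared to $\rb(\zz)$, and more generally $\varrho_\zz(u,2R)>0$ since $\ccI(u,B_{2R}(\zz))\ge\ccI(u,B_{2\rb(\zz)}(\zz))>0$), while the numerator $\bE_\zz(u,8R)$ is bounded above (by $\|D^2u\|\le1$ one has a crude universal bound on the excess on any scale, or use \Cref{lem:X-blowdown} for large scales). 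The key point, however, is that $\Theta$ \emph{does} become small along some sequence: pick any fixed neck center $\zz_0\in\cZ$ (nonempty by \Cref{lem:Znonempty}) and let $R\to\infty$. By \Cref{lem:X-blowdown}, $\bE_{\zz_0}(u,8R)\to0$; meanwhile $\ccI(u,B_{2R}(\zz_0))$ is monotone nondecreasing in $R$ (by \eqref{eq:I monotone}) and bounded below by a positive constant, so $\varrho_{\zz_0}(u,2R)=\tfrac1{2R}\ccI(u,B_{2R}(\zz_0))^3$ — wait, this tends to $0$ too, so we cannot just use $\zz_0$ at its own center for the denominator. The cleaner route is to define
\[
\theta_k := \inf\Big\{\Theta(\zz,R)\ :\ \zz\in\cZ,\ \rb(\zz)\le R\le k\Big\},
\]
observe $\theta_k$ is finite and, using the $\zz_0$-computation combined with \Cref{prop:E_to_rho} (which gives $\varrho_\zz(u,2R)\le C_\cttg\bE_\zz(u,8R)^{3\cttg}$, hence $\Theta(\zz,R)\ge c\,\bE_\zz(u,8R)^{1-3\cttg\ctta}$, and recall $3\cttg\ctta>1$ so the exponent $1-3\cttg\ctta<0$), one sees $\Theta(\zz,R)\to\infty$ as $\bE_\zz\to0$ — so in fact the infimum $\theta_k$ is attained (or nearly attained) at scales/centers where $\bE$ is \emph{not} small, and $\theta_k$ is bounded below uniformly in $k$ by a positive constant...

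This sign issue means I need to invert the optimization: since $3\cttg\ctta>1$, \emph{small} $\bE$ forces $\Theta$ \emph{large}, so minimizing $\Theta$ pushes us toward large $\bE$, which is the wrong direction. The correct selection maximizes a ratio. Define instead, for each $k\in\N$,
\[
\Theta_k := \sup\Big\{\ \bE_\zz(u,8R)\,\varrho_\zz(u,2R)^{-\ctta}\ :\ \zz\in\cZ,\ \rb(\zz)\le R\le R^{(k)}\Big\},
\]
where $R^{(k)}\uparrow\infty$ is an auxiliary exhausting sequence, and choose $(\zz_k,R_k)$ with $\rb(\zz_k)\le R_k\le R^{(k)}$ realizing $\Theta(\zz_k,R_k)\ge\tfrac12\Theta_k$. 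Then \eqref{key1} is immediate: for any $\zz\in\cZ_R$ with $R\le R_k\le R^{(k)}$,
\[
\bE_\zz(u,8R)=\Theta(\zz,R)\,\varrho_\zz(u,2R)^\ctta\le\Theta_k\,\varrho_\zz(u,2R)^\ctta\le 2\,\Theta(\zz_k,R_k)\,\varrho_\zz(u,2R)^\ctta = 2\,\frac{\varrho_\zz(u,2R)^\ctta}{\varrho_{\zz_k}(u,2R_k)^\ctta}\,\bE_{\zz_k}(u,8R_k),
\]
which is \eqref{key1} with $\eps_k:=\bE_{\zz_k}(u,8R_k)$.

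It remains to show $R_k\to\infty$, $\rb(\zz_k)/R_k\le\varrho_{\zz_k}(u,2R_k)\to0$, and $\eps_k\to0$. For $\eps_k\to0$: combine \Cref{prop:E_to_rho} with the definition of $\Theta_k$. Since $\varrho_{\zz_k}(u,2R_k)\le C_\cttg\eps_k^{3\cttg}$, we get $\Theta(\zz_k,R_k)=\eps_k\,\varrho_{\zz_k}(u,2R_k)^{-\ctta}\ge c\,\eps_k^{1-3\cttg\ctta}$. On the other hand, testing the supremum $\Theta_k$ against the fixed point $\zz_0$ at a \emph{large} scale $R$ with $\rb(\zz_0)\le R\le R^{(k)}$: here $\bE_{\zz_0}(u,8R)\to0$ but $\varrho_{\zz_0}(u,2R)^{-\ctta}=(2R)^\ctta\ccI(u,B_{2R}(\zz_0))^{-3\ctta}$ — since $\ccI$ is bounded below and grows at most like... actually $\ccI(u,B_{2R}(\zz_0))\le C\|D^2u\|_{L^1(B_{4R}(\zz_0))}^{\text{(some power)}}$ and by \Cref{lem:Stern_Zum} $\int_{B_{4R}\cap\{u>0\}}|D^2u|^2\le CR$, so $\ccI$ grows at most polynomially in $R$; thus $\Theta(\zz_0,R)$ may be large, so $\Theta_k$ is large. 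The quantitative balance I expect is: $\Theta_k\ge\Theta(\zz_0,R^{(k)}/2)\gtrsim$ (something $\to\infty$), forcing $\Theta(\zz_k,R_k)\to\infty$, hence via $\Theta(\zz_k,R_k)\ge c\,\eps_k^{1-3\cttg\ctta}$ with negative exponent, $\eps_k\to0$. Then $\varrho_{\zz_k}(u,2R_k)\le C_\cttg\eps_k^{3\cttg}\to0$, and $\rb(\zz_k)/R_k\le C\varrho_{\zz_k}(u,2R_k)\to0$ by \Cref{prop:rb-l13} (which requires $R_k\ge\rb(\zz_k)$, satisfied by construction). Finally $R_k\to\infty$: if $R_k$ stayed bounded along a subsequence, then since $R_k\ge\rb(\zz_k)\ge\rmin>0$, the $\zz_k$ would have to vary, but $\varrho_{\zz_k}(u,2R_k)\to0$ with $R_k$ bounded forces $\ccI(u,B_{2R_k}(\zz_k))\to0$, contradicting \Cref{lem:I_neck_ball} ($\ccI(u,B_{2\rb(\zz_k)}(\zz_k))\ge c\,\rb(\zz_k)^{1/3}\ge c\,\rmin^{1/3}>0$ and $\ccI$ is monotone). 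The main obstacle I anticipate is making the ``$\Theta_k\to\infty$'' step fully rigorous — i.e., verifying carefully that the \emph{reference} profile ($\zz_0$ at large scales) genuinely makes the sup grow, which hinges on a lower bound for the factor $\varrho_{\zz_0}(u,2R)^{-\ctta}$ that beats the decay of $\bE_{\zz_0}(u,8R)$; quantitatively one combines \Cref{lem:Stern_Zum} (upper bound $\ccI\lesssim R^{1/3}$, hence $\varrho_{\zz_0}(u,2R)\lesssim 1$, i.e.\ $\varrho^{-\ctta}\gtrsim 1$) with the $\bE$ decay to at least conclude $\Theta_k\ge c>0$ bounded below, and then a genuine compactness/blow-down argument (again via \Cref{lem:X-blowdown} and \Cref{prop:E_to_rho}) to rule out $\Theta_k$ staying bounded — if it did, then $\eps_k\ge c'>0$ uniformly, but $\varrho_{\zz_k}(u,2R_k)\ge c\,\eps_k^{?}$... this needs the reverse inequality, which is exactly why one wants the extra exponent room in \eqref{eq:gamma_alpha_beta}. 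I expect the final write-up reconciles this by choosing $R^{(k)}$ adaptively (e.g.\ $R^{(k)}$ so large that $\bE_{\zz_0}(u,8R^{(k)})\le 1/k$) and then arguing that the supremum cannot be attained near $\zz_0$ at the largest scales, hence is attained at a center/scale with $\eps_k\to0$.
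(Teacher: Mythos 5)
Your selection mechanism is essentially the paper's: the paper maximizes the same ratio $\bE_\zz(u,8R)/\varrho_\zz(u,2R)^\ctta$ (via the nondecreasing envelope $\widetilde F_u(R)=\sup_{R'\le R}\sup_{\zz\in\cZ_{R'}}\Theta(\zz,R')$, which is your $\Theta_k$ with $R^{(k)}$ replaced by $R$), picks a near-maximizer, and reads off \eqref{key1} exactly as you do. Your derivation of \eqref{key1} and your argument that $R_k\to\infty$ (via $\varrho_{\zz_k}(u,2R_k)\ge c\,\rmin/R_k$ from \Cref{lem:I_neck_ball}) are fine. Two issues remain, one of which is a genuine logical gap.

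The gap is in the step ``$\Theta(\zz_k,R_k)\to\infty$, hence via $\Theta(\zz_k,R_k)\ge c\,\eps_k^{1-3\cttg\ctta}$ with negative exponent, $\eps_k\to0$.'' That inequality is a \emph{lower} bound on $\Theta(\zz_k,R_k)$; divergence of $\Theta(\zz_k,R_k)$ places no constraint on a quantity that merely lies below it, so nothing about $\eps_k$ follows. (You would need the reverse inequality $\Theta\le C\eps_k^{1-3\cttg\ctta}$, which \Cref{prop:E_to_rho} does not give.) The correct chain — and the one the paper uses — is: $\eps_k=\bE_{\zz_k}(u,8R_k)$ is bounded a priori (since $|\nabla u|\le 1$), so $\Theta(\zz_k,R_k)=\eps_k\,\varrho_{\zz_k}(u,2R_k)^{-\ctta}\to\infty$ forces $\varrho_{\zz_k}(u,2R_k)\to0$; then \Cref{prop:rb-l13} gives $\rb(\zz_k)/R_k\le C\varrho_{\zz_k}(u,2R_k)\to0$; then \Cref{lem:X-blowdown} gives $\eps_k\le\omega\big(M_*\rb(\zz_k)/(8R_k)\big)\to0$. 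All the ingredients are already cited in your write-up; only the order of implications is wrong.

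Separately, the hand-wringing in your last paragraph about whether $\Theta_k\to\infty$ is unnecessary, and the detour through \Cref{lem:Stern_Zum} bounds on $\ccI$ is not the right tool. The clean argument is the inequality you yourself state in the middle of the proposal: for a fixed neck center $\zz_0$, \Cref{prop:E_to_rho} gives $\Theta(\zz_0,R)\ge c\,\bE_{\zz_0}(u,8R)^{1-3\ctta\cttg}$, and since $\bE_{\zz_0}(u,8R)\to0$ as $R\to\infty$ by \Cref{lem:X-blowdown} while $1-3\ctta\cttg<0$, testing the supremum at $(\zz_0,R^{(k)})$ yields $\Theta_k\to\infty$ directly. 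You should also record the finiteness of $\Theta_k$ (needed for the near-maximizer with factor $\tfrac12$ to make sense), which follows from the uniform lower bound $\varrho_\zz(u,2R)\ge c\,\rmin/R^{(k)}$ on the admissible range together with the a priori bound on $\bE$.
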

 \begin{proof}
 Let us define  the quotient
\[
F_u(R) : = \sup_{\zz\in \cZ_R}   \frac{ \bE_\zz(u,8R)}{ \varrho_\zz(u,2R)^\ctta } .  
\]
Notice that, because of \eqref{eq:rbeps} and \eqref{eq:rmin_def},
$F_u(R)\leq \sup_{\zz\in\cZ_R}(R/\rb(\zz))^\alpha \bE_\zz(u,8R) \leq CR^\alpha$ (here we use that $\bE_\zz(u,\cdot)$ is always bounded, since  $|\nabla u|\le 1$). So $F_u$ is well-defined.

Also, thanks to \eqref{eq:rbeps} and \Cref{lem:X-blowdown} (recall that $3\ctta\cttg > 1$),
\begin{equation}\label{limF}
\limsup_{R\to\infty}F_u(R)
\geq \limsup_{R\to\infty}\,  \sup_{\zz\in\cZ_R} \frac{\bE_\zz(u, 8R)}{C^\ctta \bE_\zz(u, 8R)^{3\ctta \cttg}}
\geq \frac{1}{C^\ctta } \limsup_{R\to\infty} \, \sup_{\zz\in\cZ_R}
	\omega\left(\frac{M_*\rb(\zz)}{8R}\right)^{1-3\ctta\cttg}
=+\infty.
\end{equation}
Consider now the `nondecreasing envelope' of $F_u$, namely
\[
\widetilde F_u(R) : = \sup_{ R'\le R} F_u(R'),
\]
and choose a monotone increasing sequence $R_k\to \infty$ such that, for each $k$, there exists $\zz_k\in \cZ_{R_k}$ satisfying
\begin{equation}
\label{eq:Ftildeineq}
\tfrac 1 2 \widetilde F_u(R_k) \le  \frac{\bE_{\zz_k}(u,8R_k)}{\varrho_{\zz_k}(u,2R_k)^\ctta}\le \widetilde F_u(R_k) 
\end{equation}
and let $\eps_k := \bE_{\zz_k}(u,8R_k)$.

Notice that the numerator in \eqref{eq:Ftildeineq} is always bounded using  $|\nabla u|\le 1$. Thus,   the only way $\widetilde F_u(R_k)$ may diverge is if the denominator in \eqref{eq:Ftildeineq} converges to zero. But then the numerator must converge to zero as well since, by \Cref{lem:X-blowdown} and \eqref{eq:rbeps},   $\bE_{\zz_k}(u, 8R_k)\le \omega(\rb(\zz_k)/8R_k) \le \omega(C\varrho_{\zz_k}(u, 2R_k))\to 0$. This shows \eqref{eq:eps_k}.

In addition, by the definition of  $\widetilde F_u$ we have 
\[
 \frac{\bE_\zz(u,8R)}{ \varrho_\zz(u,2R)^\ctta }  \le  \widetilde F_u(R_k) \le 2  \frac{\bE_{\zz_k}(u,8R_k)}{\varrho_{\zz_k}(u,2R_k)^\ctta }  =  \frac{2\eps_k}{\varrho_{\zz_k}(u,2R_k)^\ctta } \qquad \mbox{for all $\zz\in \cZ_R$, $R\le R_k$,}
\]
so \eqref{key1} follows.
\end{proof}

  Given $\zeta\in (0,1)$ and a ball $B_{  R}(  \zz)\subset \R^3$, recalling \eqref{eq:ZR} we define  
\begin{equation}
\label{eq:Ndef}
\bN\big(\zeta, B_{  R}( \zz)\big)  : =  (\zeta   R)^{-3} \bigg|\bigcup_{\zz'\in \mathcal{A}^\zeta_{\zz, R}} B_{\zeta   R} ( \zz') \bigg|, \qquad \text{where}\quad\mathcal{A}^\zeta_{\zz, R} := \cZ_{\zeta R} \cap B_{R}(\zz).
\end{equation}
This is roughly the number of balls of radius $\zeta R$ needed to cover  ${\cZ_{\zeta R} }\cap B_R(\zz)$. More precisely, we have the following: 
\begin{lem}
\label{lem:Nbound}
There exists $\tilde \cA_{\zz, R}^\zeta\subset   \cA_{\zz, R}^\zeta$, with $\# \tilde \cA_{\zz, R}^\zeta\le C\,\bN\big(\zeta, B_{  R}( \zz)\big)$ for some $C$ universal, such that
\[
\bigcup_{\zz'\in \cA_{\zz, R}^\zeta} B_{\zeta R}(\zz') \subset \bigcup_{\zz'\in \tilde \cA_{\zz, R}^\zeta} B_{2\zeta R}(\zz') .
\]
\end{lem}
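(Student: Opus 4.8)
\textbf{Proof plan for \Cref{lem:Nbound}.} The statement is a standard Vitali-type extraction: we want to thin out the (possibly infinite) family $\{B_{\zeta R}(\zz')\}_{\zz'\in\cA_{\zz,R}^\zeta}$ to a finite subfamily of comparable balls whose doubles still cover the original union, with a cardinality bound controlled by the normalized volume $\bN(\zeta, B_R(\zz))$. The plan is to apply the Vitali covering lemma to the balls $\{B_{\zeta R}(\zz')\}_{\zz'\in\cA_{\zz,R}^\zeta}$ (all of which have the \emph{same} radius $\zeta R$, which makes things especially clean): one obtains a countable pairwise disjoint subcollection indexed by $\tilde\cA_{\zz,R}^\zeta\subset\cA_{\zz,R}^\zeta$ such that every ball in the original family meets one of the selected balls, and hence is contained in its $2\zeta R$-enlargement (here we use that all radii are equal, so a ball of radius $\zeta R$ intersecting $B_{\zeta R}(\zz')$ lies inside $B_{3\zeta R}(\zz')$—actually to land exactly inside $B_{2\zeta R}(\zz')$ one should instead run Vitali on the \emph{halved} balls $B_{\zeta R/2}(\zz')$ and enlarge by a factor that gives $2\zeta R$; I would simply pick the disjointified radius small enough, e.g.\ $\zeta R/2$, so that a ball of radius $\zeta R$ hitting a selected $B_{\zeta R/2}(\zz')$ is contained in $B_{2\zeta R}(\zz')$).

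\textbf{Cardinality bound.} Once $\tilde\cA_{\zz,R}^\zeta$ is extracted so that the balls $\{B_{c\zeta R}(\zz')\}_{\zz'\in\tilde\cA_{\zz,R}^\zeta}$ are pairwise disjoint (with $c\in\{\tfrac12,\tfrac14\}$ a fixed constant), I would bound $\#\tilde\cA_{\zz,R}^\zeta$ from below-volume: since these disjoint balls are all contained in the union $\bigcup_{\zz'\in\cA_{\zz,R}^\zeta}B_{\zeta R}(\zz')$ (each $\zz'\in\tilde\cA_{\zz,R}^\zeta\subset\cA_{\zz,R}^\zeta$, so $B_{c\zeta R}(\zz')\subset B_{\zeta R}(\zz')$), we get
\[
\#\tilde\cA_{\zz,R}^\zeta\cdot |B_{c\zeta R}| \;=\; \Big|\bigsqcup_{\zz'\in\tilde\cA_{\zz,R}^\zeta} B_{c\zeta R}(\zz')\Big| \;\le\; \Big|\bigcup_{\zz'\in\cA_{\zz,R}^\zeta} B_{\zeta R}(\zz')\Big| \;=\; (\zeta R)^3\,\bN(\zeta, B_R(\zz)),
\]
so $\#\tilde\cA_{\zz,R}^\zeta\le c^{-3}|B_1|^{-1}\bN(\zeta,B_R(\zz)) = C\,\bN(\zeta,B_R(\zz))$ with $C$ universal, as desired. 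In particular this already forces $\tilde\cA_{\zz,R}^\zeta$ to be finite, since $\bN$ is a finite quantity (note $\cA_{\zz,R}^\zeta\subset\cZ$ is locally finite and contained in the bounded set $B_R(\zz)$).

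\textbf{Main obstacle.} There is essentially no deep obstacle here—this is a soft covering argument—but the one point requiring a little care is the choice of disjointification radius so that the conclusion reads exactly $B_{2\zeta R}(\zz')$ rather than $B_{3\zeta R}(\zz')$; running Vitali on the balls of radius $\zeta R/2$ and observing that any $B_{\zeta R}(\zz'')$ meeting a selected $B_{\zeta R/2}(\zz')$ satisfies $|\zz''-\zz'|\le \tfrac32\zeta R$ and hence $B_{\zeta R}(\zz'')\subset B_{5\zeta R/2}(\zz')$ is still slightly too weak; instead run Vitali on radius $\zeta R/4$, giving $|\zz''-\zz'|\le\tfrac54\zeta R$ and $B_{\zeta R}(\zz'')\subset B_{9\zeta R/4}(\zz')\subset B_{2\zeta R}(\zz')$ after also shrinking the enlargement target—or, cleanest of all, simply re-run the count with $c=\tfrac14$ (which only worsens the universal constant $C$) and note that the required inclusion $\bigcup_{\cA}B_{\zeta R}\subset\bigcup_{\tilde\cA}B_{2\zeta R}$ is exactly what the standard "$5r$"-covering lemma delivers when the ratio of largest to smallest radius is $1$, down to the factor I fix by the disjointification scale. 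Either way the statement follows with a universal $C$; I would present the version with disjoint $B_{\zeta R/4}(\zz')$ to keep the geometry transparent.
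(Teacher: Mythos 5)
Your approach (a maximal-packing/Vitali extraction plus a volume count for the disjointified balls) is sound and gives the lemma, but the arithmetic you use to close the covering inclusion never actually works as written: in your final version you bound $|\zz''-\zz'|\le \tfrac54\zeta R$ and land in $B_{9\zeta R/4}(\zz')$, and $9/4>2$, while ``shrinking the enlargement target'' is not available since the radius $2\zeta R$ is fixed by the statement. The slip is that you are testing when a \emph{full} ball $B_{\zeta R}(\zz'')$ meets a \emph{shrunken} selected ball $B_{c\zeta R}(\zz')$; the intersection property you get for free from maximality is between two balls of the \emph{same} (shrunken) radius. Concretely: choose $\tilde\cA_{\zz,R}^\zeta\subset\cA_{\zz,R}^\zeta$ maximal with the property that the balls $\{B_{\zeta R/2}(\zz')\}_{\zz'\in\tilde\cA_{\zz,R}^\zeta}$ are pairwise disjoint. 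Maximality then says that for every $\zz''\in\cA_{\zz,R}^\zeta$ the ball $B_{\zeta R/2}(\zz'')$ meets some selected $B_{\zeta R/2}(\zz')$, hence $|\zz''-\zz'|\le\zeta R$ and
\[
B_{\zeta R}(\zz'')\subset B_{2\zeta R}(\zz'),
\]
which is exactly the claimed inclusion. Your cardinality count is then correct as stated: the disjoint balls $B_{\zeta R/2}(\zz')$ sit inside $\bigcup_{\zz''\in\cA_{\zz,R}^\zeta}B_{\zeta R}(\zz'')$, so $\#\tilde\cA_{\zz,R}^\zeta\cdot|B_{\zeta R/2}|\le(\zeta R)^3\,\bN(\zeta,B_R(\zz))$, i.e.\ $\#\tilde\cA_{\zz,R}^\zeta\le C\,\bN(\zeta,B_R(\zz))$ with $C=8/|B_1|$. (Disjointifying at radius $\zeta R/4$ also works, giving $|\zz''-\zz'|\le\zeta R/2$ and $B_{\zeta R}(\zz'')\subset B_{3\zeta R/2}(\zz')$, at the cost of a worse universal constant.)

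For comparison, the paper argues differently: it applies the Besicovitch covering theorem to $\{B_{\zeta R}(\zz')\}_{\zz'\in\cA_{\zz,R}^\zeta}$ to extract a subfamily with bounded overlap that still \emph{covers the set of centers} $\cA_{\zz,R}^\zeta$; the count follows from bounded overlap and the volume of the union, and the inclusion is then the one-line Minkowski-sum identity $\bigcup_{\cA}B_{\zeta R}(\zz')=\cA_{\zz,R}^\zeta+B_{\zeta R}\subset\bigl(\bigcup_{\tilde\cA}B_{\zeta R}(\zz')\bigr)+B_{\zeta R}=\bigcup_{\tilde\cA}B_{2\zeta R}(\zz')$. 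Both routes are soft and yield a universal constant; yours is more elementary (a greedy packing rather than Besicovitch), the paper's avoids any radius bookkeeping. Once you correct the intersection step as above, your proof is complete.
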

\begin{proof}
Applying Besicovitch covering theorem to the family of balls $\{B_{\zeta   R} ( \zz')\}_{\zz' \in \cA_{\zz, R}^\zeta}$, we can find a subcovering  $\tilde \cA_{\zz, R}^\zeta$ of  $\cA_{\zz, R}^\zeta$ with bounded overlapping,
thus $\# \tilde \cA_{\zz, R}^\zeta\le C\,\bN\big(\zeta, B_{  R}( \zz)\big)$.
Also,
$$
\bigcup_{\zz'\in \tilde \cA_{\zz, R}^\zeta} B_{2\zeta R}(\zz')=
\bigcup_{\zz'\in \tilde \cA_{\zz, R}^\zeta} B_{\zeta R}(\zz')+B_{\zeta R}\supset \cA_{\zz, R}^\zeta+B_{\zeta R}=\bigcup_{\zz'\in \cA_{\zz, R}^\zeta} B_{\zeta R}(\zz').
$$
\end{proof}

Starting from \Cref{lem:zkRk}, we can define new sequences $\zk\in \cZ$ and $\Rk > 0$ satisfying the following:

\begin{lem} 
\label{lem:tildezktildeRk}
Let $R_k$ and $\zz_k$ be the sequences given by \Cref{lem:zkRk}. There exist  $\tilde \zeta_k \in (0, 1]$ and $\zk\in \cZ\cap B_{R_k}(\zz_k)$ such that, setting 
\[
\Rk := \tilde \zeta_k R_k\qquad \mbox{and} \qquad \epk := \tilde \zeta_k^{\ctta \cttb} \eps_k,
\]
we have $\Rk \to \infty$, $\epk \to 0$,  and  the following properties hold:
\begin{equation} \label{key3}
 B_{\Rk}(\zk)\subset B_{R_k} (\zz_k ),\qquad    \frac{\varrho_{\zk}(u, 2\Rk) }{\varrho_{\zz_k}(u,2R_k)} \le \tilde \zeta_k^{\cttb}, 
\end{equation}
and
\begin{equation}\label{strongalternativeA}
\bN\big(\zeta, B_{\Rk}(\tilde  \zz_k)\big)
   \le C \zeta^{-\frac{1+\cttb}{3}}\quad \mbox{for all } \zeta \in (0,1).
\end{equation}
Moreover, for all $k$ sufficiently large, we have:
\begin{equation}\label{eq:rbepk}
 \rb(\zz)  \le C\Rk\epk^{3\gamma} \le \frac{\Rk}{10}\epk^{1/\alpha}\qquad\text{for all}\quad \zz\in \cZ\cap B_{3\Rk/2}(\zk);
\end{equation}
and
\begin{equation}\label{keyeqn}
  \bE_\zz(u,8R)  \le   2\Big(\tfrac{\Rk}{R}\Big)^\ctta    \widetilde\eps_k \qquad \text{for all}\quad  \zz\in \cZ  \ \ \text{with}\ B_{R}(\zz)\subset B_{\Rk}(\zk)\text{ and }\ R\ge  \epk^{1/\alpha} \Rk.
\end{equation}
Here, the constant $C$ is universal and  $\bN$ is given by \eqref{eq:Ndef}
\end{lem}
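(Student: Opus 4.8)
The idea is a selection/iteration scheme: starting from the pair $(\zz_k, R_k)$ given by \Cref{lem:zkRk}, we will keep ``zooming in'' to a sub-ball on which the Minkowski-type counting function $\bN$ is not too large, while preserving the key geometric inequality \eqref{key1} in a suitably rescaled form. Concretely, I would define
\[
\tilde\zeta_k := \sup\Big\{ \zeta\in (0,1] : \bN\big(\zeta, B_{\zeta R_k}(\zz')\big) \le C_0\,\zeta^{-\frac{1+\cttb}{3}}\ \text{ for every } \zz'\in \cZ\cap B_{R_k}(\zz_k) \text{ with } B_{\zeta R_k}(\zz')\subset B_{R_k}(\zz_k)\Big\},
\]
for a large universal constant $C_0$ to be fixed, and pick $\zk\in \cZ\cap B_{R_k}(\zz_k)$ to be (roughly) a center realizing the failure of this bound at the scale $\tilde\zeta_k$, i.e. a point near which $\bN$ is maximal at the resolution $\tilde\zeta_k$. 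The first thing to check is that $\tilde\zeta_k\to 0$: if instead $\tilde\zeta_k$ stayed bounded below along a subsequence, then $\bN$ would be globally controlled at all scales inside $B_{R_k}(\zz_k)$ and the linearization argument (anticipated in point 8 of the overview) could be run directly, contradicting the standing assumption that $\cZ\ne\varnothing$ — but for the present lemma it suffices to note that the set $\cZ$ is locally finite so that, as $R_k\to\infty$, the relative density of neck centers at coarse scales forces $\tilde\zeta_k\to 0$, hence $\Rk = \tilde\zeta_k R_k$ — one must verify that the competition between $\tilde\zeta_k\to 0$ and $R_k\to\infty$ still gives $\Rk\to\infty$, which follows because a genuine failure of the $\bN$-bound at scale $\tilde\zeta_k$ requires $\tilde\zeta_k R_k \gtrsim \rb(\zz_k)$ by \eqref{eq:rbeps}, and $\rb(\zz_k)/R_k\to 0$ forces $\tilde\zeta_k R_k\to\infty$ (after possibly passing to a subsequence and using that $\bN(\zeta,\cdot)\ge 1$ whenever the ball contains a neck center of radius $\le\zeta R_k$).

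**The core estimates.** Property \eqref{strongalternativeA} is essentially built into the definition of $\zk$ and $\tilde\zeta_k$: by maximality of $\tilde\zeta_k$, for $\zeta<\tilde\zeta_k$ the bound $\bN(\zeta, B_{\Rk}(\zk))\le C\zeta^{-(1+\cttb)/3}$ holds at the center $\zk$ (one needs a short covering argument, of the type in \Cref{lem:Nbound}, to pass from ``every $\zz'$'' to ``the particular center $\zk$ and its sub-balls'' — this uses $\Rk = \tilde\zeta_k R_k$ and the sub-ball relation $B_{\Rk}(\zk)\subset B_{R_k}(\zz_k)$, which gives the first half of \eqref{key3}), and for $\zeta\ge\tilde\zeta_k$ one uses that $\bN(\zeta, B_{\Rk}(\zk))\le (\Rk/(\zeta\Rk))^3 = \zeta^{-3}$ trivially, which is $\le C\zeta^{-(1+\cttb)/3}$ only for $\zeta$ bounded below — here one instead invokes the simple volume bound $\bN\le C$ directly (there are only boundedly many disjoint balls of radius $\zeta\Rk$ with $\zeta$ comparable to $1$). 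For the decay of $\varrho$ in \eqref{key3}: the failure of the $\bN$-bound at $\zk$ means many neck centers of radius $\lesssim \Rk$ are packed in $B_{\Rk}(\zk)$, and combining this with \eqref{eq:rbeps} (applied at scale $\Rk$) and the lower bound \Cref{lem:I_neck_ball} on $\ccI$ at each such neck ball, one lower-bounds $\ccI(u,B_{2\Rk}(\zk))$, hence $\varrho_{\zk}(u,2\Rk)$, by a quantity $\gtrsim \tilde\zeta_k^{-\cttb}\cdot(\Rk/R_k)\cdot(\text{something})$; more carefully, one wants the *ratio* $\varrho_{\zk}(u,2\Rk)/\varrho_{\zz_k}(u,2R_k)$, and \eqref{key1} gives $\bE_{\zk}(u,8\Rk)\le 2\eps_k\,\varrho_{\zk}(u,2\Rk)^\ctta/\varrho_{\zz_k}(u,2R_k)^\ctta$ while the packing gives a comparable *lower* bound on $\varrho_{\zk}$ — chasing these through, with the choice of $\cttb$ small relative to the slack in $3\ctta\cttg>1$, produces $\varrho_{\zk}(u,2\Rk)/\varrho_{\zz_k}(u,2R_k)\le \tilde\zeta_k^{\cttb}$, the second half of \eqref{key3}. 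Then \eqref{eq:rbepk} follows by feeding \eqref{key3} into \eqref{eq:rbeps}: for $\zz\in\cZ\cap B_{3\Rk/2}(\zk)$,
\[
\frac{\rb(\zz)}{\Rk} \le C\,\varrho_{\zk}(u,2\Rk) \le C\,\varrho_{\zz_k}(u,2R_k)^{\cttb}\cdot\varrho_{\zk}(u,2\Rk)^{1-\cttb} \le C\,\bE_{\zk}(u,8\Rk)^{3\cttg},
\]
and $\bE_{\zk}(u,8\Rk) = \epk\,\tilde\zeta_k^{-\ctta\cttb}\cdot\tilde\zeta_k^{\ctta\cttb}\le C\epk$ by the definition $\epk = \tilde\zeta_k^{\ctta\cttb}\eps_k$ together with \eqref{key1}; since $3\cttg>1/\alpha$ (because $3\ctta\cttg>1$ forces $3\cttg>1/\ctta>1/\alpha$ once $\alpha<1$... actually $1/\ctta<3\cttg$, and $1/\alpha>1/\ctta$ since $\alpha<\ctta$? no, here $\alpha=\ctta$) — this arithmetic with the explicit constants \eqref{eq:gamma_alpha_beta} must be checked: $3\cttg = \tfrac{4}{3}-\tfrac{1}{75} > \tfrac{1}{\ctta}$, and we need $C\Rk\epk^{3\cttg}\le \tfrac{1}{10}\Rk\epk^{1/\ctta}$, i.e. $\epk^{3\cttg - 1/\ctta}\le c$, which holds for $k$ large since $\epk\to0$ and $3\cttg>1/\ctta$.

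**From \eqref{eq:rbepk} to \eqref{keyeqn}.** Finally, \eqref{keyeqn} is obtained from \eqref{key1} (with $\zz$ ranging over $\cZ$ and $R\le R_k$, which is implied by $B_R(\zz)\subset B_{\Rk}(\zk)\subset B_{R_k}(\zz_k)$) by converting the $\varrho$-ratio into a power of $R$: one has $\varrho_{\zz}(u,2R)\le C\bE_{\zz}(u,8R)^{3\cttg}$ by \Cref{prop:E_to_rho}--\Cref{prop:rb-l13}, and on the other hand, using that for $R\ge\epk^{1/\ctta}\Rk$ all neck radii inside are much smaller than $R$ (this is exactly \eqref{eq:rbepk}), one can compare $\varrho_\zz(u,2R)$ with $\varrho_{\zk}(u,2\Rk)$ up to a scaling factor $(R/\Rk)$ via the monotonicity \eqref{eq:I monotone} and the scaling \eqref{eq:rescaleI} of $\ccI$; plugging into \eqref{key1} and using \eqref{key3} gives $\bE_\zz(u,8R)\le 2\eps_k(\varrho_\zz/\varrho_{\zz_k})^\ctta \le 2\eps_k\,\tilde\zeta_k^{\ctta\cttb}(R/R_k)^{\ctta}\cdot\text{(correction)} = 2\epk(R/R_k)^\ctta\,\tilde\zeta_k^{-\ctta} = 2\epk(R/\Rk)^\ctta$, which is \eqref{keyeqn}. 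The main obstacle, in my estimation, is making the circular-looking bootstrap in \eqref{key3} rigorous: the packing lower bound on $\varrho_{\zk}$ and the upper bound coming from \eqref{key1} feed into each other, and it is only the strict inequality $3\ctta\cttg>1$ (with a definite margin, reflected in the explicit choice $\cttb=\tfrac{1}{20}$) that closes the loop and forces $\tilde\zeta_k$ to be well-defined with the stated properties. A secondary technical point requiring care is the passage, via Besicovitch/Vitali-type covering arguments as in \Cref{lem:Nbound}, between the ``uniform over all sub-centers'' formulation used to define $\tilde\zeta_k$ and the ``at the single center $\zk$'' conclusion \eqref{strongalternativeA}.
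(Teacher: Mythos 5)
There is a genuine gap, and it sits exactly where you suspect: the second inequality of \eqref{key3} cannot be obtained from your $\bN$-based selection. You define $\tilde\zeta_k$ as the threshold scale for the counting bound on $\bN$ and then try to recover the decay $\varrho_{\zk}(u,2\Rk)/\varrho_{\zz_k}(u,2R_k)\le\tilde\zeta_k^{\cttb}$ from a ``packing lower bound'' on $\ccI(u,B_{2\Rk}(\zk))$. This fails for two reasons. First, the failure of the $\bN$-bound only tells you there are many balls of radius $\zeta\Rk$ containing neck centers with $\rb\le\zeta\Rk$; by \Cref{lem:I_neck_ball} each such center contributes only $c\,\rb(\zz')^{1/3}$ to $\ccI$, and $\rb(\zz')$ can be as small as $\rmin$ while $\zeta\Rk\to\infty$, so no useful lower bound proportional to $(\zeta\Rk)^{1/3}$ per ball is available. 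Second, and more fundamentally, what you need in \eqref{key3} is an \emph{upper} bound on $\varrho_{\zk}$ relative to $\varrho_{\zz_k}$; a lower bound on $\varrho_{\zk}$ works against you, and chasing \eqref{key1} together with \Cref{prop:E_to_rho} only ever produces lower bounds on $\varrho_{\zk}$ (since $3\ctta\cttg>1$). The paper resolves this by turning the selection around: it defines $\zeta_k$ as the \emph{infimum} of scales $\zeta$ for which \emph{some} center $\zz\in\cZ_{\zeta R_k}$ with $B_{\zeta R_k}(\zz)\subset B_{R_k}(\zz_k)$ satisfies $\varrho_\zz(u,2\zeta R_k)/\varrho_{\zz_k}(u,2R_k)\le\zeta^{\cttb}$ (admissible at $\zeta=1$ with $\zz=\zz_k$, bounded below by $c/R_k$ since $\rb\ge\rmin$), and takes $(\tilde\zeta_k,\zk)$ nearly attaining this infimum. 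Then \eqref{key3} holds by construction, and \eqref{strongalternativeA} is the \emph{consequence}: below the infimal scale no sub-center achieves the decay, i.e. $\ccI(u,B_{2t\Rk}(\zz))>t^{(1+\cttb)/3}\ccI(u,B_{2\Rk}(\zk))$ for all admissible sub-balls, and since the integrands defining $\ccI$ are non-negative, a Besicovitch decomposition into disjoint subfamilies bounds the number of such balls by $C\zeta^{-(1+\cttb)/3}$. Your implication runs in the unprovable direction.

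Two further remarks. The claim $\tilde\zeta_k\to 0$ is neither needed nor justified (the paper's $\tilde\zeta_k$ may well equal $1$; only $\Rk\to\infty$ is required, which follows from $\varrho_{\zk}(u,2\Rk)\le\varrho_{\zz_k}(u,2R_k)\to0$ together with \eqref{eq:rbeps} and $\rmin>0$), and your heuristic for it (``local finiteness of $\cZ$ forces the density up'') points the wrong way. Finally, in your derivation of \eqref{keyeqn} the exponent comes out inverted: the monotonicity $R\,\varrho_\zz(u,2R)\le\Rk\,\varrho_{\zk}(u,2\Rk)$ gives $\varrho_\zz(u,2R)\le(\Rk/R)\,\varrho_{\zk}(u,2\Rk)$, hence $\bE_\zz(u,8R)\le2\epk(\Rk/R)^{\ctta}$, a bound that \emph{grows} as $R$ decreases; your final expression $2\epk(R/\Rk)^{\ctta}$ would assert excess decay at small scales, which is precisely what is not yet known at this stage. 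Your treatment of \eqref{eq:rbepk}, by contrast, is essentially the paper's argument and is correct modulo the mid-sentence confusion about $\alpha$ versus $\ctta$ (they coincide here, and indeed $3\cttg>1/\ctta$).
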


\begin{proof} We divide the proof into two steps.

\medskip
\noindent{\bf Step 1:}  
We first construct $\tilde \zeta_k$, show that $\Rk\to \infty$, and prove \eqref{key3} and \eqref{strongalternativeA}. 

Define
\[
\zeta_k  :=  \inf \left\{ \zeta>0  \  :  \  \mbox{there exists}\   \zz\in \cZ_{\zeta R_k}  \mbox{ s.t. } B_{\zeta R_k}(\zz)\subset B_{R_k} (\zz_k ) \mbox{ and }  \frac{\varrho_{\zz}(u,2\zeta R_k) }{\varrho_{\zz_k}(u,2R_k)} \le \zeta^{\cttb}    \right\}.
\] 
Notice that $\zeta=1$ and $\zz_k=\zz$ is always an admissible choice, therefore
$\zeta_k \le 1$. Also, since $\rb(\zz) \ge\rmin >0$  for all $\zz\in \cZ$ (recall \eqref{eq:rmin_def}), we must have $\zeta_k\geq  c/R_k >0$. 

Now, by the definition of $\zeta_k$, there exists $\tilde\zeta_k \in [ \zeta_k, \min\{2\zeta_k, 1\}]$ and  $\zk\in \cZ_{\Rk} \cap B_{R_k} (\zz_k )$ (where $\Rk : = \tilde \zeta_k R_k$) such that \eqref{key3} holds. 
Also, recalling \eqref{eq:rbeps} and that $\rmin>0$,
since $\varrho_{\zk}(u,2\Rk) \le \varrho_{  \zz_k}(u,   2R_k) \to 0$ as $k\to \infty$ we deduce that $\Rk \to \infty$. Furthermore $\epk \leq \eps_k \to 0$. 

We now prove \eqref{strongalternativeA}.
Notice that by the definition of $\zeta_k$ and the inclusion in \eqref{key3}, we must have
$$
 \frac{\varrho_{\zz}(u,2t\Rk) }{\varrho_{\zk}(u,2\Rk)} >t^{\cttb}\qquad \mbox{ for all }   t\in (0,1),\ \zz\in \cZ_{t\Rk}, \ B_{t \Rk}(\zz)\subset B_{\Rk} (\zk ). 
$$
or equivalently, recalling \eqref{eq:varrho_def_intro},
\begin{equation}
\label{eq:rhoI}
  \frac{\ccI(u,B_{2t\Rk}(\zz)) }{\ccI(u,B_{ 2\Rk}(\zk))} >t^{\frac{1+\cttb}{3}}\qquad \mbox{ for all }   t\in (0,1),\ \zz\in \cZ_{t\Rk}, \ B_{t \Rk}(\zz)\subset B_{\Rk} (\zk ). 
\end{equation}
Now, for $k$ fixed and $\zeta \in (0,1)$, define $\cA_\zeta := \cA_{\tilde\zz_k, \Rk}^\zeta$ (recall \eqref{eq:Ndef}), so that in the definition of $\bN(\zeta, B_{\Rk}(\zz_k))$,  we are considering the covering $\{B_{\zeta \Rk}(\zz') : \zz'\in \cA_\zeta\}$. Then, by Besicovitch covering theorem there exists a universal constant $C_3$ such that, for $C_3$ distinct subfamilies $\cA_{\zeta}^{(1)},\dots,\cA_{\zeta}^{(C_3)}\subset \cA_\zeta$, we have 
\begin{equation}
\label{eq:Besic_cupcup}
\bigcup_{\zz'\in \cA_{\zeta}} B_{\zeta \Rk }(\zz') \subset \bigcup_{j = 1}^{C_3} \bigcup_{\zz'\in \cA^{(j)}_{\zeta}} B_{2\zeta \Rk}(\zz'),
\end{equation}
and, for each $j\in \{1, \dots, C_3\}$, the family $\{B_{\zeta\Rk }(\zz') : \zz'\in \cA_\zeta^{(j)}\}$ consists of disjoint balls. Thus 
\[
\ccI(u, B_{2\Rk}(\tilde\zz_k)) \ge \ccI\biggl(u, \bigcup_{\zz'\in \cA_\zeta^{(j)}} B_{\zeta\Rk }(\zz')\biggr)\ge \sum_{\zz'\in \cA_\zeta^{(j)}} \ccI\left(u, B_{\zeta\Rk }(\zz')\right)\ge \#\cA_{\zeta}^{(j)}\cdot \min_{\zz'\in \cA_\zeta^{(j)}} \ccI\left(u, B_{\zeta\Rk }(\zz')\right),
\]
for every $j\in \{1, \dots, C_3\}$.
Also, because of \eqref{eq:rhoI}, 
\[
\min_{\zz'\in \cA_\zeta^{(j)}} \ccI\left(u, B_{\zeta\Rk }(\zz')\right) \ge (\zeta/2)^{\frac{1+\cttb}{3}} \ccI(u, B_{2\Rk}(\zk)),
\]
therefore
\[\#\cA_{\zeta}^{(j)} \le C\zeta^{-\frac{1+\cttb}{3}}\qquad\text{for every $j\in \{1, \dots, C_3\}$}, 
\]
for some universal constant $C$. Thus, choosing $\tilde \cA_\zeta:= \bigcup_{j = 1}^{C_3} \cA_\zeta^{(j)}$, it follows from \eqref{eq:Besic_cupcup} that
\begin{equation}
\label{eq:contained}
\bigcup_{\zz'\in \cA_{\zeta}} B_{\zeta \Rk }(\zz') \subset \bigcup_{\zz'\in \tilde \cA_\zeta}B_{2\zeta \Rk}(\zz')\qquad\text{with}\quad \#\tilde\cA_\zeta\le C \zeta^{-\frac{1+\cttb}{3}}, 
\end{equation}
and 
\[
\biggl|\bigcup_{\zz'\in \cA_{\zeta}} B_{\zeta \Rk }(\zz')\biggr|\le C  (2\zeta\Rk)^3 \#\tilde\cA_\zeta\le C (\zeta\Rk)^3  \zeta^{-\frac{1+\cttb}{3}}.
\]
Thus, \eqref{strongalternativeA} holds.

\medskip
\noindent{\bf Step 2:} We now prove \eqref{eq:rbepk} and \eqref{keyeqn}.

Note that, by the monotonicity of $\ccI$ (see \eqref{eq:I monotone}) and the definition of $\varrho$ (see \eqref{eq:varrho_def_intro}),  for all $\overline R>0$ and $\zz\in B_{\overline R} (\overline \zz)$ such that $B_{R} (\zz)\subset B_{\overline R} (\overline \zz)$ we have
\[
R \,\varrho_{\zz}(u, 2R)   \le   \overline R \,   \varrho_{\overline \zz}(u, 2\overline R).
\]
Combined with \eqref{key1}, this gives
\begin{equation}\label{keyeqnbis}
  \bE_\zz(u,8R)  \le   2\Big(\tfrac{\Rk}{R}\Big)^\ctta    \widetilde\eps_k \qquad \text{for all}\quad  \zz\in \cZ_R  \ \text{and }\ R\le \Rk,\ \text{with}\ B_R(\zz)\subset B_{\Rk}(\zk).
\end{equation}
Then, using \eqref{eq:rbeps} and \eqref{keyeqnbis} with $\zz = \zk\in \cZ_{\Rk}$ and $R = \Rk$, \eqref{eq:rbepk} follows (recall that $3\cttg\ctta > 1$). 
Consequently, noticing that
\begin{equation}\label{ZR_equals_Z}
    \cZ_{R}\cap B_{3\Rk/2}(\zk)= \cZ\cap B_{3\Rk/2}(\zk) \qquad \mbox{for all } R\ge \epk^{1/\alpha} \Rk,
\end{equation}
\eqref{keyeqnbis} implies \eqref{keyeqn}.
\end{proof}

\subsection{Definition of $U_+$ and $U_-$} \label{ssec:Upm} Recall the sets $\Omega^{(\pm)}$ introduced in \Cref{Omegapusminus}. Our first goal is showing the following lemma---a structural property that says that the sets $\Omega^{(\pm)} = \Omega^{(\pm)}\subset \{u > 0\}$  are connected, disjoint open sets of $\{u > 0\}$ (roughly, two half-spaces in $B_R(\zz)$, which only miss $\cT^{\rm neck}$):  

\begin{lem}\label{Omegapusminus}
Let $(\mathcal N,p)$ be the tree provided by \Cref{prop:geomtree} with root $B_R(\zz)$, and let $\Omega^{(\pm)} = \Omega^{(\pm)}(B_R(\zz))$ be as in \Cref{def:Bpm_Omegapm}.

Then there exists $\theta_\circ'>0$ universal such that, for all $\theta\in(0,\theta_\circ')$, $\Omega^{(+)}$ and $\Omega^{(-)}$ are disjoint, open, connected, and satisfy 
\[
 \{\pm e\cdot(x-\zz) > \theta^4 R\} \subset \Omega^{(\pm )}\subset \{\pm e\cdot(x-\zz) > - \theta^4 R\}\qquad\text{in}\quad B_R(\zz), 
\]
where $e$ is the polarity of the root.
Moreover, their union covers $\{u > 0\} \cap B_R(\zz)$ minus the union of `neck-type' terminal balls:
\[
\big(\{u > 0\} \cap B_R(\zz)\big) \setminus \bigcup \cT^{\rm neck}  \subset \Omega^{(+)} \cup \Omega^{(-)}.
\]
 \end{lem}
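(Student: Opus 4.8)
\textbf{Proof plan for Lemma~\ref{Omegapusminus}.}
The plan is to read off all the claimed properties from the tree structure in \Cref{prop:geomtree} and its consequences (\Cref{lemregball}, \Cref{lem:grad_lower_bound}, \Cref{lem:Omegapmcovers}), using \Cref{def:Bpm_Omegapm} and the induction variables $\Omega^{(\le\ell,\pm)}$ from the proof of \Cref{lem:Omegapmcovers}. First I would observe that the last inclusion---that $\Omega^{(+)}\cup\Omega^{(-)}$ covers $\{u>0\}\cap B_R(\zz)$ outside the neck-type terminal balls---is precisely \Cref{lem:Omegapmcovers}, so nothing new is needed there. Openness is immediate: each $B^{(\pm)}$ for an internal ball is an open half-ball-slice, and for a regular terminal ball it is the intersection of an open epigraph with an open ball (via \Cref{lemregball}); a union of open sets is open.

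For the two-sided slab containment $\{\pm e\cdot(x-\zz)>\theta^4R\}\subset\Omega^{(\pm)}\subset\{\pm e\cdot(x-\zz)>-\theta^4R\}$ in $B_R(\zz)$, I would argue as follows. For the \emph{upper} inclusion, note that for \emph{any} node $B=B_\varrho(y)\in\cN$, property \eqref{closeness1} and \eqref{closeness2} of \Cref{prop:geomtree} combined with the polarity control $|\boldsymbol e(B)-\boldsymbol e(B_R(\zz))|\le\theta^3$ summed geometrically along the path to the root (giving $|\boldsymbol e(B)-e|\lesssim\theta^3$) force $B^{(+)}\subset\{e\cdot(x-\zz)>-C\theta^3\varrho\}\subset\{e\cdot(x-\zz)>-\theta^4R\}$ once $\theta$ is small, because the half-space defining $B^{(+)}$ is $\{\boldsymbol e(B)\cdot(x-y)>\theta^2\varrho\}$ and the center $y$ lies in the slit ${\rm Slab}(B_R(\zz),e,\theta^4)$ by \Cref{prop:geomtree}(4) iterated; for regular terminal balls one uses instead the graphicality bound $\|g^{(\pm)}\|_\infty\le\theta^3\varrho$ of \Cref{lemregball}. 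Taking the union over all balls yields $\Omega^{(+)}\subset\{e\cdot(x-\zz)>-\theta^4R\}$. For the \emph{lower} inclusion, I would use induction on the tree depth $\ell$ via the identity $B_R(\zz)\subset\Omega^{(\le\ell,\pm)}\cup\bigcup\cN^{(\ell)}\cup\bigcup\cT$ from the proof of \Cref{lem:Omegapmcovers}: a point $x$ with $e\cdot(x-\zz)>\theta^4R$ that is not already in some $B^{(+)}$ at level $\le\ell$ must lie in a level-$\ell$ node or a terminal ball; since $\cN^{(\ell)}$-nodes have radius $\theta^\ell R\to0$ and every such node and every terminal ball containing a deep point branches (or, if regular, its $B^{(+)}$-part still contains the point because the point is far above the slit while the graph $g^{(+)}$ is $O(\theta^3\varrho)$-flat), iterating to the finite bottom of the tree places $x$ in some $B^{(+)}$. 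This uses that balls with $e\cdot(\cdot-\zz)>\theta^4R$ sit strictly on the positive side of each node's slit, so they fall into the ``$B^{(+)}$'' part at every stage.

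\textbf{Disjointness} I would prove by a contradiction/monotonicity argument on the tree. Suppose $x\in\Omega^{(+)}\cap\Omega^{(-)}$, say $x\in B^{(+)}$ for some node $B$ and $x\in \tilde B^{(-)}$ for another node $\tilde B$. Both $B^{(+)}\subset\{e\cdot(\cdot-\zz)> -\theta^4R + \theta^2\varrho_B - C\theta^3\varrho_B\}$ and $\tilde B^{(-)}\subset\{e\cdot(\cdot-\zz)< \theta^4R - \theta^2\varrho_{\tilde B}+C\theta^3\varrho_{\tilde B}\}$ sharpened at each \emph{fixed} scale: at a given node of radius $\varrho$, $B^{(+)}$ is pushed $\theta^2\varrho$ to the positive side of \emph{its own} slit (up to $O(\theta^3\varrho)$ polarity error), which is itself within $O(\theta^4R)$ of the root slit only for the root; but comparing the two nodes' half-spaces directly, if $\varrho_B\le\varrho_{\tilde B}$ then $B$ is nested (up to the tree's Vitali structure) inside an ancestor chain compatible with $\tilde B$, and the $\theta^2$-margin at the finer scale dominates the $\theta^3$-polarity drift, forcing $e\cdot(x-\zz)$ to be simultaneously $>c\theta^2\varrho_B$ above and $<-c\theta^2\varrho_B$ below the relevant local slit---a contradiction once $\theta$ is small. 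Concretely, the cleanest route is: $\Omega^{(+)}$ and $\Omega^{(-)}$ are each contained in, respectively, $\{e\cdot(x-\zz)>-\theta^4R\}$ and its reflection---but these overlap in the slit, so that alone is insufficient; hence one must instead show each $B^{(+)}$ is disjoint from each $\tilde B^{(-)}$ using that whenever $B\cap\tilde B\ne\varnothing$ their polarities agree up to $\theta^3(\varrho_B+\varrho_{\tilde B})$ (walk both up to a common ancestor) and then the $\theta^2\min(\varrho_B,\varrho_{\tilde B})$ separation of the two half-space conditions at the smaller scale beats this. \textbf{Connectedness} of $\Omega^{(\pm)}$ then follows because $\Omega^{(+)}$ contains the full half-space slab $\{e\cdot(x-\zz)>\theta^4R\}\cap B_R(\zz)$ (which is connected), and any remaining piece of $\Omega^{(+)}$ is a union of $B^{(+)}$'s each of which, by the slit-covering property \Cref{prop:geomtree}(4) and the gradient bound $|\nabla u - e|\le C\theta^2$ on $\Omega^{(+)}$ (\Cref{lem:grad_lower_bound}), is attached to that slab through its parent's $B^{(+)}$; climbing the tree upward connects every component to the root slab.

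\textbf{Main obstacle.} The delicate point is the \emph{disjointness}, because the sets $\Omega^{(\pm)}$ are built from half-spaces at \emph{many different scales} with polarities that drift by $\theta^3$ per generation, and one must verify that the $\theta^2\varrho$ margin carved out at each scale is never swamped by the accumulated polarity drift or by the $\theta^4R$ thickness of the global slit---i.e., that the geometric series $\sum\theta^3\varrho_\ell$ of drifts is genuinely smaller, scale by scale, than the local $\theta^2$ margin. Making this quantitative across overlapping nodes of comparable-but-different radii (where one must pass to a common ancestor and control the polarity difference by $C\theta^3(\varrho_B+\varrho_{\tilde B})$) is the heart of the argument; everything else is bookkeeping on top of \Cref{prop:geomtree}.
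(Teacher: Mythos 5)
Your treatment of the covering property (it is exactly \Cref{lem:Omegapmcovers}), of openness, and of connectedness (induction up the tree, each $B^{(\pm)}$ meeting its parent's $p(B)^{(\pm)}$, every component attached to the root slab) matches the paper's proof. The genuine gap is in the disjointness argument, which you correctly single out as the crux. Your key claim --- that when $B\cap\tilde B\neq\varnothing$ the polarities agree up to $C\theta^3$ ``by walking both up to a common ancestor'' --- does not hold: \Cref{prop:geomtree}(5) controls the drift only by $\theta^3$ \emph{per generation}, and the number of generations separating two overlapping balls from their common ancestor is unbounded (of order $\log_{1/\theta}(R/\rmin)$), so the accumulated polarity difference is $(\ell+\ell'-2m)\theta^3$, which need not be beaten by the $\theta^2$ margin at the finer scale. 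Worse, even a sharp geometric comparison of the two local vee--approximations on the overlap only yields $\min\{|\boldsymbol e(B)-\boldsymbol e(\tilde B)|,\,|\boldsymbol e(B)+\boldsymbol e(\tilde B)|\}$ small, since $V_{y,e}=V_{y,-e}$: nothing in the tree data excludes that $\boldsymbol e(\tilde B)\approx-\boldsymbol e(B)$ for two overlapping deep balls in different branches, in which case $B^{(+)}$ and $\tilde B^{(-)}$ would essentially coincide. What must be ruled out is a global sign inconsistency of the $\pm$ labels across branches, and no amount of half-space bookkeeping at the level of slits and margins supplies that.

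The paper's proof of disjointness is a different, dynamical argument. By \Cref{gradmap} (which rests on \Cref{lem:grad_lower_bound}, i.e.\ $\nabla u$ is $C\theta^2$-close to $+\boldsymbol e(B)$ on $B^{(+,3/2)}$ and to $-\boldsymbol e(B)$ on $B^{(-,3/2)}$), the flow $\Phi^F$ of $F=\nabla u$ started at a point of $B^{(+,5/4)}$ reaches the parent's $p(B)^{(+,5/4)}$ before leaving $B^{(+,3/2)}$. Iterating, a hypothetical $\bar x\in\Omega^{(+)}\cap\Omega^{(-)}$ produces a \emph{single} trajectory $t\mapsto\Phi^F(\bar x,t)$ that must reach both $B_{5R/4}(\zz)\cap\{e\cdot(x-\zz)>\theta^2R\}$ and $B_{5R/4}(\zz)\cap\{e\cdot(x-\zz)<-\theta^2R\}$; but once it enters the first region, where $\nabla u\approx e$, it exits $B_{3R/2}(\zz)$ without ever crossing $\{e\cdot(x-\zz)=0\}$, a contradiction. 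It is the uniqueness of the gradient-flow trajectory that enforces consistency of the labels across scales and branches; to repair your proof you would need to import this (or an equivalent global argument tying the sign of the label to $\nabla u$ along paths), since the local $\theta^2$-versus-$\theta^3$ comparison cannot close on its own.
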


The proof of \Cref{Omegapusminus} relies crucially on the following result:

\begin{lem}\label{gradmap}
Under the same assumptions as in \Cref{Omegapusminus}, define $F := \nabla u|_{\{u>0\}}$ and denote by $\Phi^F = \Phi^F(x,t)$ the associated flow (with maximal domain): 
\begin{equation}\label{eq:flow}
\left\{
\begin{array}{rcll}
 \dot{\Phi}^F(x, t) & =&  F(\Phi^F(x, t)))&\qquad\text{for}\quad t > 0,\\
\Phi^F(x, 0) & = & x.&
\end{array}
\right.
\end{equation}
For any given $B\in \cN \setminus \{ B_R(\zz) \}$  and $x\in B^{(+,5/4)}$  there exists $\tau>0$ such that $\Phi^F(x,t) \in B^{(+,3/2)}$ for $t\in [0,\tau]$ and 
\[
\Phi^F(x,\tau') \in p(B)^{(+,5/4)}, \quad \text{for some $\tau' \in  (0, \tau)$ }.
\]
(Recall that $p(B)$ is the predecessor of $B$.)
The same statement holds with $+$ replaced by $-$.
\end{lem}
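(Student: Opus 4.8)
\textbf{Proof plan for Lemma~\ref{gradmap}.}
The plan is to follow the flow lines of $F=\nabla u$ and show they point "upward" through the tree, in the sense that they can only exit a regular region $B^{(\pm,3/2)}$ through the ``slab'' portion where $B$ meets its predecessor. First I would fix $B=B_\varrho(y)\in\cN\setminus\{B_R(\zz)\}$ with predecessor $B'=p(B)=B_{\varrho'}(y')$, so $\varrho'=\varrho/\theta$, and note that by \Cref{prop:geomtree}(4) the center $y$ lies in ${\rm Slab}(B',\boldsymbol e(B'),\theta^4)$, while by (5) the polarities satisfy $|\boldsymbol e(B)-\boldsymbol e(B')|\le\theta^3$. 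The whole point is to understand the geometry: $B^{(+,5/4)}$ is (a connected component of) $\{u>0\}\cap B_{5\varrho/4}(y)$ lying on the side $e\cdot(x-y)>\theta^2\varrho$ (with $e=\boldsymbol e(B)$), and I must show that the $F$-trajectory starting at any such $x$ enters $p(B)^{(+,5/4)}$ before (or at the moment) it leaves $B^{(+,3/2)}$.

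The key analytic input is \Cref{lem:grad_lower_bound}: in $B^{(+,3/2)}$ we have $|\nabla u - e|\le C\theta^2$, hence $F=\nabla u$ is nearly the constant vector $e$, with $F\cdot e\ge 1-C\theta^2>\tfrac12$. Thus along any integral curve $\Phi^F(x,\cdot)$ staying inside $B^{(+,3/2)}$, the quantity $t\mapsto e\cdot(\Phi^F(x,t)-y)$ is strictly increasing at rate between $\tfrac12$ and $2$, and $\Phi^F$ moves essentially in the direction $e$ with a transverse drift of size at most $C\theta^2$ per unit of ``height'' gained. Concretely I would: (i) let $\tau$ be the first exit time of $B^{(+,3/2)}=\{u>0\}\cap B_{3\varrho/2}(y)\cap\{e\cdot(x-y)>\theta^2\varrho\}$; this $\tau$ is finite and positive because the height $e\cdot(\Phi^F-y)$ grows linearly, so the trajectory cannot stay in the bounded set $B_{3\varrho/2}(y)$ forever, and it starts strictly inside (height $>\theta^2\varrho$ at $t=0$). (ii) Rule out exit through $\partial\{u>0\}$: since $F=\nabla u$ on $\{u>0\}$ and $u$ is harmonic with $|\nabla u|=1$ on the free boundary pointing \emph{inward}, $u$ increases along the flow, so a trajectory in $\{u>0\}$ cannot reach $\partial\{u>0\}$. (iii) Rule out exit through the lower face $\{e\cdot(x-y)=\theta^2\varrho\}$: impossible since the height is increasing. (iv) Therefore the trajectory exits through $\partial B_{3\varrho/2}(y)$ at height $h_\tau:=e\cdot(\Phi^F(x,\tau)-y)\le 3\varrho/2$, and since the height started above $\theta^2\varrho$ and grew, at any time $\tau'$ before $\tau$ with height $h_{\tau'}\in[\theta^2\varrho,\,3\varrho/2]$ the transverse displacement from $x$ is $O(\theta^2\varrho)$, so $\Phi^F(x,\tau')$ stays within $B_{5\varrho/4+C\theta^2\varrho}(y)$, in particular in $B_{5\varrho/4}(y)$ once I pick the exit-adjacent time carefully.

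The heart of the argument — and the step I expect to be the main obstacle — is step (v): showing that at an appropriate time $\tau'\in(0,\tau)$ the point $\Phi^F(x,\tau')$ actually lands in $p(B)^{(+,5/4)}$, i.e.\ in $\{u>0\}\cap B_{5\varrho'/4}(y')\cap\{\boldsymbol e(B')\cdot(z-y')>\theta^2\varrho'\}$. Here one must convert a height gained relative to $y$ into a height relative to $y'$: since $|y-y'|\le\theta^4\varrho'$ along direction $\boldsymbol e(B')$ and $\varrho=\theta\varrho'$, a point at height $\sim\varrho$ above $y$ (in direction $e=\boldsymbol e(B)$) has, because $|\boldsymbol e(B)-\boldsymbol e(B')|\le\theta^3$, height $\ge \varrho - C\theta^3\varrho - \theta^4\varrho' = \theta\varrho' - C\theta^3\varrho'$ above $y'$ in direction $\boldsymbol e(B')$, which exceeds $\theta^2\varrho'$ once $\theta$ is small. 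For the radial containment, the transverse drift accumulated is $O(\theta^2)\times(\text{height gained})=O(\theta^2\varrho)=O(\theta^3\varrho')$, and $|x-y'|\le|x-y|+|y-y'|\le \tfrac54\varrho+\theta^4\varrho'\le(\tfrac54\theta+\theta^4)\varrho'$; so $|\Phi^F(x,\tau')-y'|\le (\tfrac54\theta+C\theta^2)\varrho'<\tfrac54\varrho'$ for $\theta$ small. Finally one checks $\Phi^F(x,\tau')\in\{u>0\}$ (automatic, as noted) and that it lies in the \emph{correct} connected component: this follows because the whole trajectory from $x$ to $\Phi^F(x,\tau')$ is a continuous path in $\{u>0\}$, and $B^{(+,5/4)}\cup B^{(+,3/2)}$ connects into $p(B)^{(+,5/4)}$ on the ``$+$'' side by the slab-covering property \Cref{prop:geomtree}(4) (choosing $\theta<\theta_\circ'$ so that all the ``$C\theta^2$'' error terms above are smaller than the relevant $\theta$-gaps). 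The ``$-$'' case is identical after replacing $\boldsymbol e$ by $-\boldsymbol e$ (cf.\ \Cref{reversed_polarity}), using $|\nabla u+e|\le C\theta^2$ in $B^{(-,3/2)}$ and the fact that $u$ still increases along $F$.
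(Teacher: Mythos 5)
Your proposal is correct and follows essentially the same route as the paper, which simply cites the slab/polarity facts from \Cref{prop:geomtree} and the gradient closeness $|\nabla u - e|\le C\theta^2$ from \Cref{lem:grad_lower_bound} and asserts the conclusion; you have filled in exactly the intended details (height monotone along the flow, no exit through the free boundary since $u$ increases, $O(\theta^2)$ transverse drift, and the height/center conversion from $(y,e)$ to $(y',e')$). One small inaccuracy: \Cref{prop:geomtree}(4) only bounds the component of $y-y'$ along $\boldsymbol e(B')$ by $\theta^4\varrho'$, not the full distance $|y-y'|$ (which can be as large as $\varrho'$), but your radial containment still goes through since $|x-y'|\le \varrho'+\tfrac54\theta\varrho'$ plus an $O(\theta\varrho')$ flow displacement remains below $\tfrac54\varrho'$ for $\theta$ small.
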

\begin{proof}
We can always assume that $\theta \in (0,\theta_\circ)$, where $\theta_\circ$ is as in \Cref{prop:geomtree}. 
Given some ball $B = B_{\varrho}(y)\in \cN$  with polarity $e$ and its predecessor $p(B) = B_{\varrho'}(y')$  ---thus $\varrho =\theta \varrho'$--- with polarity $e'$, by \Cref{prop:geomtree} we know that
\begin{equation}\label{gradvspolar2_3}
y\in B_{\varrho'}(y')\cap \{x\ : \  |e\cdot (x-y')|\le \theta^4\varrho'\}, \qquad |e-e'|\le \theta^3.
\end{equation}
Combining this with \Cref{lem:grad_lower_bound} 
one can easily see that (for $\theta$ universally small) the integral curves of $\nabla u$ starting at 
$B^{(+,5/4)}$ meet $p(B)^{(+,5/4)}$ for some universal time before leaving $B^{(+,3/2)}$.
\end{proof}

We can now show \Cref{Omegapusminus}.

\begin{proof}[Proof of \Cref{Omegapusminus}]
We will exploit the tree structure to reason by induction. Note that the covering property follows from \Cref{lem:Omegapmcovers}. 

Set 
$\mathcal N = \bigcup_{\ell\ge 0} \mathcal N^{(\ell)}$ as in \Cref{defrootedtree}, and let $\Omega^{(\le \ell, \pm)}$ as in \Cref{lem:Omegapmcovers}  (see \eqref{eq:Omegaleqell}). 
We need to show that the sets $\Omega^{(+)}$ and $\Omega^{(-)}$ are disjoint, connected, and open.

The openness directly follows because each set is a finite union of open sets, intersected with an open ball.

For the connectedness, will show by induction over $\ell=0,1,2,\dots$ that the two sets $\Omega^{(\le \ell, +)}$ and $\Omega^{(\le \ell, -)}$ are connected. Since the tree is finite, these two sets will eventually coincide with $\Omega^{(+)}$ and $\Omega^{(-)}$.
Since the root is always internal (see \Cref{prop:geomtree}),  $\Omega^{(\le 0,+)}$ and $\Omega^{(\le 0,-)}$ coincide respectively  with $B^{(+)}$ and $B^{(-)}$  as in  \Cref{def:Bpm_Omegapm} (with $B$ being the root and $e$ its polarity). Each of these two sets is connected (and they are disjoint). Now assuming that   $\Omega^{(\le \ell-1, \pm)}$ are connected    open sets for some $\ell\ge 1$, the result follows by induction from the following observation, which is a consequence of \Cref{prop:geomtree} (for $\theta$ small): for any given  $B\in\mathcal N^{(\ell)} \cap (\mathcal I\cup \mathcal T^{\rm reg})$ we have 
\[
   B^{(\pm)}\cap\Omega^{(\le \ell-1, \pm)}\neq \varnothing.
\]
 Since for any $B\in\mathcal N^{(\ell)}$ we have $p(B)\in\mathcal N^{(\ell-1)} \cap \mathcal I$, we obtain $p(B)^{(\pm)}\subset \Omega^{(\le \ell-1, \pm)}$, so the connectedness follows. 

To show that $\Omega^{(+)}$ and $\Omega^{(-)}$ are disjoint we use \Cref{gradmap} iteratively. Indeed, if $\bar x\in \Omega^{(+)}\cap \Omega^{(-)}$, repeated iterations of \Cref{gradmap} for both $+$ and $-$ (notice that the flow is always well defined, since the value of $u$ increases along it) imply that 
$$
\Phi(\bar x, T_+) \in B_{5R/4}(\zz)\cap \{e\cdot(x-\zz) > \theta^2 R\}\qquad \text{ and }\qquad \Phi(\bar x, T_-) \in B_{5R/4}(\zz)\cap \{e\cdot(x-\zz) <- \theta^2 R\}
$$
for some $T_\pm > 0$, where $e = \boldsymbol{e}(B_R(\zz))$. In addition, $\Phi(\bar x, t) \in B_{3R/2}(\zz)$ for all $t < \max\{T_+, T_-\}$. 
Assume now, without loss of generality, that $T_+ < T_-$.  Then, since $\Phi(\bar x, T_+) \in B_{5R/4}(\zz)\cap \{e\cdot(x-\zz) > \theta^2 R\}$ and $\nabla u$ is very close to $e$ (see \eqref{gradvspolar2}), for $t \geq T_+$ the flow goes outside of $B_{3R/2}(\zz)$ without crossing $\{e\cdot(x-\zz) = 0\}$, a contradiction to the fact that $\Phi(\bar x, T_-) \in B_{5R/4}(\zz)\cap \{e\cdot(x-\zz) <- \theta^2 R\}$. Hence, $\Omega^{(+)}$ and $\Omega^{(-)}$ are disjoint. 
\end{proof}

 We can now define the sets $U_\pm $ and $U_0$.

 \begin{defn}
 \label{def:UpUm}
Given $\theta>0$ universal such that \Cref{Omegapusminus} holds, let $\Rk$ and  $\zk$ be given by \Cref{lem:tildezktildeRk}, and let $\Omega^{(\pm )} = \Omega^{(\pm )}(B_{\Rk}(\zk))$ be the two  open connected subdomains of $\{u>0\}\cap B_{\Rk}(\zk)$ constructed in \Cref{def:Bpm_Omegapm}. 
Fix a subset $\tilde \cA\subset \cA^{\tilde \zeta}_{\zk, \Rk}$    with $\tilde \zeta =\frac12  \epk^{1/\alpha}$ as in \Cref{lem:Nbound} such that 
\begin{equation}
\label{eq:tildeA}
(\mathcal Z\cap B_{\Rk}(\zk))+B_{\frac12\Rk \epk^{1/\alpha}} \subset \bigcup_{\zz\in\tilde \cA} B_{\Rk \epk^{1/\alpha}} (\zz),\qquad \#\tilde \cA\le C \epk^{-\frac{1+\cttb}{3\alpha}}
\end{equation}
  (note that $\cA_{\zk, \Rk}^{\tilde \zeta} = \cZ\cap B_\Rk (\zk)$ by \eqref{ZR_equals_Z}), 
  and define 
\begin{equation}\label{depU1U2}
U_0  : =  \bigcup_{\zz\in\tilde \cA} B_{\Rk \epk^{1/\ctta}} (\zz),
\qquad
U_+  : = \left( \Omega^{(+)}\cap B_{\Rk/2}(\zk)\right)\setminus \overline{U_0},
\qquad \mbox{and}\qquad 
U_-  : = \left(\Omega^{(-)}\cap B_{\Rk/2}(\zk)\right)\setminus \overline{U_0}.
\end{equation}
\end{defn}

We note that since $\Omega^{(+)}\cap \Omega^{(-)}=\varnothing$ by \Cref{Omegapusminus}, the sets
$U_+$ and $U_-$ are disjoint open subsets of $B_{\Rk}(\zk)$. 
We also remark that the sets $\Omega^{(\pm)}$, $U_{\pm}$, and $U_0$  depend on $k$, but we drop this dependence in our notation for the sake of readability.

The following observations on $U_+, U_-$ will be used several times in the sequel:
\begin{lem}
    \label{lem:obsUpm}
Let $\Omega^{(\pm)} = \Omega^{(\pm)}(B_\Rk(\zk))$ and $U_\pm$ be as in \Cref{def:UpUm} above. Then: 

\begin{enumerate}[(i)]
\item The (disjoint open) sets $U_+$ and $U_-$ satisfy
\begin{equation}\label{bdryUpm1}
     U_+ \cup  U_- =  \left(\{u>0\}\cap B_{\Rk/2}(\zk)\right)\setminus \overline{U_0}\quad 
\end{equation}
and
\begin{equation}\label{bdryUpm}
    \left((\partial U_+ \cup \partial U_-)\cap B_{\Rk/2}(\zk) \right)\setminus \overline{U_0} \subset \partial\{u>0\}.
\end{equation}

\item For all $\zz\in \cZ$ and  $R\ge \epk^{1/\alpha} \Rk$ such that $B_{R}(\zz)\subset B_{\Rk/2}(\zk)$ there is $e\in \bS^2$ 
such that
\begin{equation} \label{flatnessatallscalesofinterest}
 \left\{\pm e\cdot(x-\zz) > C\epk (\Rk/R)^\alpha R\right\} \subset U_\pm \subset \left\{\pm e\cdot(x-\zz) > - C \epk(\Rk/R)^\alpha R\right\}\qquad\text{in}\quad B_R(\zz), 
\end{equation}
for some $C$ universal.
\end{enumerate}
\end{lem}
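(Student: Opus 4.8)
The two assertions are essentially unpacking the definitions of $U_\pm$ together with the structural properties of the ball tree from \Cref{prop:geomtree} and the quantitative estimate \eqref{keyeqn}. For part (i), the identity \eqref{bdryUpm1} follows by combining \Cref{lem:obsUpm}(\emph{via} \Cref{Omegapusminus}): the covering statement of \Cref{Omegapusminus} gives $\{u>0\}\cap B_{\Rk/2}(\zk)\subset \Omega^{(+)}\cup \Omega^{(-)}\cup \bigcup\cT^{\rm neck}$, so after removing $\overline{U_0}$ it remains to check that every neck-type terminal ball $B\in\cT^{\rm neck}$ meeting $B_{\Rk/2}(\zk)$ is swallowed by $U_0$. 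This is exactly what \eqref{eq:tildeA} is designed for: a neck ball $B=B_\varrho(y)$ is ``non-regular'', so $B_{2\varrho}(y)\cap \cZ\neq\varnothing$ and, by the branching criterion \eqref{branchingcondition} failing, $\varrho\le M\rb(\zz)$ for some $\zz\in B_{2\varrho}(y)\cap\cZ$; combined with \eqref{eq:rbepk} this forces $\varrho\le C\Rk\epk^{3\gamma}\ll \Rk\epk^{1/\alpha}$, and since $\zz\in\cZ\cap B_{\Rk}(\zk)$, the ball $B\subset B_{3\varrho}(\zz)\subset \zz+B_{\frac12\Rk\epk^{1/\alpha}}$ lies inside $\bigcup_{\zz\in\tilde\cA}B_{\Rk\epk^{1/\alpha}}(\zz)\subset U_0$ (using $\epk^{1/\alpha}\le \epk^{1/\ctta}$ since $\ctta<1$ and $\epk<1$). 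This yields \eqref{bdryUpm1}. For \eqref{bdryUpm}, away from $\overline{U_0}$ and inside $B_{\Rk/2}(\zk)$ the set $\{u>0\}$ is exactly $U_+\sqcup U_-$ by \eqref{bdryUpm1}, the two pieces are disjoint open sets, hence a boundary point of $U_\pm$ in this region is either a boundary point of $\{u>0\}$ (which is what we want) or lies in the other component's interior or on $\partial B_{\Rk/2}(\zk)$ — both excluded by the localization to the open ball and disjointness. One must also check that $\partial \Omega^{(\pm)}$ cannot contribute an artificial ``flat interface'' at $\{\pm e\cdot(x-\zk)=\pm\theta^2\Rk\}$: but this is not an issue because near such a point $u>0$ (by \Cref{lem:grad_lower_bound}, $|\nabla u|\ge 9/10$ on $\Omega^{(\pm)}$, so the corresponding regions of neighboring tree balls glue across), so those pseudo-boundaries are interior to $\{u>0\}$ and, by the connectedness from \Cref{Omegapusminus}, interior to $U_+\cup U_-$.

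\textbf{Part (ii).} Here the plan is: given $\zz\in\cZ$ and $R\ge \epk^{1/\alpha}\Rk$ with $B_R(\zz)\subset B_{\Rk/2}(\zk)$, apply \eqref{keyeqn} to get $\bE_\zz(u,8R)\le 2(\Rk/R)^\ctta\epk$. By the definition \eqref{eq:Ez_def_intro} of $\bE_\zz$ and the $L^1$-to-$L^\infty$ closeness lemma \Cref{lem:boundbelow}, there is $e=e(\zz,R)\in\bS^2$ with $\|u-V_{\zz,e}\|_{L^\infty(B_{3R/4}(\zz))}\le C(\Rk/R)^\ctta\epk R$ (after passing from $L^2$ to $L^1$ by Cauchy–Schwarz, then invoking \Cref{lem:boundbelow} for the upper-$L^\infty$ bound and the obvious half for the lower one, or directly citing the $L^1$-to-$C^2$ estimates \Cref{lem:L1Linfty_2} away from necks). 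This gives $\{u=0\}\cap B_{3R/4}(\zz)\subset \{|e\cdot(x-\zz)|\le C(\Rk/R)^\ctta\epk R\}$, so on the ``$+$ side'' $\{e\cdot(x-\zz)>C(\Rk/R)^\ctta\epk R\}$ we have $u>0$; combined with the gradient bound $\partial_e u\le 0$ and $|\nabla u|$ close to $1$ there (interior harmonic estimates), such points lie in the connected component $\Omega^{(+)}$ of the tree rooted at, say, $B_R(\zz)$ — but one must reconcile this with $\Omega^{(\pm)}=\Omega^{(\pm)}(B_\Rk(\zk))$, which is done by \Cref{treeprelim1}(a) (polarities at nearby scales differ by $o(1)$) together with \Cref{gradmap}, which shows integral curves of $\nabla u$ from the $+$-side of a small ball flow into the $+$-side of its predecessor, hence eventually into $\Omega^{(+)}(B_\Rk(\zk))$. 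Removing $\overline{U_0}$: by \eqref{eq:rbepk}, all neck radii inside $B_{3R/2}(\zz)$ are $\le C\Rk\epk^{3\gamma}\ll \epk(\Rk/R)^\alpha R$ (using $R\le\Rk/2$, $3\gamma$ close to $4/3$, and $\alpha<1$), so $U_0\cap B_R(\zz)$ is contained in the already-excluded slab up to enlarging the constant $C$. Putting these together yields \eqref{flatnessatallscalesofinterest}.

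\textbf{Main obstacle.} The routine parts are the measure-theoretic bookkeeping in (i) and the excess-to-$L^\infty$ passage in (ii). The genuinely delicate point is identifying, in (ii), the ``$+$ side'' of the \emph{local} approximating hyperplane at scale $R$ around $\zz$ with the \emph{global} component $\Omega^{(+)}=\Omega^{(+)}(B_\Rk(\zk))$: the polarity vectors $e(\zz,R)$ and $\boldsymbol e(B_\Rk(\zk))$ are only $o(1)$-close, accumulated over $\sim|\log\epk|$ tree generations, so one must argue that this accumulated tilt stays $\lesssim \theta^3$ and does not flip which half-space a point lies in. This is precisely controlled by the geometric decrease of polarity differences in \Cref{prop:geomtree}(5) ($|\boldsymbol e(B)-\boldsymbol e(B')|\le\theta^3$ per level, summing to $\lesssim\theta^3$ by geometric series) combined with \Cref{treeprelim1}(a) to compare the root polarity of the $B_R(\zz)$-tree with $\boldsymbol e(B_\Rk(\zk))$; and the flow argument \Cref{gradmap} certifies that the sign is transported consistently. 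So the proof strategy is: (a) prove (i) by the $U_0$-swallowing argument above; (b) for (ii), extract the local hyperplane from \eqref{keyeqn} and \Cref{lem:boundbelow}, then transport its orientation to the global $\Omega^{(\pm)}$ via \Cref{prop:geomtree}(5), \Cref{treeprelim1}(a), and \Cref{gradmap}; (c) absorb $U_0$ into the error slab using \eqref{eq:rbepk}.
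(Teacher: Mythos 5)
Your part (i) is essentially the paper's argument (neck balls are swallowed by $U_0$ via \eqref{eq:rbepk} and \eqref{eq:tildeA}, then the covering from \Cref{Omegapusminus}/\Cref{lem:Omegapmcovers} gives \eqref{bdryUpm1}, and \eqref{bdryUpm} follows by disjointness of the open sets $U_\pm$), and it is correct; the aside about $\epk^{1/\alpha}$ versus $\epk^{1/\ctta}$ is vacuous since $\ctta$ \emph{is} $\alpha$ in this paper. Also note that \Cref{lem:boundbelow} only gives the one-sided bound $u\ge V_{\zz,e}-C\eps R$, not the two-sided $L^\infty$ closeness you claim; fortunately all you use is the inclusion $\{u=0\}\cap B_R(\zz)\subset{\rm Slab}$, which does follow from the one-sided bound.

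The genuine gap is in part (ii), precisely at the step you flag as the main obstacle. Your mechanism for matching the two sides of the local hyperplane at scale $R$ with the global labels $\Omega^{(\pm)}(B_{\Rk}(\zk))$ is to control the accumulated polarity drift down the tree, claiming that the per-level bound $|\boldsymbol e(B)-\boldsymbol e(B')|\le\theta^3$ from \Cref{prop:geomtree}(5) ``sums to $\lesssim\theta^3$ by geometric series.'' There is no geometric decay: the bound is a \emph{uniform} $\theta^3$ per generation, and a ball at scale $\epk^{1/\alpha}\Rk$ sits at depth $\ell\sim\frac{|\log\epk|}{\alpha|\log\theta|}$, so all the tree structure gives is an accumulated tilt $\lesssim\ell\,\theta^3$, which diverges as $k\to\infty$. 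Nothing in the paper (nor in the geometry: the deep free boundary is only trapped in a slab of width $\theta^4\Rk$ around the root hyperplane, which is no constraint at scale $\varrho_\ell\ll\theta^4\Rk$) prevents the deep polarities from drifting far from $\boldsymbol e(B_{\Rk}(\zk))$. Consequently you cannot decide, by comparing directions, whether a point of $\{e\cdot(x-\zz)>\eps'R\}$ lies in a $B^{(+)}$ or a $B^{(-)}$ of the global tree, and the flow argument of \Cref{gradmap} only transports a label you have not been able to assign.

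The paper sidesteps polarity comparison entirely with a connectedness argument. Having shown $(\{u=0\}\cup U_0)\cap B_R(\zz)\subset{\rm Slab}(B_R(\zz),e,2C_1(\Rk/R)^{\alpha}\epk)$, each of the two components of $B_R(\zz)$ minus the slab is a connected open subset of $(\{u>0\}\cap B_{\Rk/2}(\zk))\setminus\overline{U_0}=U_+\sqcup U_-$ (by part (i)), hence lies entirely in one of $U_+$, $U_-$. It then remains only to show that both colours actually occur, i.e.\ that $U_+$ and $U_-$ each meet $B_R(\zz)$ outside the slab. For this one takes $R\ge C_*\Rk\epk^{1/\alpha}$ (the smaller scales being trivial after enlarging $C$), observes that the neck balls have radii $\ll\epk^{1/\alpha}\Rk$, and hence finds a single internal or regular terminal ball $B\in\cN$ of radius $\ge cR$ fully contained in $B_R(\zz)$; its two halves $B^{(+)}\subset\Omega^{(+)}$ and $B^{(-)}\subset\Omega^{(-)}$ both escape the slab. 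No comparison between $e$ and $\boldsymbol e(B_{\Rk}(\zk))$ is ever needed — one simply relabels $e\mapsto -e$ if necessary at the end. You should replace your orientation-transport step by this argument.
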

\begin{proof}
{\em (i)}  
We first show that the union of all neck balls of $\cN = \cN (B_{\Rk}(\zk))$ that intersect $B_{\Rk/2}(\zk)$ is contained in $U_0$, namely,
\begin{equation}\label{eq:necksCU0}
\bigcup \left\{ B\in \cT^{\rm neck} \ : \ B\cap B_{\Rk/2}(\zk) \neq \varnothing\right\} \subset U_0.     
\end{equation}
To prove this, let $B=B_\varrho(y)\in \cT^{\rm neck}$ with $B\cap B_{\Rk/2}(\zk) \neq \varnothing$.
On the one hand, by definition\footnote{See \Cref{prop:geomtree} and \Cref{defiregball}} of neck ball, we have $y\in B_{\Rk}(\zk)\cap\{u=0\}$, $\varrho\le \theta \Rk$, $B_{2\varrho}(y)\cap \cZ$ is nonempty, and for every $\zz\in B_{2\varrho}(y)\cap \cZ$ it holds $\varrho <M(\theta)\rb(\zz)$.
On the other hand, picking an arbitrary $\zz \in B_{2\varrho}(y)\cap \cZ$, \eqref{eq:rbepk} implies that $\rb(\zz) \le C\Rk \epk^{3\gamma} \ll \Rk \epk^{1/\ctta}$.
Thus, we conclude that $\varrho\ll  \Rk\epk^{1/\alpha}$ as $k\to \infty$,
and therefore $B\subset U_0$ (recall \eqref{depU1U2}). This proves \eqref{eq:necksCU0}.

Recall now that, by \Cref{lem:Omegapmcovers}, $\Omega^{(+)}$, $\Omega^{(-)}$, and the neck balls cover all of $\{u>0\}\cap B_{\Rk}(\zk)$. 
Thus \eqref{eq:necksCU0} implies \eqref{bdryUpm1}, from which  \eqref{bdryUpm} is a direct consequence.

{\em (ii)} Let $C_*$ be a large universal constant to be chosen later. Notice that  \eqref{flatnessatallscalesofinterest} becomes trivially true at scales $R<C_* \Rk \epk^{1/\alpha}$ with $C = C_*^\alpha$, so we can assume $R\ge C_*  \Rk\epk^{1/\alpha}$.

From \eqref{keyeqn} and \Cref{lem:boundbelow} we know that, for some $e\in \mathbb{S}^2$, 
\[
\{u = 0\}\cap B_{R}(\zz) \subset {\rm Slab}\!\left(B_{R}(\zz), e, C_1   (\Rk/R)^\alpha \epk\right). 
\]
with $C_1>1$ universal. Hence, recalling \eqref{depU1U2} and that $R\ge \Rk \epk^{1/\alpha}$, we obtain
\[
(\{u = 0\}\cup U_0)\cap B_{R}(\zz) \subset {\rm Slab}\!\left(B_{R}(\zz), e, 2C_1(\Rk/R)^\alpha\epk\right).
\]
This will imply \eqref{flatnessatallscalesofinterest},  provided we can show that both $U_+$ and $U_-$ must intersect the set $$\left(\{u > 0\}\cap B_{R}(\zz) \right)\setminus {\rm Slab}\!\left(B_{R}(\zz), e, 2C_1(\Rk/R)^\alpha\epk\right)$$ for $R\ge C_*\Rk \epk^{1/\alpha}$. 
To show this we recall that, by the proof of (i), the radii of the neck balls are much smaller than $\epk^{1/\alpha}\Rk$. Thus, if $C_*$ is sufficiently large (depending on the parameter $\theta$ in the construction of the ball tree), there exist balls $B\in \mathcal N$ {(in particular, centered at points of $\{u = 0\}$)} that:\\
- are either regular terminal or interior;\\
-  are fully contained in $B_R(\zz)$;\\
- and their radius is larger than $cR$, for some $c=c(\theta)>0$.\\
Reasoning with these balls (and recalling  \Cref{def:Bpm_Omegapm} and \Cref{prop:geomtree}), we deduce that both $B^{(+)}$ and $B^{(-)}$ (and therefore both $U_+$ and $U_-$) must intersect $\{u > 0\}\cap B_{R}(\zz) \setminus  {\rm Slab}\!\big(B_{R}(\zz), e, 2C_1(\Rk/R)^\alpha\epk\big)$.
This concludes the proof. 
\end{proof}

\section{Linearization}
\label{sec:linear}
In this section, we fix the sets $U_\pm\subset B_{\Rk/2}(\zk)$ and $U_0$ from \Cref{def:UpUm}, where $\Rk$ and  $\zk$ are given by \Cref{lem:tildezktildeRk}. We define 
 the asymmetric excess $\bA_{\zz}(u, R)$  for balls $B_R(\zz) \subset B_{\widetilde{R}_k/2}(\widetilde{\zz}_k)$   as follows (see~\eqref{eq:Az_def_intro}):
\begin{equation}
\label{eq:def_Az}
\begin{split}
\bA_{\zz}(u, R) & := \max_{\ast\in \{+, -\}} \min_{a \in \mathbb{S}^2, \, b \in \mathbb{R}} \frac{1}{R |B_R(\zz)|} \int_{U_\ast \cap B_R(\zz)} \big| u(x) - a \cdot x - b \big| \, dx,\\
& \,= \max_{\ast\in \{+, -\}} \min_{a \in \mathbb{S}^2, \, \bar b \in \mathbb{R}} \frac{1}{R |B_R(\zz)|} \int_{U_\ast \cap B_R(\zz)} \big| u(x) - a \cdot (x - \zz) - \bar b \big| \, dx.
\end{split}
\end{equation}
In this section (and for the following ones) we also fix (in addition to the constants in \eqref{eq:gamma_alpha_beta}) the constant
\begin{equation}
    \label{eq:chi}
    \cttc := \valcttc. 
\end{equation}
The goal of this section is to show the following result on the decay of the excess, with a two-scale behaviour depending on the size of the radius. More precisely, we first prove that up to some mesoscopic scale (depending on $\epk$) there is a $C^{1,1/3}$ improvement in flatness, while
until a second smaller mesoscopic scale there is a sort of ``preservation on average'' of the $L^\infty$ norm.

\begin{prop}
\label{prop:decaysqrt}
Let $\Rk$ and  $\zk$ be given by \Cref{lem:tildezktildeRk}, and let $R_k^\flat  := \epk^\cttc \Rk$. Then, for  every $\zz\in B_{\Rk/16}(\zk)\cap \cZ$,
\begin{equation}
\label{eq:prop62}
\bA_\zz(u, R) \le \left(\frac{R}{\Rk}\right)^{1/3}  \epk \qquad\text{for all}\quad R\in  \left[R_k^\flat, \Rk/C\right]
\end{equation}
for some $C>0$ universal. 
Moreover, there exist $a^\flat_+, a^\flat_-\in \mathbb S^2$ and $b^\flat_+, b^\flat_-\in \mathbb R$  
such that 
\begin{equation}\label{equation:weakestdecay}
\fint_{U_\ast\cap B_{r}(\zz)} |u-a^\flat_\ast \cdot x- b^\flat_\ast| \,dx \le C \epk^{1+\cttc/3} R_k^\flat \qquad\text{for}\quad \ast\in \{+, -\},\qquad \text{for all}\quad r\in [\epk^{1+2\cttc} \Rk, R_k^\flat ],
\end{equation}
with $C>0$ universal.
\end{prop}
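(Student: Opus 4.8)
The statement is a two–scale excess decay around a neck center $\zz \in B_{\Rk/16}(\zk) \cap \cZ$. The engine is the classical Caffarelli--De~Silva linearization scheme, but run with an $L^p$ (rather than $L^\infty$) control on the normal derivatives across the approximating hyperplane, as advertised in Section~\ref{sec:FB_AC}'s overview (points~8 and~9). The key structural inputs are \eqref{keyeqn} and \eqref{flatnessatallscalesofinterest} of \Cref{lem:tildezktildeRk}/\Cref{lem:obsUpm}: at \emph{every} scale $R \in [\epk^{1/\alpha}\Rk, \Rk/2]$ the set $\{u>0\}$ (minus the thin set $U_0$ of neck balls) splits into two nearly flat pieces $U_\pm$, with the symmetric excess decaying like $\epk(\Rk/R)^\alpha$. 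Because $3\alpha\gamma > 1$ and $\gamma < 4/9$, this flatness is summable in a way that lets the iteration run; and because $\cttc = 1/500$ is tiny compared to $1/\alpha$, we have $R_k^\flat = \epk^\cttc\Rk \gg \epk^{1/\alpha}\Rk$, so the linearization range $[R_k^\flat, \Rk/C]$ sits comfortably above the ``resolution scale'' $\epk^{1/\alpha}\Rk$ below which the ball tree becomes trivial.

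\textbf{Step 1 (linearized problem).} For a fixed scale $R$ in the relevant range, let $a_\pm \in \mathbb{S}^2$, $b_\pm \in \mathbb{R}$ achieve the asymmetric excess on $U_\pm \cap B_R(\zz)$, and set $v_\pm(x) := (u(x) - a_\pm\cdot x - b_\pm)/(\epk R)$ on $U_\pm \cap B_{R/2}(\zz)$. One shows, using \Cref{lem:grad_lower_bound} and the ball-tree structure, that $v_\pm$ is bounded in an appropriate averaged sense, is harmonic where $u>0$, and that on $\partial U_\pm$ (which by \eqref{bdryUpm} lies in $\FB(u)$ away from $U_0$) the Neumann data $(v_\pm)_\nu$ is small in $L^2$: this is exactly \eqref{wthuiwhioh123}, proved via \Cref{prop:neumann_bound} and \Cref{lem:hyp_comp}. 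The point is that the normal derivative of $u$ jumps at necks, so no $L^\infty$ bound holds, but the necks are confined to $U_0$ which has measure $\ll \epk^{3/\alpha}R^3$ by \eqref{eq:tildeA} and \eqref{eq:rbepk}; hence an $L^p$ bound on $\nabla v_\pm$ for some $p>1$ (\Cref{prop:neumann_bound}) together with the smallness of the contribution of $\partial U_0$ suffices to compare $v_\pm$, via a compactness/contradiction argument, to a genuine weak solution $w_\pm$ of $\Delta w = 0$ in $B_{R/2}(\zz)\cap\{a_\pm\cdot x + b_\pm > 0\}$ with $\partial_{a_\pm} w = 0$ on the flat boundary. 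By standard boundary regularity for this mixed Neumann problem, $w_\pm$ is $C^{1,1/3}$ at the origin (one can even take $C^{1,1}$ after reflection, but $C^{1,1/3}$ is all we need and is robust), so $w_\pm$ is approximated at scale $\theta R$ by an affine function to order $\theta^{1+1/3}$ of its size.

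\textbf{Step 2 (iteration for \eqref{eq:prop62}).} Choosing $\theta$ universal and absorbing the errors — the $L^1\to L^\infty$ and $L^1\to C^2$ estimates \Cref{lem:L1Linfty_2}, the Neumann error, the measure of $U_0$, and the mismatch between the two sides $U_\pm$ which is controlled by the \emph{symmetric} excess $\bE_\zz$ and hence by \eqref{keyeqn} — yields a geometric improvement
\[
\bA_\zz(u, \theta R) \le \tfrac{1}{2}\theta^{1/3}\,\bA_\zz(u, R) + C\Big(\tfrac{\Rk}{R}\Big)^{\alpha}\epk,
\]
valid for $R \in [R_k^\flat, \Rk/C]$. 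Iterating from the top scale $R = \Rk/C$, where $\bA_\zz(u,\Rk/C) \lesssim \bE_{\zz}(u, \cdot) \lesssim \epk$ by \eqref{keyeqn}, and using that the error term $\sum (\Rk/R_j)^\alpha\epk$ telescopes against the geometric factor $\theta^{1/3}$ as long as $\theta^{1/3} > \theta^{\alpha}$ (true since $\alpha > 3/4 > 1/3$), gives $\bA_\zz(u,R) \le (R/\Rk)^{1/3}\epk$ for all $R \in [R_k^\flat, \Rk/C]$, which is \eqref{eq:prop62}. The ``$1/3$'' exponent is precisely the boundary Hölder regularity of the mixed problem and is why it appears in \eqref{eq:prop62}.

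\textbf{Step 3 (continuation past $R_k^\flat$ for \eqref{equation:weakestdecay}).} Once we reach the scale $R_k^\flat$, by \eqref{eq:prop62} we have optimal planes $a^\flat_\pm := a_\pm(R_k^\flat)$, $b^\flat_\pm := b_\pm(R_k^\flat)$ with $\fint_{U_\ast\cap B_{R_k^\flat}(\zz)}|u - a^\flat_\ast\cdot x - b^\flat_\ast| \le (R_k^\flat/\Rk)^{1/3}\epk R_k^\flat = \epk^{1+\cttc/3}R_k^\flat$. For smaller radii $r \in [\epk^{1+2\cttc}\Rk, R_k^\flat]$ we can no longer afford to re-linearize (the excess may no longer improve because neck balls become relatively large compared to $r$, obstructing the split), but we do not need decay — only that the $L^1$-average of $u - a^\flat_\ast\cdot x - b^\flat_\ast$ does not \emph{grow} beyond $C\epk^{1+\cttc/3}R_k^\flat$ as $r$ shrinks. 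This follows from a crude ``no-growth'' / almost-monotonicity estimate: $u - a^\flat_\ast\cdot x - b^\flat_\ast$ is harmonic on $U_\ast$ with small Neumann data on the flat part of $\partial U_\ast$, and on the balls $B_{\Rk\epk^{1/\alpha}}(\zz')\subset U_0$ the function $u$ is merely $\lesssim \Rk\epk^{1/\alpha}$ in magnitude (since $|\nabla u|\le 1$ and $u = 0$ somewhere inside); since $\Rk\epk^{1/\alpha} \ll \epk^{1+\cttc/3}R_k^\flat = \epk^{1+\cttc+\cttc/3}\Rk$ — here we use $1/\alpha > 1 + 2\cttc$, which holds because $\alpha < 1$ and $\cttc$ is tiny — the $U_0$-contribution is negligible, and a subharmonicity/maximum-principle argument on each nearly-flat half propagates the $L^1$ bound from scale $R_k^\flat$ down to scale $\epk^{1+2\cttc}\Rk$ with only a universal multiplicative loss. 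This gives \eqref{equation:weakestdecay}.

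\textbf{Main obstacle.} The genuinely hard part is Step~1: establishing the $L^p$ bound on $\nabla v_\pm$ (\Cref{prop:neumann_bound}) and the smallness of the Neumann data (\Cref{lem:hyp_comp}) \emph{uniformly across all scales in the range}, and then passing to the limit to identify the mixed-boundary-value harmonic problem. The subtlety is that $U_\pm$ is not a nice Lipschitz domain — it is $\{u>0\}$ with the neck balls excised — so one cannot directly quote standard elliptic estimates; one must exploit the ball-tree description (\Cref{prop:geomtree}, \Cref{lemregball}) to build Harnack chains and local flatness on each node, control the total measure and ``capacity'' of $U_0$ via the neck-radius bound \eqref{eq:rbepk} and the counting bound $\#\tilde\cA \le C\epk^{-(1+\cttb)/(3\alpha)}$ from \eqref{eq:tildeA}, and check that the resulting error from $U_0$ is of lower order than the main gain $\theta^{1/3}\bA_\zz$. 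Getting all these error terms to close simultaneously is exactly where the specific numerology of $\gamma, \alpha, \cttb, \cttc$ in \eqref{eq:gamma_alpha_beta}--\eqref{eq:chi} is used.
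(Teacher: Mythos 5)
Your overall strategy (linearize to a Neumann problem with $W^{1,p}$ interior bounds and only $L^2$-small Neumann data, exploit the ball tree to handle $U_0$, iterate dyadically) is the paper's strategy, but the way you close the two iterations contains genuine errors. In Step 2 you write the recursion as $\bA_\zz(u,\theta R)\le \tfrac12\theta^{1/3}\bA_\zz(u,R)+C(\Rk/R)^{\ctta}\epk$ and claim the error telescopes because $\theta^{1/3}>\theta^{\ctta}$. But $(\Rk/R)^{\ctta}\epk$ \emph{increases} as $R$ decreases (that is the form of \eqref{keyeqn}), and at the bottom scale $R=R_k^\flat=\epk^{\cttc}\Rk$ it equals $\epk^{1-\ctta\cttc}\gg\epk^{1+\cttc/3}$, which is far larger than the target in \eqref{eq:prop62}; the recursion as written therefore cannot produce the claimed decay. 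The resolution in the paper is that the symmetric-excess information never enters as an additive term in the excess recursion: it only controls the flatness of $U_\pm$ at each scale and the Neumann data (via \Cref{prop:neumann_bound}), i.e. it is a domain/boundary perturbation that the multi-scale compactness lemma (\Cref{lem:compactness2}, applied with $d=1$ to the normalized $\tilde v_\ell$ using the $W^{1,p}$ bounds at $\ell_0$ consecutive scales from \Cref{lem:hyp_comp}) absorbs into an $\eta+C\epk^{1/2}$ multiplicative loss. Relatedly, the exponent $1/3$ is not "the boundary H\"older regularity of the mixed problem" (that problem is smooth after even reflection); the paper proves decay with exponent $1/2$ and uses $1/3$ only as slack to absorb the constant $C_\circ$ by shrinking the top scale to $\Rk/C_\circ^6$.

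Step 3 is also not correct as stated. The sub-mean-value inequality for the subharmonic function $|v|$ goes the wrong way for what you want: it bounds pointwise values by averages over \emph{larger} balls, so "subharmonicity/maximum principle" does not propagate an $L^1$ bound from scale $R_k^\flat$ down to scale $\epk^{1+2\cttc}\Rk$ (a scale ratio of $\epk^{-(1+\cttc)}$), and the $L^1$--Lipschitz interpolation only yields $\|v\|_{L^\infty}\lesssim \epk^{(1+\cttc/3)/4}R_k^\flat$, far too weak. Moreover the Neumann data of $v$ on $\FB(u)$ is only $L^2$-small, so near the boundary one cannot reflect. The paper instead runs a second linearization iteration at these scales, using \Cref{lem:compactness2} with $d=0$ to get oscillation decay $\fint_{U_+\cap B_r}|w-c(r)|\le C\epk^{1+\cttc/3}R_k^\flat(r/R_k^\flat)^{\bar\beta}$ for a small $\bar\beta>0$, and then sums the resulting geometric series for $|c(r_1)-c(r_2)|$ to control the drift of the constants; it is this drift control, absent from your argument, that yields \eqref{equation:weakestdecay} without re-centering.
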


To prove this proposition, we will need several preliminary results that we now present.

 \subsection{Linearization: auxiliary results} We start by proving the following:

\begin{lem}
\label{lem:AzEz} For  $B_R(\zz)\subset B_{\Rk/2}(\zk)$ with $R\ge\epk^{1/\alpha} \Rk$, we have 
\[
  \bA_\zz(u,8R)  \le   C\Big(\tfrac{\Rk}{R}\Big)^\ctta    \widetilde\eps_k ,
\]
where $C$ is universal.
\end{lem}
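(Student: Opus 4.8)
The statement compares the asymmetric excess $\bA_\zz(u,8R)$ with the quantity $(\Rk/R)^\ctta\widetilde\eps_k$, which by \eqref{keyeqn} controls the symmetric excess $\bE_\zz(u,8R')$ at all scales $R'\ge \epk^{1/\alpha}\Rk$. The natural approach is therefore to dominate $\bA_\zz(u,8R)$ by a (suitable multiple of the) symmetric excess $\bE_{\zz}(u,cR)$ on a slightly larger ball, and then invoke \eqref{keyeqn}. The key structural input is \Cref{lem:obsUpm}(ii): for $B_R(\zz)\subset B_{\Rk/2}(\zk)$ with $R\ge\epk^{1/\alpha}\Rk$ there is a unit vector $e$ such that $U_+$ and $U_-$ are essentially the two half-balls $\{\pm e\cdot(x-\zz)>0\}$ up to a slab of width $C\epk(\Rk/R)^\alpha R$ inside $B_R(\zz)$; and the free boundary itself lies in that same thin slab because of \eqref{keyeqn} and \Cref{lem:boundbelow}.

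\textbf{Main steps.} First I would fix $\zz$ and $R$ as in the statement and apply \eqref{keyeqn} (with radius $8R$, noting $8R\ge \epk^{1/\alpha}\Rk$) to get $\bE_\zz(u,8R)\le 2(\Rk/(8R))^\ctta\widetilde\eps_k\le C(\Rk/R)^\ctta\widetilde\eps_k=:\delta$. By definition of $\bE_\zz$ there is $e\in\bS^2$ with $\fint_{B_{8R}(\zz)}|u-V_{\zz,e}|^2\le \delta^2 (8R)^2$; by Cauchy--Schwarz this gives an $L^1$ bound $\frac{1}{8R|B_{8R}(\zz)|}\int_{B_{8R}(\zz)}|u-V_{\zz,e}|\le C\delta$. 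Next, on the half $\{e\cdot(x-\zz)>0\}$ the vee $V_{\zz,e}(x)=|e\cdot(x-\zz)|$ equals the affine function $a\cdot(x-\zz)$ with $a=e$, so restricting the previous $L^1$ integral to $U_+\cap B_{8R}(\zz)$ and using the candidate $(a,\bar b)=(e,0)$ yields
\[
\min_{a\in\bS^2,\bar b\in\R}\frac{1}{8R|B_{8R}(\zz)|}\int_{U_+\cap B_{8R}(\zz)}|u-a\cdot(x-\zz)-\bar b|\,dx\le C\delta;
\]
symmetrically with $a=-e$ on $U_-$. Taking the max over $\ast\in\{+,-\}$ gives $\bA_\zz(u,8R)\le C\delta=C(\Rk/R)^\ctta\widetilde\eps_k$, as desired. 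The only subtlety is that $U_\pm$ are not exactly half-balls: they differ from $\{\pm e\cdot(x-\zz)>0\}\cap B_{8R}(\zz)$ by the thin slab and by the removal of $U_0$; on that exceptional set $u$ is still $1$-Lipschitz with $|u|\le CR$, and its measure is $O(\epk(\Rk/R)^\alpha R)\cdot R^2 + O(|U_0|)$, which after dividing by $R|B_{8R}(\zz)|$ contributes an error of the same order $C\epk(\Rk/R)^\alpha$ (using $\alpha<\ctta$ and $|U_0|$ negligible by \eqref{eq:rbepk}, since neck radii are $\ll\epk^{1/\alpha}\Rk$). This error is absorbed into $C(\Rk/R)^\ctta\widetilde\eps_k$.

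\textbf{Expected obstacle.} The routine part is the $L^2$-to-$L^1$ passage and the affine-vs-vee identification; the only genuinely delicate bookkeeping is controlling the contribution of the exceptional region $\big(B_{8R}(\zz)\cap\{|e\cdot(x-\zz)|\le C\epk(\Rk/R)^\alpha R\}\big)\cup U_0$ to the integral, and checking that $8R\le \Rk$ so that $B_{8R}(\zz)\subset B_{\Rk}(\zk)$ and \Cref{lem:obsUpm} applies — this uses that we are in the regime $R\le \Rk/C$ which is where the lemma will be invoked (in the statement here $R$ is only assumed $\ge\epk^{1/\alpha}\Rk$, but for $R$ comparable to $\Rk$ the bound is trivial since $\bA_\zz\le C$ always and the right-hand side is $\ge c\widetilde\eps_k^{\,}$-order; one splits into the two cases $8R\le\Rk/2$ and $8R>\Rk/2$). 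Modulo these elementary case distinctions, the proof is a direct comparison argument.
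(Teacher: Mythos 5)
Your main argument is exactly the paper's: bound the symmetric excess via \eqref{keyeqn}, pass from $L^2$ to $L^1$ by Cauchy--Schwarz, use \Cref{lem:obsUpm}(ii) to identify the region where the vee $V_{\zz,e}$ coincides with the competitor affine function on $U_\pm$, and absorb the exceptional slab via the trivial Lipschitz bound $|u-V_{\zz,e}|\le CR$ times the slab measure $C\epk(\Rk/R)^\alpha R^3$. (Your separate estimate of $|U_0|$ is unnecessary: the proof of \Cref{lem:obsUpm}(ii) already places $U_0\cap B_R(\zz)$ inside the same thin slab, which is why \eqref{flatnessatallscalesofinterest} is stated directly for $U_\pm$.)

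One step in your case distinction does not work as written: for $8R>\Rk/2$ you claim the bound is "trivial since $\bA_\zz\le C$ always and the right-hand side is $\ge c\widetilde\eps_k$-order." But the right-hand side is of order $\widetilde\eps_k\to 0$, so the crude bound $\bA_\zz\le C$ proves nothing. The paper avoids this issue because its proof actually establishes the estimate for $\bA_\zz(u,R)$ (not $\bA_\zz(u,8R)$) under the stated hypotheses — and that is the form in which the lemma is invoked later (in \Cref{proplinearization}). In other words, the $8R$/$R$ mismatch is an inconsistency between the statement and the proof in the paper itself; the correct fix is to prove the bound at scale $R$ (where \Cref{lem:obsUpm}(ii) applies directly under the hypothesis $B_R(\zz)\subset B_{\Rk/2}(\zk)$), not to dispose of the large-radius case by a triviality that fails.
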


\begin{proof}
Fix $\zz\in \cZ$ and $R\ge\epk^{1/\alpha} \Rk$ such that $B_R(\zz)\subset B_{\Rk/2}(\zk)$. Also, define $\eta= \Big(\tfrac{\Rk}{R}\Big)^\ctta\epk$.

On the one hand, by \eqref{keyeqn} and H\"older's inequality, we obtain
\begin{equation}\label{aux111}
\int_{B_R(\zz)} |u - V_{\zz, e}|  \,dx  \le   C\eta R^4.
\end{equation}
On the other hand, by \eqref{flatnessatallscalesofinterest},
\begin{equation}\label{aux222}
D_{\pm} : = \big(U_\pm\setminus \{\pm e\cdot(x-\zz) > C\eta R\} \big)\cap B_{R}(\zz) \subset {\rm Slab}\!\left(B_{R}(\zz), e, C\eta\right).
\end{equation}
(We notice that in the proof of \eqref{flatnessatallscalesofinterest}, the vector $e$ is given by \Cref{lem:boundbelow}, from which we deduce that the unit vectors $e$ in \eqref{aux111} and \eqref{aux222} are indeed the same.)

Also, since $u(\zz)=0$ (because $\zz\in \cZ$), from the gradient bound $|\nabla u|\le 1$  we obtain
\begin{equation}\label{aux333}
 \sup_{B_R(\zz)} |u - V_{\zz, e}|\le CR.
\end{equation}
Combining \eqref{aux111},\eqref{aux222},  and \eqref{aux333},
we get
\[
\int_{U_\pm } |u(x)\mp(e\cdot x)|\, dx \le C\eta R^4 + C|D_{\pm}|R \le C\eta R^4,
\]
thus $\bA_{\zz}(u,R) \le C\eta$, as wanted. 
\end{proof}

We next state an abstract lemma that will be applied in the context of \Cref{lem:tildezktildeRk}. It provides pointwise gradient and flux bounds in a linearization regime. 

\begin{prop}[Linearization] \label{proplinearization}
There exist $a_+\in \bS^{2}$ and $b_+\in \R$ such that, denoting
$ 
v(x) := u(x) - a_+\cdot x-b_+,
$
we have
\[
\frac{1}{\Rk}
\fint_{B_{\Rk} (\zk)\cap U_+} |v| \,dx \le C\,  \epk,
\]
for some $C$ universal. Moreover, for all $\bar{x}\in   \overline{U_+} \cap B_{\Rk/4}(\zk)$,
\[
  |\nabla v (\bar x)| = | \nabla u(\bar{x})-a_+|
\leq  C\biggl(\frac{\Rk}{\dist(\bar x, \cZ)}\biggr)^{\ctta}  \epk.
\]
In particular, for all $\bar{x}\in  \partial U_+\cap B_{\Rk/4}(\zk)$ with $\dist(\bar{x},\cZ) \geq 
\Rk \epk^{1/\ctta} $, 
we have $\bar x \in \FB(u)$  and
\[
|\partial_\nu v(\bar x)| = |1-a_+\cdot \nu(\bar{x})|
\leq  C\biggl(\frac{\Rk}{\dist(\bar x, \cZ)}\biggr)^{2\ctta}  \epk^{2},
\]
where $\nu(\bar x)$ is the inward unit normal vector to $\FB(u)$, and the constant  $C$ is universal

The same statement holds with $a_+, b_+, U_+$ replaced by $a_-, b_-, U_-$.
\end{prop}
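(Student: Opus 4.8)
\textbf{Plan for the proof of Proposition~\ref{proplinearization}.}
The plan is to choose the optimal affine approximation of $u$ on $U_+$ at scale $\Rk$, set $v = u - a_+\cdot x - b_+$ with $\abs{a_+}=1$ realizing the minimum in the definition of $\bA_{\zk}(u,\Rk)$, and then propagate the $L^1$-smallness to pointwise gradient bounds via a standard Campanato-type iteration at \emph{every} relevant scale, using the key estimate \eqref{keyeqn} (equivalently \eqref{eq:AzEz}) together with the $L^1$-to-$C^2$ regularity of \Cref{lem:L1Linfty_2} away from necks. First I would record that $\tfrac1{\Rk}\fint_{B_{\Rk}(\zk)\cap U_+}\abs v\,dx\le C\epk$ is precisely the $\ast=+$ term in $\bA_{\zk}(u,\Rk)\le C\epk$, which follows from \Cref{lem:AzEz} (with $R=\Rk/8$, say, after adjusting constants) or directly from \eqref{keyeqn}. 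The only subtlety in the first assertion is that the minimizing slope $a_+$ is genuinely a unit vector and not just a gradient of an affine function; but since $u$ is $1$-Lipschitz with $\abs{\nabla u}$ close to $1$ on $\Omega^{(+)}$ by \Cref{lem:grad_lower_bound}, the optimal affine function has slope of size $1+O(\theta^2)$, and one normalizes it, absorbing the error into the constant.

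For the pointwise gradient bound, fix $\bar x\in\overline{U_+}\cap B_{\Rk/4}(\zk)$ and let $d=\dist(\bar x,\cZ)$. The plan is to work on the ball $B_r(\zz_{\bar x})$ where $\zz_{\bar x}\in\cZ$ is the nearest neck center to $\bar x$ and $r\simeq d$, as long as $r\ge\Rk\epk^{1/\alpha}$ (the case $d<\Rk\epk^{1/\alpha}$ being trivial since then the claimed bound exceeds the trivial bound $\abs{\nabla v}\le 2$). On $U_+\cap B_r(\zz_{\bar x})$ one has, by \Cref{lem:obsUpm}(ii), that $U_+$ is a Lipschitz-flat domain with flatness $\le C\epk(\Rk/r)^\alpha$, and by \eqref{keyeqn} combined with \Cref{lem:boundbelow} that $u$ is $L^1$-close to a vee $V_{\zz_{\bar x},e}$ there; restricting to the positive side, $u\mathbbm 1_{U_+}$ is $L^1$-close (at scale $\epk(\Rk/r)^\alpha r$) to the linear function $e\cdot(x-\zz_{\bar x})$ on the half-ball. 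Since $B_{2r}(\zz_{\bar x})$ may still contain smaller neck centers, one cannot apply \Cref{lem:L1Linfty_2} directly on that whole ball; instead one covers $\overline{U_+}\cap B_{r/2}(\zz_{\bar x})$ minus a neighborhood of $\cZ$ of size $\simeq\Rk\epk^{1/\alpha}$, using the ball-tree structure (\Cref{prop:geomtree}, \Cref{lemregball}, \Cref{def:Bpm_Omegapm}) to see that $\Omega^{(+)}$ there is a union of regular/internal regions on which the free boundary is a flat $C^{1,1}$ graph, and on each such region one applies \Cref{lem:L1Linfty_2} (or \Cref{lem:eps-reg-classical}) at the appropriate local scale. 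This yields $\abs{\nabla v(\bar x)}=\abs{\nabla u(\bar x)-a_+}\le C\epk(\Rk/d)^\alpha$ after also comparing $e$ with $a_+$ across scales (a geometric-series/telescoping argument over dyadic scales between $d$ and $\Rk$, each step costing a factor controlled by $\bA_\zz(u,\cdot)\le C\epk(\Rk/\cdot)^\alpha$, which is summable since we only need the value at the bottom scale).

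For the flux bound: if in addition $\bar x\in\partial U_+$ and $d\ge\Rk\epk^{1/\alpha}$, then $\bar x\notin\overline{U_0}$, so by \Cref{lem:obsUpm}(i) (\eqref{bdryUpm}) we have $\bar x\in\FB(u)$. On $\FB(u)$ we have $\abs{\nabla u(\bar x)}=1$, hence $1-a_+\cdot\nu(\bar x) = \tfrac12\abs{\nu(\bar x)-a_+}^2 + \tfrac12(1-\abs{a_+}^2)$ up to sign, and since $\nu(\bar x)=\nabla u(\bar x)$ (unit inward normal equals $\nabla u$ there) the already-proven gradient bound gives $\abs{\nu(\bar x)-a_+}\le C\epk(\Rk/d)^\alpha$, whence $\abs{1-a_+\cdot\nu(\bar x)}\le C\epk^2(\Rk/d)^{2\alpha}$; the term $1-\abs{a_+}^2$ vanishes since $\abs{a_+}=1$ by construction. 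The identical argument with reversed polarity $-\boldsymbol e$ (see \Cref{reversed_polarity}) handles $a_-,b_-,U_-$.

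\textbf{Main obstacle.} The delicate point is the gradient estimate near a neck center at an intermediate scale: one must control $\abs{\nabla u - a_+}$ at a point $\bar x$ whose distance to $\cZ$ is $d\ll\Rk$, and the naive application of interior or free-boundary regularity fails because $B_{2d}(\bar x)$ can contain further, smaller necks on which $\abs{\nabla u}$ is \emph{not} close to $\abs{a_+}$ in $L^\infty$. The resolution is that such sub-necks have radius $\le C\Rk\epk^{3\gamma}$ by \eqref{eq:rbepk}, which is $\ll d$ for the range of $d$ we care about ($d\ge\Rk\epk^{1/\alpha}$, and $3\gamma>1/\alpha$ by \eqref{eq:gamma_alpha_beta}); so one excises a $\Rk\epk^{1/\alpha}$-neighborhood of $\cZ$, applies regularity on the complement using the tree structure, and checks that removing this neighborhood does not affect the point $\bar x$ (since $\dist(\bar x,\cZ)=d\ge\Rk\epk^{1/\alpha}$). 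Getting all the scale-dependent constants to line up — in particular that the telescoping of the affine corrections across dyadic scales between $d$ and $\Rk$ converges and produces the exponent $\alpha$ (not $\alpha$ times the number of scales) — is the computational heart of the argument and relies essentially on $\alpha<1$.
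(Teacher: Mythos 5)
Your proposal is correct and follows essentially the same route as the paper's proof: the $L^1$ bound is the $\ast=+$ instance of \Cref{lem:AzEz} at scale $\Rk$, the gradient bound comes from telescoping the optimal affine approximations over dyadic scales between $\dist(\bar x,\cZ)$ and $\Rk$ together with an $L^1$-to-$C^2$ estimate at the bottom scale, and the flux bound is the identity $1-a_+\cdot\nu(\bar x)=\tfrac12\abs{\nabla u(\bar x)-a_+}^2$ for unit vectors combined with the gradient bound. The one simplification you overlook is that no ball-tree covering is needed at the bottom scale: since $\rho=\dist(\bar x,\cZ)\ge C_*\Rk\epk^{1/\ctta}$ while every ball forming $U_0$ has radius $\Rk\epk^{1/\ctta}$, the ball $B_{\rho/2}(\bar x)$ centered at $\bar x$ itself (rather than at the nearest neck center) is disjoint from $\overline{U_0}$, so $u|_{U_+}$ is a classical Bernoulli solution there and \Cref{lem:L1Linfty_2} applies in a single ball; note also that the convergence of the telescoping sum $\sum_j(\Rk/2^j\rho)^{\ctta}$ only requires $\ctta>0$, not $\ctta<1$.
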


\begin{proof}
By \Cref{lem:AzEz} we have 
\begin{equation}\label{eq:Azbound}
\bA_{\zz}(u, R) \le C \bigg(\frac{\Rk}{R}\bigg)^\alpha \epk, 
\end{equation}
as long as $B_R(\zz)\subset B_{\Rk/2}(\zk)$ and $R\ge\epk^{1/\alpha} \Rk$. 
In particular, choosing $\zz=\zk$ and $R=\Rk$ and using  the definition of the asymmetric excess in \eqref{eq:def_Az}, it follows that there exist $a_+ \in \bS^{2}$ and $b_+ \in \R$ such that
 \begin{equation}
 \label{eq:u R}
      \frac{1}{R^4} \int_{U_\ast \cap B_R(\zz)} \big| u(x) - a_+ \cdot x - b_+ \big| \, dx \le C \epk,
 \end{equation}
 so the first inequality in the statement of the proposition holds.

 Now, fix $\bar{x} \in \overline{U_+} \cap B_{\Rk/4}(\zk)$ and define $\rho:=|\bar x - \zz(\bar x)|\le \frac{\Rk}{4}$, where $\zz(\bar x)\in \cZ\cap B_{\Rk/2}(\zk)$ is such that $\dist(\bar{x},\cZ) = \rho$. Since $| \nabla u(\bar{x})-a_+|\le 2$, our desired estimate is trivially true if $\rho \le C_* \Rk \epk^{1/\alpha}$, so we can assume $\rho \ge C_*\Rk \epk^{1/\alpha}$ for a universal $C_*\ge 2$. Moreover, since $\rb(\zz(\bar x))\le \tfrac{1}{10}\Rk \epk^{1/\alpha}$ (by \eqref{eq:rbepk}), we have $\overline{B_{\rho/2}(\bar x)}\cap \overline{U_0} = \varnothing$. 
  
Now, on the one hand, set $\rho_j:=2^{j}\rho$ with $0\leq j\leq j_{\max}:=\floors{\log_2(\frac{\Rk}{2\rho})}$.
Then \eqref{eq:Azbound} gives the existence of  $a_j\in \bS^{2}$ and $b_j\in\R$ such that
\begin{equation}\label{eq:lin-asym-j}
\frac{2}{\rho_j}
\fint_{B_{\rho_j/2}(\bar{x})\cap U_+}
	|u(x)-a_{j}\cdot x-b_{j}|
\,dx
\leq \frac{C}{2\rho_j}
\fint_{B_{2\rho_j}(\zz(\bar x))\cap U_+}
	|u(x)-a_{j}\cdot x-b_{j}|
\,dx\leq  C \bigg(\frac{\Rk}{\rho_j}\bigg)^\ctta \epk
\end{equation}
for $0 \leq j \leq j_{\max}$. Hence, if we set 
 $a_{j_{\max}+1}=a_+$ and $b_{j_{\max}+1}=b_+$, 
it follows from the bounds above and \eqref{eq:u R} that 
\[
|a_{j}-a_{j-1}|
\leq 
\frac{C }{\rho_j}
\fint_{B_{\rho_j(\zz(\bar x))}\cap U_+}
	|(a_{j}-a_{j-1})\cdot x|
\,dx
\leq C\bigg(\frac{\Rk}{\rho_j}\bigg)^\ctta   \epk,
	\qquad 1\leq j\leq j_{\rm max }+1
\]
 (note that, because of \eqref{flatnessatallscalesofinterest}, at the scales of interest $U_+$ is roughly a half-space).

On the other hand, since $B_{\rho/2}(\bar x)\cap \overline{U_0} = \varnothing$, 
it follows from \eqref{eq:necksCU0} 
that $u\big|_{U_+}$  is a (classical) solution to the Bernoulli problem in $B_{\rho/2}(\bar x)$. Hence, by \eqref{eq:lin-asym-j} for $j=0$, we can apply a rescaled version of \Cref{lem:L1Linfty_2} to obtain
\[
|\nabla u(\bar{x})-a_0| \leq C\bigg(\frac{\Rk}{\rho}\bigg)^\alpha \epk,  \quad \mbox{in } U_+\cap B_{\rho/4}(\bar{x}).
\]
Summing up,
\begin{align*}
|\nabla u(\bar x)-a_+|
&\leq C  \sum_{j=1}^{j_{\max}+1}
  \left(\frac{\Rk}{\rho_j}\right)^\ctta \epk
\leq C \bigg(\frac{\Rk}{\rho}\bigg)^\ctta\epk.
\end{align*}
This proves the desired bound on $\nabla v$.

Finally, assume $\bar x \in \partial U_+\cap B_{\Rk/4}(\zk)$ with $\dist(\bar x, \cZ) \ge \Rk \epk^{1/\ctta}$. Thanks to \eqref{bdryUpm}, since  $\bar x\notin U_0$ then $\bar x\in \FB(u)$. Thus, since $a_+$ and $\nabla u(\bar x)$ are unit vectors, we get
\begin{align*}
|1-a_+\cdot \nabla u(\bar{x})|
&=\frac{1}{2}|a_+-\nabla u(\bar{x})|^2
\leq C\bigg(\frac{\Rk}{\rho}\bigg)^{2\ctta}\epk^2, 
\end{align*}
as desired. 
\end{proof}

\begin{prop}
\label{prop:neumann_bound}
Let $v$ be as in \Cref{proplinearization}. For any given $\beta\in [0,1]$ satisfying $12 \ctta\beta< 5 -\cttb$, we have
\[
\int_{(\partial U_+\cup \partial U_-)\cap B_{\Rk/8}(\zk)} |\partial_\nu v|^2  \,d\cH^{2}
\le C\,
\Rk^2\epk^{4\beta} ,\]
where $\nu$ is the unit inward normal vector and $C$ depends only on  $\beta$. 
\end{prop}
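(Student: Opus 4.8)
\textbf{Proof plan for Proposition~\ref{prop:neumann_bound}.}
The plan is to estimate $\int_{(\partial U_+\cup\partial U_-)\cap B_{\Rk/8}(\zk)}|\partial_\nu v|^2\,d\cH^2$ by splitting the free boundary portion under consideration according to the distance to the set $\cZ$ of neck centers, and then applying the pointwise flux bound of \Cref{proplinearization} on the ``good'' part while controlling the ``bad'' part (near $\cZ$) by its small area. More precisely, I would first dispose of the piece of $(\partial U_+\cup\partial U_-)\cap B_{\Rk/8}(\zk)$ that lies in $\overline{U_0}$ using \eqref{bdryUpm}: on that set we only know $|\partial_\nu v|\le 2$ (from $|\nabla u|\le 1$ and $|a_\pm|=1$), so its contribution is bounded by $C\,\cH^2\big((\partial U_+\cup\partial U_-)\cap \overline{U_0}\cap B_{\Rk/8}(\zk)\big)$. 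Here $U_0=\bigcup_{\zz\in\tilde\cA}B_{\Rk\epk^{1/\alpha}}(\zz)$ with $\#\tilde\cA\le C\epk^{-\frac{1+\cttb}{3\alpha}}$, so by \Cref{lem:perbound} (the dimensional area bound for the free boundary of a classical solution) each such ball contributes at most $C(\Rk\epk^{1/\alpha})^2$ to the area of $\FB(u)$, hence the $\overline{U_0}$-contribution is at most $C\,\epk^{-\frac{1+\cttb}{3\alpha}}(\Rk\epk^{1/\alpha})^2=C\Rk^2\epk^{2/\alpha-\frac{1+\cttb}{3\alpha}}$.

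Next, for the portion of $\partial U_\pm\cap B_{\Rk/8}(\zk)$ away from $\overline{U_0}$, I would decompose it into dyadic annuli $A_j:=\{x:\ 2^{j}\Rk\epk^{1/\alpha}\le\dist(x,\cZ)<2^{j+1}\Rk\epk^{1/\alpha}\}$, for $j=0,1,\dots,j_{\max}$ with $2^{j_{\max}}\sim\epk^{-1/\alpha}$ (so that $\dist(x,\cZ)$ ranges up to $\sim\Rk$). On each $A_j$, every point of $\partial U_\pm\cap B_{\Rk/4}(\zk)$ with $\dist(x,\cZ)\ge\Rk\epk^{1/\alpha}$ lies on $\FB(u)$ and satisfies the flux bound $|\partial_\nu v|\le C(\Rk/\dist(x,\cZ))^{2\alpha}\epk^2$ from \Cref{proplinearization}; thus on $A_j$ we get $|\partial_\nu v|^2\le C(2^{-j}\epk^{-1/\alpha})^{4\alpha}\epk^4\cdot\Rk^{4\alpha}/\Rk^{4\alpha}$—more cleanly, $|\partial_\nu v|^2\le C\,2^{-4\alpha j}\epk^{-4}\epk^4=C\,2^{-4\alpha j}$ times the appropriate powers of $\epk$; I must track these carefully. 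For the area of $\FB(u)\cap A_j\cap B_{\Rk/8}(\zk)$ I use \Cref{lem:perbound} together with the fact (from \Cref{lem:Hess-decay}, \Cref{lem:distX-r0} and the definition of $\cZ$) that on $A_j$ the free boundary can be covered by balls of radius $\sim2^{j}\Rk\epk^{1/\alpha}$ of which at most $\bN(2^{j}\epk^{1/\alpha},B_{\Rk/4}(\zk))\le C(2^{j}\epk^{1/\alpha})^{-\frac{1+\cttb}{3}}$ are needed, by \eqref{strongalternativeA}; each contributes $\le C(2^{j}\Rk\epk^{1/\alpha})^2$ to the area, giving $\cH^2(\FB(u)\cap A_j\cap B_{\Rk/8}(\zk))\le C\Rk^2(2^{j}\epk^{1/\alpha})^{2-\frac{1+\cttb}{3}}$.

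Putting these together, the contribution of $A_j$ to the flux integral is at most $C\Rk^2\,2^{-4\alpha j}\epk^{4-4/\alpha}\cdot(2^{j}\epk^{1/\alpha})^{2-\frac{1+\cttb}{3}}=C\Rk^2\,\epk^{4-\frac{2}{\alpha}+\frac{1}{3\alpha}(... )}\,2^{(2-\frac{1+\cttb}{3}-4\alpha)j}$; since $\alpha>3/4$ the exponent $2-\frac{1+\cttb}{3}-4\alpha<0$, so the geometric sum over $j$ converges and is dominated by the $j=0$ term, which (after simplifying the $\epk$-powers) yields a bound of the form $C\Rk^2\epk^{s}$ for an explicit $s$ depending on $\alpha,\cttb$. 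Comparing $s$ and the exponent $2/\alpha-\frac{1+\cttb}{3\alpha}$ from the $U_0$-term with the target exponent $4\beta$, one checks that the hypothesis $12\alpha\beta<5-\cttb$ is exactly what guarantees $4\beta\le s$ and $4\beta\le 2/\alpha-\frac{1+\cttb}{3\alpha}$ (using also $\beta\le1$ and the numerical values \eqref{eq:gamma_alpha_beta}), so that $\epk^s\le C\epk^{4\beta}$ and similarly for the other term for $k$ large; this closes the estimate. The main obstacle is the bookkeeping of the $\epk$-exponents: one must carefully combine the flux decay rate $2\alpha$, the Minkowski-type dimension bound $\frac{1+\cttb}{3}$ for the neck centers from \eqref{strongalternativeA}, and the cardinality bound $\#\tilde\cA\le C\epk^{-\frac{1+\cttb}{3\alpha}}$ for the $U_0$-covering, and verify that the worst of the resulting exponents still beats $4\beta$ precisely under the stated arithmetic condition $12\alpha\beta<5-\cttb$—there is no soft argument here, the inequality is sharp and forces the constant $\tfrac{1}{20}$ in $\cttb$ and the range of $\beta$.
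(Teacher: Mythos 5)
Your proposal is correct in substance and follows essentially the same route as the paper's proof: both combine the pointwise flux decay of \Cref{proplinearization} with the Minkowski-content bound $\cH^2\big(\{\dist(\cdot,\cZ)<t\}\cap\partial U_\pm\cap B_{\Rk/8}(\zk)\big)\lesssim t^2(t/\Rk)^{-\frac{1+\cttb}{3}}$ coming from \eqref{strongalternativeA}, \Cref{lem:Nbound}, and \Cref{lem:perbound}, stratified by distance to $\cZ$ — the paper via a layer-cake formula after first interpolating $|\partial_\nu v|\le 2|\partial_\nu v|^\beta$ so that the answer is exactly $\epk^{4\beta}$ and the hypothesis $12\ctta\beta<5-\cttb$ appears as the convergence condition of the $t$-integral near $t=0$, you via a dyadic sum with the full (non-interpolated) bound, which is dominated by the innermost annulus and yields the slightly stronger exponent $\tfrac{5-\cttb}{3\ctta}>4\beta$, the hypothesis entering only in the final comparison of exponents. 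Two small points to repair in your write-up: the factor $\epk^{4-4/\alpha}$ in your assembled $A_j$-bound should be $\epk^{0}$, since $(\Rk/\dist)^{4\ctta}\epk^4\le(2^{j}\epk^{1/\ctta})^{-4\ctta}\epk^4=2^{-4\ctta j}$ as your own preceding line correctly shows (so the whole smallness comes from the area factor, not the flux factor); and the $\overline{U_0}$-contribution must also account for the spherical pieces of $\partial U_0$ that belong to $\partial U_\pm$ (not only the free-boundary pieces inside the balls), which satisfy the same area bound $C\,\#\tilde\cA\,(\Rk\epk^{1/\ctta})^2=C\Rk^2\epk^{\frac{5-\cttb}{3\ctta}}$.
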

\begin{remark}\label{rem:pequal2}
Recalling \eqref{eq:gamma_alpha_beta}, one can choose $\beta := \frac{2+\delta_\circ}{4}$ with $\delta_\circ := \tfrac{1}{10}$. 
\end{remark}

\begin{proof}
We prove it for $U_+$, the same proof works for $U_-$. Observe that, for $\bar x\in \FB(u)$,  \Cref{proplinearization} implies that 
\[
|\partial_\nu v(\bar x)| \le C \biggl(\frac{\Rk}{ \dist(\bar x, \cZ )}\biggr)^{2\ctta} \epk ^2 
\qquad\text{for}\quad 
\bar x\in \partial U_+\cap B_{\Rk/4}(\zk)\cap\left\{ {\rm dist}(\cdot, \cZ) \ge \Rk \epk^{1/\ctta}\right\}. 
\]
In particular, since 
$
|\partial_\nu v(\bar x)|  \le 2
$, 
for any $\beta\in [0,1]$ we have 
\begin{equation}
\label{eq:gtlc}
|\partial_\nu v(\bar x)| \le 2|\partial_\nu v(\bar x)|^\beta \le C  \bigg(\frac{\Rk}{\dist(\bar x, \cZ)}\bigg)^{2\ctta\beta} \epk ^{2\beta} 
\quad \text{ for} \quad 
\bar x\in \partial U_+\cap B_{\Rk/4}(\zk)\cap\left\{ {\rm dist}(\cdot, \cZ) \ge \Rk \epk^{1/\ctta}\right\}.
\end{equation}
Now, for  each $t \in (0, \Rk)$, consider the sets 
\[
D_t : = \bigcup_{\zz'\in \cZ \cap B_\Rk(\zk)}  B_{ t}(\zz') \cap \partial U_+\cap B_{\Rk/8}(\zk).
\] 
Notice that, by the definition of $U_0$ (cf. \Cref{def:UpUm}), for $ t\ge \Rk \epk^{1/\ctta}$ the set $D_t$ covers  $\overline{U}_0 \cap \partial U_+$ . In particular, if $\tilde t := \Rk \epk^{1/\ctta}$ then $D_t\setminus D_{\tilde t}\subset \FB(u)$ (see \eqref{bdryUpm}). Hence, by \eqref{strongalternativeA}, \Cref{lem:Nbound}, and  the perimeter bound from \Cref{lem:perbound}, we obtain
\[
\cH^2(D_t\setminus D_{\tilde t}) \le  C t^2 (t/\Rk)^{-\frac{1+\cttb}{3}}\qquad\text{for}\quad  t\ge \tilde t.
\]
In addition, again by \eqref{bdryUpm}, $\cH^2(D_{\tilde t})\le \cH^2(\FB(u)\cap D_{\tilde t})+\cH^2(\partial U_0)$. Thus, arguing as above,
\[
\cH^2(D_{\tilde t})\le C (2\tilde t)^2 (2\tilde t/\Rk)^{-\frac{1+\cttb}{3}} +C {\tilde t}^2 (\tilde t/\Rk)^{-\frac{1+\cttb}{3}}\le C {\tilde t}^2 (\tilde t/\Rk)^{-\frac{1+\cttb}{3}}.
\]
Hence, combining the last two estimates, we conclude that
\begin{equation}
\label{eq:boundDt}
\cH^2(D_t ) \le  C t^2 (t/\Rk)^{-\frac{1+\cttb}{3}}\qquad\text{for}\quad  t\ge   \Rk \epk^{1/\alpha}.
\end{equation}
This allows us to obtain the desired estimate, using the following standard `layer cake' formula:\\
If $(E_t)_{t\in [a,b]}$ is an increasing  collection of  (measurable) sets with $t\mapsto \cH^k(E_t)$ continuous, $f:E_b \to [0,\infty)$ is integrable and satisfies  $0\le f \le g(t)$ in   $E_b\setminus E_t$ and $0 \leq f \leq g(a)$ in $E_a$, where $g\in C^1([a,b])$ is nonincreasing, then  
\[
\int_{E_b} f \, d\cH^{k} \le \int_a^b   \cH^k(E_t)\, |g'(t)| \,dt  +\cH^k(E_b)\,g(b) + \cH^k(E_a)\,g(a) .
\]
Using this formula with $E_t=D_t$, $a=\Rk\epk^{1/\ctta}$, $b=\Rk$, $f = |\partial_\nu v|^2$, and $g(t)=\min\left\{\left(\tfrac{\Rk}{t}\right)^{4\ctta \beta }\epk^{4\beta},1\right\}  $  (see \eqref{eq:gtlc}), thanks to \eqref{eq:boundDt} we obtain
\[
\begin{split}
\int_{\partial U_+\cap B_{\Rk/8}(\zk)} |\partial_\nu v|^{2} \,d\cH^{2} &\le C  \int_{\Rk\epk^{1/\ctta}}^{\Rk}  \cH^2(D_t) \left|\tfrac{d}{dt}\Big(  \Big(\tfrac{\Rk}{t}\Big)^{4\ctta \beta } \epk^{4\beta}  \Big) \right|\,dt  +C\Rk^2 \epk^{4\beta }+C \Rk^2\epk^{\frac{5-\cttb}{3\ctta}}
\\& \le   C \,\Rk^{4\ctta\beta + \frac{1+\cttb}{3} }\epk^{4\beta}  \int_{0}^{\Rk}    t^{2 -\frac{1+\cttb}{3}-4\ctta\beta-1}  dt   +C\Rk^2 \epk^{4\beta  }+C \Rk^2\epk^{\frac{5-\cttb}{3\ctta}}\\
& \le  C_{   \beta} \, \Rk^2\epk^{4\beta}  
\end{split}
\]
provided  that $2 -\frac{1+\cttb}{3}-4\ctta\beta-1>-1$, that is, $4\ctta\beta< 5/3-\cttb/3$.  
\end{proof}

\subsection{The compactness argument}

We now present two abstract compactness results that will be used to show \Cref{prop:decaysqrt}:

\begin{lem}
\label{lem:compactness}
Let $n\ge 2$  and $p > 1$. For any $\eta > 0$ there exists $\delta = \delta(\eta, n, p)>0$ such that the following holds. 

Let $\Omega_\delta\subset \R^n$ be a Lipschitz and locally piecewise smooth domain, and let $v\in H^1(B_1)$ satisfy
\[
\Delta v  =  0  \quad\text{in}\quad \Omega_\delta\cap B_1 \qquad\text{and}\qquad 
\| v\|_{W^{1,p}(\Omega_\delta \cap B_1)}\le 1. 
\] 
Suppose that  
\[
B_1\cap \{ x_n \ge \delta\} \subset \Omega_\delta, \qquad \Omega_\delta\cap B_1 \subset \{ x_n \ge- \delta\},\qquad \text{and}\qquad \int_{\partial \Omega_\delta\cap B_1} |v_\nu| \,d\cH^{n-1}\le \delta,
\]
where $\nu$ is the inwards unit normal to $\partial \Omega_\delta$.
Then there exists $w:B_1\to \R$, harmonic in $B_1$ and even in $x_n$, such that 
\[
\int_{  \Omega_\delta\cap B_1} |v-w|\,dx \le \eta.
\]
\end{lem}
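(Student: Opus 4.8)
\textbf{Proof strategy for Lemma~\ref{lem:compactness}.} The plan is a standard compactness-contradiction argument. Suppose the statement fails: then there exists $\eta_0>0$, a sequence $\delta_k\downarrow 0$, a sequence of domains $\Omega_{\delta_k}$ satisfying the hypotheses (with $\delta$ replaced by $\delta_k$), and functions $v_k\in H^1(B_1)$ with $\Delta v_k=0$ in $\Omega_{\delta_k}\cap B_1$, $\|v_k\|_{W^{1,p}(\Omega_{\delta_k}\cap B_1)}\le 1$, $\int_{\partial\Omega_{\delta_k}\cap B_1}|(v_k)_\nu|\,d\cH^{n-1}\le\delta_k$, such that
\[
\int_{\Omega_{\delta_k}\cap B_1}|v_k-w|\,dx > \eta_0 \qquad\text{for every harmonic, $x_n$-even }w.
\]
First I would extend each $v_k$ to a function $\tilde v_k$ defined on a fixed neighborhood (say $B_{3/4}$ intersected with a half-space-like region), for instance by even reflection across $\{x_n=0\}$ after first solving the mixed problem, or more simply by using the uniform $W^{1,p}$ bound to extract a weakly convergent subsequence: since $\{x_n\ge\delta_k\}\cap B_1\subset\Omega_{\delta_k}$ and $\delta_k\to 0$, the sets $\Omega_{\delta_k}\cap B_{3/4}$ exhaust $\{x_n>0\}\cap B_{3/4}$ from inside, and the $W^{1,p}$ bounds give, up to a subsequence, $v_k\rightharpoonup v_\infty$ weakly in $W^{1,p}_{\mathrm{loc}}(\{x_n>0\}\cap B_{3/4})$ and strongly in $L^1_{\mathrm{loc}}$ by Rellich (note $p>1$ is what makes the embedding $W^{1,p}\hookrightarrow L^1$ compact on nice domains). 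The limit $v_\infty$ is harmonic in $\{x_n>0\}\cap B_{3/4}$, being a weak limit of harmonic functions.

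The heart of the argument is identifying the boundary condition: I would show that $v_\infty$ satisfies $\partial_{x_n}v_\infty=0$ on $\{x_n=0\}\cap B_{3/4}$ in the weak sense, so that its even reflection $w$ across $\{x_n=0\}$ is harmonic in $B_{3/4}$ and $x_n$-even. To see this, take any $\varphi\in C_c^\infty(B_{3/4})$ and integrate by parts:
\[
\int_{\Omega_{\delta_k}\cap B_{3/4}}\nabla v_k\cdot\nabla\varphi\,dx = \int_{\partial\Omega_{\delta_k}\cap B_{3/4}}(v_k)_\nu\,\varphi\,d\cH^{n-1},
\]
using $\Delta v_k=0$ in $\Omega_{\delta_k}$ and that $\varphi$ is compactly supported (so the part of $\partial\Omega_{\delta_k}$ on $\partial B_{3/4}$ contributes nothing). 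The right-hand side is bounded by $\|\varphi\|_\infty\,\delta_k\to 0$. On the left, $\nabla v_k\rightharpoonup\nabla v_\infty$ weakly in $L^p$ on compact subsets of $\{x_n>0\}$, and since $|\Omega_{\delta_k}\cap B_{3/4}\setminus(\{x_n>0\}\cap B_{3/4})|\le |\{-\delta_k< x_n<0\}\cap B_{3/4}|\to 0$, combined with the uniform $L^p$ bound on $\nabla v_k$, one gets $\int_{\Omega_{\delta_k}\cap B_{3/4}}\nabla v_k\cdot\nabla\varphi\to\int_{\{x_n>0\}\cap B_{3/4}}\nabla v_\infty\cdot\nabla\varphi$. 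Hence $\int_{\{x_n>0\}\cap B_{3/4}}\nabla v_\infty\cdot\nabla\varphi\,dx=0$ for all such $\varphi$, which is exactly the weak formulation of $\Delta v_\infty=0$ in $\{x_n>0\}$ together with the Neumann condition $\partial_{x_n}v_\infty=0$ on $\{x_n=0\}$. By the reflection principle, $w$ (even extension of $v_\infty$) is harmonic in $B_{3/4}$, and trivially $x_n$-even. Finally, the strong $L^1_{\mathrm{loc}}$ convergence on $\{x_n>0\}$ together with the vanishing measure of the slab $\{-\delta_k<x_n<0\}$ gives $\int_{\Omega_{\delta_k}\cap B_1}|v_k-w|\,dx\to 0$ (one should be slightly careful near $\partial B_1$: work on $B_{3/4}$ and note that a harmonic $w$ on $B_{3/4}$, or rather one should repeat the compactness on a ball slightly larger than $B_1$ — in the application the relevant ball can be taken with a margin, or one simply states the conclusion on a concentric smaller ball and rescales; I would state and use the cleanest version, extracting $w$ harmonic on $B_1$ after extending the hypotheses slightly, which is how this lemma is invoked). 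This contradicts the lower bound $>\eta_0$, completing the proof.

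\textbf{Main obstacle.} The delicate point is the passage to the limit in the interior equation near the flat boundary: one must ensure that the ``missing'' region $\Omega_{\delta_k}\cap\{x_n<0\}$ (a thin slab of width $\le\delta_k$) genuinely disappears in the limit and does not carry concentrated gradient mass. Since we only have a $W^{1,p}$ bound with $p>1$ (not $p=\infty$), $\int_{\{-\delta_k<x_n<0\}\cap B_1}|\nabla v_k|\,dx\le |\{-\delta_k<x_n<0\}\cap B_1|^{1-1/p}\|\nabla v_k\|_{L^p}\le C\delta_k^{1-1/p}\to 0$ by Hölder, which is exactly why $p>1$ is needed and why the hypothesis is stated with $W^{1,p}$ rather than $W^{1,1}$. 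A secondary technical care is the regularity/geometry of $\partial\Omega_{\delta_k}$ needed to justify the integration by parts and to give meaning to $(v_k)_\nu$; here the ``Lipschitz and locally piecewise smooth'' hypothesis, together with the fact that $v_k$ is harmonic (hence smooth) up to the boundary on the relevant portion — which in the application is a piece of free boundary of a classical Bernoulli solution — makes the divergence theorem legitimate.
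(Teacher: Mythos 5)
Your proposal is correct and follows essentially the same route as the paper's proof: a compactness--contradiction argument in which the flux hypothesis forces the weak Neumann condition $\partial_{x_n}v_\infty=0$ on $\{x_n=0\}$, the thin slab is absorbed by H\"older's inequality using $p>1$, and the even reflection gives the harmonic competitor $w$. Your worry about the region near $\partial B_1$ is resolved exactly as you resolve the slab: the paper lumps the annulus $B_1\setminus B_{1-\mu}$ together with the slab into a single small-measure bad set $A^k_\mu$ with $|A^k_\mu|\le C\mu$ and applies the same H\"older estimate, so no shrinking of the ball is needed and $w$ is obtained harmonic on all of $B_1$.
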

\begin{proof}
We argue by contradiction. Suppose that the statement does not hold. Then there exists some $\eta_\circ>0$ such that, for each $k\in \N$, there is some $v_k$ and $\Omega_k$ with 
\[
\Delta v_k  =  0  \quad\text{in}\quad \Omega_k\cap B_1 \qquad\text{and}\qquad 
\| v_k\|_{W^{1,p}(\Omega_k \cap B_1)}\le 1,
\]
and
\[
B_1\cap \{ x_n \ge 1/k\} \subset \Omega_k, \qquad \Omega_k\cap B_1 \subset \{ x_n \ge- 1/k\},\qquad \text{and}\qquad \int_{\partial \Omega_k\cap B_1} |(v_k)_\nu| \,d\cH^{n-1} \le \frac{1}{k},
\]
 such that 
\[
\int_{  \Omega_k\cap B_1} |v_k-w| \,dx >\eta_\circ > 0\qquad\text{for all $w$ harmonic in $B_1$ and even in $x_n$. }
\]
Notice first that, by harmonic estimates, up to subsequences we have that $v_k$ converges locally uniformly in $  \{x_n > 0\}\cap B_1$ to some function $v_\infty$ which satisfies 
\[
\Delta v_\infty  =  0  \quad\text{in}\quad \{x_n > 0 \}\cap B_1 \qquad\text{and}\qquad 
\| v_\infty\|_{W^{1,p}(\{x_n > 0\} \cap B_1)}\le 1. 
\]
We now want to show that $\partial_nv_\infty=0$ on $\{x_n=0\}\cap B_1$. To this aim,
let $\varphi\in C_c^\infty(B_1)$ and $\mu > 0$ (small) be fixed.  For $k\in \N\cup\{\infty\}$ with $k > 1/\mu$ and   $\Omega_\infty :=\{x_n > 0\}$, 
\[
\left|\int_{\Omega_k \cap B_1}\nabla v_k\cdot\nabla \varphi \,dx \right|\le \left|\int_{\{x_n > \mu\}\cap B_{1-\mu}}\nabla v_k\cdot\nabla \varphi\,dx \right|+\left|\int_{A^k_\mu}\nabla v_k\cdot\nabla \varphi\,dx \right|,
\]
where $A^k_\mu := (\Omega_k \cap B_1)\setminus (\{x_n > \mu\} \cap B_{1-\mu})$. In particular, since $|A_\mu|\le C \mu$, by H\"older's inequality with $\frac{1}{p}+\frac{1}{p'} = 1$ we get
\begin{equation}
\label{eq:vkAk}
\left|\int_{A^k_\mu}\nabla v_k\cdot\nabla \varphi\,dx\right|\le \|\nabla v_k\|_{L^p(\Omega_k\cap B_1)}\|\nabla\varphi\|_{L^\infty(B_1)}|A_\mu^k|^{\frac{1}{p'}}\le C \|\nabla\varphi\|_{L^\infty(B_1)} \mu^{\frac{1}{p'}}.
\end{equation}
We compute now
\[
\begin{split}
\left|\int_{\{x_n > 0\}\cap B_1}\nabla v_\infty\cdot\nabla \varphi\,dx\right|& \le \left|\int_{\{x_n > \mu\}\cap B_{1-\mu}}\nabla v_\infty\cdot\nabla \varphi\,dx\right|+C \|\nabla\varphi\|_{L^\infty(B_1)} \mu^{\frac{1}{p'}}\\
& \hspace{-1cm}\le \left|\int_{\{x_n > \mu\}\cap B_{1-\mu}}\nabla v_k\cdot\nabla \varphi\,dx\right|+C \|\nabla\varphi\|_{L^\infty(B_1)}\left( \|\nabla v_k - \nabla v_\infty\|_{L^\infty(\{x_n > \mu\}\cap B_{1-\mu})} +  \mu^{\frac{1}{p'}}\right) \\
& \hspace{-1cm}\le \left|\int_{\Omega_k \cap B_1}\nabla v_k\cdot\nabla \varphi\,dx\right|+C \|\nabla\varphi\|_{L^\infty(B_1)}\left( \|\nabla v_k - \nabla v_\infty\|_{L^\infty(\{x_n > \mu\}\cap B_{1-\mu})} +  2\mu^{\frac{1}{p'}}\right) ,
\end{split}
\]
where, in the last inequality, we have used \eqref{eq:vkAk}.  By the assumption on $v_k$ for $k\in \N$, we know
\[
\left|\int_{\Omega_k \cap B_1}\nabla v_k \cdot \nabla \varphi\,dx\right| = \left|\int_{\partial \Omega_k \cap B_1}\nu \cdot \nabla v_k \, \varphi \,d\cH^{n-1}\right|\le \frac{\|\varphi\|_{L^\infty(B_1)}}{k}. 
\]
Thus, we have 
\[
\left|\int_{\{x_n > 0\}\cap B_1}\nabla v_\infty\cdot\nabla \varphi\,dx\right| \le \frac{\|\varphi\|_{L^\infty(B_1)}}{k} +C \|\nabla\varphi\|_{L^\infty(B_1)}\left( \|\nabla v_k - \nabla v_\infty\|_{L^\infty(\{x_n > \mu\}\cap B_{1-\mu})} +  2\mu^{\frac{1}{p'}}\right). 
\]
Letting $k \to\infty$, since $v_k \to v_\infty$ smoothly in the interior of $\{x_n > 0\}\cap B_1$ (by harmonic estimates) we deduce that 
\[
\left|\int_{\{x_n > 0\}\cap B_1}\nabla v_\infty\cdot\nabla \varphi\,dx\right| \le  C \|\nabla\varphi\|_{L^\infty(B_1)}\mu^{\frac{1}{p'}},
\]
and so, by the arbitrariness of  $\mu>0$,
\[
\int_{\{x_n > 0\}\cap B_1}\nabla v_\infty\cdot \nabla\varphi\,dx = 0\qquad\forall\,  \varphi\in C^\infty_c(B_1). 
\]
This is the weak formulation of
\[
\left\{
\begin{array}{rcll}
\Delta v_\infty & = & 0 & \qquad\text{in}\quad   \{x_n > 0\}\cap B_1, \\
\partial_n v_\infty & = & 0 & \quad\text{on}\quad \{x_n = 0 \}\cap B_1.
\end{array}
\right. 
\] 
In particular, $v_\infty$ extends evenly to a harmonic function $\bar v_\infty$ defined in the whole $B_1$.
Also, for any $\mu > 0$ we have
\[
\int_{\{x_n > \mu\}\cap B_{1-\mu}} | v_k - \bar v_\infty|\,dx\to 0\qquad\text{as}\quad k\to \infty. 
\]
Hence, since $\|v_k\|_{L^p(\Omega_\delta\cap B_1)}\le 1$ for all $k \in \N\cup\{\infty\}$, again by H\"older inequality we get
\[
\int_{A_\mu^k} | v_k - \bar v_\infty|\,dx \le C\|v_k - \bar v_\infty\|_{L^p(\Omega_k\cap B_1)} |A_\mu^k|^{p'}\le  C \mu^{p'}.
\]
Hence, choosing $\mu$ small enough so that $ C \mu^{p'} \le \frac{\eta_\circ}{2}$, we reach a contradiction for $k$ large enough. 
\end{proof}

From the previous compactness result, we obtain the following global version:

\begin{lem}
\label{lem:compactness2}
Let $n\ge 2$, $p > 1$, and $d \ge 0$. For any $\eta > 0$ there exists $\delta = \delta(\eta, n, p, d)$ small such that the following holds. 

Let $\Omega_\delta\subset \R^n$ be a Lipschitz and locally piecewise smooth domain, and let $v\in H^1(B_{1/\delta})$ satisfy
\[
\begin{split}
\Delta v  =  0  \quad \text{in}\quad  \Omega_\delta\cap B_{1/\delta},\quad\text{and}\quad \left(\fint_{\Omega_\delta\cap B_\rho} |v|^p\,dx\right)^{1/p} & + \rho\left( \fint_{\Omega_\delta\cap B_\rho} |\nabla v|^p\,dx\right)^{1/p} \le \rho^{d+1/2}\quad \text{for}\quad 1\le \rho \le \tfrac{1}{\delta}.
\end{split} 
\] 
Suppose that 
\[
\begin{split}
 & \{ \tilde e_\rho \cdot x \ge \rho \delta\} \subset \Omega_\delta \subset  \{\tilde e_\rho \cdot x\ge- \rho\delta\}\quad\text{in}\quad B_{\rho},
 \qquad\text{for}\quad 1\le \rho \le \tfrac{1}{\delta},\\
 &\qquad \frac{1}{\rho^{n-2}} \int_{\partial \Omega_\delta\cap B_\rho} |v_\nu| \,d\cH^{n-1}\le \delta \rho^{d+1/2}\qquad \text{for}\quad 1 \le \rho \le \tfrac{1}{\delta},
 \end{split}
\]
where $\nu$ is the outwards unit normal to $\partial \Omega_\delta$, for some $\tilde e_\rho \in \bS^{n-1}$ depending on $\rho$.
Then there exists a harmonic polynomial $p_d$ of degree at most $d$ such that
\[
\int_{  \Omega_\delta\cap B_1} |v-p_d|\,dx \le \eta,
\]
where $p_d$ is even in the $\tilde e_1$ direction and  satisfies  $\|p_\delta\|_{L^1(B_1)}\le |B_1|$.
\end{lem}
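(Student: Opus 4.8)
The plan is to argue by contradiction, following the same scheme as in the proof of \Cref{lem:compactness}, but now carrying a polynomial approximation of degree $d$ rather than just a harmonic function. Suppose the statement fails: there is $\eta_\circ>0$, a sequence $\delta_k\to 0$, domains $\Omega_k$, functions $v_k\in H^1(B_{1/\delta_k})$ and directions $\tilde e_\rho^{(k)}$ satisfying the hypotheses with $\delta=\delta_k$, yet with the property that for every harmonic polynomial $p_d$ of degree $\le d$ which is even in $\tilde e_1^{(k)}$ and satisfies $\|p_d\|_{L^1(B_1)}\le |B_1|$ one has $\int_{\Omega_k\cap B_1}|v_k-p_d|\,dx>\eta_\circ$. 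Up to a rotation we may assume $\tilde e_1^{(k)}=e_n$ for all $k$; since $\tilde e_\rho^{(k)}\cdot x\ge \rho\delta_k$ forces $\Omega_k$ to contain $\{x_n\ge\rho\delta_k\}$ in $B_\rho$ and be contained in $\{x_n\ge-\rho\delta_k\}$ there, we also get $|\tilde e_\rho^{(k)}-e_n|\to 0$ locally uniformly in $\rho$ (the two slab conditions at scales $\rho$ and, say, $2\rho$ pin down the normal up to an error $O(\delta_k)$). Hence in the limit all the approximating directions coincide with $e_n$.

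First I would extract a limit. The growth hypothesis $\big(\fint_{\Omega_k\cap B_\rho}|v_k|^p\big)^{1/p}+\rho\big(\fint_{\Omega_k\cap B_\rho}|\nabla v_k|^p\big)^{1/p}\le \rho^{d+1/2}$ gives, for each fixed $R\ge 1$, a uniform bound on $\|v_k\|_{W^{1,p}(\Omega_k\cap B_R)}$; combined with interior harmonic estimates in $\{x_n>0\}$, a diagonal argument yields (up to a subsequence) $v_k\to v_\infty$ locally uniformly in $\{x_n>0\}$ and weakly in $W^{1,p}_{\rm loc}(\{x_n>0\})$, with $v_\infty$ harmonic in $\{x_n>0\}$ and inheriting the same polynomial-type growth at all scales $\rho\ge 1$. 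The key point is then that the vanishing flux bound $\rho^{2-n}\int_{\partial\Omega_k\cap B_\rho}|(v_k)_\nu|\,d\cH^{n-1}\le \delta_k\rho^{d+1/2}\to 0$ lets us pass to the limit in the weak formulation exactly as in \Cref{lem:compactness}: for $\varphi\in C_c^\infty(B_R)$ we split $\int_{\Omega_k\cap B_R}\nabla v_k\cdot\nabla\varphi$ into the part over $\{x_n>\mu\}\cap B_{R-\mu}$ and an error over a set of measure $O(\mu)$ controlled by H\"older and the $W^{1,p}$ bound, and use $\int_{\Omega_k}\nabla v_k\cdot\nabla\varphi=\int_{\partial\Omega_k}(v_k)_\nu\varphi\to 0$; letting $k\to\infty$ then $\mu\to 0$ shows $\int_{\{x_n>0\}\cap B_R}\nabla v_\infty\cdot\nabla\varphi=0$ for all such $\varphi$, i.e. $v_\infty$ solves the Neumann problem $\Delta v_\infty=0$ in $\{x_n>0\}$, $\partial_n v_\infty=0$ on $\{x_n=0\}$. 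By even reflection, $v_\infty$ extends to an entire harmonic function $\bar v_\infty$ on $\R^n$, even in $x_n$, and the growth control $\fint_{B_\rho}|\bar v_\infty|^p\lesssim \rho^{(d+1/2)p}$ for all $\rho\ge 1$ forces $\bar v_\infty$ to be a harmonic polynomial of degree at most $d$ (by a Liouville-type argument: Taylor coefficients of order $>d$ must vanish since their contribution to a large ball grows too fast). Rescaling, $p_d:=\bar v_\infty$ satisfies $\|p_d\|_{L^1(B_1)}\le|B_1|$ because the hypothesis at $\rho=1$ gives $\fint_{\Omega_k\cap B_1}|v_k|\le 1$ and the limit set $\Omega_\infty=\{x_n\ge 0\}\cap B_1$ has half the measure of $B_1$, while $p_d$ is even — so $\fint_{B_1}|p_d|=2\fint_{\{x_n>0\}\cap B_1}|p_d|\le 2\liminf\fint_{\Omega_k\cap B_1}|v_k|\le 2$; one gets the sharp constant $|B_1|$ by using instead the scale $\rho\to 0$ normalization built into the statement, or simply by noting the target polynomial can be taken to satisfy the bound up to the reflection symmetry.

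Finally I would derive the contradiction: for every $\mu>0$, $\int_{\{x_n>\mu\}\cap B_{1-\mu}}|v_k-\bar v_\infty|\,dx\to 0$ by locally uniform convergence, while on the remaining sliver $A_\mu^k:=(\Omega_k\cap B_1)\setminus(\{x_n>\mu\}\cap B_{1-\mu})$, which has measure $O(\mu)$, H\"older's inequality with the uniform $L^p$ bound on $v_k-\bar v_\infty$ gives $\int_{A_\mu^k}|v_k-\bar v_\infty|\,dx\le C\mu^{1/p'}$. Choosing $\mu$ so small that $C\mu^{1/p'}<\eta_\circ/2$ and then $k$ large enough that the first term is $<\eta_\circ/2$, we obtain $\int_{\Omega_k\cap B_1}|v_k-\bar v_\infty|\,dx<\eta_\circ$, contradicting the standing assumption (note $\bar v_\infty=p_d$ is an admissible competitor, being harmonic, of degree $\le d$, even in $e_n$, with the required $L^1$ normalization). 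The main obstacle is the passage to the limit in the Neumann boundary condition together with the oscillating directions $\tilde e_\rho^{(k)}$: one must be careful that the slab conditions at two nearby scales genuinely pin the normal down to $e_n$ in the limit (so that the reflected extension is consistent with the growth hypotheses stated along the varying $\tilde e_\rho^{(k)}$), and that the flux term really is annihilated in the limit despite being integrated over the full boundary $\partial\Omega_k\cap B_\rho$ rather than just a compact piece. Everything else is a routine adaptation of \Cref{lem:compactness}.
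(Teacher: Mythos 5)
Your proposal is correct and follows essentially the same route as the paper: a compactness/contradiction argument in which the vanishing Neumann flux forces the limit to be an even harmonic function, the polynomial growth bound plus Liouville identifies it as a harmonic polynomial of degree at most $d$, and the contradiction comes from local convergence plus a H\"older estimate on the thin sliver near $\partial\Omega_k$. The only difference is presentational — the paper simply invokes \Cref{lem:compactness} at each scale $B_\rho$ together with a diagonal argument, whereas you unfold that lemma's proof; your remark on the $L^1$ normalization has a small arithmetic slip (the factor $2$ cancels against $|\Omega_k\cap B_1|\to|B_1|/2$, giving $\|p_d\|_{L^1(B_1)}\le|B_1|$ directly from the hypothesis at $\rho=1$), but the conclusion is right.
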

\begin{proof}
We argue by compactness/contradiction. Suppose that the statement is not true, so that there is a sequence $v_k$ satisfying the previous hypotheses for $\delta_k \downarrow 0$ but the conclusion fails for a certain $\eta = \eta_\circ>0$.

Applying \Cref{lem:compactness} inside each ball $B_\rho$ with $1\le \rho\le \frac{1}{\delta_k}$ and a standard diagonal argument, we obtain that $v_k$ converges in $L^{1}_{\rm loc}(\R^n)$ to some harmonic function $v_{\infty}$, even with respect to $\{\tilde e_1 \cdot x= 0\}$. Also, $v_\infty$ satisfies the growth bound 
\[
\fint_{B_\rho} |v| \,dx \le \rho^{d+1/2}\qquad \text{for all }\rho \geq 1.
\]
By the Liouville theorem, $v_\infty$ must be a harmonic polynomial of degree $\le d$, thus reaching a contradiction for $k$ large enough. 
Finally, the bound $\|p_\delta\|_{L^1(B_1)}\le |B_1|$ comes from the growth bound with $\rho=1$.
\end{proof}

In analogy with Lemmas~\ref{lem_basic1}--\ref{lem_basic2}, we also have the following general estimates for monotone harmonic functions:

\begin{lem}\label{lem:abst_osc2}
Suppose that $n\geq 2$ and $w: B_{2} \cap \{x_n>0\}\to (0,\infty)$  is a harmonic function satisfying $\partial_n w\le 0$ in $B_2\cap \{x_n>0\}$. Then, denoting $x = (x', x_n)\in \R^{n-1}\times \R$,  for every $q\in (1,\infty)$ we have
\[
\int_{\{|x'| < 3/2\}}
|\nabla w|^q (x', t)\,dx'
\leq C t^{(n-1)(1-q)}
\int_{B_{2} \cap \set{x_n \geq 1/4}}
    |\nabla w|^q
\,dx\qquad\text{for all}\quad t\in (0, 1),
\]
where $C$ depends only on $n$ and  $q$. 
\end{lem}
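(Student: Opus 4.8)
The plan is to reduce the statement to a one-dimensional slicing argument combined with the sub/super-harmonicity of $|\nabla w|^q$. First I would note that since $w$ is harmonic in $B_2\cap\{x_n>0\}$ and $\partial_n w\le 0$, the function $-\partial_n w$ is a \emph{non-negative} harmonic function in that region. Each partial derivative $\partial_i w$ is harmonic, so $|\nabla w|^2=\sum_i(\partial_i w)^2$ is subharmonic, and hence $|\nabla w|^q=(|\nabla w|^2)^{q/2}$ is subharmonic for $q\ge 1$ (composition of a convex increasing function with a non-negative subharmonic function). Thus for any point $y=(y',t)$ with $t\in(0,1)$ and $|y'|<3/2$ the sub-mean-value property on the ball $B_{t/2}(y)\subset B_2\cap\{x_n>0\}$ gives $|\nabla w|^q(y)\le C t^{-n}\int_{B_{t/2}(y)}|\nabla w|^q\,dx$.

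The next step is to integrate this pointwise bound over the slice $\{|x'|<3/2\}\times\{x_n=t\}$ and control the right-hand side by a single integral over a fixed region at height $\gtrsim 1/4$. This is exactly the mechanism of Lemma~\ref{lem_basic1}: one writes, using Fubini and the fact that the balls $B_{t/2}(y)$ for $y$ ranging over the slice have bounded overlap and are all contained in $\{|x'|<2\}\cap\{0<x_n<3t/2\}$,
\[
\int_{\{|x'|<3/2\}}|\nabla w|^q(x',t)\,dx' \le \frac{C}{t}\int_{\{|x'|<2\}\cap\{0<x_n<3t/2\}}|\nabla w|^q\,dx.
\]
So it remains to bound $\int_{\{|x'|<2\}\cap\{0<x_n<3t/2\}}|\nabla w|^q\,dx$ by $C\,t^{(n-1)(1-q)+1}\int_{B_2\cap\{x_n\ge 1/4\}}|\nabla w|^q\,dx$. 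Iterating the slice bound in the $x_n$ variable (or directly applying Lemma~\ref{lem_basic1} to the non-negative harmonic function $-\partial_n w$ together with harmonic estimates for the other components $\partial_i w$, $i<n$, expressed via $-\partial_n w$) reduces matters to showing the \emph{gradient decay} $\int_{\{|x'|<3/2\}}|\nabla w|^q(x',t)\,dx'\le C\,t^{(n-1)(1-q)}\|\nabla w\|_{L^q(B_2\cap\{x_n\ge 1/4\})}^q$: pick a boundary point, use that $w>0$ and the Harnack-type / boundary Harnack behavior of positive harmonic functions vanishing on a flat portion forces $|\nabla w|(x',t)\lesssim \fint_{\{x_n\sim t\}} w \lesssim \|\nabla w\|\cdot(\text{slice mass})^{-(n-2)/(n-1)}$... — more simply, one uses that $-\partial_n w\ge 0$ is harmonic vanishing on $\{x_n=0\}$, so by the boundary Harnack principle $(-\partial_n w)(x',t)\le C\,t\cdot\fint_{\{|x'|<7/4,\,x_n\in(1/4,1/2)\}}(-\partial_n w)$, and then raise to the power $q$, integrate in $x'$ over the $(n-1)$-disk, and invoke Jensen to convert the average into the $L^q$ norm with the claimed $t$-power. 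The tangential derivatives $\partial_i w$ ($i<n$) are controlled the same way since each is harmonic and, by harmonic estimates in $B_{x_n/2}(x',x_n)$, bounded by $C x_n^{-1}\|w\|_{L^\infty}\le C x_n^{-1}\cdot(\text{const})$ times the $L^q$ mass, but one must be careful to get the sharp power $t^{(n-1)(1-q)}$ rather than a worse one.

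I expect the main obstacle to be exactly this last point: obtaining the \emph{sharp} exponent $(n-1)(1-q)$ rather than a suboptimal decay. The naive argument (subharmonicity + mean value) gives a bound of order $t^{1-n}$ per unit height which, after integrating over the $t$-slab of width $\sim t$, produces the right scaling for the full-gradient $L^q$ mass but not obviously the precise $t$-power in the sliced estimate. The clean route is to appeal to Lemma~\ref{lem_basic1} applied to the positive harmonic function $-\partial_n w$ (giving $\int_{\{|x'|<3/2\}}(-\partial_n w)(x',t)\,dx'\le C(-\partial_n w)(e_n)$, hence by Harnack $\le C\|\nabla w\|_{L^1}$), combined with a covering/Jensen argument to upgrade from $q=1$ to general $q$, and with interior harmonic estimates controlling the tangential derivatives in terms of the same quantity. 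Concatenating these — and using that on $\{x_n\ge 1/4\}$ all the quantities are comparable by Harnack — yields the stated inequality. The rest is routine: bounded-overlap covering, Fubini, and keeping track of the scaling of $|B_1|$.
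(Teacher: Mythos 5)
Your proposal does not close the key step, and one of its intermediate claims is false. The decisive issue is the control of the \emph{tangential} derivatives $\partial_i w$, $i<n$, on the slice $\{x_n=t\}$ with the sharp power $t^{(n-1)(1-q)}$. These derivatives are harmonic but not signed, so neither \Cref{lem_basic1} nor any positivity-based argument applies to them, and the interior estimate $|\partial_i w(x',t)|\le C t^{-1}\,\mathrm{osc}\,w$ that you invoke only yields $\int_{\{|x'|<3/2\}}|\partial_i w|^q(x',t)\,dx'\lesssim t^{-q}$, which is strictly worse than $t^{(n-1)(1-q)}$ for every $q>1$ (e.g.\ for $q$ close to $1$ the target exponent is close to $0$ while yours is close to $-1$). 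You flag this as "one must be careful" but offer no mechanism. The paper's mechanism is essential: after normalizing $w$ and observing that $w$ and $-\partial_n w$ are non-negative harmonic with bounded $L^1$ boundary traces, one compares $w$ with the Poisson extension $\bar w$ of its (truncated) trace, gets the sharp power for $\bar w(\cdot,t)$ and $\bar w_n(\cdot,t)$ from Young's inequality and $\|P(\cdot,t)\|_{L^q(\R^{n-1})}^q\sim t^{(n-1)(1-q)}$, and then recovers $\nabla'\bar w(\cdot,t)$ in $L^q$ from the identity $\bar w_n(\cdot,t)=-(-\Delta_{x'})^{1/2}\bar w(\cdot,t)$ together with Calder\'on--Zygmund ($W^{1,q}$) estimates for the half-Laplacian. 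Nothing in your outline substitutes for this Riesz-transform step.

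Two further problems. First, your claim that $-\partial_n w$ "vanishes on $\{x_n=0\}$" and hence obeys a boundary-Harnack bound linear in $t$ is wrong: the hypotheses impose no boundary condition, and such a bound would force the slice integrals to vanish as $t\to0$, contradicting the fact that the right-hand side blows up like $t^{(n-1)(1-q)}$. (For the normal derivative alone, the correct route is $L^1$--$L^\infty$ interpolation: \Cref{lem_basic1} gives $\|(-\partial_n w)(\cdot,s)\|_{L^1}\le CM$ uniformly in $s$, the mean-value property then gives $\|(-\partial_n w)(\cdot,t)\|_{L^\infty}\le Ct^{1-n}M$, and $\|f\|_{L^q}^q\le\|f\|_{L^\infty}^{q-1}\|f\|_{L^1}$ yields the sharp power --- note this is \emph{not} Jensen, which goes the wrong way for $q>1$.) Second, your opening reduction is circular: bounding $\int_{\{|x'|<2\}\cap\{0<x_n<3t/2\}}|\nabla w|^q$ by $t^{(n-1)(1-q)+1}$ times the far-field mass is exactly the slice estimate integrated in $t$, i.e.\ the statement you are trying to prove.
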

\begin{proof}
    Denote $B_r^+ =B_r\cap \{x_n>0\}\subset \R^n$ and $B_r'=\{x' : |x'| < r\}\subset \R^{n-1}$.   By harmonic estimates and Poincar\'e inequality,
    \[
    \|w-c\|_{L^\infty(B_{7/4}\cap \{x_n\ge 1/3\})}\le C\|w-c\|_{L^q(B_2\cap \{x_n\ge 1/4\})} \le C\|\nabla w\|_{L^q(B_2\cap \{x_n\ge 1/4\})}, 
    \]
    where $C=C(n,q)\geq $ is a constant. 
    Now, up to replacing $w$ by $$\frac{w-c}{2C_q\|\nabla w\|_{L^q(B_2\cap \{x_n\ge 1/4\})}}+\frac12,$$
    we can assume that 
    $0 \leq w \leq 1$ inside $B_{7/4}\cap \{x_n\ge 1/3\}$.
    Thus, since  $\partial_n w \le 0$, it follows that $w(e_n)\le 1$ and  $w\ge 0$ in $B_{3/2}^+$,
    and under these assumptions we need to prove that
    \[
    \int_{\{|x'| < 3/2\}}
|\nabla w|^q (x', t)\,dx'
\leq C_q t^{(n-1)(1-q)}.
\]
To this aim, set $w_n := \partial_n w$.  By standard  Poisson kernel bounds, since $w$ is harmonic with $w(e_n)\le 1$ and $w\ge 0$ in $B_{3/2}^+$, it follows that $\|w(x', 0)\|_{L^1(B'_{5/4})}\le C$.
Similarly, since $w_n$ is harmonic with $|w_n(e_n)|\le C$ and $w_n\le 0$ in $B_{3/2}^+$, we also get $\|w_n(x', 0)\|_{L^1(B'_{5/4})}\le C$.

Now, let us denote respectively by $\bar w$ and $\bar w_n$, the harmonic extensions inside $\{x_n > 0\}$ of $w(\cdot , 0)\mathbbm{1}_{B'_{4/3}}$ and $w_n(\cdot , 0)\mathbbm{1}_{B'_{4/3}}$. 
Then, by boundary Harnack, we have $\norm[C^1(B^+_{5/4})]{\frac{w-\bar{w}}{x_n}}\leq C$. In particular,
\begin{equation}
\label{eq:vvbar}
\|w-\bar w\|_{C^1(B_{5/4}^+)} +\|w_n-\bar w_n\|_{L^\infty(B_{5/4}^+)} \le C.
\end{equation}
Now, the Poisson representation for the half-space $\{x_n>0\}\subset \R^n$ reads
\[
\bar w(\,\cdot\,, t) = P(\cdot , t)*_{x'} \bar w(\cdot, 0),\quad \bar w_n(\,\cdot\,, t) = P(\cdot , t)*_{x'} \bar w_n(\cdot, 0),
    \qquad
P(x',t) : = c_n \frac{t}{(|x'|^2 + t^2)^{n/2}}.
\]
Thus, recalling that $\|\bar w(\cdot, t)\|^q_{L^1(\R^{n-1})}+\|\bar w_n(\cdot, 0)\|^q_{L^1(\R^{n-1})} \leq C$, by Young's inequality and a direct computation we get
\[
\|\bar w(\cdot, t)\|^q_{L^q(\R^{n-1})}+\|\bar w_n(\cdot, t)\|^q_{L^q(\R^{n-1})} 
\le C \| P(\cdot, t)\|^q_{L^q(\R^{n-1})}
\le 
C_q t^{(n-1)(1-q)}\qquad \forall\,q \geq 1.
\]
In particular, thanks to \eqref{eq:vvbar}, 
\[
\|w(\cdot, t)\|^q_{L^q(B'_{5/4})}+\|w_n(\cdot, t)\|^q_{L^q(B'_{5/4})} \le C_q t^{(n-1)(1-q)}. 
\]
Finally, since $\bar w_n(\cdot, t) =  -(-\Delta)_{x'}^{1/2}  \bar  w(\cdot, t) $, by $W^{1, q}$ estimates\footnote{Combining classical Calder\'on--Zygmund estimates for the Riesz transform  with interior estimates for $\tfrac12$-harmonic functions, one obtains the following: If $(-\Delta)^{1/2} u = f$ in $B_1\subset \R^d$ with $f\in L^p(B_1)$ and $p \in (1,\infty)$, then 
\[
\|u\|_{W^{1,p}(B_{1/2})}\le C_{d,p} \left(\|f\|_{L^p(B_1)}+ \int_{\R^d} \frac{|u(y)|}{1+|y|^{d+1}}\, dy\right). 
\]} for   $(-\Delta_{x'})^{1/2}$ imply that 
\[
\|\nabla'\bar w(\cdot, t)\|^q_{L^{q}(B'_1)} \le  C_q \|w(\cdot, t)\|^q_{L^q(B'_{5/4})}+\|w_n(\cdot, t)\|^q_{L^q(B'_{5/4})} \leq C_q t^{(n-1)(1-q)} \qquad \forall\,q \in (1,\infty).
\]
Using again \eqref{eq:vvbar}, we get the desired bound on $\nabla w$. 
\end{proof}

\normalcolor

As shown in \Cref{appendixD},  \Cref{lem:abst_osc2} implies the following result in flat-Lipschitz domains: 

\begin{lem}\label{lem:abst_osc1}
Let $n\ge 2$ and fix  $q\in(1,\frac{n}{n-1})$. Assume that $w:B_{2r}\cap D \to (0, \infty)$ is a positive harmonic function inside $D = \{x_n > \varphi(x')\}$, where $\varphi:B_{2r}'\subset \R^{n-1}\to \R$ satisfies
\[ 
|\varphi|  + r|\nabla \varphi| \le c_\circ r.
\]
Assume, in addition, that $\partial_n w\le 0$ in $B_{2r}\cap D$.
Then, for $c_\circ$ small enough depending only on $n$ and $q$, we have
\[ 
\int_{
    B_r\cap D
}
    |\nabla w|^q
\,dx
\leq C_q 
\int_{B_{2r} \cap \set{x_3 \geq r/4}}
    |\nabla w|^q
\,dx,
\]
where $C_q$ depends only on $n$ and $q$.
\end{lem}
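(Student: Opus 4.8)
\textbf{Step 1: scaling and flattening.} The inequality to be proved is invariant under $x\mapsto rx$ together with multiplication of $w$ by a constant, so we may take $r=1$. We may also assume $\varphi$ is smooth: the constant $C_q$ produced below will depend only on $n$, $q$ and on the smallness of $\|\varphi\|_{C^{0,1}}\le c_\circ$, so the general Lipschitz case follows by approximation. Let $\Psi(x',x_n):=(x',\,x_n-\varphi(x'))$, a volume-preserving diffeomorphism that fixes the tangential variables, satisfies $\|D\Psi-I\|_{L^\infty}=\|\nabla\varphi\|_{L^\infty}\le c_\circ$, and maps $D=\{x_n>\varphi(x')\}$ onto $\{y_n>0\}$ and $\partial D$ onto $\{y_n=0\}$. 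Then $\tilde w:=w\circ\Psi^{-1}>0$ solves, in $B_{7/4}\cap\{y_n>0\}$, the divergence-form equation $\Div(A(y)\nabla\tilde w)=0$ with $A=(D\Psi\,D\Psi^{\top})\circ\Psi^{-1}$ smooth, symmetric, uniformly elliptic, and $\|A-I\|_{L^\infty}\le C c_\circ$; moreover $\partial_{y_n}\tilde w=(\partial_{x_n}w)\circ\Psi^{-1}\le 0$, so the transverse monotonicity survives the flattening.

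\textbf{Step 2: the perturbed half-space estimate, and integration in the transverse variable.} The proof of \Cref{lem:abst_osc2} given in \Cref{appendixD} adapts line by line with $\Delta$ replaced by $\Div(A\nabla\cdot)$: the $L^1$-bounds on the boundary traces of $\tilde w$ and $\partial_n\tilde w$ follow from Poisson-type kernel estimates for $A$, the comparison of $\tilde w$ with the $A$-harmonic extension of its truncated trace follows from boundary Harnack for $\Div(A\nabla\cdot)$ in a flat domain, and the final $W^{1,q}$ control on boundary slices follows from the $L^q\!\to\!W^{1,q}$ continuity of the Dirichlet--Neumann operator of $A$ (by freezing the coefficients at the relevant boundary point and treating the remainder perturbatively, this reduces to the half-Laplacian estimate used in \Cref{lem:abst_osc2}). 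Running that proof but normalizing by the local gradient norm near a fixed interior point, one gets the variant with an interior ball $B_{1/2}(e_n)$ in place of $\{x_n\ge 1/4\}$ on the right-hand side; since all constants depend only on $n$, $q$ and the ellipticity of $A$, they are universal once $c_\circ$ is small. After rescaling this gives, for every $q\in(1,\infty)$ and every admissible $t$,
\[
\int_{\{|y'|<3/2\}}|\nabla\tilde w|^q(y',t)\,dy'\;\le\;C\,t^{(n-1)(1-q)}\int_{B_{1/2}(e_n)\cap\{y_n>0\}}|\nabla\tilde w|^q\,dy.
\]
Integrating over $t\in(0,1)$ and using that (a fixed portion of) $B_1\cap\{y_n>0\}$ is contained in $\{|y'|<3/2\}\times(0,1)$, the left-hand side controls $\int$ of $|\nabla\tilde w|^q$ over that portion, while the $t$-integral equals $\int_0^1 t^{(n-1)(1-q)}\,dt=\tfrac{1}{n-(n-1)q}$, finite \emph{exactly} because $q<\tfrac{n}{n-1}$; this is the sole place the hypothesis on $q$ is used.

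\textbf{Step 3: splitting and transferring back.} Split $B_1\cap D=A_1\cup A_2$ with $A_1=B_1\cap D\cap\{x_n\le 3/4\}$ and $A_2=B_1\cap D\cap\{x_n>3/4\}$. For $A_1$: since $\Psi$ fixes the tangential variables and shifts $x_n$ by at most $c_\circ$, $\Psi(A_1)$ lies in the slab handled in Step 2, and $\Psi^{-1}(B_{1/2}(e_n)\cap\{y_n>0\})\subset B_2\cap\{x_n\ge 1/2-c_\circ\}\subset B_2\cap\{x_n\ge 1/4\}$ for $c_\circ$ small; together with $\det D\Psi=1$ and $\|D\Psi^{-1}-I\|_{L^\infty}\le Cc_\circ$ this bounds $\int_{A_1}|\nabla w|^q$ by $C_q\int_{B_2\cap\{x_n\ge 1/4\}}|\nabla w|^q$ via Step 2. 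For $A_2$: every point has distance $\gtrsim 1/2$ from $\partial D$, so a covering by fixed-radius balls, each contained in $B_{3/2}\cap\{x_n\ge 3/8\}\subset B_2\cap\{x_n\ge 1/4\}$, together with interior gradient estimates for the harmonic function $w$, gives $\int_{A_2}|\nabla w|^q\le C_q\int_{B_2\cap\{x_n\ge 1/4\}}|\nabla w|^q$. Adding the two pieces proves the lemma.

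\textbf{Main obstacle.} The heart of the matter is the $\Div(A\nabla\cdot)$-perturbed version of \Cref{lem:abst_osc2}, and inside it the $W^{1,q}$ estimate on boundary slices: in the constant-coefficient case this rested on the identity $\bar w_n=-(-\Delta)^{1/2}\bar w$ and Calder\'on--Zygmund theory for the Riesz transform, whereas for $\Div(A\nabla\cdot)$ one must invoke mapping properties of the associated Dirichlet--Neumann operator (or a coefficient-freezing argument), all while keeping track that the integrability threshold $q<n/(n-1)$ is precisely what renders the transverse integral convergent.
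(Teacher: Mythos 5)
Your route is genuinely different from the paper's: you flatten the Lipschitz graph and then try to prove a variable‑coefficient version of \Cref{lem:abst_osc2}, whereas the paper never flattens. It instead reruns the multi-scale covering from Step 2 of the proof of \Cref{lem_basic2}: the region $D\cap B_{3/2}\cap\{x_n\le 1/16\}$ is covered by genuinely flat spherical caps $D_j^{(i)}=p_j^{(i)}+\{x_n\ge\tau^2\rho_i\}\cap B_{\rho_i}$ at scales $\rho_i=\tau^i/8$ (these sit inside $D$ because $c_\circ\ll\tau^2$), and the constant-coefficient \Cref{lem:abst_osc2} is applied verbatim on each cap. Integrating the slice estimate in $t$ over the slab $S_j^{(i)}$ produces the factor $\tau^{n-(n-1)q}$, which is a contraction precisely because $q<\frac{n}{n-1}$, and summing the resulting geometric series over the scales $i$ gives the lemma. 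So the hypothesis on $q$ enters through geometric summability across the scales of the covering, not only through the convergence of a single $t$-integral as in your Step 2.

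The gap in your version is the assertion that the proof of \Cref{lem:abst_osc2} ``adapts line by line'' to $\Div(A\nabla\cdot)$. Two steps concretely fail. First, that proof crucially uses that $w_n=\partial_n w$ is itself harmonic (this is how one gets $\|w_n(\cdot,0)\|_{L^1(B'_{5/4})}\le C$ from Poisson-kernel bounds together with the sign condition $w_n\le 0$); after flattening, $\partial_{y_n}\tilde w$ solves $\Div(A\nabla\partial_{y_n}\tilde w)=-\Div\bigl((\partial_{y_n}A)\nabla\tilde w\bigr)$, an equation with a nontrivial right-hand side, so the representation-formula argument for the normal derivative does not transfer. Second, the identity $\bar w_n=-(-\Delta)_{x'}^{1/2}\bar w$ and the Calder\'on--Zygmund step must be replaced by mapping properties of the variable-coefficient Dirichlet-to-Neumann operator acting on boundary data controlled only in $L^1$; a coefficient-freezing/perturbation argument is delicate exactly at this endpoint and is only gestured at, not carried out. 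These are not cosmetic issues: they are the analytic heart of the lemma. The paper's covering argument sidesteps all of this by only ever invoking the flat, constant-coefficient statement, which is why it is worth adopting.
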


To show \Cref{prop:decaysqrt}, we will need to apply  \Cref{lem:compactness2} to a suitable sequence. The following result will ensure that the sequence satisfies the hypotheses of \Cref{lem:compactness2}:

\begin{lem}
\label{lem:hyp_comp}
Let $p\in [1, \tfrac32]$ and $\bar \gamma = \tfrac{1}{10}$. There exists $\eps_\circ$ depending only on  $p$ such that if $ 0< \epk< \eps_\circ$ the following holds for any $\zz\in B_{\Rk/8}(\tilde\zz_k)\cap \cZ$. 

 Let $a_+\in \mathbb{S}^2$ and $b_+\in \R$ be given by \Cref{proplinearization}, and assume that for some $\delta>0$ and $R \in (\Rk \epk^{1+{\bar \gamma}}, \Rk/8)$ we have 
\[
\frac{1}{R}\fint_{U_+\cap B_{R}(\zz)} |u-a\cdot x - b|\,dx
\le   \epk\delta ,\qquad \text{for some $a\in \mathbb{S}^2$ with $  |a- a_+|\le \epk^{1/2} $ and $b\in\R$}.
\]
Then
\[
\frac{1}{R^p}\fint_{U_+\cap B_{R/2}(\zz)} 
    |u-a\cdot x-b|^p \,dx
+ \fint_{U_+\cap B_{R/2}(\zz)}
    |\nabla u-a|^p \,dx
 \le C \bigg(  (\epk\delta)^p + \frac{\Rk}{R} \epk^{1+{\bar \gamma}}\bigg),
\]
where $C$ depends only on  $p$.
\end{lem}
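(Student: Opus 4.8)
\textbf{Proof plan for Lemma~\ref{lem:hyp_comp}.}

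The strategy is to decompose $B_{R/2}(\zz)$ into a ``good region'' far from the neck centers $\cZ$, where $u|_{U_+}$ is a genuine classical solution of the Bernoulli problem and $L^1$-flatness upgrades to $C^2$-flatness, and a ``bad region'' near $\cZ$, whose measure is controlled by the Minkowski-type bound \eqref{strongalternativeA} (through \Cref{lem:Nbound}), where we only have the crude gradient bound $|\nabla u - a| \le |\nabla u| + 1 \le 2$. First I would fix $v(x) := u(x) - a\cdot x - b$ and set $\tau := \Rk\epk^{1/\alpha}$ (the threshold scale at which $U_0$ is built, cf.\ \Cref{def:UpUm}); note $\tau \ll R$ since $R \ge \Rk\epk^{1+\bar\gamma}$ and $\bar\gamma = \tfrac1{10} > \tfrac1\alpha - 1$ is incompatible only in the wrong direction, so one must instead argue that the contribution from scales below $\tau$ is a genuinely lower-order term — this is exactly where the exponent $1 + \bar\gamma$ enters. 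For a point $\bar x \in U_+ \cap B_{R/4}(\zz)$ with $d := \dist(\bar x,\cZ) \ge c\,\tau$, the ball $B_{d/4}(\bar x)$ avoids $\overline{U_0}$ (by \eqref{eq:rbepk}, since neck radii are $\le \tfrac1{10}\tau$), so by \eqref{eq:necksCU0} the function $u|_{U_+}$ solves Bernoulli there; rescaling \Cref{lem:L1Linfty_2} and using the hypothesis on each dyadic annulus (exactly as in the proof of \Cref{proplinearization}, summing the geometric-type series $\sum (\Rk/\rho_j)^\alpha$) gives
\[
|\nabla v(\bar x)| \le C\biggl(\frac{\Rk}{d}\biggr)^\alpha \epk
\quad\text{whenever } d \ge c\,\tau,\ \bar x\in U_+\cap B_{R/4}(\zz),
\]
and similarly $|v(\bar x)| \le C\,\epk\delta\,R + C(\Rk/d)^\alpha \epk\, d$ by integrating the gradient bound along a path, so $|v|/R \lesssim \epk\delta + (\Rk/R)^\alpha\epk(d/R)^{1-\alpha}$.

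Next I would run the ``layer cake'' bookkeeping over the level sets $D_t := \bigcup_{\zz'\in\cZ\cap B_R(\zz)} B_t(\zz') \cap U_+ \cap B_{R/2}(\zz)$, exactly in the spirit of the proof of \Cref{prop:neumann_bound}: by \eqref{strongalternativeA}, \Cref{lem:Nbound} and the volume version of \Cref{lem:perbound} (now for $3$-dimensional measure rather than perimeter), one gets $|D_t| \le C\,t^3\,(t/\Rk)^{-(1+\cttb)/3}$ for $t \ge c\,\tau$, and for $t \le c\,\tau$ simply $|D_t| \le |U_0\cap B_{R/2}(\zz)| \le C\,\tau^3(\tau/\Rk)^{-(1+\cttb)/3}$. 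Integrating $|\nabla v|^p \le \min\{2^p, C((\Rk/t)^\alpha\epk)^p\}$ against $d|D_t|$ yields
\[
\int_{U_+\cap B_{R/2}(\zz)} |\nabla v|^p\,dx
\le C\,R^3\biggl((\epk\delta)^p \cdot 0 + \Bigl(\tfrac{\Rk}{R}\Bigr)^{\alpha p}\epk^p R^3 \cdot \text{(convergent if }\alpha p < \tfrac{5-\cttb}{3})\biggr) + C\,\tau^3\bigl(\tfrac{\tau}{\Rk}\bigr)^{-\frac{1+\cttb}3},
\]
and here one checks that with $p \le \tfrac32$, $\alpha = \frac34+\frac3{100}$, $\cttb = \frac1{20}$ the integrability condition $\alpha p < \tfrac{5-\cttb}{3}$ holds; the leftover $\tau$-term is $\tau^3(\tau/\Rk)^{-(1+\cttb)/3} = \Rk^3\epk^{(1/\alpha)(3-(1+\cttb)/3)}$, which after dividing by $R^3$ and using $R \ge \Rk\epk^{1+\bar\gamma}$ is bounded by $C\,(\Rk/R)\,\epk^{1+\bar\gamma}$ provided $\bar\gamma$ and the exponents are as fixed (this is a short arithmetic check with $\delta_\circ$'s and $\cttb$'s). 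The same computation with the pointwise bound on $|v|$ — noting that $(d/R)^{1-\alpha} \le 1$ — gives $R^{-p}\fint |v|^p \le C((\epk\delta)^p + (\Rk/R)\epk^{1+\bar\gamma})$; one must also absorb the $U_0$-contribution to $\int|v|^p$, where $|v| \le CR$ pointwise, which is again of size $R^p\tau^3(\tau/\Rk)^{-(1+\cttb)/3}/R^3$, i.e.\ the same lower-order term.

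The main obstacle is the bookkeeping near $\cZ$: one has to verify that \emph{every} numerical exponent lines up, namely (i) that $\alpha p < (5-\cttb)/3$ so the dyadic sum over $t \in (\tau,R)$ converges to an $R$-independent constant, and (ii) that the ``floor'' contribution at scale $\tau = \Rk\epk^{1/\alpha}$, of order $\Rk^3\epk^{\frac{1}{\alpha}(3-\frac{1+\cttb}{3})}$, is no bigger than $R^3\cdot(\Rk/R)\epk^{1+\bar\gamma}$ for all admissible $R \ge \Rk\epk^{1+\bar\gamma}$ — equivalently $\tfrac1\alpha(3-\tfrac{1+\cttb}{3}) \ge 2 + \frac1\alpha\cdot\text{something}$... this reduces to a clean inequality among $\alpha,\cttb,\bar\gamma$ that holds with the chosen values, but it is the only genuinely delicate point and is where the precise choice $\bar\gamma = \tfrac1{10}$ (and the freedom to take $\epk$ small) is used. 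Everything else — the dyadic linearization giving the pointwise $\nabla v$ bound, the passage from $L^1$-flatness to $C^2$-flatness via \Cref{lem:L1Linfty_2}, and the identification of $u|_{U_+}$ as a Bernoulli solution away from $U_0$ via \eqref{eq:necksCU0} and \eqref{bdryUpm} — is a direct transcription of arguments already carried out in \Cref{proplinearization} and \Cref{prop:neumann_bound}.
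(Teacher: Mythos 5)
Your proposal does not follow the paper's argument, and the route you chose has a genuine gap that I do not see how to repair. Your only source of pointwise control in the bulk is the telescoped bound $|\nabla u(\bar x)-a|\le C\epk\delta + C(\Rk/d)^{\ctta}\epk$ with $d=\dist(\bar x,\cZ)$, inherited from \Cref{proplinearization}. On the set where $d\sim R$ --- which is essentially all of $B_{R/2}(\zz)$ --- this gives $|\nabla v|\lesssim \epk\delta+(\Rk/R)^{\ctta}\epk$, so after integration your bulk contribution is of order $\big((\Rk/R)^{\ctta}\epk\big)^p$ per unit volume. This is \emph{not} dominated by the claimed right-hand side: take $p=1$, $R\sim\Rk/8$ and $\delta$ small; then your bulk term is $\sim\epk$ while the target is $\epk\delta+(\Rk/R)\epk^{1+\bar\gamma}\ll\epk$. (This is exactly the regime in which the lemma is invoked in the proof of \Cref{prop:decaysqrt}, where $\delta=C_\circ 2^{-m/2}$ becomes small.) In other words, the global excess decay anchored at scale $\Rk$ cannot see the local smallness $\delta$ at scale $R$, and no layer-cake bookkeeping over $\dist(\cdot,\cZ)$ recovers the factor $\delta^p$ that the statement asserts. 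There is also a secondary arithmetic slip: for the volume (rather than surface) version of the Minkowski bound the relevant convergence condition is $\ctta p<(8-\cttb)/3$, not $(5-\cttb)/3$; but even with the correct exponent the dyadic sum over intermediate scales is dominated by the top scale and reproduces the bad bulk term above.

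The paper's proof is organized around a different decomposition, and this is where the two special features of the statement --- the restriction $p\le\tfrac32$ and the exponent $1+\bar\gamma$ --- actually come from. One splits $U_+\cap B_{R/2}(\zz)=A_1\cup A_2$ according to the \emph{value} of $u$, with $A_2=\{u<\Rk\epk^{1+\bar\gamma}\}\cap U_+\cap B_{R/2}(\zz)$: by \Cref{lem:area-level-sets}, the coarea formula and the gradient lower bound on $U_+$, one gets $|A_2|\le CR^2\Rk\epk^{1+\bar\gamma}$, and since $|\nabla v|\le 2$ and $|v|\le CR$ there, $A_2$ produces precisely the error term $(\Rk/R)\,\epk^{1+\bar\gamma}$ --- it has nothing to do with neck radii or the covering bound \eqref{strongalternativeA}. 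On $A_1$ one has $\dist(\cdot,\cZ)\ge\Rk\epk^{1+\bar\gamma}\gg\Rk\epk^{1/\ctta}$ (because $u$ is $1$-Lipschitz and vanishes on $\cZ$), so \Cref{proplinearization} makes $A_1$ a flat Lipschitz epigraph in the direction $e_3=a_+$ with $\partial_3 v\le|\nabla u|-1\le 0$; the key step is then \Cref{lem:abst_osc1}, the $L^q$ transfer estimate for harmonic functions monotone in $x_3$ on flat Lipschitz domains (valid only for $q<\tfrac{n}{n-1}=\tfrac32$, which is why $p\le\tfrac32$), which bounds $\int_{A_1}|\nabla v|^p$ by the same integral over an interior region $\{(x-\zz)\cdot a\ge R/15\}$, where harmonic estimates plus the $L^1$ hypothesis at scale $R$ give $|\nabla v|\le C\epk\delta$ --- producing the $(\epk\delta)^p$ term with no loss. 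The $L^p$ bound on $v$ itself then follows from a Sobolev inequality on the Lipschitz domain $A_1$ (again requiring $p\le\tfrac32$). Your proposal is missing both of these ingredients, and without the monotone-harmonic transfer there is no mechanism to propagate the interior smallness $\epk\delta$ down to the free boundary in $L^p$.
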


\begin{proof}
Up to a rotation, we can assume that $a_+ = e_3$. We write $v=u-x_3-b$ and divide the proof into two steps.

\medskip
\noindent{\bf Step 1:} we prove  the  ${\dot{W}}^{1,p}$ bound.

We begin by noticing that if $u(x) \ge \Rk \epk^{1+{\bar \gamma}}$ then\footnote{Recall that $|\nabla u|\le 1$ in $\R^3$ and $\cZ\subset \{u=0\}$.}  $\dist(x, \cZ)\ge \Rk \epk^{\frac{2}{1+\ctta}}\gg \Rk \epk^{\frac{1}{\ctta}}$ as long as $1+{\bar \gamma} < \frac{2}{1+\ctta}$ (the chosen value $\bar \gamma = \tfrac{1}{10}$ works since $\ctta=\valctta$).
Thus, thanks to \Cref{proplinearization}, 
\begin{equation}
\label{eq:gradbounde3}
|\nabla u  - e_3|\le C\epk^{\frac{1-\ctta}{1+\ctta}} \ll 1 \qquad\text{in}\quad   \Omega^{{\bar \gamma}}_+\cap B_{\Rk/4}(\zk),\quad\text{where}\quad   \Omega^{\bar \gamma}_+ := U_+ \cap \left\{u \ge \Rk\epk^{1+{\bar \gamma}}\right\}.
\end{equation}
Note that \eqref{eq:gradbounde3} implies that $\partial_{\tilde e} u >0 $ in $\Omega_+^{\bar \gamma}$ as long as $\tilde e\cdot e_3\ge  \epk^{\frac{1-\ctta}{2}} \gg C\epk^{\frac{1-\ctta}{1+\ctta}} $. Hence, noticing that $\{u = \Rk \epk^{1+\bar\gamma}\}\cap \partial U_+\cap B_{\Rk /5}(\zz) = \varnothing$, we deduce that $\partial\Omega_+^{{\bar \gamma}}\cap B_{\Rk/5}(\zz)$ is an $\epk^{\frac{1-\ctta}{2}}$-Lipschitz graph both in the $e_3$ direction and in the $a$ direction (recall that, by assumption, $|a-e_3| = |a-a_+|\le \epk^{1/2}$). 
In particular,  for any $y_\circ\in \partial \Omega^{{\bar \gamma}}_+\cap B_{\Rk/8}(\zz)$ we have (using $\bar \gamma<\frac{1-\ctta}{2}$)
\begin{equation}
\label{eq:epskgammaflat}
  \{x_3 \ge \rho \epk^{\bar \gamma}\}\subset  \Omega_+^{\bar \gamma} -y_\circ \subset \{x_3 \ge - \rho \epk^{{\bar \gamma}}\} \qquad\text{in}\quad B_{\rho},\qquad\text{for any}\quad \rho\in (0, \Rk/8). 
\end{equation}
We  divide the set $ U_+ \cap B_{R/2}(\zz)$ into two regions:
\begin{equation}
    \label{eq:A1A2}
U_+ \cap B_{R/2}(\zz) = A_1\cup A_2,\qquad\text{where}\quad A_1 :=  \Omega_+^{{\bar \gamma}}\cap B_{R/2}(\zz),\quad  A_2 :=  (U_+ \setminus  \Omega_+^{{\bar \gamma}})\cap B_{R/2}(\zz).  
\end{equation}
Now, we first use  \Cref{lem:abst_osc1} in $A_1$ (recall $A_1$ is a flat-Lipschitz domain in the $e_3$ direction, and notice that $\partial_{3} v \le |\nabla u|-1\le 0$) with a covering argument to obtain
\[
\int_{A_1} |\nabla v|^p \,dx
\le  C\int_{B_{2R/3}(\zz)\cap \{(x-\zz)\cdot e_3 \ge R/12\}} |\nabla v|^p\, dx \leq C\int_{B_{2R/3}(\zz)\cap \{(x-\zz)\cdot a \ge R/15\}} |\nabla v|^p\, dx 
\le CR^3 (\epk \delta)^p,
\] 
where the last inequality follows by interior harmonic estimates and the $L^1$-smallness assumption of $v$ inside $U_+\cap B_R(\zz)\supset B_{2R/3}(\zz)\cap \{(x-\zz)\cdot a \ge R/20\}$.

Concerning $A_2$, \Cref{lem:area-level-sets} together with the coarea formula and the fact that the gradient is lower bounded in $\Omega^{(\pm)}\supset U_\pm$ (see \Cref{lem:grad_lower_bound}) imply that $|A_2|\le C R^2 \Rk \epk^{1+\bar\gamma}$. Hence, since $|\nabla v|\le 2$,
\[
\int_{A_2} |\nabla v|^p \,dx
\le C R^2\Rk \epk^{1+{\bar \gamma}}.
\] 
This proves the desired bound on $|\nabla v|^p$.

\medskip
\noindent{\bf Step 2:} We now prove the $L^p$ bound.

As before, consider the sets $A_1$ and $A_2$ as in \eqref{eq:A1A2}.
Notice first that, since $|\nabla v| \leq 2$, by the $L^1$ bound of $v$ in $B_R(\zz)$ we deduce that $|v|\le CR$ in $A_2$. Hence,
\[
\int_{A_2}|v|^p\,dx \le C R^p |A_2|\leq C R^{p+2} \Rk\epk^{1+\gamma}.
\]
On the other hand, since $A_1$ is a Lipschitz domain, by Sobolev embedding (see, for example, \cite[Theorem 3]{AF77}) and H\"older inequality  we get
\begin{multline*}
\left(\fint_{A_1} |v|^p\,dx\right)^{1/p} \leq \left(\fint_{A_1} |v|^{3/2}\,dx\right)^{2/3}
\le C\fint_{A_1} |v|\,dx + CR\fint_{A_1} |\nabla v|\,dx \\
\leq C\fint_{A_1} |v|\,dx + CR\left(\fint_{A_1} |\nabla v|^p\,dx \right)^{1/p}\qquad \text{whenever} \quad 1 \le p \le \frac{3}{2}.
\end{multline*}
Thus, using Step 1 and the assumption on $v$, we get the desired estimate.
\end{proof}

\subsection{Proof of \Cref{prop:decaysqrt}} We conclude by proving the main result of this section.

\begin{proof}[Proof of \Cref{prop:decaysqrt}]
We split the proof into two steps. 

\medskip
\noindent
{\bf Step 1:} 
We first show an algebraic decay of $\bA$ from scale $\Rk$ to scale $R_k^\flat$ (with given center $\zz$). More precisely, we start by showing 
\begin{equation}
\label{eq:prop62p}
\bA_\zz(u, R) \le C_\circ \left(\frac{R}{\Rk}\right)^{1/2}\epk , \qquad\text{for all}\quad R\in  \left[R_k^\flat, \Rk/16\right],
\end{equation}
for some ${C_\circ}$ universal. Note that \eqref{eq:prop62} follows directly from this bound, choosing  $C=C_\circ^6$.

Let us denote $R_\ell := 2^{-\ell}\Rk$,
and let $\ell_0 \in \mathbb N$ be a large constant to be fixed. Thanks to  \Cref{lem:AzEz}, up to choosing $C_\circ$ sufficiently large (depending only on $\ell_0$), we can assume that \eqref{eq:prop62p} holds for $R = R_4,R_5, R_6, R_7, \dots, R_{\ell_0}$.

We now argue by induction and prove the following: if 
\eqref{eq:prop62p} holds for $R =R_4, R_5, R_6, R_7, \dots, R_{\ell}$ for some $\ell \geq \ell_0$ such that $2^{-\ell}\geq \epk^\cttc$,
then \eqref{eq:prop62p} holds for $R_{\ell+1}$. This will imply the desired bound.

To prove the inductive step, we  will apply \Cref{lem:compactness2} to a suitable function. 
Recall that $\bA_\zz$ is given by the maximum of two integrals, one inside $U_+$ and one inside $U_-$ (see \eqref{eq:def_Az}).
Here we just prove the estimate for $U_+$, since the case of $U_-$ is completely analogous.

 Fix $\zz\in B_{\Rk/16}(\zk)\cap \cZ$. By the inductive hypothesis, for all $m=4,5,6,\dots,\ell$ there exist $a_m\in \mathbb{S}^2$ and $b_m\in \R$ such that
\begin{equation}\label{eq:vm small}
 \frac{1}{|B_{R_m}|}\int_{ U_+\cap B_{R_m}(\zz)} |v_m| \,dx \le {C_\circ} 2^{-m/2} R_m \epk, \qquad \mbox{where} \quad  v_m(x) := u(x) - a_m\cdot x - b_m.
\end{equation}
Then, by the triangle inequality  (similarly to  \Cref{proplinearization}, using that $ U_+\cap B_\rho(\zz)$ is roughly a half-space at scales $\rho \gg \Rk \epk^{1/\alpha}\ge \rb(\zz)$)  we get 
\[
  R_m |a_m-a_{m+1}| +  |b_m-b_{m+1}|\le C {C_\circ}  2^{-m/2}R_m\epk.
\]
In particular, this implies that for any $4 \leq \ell_1\le \ell_2\le \ell$ we have 
\begin{equation}
    \label{eq:boundsab}
  |a_{\ell_1}-a_{\ell_2}| \le C{C_\circ}  2^{-\ell_1/2}\epk , \qquad  |b_{\ell_1}-b_{\ell_2}|\le C{C_\circ}  2^{-\ell_1/2}   R_{\ell_1} \epk .
\end{equation}
Furthermore, if we consider the function $v(x) = u(x) -a_+\cdot x -b_+$ provided by \Cref{proplinearization}, by the very same reason  we also have
\begin{equation}
    \label{eq:boundsab2}
  |a_{\ell_1}-a_+| \le C{C_\circ}  \epk  \qquad\text{for any}\quad 4 \le \ell_1\le \ell. 
\end{equation}
Fix now $p>1$ satisfying 
$\tfrac{1+{\bar \gamma}-\cttc}{p}\ge 1+\frac{\cttc}2$ 
(for instance, one can choose $p=1+\frac1{20}$), and recall that by assumption  $2^{-\ell} \ge \epk^\cttc$. Then, thanks to \eqref{eq:vm small}, we can apply \Cref{lem:hyp_comp} with $\delta = {C_\circ} 2^{-m/2}$ to deduce that, for any $4\le m \le \ell$,  
\[
\frac{1}{R_m^p}\fint_{U_+\cap B_{R_m/2}(\zz)} |v_m|^p \,dx +\fint_{U_+\cap B_{R_m/2}(\zz)} |\nabla v_m|^p \,dx \le (C {C_\circ}2^{-m/2}\epk)^p. 
\]
Using \eqref{eq:boundsab}, this implies that 
\[
 \frac{1}{R_m^p}\fint_{U_+\cap B_{R_m/2}(\zz)} |v_\ell|^p\,dx +\fint_{U_+\cap B_{R_m/2}(\zz)} |\nabla v_\ell|^p \,dx \le   (\tilde C {C_\circ}2^{-m/2}\epk)^p \qquad \forall\, m \in \{\ell-\ell_0,\ldots,\ell\}, 
\]
for some $\tilde C$ universal. Hence, if we define 
$
\tilde v_\ell(x):=(\tilde C {C_\circ} 2^{-\ell/2}R_\ell\epk)^{-1} v_\ell (\zz+R_{\ell} x)$,
we get  
\begin{equation}
    \label{eq:twocond}
\bigg( \fint_{R_{\ell}^{-1}(U_+-\zz)\cap B_{\rho/2}} |\nabla \tilde v_\ell|^p \,dx \bigg)^{1/p} \le    \rho^{1/2},  \quad  \bigg( \fint_{R_{\ell}^{-1}(U_+-\zz)\cap B_{\rho/2}} |\tilde v_\ell|^p \,dx \bigg)^{1/p}\le    \rho^{3/2},\quad\text{for $\rho \in  \{2^0,2^1,2^2,\dots, 2^{\ell_0} \}$.} 
\end{equation}
On the other hand,  \Cref{prop:neumann_bound}  and \Cref{rem:pequal2} yield
\[
\int_{\partial U_+\cap B_{R_4}(\zz)} |\partial_\nu v|^2  \,d\cH^{2}
\le C\,
\Rk^2\epk^{2+\delta_\circ} .\]
Also, by the triangle inequality and \eqref{eq:boundsab2},
\[
|1-\nu\cdot a_\ell|^2 = \frac14 |\nabla u - a_\ell|^4 \le |\nabla u - a_+|^4 + (C{C_\circ}\epk)^4 = 4|1-\nu\cdot a_+|^2 + (C{C_\circ}\epk)^4\qquad\text{on}\quad \FB(u),
\]
therefore
$$
|\partial_\nu v_\ell|^2 \leq 4|\partial_\nu v|^2+(C{C_\circ}\epk)^4\qquad\text{on}\quad \FB(u).
$$
Recalling that $\cH^2(\partial U_0)\le C\Rk^2 \epk^{\frac{5-\cttb}{3\ctta}} \le C\Rk^2 \epk^{2+\delta_\circ}$ (see \eqref{eq:boundDt}) 
and that $\cH^2(\partial U_+\cap B_{R_4}(\zz)) \le C\Rk^2 +\cH^2(\partial U_0\cap B_{\Rk/2}(\zk)) \le C\Rk^2$ (see \Cref{lem:perbound} and \eqref{depU1U2}), we then obtain
\begin{multline*}
\int_{\partial U_+\cap B_{R_4}(\zz)} \hspace{-3mm}|\partial_\nu v_\ell|^2\, d\mathcal{H}^2 \le C \Rk^2 \epk^{2+\delta_\circ} + \int_{\partial U_+\cap B_{R_4}(\zz)\setminus\partial{U_0}} \hspace{-3mm}|\partial_\nu v_\ell|^2\, d\mathcal{H}^2  \\ \le C \,\Rk^2\left( \epk^{2+\delta_\circ} +  C_0^4  \epk^{4}\right) + 4\int_{\partial U_+\cap B_{R_4}(\zz)\setminus\partial{U_0}} \hspace{-3mm}|\partial_\nu v|^2\, d\mathcal{H}^2   \leq C \,\Rk^2\left( \epk^{2+\delta_\circ} +  C_0^4  \epk^{4}\right).
 \end{multline*}
By H\"older's inequality, using again $\cH^2(\partial U_+\cap B_{R_4}(\zz)) \le C\Rk^2$, this implies
\[ 
 \int_{\partial U_+\cap B_{R_4}(\zz)} \hspace{-3mm}|\partial_\nu v_\ell|\, d\mathcal{H}^2\le C\,\Rk^{2} (   \epk^{1+ \delta_\circ/2} +C_\circ^{2}  \epk^{2} )\quad \implies\quad 
 \int_{R_\ell^{-1}(\partial U_+-\zz)\cap B_{2^{\ell-4}}} |\partial_\nu \tilde v_\ell (x )|\,d \mathcal{H}^2\le C2^{5\ell/2} (  \epk^{\delta_\circ/2}+C_\circ^2 \epk) .
\]
Hence, taking $\epk$ small enough depending on ${C_\circ}$ so that $C_\circ^2 \epk\le \epk^{1/2}$,  since $2^\ell \le (\epk)^{-\chi}$ and $\delta_\circ/2-5\cttc/2 \ge \delta_\circ/4$ we get
\begin{equation}
    \label{eq:twocond2}
 \int_{R_\ell^{-1}(\partial U_+-\zz)\cap B_{\rho}} |\partial_\nu \tilde v_\ell (x )|\, d\mathcal{H}^2\le C \,2^{5\ell/2}\epk^{\delta_\circ/2}  \le C\,2^{5\ell/2} \rho^{5/2} \epk^{\delta_\circ/2}  \le C\rho^{5/2} \epk^{\delta_\circ/4},\qquad\text{for $\rho\in [1, 2^{\ell_0-4}]$.}
\end{equation}
To go further, we note that
\eqref{keyeqn} and \Cref{lem:obsUpm}(ii) imply the existence of a vector $e_R = e_{R, \zz}\in \mathbb{S}^2$ such that
\begin{equation}
\label{eq:eRtilde}
\fint_{B_R(\zz)}|u-V_{\zz,e_R}|\,dx
\le C \bigg(\frac{\Rk}{R}\bigg)^\alpha  R\, \epk
\end{equation} 
(where H\"older's inequality is used) and
$$
 \bigg\{e_R\cdot x > C\bigg(\frac{\Rk}{R}\bigg)^\alpha R \epk \bigg\}\subset U_+-\zz \subset \bigg\{e_R\cdot x \ge -C\bigg(\frac{\Rk}{R}\bigg)^\alpha R \epk\bigg\}\qquad\text{in}\quad B_R. 
$$
(As already noted in the proof of \Cref{lem:AzEz}, $e_R$ is the same for both estimates.)
Thus, if we denote $\rho = 2^\ell R \Rk^{-1}$ and $\tilde {e}_\rho = e_{2^{-\ell}\rho\Rk}$, as long as $2^{-\ell}\ge \epk^\cttc \gg \epk$ we have that
the domain $R_\ell^{-1}( U_+-\zz)$ satisfies
\begin{equation}
    \label{eq:twocond3}
 \left\{\tilde e_\rho \cdot x \ge   C \epk^{1-\alpha\chi} \rho \right\}\subset R_\ell^{-1}(U_+-\zz) \subset \left\{\tilde e_\rho\cdot x \ge -  C \epk^{1-\alpha\chi} \rho \right\}\qquad\text{in}\quad B_{\rho},\qquad\text{for}\quad 1\le  \rho \le 2^{\ell_0}.  
\end{equation}
Also, by \eqref{eq:eRtilde} with $R=R_\ell$, we have
\[ 
\fint_{B_R(\zz)}\big|u-|\tilde e_1 \cdot (x-\zz)|\big| \,dx
\le C \epk^{1-\alpha\chi} R_\ell
\]
and in particular, because of \eqref{eq:vm small}, it follows that $|a_\ell - \tilde e_1| \le C {C_\circ} \epk^{1-\alpha\chi}\le C {C_\circ} \epk^{1/2}$.

Thanks to \eqref{eq:twocond}, \eqref{eq:twocond2}, and \eqref{eq:twocond3}, we can now apply \Cref{lem:compactness2} with $d=1$ 
 to $\tilde v_\ell$. As a consequence, given   $\eta>0$  fixed (to be chosen universally), for $\epk$ small enough and $\ell_0$ large enough we have 
\[
\int_{R_\ell^{-1}( U_+-\zz)\cap B_{1/2}} |\tilde v_\ell - a\cdot x - b|\, dx\le \eta, 
\]
for some $a\in \R^{2}$ with $a\cdot \tilde e_1 = 0$ and $b\in \R$, with $|a|+|b|\le C$. In terms of $u$, this gives
\[
R_\ell^{-3} \int_{U_+\cap B_{R_\ell/2}(\zz)} \left| u(x) -a_\ell\cdot x - b_\ell - \tilde C{C_\circ}  \epk2^{-\ell/2} a\cdot (x-\zz) - \tilde C {C_\circ}  R_\ell \epk2^{-\ell/2} b\right|\, dx\le  \tilde C {C_\circ}  R_\ell \epk2^{-\ell/2} \eta.
\]
Let us denote $b_{\ell+1} := b_\ell + \tilde C{C_\circ}  R_\ell \epk 2^{-\ell/2} b+ \tilde C{C_\circ} \epk 2^{-\ell/2} a\cdot \zz$, as well as $\tilde a_{\ell+1} = a_\ell + \tilde C{C_\circ}  \epk 2^{-\ell/2} a$ and $a_{\ell+1} = \frac{\tilde a_{\ell+1}}{|\tilde a_{\ell+1}|}$. Then, thanks to the fact that $|a_\ell - \tilde e_1| \le C {C_\circ} \epk^{1/2}$ and $a\cdot \tilde e_1 = 0$ we deduce that $|a\cdot a_\ell|\le  C {C_\circ} \epk^{1/2}$ and thus
$
|\tilde a_{\ell+1} - a_{\ell+1}|\le C C_\circ^2 \epk^{3/2} 2^{-\ell/2}.
$
Combining all together, we obtain 
\[
\fint_{U_+\cap B_{R_\ell/2}(\zz)} | u(x) -a_{\ell+1}\cdot x - b_{\ell+1}|\, dx\le  C {C_\circ} R_\ell \epk2^{-\ell/2} \left(\eta + {C_\circ}\epk^{1/2}\right).
\]
We now choose $\eta$ small so that $C\eta \le \tfrac14$, which in turn fixes $\ell_0$, ${C_\circ}$, and an upper bound for $\epk$. Then, choosing $\epk$ small enough so that $C{C_\circ} \epk^{1/2}\le \tfrac14$, we get  \eqref{eq:prop62p}, as desired.
 This concludes the proof of  \eqref{eq:prop62}.

\medskip
\noindent
{\bf Step 2:}  We now show \eqref{equation:weakestdecay} for $\ast = +$; the same proof holds for $\ast = -$. Recall that $R_k^\flat  = \epk^\cttc \Rk$. Then, for $\zz\in B_{\Rk/8}(\zk)\cap \cZ$ fixed, we define
$w :=  u-a^\flat_+ \cdot x- b^\flat_+$,
where $a^\flat_+\in \mathbb S^2$ and $b^\flat_+\in \mathbb R$  are such that
\[
\fint_{U_+\cap B_{R_k^\flat}(\zz)} |u-a^\flat_+ \cdot x- b^\flat_+| \,dx  \le C \bA_\zz(u, R_k^\flat)  \le \epk^{1+\cttc/3} R_k^\flat.
\]
To prove \eqref{equation:weakestdecay}, we will show that, for some small universal constant ${\bar \beta}>0$, it holds that
\begin{equation}\label{weakdecay11}
   \text{for all}\quad r \in [\epk^{1+2\cttc} \Rk, R_k^\flat]\quad\text{there exists $c = c(r)$ such that }   \fint_{U_+\cap B_r(\zz)} |w-c| \,dx \le C \epk^{1+\cttc/3} R_k^\flat  (r/R_k^\flat)^{\bar \beta}.
\end{equation}
Notice that this directly yields the desired result by adding a geometric series, since   by the triangle inequality, we have  $|c(r_1) - c(r_2)|\le C \epk^{1+\cttc/3} R_k^\flat  (r_1/R_k^\flat)^{\bar \beta}$ for all $r_2 \in (r_1/2, r_1)$ and $r_1 \in [\epk^{1+2\cttc} \Rk, R_k^\flat]$ (and we may take $c(R_k^\flat) = 0$).

The first key observation is that, by \Cref{prop:neumann_bound}, arguing as in Step 1 we get 
\begin{equation}\label{flux1234}
\int_{\partial U_+\cap B_{\Rk/16}(\zz)} |\partial_\nu w|^2\, d\mathcal{H}^2\le C  \epk^{2+\delta_\circ} \Rk^{2}  
\quad\Longrightarrow\quad 
 \int_{\partial U_+\cap B_{r}(\zz)} |\partial_\nu  w |^2\, d\mathcal{H}^2\le C\epk^{2+2\cttc} (R_k^\flat)^2  (r/R_k^\flat)^{2{\bar \beta}}
\end{equation}
for all $r \in [\epk^{1+2\cttc} \Rk, R_k^\flat]$, provided that $\delta_\circ  >4\cttc+2\bar\beta(1+2\cttc)$ (this is true, for example, choosing $\bar \beta \leq  \tfrac{1}{50}$).

Set $r_\ell := 2^{-\ell}R_k^\flat$. As in Step 1, we can assume that \eqref{weakdecay11} holds for $r = r_0,r_1, r_2, r_3, \dots, r_\ell$ for some $\ell \geq \ell_0$, and we will show its validity for $r_{\ell+1}$ as well, as long as $2^{-\ell}\ge \epk^{1+2\chi}$.

Again as in Step 1, by assumption there exist $c_m\in \R$ such that,  if we define
$w_m(x) = w(x) -c_m,$
then 
\begin{equation}\label{industep2}
\fint_{ U_+\cap B_{r_m}(\zz)} |w_m|\,dx  \le C 2^{-{\bar \beta} m} \epk^{1+\cttc/3} R_k^\flat,\qquad   |c_m-c_{\ell}|\le C_{\bar \beta} 2^{-{\bar \beta} m}\epk^{1+\cttc/3}  R_k^\flat,\qquad \text{for all }0\le m \le \ell.
\end{equation}
Thus, applying \Cref{lem:hyp_comp} with $R = r_m$ and $\delta =  C 2^{-{\bar \beta} m} \epk^{\cttc/3} R_k^\flat/r_m$ and using the induction hypothesis, for any $2\le m \le \ell$ we get
\[
 \fint_{U_+\cap B_{r_m/2}(\zz)} |w_m|^p 
 \,dx+r_m^p\fint_{U_+\cap B_{r_m/2}(\zz)} |\nabla w_m|^p \,dx   \le C  \left(\epk^{1+\cttc/3} R_k^\flat 2^{-{\bar \beta} m}\right)^p, 
\]
  for some $p > 1$ 
 and ${\bar \beta} > 0$ sufficiently small (more precisely, we need $\bar\beta p \leq p-1$ and $\tfrac{1+\bar\gamma-\cttc}{p} > 1+\tfrac{\cttc}{3}$).  By the triangle inequality and \eqref{industep2}, the same holds for $w_\ell$. Thus, if we define 
\[
\tilde w_\ell(x):= \frac{w_\ell (\zz+r_{\ell} x)}{2C \epk^{1+\cttc/3} R_k^\flat 2^{-{\bar \beta} \ell}},
\]
since ${\bar \beta} < 1/2$ we get
\begin{equation}
    \label{eq:twocondbis}
\left( \fint_{r_{\ell}^{-1}(U_+-\zz)\cap B_{\rho/2}} |\tilde w_\ell|^p \,dx\right)^{1/p}+\rho \left(  \fint_{r_{\ell}^{-1}(U_+-\zz)\cap B_{\rho/2}} |\nabla \tilde w_\ell|^p \,dx\right)^{1/p} \le    \rho^{1/2}\qquad\text{for $\rho \in \{2^0,2^1,2^2,\dots,2^{\ell_0}\} $.} 
\end{equation}
Also,  by \eqref{flux1234} and H\"older inequality, we have
\begin{equation}
    \label{eq:twocond2bis}
 \frac{1}{\rho}\int_{r_\ell^{-1}(\partial U_+-\zz)\cap B_{\rho}} |\partial_\nu \tilde w_\ell (x )|\, d\mathcal{H}^2\le C\, \epk^{\cttc/3} \qquad\text{for $\rho\in [1, 2^{\ell_0}]$.}
\end{equation}
Finally,  since $r_\ell \ge \epk^{1+2\chi}\Rk  \gg \rb(\zz)$ (see \eqref{eq:rbepk}), as in Step 1 we obtain
\begin{equation}
    \label{eq:twocond3bis}
 \left\{\tilde e_\rho \cdot x \ge   o_{\epk}(1) \rho \right\}\subset r_\ell^{-1}(U_+-\zz) \subset \left\{\tilde e_\rho\cdot x \ge -  o_{\epk}(1) \rho  \right \}\qquad\text{in}\quad B_{\rho},\qquad\text{for}\quad \rho \ge 1,  
\end{equation}
and $|a^\flat_+ - \tilde e_1| = o_{\epk}(1)$, where $o_{\epk}(1) \to 0$ as $k\to \infty$.

Thanks to \eqref{eq:twocondbis}, \eqref{eq:twocond2bis}, and \eqref{eq:twocond3bis}, we can apply \Cref{lem:compactness2} with $d=0$ to obtain that, for any $\eta>0$, there exist $\epk$ small enough and $\ell_0$ large enough such that 
\[
\int_{r_\ell^{-1}( U_+-\zz)\cap B_{1/2}} |\tilde w_\ell - c|\, dx\le \eta, 
\]
for some $c\in \R$.  Similarly to Step 1,
after rescaling
we deduce that \eqref{industep2} holds for $m = \ell+1$, for some suitable $c_{\ell+1}\in \R$ with $|c_{\ell+1}-c_\ell|\le C2^{-{\bar \beta}\ell} \epk^{1+\cttc/3}  R_k^\flat$.
This proves \eqref{weakdecay11}, concluding the proof.
\end{proof}

\section{Proofs of \Cref{thm:main1} and  its corollaries}

\label{sec:closing}
In this section, we prove \Cref{thm:main1} and Corollaries \ref{cor:DeG_Bernoulli} and \ref{thm:main2}. As we shall see, \Cref{thm:main1} follows from \Cref{prop:decaysqrt} together with a contradiction argument.

\subsection{Remainder involving symmetric excess}

  Recall that the Weiss energy {\bf W} was introduced in \eqref{eq:W-def}. We will need two new quantities,  ${\bf M}$ and ${\bf T}$, that we now define.\footnote{The letter ${\bf M}$ is motivated by the analogies of our quantity with the so-called Monneau energy, which plays a crucial role in obstacle problems. However, contrary to that setting, now ${\bf M}$ is not a monotone quantity.}

Recalling that $u_r(x)=r^{-1}u(rx)$,  given $e\in \mathbb{S}^{n-1}$ we define
\begin{equation}\label{eq:Monneau-def}
{\bf M}(u,r, e)
:=\frac{1}{r^{n+1}}\int_{\partial B_r}
	(u-|e\cdot x|)^2
\,d\cH^{n-1},
	\quad \text{ so that } \quad
{\bf M}(u,r, e)={\bf M}(u_r,1, e)=
\int_{\partial B_1}
	(u_r-|e\cdot x|)^2
\,d\cH^{n-1},
\end{equation}
and
\begin{equation}\label{eq:T-def}
{\bf T}(u,r, e)
:=\frac{1}{r^{n+2}}\int_{B_r}
	(u-|e\cdot x|)^2
\,dx,
	\quad \text{ so that } \quad
{\bf T}(u,r, e)={\bf T}(u_r,1, e)=
\int_{  B_1}
	(u_r-|e\cdot x|)^2
\,dx.
\end{equation}
Note that $\partial_r\left(r^{n+2} {\bf T}(u, r, e)\right) = r^{n+1} {\bf M}(u, r, e)$, therefore
\begin{equation}\label{eq:T-M-rel}
r_2^{n+2}{\bf T}(u, r_2, e) - r_1^{n+2}{\bf T}(u, r_1, e) = \int_{r_1}^{r_2} s^{n+1}{\bf M}(u, s, e)\, ds\qquad \text{ for all }0<r_1<r_2.
\end{equation}

While the quantities ${\bf M}$ and ${\bf T}$ are not necessarily monotone,\footnote{The fact that we can exploit non-monotone quantities is rather remarkable, since usually the lack of monotonicity formulas makes this type of quantities useless. In this respect, our argument is very robust and we expect it to be useful in several other problems.} we can still find nice relations between them and ${\bf W}$ that will be crucial for our argument.

\begin{lem}\label{lem:Monneau}
For any $e\in \mathbb{S}^{n-1}$, it holds
\[
\partial_r {\bf W}(u,r)
\geq 2r\left (\partial_r\sqrt{{\bf M}(u,r, e)}\right )^2.
\]
Consequently, for any $r > 0$ and $\eta\in (0, 1)$, 
\[
{\bf M}(u, r, e) \le |\log \eta | \left( {\bf W}(u, r) - {\bf W}(u, \eta r)\right) + 2 {\bf M} (u, \eta r, e)
\]
and 
\[
{\bf T}(u, r, e) \le \frac{|\log(\eta)|}{n+2} ({\bf W}(u, r) - {\bf W}(u, \eta r)) + \frac{2}{n+2} {\bf M}(u, \eta r, e)  +\eta^{n+2}{\bf T}(u, \eta r, e).
\]
\end{lem}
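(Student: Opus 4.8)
The strategy is to first establish the pointwise differential inequality
\[
\partial_r {\bf W}(u,r) \geq 2r\bigl(\partial_r\sqrt{{\bf M}(u,r,e)}\bigr)^2,
\]
and then integrate it to deduce the two almost-monotonicity statements. For the differential inequality, I would start from the exact Weiss derivative formula \eqref{eq:W-monotone}, which gives
\[
\partial_r {\bf W}(u,r) = \frac{2}{r}\int_{\partial B_1}(u_r - x\cdot\nabla u_r)^2\,d\cH^{n-1}.
\]
On the other hand, one computes $\partial_r\sqrt{{\bf M}(u,r,e)} = \tfrac{1}{2\sqrt{{\bf M}}}\,\partial_r {\bf M}(u,r,e)$, and using ${\bf M}(u,r,e) = {\bf M}(u_r,1,e) = \int_{\partial B_1}(u_r - |e\cdot x|)^2\,d\cH^{n-1}$, together with $\partial_r u_r = \tfrac{1}{r}(x\cdot\nabla u_r - u_r)$ (the standard scaling identity, since $u_r(x) = u(rx)/r$), one gets
\[
\partial_r {\bf M}(u,r,e) = 2\int_{\partial B_1}(u_r - |e\cdot x|)\,\partial_r u_r\,d\cH^{n-1} = \frac{2}{r}\int_{\partial B_1}(u_r - |e\cdot x|)(x\cdot\nabla u_r - u_r)\,d\cH^{n-1}.
\]
Then Cauchy--Schwarz on $\partial B_1$ gives
\[
\bigl(\partial_r {\bf M}(u,r,e)\bigr)^2 \le \frac{4}{r^2}\,{\bf M}(u,r,e)\int_{\partial B_1}(x\cdot\nabla u_r - u_r)^2\,d\cH^{n-1},
\]
so that
\[
2r\bigl(\partial_r\sqrt{{\bf M}}\bigr)^2 = \frac{r\,(\partial_r{\bf M})^2}{2{\bf M}} \le \frac{2}{r}\int_{\partial B_1}(x\cdot\nabla u_r - u_r)^2\,d\cH^{n-1} = \partial_r {\bf W}(u,r),
\]
which is the claimed inequality. (One must take a little care where ${\bf M}(u,r,e) = 0$; there $\sqrt{{\bf M}}$ is not differentiable in the classical sense, but the inequality holds in the sense of, e.g., one-sided derivatives, or one simply works with the absolutely continuous function $r\mapsto\sqrt{{\bf M}(u,r,e)}$ and argues a.e.)

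For the first consequence, I would integrate $\partial_r\sqrt{{\bf M}}$ against $\tfrac{dr}{r}$ on $[\eta r_0, r_0]$ and use Cauchy--Schwarz in the $r$-variable:
\[
\Bigl|\sqrt{{\bf M}(u,r_0,e)} - \sqrt{{\bf M}(u,\eta r_0,e)}\Bigr| \le \int_{\eta r_0}^{r_0}\bigl|\partial_s\sqrt{{\bf M}}\bigr|\,ds \le \Bigl(\int_{\eta r_0}^{r_0}\frac{ds}{s}\Bigr)^{1/2}\Bigl(\int_{\eta r_0}^{r_0} s\bigl(\partial_s\sqrt{{\bf M}}\bigr)^2\,ds\Bigr)^{1/2}.
\]
The first factor is $|\log\eta|^{1/2}$, and by the differential inequality the second factor is bounded by $\bigl(\tfrac12\int_{\eta r_0}^{r_0}\partial_s{\bf W}(u,s)\,ds\bigr)^{1/2} = \bigl(\tfrac12({\bf W}(u,r_0) - {\bf W}(u,\eta r_0))\bigr)^{1/2}$. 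Squaring and using $(a+b)^2 \le 2a^2 + 2b^2$ gives
\[
{\bf M}(u,r_0,e) \le 2\Bigl(\sqrt{{\bf M}(u,r_0,e)} - \sqrt{{\bf M}(u,\eta r_0,e)}\Bigr)^2 + 2\,{\bf M}(u,\eta r_0,e) \le |\log\eta|\bigl({\bf W}(u,r_0) - {\bf W}(u,\eta r_0)\bigr) + 2\,{\bf M}(u,\eta r_0,e),
\]
as desired. For the ${\bf T}$ estimate, I would use the identity \eqref{eq:T-M-rel} in the form $r_0^{n+2}{\bf T}(u,r_0,e) = (\eta r_0)^{n+2}{\bf T}(u,\eta r_0,e) + \int_{\eta r_0}^{r_0} s^{n+1}{\bf M}(u,s,e)\,ds$, divide by $r_0^{n+2}$, and bound ${\bf M}(u,s,e)$ for $s\in[\eta r_0, r_0]$ using the inequality just proved (with $\eta r_0 /s$ in place of $\eta$, noting $|\log(\eta r_0/s)| \le |\log\eta|$ and that ${\bf W}$ is monotone so ${\bf W}(u,s) - {\bf W}(u,\eta r_0) \le {\bf W}(u,r_0) - {\bf W}(u,\eta r_0)$); then $\int_{\eta r_0}^{r_0} s^{n+1}\,ds \le r_0^{n+2}/(n+2)$ collects the constant.

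\textbf{Main obstacle.} The computations themselves are routine; the only genuinely delicate point is the justification of the pointwise inequality at radii where ${\bf M}(u,r,e)$ vanishes or where the map $e\mapsto$ (minimizing vee) interacts with the analysis. Since here $e$ is \emph{fixed} and not optimized, the only subtlety is the non-smoothness of $t\mapsto\sqrt t$ at $t=0$; this is handled by noting that $r\mapsto\sqrt{{\bf M}(u,r,e)}$ is locally Lipschitz (as ${\bf M}$ is smooth in $r$ and $\sqrt{\cdot}$ is Lipschitz away from $0$, while near a zero of ${\bf M}$ the function $\sqrt{{\bf M}}$ is still absolutely continuous with $\partial_r\sqrt{{\bf M}} = 0$ a.e. on $\{{\bf M}=0\}$), so the fundamental theorem of calculus applies and the Cauchy--Schwarz argument goes through verbatim. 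No regularity of the free boundary beyond what is already assumed ($u\in C^1(\overline{\{u>0\}})$, classical solution) is needed, since everything takes place on spheres and balls where $u$ is merely Lipschitz, which suffices for all the integrals above.
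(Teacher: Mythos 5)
Your proof is correct and follows essentially the same route as the paper: the Weiss derivative formula plus Cauchy--Schwarz on $\partial B_1$ for the differential inequality, Cauchy--Schwarz in $r$ with weight $\rho^{\pm 1/2}$ for the ${\bf M}$ estimate, and the identity \eqref{eq:T-M-rel} combined with the monotonicity of ${\bf W}$ for the ${\bf T}$ estimate. Your remark on the a.e.\ differentiability of $r\mapsto\sqrt{{\bf M}(u,r,e)}$ at zeros of ${\bf M}$ is a point the paper leaves implicit, but it changes nothing in the argument.
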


\begin{proof}
Since  $u_r-x\cdot\nabla u_r=-r \partial_r u_r$, we have $\partial_r {\bf W}(u,r)=2r\int_{\partial B_1}(\partial_r u_r)^2\,d\cH^{n-1}$ (recall \eqref{eq:W-monotone}). By Cauchy--Schwarz,
\begin{align*}
\partial_r {\bf M}(u,r, e)
&=
2\int_{\partial B_1}
	(u_r-|e\cdot x|)\partial_r u_r
\,d\cH^{n-1}
\\
&\leq 2
	\sqrt{\int_{\partial B_1}
		(u_r-|e\cdot x|)^2
	\,d\cH^{n-1}
	}
	\sqrt{\int_{\partial B_1}
		(\partial_r u_r)^2
	\,d\cH^{n-1}
	}
=2\sqrt{
	{\bf M}(u,r, e)
	\frac{
		\partial_r {\bf W}(u,r)
	}{2r}
}.
\end{align*}
Rearranging the terms, we get the first inequality. 

Now, we integrate the first inequality between $\eta r$ and $r$,
we multiply the result by $\int_{\eta r}^r \frac{d\rho }{\rho} = |\log \eta|$, and then we apply H\"older inequality: 
\[
|\log\eta|\left({\bf W}(u, r) - {\bf W}(u, \eta r)\right) \ge 2\left(\int_{\eta r}^r \frac{d\rho }{\rho}\right) \int_{\eta r}^r \rho \left(\partial_\rho\sqrt{{\bf M}(u, \rho, e)}\right)^2\, d\rho \ge 2 \left( \int_{\eta r}^r \partial_\rho \sqrt{{\bf M}(u, \rho, e)}\, d\rho\right)^2.
\]
This gives
\[
\frac12 |\log\eta|\left({\bf W}(u, r) - {\bf W}(u, \eta r)\right)\ge \left(  \sqrt{{\bf M}(u, r, e)} - \sqrt{{\bf M}(u, \eta r, e)}\right)^2 \ge \frac12 {\bf M}(u, r, e) - {\bf M}(u, \eta r, e),
\]
which proves the second inequality.

Finally, using \eqref{eq:T-M-rel} with $r_2 = r$ and $r_1 = \eta r$, by the second inequality we get 
\[
\begin{split}
{\bf T}(u, r, e) &\le r^{-n-2}\int_{\eta r}^r s^{n+1} {\bf M}(u, s, e) ds+\eta^{n+2}{\bf T}(u, \eta r, e)\\
&\le r^{-n-2}\int_{\eta r}^r s^{n+1} \left(\log\left(\frac{s}{\eta r}\right) ({\bf W}(u, s) - {\bf W}(u, \eta r)) + 2 {\bf M}(u, \eta r, e)\right) ds+\eta^{n+2}{\bf T}(u, \eta r, e).
\end{split}
\]
Since $\log\left(\frac{s}{\eta r}\right) \leq |\log \eta|$ and ${\bf W}(u, s) \leq {\bf W}(u, r)$ for $s \in [\eta r,r]$
(recall that ${\bf W}(u,\cdot)$ is non-decreasing), we can bound the term above by
$$
\Big(|\log(\eta)| ({\bf W}(u, r) - {\bf W}(u, \eta r)) + 2 {\bf M}(u, \eta r, e) \Big) r^{-n-2}\int_{\eta r}^r s^{n+1}\,dx +\eta^{n+2}{\bf T}(u, \eta r, e),
$$
from which the third inequality follows easily.
\end{proof}

It will now be convenient to allow the center of the different quantities to vary. To this aim, we denote 
\[
{\bf W}_{x_\circ}(u, r) = {\bf W}(u(\cdot - x_\circ), r), \quad {\bf M}_{x_\circ}(u, r, e) = {\bf M}(u(\cdot - x_\circ), r, e), \quad \text{and}\quad  {\bf T}_{x_\circ}(u, r, e) = {\bf T}(u(\cdot - x_\circ), r, e).
\]

We want to show the existence of a free boundary point where the Weiss energy is close to its maximum while ${\bf T}$ and ${\bf M}$ are very small, all in terms of $\eps_k$. In this result, it will be crucial that we can prove a bound in $\epk$ with a power strictly larger than 2.
\begin{lem}
\label{lem:weiss_small}
Let $\ctta_3$ be as in \eqref{eq:alpha_n}. Then, in the setting of \Cref{prop:decaysqrt} and for $k \gg 1$, there exists   $\bar y \in B_{\Rk/32}(\zk)\cap \FB(u)$ such that 
\[
2\ctta_3 - {\bf W}_{\bar y}(u, R_k^\flat/16) \le \epk^{2+ \cttc/2}\qquad\text{and}\qquad 
{\bf T}_{\bar y}(u, R_k^\flat/4, e) + {\bf M}_{\bar y}(u, R_k^\flat/4, e) \le \epk^{2+  \cttc/2},
\]
for some $e\in \mathbb{S}^2$.
\end{lem}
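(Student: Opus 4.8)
\textbf{Proof plan for Lemma~\ref{lem:weiss_small}.} The idea is to convert the asymmetric $L^1$-excess decay obtained in \Cref{prop:decaysqrt} into smallness of the (symmetric) quantities ${\bf T}$ and ${\bf M}$, and then use the Weiss monotonicity formula together with \Cref{lem:Monneau} to propagate this smallness back up to the scale $R_k^\flat/16$. First I would fix $\zz = \zk$ and use the estimate \eqref{equation:weakestdecay} at the smallest admissible scale $r_\flat := \epk^{1+2\cttc}\Rk$: this gives planes $a^\flat_\pm, b^\flat_\pm$ with $\fint_{U_\ast\cap B_{r}(\zk)}|u - a^\flat_\ast\cdot x - b^\flat_\ast|\,dx \lesssim \epk^{1+\cttc/3}R_k^\flat$ for $r\in [r_\flat, R_k^\flat]$. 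Since $\zk\in\FB(u)$, we have $u(\zk)=0$, so $|a^\flat_\ast\cdot \zk + b^\flat_\ast|$ is controlled, which lets one replace the affine functions by the single vee $V_{\zk, e}$ for a suitable $e\in\mathbb S^2$ (matching the two sides up to an error of the same order, using that $|a^\flat_+ - a^\flat_-|$ is small because at scale $R_k^\flat$, by \Cref{lem:obsUpm}(ii), $U_+\cup U_-$ is nearly a bisected ball). This produces $\frac{1}{R}\fint_{B_R(\zk)}|u - V_{\zk, e}|\,dx \lesssim \epk^{1+\cttc/3}$ for $R\in[r_\flat, R_k^\flat/C]$, where the integral is now over the whole ball (the complement $U_0$ of $U_+\cup U_-$ inside $\{u>0\}$ has measure $\ll R^3$ by \eqref{eq:boundDt}, and $\{u=0\}$ is contained in a thin slab by \Cref{lem:boundbelow}).

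Next I would upgrade this $L^1$ bound to the $L^2$ quantities ${\bf T}$ and ${\bf M}$. For ${\bf T}$ one uses $|u - V_{\zk,e}|\le C R$ in $B_R(\zk)$ (from $|\nabla u|\le 1$ and $u(\zk)=0$), so ${\bf T}_{\zk}(u, R, e) = \frac{1}{R^{n+2}}\int_{B_R}(u-V_{\zk,e})^2 \le \frac{C}{R^{n+1}}\int_{B_R}|u - V_{\zk,e}| \lesssim \epk^{1+\cttc/3}$, which is already of the form $\epk^{1+\cttc/3}$; this is not yet a power larger than $2$, so the crucial refinement is to iterate this over dyadic scales or to invoke the $C^{1,1/3}$-type improvement \eqref{eq:prop62}, which gives $\bA_{\zk}(u,R)\le (R/\Rk)^{1/3}\epk$ — hence at scale $R\sim R_k^\flat = \epk^\cttc\Rk$ one gets $\bA_{\zk}(u, R_k^\flat)\le \epk^{1+\cttc/3}$, and more importantly, combined with the passage to a vee, $\fint_{B_R(\zk)}|u-V_{\zk,e}|\,dx\le C\epk^{1+\cttc/3}R$ for $R$ down to $r_\flat$. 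To get a power strictly above $2$ in $\epk$ one then feeds this into the $L^\infty$-from-$L^1$ estimate (a rescaled \Cref{lem:L1Linfty_2} applied away from necks, patched over the thin neck region using the perimeter/measure bounds) to obtain $\|u - V_{\zk,e}\|_{L^\infty(B_R(\zk))}\le C\epk^{1+\cttc/3}R$ on the same range; squaring gives ${\bf T}_{\zk}(u, R, e)\lesssim \epk^{2+2\cttc/3}$ and similarly ${\bf M}_{\zk}(u, R, e)\lesssim \epk^{2+2\cttc/3}$ for $R\in[r_\flat, R_k^\flat/C]$, and $2+2\cttc/3 > 2 + \cttc/2$.

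Then I would choose the free boundary point $\bar y$. The Weiss energy satisfies ${\bf W}_{x}(u,\cdot)\le 2\ctta_3$ everywhere (by \eqref{eq:W a 2a}, valid at every free boundary point), and monotone in $r$. Averaging ${\bf W}_{x}(u, R_k^\flat/16) - {\bf W}_x(u, r_\flat)$ over $x$ in $\FB(u)\cap B_{\Rk/32}(\zk)$ (against $\cH^{n-1}$, finite by \Cref{lem:perbound}), and using that the total variation $\int (\,{\bf W}_x(u,\cdot)\,)$ is controlled via the Weiss monotonicity formula \eqref{eq:W-monotone} by an integral of $(u - x\cdot\nabla u)^2$ over an annulus — which is small because $u$ is $L^\infty$-close to the homogeneous vee $V_{\zk,e}$ there — yields a point $\bar y\in\FB(u)\cap B_{\Rk/32}(\zk)$ with ${\bf W}_{\bar y}(u, R_k^\flat/16) - {\bf W}_{\bar y}(u, r_\flat)\le \epk^{2+\cttc/2}$. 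Since $2\ctta_3 \ge {\bf W}_{\bar y}(u, R_k^\flat/16) \ge {\bf W}_{\bar y}(u, r_\flat)$, and because the $L^\infty$-closeness to the vee (now recentered at $\bar y$, still valid up to a negligible shift since $|\bar y - \zk|\le \Rk/32$ and $\bar y\in \FB(u)$) forces ${\bf W}_{\bar y}(u, r_\flat)\ge 2\ctta_3 - C\,\big({\bf T}_{\bar y} + {\bf M}_{\bar y}\big)\ge 2\ctta_3 - \epk^{2+\cttc/2}$, we conclude $2\ctta_3 - {\bf W}_{\bar y}(u, R_k^\flat/16)\le \epk^{2+\cttc/2}$. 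The ${\bf T}$ and ${\bf M}$ bounds at scale $R_k^\flat/4$ at the point $\bar y$ follow by recentering the estimates of the previous paragraph (note $R_k^\flat/4$ is within the valid range once $k$ is large).

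\textbf{Main obstacle.} The delicate point is obtaining a power of $\epk$ strictly greater than $2$ (rather than merely the ``expected'' $\epk^2$). This is where one must genuinely exploit the super-linear improvement \eqref{eq:prop62} of \Cref{prop:decaysqrt} — the naive bound ${\bf T}\lesssim \|u - V\|_{L^\infty}\cdot\|u-V\|_{L^1}/R^{\text{const}}$ only gives $\epk^{1+\cttc/3}\cdot\epk$ if one is careless, and one needs to be sure that \emph{both} factors carry the extra $\epk^{\cttc/3}$, which requires the $L^1$-to-$L^\infty$ step to be performed at the mesoscopic scale where \eqref{equation:weakestdecay} holds, carefully handling the neck region $U_0$ where the normal derivative is not small but whose measure and perimeter are superpolynomially small in $\epk$ (by \eqref{eq:rbepk} and \eqref{eq:boundDt}). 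The secondary technical nuisance is keeping track of the recentering from $\zk$ to $\bar y$ and ensuring all the scale ranges $[r_\flat, R_k^\flat/C]$ remain compatible after the $O(\Rk/32)$ translation.
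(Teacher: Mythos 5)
Your proposal correctly identifies the two estimates of \Cref{prop:decaysqrt} as the inputs and correctly locates the main difficulty (getting a power of $\epk$ strictly above $2$), but two essential steps are missing or would fail as written. First, the selection of $\bar y$. The paper's proof chooses $\bar y$ by a covering/area argument (using \eqref{strongalternativeA} and the slab confinement of $\FB(u)\cup U_0$) so that $\dist(\bar y,\cZ)\sim R_k^\flat$; this is what makes $B_{R_k^\flat/4}(\bar y)$ neck-free, so that $u\big|_{U_\pm}$ are genuine classical solutions there and \Cref{lem:L1Linfty_2} upgrades the $L^1$ bound \eqref{equation:weakestdecay} to \emph{pointwise} $C^1$ two-plane flatness with error $C\epk^{1+\cttc/3}$. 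Your alternative selection — averaging the Weiss drop ${\bf W}_x(u,R_k^\flat/16)-{\bf W}_x(u,r_\flat)$ over $\FB(u)\cap B_{\Rk/32}(\zk)$ — does not substitute for this: you give no quantitative bound for that average (controlling $\int(u-x\cdot\nabla u)^2$ requires gradient closeness to the vee, which fails at necks), and your closing inequality ${\bf W}_{\bar y}(u,r_\flat)\ge 2\ctta_3-C({\bf T}_{\bar y}+{\bf M}_{\bar y})$ is precisely the hard point: ${\bf W}-2\ctta_3$ contains the term $\frac{1}{r^n}|\{u>0\}\cap B_r|-|B_1|$ plus a boundary term, whose \emph{linear} contributions in the flatness only cancel after the integration-by-parts identities of the paper's Step~3, and that cancellation needs the pointwise flatness of the free boundary and of $\nabla u$ (available only at the neck-free point $\bar y$), not merely $L^2$ closeness. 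Asserting that inequality from ${\bf T}+{\bf M}$ alone assumes the crux of the lemma.

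Second, your matching of the two half-space planes into a single vee relies on "$|a^\flat_+-a^\flat_-|$ small because $U_+\cup U_-$ is nearly a bisected ball." Besides the sign (one needs $|a^\flat_++a^\flat_-|$ small), the flatness from \Cref{lem:obsUpm}(ii) at scale $R_k^\flat$ only yields an error of order $\epk(\Rk/R_k^\flat)^\ctta=\epk^{1-\ctta\cttc}$, which is far weaker than the required $\epk^{1+\cttc/3}$; with only that rate the ${\bf T},{\bf M}$ bounds degrade below the threshold $\epk^{2+\cttc/2}$. The paper's Step~1 proves $|a^\flat_++a^\flat_-|\le C\epk^{1+\cttc/3}$ by a genuine argument combining the lower barrier $u\ge\max\{a^\flat_+\cdot x,a^\flat_-\cdot x\}-C\epk^{1+\cttc/3}R_k^\flat$ with the existence of free boundary points in every half-disk orthogonal to $a^\flat_+$ (a nondegeneracy fact); this step cannot be skipped. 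A related unaddressed issue is the recentering from $\zk$ to $\bar y$: since at scale $\Rk$ the solution is only $\epk\Rk$-close to a vee, the shift $|e\cdot(\bar y-\zk)|$ can be of order $\epk^{1-\cttc}R_k^\flat\gg\epk^{1+\cttc/3}R_k^\flat$, so the vee must be built at $\bar y$ from the start (using $u(\bar y)=0$ to fix the constants), not transported from $\zk$.
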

 
\begin{proof}
Recall that $R_k^\flat = \epk^\cttc \Rk.$
    We divide the proof into three steps. 
    \medskip 
    
    \noindent {\bf Step 1:} Given any $\zz\in B_{\Rk/16}(\zk)\cap \cZ$, we start by proving some controls on the vectors  $a^\flat_\pm$ and the constants $b^\flat_\pm$ from \Cref{prop:decaysqrt} (whose dependence on $\zz$ is omitted). 
    
  Up to a translation, we can assume that $\zz = 0$. By \eqref{equation:weakestdecay}, if we set $r_k^\flat := \epk^{1+2\cttc} \Rk$, then
\begin{equation}
\label{eq:L1weakestdecay0}
\fint_{U_\ast\cap B_{r}} |u-a^\flat_\ast\cdot x- b^\flat_\ast|\,dx \le C \epk^{1+\cttc/3} R_k^\flat\qquad\text{for any}\quad r\in [r_k^\flat, R_k^\flat],\qquad \ast \in \{+,-\}.
\end{equation}
Since $u-a^\flat_\ast\cdot x$ is 2-Lipschitz and vanishes at 0, \eqref{eq:L1weakestdecay0} with $ r = r^\flat_k$ implies 
\begin{equation}
\label{eq:b flat}
|b^\flat_\ast| \le 2r_k^\flat + C\epk^{1+\chi/3}R_k^\flat \le C \epk^{1+\chi/3}R_k^\flat\qquad\text{for}\quad \ast\in \{+, -\}.
\end{equation}
Furthermore, since $R_k^\flat \gg \rb(0)$, \Cref{lem:X-blowdown} implies that $u$ is $L^\infty$-close to a vee $V_{0, e}$ in $B_{R_k^\flat}$. By the $L^1$-closeness condition \eqref{eq:L1weakestdecay0}, we must have $|a^\flat_+- e| \ll 1$ (up to replacing $e$ by $-e$). Consequently, $u > 0$ in the region $B_{R_k^\flat} \cap \{a^\flat_+ \cdot x > R_k^\flat / 16\}$, where it is harmonic. Thus, by $L^1$-to-$L^\infty$ estimates for harmonic functions, we have  $|u - a^\flat_+\cdot x -b^\flat_+| \le C \epk^{1+\cttc/3} R_k^\flat$ in $B_{3R_k^\flat/4} \cap \{a^\flat_+\cdot x \ge R_k^\flat/8\}$. This, together with $a^\flat_+\cdot \nabla (u-a^\flat_+\cdot x-b^\flat_+)\le 0$ 
and the bound on $b^\flat_+$, gives
\[
 u(x) \ge a^\flat_+\cdot x +b^\flat_+ -C \epk^{1+\cttc/3} R_k^\flat\ge a^\flat_+\cdot x   -C \epk^{1+\cttc/3} R_k^\flat\qquad\text{in}\quad B_{R_k^\flat/2}.
\]
By symmetry, the same bound holds for $a^\flat_-$, therefore
\begin{equation}
    \label{eq:geom_u}
 u(x) \ge \max\{a^\flat_+\cdot x, a^\flat_-\cdot x\}   -C \epk^{1+\cttc/3} R_k^\flat\qquad\text{in}\quad B_{R_k^\flat/2}.
\end{equation}
Next, we note that the closeness of $u$ to a vee implies that $|a^\flat_++a^\flat_-|\ll 1$, and we want to quantify this.
To this aim, we first note that  
\begin{equation}
\label{eq:geomcont}
\inf_{ B_{R_k^\flat/2}\cap\{e\cdot x\ge R_k^\flat/8\} } u = 0\quad\text{for any $e\perp a^\flat_+$.} 
\end{equation}
Indeed, since the sum of radii of the balls forming $U_0$ is bounded by $ C\Rk \epk^{1/\ctta} \epk^{-\frac{1+\cttb}{3\ctta}}\ll R_k^\flat$ (recall \eqref{eq:tildeA}), it follows from \Cref{lem:obsUpm} that 
we can always find free boundary points inside $B_{R_k^\flat/2}\cap\{e\cdot x\ge R_k^\flat/8\} $.

Now, assume that $|a^\flat_++a^\flat_-|>0$ (otherwise there is nothing to prove) and consider the vector 
$e = \frac{a^\flat_++a^\flat_-}{|a^\flat_++a^\flat_-|}-\frac12 |a^\flat_++a^\flat_-| a^\flat_+.$
Since $|a^\flat_++a^\flat_-|\ll 1$, $e$ is almost unitary and almost parallel to $a^\flat_++a^\flat_-$. Also, one can readily check that $e\cdot a^\flat_+=0$. Thus, 
\eqref{eq:geom_u} yields 
\[
u(x) \ge   \frac{(a^\flat_++a^\flat_-)\cdot x}{2}   -C \epk^{1+\cttc/3} R_k^\flat\ge c|a^\flat_++a^\flat_-| R_k^\flat  -C \epk^{1+\cttc/3} R_k^\flat\qquad\text{in}\quad  B_{R_k^\flat/2}\cap \{e\cdot x \ge R_k^\flat/8\},
\]
for some universal constant $c>0$.
Combining this bound with 
\eqref{eq:geomcont}, this proves that $|a^\flat_++a^\flat_-|\leq C \epk^{1+\cttc/3} R_k^\flat$. 
In particular, recalling \eqref{eq:b flat}, we conclude that
\begin{equation}
    \label{eq:apmbpm}
    |a^\flat_++a^\flat_-|\le C\epk^{1+\cttc/3}\qquad\text{and}\qquad |b^\flat_+|+|b^\flat_-|\le C\epk^{1+\cttc/3}R_k^\flat.
\end{equation}

\medskip

\noindent {\bf Step 2:} 
 We now show the existence of a point $\bar y \in B_{\Rk/32}(\zk)\cap \FB(u)$ whose distance from  $\cZ$ is comparable to~$R_k^\flat$. 
 
 Indeed, recalling that $\rb(\zz')\ll R_k^\flat$ for all $\zz' \in B_{\Rk/32}(\zk)$, it follows from \eqref{strongalternativeA}  and \Cref{lem:Nbound} that the set $\cZ\cap B_{\Rk/64}(\zk)$ can be covered by $C \epk^{-\frac{(1+\cttb)\cttc}{3}} $ balls of radius $\tfrac14  R_k^\flat$. Hence, recalling \Cref{def:UpUm}, we also have 
\[
\overline{U_0} \cap B_{\Rk/64}(\zk) \subset \bigcup_{i = 1}^N B_{\tfrac34  R_k^\flat}(\zz_i) =: S,
\]
for some  $\zz_i\in \cZ$, where $N\le C\epk^{-\frac{(1+\cttb)\cttc}{3}}$. Notice now that, thanks to  \Cref{lem:obsUpm}, $(\FB(u)\cup U_0)\cap B_{\Rk/32}(\zk)$ is contained inside a strip $W := {\rm Slab}\!\left(B_{\Rk/32}(\zk), e, \epk\right)$ of width $\Rk\epk\ll R_k^\flat$.
In addition, again by \Cref{lem:obsUpm},  $\FB(u)\cup (S\cap W)$ separates $\{u > 0\}\cap B_{\Rk/64}(\zk)$ into two disjoint regions. By projecting these sets onto the hyperplane $\{e\cdot(x-\zk) = 0\}$, we see that the area contributed by $S\cap W$ is of order $(R_k^\flat)^2 \epk^{-\frac{(1+\cttb)\cttc}{3}} \ll \Rk^2$, which means in particular that one can always find a point $\bar y\in \partial S \cap \FB(u)\cap B_{\Rk/32}(\zk)$. In particular, by the construction, it follows that 
\[
\bar y\in \FB(u) \cap B_{\Rk/32}(\zk)\cap \left\{\tfrac12 R_k^\flat\le \dist(\cdot, \cZ) \le \tfrac34 R_k^\flat\right\}.
\]
  Let $\zz\in \cZ\cap B_{\Rk/16}(\zk)$ be such that $\dist(\bar y, \cZ) = |\bar y - \zz|$. After a translation, we assume $\zz = 0$ (so we are putting ourselves in the setting of Step 1). 
From  \eqref{eq:L1weakestdecay0}  we know that
\begin{equation}
\label{eq:L1weakestdecay}
\fint_{U_\pm\cap B_{R_k^\flat/8}(\bar y)} |u-a^\flat_\pm \cdot x- b^\flat_\pm| \,dx \le C \epk^{1+\cttc/3} R_k^\flat,
\end{equation}
where $a^\flat_\pm$ and $b^\flat_\pm$ satisfy \eqref{eq:apmbpm}. Notice also that, by assumption, there are no neck centers $\cZ$ in $B_{R_k^\flat/4}(\bar y)$; therefore, $\partial U_\pm\subset \FB(u)$ and $U_+\cup U_- = \{u > 0\}$ inside $B_{R_k^\flat/8}(\bar y)$, and the restriction of $u$ to $U_\pm$ is a classical solution to the Bernoulli problem inside $B_{R_k^\flat/8}(\bar y)$. Consequently, we can apply \Cref{lem:L1Linfty_2} (rescaled) to such restrictions to obtain
\[
|u-a^\flat_\pm \cdot x - b^\flat_\pm|\le C \epk^{1+\cttc/3} R_k^\flat 
\qquad\text{and}\qquad 
|\nabla u-a^\flat_\pm |\le C \epk^{1+\cttc/3} \qquad\text{in}\quad U_\pm\cap B_{R_k^\flat/16}(\bar y).
\]\
In particular, applying the first bound above with $x=\bar y \in \FB(u)$ it follows that 
 $|a^\flat_\pm \cdot \bar y  + b^\flat_\pm|\le C \epk^{1+\cttc/3}R_k^\flat$,
 that combined with the bounds above implies
\[
|u-a^\flat_\pm \cdot (x-\bar y) |\le  C\epk^{1+\cttc/3} R_k^\flat 
\qquad\text{and}\qquad 
|\nabla u-a^\flat_\pm |\le  C\epk^{1+\cttc/3} \qquad\text{in}\quad U_\pm\cap B_{R_k^\flat/16}(\bar y).
\]
Combining this estimate with \eqref{eq:apmbpm}, we conclude that
\[
\{\pm a^\flat_+\cdot x \ge C \epk^{1+\cttc/3} R_k^\flat\}\subset U_\pm-\bar y\subset \{\pm a^\flat_+\cdot x \ge - C \epk^{1+\cttc/3} R_k^\flat\}\qquad\text{in}\quad B_{R_k^\flat/16},
\]
i.e., the two free boundaries are $C \epk^{1+\cttc/3}$-flat at scale $R_k^\flat/16$ and are at distance $C \epk^{1+\cttc/3} R_k^\flat$ from each other. 

Since $\epk \ll 1$ for $k$ sufficiently large,
combining all these estimates together  we get the desired bounds on ${\bf M}_{\bar y}$ and ${\bf T}_{\bar y}$ with $e = a^\flat_+$.

\medskip

\noindent {\bf Step 3:}
Let $\bar u(x) := \frac{16}{R_k^\flat}u\left(\bar y +\frac{R_k^\flat}{16} x\right)$ be defined in $B_1$. By Step 2 we know that $\{\bar u> 0\}$ has two flat connected components $\bar U_\pm$ and that, after a rotation
\begin{equation}
    \label{eq:baruclose}
    |\bar u\mp x_3| \le C\epk^{1+\cttc/3},\qquad |\nabla \bar u \mp e_3|\le C\epk^{1+\cttc/3}\qquad\text{in}\quad \bar U_\pm \cap B_1,
\end{equation}
and 
\begin{equation}
    \label{eq:baromflat}
\left\{\pm x_3 \ge C  \epk^{1+\cttc/3} \right\}\subset \bar U_\pm\subset \left\{\pm x_3 \ge - C \epk^{1+\cttc/3} \right \}\qquad\text{in}\quad B_{1}.
\end{equation}
Let $\bar u^\pm$ denote the restrictions of $\bar u$ to $\bar U_\pm$, so that $\bar u = \bar u^+ + \bar u^-$ and 
\[
{\bf W}_{\bar y}(u, R_k^\flat/16) = {\bf W}(\bar u, 1)  ={\bf W}(\bar u^+, 1)+{\bf W}(\bar u^-, 1).
\]
Now, given any solution $w$ to the Bernoulli problem in $B_1$,
let $\Omega:=\{w>0\}$, so that $w=0$ on $\partial \Omega$ and $\nabla w$ coincides with the inner unit normal.
Then,
using integration by parts, 
on the one hand we have 
\[
{\bf W}(w, 1) = \int_{\partial B_1} (x\cdot \nabla w - w) w \,d\mathcal H^2+ |\Omega\cap B_1|,
\]
and, on the other hand (here we use that, on $\partial \Omega$, $w=0$ and $\nabla w$ coincides with the inner unit normal),
\[
\int_{\Omega\cap \partial B_1} x\cdot \nabla w \, x_3\,d\mathcal H^2 -\int_{\partial \Omega\cap B_1} x_3 \,d\mathcal H^2 = \int_{\Omega\cap B_1} \nabla w \cdot \nabla x_3 \,dx= \int_{\Omega\cap \partial B_1} w x\cdot \nabla x_3 \,d\mathcal H^2  = \int_{\Omega\cap \partial B_1} w x_3\,d\mathcal H^2 . 
\]
Combining these two identities, we deduce that
\[
{\bf W}(w, 1) = \int_{\partial B_1} (x\cdot \nabla w - w) (w-x_3) \,d\mathcal H^2+ \int_{\partial \Omega\cap B_1} x_3\,d\mathcal H^2+|\Omega\cap B_1|. 
\]
Notice that, applying the divergence theorem to the vector field $x_3e_3$ inside a Lipschitz domain $A$, it follows that
$$
|A\cap B_1|=\int_{A\cap B_1}{\rm div}(x_3e_3)\,dx=\int_{\partial A\cap B_1}x_3 e_3\cdot \nu \,d\mathcal H^2 + \int_{\overline A\cap \partial B_1}x_3^2 \,d\mathcal H^2.
$$
Applying this estimate both with $A=B_1\setminus\Omega$ and $A=B_1^-:=B_1\cap\{x_3 \leq 0\}$, we obtain
$$
\int_{\partial \Omega\cap B_1} x_3\,d\mathcal H^2 - |B_1\setminus 
\Omega| + |B_1^-|= \int_{\partial\Omega \cap B_1}x_3(1-e_3\cdot \nu)\,d\mathcal H^2-\int_{\overline{(B_1 \setminus \Omega)}\cap \partial B_1}x_3^2d\mathcal H^2+\int_{\{x_3\leq 0\}\cap \partial B_1}x_3^2 \, d\mathcal H^2,
$$
where $\nu$ is the inner unit normal to $\Omega$, and therefore
$$
\bigg| \int_{\partial \Omega\cap B_1} x_3\,d\mathcal H^2 + | 
\Omega\cap B_1| -\tfrac12|B_1|\bigg|\leq \frac12 \int_{\partial\Omega \cap B_1}|x_3|\,|\nu -e_3|^2\,d\mathcal H^2+\int_{\left(\overline{(B_1 \setminus \Omega)}\Delta \{x_3\leq 0\}\right)\cap \partial B_1}x_3^2 \,d\mathcal H^2.
$$
Applying this bound with $w=\bar u_+$ and $\Omega =\bar U_+$, recalling \eqref{eq:baruclose} and \eqref{eq:baromflat} we get 
\[
\int_{\partial \bar U_+\cap B_1} x_3\,d\mathcal H^2 + |\Omega\cap B_1|=\tfrac12 |B_1|+ O\left(\epk^{3(1+\cttc/3)}\right),
\]
thus
\[
{\bf W}(\bar u^+, 1) = \int_{\partial B_1} (x\cdot \nabla \bar u^+ - \bar u^+) (\bar u^+-x_3) \,d\cH^2 +\tfrac12 |B_1| +  O\left(\epk^{3(1+\cttc/3)}\right).
\]
Thanks again to
 \eqref{eq:baruclose} and \eqref{eq:baromflat}, we see that the integrand above is bounded by $C\epk^{2(1+\cttc/3)}$. This implies that
\[
{\bf W}(\bar u^+, 1) =  \tfrac12 |B_1| +  O\left(\epk^{2(1+\cttc/3)}\right)\qquad\Longrightarrow\qquad {\bf W}_{\bar y}(u, R_k^\flat/16) = {\bf W}(\bar u, 1) =   |B_1| +  O\left(\epk^{2(1+\cttc/3)}\right),
\]
so the desired bound follows.
\end{proof}

\subsection{Proof of \Cref{thm:main1}} We are now ready to prove our main result. 

\begin{proof}[Proof of \Cref{thm:main1}]
We assume the statement to be false. After the reduction provided by \Cref{lem:reduction}, we can assume $u$ to have a globally bounded Hessian. Then, we can define the neck centers $\cZ$ as done in Subsection~\ref{ssec:neck-set-def}, and this set is non-empty because of \Cref{lem:Znonempty}.

We   define the symmetric excess $\bE_\zz(u, R)$ at any $\zz\in \cZ$ and $R > 0$ as in \eqref{eq:Ez_def_intro}. By \Cref{lem:zkRk}, there exist $R_k\to \infty$ and $\zz_k\in \cZ$ such that 
\[
\eps_k = \bE_{\zz_k}(u, 8R_k) \to 0\qquad\text{as}\quad k \to \infty. 
\]
Moreover, by \Cref{lem:tildezktildeRk}, there are $\Rk = \tilde \zeta_k R_k \to \infty$ and $\tilde\zz_k\in \cZ\cap B_{R_k}(\zz_k)$ such that \eqref{keyeqn} holds, with $\epk = \tilde \zeta_k^{\ctta\cttb} \eps_k$ and $\tilde \zeta_k \in (0, 1]$.

Now, by \eqref{eq:W a 2a}, the monotonicity of the Weiss energy, and \Cref{lem:weiss_small} (using the notation there, as well), there exists a point 
$\bar y\in B_{\Rk/32}(\zk) \cap \FB(u)$ such that
\[
|B_1|- \epk^{2+\cttc /2} \leq  {\bf W}_{\bar y}(u,r) \le |B_1| \qquad \text{ for all $r\geq R_k^\flat/16$}.
\]
Thus, combining this bound with Lemmas \ref{lem:weiss_small} and \ref{lem:Monneau}, we get (for $e\in \bS^2$ as in \Cref{lem:weiss_small})
\[
{\bf T}_{\bar y}(u, 32R_k, e) \le |\log (C R_k / R_k^\flat)| \epk^{2+\cttc /2} +  {\bf M}_{\bar y}(u, R_k^\flat/16, e)+ {\bf T}_{\bar y}(u, R_k^\flat/16, e)\le C|\log (  \tilde\zeta_k   \epk^{\cttc})| \epk^{2+\cttc /2}.
\]
where we used that $R_k^\flat = \epk^\chi \Rk  = \epk^\chi\tilde \zeta_k R_k$. 
Now, recalling recalling  $\epk = \tilde \zeta_k^{\ctta\cttb}\eps_k$,  it follows that $|\log (  \tilde\zeta_k   \epk^{\cttc})| \epk^{\chi/3} \to 0$ as $\eps_k \to 0$. Hence, in particular,
\[
{\bf T}_{\bar y}(u, 32R_k, e) \le    \eps_k^{2+\cttc /6}\qquad \text{for }k \gg  1.
\]
(Here is the only place in the paper where we are using that $\cttb>0$.) Since 
$B_{8R_k}(\zz_k)\subset B_{32R_k}(\bar y)$, for $k$ sufficiently large we get 
\begin{equation}
    \label{eq:Tbound}
\eps_k = \bE_{\zz_k}(u, 8R_k) \le \big({\bf T}_{\zz_k}(u, 8R_k, e) \big)^{1/2}\leq   \bigg( \Big(\frac{32}8\Big)^5 {\bf T}_{\bar y}(u, 32R_k, e) \bigg)^{1/2} \le 2^5 \eps_k^{1+\cttc /12}, 
\end{equation}
a contradiction.
\end{proof}

\begin{remark}
 The conditions to be satisfied by all the constants appearing throughout the paper are: 
    \[
    3\ctta \cttg  > 1, \qquad 12\ctta\beta  < 5 -\cttb,\qquad \delta_\circ = 2(2\beta - 1) > 0, \qquad \bar\gamma < \frac{1-\ctta}{\ctta},\qquad\frac34 < \ctta < \frac{5-\cttb}{6}, \qquad p < 1+ \bar\gamma,
    \]
    as well as
    \[
     4\cttc + 2 (1+2\cttc)\bar\beta < \delta_\circ, \qquad \bar \beta < \frac{p-1}{p}, \qquad \cttc < \frac{1+\bar\gamma-p}{1+p},\qquad  \frac{\delta_\circ}{8}\ge \frac{5\cttc}{2}.
    \]
    One possible choice is: 
    \[
    \cttb = \frac{1}{20}, \ \ \beta = \frac12 + \frac{1}{40}, \ \ \ctta = \frac{39}{50}, \ \ \cttg = \frac {11}{25}, \ \ p = \frac{21}{20}, \ \ \bar \beta = \frac{1}{40}, \ \ \delta_\circ = \frac{1}{10}, \ \ \cttc = \frac{1}{500}, \ \ \bar\gamma = \frac{1}{10}. 
    \]
\end{remark}

\subsection{Proofs of Corollaries \ref{cor:DeG_Bernoulli}  and \ref{thm:main2}} 
\label{ssec:proofcorollaries}  Corollaries \ref{cor:DeG_Bernoulli} and \ref{thm:main2} are now rather standard consequences of  \Cref{thm:main1}. We sketch their proofs here for the reader's convenience:

\begin{proof}[Proof of \Cref{cor:DeG_Bernoulli}] 
Let $u$ be a global classical solution of the  Bernoulli problem in $\R^4$ satisfying $\partial_{x_4} u>0$ in $\{u>0\}$. We start by noticing that $u$ is stable. Indeed, 
for every smooth compactly supported function $\xi$, let 
${\rm spt}(\xi) \subset K =K'\times[-C,C]$  for some compact set $K'\subset \R^3$. Then, whenever $K\cap\{u>0\}\neq \varnothing$ we have 
\[\inf_{K\cap \{u>0\}} \partial_{4} u \ge c(u,K) >0\]
(see, for instance, \cite[Lemma 4.1]{EFY23}), which shows that translations of the graph of $u$ in the $x_4$ direction locally foliate the graph of $u$. It is then a standard fact that this property implies the minimality of $u$ with respect to sufficiently small variations and, therefore, its stability (for a detailed proof of this fact see, for instance, \cite[Proof of Theorem 4.5]{AAC01}).\footnote{An alternative way to obtain the stability inequality from a positive (sub-)solution of the linearized equation is as follows: if $\psi>0$ satisfies $\Delta \psi=0$ in $\set{u>0}$, $\psi_\nu+H\psi=0$ on $\FB(u)$, then for $\phi\in C_c^{0,1}(\R^4)$, testing against $\phi^2/\psi$ gives
\[
\int_{\set{u>0}} |\nabla \phi|^2\,dx
-\int_{\FB(u)} H\phi^2\,d\cH^3
=\int_{\set{u>0}}
\left(|\nabla \phi|^2
-\nabla\cdot\frac{\phi^2\nabla\psi}{\psi}
\right)\,dx
=\int_{\set{u>0}} \abs{\nabla \phi-\frac{\phi\nabla\psi}{\psi}}^2\,dx
\geq 0.
\]
}

Suppose now, by contradiction, that $D^2u\not\equiv 0$ in $\{u>0\}$. By \Cref{lem:reduction}, there exists a (classical) stable solution (which, as an abuse of notation, we still denote $u$) satisfying
\[
|D^2u|\le 1 \quad \mbox{in }\{u>0\}\qquad \mbox{and}\qquad |D^2u(0)|=1.
\]
Also, as one can easily check, the proof \Cref{lem:reduction} provides a new function that will still satisfy monotonicity  but in the weaker form
$
\partial_{x_4} u \ge 0.
$
Moreover, up to restricting $u$ to a single connected component of $\{u>0\}$, we can assume without loss of generality that $\{u>0\}$ has one connected component.

We now consider the two limits 
\[
\underline u = \underline u(x_1, x_2, x_3) : = \lim_{x_4\to-\infty } u\quad \mbox{and}\qquad \overline  u = \overline  u(x_1, x_2, x_3) : = \lim_{x_4\to+\infty } u.
\]
Thanks to the bound $|D^2u|\leq 1$ (which also gives uniform curvature bounds on the free boundary), $\underline u\le u$ is either identically zero or is a classical stable solution of Bernoulli in $\R^3$ (cf. proof of \Cref{lem:reduction}). 
Analogously,  $\overline u\ge u$ is either identically $+\infty$ or is a classical stable solution of Bernoulli in $\R^3$.

Applying \Cref{thm:main1} to both $\underline u$ and $\overline u$ we obtain that, if they are not constant (respectively equal to 0 or $+\infty$), then $\{\underline u =0\}$ and $\{\overline u =0\}$ are either a half-space or a slab (i.e., the region between two parallel hyperplanes). Since $0\le u\le \overline u$, in this second scenario also $\{u>0\}$ would be disconnected, contradicting our setup. Thus:\\
(i) $\underline u$ is either zero, or a 1D monotone minimizer (so, of the form $(x\cdot e-a)_+$), or a maximum of two minimizers with disjoint support;\\
(ii)  $\overline u$ is either a 1D monotone minimizer or $+\infty$.\\
This ensures that $\underline u$ and $\overline u$  are, respectively, lower and upper barriers for minimizers (since minimizers cannot cross). 
Thus, since the family of translated graphs $\{x_5=u(x + te_4)\}_{t\in \R}$ foliates the region 
$$
\big\{(x,x_5) \in \R^4 \times [0,+\infty) :  \underline u(x) \le x_5 \le \overline u(x)\big\},
$$
a standard foliation argument (see \cite[Proof of Theorem 1.3]{Jerison-Monneau}) implies that $u$ must be energy-minimizing in every compact subset of $\R^4$.
But then it follows from the regularity theory for minimizers for the Bernoulli problem (e.g. using \cite{DeSilva, Jerison-Savin}) that $D^2 u$ is identically zero in $\{u>0\}$, contradicting $|D^2u(0)|=1$.
\end{proof}

\Cref{thm:main2} will be obtained as a particular case of the following more technical proposition that will be useful in the sequel as well. A direct proof of the fact that \Cref{thm:main1} implies \Cref{thm:main2} is essentially contained in \cite{kamburov2022nondegeneracy}. However, the current proof of \cite[Theorem 1.2]{kamburov2022nondegeneracy} relies on \cite[Lemma 1.21]{CS05} (see the discussion before \cite[Proposition A.4]{kamburov2022nondegeneracy}), whose proof is incomplete. We fix this gap in our \Cref{lem:compact}.  

\begin{prop}
\label{prop:main2bis}
    Let  $u$ be a classical stable solution to the Bernoulli problem in $B_1\subset \R^4$    satisfying $\partial_4 u \ge0$ in $B_1$. 
    Then $|D^2u |\le C$ in $\overline {B_{1/2}\cap \{u>0\}}$, for some  $C$ universal. 
\end{prop}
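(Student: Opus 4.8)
\textbf{Proof proposal for \Cref{prop:main2bis}.}

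The plan is to argue by contradiction via a compactness/blow-up scheme, exactly paralleling the reduction in \Cref{lem:reduction} but now localized in $B_1$ and in ambient dimension $n=4$. Suppose the conclusion fails: then there is a sequence of classical stable solutions $u_j$ to the Bernoulli problem in $B_1\subset\R^4$ with $\partial_4 u_j\ge 0$, and points $x_j\in \overline{B_{1/2}\cap\{u_j>0\}}$ with $|D^2 u_j(x_j)|\to\infty$. Following the standard doubling/blow-up selection (as in the proof of \Cref{lem:reduction}, using the weight $(1-2|x|)$ in $B_{1/2}$), one produces rescaled functions $v_k(y):=d_k\,u_{j_k}\big(x_{j_k}+d_k^{-1}y\big)$ with $d_k\to\infty$, defined on balls $B_{d_k\rho_k}$ with $d_k\rho_k\to\infty$, satisfying $0\in\overline{\{v_k>0\}}$, $|D^2 v_k|\le \tfrac{1}{1-1/k}$ on $\overline{\{v_k>0\}}$, and $|D^2 v_k(0)|=1$. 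Each $v_k$ is a classical stable solution in its (expanding) domain, and the rescaled monotonicity $\partial_4 v_k\ge 0$ is preserved. By \Cref{lem:compact} (applied to the restrictions of $v_k$ to $B_k$ after a diagonal argument), up to a subsequence $v_k\to v$ locally in $(H^1_{\rm loc}\cap C^{0,\alpha}_{\rm loc})(\R^4)$, with the positivity sets, zero sets, and free boundaries converging locally in Hausdorff distance; moreover $v$ is a global stable solution in the sense of \Cref{defi:solutions}, with $|\nabla v|\le 1$ and $|D^2 v|\le 1$ on $\overline{\{v>0\}}$.

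The key structural point is then to upgrade $v$ to a \emph{classical} stable solution with $|D^2 v(0)|=1$, $\partial_4 v\ge 0$, and a single connected positivity component — i.e., to bring $v$ precisely into the setting hypotheses of \Cref{cor:DeG_Bernoulli}. This is done exactly as in the final paragraphs of the proof of \Cref{lem:reduction}: the uniform Hessian bound $|D^2 v_k|\le 1$ together with $\partial_\nu v_k=1$ on $\FB(v_k)$ and the clean ball property \Cref{lem:cleanball} forces $\{v_k>0\}$ to be, locally, a union of at most two smooth hypographs in opposite directions with uniformly bounded curvature; Arzel\`a--Ascoli gives smooth convergence of these hypographs, so $\FB(v_k)\to\FB(v)$ in $C^{1,1}_{\rm loc}$ and $v$ is $C^2$ up to its free boundary. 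Tangency of two components of $\{v>0\}$ is ruled out by the mean-concavity from \Cref{lem:vbounds} combined with Hopf and unique continuation, precisely as in \Cref{lem:reduction}: a tangency point would force $D^2 v\equiv 0$ on those components, contradicting $|D^2 v(0)|=1$ (which passes to the limit since $v$ is classical). Restricting to the connected component of $\{v>0\}$ containing a point near $0$, we may also assume $\{v>0\}$ is connected; note that, as in \Cref{lem:reduction} and the proof of \Cref{cor:DeG_Bernoulli}, the selection procedure only yields $\partial_4 v\ge 0$ rather than strict monotonicity, but this is the exact hypothesis under which \Cref{cor:DeG_Bernoulli} is proved (its proof begins by reducing to the weak inequality anyway).

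Applying \Cref{cor:DeG_Bernoulli} to $v$ now gives $D^2 v\equiv 0$ in $\{v>0\}$, contradicting $|D^2 v(0)|=1$; this contradiction proves the proposition, with a universal constant $C$. A harmless technical wrinkle is that \Cref{cor:DeG_Bernoulli} is stated for solutions with $\partial_4 u>0$, but its proof immediately reduces (via \Cref{lem:reduction}) to the case $\partial_4 u\ge 0$ with connected positivity set, which is exactly the object we have constructed; so one may either invoke that proof directly or quote the intermediate statement. The main obstacle — and the only place any real work is hidden — is the limiting argument that $v$ is genuinely a \emph{classical} (as opposed to merely weak or stationary) stable solution with nonvanishing Hessian at the origin and no interior tangencies; this is precisely the content of the second half of the proof of \Cref{lem:reduction}, and everything else is bookkeeping of the rescalings and a citation of \Cref{lem:compact} and \Cref{cor:DeG_Bernoulli}. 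One should also double-check that the doubling selection can be carried out inside the \emph{bounded} ball $B_{1/2}$ (so the blow-up centers stay in $B_{1/2}$ and the rescaled domains genuinely exhaust $\R^4$), which is standard given $|D^2 u_j(x_j)|\to\infty$.
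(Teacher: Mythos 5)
Your proposal is correct and follows essentially the same route as the paper: a doubling/blow-up selection, passage to a global classical stable limit via the argument of \Cref{lem:reduction} and \Cref{lem:compact}, and a contradiction with the global classification. The only substantive difference is at the end: rather than re-entering the proof of \Cref{cor:DeG_Bernoulli} with the weakly monotone limit, the paper applies the strong maximum principle to the harmonic function $\partial_4 \tilde u_\infty$ on the (connected) positivity set to split into $\partial_4\tilde u_\infty\equiv 0$ (then the limit is effectively three-dimensional and \Cref{thm:main1} applies) or $\partial_4\tilde u_\infty>0$ (then \Cref{cor:DeG_Bernoulli} applies as stated); your appeal to the interior of the proof of \Cref{cor:DeG_Bernoulli} is also valid, but the explicit dichotomy is cleaner. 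One cosmetic point: the doubling weight should vanish on a sphere strictly larger than $\partial B_{1/2}$ (the paper uses $\bigl(\tfrac34-|x|\bigr)$ on $B_{3/4}$), since with the weight $(1-2|x|)$ the weighted maximum could remain bounded while $|D^2u_j|$ blows up near $\partial B_{1/2}$, and the conclusion is claimed on the closed set $\overline{B_{1/2}\cap\{u>0\}}$.
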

\begin{proof}
We proceed as in the proof of \Cref{lem:reduction} and assume by contradiction that the statement does not hold. Then, there exists a sequence $u_k$ of classical stable solutions to the Bernoulli problem in $B_1\subset \R^4$, with $0\in \FB(u_k)$, $\partial_4 u_k \ge 0$, and such that 
\[
h_k := |D^2 u_k(x_k)| \left(\frac34 - |x_k| \right)  = \max_{x\in B_{3/4}\cap\partial\{u_k>0\}}|D^2 u_k(x)| \left(\frac34 - |x| \right) \to \infty\qquad\text{as}\quad k \to \infty. 
\]
Set $d_k := D^2 u_k(x_k)$ and $\rho_k = \frac34-|x_k|$, and define the classical stable solution
\[
\tilde u_k(x) := d_k u_k(x_k + x/d_k)\qquad\text{for}\quad x\in B_{d_k \rho_k/2}. 
\]
Since $d_k\rho_k\to \infty$, proceeding as in \Cref{lem:reduction} we can take the limit of $\tilde u_k$ (up to subsequences) to find a global classical stable solution $\tilde u_\infty$, with $0\in \FB(u_\infty)$, $\partial_4 \tilde u_\infty \ge 0$, $|D^2 \tilde u_\infty(0)| = 1$, and $|D^2 \tilde u_\infty|\le 1$ in $\{\tilde u_\infty>0\}$.

Up to restricting $\tilde u_\infty$  to one connected component of $\{\tilde u_\infty > 0\}$, we can assume that $\{\tilde u_\infty > 0\}$ has a single connected component. Then, by the strong minimum principle, either $ \partial_4 \tilde u_\infty \equiv 0$ (in which case we contradict \Cref{thm:main1}) or $ \partial_4 \tilde u_\infty > 0$ (contradicting \Cref{cor:DeG_Bernoulli}).
\end{proof}

We can now prove our second corollary. 
 \begin{proof}[Proof of \Cref{thm:main2}]
 It suffices to extend our function to $B_1\times \R \subset \R^3\times \R$ by taking it constant in the last variable, and then apply \Cref{prop:main2bis}.
 \end{proof}

\section{The Free Boundary Allen--Cahn}
\label{sec:FB_AC}
\subsection{Preliminaries} The goal of this section is to prove \Cref{thm:FBAC-main}.  To show it, we will combine the curvature estimates obtained for the free boundary in the Bernoulli problem (see \Cref{thm:main2}) with 
the Sternberg--Zumbrun stability inequality (see \Cref{lem:SZ_AC}  below). This will allow us to extend Pogorelov's argument \cite{Pog81} for stable minimal surfaces in $\mathbb{R}^3$ to our setting. 

Before beginning with the proof, let us give the definition of classical solution, in analogy with \Cref{defi:solutions}.
 Consider the energy $\cJ_1^0$ from \eqref{eq:energy-functional}. We call  $u : B_R \to [-1,1]$ a \emph{classical solutions} of $\cJ_1^0$ if 
\begin{equation}\label{eq:AC-main}
\text{$\{|u| < 1\}$ is locally a smooth domain in $B_R$}\qquad\text{and}\qquad
\begin{cases}
\Delta u=0
	& \text{ in } B_R \cap \set{|u| < 1},\\
|\nabla u|=1
	& \text{ on } B_R \cap  \partial\set{|u|< 1}.
\end{cases}
\end{equation}
The set $\partial\{u > 0\}$ is called the \emph{free boundary} and will also be denoted $\FB(u)$. In particular, a classical solution satisfies that $\{u > 0\}$ is locally the subgraph of a smooth function around each free boundary point (up to a rotation).

  Classical solutions $u$ are \emph{stationary critical points} of $\cJ_1^0$, in the sense that they satisfy \eqref{eq:stationary} with $\cF = \cJ_1^0$; and stationary critical points $u$ are called \emph{stable} if they have non-negative second (inner) variations, i.e., they satisfy \eqref{eq:stability-var} with $\cF = \cJ_1^0$.  

From now on, a \emph{solution} will refer, unless otherwise stated, to the free boundary Allen--Cahn energy $\cJ^0_1$.

\begin{defn}
\label{defi:solutionsAC}
Let $n \ge 2$ and $R > 0$. In relation to the free boundary Allen--Cahn, i.e., choosing $\cF = \cJ^0_1$  in \eqref{eq:stationary}--\eqref{eq:stability-var}, we say that $u\in H^1(B_R)$ with $B_R\subset \R^n$ is:
\begin{itemize}
\item a \emph{classical solution} or \emph{classical critical point} in $B_R$ if it satisfies \eqref{eq:AC-main} (in particular, it satisfies \eqref{eq:stationary});
\item a \emph{classical stable solution} or \emph{classical stable critical point} in $B_R$ if it is a classical solution and satisfies \eqref{eq:stability-var}.
\end{itemize}
If a function satisfies one of the previous definitions for all $R > 0$, we call it \emph{global}. 
\end{defn}

The Sternberg--Zumbrun stability inequality for the free boundary Allen--Cahn is the following: 

\begin{lem}[Sternberg--Zumbrun stability inequality] \label{lem:SZ_AC} Let $n \ge 2$, and let $u$ be a classical stable critical point of $\cJ^0_1 $ (see \eqref{eq:energy-functional}) in $\R^n$. Then
\begin{equation}\label{eq:stab}
\int_{\R^n} |\cA(u)|^2 |\nabla u|^2 \zeta^2
\,dx
\leq \int_{\R^n} |\nabla u|^2 |\nabla\zeta|^2\,dx\qquad\text{for all}\quad \zeta\in C^{0,1}_c(\R^n),
\end{equation}
where
\[
|\cA(u)|^2(x) :=
\begin{cases}
|A(u(x))|^2 + |\nabla_T\log |\nabla u(x)||^2 \quad &\mbox{if }  |u|<1 \mbox{ and }|\nabla u(x)|\neq 0
\\
0& \mbox{otherwise}.
\end{cases}
\]
Here, $A(u(x))$ denotes the second fundamental form of the level set $\{u=u(x)\}$ at the point $x$  (therefore, $|A(u(x))|^2$ is the sum of the squares of the principal curvatures) and  $\nabla_T$ denotes the tangential gradient to the level sets.
\end{lem}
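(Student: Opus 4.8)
The statement is the Sternberg--Zumbrun form of the second variation inequality adapted to the step-potential Allen--Cahn functional $\cJ_1^0$, so the strategy is to follow the classical computation of Sternberg and Zumbrun \cite{SZ98}, taking care of the free-boundary terms that now appear. First I would expand the second inner variation $\frac{d^2}{dt^2}\big|_{t=0}\cJ_1^0(u\circ\Psi_t;B_R)$ for $\Psi_t(x)=x+t\xi(x)$, where we may without loss of generality take the deformation vector field in the form $\xi = \varphi\,\frac{\nabla u}{|\nabla u|}$ on the set $\{0<|u|<1\}$, with $\varphi\in C^{0,1}_c$; this normal-graph choice is the one that produces the geometric quantities of the level sets. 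On the set $\{|u|<1\}$ the functional is purely the Dirichlet energy $\int\varepsilon|\nabla u|^2$ (here $\varepsilon=1$) because $W_0$ vanishes there, while the potential term $\frac1\varepsilon\int W_0(u)=|\{|u|<1\}|$ contributes the ``area'' piece through the change in measure of the transition region, exactly mirroring how the perimeter term arises in the minimal-surface setting. Because $u$ is a classical solution ($\{|u|<1\}$ smooth, $\Delta u=0$ inside, $|\nabla u|=1$ on $\FB(u)$), all integrations by parts are legitimate and the boundary contributions from $\FB(u)$ can be written in terms of the mean curvature $H$ of $\FB(u)$.

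The key algebraic step is the decomposition, valid on $\{0<|u|<1\}$ where $|\nabla u|\neq0$, of the ``full'' second variation integrand into a sum of squares: writing $e=\nabla u/|\nabla u|$ for the unit normal to the level sets and using $\Delta u=0$, one has the pointwise identity
\[
|D^2u|^2 - |\nabla|\nabla u||^2 \;=\; |\nabla u|^2\Big(|A(u(x))|^2 + |\nabla_T\log|\nabla u||^2\Big),
\]
which is the analogue of the Sternberg--Zumbrun identity relating the Hessian to the second fundamental form plus the tangential logarithmic derivative of the gradient. This is precisely the content needed to rewrite the ``raw'' stability inequality \eqref{freebdryAC-stability} (whose left side features $|D^2u|^2-|\nabla|\nabla u||^2$) in the geometric form \eqref{eq:stab}. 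I would derive this identity by choosing, at a fixed point, an orthonormal frame $\{\tau_1,\dots,\tau_{n-1},e\}$ adapted to the level set, expanding $D^2u$ in that frame, and using harmonicity $\sum_i \partial^2_{\tau_i\tau_i}u + \partial^2_{ee}u=0$ to eliminate the $ee$-component; the off-diagonal terms $\partial^2_{\tau_i e}u$ assemble into $|\nabla_T|\nabla u||^2$ and the tangential block into $|\nabla u|^2|A|^2$ after dividing by $|\nabla u|^2$. Then, starting from \eqref{freebdryAC-stability} with test function $\xi$ (now a genuine $C^\infty_c(\R^n)$ function, since \eqref{eq:stab} allows $\zeta\in C^{0,1}_c$, which follows by density once the smooth case is established), substituting the identity above converts the left-hand side into $\int |\cA(u)|^2|\nabla u|^2\xi^2$, while the right-hand side is already $\int|\nabla u|^2|\nabla\xi|^2$; the definition of $|\cA(u)|^2$ as $0$ on $\{|u|=1\}\cup\{\nabla u=0\}$ handles the degenerate locus harmlessly because $|\nabla u|^2\xi^2$ vanishes there or the set has measure zero.

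The main obstacle, and the point requiring genuine care, is the treatment of the free boundary $\FB(u)=\partial\{|u|<1\}$: in the second-variation computation there are two kinds of $\FB(u)$-terms, one coming from differentiating the Dirichlet energy (which produces $\int_{\FB(u)}|\nabla u|^2\, (\text{flux-type terms})$) and one from the variation of the volume $|\{|u|<1\}|$ (producing $\int_{\FB(u)} H\,\varphi^2$-type terms). One must show these combine, using $|\nabla u|=1$ on $\FB(u)$ and the Neumann-type structure $\partial_\nu|\nabla u|$ there (cf. \Cref{lem:vbounds}, $\partial_\nu v = -2\partial^2_{\nu\nu}u = 2H$ with $v=1-|\nabla u|^2$), in exactly such a way that the inequality \eqref{freebdryAC-stability} already incorporates them — i.e. the passage from \eqref{freebdryAC-stability} to \eqref{eq:stab} involves \emph{only} the interior identity, the boundary having been accounted for when \eqref{freebdryAC-stability} itself was derived (as the excerpt notes, ``see \Cref{defi:solutionsAC} and \Cref{lem:SZ_AC}''). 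Concretely I would: (i) first establish \eqref{freebdryAC-stability} from the definition \eqref{eq:stability-var} by the standard second-inner-variation expansion, carefully tracking the $\FB(u)$ contributions and using the classical regularity to integrate by parts; (ii) then establish the pointwise Sternberg--Zumbrun identity on $\{0<|u|<1\}$; (iii) substitute and conclude \eqref{eq:stab}, extending from $\xi\in C^\infty_c$ to $\zeta\in C^{0,1}_c$ by approximation. The delicate sign bookkeeping at the free boundary is the step where the argument could go wrong, so I would verify it by comparing with the limit $\varepsilon\to0$ heuristic and with the known minimal-surface identity, ensuring the orientation conventions for $\nu$ and $H$ (inward normal, mean curvature with respect to the outer normal $-\nu$, as fixed in \Cref{lem:vbounds}) are consistent throughout.
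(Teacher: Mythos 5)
Your proposal is correct and follows essentially the same route as the paper: the paper's proof simply combines the Sternberg--Zumbrun form of the stability inequality (the analogue of \eqref{eq:prev_SZ}, where the free-boundary terms cancel because $|\nabla u|=1$ and $\partial_\nu|\nabla u|=-H$ on $\FB(u)$) with the pointwise identity $|D^2u|^2-|\nabla|\nabla u||^2=|\nabla u|^2\bigl(|A|^2+|\nabla_T\log|\nabla u||^2\bigr)$ from \cite[Lemma 2.1]{SZ98}. One minor remark: that identity is purely algebraic and does not require harmonicity (the $\partial^2_{ee}u$ contributions cancel exactly between $|D^2u|^2$ and $|\nabla|\nabla u||^2$), so your appeal to $\Delta u=0$ in deriving it is unnecessary, though harmless.
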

\begin{proof}
  As in the case of Bernoulli case, it follows from \eqref{eq:prev_SZ} using the identity in \cite[Lemma 2.1]{SZ98}. 
\end{proof}
 
\begin{remark}\label{rmk:test fct}
 Notice that, by approximation and smoothly extending and cutting off inside $\{|u| = 1\}$, it suffices in \eqref{eq:stab}  to consider test functions with $\zeta\in C^{0, 1}_c\big(\overline{\{|u|< 1\}}\big)$. 
\end{remark}

As a first observation, we have Modica's inequality (in analogy with its smooth counterpart \cite{Modica85}): 

\begin{lem}[Modica's inequality]
\label{lem:Modica}
Let $n \ge 2$, and let $u$ be a classical solution of $\cJ^0_1$ in $\R^n$. Then  Modica's inequality takes the form
\begin{equation}\label{eq:Modica}
|\nabla u |^2\le 1.
\end{equation}
\end{lem}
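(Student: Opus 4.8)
The plan is to reduce Modica's inequality $|\nabla u|^2 \le 1$ for classical solutions of $\cJ_1^0$ to the corresponding statement for the Bernoulli problem, which has already been established in \Cref{lem:Lipbound}. The key observation is that, locally near each connected component of $\{|u|<1\}$, the function $u$ is (up to an affine change $u\mapsto 1+u$ or $u\mapsto 1-u$) a classical solution of the one-phase Bernoulli problem: indeed, on such a component $u$ is harmonic, $|u|<1$, and $|\nabla u|=1$ on the free boundary $\partial\{|u|<1\}$, while on the complementary set $\{|u|=1\}$ the gradient vanishes. So first I would fix a point $x_0$ and consider separately the two cases $x_0\in\{|u|=1\}$ and $x_0\in\{|u|<1\}$.

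In the trivial case $x_0\in\{u=1\}$ or $x_0\in\{u=-1\}$, since $u$ attains an interior maximum (resp.\ minimum) there and is of class $C^1$, the gradient $\nabla u(x_0)=0$, so $|\nabla u(x_0)|^2=0\le 1$. In the main case $x_0\in\{|u|<1\}$, let $U$ be the connected component of $\{|u|<1\}$ containing $x_0$; without loss of generality assume $u<1$ on a neighborhood of the part of $\partial U$ that $u$ approaches from below—more precisely, near any free boundary point $y\in\partial U$ one has $u(y)\in\{-1,+1\}$, and by continuity and the structure of $U$ as a smooth domain one can set $w:=1+u$ on $U^+:=U\cap\{u\ \text{near}\ -1\ \text{side}\}$ and $w:=1-u$ on the side where $u$ approaches $+1$. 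Each such $w$ is nonnegative, harmonic in its domain, vanishes with $|\nabla w|=1$ on the free boundary, so $w$ is a classical solution of the one-phase Bernoulli problem in $\R^n$ in the sense of \Cref{defi:solutions}; by \Cref{lem:Lipbound} we get $|\nabla w|\le 1$, hence $|\nabla u(x_0)|=|\nabla w(x_0)|\le 1$. Combining the two cases gives $|\nabla u|^2\le 1$ everywhere.

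I expect the only genuine subtlety—and thus the main point to be careful about—to be the \emph{global} character of the Bernoulli comparison: \Cref{lem:Lipbound} is stated for classical solutions to the Bernoulli problem \emph{in $\R^n$}, and the function $w$ built above is naturally defined only on the (possibly proper) subset of $\R^n$ consisting of those components of $\{|u|<1\}$ lying on one side of the free boundary together with the adjacent saturated region $\{u\equiv\mp 1\}$. One must check that $w$, extended by $0$ on $\{u=\mp1\}$, is globally a classical solution of Bernoulli on all of $\R^n$: this is immediate since on the interior of $\{u=\mp1\}$ the function $w$ is identically zero (hence harmonic), on $\{|u|<1\}$ (that side) it is positive and harmonic, and across $\partial\{|u|<1\}$ the matching condition $|\nabla w|=1$ holds by hypothesis \eqref{eq:AC-main}; moreover $\{w>0\}$ is locally a smooth domain because $\{|u|<1\}$ is. Hence \Cref{lem:Lipbound} applies verbatim to $w$, and the argument closes. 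Alternatively, one can bypass the global reduction entirely by invoking the interior-gradient bound built into the proof of \Cref{lem:Lipbound} (it is obtained via comparison with the one-dimensional profile and Moser-type estimates, which are purely local), but the cleanest route is the global reduction just described.
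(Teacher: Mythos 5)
Your reduction to \Cref{lem:Lipbound} has a genuine gap: in general one cannot manufacture from $u$ a \emph{global} classical solution of the one-phase Bernoulli problem on all of $\R^n$. A single connected component $U$ of $\{|u|<1\}$ typically has free boundary pieces on which $u=+1$ \emph{and} pieces on which $u=-1$ (the one-dimensional solution $u=x_n$ on $\{|x_n|\le 1\}$ already does). If you set $w=1+u$, then $\{w>0\}=\{u>-1\}$ contains the saturated region $\{u=+1\}$, where $w\equiv 2$; across $\partial\{u<1\}$ the function $w$ has a distributional Laplacian concentrated on that interface, so $w$ is not harmonic in $\{w>0\}$ and is not a classical Bernoulli solution. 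There is no consistent global choice of sign, and $1-|u|$ is not harmonic either. The local reduction you gesture at (restricting to a ball around a point where $u=-1$ in which $u<1$) is exactly what the paper does \emph{later}, in \Cref{lem:global-curv-bd} --- but there it is legitimate only because Modica's inequality $|\nabla u|\le 1$ is already known, which guarantees $u\le 0$ in a unit ball around such a point. Using it here is circular: with only the a priori dimensional bound $|\nabla u|\le C$ you get a Bernoulli solution in a ball of radius $\sim 1/C$, and the local Lipschitz theory returns only $|\nabla u|\le C(n)$, not the sharp constant $1$.

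Your fallback --- that the constant $1$ in \Cref{lem:Lipbound} comes from a ``purely local'' interior estimate --- is also not correct: the sharp bound $|\nabla u|\le 1$ is a genuinely global statement about solutions in all of $\R^n$; local classical solutions of Bernoulli can easily have $|\nabla u|>1$ in the interior. This is why the paper's proof has two distinct stages: first the dimensional bound $|\nabla u|\le C$ by the standard local argument, and then a global contradiction argument. Assuming $\sup|\nabla u|=1+\kappa>1$, subharmonicity of $|\nabla u|^2$ in $\{|u|<1\}$ together with $|\nabla u|=1$ on the free boundary forces the supremum to be approached along a sequence escaping to infinity; a blow-up at nearest free boundary points produces the affine limit $(1+\kappa)(x_n+1)$, and a sliding comparison with an explicit harmonic sub-barrier of slope $>1$ contradicts the free boundary condition $|\nabla u|=1$. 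Some argument of this global type is unavoidable, and your proposal does not supply one.
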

\begin{proof}
    The global Lipschitz bound with a dimensional constant $C$ holds by the same proof as for the Bernoulli problem, see for example \cite[Lemma 11.19]{CS05}. Let us now prove that this constant can be chosen to be $1$.
    
    By contradiction, assume that
    \[
    \sup_{\R^n} \, |\nabla u| = 1+\kappa>1, 
    \]
    for some $\kappa \in (0, C-1]$. Then, since $|\nabla u|^2$ is subharmonic inside $\{|u|<1\},$ and $|\nabla u|=1$ on the free boundary, the  maximum principle implies that there exists a sequence $z_i\in \{|u|< 1\}$, with $|z_i|\to \infty$ as $i\to \infty$, such that $|\nabla u(z_i)|\uparrow 1+\kappa$. Let $y_i\in \partial \{|u|<1\}$ satisfy $\dist(z_i, \partial\{|u|< 1\}) = |z_i - y_i| =: \tau_i$. Notice that, since $|u|\leq 1$,  harmonic estimates imply that $|\nabla u| \leq C{\rm dist}(\cdot,\partial\{|u|<1\})^{-1}$,  so $\tau_i$ is necessarily bounded (but it could go to zero).
    
    Up to taking a subsequence and replacing $u$ with $-u$,
      we can assume that $u(y_i)= -1$ for all $i$. Then, if we define
    \[
    v_i(x) := \frac{u(z_i+\tau_i x)+1}{\tau_i}\geq 0,
    \]
    it follows that $v_i$ satisfies 
    \[
    \Delta v_i = 0\quad\text{and}\quad v_i > 0\qquad\text{in}\quad B_1, \qquad \sup_{\R^n} \, |\nabla v_i| = 1+\kappa, \qquad |\nabla v_i(0)| \uparrow_i 1+\kappa,\qquad |\nabla v_i| = 1\quad\text{on}\quad \partial\{v_i > 0\}. 
    \]
    Also, up to a rotation, we can assume that $z_i - y_i = \tau_i e_n$, therefore $v_i(-e_n) = 0$.

Then, up to a subsequence, the functions $v_i$ converge locally uniformly in $\R^n$ to a $(1+\kappa)$-Lipschitz function $v_\infty$ that is harmonic in $B_1$ and satisfies $|\nabla v_\infty(0)|=1+\kappa$,   $v_\infty \ge 0$ in $B_1$, and $v_\infty(-e_n) = 0$. Thus, by the strong maximum principle, $|\nabla v_\infty|\equiv 1+\kappa$ in $B_1$, and therefore $v_\infty (x)= (1+\kappa)(x_n +1)$ in $B_1$. By unique continuation, it follows that $v_\infty (x)= (1+\kappa)(x_n +1)$ in $\{-1 \leq x_n \leq 1\}$.

Thus, we have proved that the non-negative functions $v_i$ converge locally uniformly to  $(1+\kappa)(x_n +1)$ inside  $\{-1 < x_n < 1\}$. 
Consider now the harmonic sub-barrier
    \[
    \psi_{\kappa, \eps}(x) := (1+\tfrac{\kappa}{2})(x_n +1)  +\eps \left((x_n+1)^2-\tfrac{1}{n-1}(x_1^2+\dots+x_{n-1}^2)\right).
    \]
For $\eps$ sufficiently small (depending only on $\kappa$) and for $i$ large enough (depending on $\eps$ and $\kappa$), we see that $v_i \ge \psi_{\kappa, \eps}(x-se_n)$ on $\partial B_2(-e_n)$ for all $s\in [0, 1]$, and $\psi_{\kappa, \eps}(x- e_n)\le v_i$ in $B_2(-e_n)$. 

We now perform a sliding argument and let $s$ decrease from 1 until $\psi_{\kappa, \eps}(x-se_n)$ touches $v_i$ from below. By the previous considerations and the maximum principle, the touching point must be on the free boundary of $v_i$. But this is a contradiction, since $|\nabla v_i| = 1$ on the free boundary while $|\nabla \psi_{\kappa, \eps}| \geq \partial_n \psi_{\kappa, \eps} \geq 1+\tfrac{\kappa}{2} -C_n\eps > 1$ for $\eps$ small enough, depending only on $n$ and $\kappa$.   
\end{proof}
The stability inequality \eqref{eq:stab} will now be used in four ways:
\begin{enumerate}
    \item With a Euclidean $\log$-cut-off in $\R^3$, showing that the amount of ``bad regions'' is sublinear. This results in a clean annulus (see \Cref{lem:freeannulus}).
    \item With a Euclidean Lipschitz cut-off, ensuring good estimates in the clean annulus (see \Cref{prop:2W}).
    \item With an intrinsic $\log$-cut-off on a level set, a $2$-surface, bounding the average area near the ``bad region'' (see \Cref{lem:stab-on-bad}).
    \item With an intrinsic ``tent'' function of the form $(r-\dB)_+$, in an integral way (see \eqref{eq:stab-4}), allowing us to close a Gauss--Bonnet type estimate (see \Cref{lem:GB-rigor}).
\end{enumerate}

\subsection{Definition of $\ccB$}  
From now on, we will assume that $n = 3$ and $u$ is a global classical stable solution to the free boundary Allen--Cahn in $\R^3$ according to \Cref{defi:solutionsAC}. 
We start with the following universal derivative bounds, which follow from the curvature bounds on the free boundary of stable solutions of the one-phase Bernoulli problem.

\begin{lem}[Regularity]
\label{lem:global-curv-bd}
For any $k \geq 2$ there exists a constant $C_k>0$, depending only on $k$, such that
$|D^ku|\leq C_k$ inside ${\set{|u|<1}}$.
\end{lem}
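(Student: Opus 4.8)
The statement is a local higher-order regularity estimate for classical solutions of the free boundary Allen--Cahn equation $\cJ_1^0$, and the key observation is that near the free boundary the equation decouples into two one-phase Bernoulli problems (one for $1+u$, one for $1-u$), to which \Cref{thm:main2} applies. Concretely, the plan is as follows. Fix $x_\circ\in\{|u|<1\}$ and set $d_\circ:=\dist(x_\circ,\FB(u))$. We distinguish two regimes.

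\emph{Regime 1: $x_\circ$ far from the free boundary.} If $d_\circ\geq\tfrac14$, then $u$ is harmonic in $B_{1/4}(x_\circ)$ with $|u|\leq 1$, so interior estimates for harmonic functions give $|D^k u(x_\circ)|\leq C_k$ immediately. More generally, if $x_\circ$ is at distance $d_\circ$ from $\FB(u)$, harmonicity of $u$ in $B_{d_\circ/2}(x_\circ)$ together with $|u|\leq 1$ gives $|D^k u(x_\circ)|\leq C_k d_\circ^{-k}$; this will be the nontrivial bound only when $d_\circ$ is small, which is handled in Regime 2.

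\emph{Regime 2: $x_\circ$ near the free boundary.} Suppose $d_\circ<\tfrac14$ and pick $y_\circ\in\FB(u)\cap B_{1/4}(x_\circ)$ with $|x_\circ-y_\circ|=d_\circ$; up to replacing $u$ by $-u$ we may assume $u(y_\circ)=-1$. Then $w:=1+u$ satisfies $w\geq 0$, $\Delta w=0$ in $\{w>0\}$, and $|\nabla w|=1$ on $\partial\{w>0\}$, so $w$ is a classical solution of the one-phase Bernoulli problem in a neighborhood of $y_\circ$; moreover, since $u$ is a stable critical point of $\cJ_1^0$, one checks (using \Cref{lem:SZ_AC} and the fact that the component $\{u>-1\}$ near $y_\circ$ does not interact with the component $\{u<1\}$ because the two free boundaries are separated—this is exactly the decoupling property emphasized in the introduction for $\alpha=0$) that $w$ is a \emph{stable} classical solution of Bernoulli in a fixed ball $B_{r_\circ}(y_\circ)$ for some universal $r_\circ>0$. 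Applying \Cref{thm:main2} (rescaled) to $w$ yields $\|D^2 w\|_{L^\infty(B_{r_\circ/2}(y_\circ)\cap\{w>0\})}\leq C$ with $C$ universal, i.e.\ $|D^2 u|\leq C$ on $B_{r_\circ/2}(y_\circ)\cap\{|u|<1\}$. In particular $|D^2 u(x_\circ)|\leq C$. For $k\geq 3$, one then invokes \Cref{lem:eps-reg-classical-2} (higher regularity from a bounded Hessian, applied to the classical Bernoulli solution $w$): a $C^0$ bound on $D^2 w$ upgrades to $\|D^k w\|_{L^\infty(B_{r_\circ/4}(y_\circ)\cap\{w>0\})}\leq C_k$, hence $|D^k u(x_\circ)|\leq C_k$. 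A standard covering argument over $\FB(u)$ and its neighborhood of fixed width, combined with Regime 1 for points deeper inside $\{|u|<1\}$, gives the claimed bound $|D^k u|\leq C_k$ on all of $\{|u|<1\}$.

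\emph{Main obstacle.} The only genuinely nontrivial point is verifying that stability of $u$ as a critical point of $\cJ_1^0$ transfers to stability of $w=1+u$ (or $1-u$) as a Bernoulli solution on a fixed-size ball. The subtlety is that the Sternberg--Zumbrun inequality \eqref{eq:stab} for $u$ integrates over \emph{both} free-boundary components, while the Bernoulli stability inequality \eqref{eq:stability-var}/\eqref{eq:stab-intro} for $w$ sees only one of them. Here one uses that, by the a priori global Lipschitz bound and Modica's inequality \eqref{eq:Modica} (so $|\nabla u|\leq 1$), together with $|\nabla u|=1$ on $\FB(u)$, the two level-one free boundaries cannot come closer than a universal distance to each other within $B_{r_\circ}(y_\circ)$—otherwise a harmonic function pinched between two such boundaries would have gradient strictly exceeding $1$ somewhere (exactly the sliding-barrier argument used in the proof of \Cref{lem:Modica}). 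Consequently any test function $\xi\in C^\infty_c(B_{r_\circ}(y_\circ))$ can be taken supported in the component containing $y_\circ$, and \eqref{eq:stab} restricted to that support is precisely the Bernoulli stability inequality for $w$. Once this is in place, \Cref{thm:main2} and \Cref{lem:eps-reg-classical-2} do the rest.
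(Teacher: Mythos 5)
Your proposal is correct and follows essentially the same route as the paper: decouple near each free boundary point into a one-phase Bernoulli problem via Modica's inequality, apply \Cref{thm:main2} for the Hessian bound and \Cref{lem:eps-reg-classical-2} for higher derivatives, and use interior harmonic estimates away from the free boundary. The only difference is that your ``main obstacle'' is resolved more simply in the paper: since $u(y_\circ)=-1$ and $|\nabla u|\le 1$, one has $u\le 0<1$ on all of $B_1(y_\circ)$, so the other free boundary simply does not enter the ball and no pinching or sliding argument is needed.
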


\begin{proof}
Fix $x_\circ \in \partial\{|u|< 1\}$ and, up to replacing $u$ with $-u$, assume that $u(x_\circ) = -1$. Thanks to  \Cref{lem:Modica} we know that $|\nabla u|\le 1$, therefore $u \le 0 < 1$ in $B_1(x_\circ)$. This implies that $u+1$ is a classical stable solution to the Bernoulli problem in $B_1(x_\circ)$ (see \Cref{defi:solutions}), so we can apply \Cref{thm:main2} and \Cref{lem:eps-reg-classical-2} to deduce that $|D^k u|\le C_k$ in $B_{1/2}(x_\circ)\cap \{u +1 > 0\}$. 
Repeating this argument at every free boundary point, we obtain $|D^k u|\le C_k$
 inside  $\{ 0 < \dist(\cdot, \{|u| = 1\}) < 1/2\}$. Finally, the bound inside $\{|u|< 1\}$ follows by interior regularity estimates for harmonic functions.
 \end{proof}

Motivated by \Cref{lem:SZ_AC}, given $\delta_\circ\in(0,1)$ we define 
\begin{equation}
\label{eq:Xdef}
\cX(\delta_\circ) := \bigg\{z\in  \set{|u|<1}  : \int_{B_{2}(z)} |\cA(u)|^2  |\nabla u|^2 \,dx > \delta_\circ\bigg\}\neq\varnothing.
\end{equation}
Note that \Cref{thm:FBAC-main} is equivalent to showing that $\cX(\delta_\circ) = \varnothing$ for any $\delta_\circ>0$.
So, by contradiction, we assume that there exists $\delta_\circ \in (0,1)$ small (to be fixed later) such that $\cX(\delta_\circ) \neq  \varnothing$, and define
\begin{equation}
\label{eq:Gdef}
\cG(\delta_\circ) := \big\{z\in \{|u|< 1\}\setminus \cX(\delta_\circ) : \dist(z, \{|u|= 1 \})\le 8\big\} 
\end{equation}
and
\[
\cW(\delta_\circ) := \{|u|< 1\}\setminus (\cX(\delta_\circ)\cup \cG(\delta_\circ)). 
\]

 The following result says that the set $\cG(\delta_\circ)$ locally looks like arbitrarily flat strips: 
 
\begin{lem}[Curvature bound in $\cG(\delta_\circ)$] \label{prop:good}
Given $\eta_\circ\in (0,1)$, there exists $\delta_\circ=\delta_\circ(\eta_\circ)>0$ such that 
if $x_\circ \in \cG(\delta_\circ)$, then 
\[
|D^2 u|\le \eta_\circ\quad \text{in}\quad B_{3/2}(x_\circ)\cap \{|u|< 1\},
\]
and $\nabla u$ almost achieves equality in Modica's inequality \eqref{eq:Modica}: 
\begin{equation}
\label{eq:good-grad}
1-\eta_\circ \leq |\nabla u|\leq 1
	\qquad \text{ in } \set{|u|<1}\cap B_{3/2}(x_\circ).
\end{equation}
In particular, for all $\lambda \in (-1, 1)$, the level set $\set{|u|=\lambda} \cap B_{3/2}(x_\circ)$ is a smooth surface with curvature bounded by~$\eta_\circ$. 
\end{lem}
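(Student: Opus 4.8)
The statement says that at a point $x_\circ$ lying in the ``good'' set $\cG(\delta_\circ)$—that is, a point where the Sternberg--Zumbrun energy $\int_{B_2(x_\circ)}|\cA(u)|^2|\nabla u|^2\,dx$ is small and which is within distance $8$ of the phases—the Hessian of $u$ is small in $B_{3/2}(x_\circ)$, $|\nabla u|$ is close to $1$, and consequently level sets are nearly flat smooth surfaces. The natural strategy is a \emph{compactness/contradiction} argument, exactly parallel to how closeness-to-a-vee was upgraded to regularity in the Bernoulli part (cf. \Cref{lem:W-compact-H1}, \Cref{prop:W}, and the proof of \Cref{lem:X-blowdown}). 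The key inputs already available are: the universal derivative bounds $|D^k u|\le C_k$ inside $\{|u|<1\}$ from \Cref{lem:global-curv-bd} (which come from \Cref{thm:main2}); Modica's inequality $|\nabla u|\le 1$ (\Cref{lem:Modica}); and the Sternberg--Zumbrun stability inequality (\Cref{lem:SZ_AC}). The point $x_\circ\in \cG(\delta_\circ)$ also provides, by definition, a free-boundary point within distance $8$, so that blow-up limits will genuinely have nontrivial phases.

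\textbf{Steps.} First I would argue by contradiction: suppose the conclusion fails. Then there are $\eta_\circ>0$ and sequences $\delta_k\downarrow 0$, classical stable solutions $u_k$, and points $x_k\in\cG(\delta_k)$ with $\|D^2 u_k\|_{L^\infty(B_{3/2}(x_k)\cap\{|u_k|<1\})}>\eta_\circ$. Translate so $x_k=0$. By \Cref{lem:global-curv-bd}, the $u_k$ have uniformly bounded $C^k$ norms on $\{|u_k|<1\}$ for every $k$, and by \Cref{lem:Modica} they are $1$-Lipschitz; moreover $\dist(0,\{|u_k|=1\})\le 8$, so up to subsequences we may extract a limit $u_\infty$ with locally smooth (up to the free boundary, by uniform curvature bounds as in the compactness argument of \Cref{lem:reduction}) convergence in a neighborhood of $\overline{\{|u_\infty|<1\}}$, together with local Hausdorff convergence of the free boundaries. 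The limit $u_\infty$ is a global classical stable solution of $\cJ^0_1$ in $\R^3$, and passing the defect bound to the limit gives $\int_{B_2(0)}|\cA(u_\infty)|^2|\nabla u_\infty|^2\,dx=0$. Since the integrand is $\ge 0$ and vanishes only where $|A(u_\infty)|^2+|\nabla_T\log|\nabla u_\infty||^2=0$ (or $|\nabla u_\infty|=0$), and since $|\nabla u_\infty|\neq 0$ on the smooth free boundary (where it equals $1$), we deduce $A(u_\infty)\equiv 0$ on the level set through every point of $B_2(0)\cap\{|u_\infty|<1\}$ near the free boundary; combined with $\nabla_T\log|\nabla u_\infty|\equiv 0$ and $|\nabla u_\infty|=1$ on $\FB(u_\infty)$, this forces $|\nabla u_\infty|\equiv 1$ and the level sets to be (pieces of) hyperplanes near the free boundary. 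Hence $D^2 u_\infty\equiv 0$ in a neighborhood of $0$, and by the smooth (up-to-boundary) convergence $\|D^2 u_k\|_{L^\infty(B_{3/2}(0)\cap\{|u_k|<1\})}\to 0$, contradicting the assumption that it exceeds $\eta_\circ$. The bound \eqref{eq:good-grad} follows from the same convergence: $|\nabla u_k|\to|\nabla u_\infty|=1$ uniformly near the free boundary, and away from the free boundary $|\nabla u_k|$ is controlled by interior harmonic estimates plus $|\nabla u_k|\le 1$; a short argument using that $0$ is within distance $8$ of a phase, together with $|D^2 u_k|$ small, propagates closeness of $|\nabla u_k|$ to $1$ on all of $B_{3/2}(0)\cap\{|u_k|<1\}$. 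The final assertion about level sets $\{u=\lambda\}\cap B_{3/2}(x_\circ)$ being smooth surfaces with curvature $\le\eta_\circ$ is then immediate: $|\nabla u|\ge 1-\eta_\circ>0$ makes them smooth regular level sets, and their second fundamental form is bounded by $|D^2 u|/|\nabla u|\le \eta_\circ/(1-\eta_\circ)$, which is $\lesssim\eta_\circ$ (up to relabeling constants).

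\textbf{Main obstacle.} The delicate point is the rigidity step for the limit $u_\infty$: extracting from ``$|\cA(u_\infty)|^2|\nabla u_\infty|^2$ integrates to zero in $B_2$'' the conclusion that $D^2 u_\infty\equiv 0$ in a \emph{fixed-size} neighborhood of the origin. One must be careful that the vanishing set of the stability defect, a priori only containing the level sets passing through $B_2\cap\{|u_\infty|<1\}$ near the free boundary, actually propagates by unique continuation (the function $|D^2 u_\infty|^2$, restricted to a connected component of $\{|u_\infty|<1\}$, is subharmonic and vanishes on an open set hence everywhere on that component, as in \Cref{lem:reduction} and \Cref{lem:vbounds}) to give flatness of \emph{all} level sets in the component, and that the component containing points near $0$ reaches the ball $B_{3/2}(0)$—which is guaranteed because $0$ is within distance $8$ of the free boundary and $|\nabla u_\infty|\equiv 1$ forces the component to be a full slab or half-space. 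A secondary technical nuisance is justifying the up-to-the-free-boundary smooth convergence $u_k\to u_\infty$ (as opposed to merely interior $C^\infty_{\loc}$ convergence in $\{|u_\infty|<1\}$), which is needed to conclude $\|D^2 u_k\|_{L^\infty}\to 0$ on $B_{3/2}(0)\cap\{|u_k|<1\}$; this is handled exactly as in the proof of \Cref{lem:reduction}, representing the free boundaries as uniformly-$C^k$ graphs and invoking Arzelà--Ascoli.
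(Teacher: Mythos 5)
Your overall strategy is sound and your treatment of the gradient bound ultimately coincides with the paper's, but the two proofs diverge on the Hessian bound, and one sub-step of your gradient argument is flawed as stated.

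For the Hessian bound the paper does \emph{not} use compactness: it observes that $|D^2u|^2\le 3\,|\cA(u)|^2|\nabla u|^2$ wherever $u$ is harmonic (the pointwise inequality underlying \Cref{lem:Stern_Zum_App}), so that $x_\circ\in\cG(\delta_\circ)$ immediately gives $\int_{B_2(x_\circ)\cap\{|u|<1\}}|D^2u|^2\le 3\delta_\circ$, and then upgrades this $L^1$-smallness to $L^\infty$-smallness via the interpolation \Cref{lem:L1_Lip}, using the uniform $C^3$ bound of \Cref{lem:global-curv-bd}. This yields the quantitative estimate $|D^2u|\le C\delta_\circ^{1/8}$ in $B_{3/2}(x_\circ)$, which is then fed as input into the compactness argument for the gradient. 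Your compactness route for the Hessian can be made to work, but it is heavier: it requires lower semicontinuity of the stability defect under the limit and genuinely up-to-the-free-boundary $C^2$ convergence (to rule out sequences $y_k\in\{|u_k|<1\}$ with $|D^2u_k(y_k)|>\eta_\circ$ accumulating on $\FB(u_\infty)$), precisely the technicalities you flag. The direct argument sidesteps both and, being quantitative, is what makes the subsequent compactness step for the gradient clean.

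The genuine gap is in your mechanism for \eqref{eq:good-grad}: you propose to ``propagate closeness of $|\nabla u_k|$ to $1$'' from the free boundary using smallness of $|D^2u_k|$ and the fact that $\dist(x_\circ,\{|u_k|=1\})\le 8$. This does not work as stated, because the smallness $|D^2u_k|\le\eta$ is only available in $B_{3/2}(x_\circ)$, whereas the nearest point of $\{|u_k|=1\}$ may be at distance up to $8$; any path from a point of $B_{3/2}(x_\circ)$ to the free boundary along which you would integrate $D^2u_k$ passes through a region where no smallness is known. The correct argument—which your compactness setup already supports, and which is what the paper does—works entirely on the limit: $D^2u_\infty\equiv 0$ on $B_{3/2}\cap\{|u_\infty|<1\}$ extends by unique continuation (each $\partial^2_{ij}u_\infty$ is harmonic) to every connected component of $\{|u_\infty|<1\}$ touching $B_{3/2}$, so $\nabla u_\infty$ is constant on each such component; if the component has any free boundary that constant is a unit vector, and if it has none then $\{|u_\infty|<1\}=\R^3$, which contradicts $\dist(0,\{|u_k|=1\})\le 8$ in the limit. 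Either way $|\nabla u_\infty(x_\infty)|\le 1-\eta_\circ$ is impossible. With this substitution your proof is complete.
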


\begin{proof}
Translating if necessary, we can assume $x_\circ=0$. We show first the bound on the Hessian.

As in the proof of \Cref{lem:Stern_Zum_App},  $|D^2 u|^2\le 3 |\cA(u)|^2 |\nabla u|^2$ wherever $u$ is harmonic. Thus, 
$$
\int_{B_2\cap \{|u| < 1\}} |D^2 u|^2 \,dx \le 3\delta_\circ. 
$$
Also, thanks to \Cref{lem:global-curv-bd},
$|D^3 u|\le C$ in $B_{3/2}\cap \{|u|< 1\}$, and the free boundaries have bounded curvature and they are uniformly separated. So, by  \Cref{lem:L1_Lip} applied to $|D^2u|$ we get
\begin{equation}\label{eq:delta1/8}
|D^2 u|\le C \delta_\circ^{1/8}\quad \text{ in }B_{3/2}\cap \{|u|< 1\}.
\end{equation}
This proves the first bound in the statement.

For the second one, we proceed by contradiction and compactness.  Let $u_k$ be a sequence of (nonconstant) stable critical points of $\cJ^0_1$ in $\R^3$ for which $0 \in \cG(\delta_k)$ with $\delta_k=\frac1k$, but $|\nabla u_k(x_k)|< 1-\eta_\circ$ for some $x_k \in \set{|u_k|<1}\cap B_{3/2}$. Note that, as a consequence of \ref{eq:delta1/8}, 
\begin{equation}\label{eq:wk-curv}
\|D^2 u_k\|_{L^\infty(B_{3/2}\cap \{|u_k|< 1\})}\le  Ck^{-1/8}\to 0,\quad\text{as}\quad k \to \infty.
\end{equation}
Thanks to \Cref{lem:global-curv-bd}, up to a subsequence the functions $u_k$ converge to $u_\infty$,
which is a classical stable solution satisfying (because of \eqref{eq:wk-curv} and unique continuation) $|D^2 u_\infty|\equiv 0$ in any connected component of $\{|u_\infty|< 1\}$
touching $B_{3/2}$. Also, there exists a point $x_\infty \in \set{|u_\infty|<1}\cap \overline{B_{3/2}}$ such that $|\nabla u_\infty(x_\infty)|\leq 1-\eta_\circ$. Since $|\nabla u_\infty|=1$ on the free boundary, this implies that every connected component of $\{|u_\infty|< 1\}$
touching $B_{3/2}$ cannot have any boundary, therefore the only option is that 
$\{|u_\infty|< 1\}=\R^3$. By the convergence of $u_k$ to $u_\infty$, this implies that $u_k$ has no free boundary point inside $B_{16}$ for $k$ large, a contradiction to the fact that $0 \in \cG(\delta_\circ)$.
\end{proof}
\begin{remark}
\label{rem:distcGcW}
As a consequence of the previous result, 
inside $\cG(\delta_\circ)$ the integral curves of $\nabla u$ are almost straight and $|\nabla u|$ is very close to $1$. Hence, by looking how the value of $u$ changes along integral curves of $\nabla u$, for any point $z\in \cG(\delta_\circ)$ it holds $\dist(z, \{|u|= 1\})\le 1-|u|+o_{\delta_\circ}(1) \le 2$, where $o_{\delta_\circ}(1)\downarrow 0$ as $\delta_\circ\downarrow 0$. In particular, by the definition of $\cW(\delta_\circ)$, it follows that $|z_g - z_w|\ge 6$ for any $(z_g, z_w)\in \cG(\delta_\circ)\times  \cW(\delta_\circ)$. Hence, these two sets are separated and since $\cW(\delta_\circ)$ is far from the free boundaries, it must always be surrounded by $\cX(\delta_\circ)$.
\end{remark}

Now, given $\alpha >0$ we define 
\[
\cS^*_\alpha(\delta_\circ) := \bigcup_{z\in \cX(\delta_\circ)} B_\alpha(z)=\cX(\delta_\circ)+B_\alpha. 
\]
Then, given $x\in \cG(\delta_\circ) \setminus \cS^*_4(\delta_\circ)$ and $\lambda \in [-1, 1]$, we define the $\pi_\lambda$ ``projection'' as the point on $\Sigma_\lambda := \{u = \lambda\}$ obtained from flowing $x$ perpendicularly to the level sets until it intersects $\Sigma_\lambda$. That is, let $n_x:[-1, 1]\to \overline{\{|u|< 1\}}$ be defined by 
\begin{equation}
    \label{eq:nxt}
\dot{n}_x(t) = \frac{\nabla u(n_x(t))}{|\nabla u(n_x(t))|^2}, \qquad n_x(u(x)) = x,
\end{equation}
and set 
\begin{equation}
    \label{eq:pi lambda}
\pi_\lambda(x) := n_x(\lambda).
\end{equation}
Notice that, if $x\in \cG(\delta_\circ) \setminus \cS^*_4(\delta_\circ)$, then 
\Cref{prop:good} and \Cref{rem:distcGcW} imply that
$1-\eta_\circ \leq |\nabla u| \leq 1$ in $B_4(x)\cap \{|u|<1\}$, so the map above is well defined (provided $\delta_\circ$ is sufficiently small so that $\eta_\circ \leq 1/2$).

Finally, we define
\[
\ccB_*(\delta_\circ) := \cS^*_4(\delta_\circ) \cup \bigcup_{x\in \partial \cS^*_4(\delta_\circ)\cap\cG(\delta_\circ)} \left\{n_x(t) : t\in [-1,1]\right\} = \cS^*_4(\delta_\circ) \cup \bigcup_{x\in \partial \cS^*_4(\delta_\circ)\cap\cG(\delta_\circ)} \left\{\pi_\lambda(x) : \lambda\in [-1,1]\right\}.
\]
In other words, we add to $\cS^*_4(\delta_\circ)$ the image of  $\partial \cS^*_4(\delta_\circ)\cap\cG(\delta_\circ)$ through the flow $\nabla u$ across the level sets. Observe that, by construction and thanks to \Cref{prop:good}, 
\begin{equation}
\label{eq:BS}
\ccB_*(\delta_\circ)\subset \cS^*_8(\delta_\circ).
\end{equation}
Moreover, $\cG(\delta_\circ) \setminus \ccB_*(\delta_\circ)$ is ``invariant'' under the flow of $\nabla u$, that is,
\[
\text{if $x\in \cG(\delta_\circ)\setminus \ccB_*(\delta_\circ)$, then $\pi_\lambda(x)$ is well defined and $\pi_\lambda(x)\in \cG(\delta_\circ)\setminus \ccB_*(\delta_\circ)$ for all $\lambda\in [-1, 1]$.}
\]

Now, thanks to the stability of $u$, we can prove that for any $\Lambda>0$ (large) we can find  an  annulus $B_{R+\Lambda}(z) \setminus B_R(z)$ of width $\Lambda$, with  $z\in \cX(\delta_\circ)$, which does not intersect $\cS^*_8(\delta_\circ).$

\begin{lem}[Existence of a clean annulus]
\label{lem:freeannulus}
Let $\delta_\circ > 0$ be sufficiently small so that $\eta_\circ\leq 1/2$. For every $\Lambda > 0$ there exist $z\in \cX(\delta_\circ)$ and  $R > 1$ such that 
\[
\ccB_*(\delta_\circ) \cap B_{R+\Lambda}(z) \subset \cS^*_8(\delta_\circ) \cap B_{R+\Lambda}(z) \subset B_R(z).
\]
\end{lem}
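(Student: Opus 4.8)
The statement asserts that, given any width $\Lambda>0$, we can locate a point $z\in\cX(\delta_\circ)$ and a radius $R>1$ such that the annulus $B_{R+\Lambda}(z)\setminus B_R(z)$ avoids the $8$-fattening $\cS^*_8(\delta_\circ)$ (hence, by \eqref{eq:BS}, avoids $\ccB_*(\delta_\circ)$ too). The natural mechanism is a logarithmic cut-off argument using the Sternberg--Zumbrun stability inequality \eqref{eq:stab}, exactly as in the four-point list after \Cref{lem:Modica} (``with a Euclidean $\log$-cut-off in $\R^3$, showing that the amount of bad regions is sublinear''). The key point is that a point $z_0\in\cX(\delta_\circ)$ already exists, so $\cX(\delta_\circ)\neq\varnothing$, and we want to show that $\cX(\delta_\circ)$ (more precisely the slightly larger set $\cS^*_8(\delta_\circ)$) cannot be ``too spread out'' at scales going to infinity: there must be concentric spherical shells of arbitrary thickness that miss it.

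\textbf{Step 1: a sublinear density bound from stability.} First I would fix $z_0\in\cX(\delta_\circ)$ and test \eqref{eq:stab} with the radial logarithmic cut-off
\[
\zeta(x)=\zeta_{\rho,S}(x):=\begin{cases}1 & |x-z_0|\le \rho,\\[1mm]\dfrac{\log(S/|x-z_0|)}{\log(S/\rho)} & \rho\le |x-z_0|\le S,\\[1mm]0 & |x-z_0|\ge S,\end{cases}
\]
for $1\le \rho< S$. Since $|\nabla u|\le 1$ by \Cref{lem:Modica}, the right-hand side is controlled by $\int_{B_S\setminus B_\rho}|\nabla\zeta|^2\,dx\le C/\log(S/\rho)$ (the standard computation: $|\nabla\zeta|=1/(r\log(S/\rho))$, and $\int_\rho^S r^{-2}\cdot r^{2}\,dr/\log^2(S/\rho)=C/\log(S/\rho)$ in $\R^3$). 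On the other hand, every point $w\in\cX(\delta_\circ)$ contributes, by its very definition \eqref{eq:Xdef}, an amount $\ge\delta_\circ$ to $\int_{B_2(w)}|\cA(u)|^2|\nabla u|^2$; covering $\cX(\delta_\circ)\cap(B_{\rho-2}(z_0))$ by a bounded-overlap family of balls $B_2(w_i)$ of radius $2$ centered at points of $\cX(\delta_\circ)$ gives a lower bound on the left-hand side of \eqref{eq:stab} of the form $c\,\delta_\circ\cdot N(\rho)$, where $N(\rho)$ is the number of such balls (equivalently, up to universal constants, $|\cS^*_2(\delta_\circ)\cap B_\rho(z_0)|$). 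Combining, $\delta_\circ\,N(\rho)\le C/\log(S/\rho)$ for every $S>\rho$; letting $S\to\infty$ this only gives $N(\rho)<\infty$, which is not yet enough, so instead one keeps $S$ finite and exploits the freedom in both $\rho$ and $S$.

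\textbf{Step 2: from sublinear density to a clean annulus.} The correct way to extract an annulus is a pigeonhole/dyadic argument. Suppose, for contradiction, that for the given $\Lambda$ \emph{every} annulus $B_{R+\Lambda}(z)\setminus B_R(z)$ with $z\in\cX(\delta_\circ)$ and $R>1$ meets $\cS^*_8(\delta_\circ)$. Take $z=z_0$. Then for each $R>1$ there is a point of $\cS^*_8(\delta_\circ)$ in $B_{R+\Lambda}(z_0)\setminus B_R(z_0)$, hence a point of $\cX(\delta_\circ)$ within distance $8$ of it; thus the shells $\{R_j\le |x-z_0|\le R_j+\Lambda\}$ with $R_j:=1+j(\Lambda+20)$, $j=0,1,\dots,m-1$, are pairwise disjoint and each contains a ball $B_2(w_j)$, $w_j\in\cX(\delta_\circ)$, with all $w_j$ inside $B_{\rho}(z_0)$ for $\rho:=R_m+\Lambda+10\le Cm\Lambda$. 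This forces $N(\rho)\ge m$, i.e.\ $N(\rho)\ge c\,\rho/\Lambda$ — a \emph{linear} lower bound. Plugging into the stability bound $\delta_\circ N(\rho)\le C/\log(S/\rho)$ from Step 1 and choosing, say, $S=\rho^2$, we get $\delta_\circ\,c\,\rho/\Lambda\le C/\log\rho$, which fails for $\rho$ large. This contradiction proves the existence of the desired clean annulus. I would then note that by \eqref{eq:BS} the inclusion $\ccB_*(\delta_\circ)\cap B_{R+\Lambda}(z)\subset\cS^*_8(\delta_\circ)\cap B_{R+\Lambda}(z)$ is automatic, so only the second inclusion $\cS^*_8(\delta_\circ)\cap B_{R+\Lambda}(z)\subset B_R(z)$ needs the annulus argument, and this is exactly what the contradiction above yields (with $R=R_j$ for a suitable $j$, after shrinking/relabelling to absorb the constant fattening by $8$ rather than $2$ — a harmless adjustment of the covering and of the definition of $N$).

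\textbf{Main obstacle.} The routine parts are the cut-off computation and the covering argument; the only genuinely delicate point is making sure that the quantity $\int_{B_2(w)}|\cA(u)|^2|\nabla u|^2$ being $>\delta_\circ$ at many \emph{disjoint-ish} centers really adds up on the left-hand side of \eqref{eq:stab} — i.e.\ that one can choose the covering balls $B_2(w_j)$ to have bounded overlap while still each carrying mass $\ge\delta_\circ$. This is handled by spacing the shells $R_j$ by more than $4$ (so the balls $B_2(w_j)$ in distinct shells are disjoint) which is why the step size $\Lambda+20$ appears above; within a single shell one may lose a universal overlap constant, but that is harmless. One must also be slightly careful that $|\cA(u)|^2$ is only defined where $|\nabla u|\ne0$ and $|u|<1$, but the stability inequality \eqref{eq:stab} is stated with the convention that $|\cA(u)|^2=0$ elsewhere, so no issue arises. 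With these bookkeeping points settled, the argument closes.
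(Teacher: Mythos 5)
There is a genuine gap, and it sits exactly at the computation you flagged as "standard" in Step 1. In $\R^3$ the Dirichlet energy of the point-centered radial logarithmic cut-off is
\[
\int_{B_S\setminus B_\rho}|\nabla\zeta|^2\,dx
=\frac{C}{\log^2(S/\rho)}\int_\rho^S r^{-2}\cdot r^{2}\,dr
=\frac{C\,(S-\rho)}{\log^2(S/\rho)},
\]
not $C/\log(S/\rho)$: the identity $\int_\rho^S r^{-2}\,r^{n-1}\,dr=\log(S/\rho)$ holds only for $n=2$. Optimizing over $S$ the quantity $(S-\rho)/\log^2(S/\rho)$ is still of order $\rho$ (the minimum is attained near $S\approx e^2\rho$), so the best upper bound your cut-off can give is $\delta_\circ N(\rho)\le C\rho$ --- exactly the same linear order as the lower bound $N(\rho)\ge c\rho/\Lambda$ you extract in Step 2, with no logarithmic gain and hence no contradiction for any choice of $S$. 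The point-centered log cut-off kills sets of at most linear growth only in the plane; in $\R^3$ a bad set of linear (i.e.\ codimension-two) growth is precisely borderline, and your argument cannot close.

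The paper's proof is built to overcome exactly this. It first uses a plain Lipschitz cut-off at scale $R$ to get the linear \emph{upper} bound $|\cS^*_8(\delta_\circ)\cap B_R(\bar z)|\le C\delta_\circ^{-1}R$, then combines it with the linear lower bound (your pigeonhole step, which is correct and matches \eqref{eq:lowercA}) via a Vitali covering to prove the crucial tube-growth estimate $|A^t_{R,\bar z}|\le C\delta_\circ^{-1}\Lambda R\,t^2$ for the $t$-neighborhood of the bad set. Only then does it apply a logarithmic cut-off of the \emph{distance to the bad set itself}, $\zeta=(1-\log(1+d)/\log(1+R))_+$ with $d=\dist(\cdot,A^8_{R,\bar z})$; the layer-cake formula together with the quadratic tube growth gives $\int|\nabla\zeta|^2\le C\Lambda R/\log R$, which is sublinear by a logarithmic factor and therefore contradicts the linear lower bound $c\delta_\circ\Lambda^{-1}R$ for $R$ large. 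The missing ingredient in your write-up is thus twofold: the adapted (set-centered rather than point-centered) log cut-off, and the quadratic volume bound on tubular neighborhoods of $\cX(\delta_\circ)$ that makes its energy sublinear.
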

\begin{proof} The first inclusion follows from \eqref{eq:BS}.

For the second inclusion, assume by contradiction that it does not hold. Then, there exists $\Lambda > 0$ such that for every $\bar z\in \cX(\delta_\circ)$ and $k \geq 1$, there is $z\in \cX(\delta_\circ)$ with  $B_8(z) \cap B_{(k+1)\Lambda}(\bar z) \setminus B_{k\Lambda}(\bar z)\neq \varnothing$. In particular 
$$
 B_{(k+1)\Lambda+8}(\bar z) \setminus B_{k\Lambda-8}(\bar z)\supset B_8(z) \qquad \Longrightarrow\qquad 
|\cS^*_8(\delta_\circ) \cap (B_{(k+1)\Lambda+8}(\bar z) \setminus B_{k\Lambda-8}(\bar z)) |\geq |B_8| \qquad \forall\,k \geq \frac{8}
\Lambda,
$$
from which we easily deduce that
 \begin{equation}
     \label{eq:lowercA}
    |\cS^*_8(\delta_\circ)\cap B_R(\bar z)|\ge c\frac{R}{\Lambda}\qquad\text{for all}\quad R > 1, \ \bar z\in \cX(\delta_\circ),
 \end{equation}
for some $c > 0$ universal.

On the other hand, applying the stability inequality \eqref{eq:stab} with $\zeta\in C^\infty_c(B_{2R}(\bar z))$ such that $\zeta \equiv 1$ in $B_R(\bar z)$ and $|\nabla \zeta| \le \bar C/R$ in $\R^3$, recalling that $|\nabla u|\le 1$ we get 
$$\int_{B_{R}(\bar z)} |\cA(u)|^2|\nabla u|^2 \,dx \le CR. 
$$
Now, recalling the definition of $\cX(\delta_\circ)$,  a covering argument implies that the left-hand side above is bounded from below by $c\delta_\circ |\cS^*_8(\delta_\circ) \cap B_{R-8}(\bar z)|$ for all $R > 9$. Therefore, we have proved that
 \begin{equation}
     \label{eq:uppercA}
    |\cS^*_8(\delta_\circ)\cap B_R(\bar z)|\le C\delta_\circ^{-1} R\qquad\text{for all}\quad R > 1, \ \bar z \in \cX(\delta_\circ),
 \end{equation}
 with $C$ universal. 

Now, given $\bar z\in \cX(\delta_\circ)$ and $R$ large,  for $t\ge 1$ we define
\[
A^t_{R, \bar z} := \bigcup \left\{B_t(z): \ z\in \cX(\delta_\circ) ,\, B_8(z) \cap B_R(\bar z) \neq \varnothing \right\}.
\] 
Then, for any $t\in [4, R]$, by Vitali's covering lemma we can find a disjoint subcollection of balls of radius $t/4$, centered at some $z\in \cX(\delta_\circ)$, such that the balls of radius $2t$ cover $A_{R, \bar z}^t$. Since:\\
(i) each disjoint ball of radius $t/4$ contains at least $c\Lambda^{-1}t$ mass from $\cS^*_8(\delta_\circ)$ (by \eqref{eq:lowercA});\\
(ii) these balls are all contained inside $B_{2R}(\bar z)$;\\
(iii) and $|\cS^*_8(\delta_\circ)\cap B_{2R}(\bar z)|\le C\delta_\circ^{-1}R$ (by \eqref{eq:uppercA});\\
it follows that the number of disjoint balls of radius $t/4$ is bounded by $C(\delta_\circ t)^{-1}\Lambda R$ with $C$ universal. In particular, since the balls of radius $2t$ cover $A_{R, \bar z}^t$, we get
\[
| A_{R, \bar z}^t | \leq \text{(number of disjoint balls)} \times |B_{2t}| \leq C\delta_\circ^{-1}\Lambda Rt^2 \qquad\text{for all}\quad t\in [4, R].
\]
Note that, for $t\in (0, 4]$, we can simply use the bound $|A_{R, \bar z}^t|\le C\delta_\circ^{-1}R$. 

Now, consider $R>1$ large (to be fixed later), set $d(x) :=\text{dist}(x, A^8_{R,\bar z})$, and define the test function:
\[
\zeta(x) = 
\left(
1- \tfrac{\log(1+d(x))}{\log (1+R)}\right)_+,\qquad\text{which satisfies}\quad 
|\nabla \zeta(x)| =
\begin{cases} 
\frac{1}{(1+d(x))  \log(1+R) } & \text{if } 0 < d \le R,\\
0 & \text{otherwise}.\\
\end{cases}
\]

Applying the stability inequality with $\zeta(x)$ and $R\gg 1$, we estimate the two terms as follows:
\begin{itemize}
    \item by a covering argument, the definitions of $\cX(\delta_\circ)$ and $\cS^*_8(\delta_\circ)$,  and   \eqref{eq:lowercA}, we can  bound the left-hand~side:
    \[
    \int_{\R^3}|\cA(u)|^2 |\nabla u |^2\zeta^2 \,dx 
    \ge  \int_{A^8_{R, \bar z}}|\cA(u)|^2 |\nabla u |^2 \,dx 
    \ge c\delta_\circ |\cS^*_8(\delta_\circ)\cap B_{R}(\bar z)| 
    \ge c\delta_\circ \Lambda^{-1} R;
    \]
    \item by the ``layer-cake formula'' and since $|\nabla u|\le 1$, we can bound right-hand side:
    \[
    \int_{\mathbb{R}^3} |\nabla \zeta|^2 \, dx \leq C \int_{0}^R \frac{1}{(1+t)^2 \log^2 (1+R)} \frac{d(C\Lambda R t^2)}{dt} dt  \le C\Lambda\frac{R}{\log R}. 
    \]
\end{itemize}
For $R$ large enough, this provides the desired contradiction, proving the lemma. 
\end{proof}

Now, for a given $\delta_\circ>0$ small and $\Lambda >1$ large, let $z_\Lambda\in \cX(\delta_\circ)$ and $R_\Lambda > 1$ be given by \Cref{lem:freeannulus}, and define the sets
\begin{equation}
\label{eq:cBdefAC}
\ccB = \ccB(\delta_\circ, \Lambda) := \ccB_*(\delta_\circ)\cap B_{R_\Lambda+\Lambda}(z_\Lambda),\qquad
\cS_\alpha = \cS_\alpha(\delta_\circ, \Lambda) := \bigcup_{z\in \cX(\delta_\circ)\cap \ccB(\delta_\circ, \Lambda)} B_\alpha(z). 
\end{equation}
Note that, since $\cS^*_4(\delta_\circ)\subset \ccB_*(\delta_\circ)\subset \cS^*_8(\delta_\circ)$, it follows from \Cref{lem:freeannulus} that $\cS_4\subset \ccB\subset \cS_8$. 
We now start our analysis.

\subsection{A first case: annulus formed of $\cW(\delta_\circ)$}
\label{ssec:W}

With $\delta_\circ>0$ (small) and $\Lambda >1$ (large) fixed, we recall that $z_\Lambda\in \cX(\delta_\circ)$ and $R_\Lambda > 1$ are given by \Cref{lem:freeannulus}.
Since $\dist(\cW(\delta_\circ), \{|u|= 1\}\cup \cG(\delta_\circ)) \ge 6$ (by the definition of $\cW(\delta_\circ)$ and \Cref{rem:distcGcW}), \Cref{lem:freeannulus} implies that 
\begin{align}
    \label{eq:dichotomy1}
    &\text{either}\quad B_{R_\Lambda+\Lambda}(z_\Lambda)\setminus B_{R_\Lambda}(z_\Lambda) \subset \cW(\delta_\circ),\\
    \label{eq:dichotomy2}&\text{or}\quad B_{R_\Lambda+\Lambda}(z_\Lambda)\setminus B_{R_\Lambda}(z_\Lambda) \subset \{|u| = 1\}\cup \cG(\delta_\circ).
\end{align}

We want to prove that the first case cannot occur.

\begin{prop}
\label{prop:2W}
  There exists $\Lambda_0$ sufficiently large, depending only on $\delta_\circ$, such that if $\Lambda > \Lambda_0$ then \eqref{eq:dichotomy2} holds. 
\end{prop}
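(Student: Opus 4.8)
The plan is to rule out \eqref{eq:dichotomy1}, i.e., to show that the thick clean annulus cannot be entirely contained in $\cW(\delta_\circ)$, by testing the stability inequality \eqref{eq:stab} with a cutoff supported in that annulus and deriving a contradiction once $\Lambda$ is large enough. First I would recall, from \Cref{rem:distcGcW}, that $\cW(\delta_\circ)$ is separated from both the free boundary and from $\cG(\delta_\circ)$ by a definite distance, and is always ``surrounded'' by $\cX(\delta_\circ)$; since $z_\Lambda\in\cX(\delta_\circ)$, if the entire annulus $B_{R_\Lambda+\Lambda}(z_\Lambda)\setminus B_{R_\Lambda}(z_\Lambda)$ lay in $\cW(\delta_\circ)$, then by connectedness of the level sets and the fact that $\cW(\delta_\circ)$ is far from $\{|u|=1\}$, the whole of $\set{|u|<1}$ would have to be close to a region where $u$ has no free boundary at all. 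More precisely, in $\cW(\delta_\circ)$ we have $\dist(\cdot,\{|u|=1\})>8$, so interior harmonic estimates (using $|u|\le1$, $|\nabla u|\le1$ from \Cref{lem:Modica}) give decay $|\nabla u|\lesssim \dist(\cdot,\{|u|=1\})^{-1}$; iterating, on a ball $B_\Lambda(z)$ with $z$ in the middle of the annulus, the gradient is forced to be as small as $O(1/\Lambda)$, hence $|\cA(u)|\lesssim 1/\Lambda$ there as well.

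The next step is the key computation. Take $z_\sharp$ a point at distance $\sim\Lambda$ from both $\partial B_{R_\Lambda}(z_\Lambda)$ and $\partial B_{R_\Lambda+\Lambda}(z_\Lambda)$, so that $B_{\Lambda/4}(z_\sharp)$ is contained in the clean annulus and hence in $\cW(\delta_\circ)$. Choose $\zeta\in C^{0,1}_c(B_{\Lambda/2}(z_\sharp))$ with $\zeta\equiv1$ on $B_{\Lambda/4}(z_\sharp)$ and $|\nabla\zeta|\le C/\Lambda$. Then the right-hand side of \eqref{eq:stab} is $\le C\Lambda^{-2}\int_{B_{\Lambda/2}(z_\sharp)}|\nabla u|^2\,dx\le C\Lambda$. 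For the left-hand side I would argue that $B_{\Lambda/4}(z_\sharp)\subset\cW(\delta_\circ)$ must contain a free boundary point at distance $\le 2$ --- wait, by definition $\cW(\delta_\circ)$ is at distance $>8$ from the free boundary, so instead the contradiction is cleaner: the hypothesis \eqref{eq:dichotomy1} combined with $\dist(\cW(\delta_\circ),\{|u|=1\})>8$ forces $B_{R_\Lambda+\Lambda}(z_\Lambda)$ to contain no free boundary points in the annular shell, which together with connectedness of $\{|u|<1\}$ and the global structure means that actually no free boundary meets a ball of radius $\sim\Lambda$. Then $u$ is a bounded harmonic function on $B_{\Lambda}(z_\sharp)$ with $|u|\le1$; but $z_\Lambda\in\cX(\delta_\circ)$ lies just inside, with $\int_{B_2(z_\Lambda)}|\cA(u)|^2|\nabla u|^2>\delta_\circ$. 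Since $z_\Lambda\notin B_{R_\Lambda+\Lambda}(z_\Lambda)\setminus B_{R_\Lambda}(z_\Lambda)$ (it is at the center), I would instead use that $\cX(\delta_\circ)\cap B_{R_\Lambda+\Lambda}(z_\Lambda)\neq\varnothing$ and trace a chain from $z_\Lambda$ outward: any path from $z_\Lambda$ to $\partial B_{R_\Lambda+\Lambda}(z_\Lambda)$ must leave $\cX(\delta_\circ)$ by radius $R_\Lambda$, enter $\cW(\delta_\circ)$, and remain there; applying the stability inequality with a logarithmic cutoff as in \Cref{lem:freeannulus} but now centered so as to capture the mass of $\cX(\delta_\circ)$ inside radius $R_\Lambda$ gives a lower bound $\gtrsim\delta_\circ\,|\cS^*_8(\delta_\circ)\cap B_{R_\Lambda}(z_\Lambda)|$ on the left-hand side, while the right-hand side is controlled as before.

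The cleanest route, and the one I would carry out in detail, is: test \eqref{eq:stab} with $\zeta=\big(1-\tfrac{\log(1+\dist(\cdot,B_{R_\Lambda}(z_\Lambda)))}{\log(1+\Lambda)}\big)_+$, which is supported in $B_{R_\Lambda+\Lambda}(z_\Lambda)$ and equal to $1$ on $B_{R_\Lambda}(z_\Lambda)$. On the left-hand side, $|\cA(u)|^2|\nabla u|^2$ integrated over $B_2(z_\Lambda)\subset\{\zeta=1\}$ is $>\delta_\circ$ by definition of $\cX(\delta_\circ)$, so the left-hand side is $\ge\delta_\circ$. On the right-hand side, under hypothesis \eqref{eq:dichotomy1} the gradient $|\nabla u|$ in the annular shell where $\nabla\zeta\neq 0$ is $O(1/\dist(\cdot,\{|u|=1\}))$, and using $\dist(\cdot,\{|u|=1\})>8$ together with the harmonic decay one gets $|\nabla u|\le C/\Lambda$ on the support of $\nabla\zeta$ when $\Lambda$ is large; hence $\int|\nabla u|^2|\nabla\zeta|^2\le C\Lambda^{-2}\int_{B_{R_\Lambda+\Lambda}\setminus B_{R_\Lambda}}|\nabla\zeta|^2\cdot(\text{annulus volume})\to 0$ as $\Lambda\to\infty$ (the $\log$ cutoff makes $\int|\nabla\zeta|^2$ subpolynomial, and the extra $\Lambda^{-2}$ from the gradient smallness kills it). This contradicts the lower bound $\ge\delta_\circ$ once $\Lambda$ exceeds some $\Lambda_0(\delta_\circ)$. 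The main obstacle is making rigorous the claim that in the clean $\cW$-annulus the gradient is genuinely of size $O(1/\Lambda)$: this requires showing that $\cW(\delta_\circ)$ being thick and free-boundary-free forces $u$ to be essentially a harmonic function on a large ball with bounded oscillation, and then invoking the gradient estimate for harmonic functions; one must be careful that the level sets are still connected across the annulus so that the ``harmonic on a large ball'' picture is globally valid, and this is where \Cref{rem:distcGcW} (that $\cW(\delta_\circ)$ is surrounded by $\cX(\delta_\circ)$, which in turn is bounded) and the boundedness $|u|\le 1$ are essential.
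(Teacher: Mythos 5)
Your overall strategy (rule out \eqref{eq:dichotomy1} by testing stability on the $\cW(\delta_\circ)$-annulus and exploiting harmonic gradient decay there) is the same as the paper's, but the quantitative execution has a fatal gap. First, the claim that $|\nabla u|\le C/\Lambda$ holds on the whole support of $\nabla\zeta$ is false: the harmonic gradient estimate only gives $|\nabla u(x)|\le C/\dist(x,\cX(\delta_\circ))$ (since a point of $\cW(\delta_\circ)$ can only approach $\{|u|=1\}$ through $\cX(\delta_\circ)$), and points of the annulus near $\partial B_{R_\Lambda}(z_\Lambda)$ may be at distance $O(1)$ from $\cX(\delta_\circ)\subset B_{R_\Lambda}(z_\Lambda)$, so there $|\nabla u|$ is only $O(1)$. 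Second, and more fundamentally, your right-hand side does not go to zero: with a cut-off built from the Euclidean distance to $B_{R_\Lambda}(z_\Lambda)$, the coarea slices of $\supp\nabla\zeta$ have area $\sim R_\Lambda^2$, so even with the correct gradient decay the right-hand side is of order $R_\Lambda^2$ (up to logarithms and powers of $\Lambda$). Since \Cref{lem:freeannulus} gives no control of $R_\Lambda$ in terms of $\Lambda$ (the paper stresses that the annulus radius may be much larger than its thickness), this cannot be beaten by your left-hand side lower bound of $\delta_\circ$, which comes from a single ball $B_2(z_\Lambda)$. Your middle paragraph gestures at the stronger lower bound $\gtrsim\delta_\circ|\cS^*_8(\delta_\circ)\cap B_{R_\Lambda}(z_\Lambda)|$, which is the right idea, but you never reconcile it with a right-hand side of matching size.

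The paper closes exactly this mismatch by making \emph{both} sides scale with $|\ccB|$. It takes $\zeta\equiv 1$ on $(\ccB\cup\cG(\delta_\circ)\cup\{|u|=1\})\cap B_{R_\Lambda}(z_\Lambda)$ and $\zeta=(1-\tfrac{2}{\Lambda}\dist(\cdot,\ccB))_+$ elsewhere, so that $\nabla\zeta$ is supported in $\cW(\delta_\circ)$ within distance $\Lambda/2$ of $\ccB$. The volume growth $|\cS_t|\le Ct^3|\ccB|$ of the $t$-neighborhoods of $\cX(\delta_\circ)\cap\ccB$, combined with $|\nabla u|\le C/t$ outside $\cS_t$, gives via the layer-cake formula $\int|\nabla u|^2|\nabla\zeta|^2\le C|\ccB|/\Lambda$ — no factor of $R_\Lambda$ appears because everything is measured relative to $\ccB$, not to the ball $B_{R_\Lambda}(z_\Lambda)$. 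The left-hand side is bounded below by $c\delta_\circ|\ccB|$ using \Cref{lem:comp_Joaq0}, and the contradiction $c\delta_\circ\le C/\Lambda$ follows for $\Lambda$ large depending only on $\delta_\circ$. To repair your argument you would need to replace your cut-off and your single-ball lower bound by these $\ccB$-adapted versions.
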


In order to prove it, we will use the following: 

\begin{lem}
\label{lem:comp_Joaq0}
 Let $\delta_\circ > 0$ and $\Lambda \ge 64$, and let $\ccB$ and $\cS_8$ be as in \eqref{eq:cBdefAC}. Then
 \[
   \int_{\ccB} |\cA(u)|^2 |\nabla u|^2 \,dx \ge   c\delta_\circ  |\cS_8| \ge c \delta_\circ  |\ccB| ,
 \]
 for a universal constant $c$.
\end{lem}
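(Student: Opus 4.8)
\textbf{Proof plan for Lemma \ref{lem:comp_Joaq0}.} The second inequality $|\cS_8| \ge |\ccB|$ is immediate since $\ccB \subset \cS_8$ (as noted after \eqref{eq:cBdefAC}). So the content is the first inequality, which should follow from a Vitali covering argument combined with the definition of $\cX(\delta_\circ)$ in \eqref{eq:Xdef}.

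Here is the approach. By definition, $\cS_8 = \bigcup_{z \in \cX(\delta_\circ) \cap \ccB} B_8(z)$. Apply the Vitali covering lemma to the family $\{B_8(z)\}_{z \in \cX(\delta_\circ) \cap \ccB}$ to extract a countable subfamily of \emph{pairwise disjoint} balls $\{B_8(z_j)\}_{j}$ with centers $z_j \in \cX(\delta_\circ) \cap \ccB$ such that $\cS_8 \subset \bigcup_j B_{40}(z_j)$. This gives $|\cS_8| \le C \sum_j |B_8(z_j)| = C' \#\{j\}$ (when the index set is finite; in general one argues that the relevant part is finite since $\ccB$ is bounded). Now for each $j$, since $z_j \in \cX(\delta_\circ)$, the defining inequality \eqref{eq:Xdef} gives
\[
\int_{B_2(z_j)} |\cA(u)|^2 |\nabla u|^2 \, dx > \delta_\circ.
\]
Since the balls $B_8(z_j)$ are pairwise disjoint, so are the smaller balls $B_2(z_j)$. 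Moreover, each $B_2(z_j)$ is contained in $\ccB$: indeed $z_j \in \ccB$, and one checks from the definition of $\ccB$ (or rather $\ccB_*(\delta_\circ)$, and hence $\ccB$) that $\ccB$ contains a neighborhood of each of its points of $\cX(\delta_\circ)$; more simply, $z_j \in \cX(\delta_\circ) \cap \ccB \subset \cX(\delta_\circ)$ and $\cS_4 \subset \ccB$, so $B_4(z_j) \subset \cS_4 \subset \ccB$ and a fortiori $B_2(z_j) \subset \ccB$. Summing over $j$ and using disjointness,
\[
\int_{\ccB} |\cA(u)|^2 |\nabla u|^2 \, dx \ge \sum_j \int_{B_2(z_j)} |\cA(u)|^2 |\nabla u|^2 \, dx \ge \delta_\circ \cdot \#\{j\} \ge c \delta_\circ |\cS_8|,
\]
which is the claimed bound.

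\textbf{Main obstacle.} The only delicate point is the bookkeeping around the inclusion $B_2(z_j) \subset \ccB$. One must be careful because $\ccB$ is defined as $\ccB_*(\delta_\circ) \cap B_{R_\Lambda + \Lambda}(z_\Lambda)$, so one needs $z_j$ to be far enough inside $B_{R_\Lambda + \Lambda}(z_\Lambda)$ that $B_2(z_j)$ (or even $B_4(z_j)$) stays inside it. This is where the hypothesis $\Lambda \ge 64$ enters: the clean annulus from \Cref{lem:freeannulus} has width $\Lambda$, and $\cS_8 \cap B_{R_\Lambda+\Lambda}(z_\Lambda) \subset B_{R_\Lambda}(z_\Lambda)$, so in fact all centers $z_j \in \cX(\delta_\circ) \cap \ccB \subset \cX(\delta_\circ) \cap \cS_8 \subset B_{R_\Lambda}(z_\Lambda)$ lie at distance at least $\Lambda \ge 64$ from the boundary of $B_{R_\Lambda+\Lambda}(z_\Lambda)$; hence $B_4(z_j) \subset B_{R_\Lambda+\Lambda}(z_\Lambda)$ comfortably. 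Combined with $B_4(z_j) \subset \cS_4(\delta_\circ)^* \subset \ccB_*(\delta_\circ)$ (from the definition of $\cS^*_4$ and the inclusion $\cS^*_4(\delta_\circ) \subset \ccB_*(\delta_\circ)$), we get $B_4(z_j) \subset \ccB_*(\delta_\circ) \cap B_{R_\Lambda+\Lambda}(z_\Lambda) = \ccB$, as needed. The rest is the standard Vitali argument and has no hidden difficulty.
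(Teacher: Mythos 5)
Your proof is correct and follows essentially the same route as the paper: a Vitali-type extraction of disjoint balls centered at $\cX(\delta_\circ)\cap\ccB$, the lower bound $\delta_\circ$ per disjoint ball $B_2(z_j)$ from \eqref{eq:Xdef}, the inclusion $B_2(z_j)\subset\cS_4\subset\ccB$, and a volume comparison with $|\cS_8|$. The paper merely runs Vitali at radius $2$ (covering $\cS_8=\cS_2+B_6$ by radius-$12$ balls) instead of radius $8$, which changes nothing of substance.
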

\begin{proof}
     Recall that $\cS_2 \subset \ccB \subset \cS_8$. Let $\tilde{\cS_2}\subset \cS_2$ be the union of a maximally disjoint family of $N$ balls of radius 2 centered at $\cX(\delta_\circ)\cap \ccB$ such that the balls with 
     radius $2\cdot3+6$ cover $\cS_8=\cS_2+B_6$. In particular, we know that $N|B_2|\le |\cS_2| \leq |\cS_8| \le N|B_{12}|$. Moreover, by the definition of $\cX(\delta_\circ)$, 
    \[
    \int_{\tilde{\cS_2}(\delta_\circ)} |\cA(u)|^2 |\nabla u|^2 \,dx
    >N \delta_\circ \geq c |\cS_8|\delta_\circ  
    \geq c|\ccB| \delta_\circ. 
    \]
    This yields the result.  
\end{proof}

We can now prove that \eqref{eq:dichotomy1} does not occur. 
\begin{proof}[Proof of \Cref{prop:2W}]
We argue by contradiction and assume \eqref{eq:dichotomy1} holds. Then, $|u|< 1$ (and so it is harmonic)  inside $\cW(\delta_\circ)\supset B_{R_\Lambda+\Lambda}(z_\Lambda)\setminus B_{R_\Lambda}(z_\Lambda)$. Also, since $\dist(\cW(\delta_\circ), \{|u|= 1\}\cup \cG(\delta_\circ)) \ge 6$, we have $\partial \cW(\delta_\circ)\subset \partial\cX(\delta_\circ)$ (i.e., $\cW(\delta_\circ)$ is surrounded by $\cX(\delta_\circ)$). In particular,
thanks to \Cref{lem:freeannulus}, the following Lipschitz function is compactly supported inside $B_{R_\Lambda+\Lambda/2}(z_\Lambda)$ (note $|\nabla \zeta|\neq 0$ only in $\cW(\delta_\circ)$):
\[
\zeta(x) = 
\begin{cases}
1 & \text{if} \quad x\in (\ccB\cup \cG(\delta_\circ)\cup\{|u| = 1\})\cap B_{R_\Lambda}(z_\Lambda), \\
(1-\frac{2}{\Lambda}\dist(x, \ccB))_+ & \text{otherwise}. 
\end{cases}
\]
Now, consider the set $\cS_t$ as in \eqref{eq:cBdefAC} and note that $|\cS_t(\delta_\circ)|\le Ct^3 |\ccB|$ for $t\ge 2$. Also, by harmonic estimates, $|\nabla u|\le \frac{C}{t}$ inside $\{|\nabla \zeta|\neq 0\}\cap B_{R_\Lambda+\Lambda/2}(z_\Lambda) \setminus \cS_t(\delta_\circ)$. Hence, since $|\nabla \dist(\cdot, \ccB)| = 1$, by the layer-cake formula we get
\[
\int_{\R^3}|\nabla u|^2|\nabla \zeta|^2\,dx \le \frac{C}{\Lambda^2}\left( |\cS_2(\delta_\circ)|+\int_2^{\Lambda/2} t^{-2} \frac{d(Ct^3|\ccB|)}{dt}\, dt\right)\le \frac{C|\ccB|}{\Lambda}. 
\]
Thus, by the stability inequality \eqref{eq:stab}, the bound above, \Cref{lem:comp_Joaq0}, and \eqref{eq:stab}, we obtain
\[
c|\ccB|\delta_\circ \le \int_{\ccB}|\cA(u)|^2|\nabla u|^2 \,dx\le \frac{C|\ccB|}{\Lambda}.
\]
This provides the desired contradiction for $\Lambda$ sufficiently large (depending on $\delta_\circ$).
\end{proof}

Thanks to the previous proposition, we now on we will assume that \eqref{eq:dichotomy2} holds. 

\subsection{The intrinsic distance projection}

Define
\[
\Sigma_\lambda^\cG := \Sigma_\lambda\cap \cG(\delta_\circ) = \{u = \lambda\}\cap \cG(\delta_\circ),\qquad\text{for}\quad \lambda\in (-1, 1). 
\]
Also, recall \eqref{eq:pi lambda}.
We  have the following result about the comparability of the length of curves when projected onto different level sets:

\begin{lem} \label{lem:lengthcomp}
 For any $\eps_\circ > 0$ there exists $\delta_\circ> 0$ such that the following holds.
 
 Given 
 $\lambda\in (-1, 1)$ and a Lipschitz curve $\gamma_\lambda:[0,1 ]\to \Sigma^\cG_\lambda \setminus \ccB_*(\delta_\circ)$, define $\bar\gamma_\mu:[0,1 ]\to \Sigma^\cG_\lambda \setminus \ccB_*(\delta_\circ)$ as $\bar\gamma_\mu(t) := \pi_\mu(\gamma_\lambda(t))$, $\mu \in [-1,1]$. Then,
 \[ 
   (1-\eps_\circ) {\rm Length}(\bar\gamma_\mu) \le {\rm Length}(\gamma_\lambda) \le (1+\eps_\circ){\rm Length}(\bar\gamma_\mu)\qquad \forall\,\mu \in [-1,1].  
 \]
\end{lem}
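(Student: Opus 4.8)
The key point is that the flow map $\pi_\mu$ (defined via the ODE \eqref{eq:nxt}) transports the level set $\Sigma_\lambda$ to $\Sigma_\mu$, and inside $\cG(\delta_\circ)\setminus\ccB_*(\delta_\circ)$ the curvature estimates of \Cref{prop:good} make this flow almost an isometry. Concretely, it suffices to prove a \emph{pointwise} estimate on the differential of the time-$(\mu-\lambda)$ flow: for $x\in\cG(\delta_\circ)\setminus\ccB_*(\delta_\circ)$ and any tangent vector $\xi\in T_x\Sigma_\lambda$, the image vector $D\pi_\mu(x)[\xi]\in T_{\pi_\mu(x)}\Sigma_\mu$ satisfies $(1-\eps_\circ)|\xi|\le |D\pi_\mu(x)[\xi]| \le (1+\eps_\circ)|\xi|$. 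Integrating this along $\gamma_\lambda$ (whose velocity is tangent to $\Sigma_\lambda$ at each point, since the curve stays on $\Sigma_\lambda^\cG$) and using that $\bar\gamma_\mu = \pi_\mu\circ\gamma_\lambda$, so $\dot{\bar\gamma}_\mu(t) = D\pi_\mu(\gamma_\lambda(t))[\dot\gamma_\lambda(t)]$ a.e., immediately gives the claimed length comparison.

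\textbf{Key steps.} First I would fix the vector field $X := \nabla u/|\nabla u|^2$ on $\set{|u|<1}\cap B_{3/2}(x_\circ)$-type regions, noting that by \Cref{prop:good} and \Cref{lem:global-curv-bd} it is well-defined and $C^1$ with $|X-\nabla u|$ small and $|DX|\le C\eta_\circ$ on $\cG(\delta_\circ)\setminus\ccB_*(\delta_\circ)$ (here one uses $1-\eta_\circ\le|\nabla u|\le 1$ and $|D^2u|\le\eta_\circ$ from \Cref{prop:good}, plus the invariance of $\cG(\delta_\circ)\setminus\ccB_*(\delta_\circ)$ under the flow so the flow stays in the good region for all $t\in[-1,1]$). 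Second, differentiate the flow equation in the spatial variable: the Jacobi-type field $J(t) := D\pi_t(x)[\xi]$ solves a linear ODE $\dot J = (DX)(n_x(t))\, J$ with $J(\lambda)=\xi$, so Gr\"onwall gives $|J(t)|\le e^{\int|DX|}|\xi| \le e^{2C\eta_\circ}|\xi|$ and likewise $|J(t)|\ge e^{-2C\eta_\circ}|\xi|$, since $|t-\lambda|\le 2$. Third, one must check that $J(t)$ really stays tangent to $\Sigma_t$: this is automatic because $\pi_t$ maps $\Sigma_\lambda$ into $\Sigma_t$ (the flow preserves the foliation, as $\tfrac{d}{dt}u(n_x(t)) = \nabla u\cdot X = 1$), so differentiating $u(\pi_t(x))\equiv$ const-in-$x$-on-$\Sigma_\lambda$... more precisely $u(\pi_t(x)) = u(x)+(t-u(x))$ shows $D\pi_t$ maps $T_x\Sigma_\lambda$ to $T_{\pi_t(x)}\Sigma_t$. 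Fourth, choose $\delta_\circ$ small enough (via \Cref{prop:good}) that $\eta_\circ$ is small enough that $e^{2C\eta_\circ}\le 1+\eps_\circ$ and $e^{-2C\eta_\circ}\ge 1-\eps_\circ$; then integrate the pointwise bound along $\gamma_\lambda$.

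\textbf{Main obstacle.} The only genuinely delicate point is verifying that the flow line $t\mapsto n_x(t)$ started at a point of $\cG(\delta_\circ)\setminus\ccB_*(\delta_\circ)$ remains inside a region where the curvature bounds of \Cref{prop:good} are valid for the \emph{entire} time interval $[-1,1]$ — otherwise the Gr\"onwall constant is not controlled. This is exactly where the construction of $\ccB_*(\delta_\circ)$ pays off: by design $\cG(\delta_\circ)\setminus\ccB_*(\delta_\circ)$ is invariant under $\pi_\lambda$, i.e. if $x\in\cG(\delta_\circ)\setminus\ccB_*(\delta_\circ)$ then $\pi_\lambda(x)\in\cG(\delta_\circ)\setminus\ccB_*(\delta_\circ)$ for all $\lambda\in[-1,1]$ (stated just before \Cref{lem:freeannulus}), and by \Cref{rem:distcGcW} every point of $\cG(\delta_\circ)$ lies within distance $2$ of the free boundary, so \Cref{prop:good} applies in a ball of radius $3/2$ around each $n_x(t)$ throughout. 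Hence the Hessian and gradient bounds hold along the whole trajectory, and a standard Gr\"onwall estimate closes the argument. Everything else is routine ODE perturbation theory.

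\begin{proof}
Fix $\eps_\circ>0$ and let $\eta_\circ>0$ be small, to be chosen below; let $\delta_\circ=\delta_\circ(\eta_\circ)$ be as in \Cref{prop:good}, and shrink it further if necessary so that $\eta_\circ\le 1/2$. Set $X:=\nabla u/|\nabla u|^2$, which by \Cref{prop:good} is well-defined and $C^1$, with $|DX|\le C\eta_\circ$ (for a universal $C$) on $\cG(\delta_\circ)\setminus\ccB_*(\delta_\circ)$; here we used $1-\eta_\circ\le|\nabla u|\le1$, $|D^2u|\le\eta_\circ$, and \Cref{lem:global-curv-bd}. Recall $n_x$ from \eqref{eq:nxt} and $\pi_\mu(x)=n_x(\mu)$ from \eqref{eq:pi lambda}.

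Since $\cG(\delta_\circ)\setminus\ccB_*(\delta_\circ)$ is invariant under $\pi_\mu$ for $\mu\in[-1,1]$ (see the discussion preceding \Cref{lem:freeannulus}), the trajectory $t\mapsto n_x(t)$, $t\in[-1,1]$, stays in $\cG(\delta_\circ)\setminus\ccB_*(\delta_\circ)$ whenever $x\in\cG(\delta_\circ)\setminus\ccB_*(\delta_\circ)$. By \Cref{rem:distcGcW}, every such point lies within distance $2$ of $\{|u|=1\}$, so \Cref{prop:good} is applicable in a neighbourhood of the entire trajectory, and in particular the bound $|DX|\le C\eta_\circ$ holds along $n_x(\cdot)$.

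Differentiating the flow equation $\dot n_x(t)=X(n_x(t))$ in the initial condition $x$ along a tangent direction $\xi\in T_x\Sigma_\lambda$, the field $J(t):=D\pi_t(x)[\xi]$ solves $\dot J(t)=DX(n_x(t))\,J(t)$ with $J(\lambda)=\xi$. Since $\tfrac{d}{dt}u(n_x(t))=\nabla u(n_x(t))\cdot X(n_x(t))=1$, the flow preserves the foliation; hence $\pi_t$ maps $\Sigma_\lambda$ into $\Sigma_t$ and $J(t)\in T_{n_x(t)}\Sigma_t$. By Gr\"onwall's inequality, using $|t-\lambda|\le 2$,
\[
e^{-2C\eta_\circ}|\xi|\le|J(t)|\le e^{2C\eta_\circ}|\xi|\qquad\text{for all }t\in[-1,1].
\]

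Now let $\gamma_\lambda:[0,1]\to\Sigma_\lambda^\cG\setminus\ccB_*(\delta_\circ)$ be Lipschitz and $\bar\gamma_\mu=\pi_\mu\circ\gamma_\lambda$. For a.e.\ $t$, $\dot\gamma_\lambda(t)\in T_{\gamma_\lambda(t)}\Sigma_\lambda$ and $\dot{\bar\gamma}_\mu(t)=D\pi_\mu(\gamma_\lambda(t))[\dot\gamma_\lambda(t)]$, so the pointwise estimate above gives $e^{-2C\eta_\circ}|\dot\gamma_\lambda(t)|\le|\dot{\bar\gamma}_\mu(t)|\le e^{2C\eta_\circ}|\dot\gamma_\lambda(t)|$. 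Integrating over $[0,1]$ yields
\[
e^{-2C\eta_\circ}\,{\rm Length}(\gamma_\lambda)\le{\rm Length}(\bar\gamma_\mu)\le e^{2C\eta_\circ}\,{\rm Length}(\gamma_\lambda).
\]
Choosing $\eta_\circ$ small enough that $e^{2C\eta_\circ}\le1+\eps_\circ$ and $e^{-2C\eta_\circ}\ge1-\eps_\circ$ (which fixes $\delta_\circ$) gives the claim.
\end{proof}
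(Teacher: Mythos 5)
Your proof is correct and follows essentially the same route as the paper, whose entire proof of this lemma is the one-line remark that it is "a direct consequence of the smallness of the curvature of the level sets given by \Cref{prop:good}." Your Gr\"onwall estimate for the variation field of the flow of $\nabla u/|\nabla u|^2$, together with the flow-invariance of $\cG(\delta_\circ)\setminus\ccB_*(\delta_\circ)$, is exactly the natural fleshing-out of that remark.
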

\begin{proof}
    This is a direct consequence of the smallness of the curvature of the level sets given by \Cref{prop:good}. 
\end{proof}

Next, given $\lambda\in[-1,1]$, we define the intrinsic distance along $\Sigma_\lambda$ in the set $\cG(\delta_\circ)$ as follows:
\begin{equation}\label{eq:dB-def}
\dB^\lambda:B_{R_\Lambda+\Lambda}(z_\Lambda)\cap\{|u|< 1\}  \to \R\cup\{+\infty\},\qquad \dB^\lambda(x):=
\begin{cases} 
{\rm dist}_{\Sigma_\lambda}(\pi_\lambda(x), \ccB) & \text{if } x\in \cG(\delta_\circ)\setminus  \ccB,\\
0 & \text{otherwise}.
\end{cases}
\end{equation}
where ${\rm dist}_{\Sigma_\lambda}$ is the intrinsic distance inside the surface $\Sigma_\lambda$,
and $z_\Lambda\in \cX(\delta_\circ)$ and $R_\Lambda > 1$ are given by \Cref{lem:freeannulus}. 
(We have omitted in $\dB^\lambda$ the dependence on $\delta_\circ$ and $\Lambda$ for the sake of readability.) Note that $\{\dB^\lambda>0\}$ is disjoint from $\cW(\delta_\circ)$.
The next result shows how ${\rm dist}_{\Sigma_\lambda}$ changes when varying $\lambda.$

\begin{lem}[Comparison across levels]
\label{lem:comp-dist}
Let $\delta_\circ > 0$ and $\Lambda > 0$. Let $\ccB = \ccB(\delta_\circ, \Lambda)$ be as in \eqref{eq:cBdefAC}, with $\dB^\lambda$ as in \eqref{eq:dB-def}. Then, for any $\lambda,\mu\in(-1,1)$ and $0< r < \Lambda/8$, it holds:
\begin{itemize}
    \item For any $p\in\N$, there exists $\delta_\circ$ small enough depending only on $p$ such that,  
\[
\big(r-\dB^\lambda\big)_+ 
\leq \big(2^{1/p}r-\dB^\mu\big)_+,\qquad\text{in}\quad B_{R_\Lambda+\Lambda}(z_\Lambda)\cap\{|u|< 1\}. 
\]
    \item Let  $\eps_\circ$ be as in \Cref{lem:lengthcomp}. Then
    \[
\abs{\nabla \dB^\lambda } \leq 1+\eps_\circ,\qquad \text{in}\quad B_{R_\Lambda+\Lambda}(z_\Lambda)\cap\{|u|< 1\} \cap \{\dB^\lambda < \Lambda/4\}. 
\]
\end{itemize}
\end{lem}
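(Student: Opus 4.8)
\textbf{Proof plan for \Cref{lem:comp-dist}.}

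The strategy is to reduce both statements to the geometric facts that, inside $\cG(\delta_\circ)$, the level sets are nearly flat (by \Cref{prop:good}), that $|\nabla u|$ is close to $1$, and that the map $\pi_\lambda$ carries curves on $\Sigma_\lambda$ to comparable-length curves on $\Sigma_\mu$ (by \Cref{lem:lengthcomp}). The key point is that all relevant quantities change only by a multiplicative factor $1 + o_{\delta_\circ}(1)$ when one moves from one level set to another, and these factors can be made as close to $1$ as we wish by taking $\delta_\circ$ small.

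For the first inequality, I would argue pointwise. Fix $x\in B_{R_\Lambda+\Lambda}(z_\Lambda)\cap \{|u|<1\}$. If $\dB^\lambda(x)=0$ there is nothing to prove. If $x\in\cG(\delta_\circ)\setminus\ccB$, then the integral curve $n_x$ of $\nabla u$ through $x$ is defined on $[-1,1]$ and stays in $\cG(\delta_\circ)\setminus\ccB$ (this is the ``invariance under the flow'' property noted after \eqref{eq:BS}), so $\dB^\mu(x) = \dist_{\Sigma_\mu}(\pi_\mu(x),\ccB)$ is finite. Let $\gamma_\lambda$ be a near-minimizing curve in $\Sigma_\lambda$ from $\pi_\lambda(x)$ to $\ccB$ of length at most $\dB^\lambda(x)+\varepsilon$; by \Cref{lem:lengthcomp}, $\bar\gamma_\mu := \pi_\mu\circ\gamma_\lambda$ is a curve in $\Sigma_\mu$ from $\pi_\mu(x)$ to $\pi_\mu(\text{endpoint})$ of length at most $(1+\eps_\circ)\dB^\lambda(x)+(1+\eps_\circ)\varepsilon$. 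The one subtlety is that the endpoint of $\gamma_\lambda$ lies in $\ccB\cap\Sigma_\lambda$, and one must check that its $\pi_\mu$-image still lies in $\ccB\cap\Sigma_\mu$, or at least at controlled intrinsic distance from it; this follows from the fact that $\ccB = \ccB_*(\delta_\circ)\cap B_{R_\Lambda+\Lambda}(z_\Lambda)$ is a union of pieces of integral curves of $\nabla u$ emanating from $\partial\cS^*_4(\delta_\circ)$ together with $\cS^*_4(\delta_\circ)$ itself, so it is essentially invariant under the flow $\pi_\mu$ up to the $o_{\delta_\circ}(1)$ distortion of $\cS_4^*$ across level sets (which by \Cref{prop:good} is at most $o_{\delta_\circ}(1)$). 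Hence $\dB^\mu(x)\le (1+\eps_\circ)\dB^\lambda(x)+o_{\delta_\circ}(1)$, and choosing $\delta_\circ$ small (so $\eps_\circ$ and the additive error are tiny, depending on $p$), one gets $\dB^\mu(x)\le 2^{1/p}\dB^\lambda(x)$ whenever $\dB^\lambda(x)>0$; combined with positivity of $(\cdot)_+$ this yields $(r-\dB^\lambda(x))_+\le (2^{1/p}r-\dB^\mu(x))_+$. (When $\dB^\lambda(x)>0$ but $\dB^\mu(x)$ could a priori be smaller, the inequality is even easier since $2^{1/p}r\ge r$.)

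For the second inequality, I would show that $\dB^\lambda$ is $(1+\eps_\circ)$-Lipschitz on the set where it is finite and bounded by $\Lambda/4$. Since $\pi_\lambda$ restricted to $\cG(\delta_\circ)\setminus\ccB$ is a diffeomorphism onto $\Sigma_\lambda^\cG\setminus(\ccB\cap\Sigma_\lambda)$ with differential close to an isometry composed with the projection along $\nabla u$ — quantitatively, by \Cref{prop:good} the level sets have curvature $\le\eta_\circ$ and $|\nabla u|\in[1-\eta_\circ,1]$, so the flow $n_x(t)$ moves points by essentially the Euclidean amount — the pushforward metric on $\cG(\delta_\circ)\setminus\ccB$ differs from the restriction of the Euclidean metric by a factor $1+o_{\delta_\circ}(1)$. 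Then $x\mapsto \dist_{\Sigma_\lambda}(\pi_\lambda(x),\ccB)$ is $1$-Lipschitz with respect to this pushforward metric and hence $(1+\eps_\circ)$-Lipschitz with respect to the Euclidean metric on the region $\{\dB^\lambda<\Lambda/4\}$ (where everything is well inside the clean annulus given by \Cref{lem:freeannulus}, so no competing pieces of $\ccB$ from outside $B_{R_\Lambda+\Lambda}(z_\Lambda)$ interfere). Differentiating gives $|\nabla\dB^\lambda|\le 1+\eps_\circ$ a.e., and since $\dB^\lambda$ is locally Lipschitz this is the claimed bound. The main obstacle in both parts is bookkeeping the fact that $\ccB$ — being built from $\cS^*_4(\delta_\circ)$ plus integral curves — is not exactly flow-invariant across level sets; one must verify that its distortion under $\pi_\mu$ is $o_{\delta_\circ}(1)$, which is exactly where \Cref{prop:good} (the curvature and gradient bounds in $\cG(\delta_\circ)$) and the $8$-vs-$4$ buffer between $\cS^*_4$ and $\cS^*_8$ in the definition of $\ccB_*$ are used.
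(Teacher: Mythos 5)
Your second bullet is handled essentially as in the paper: the clean annulus from \Cref{lem:freeannulus} guarantees minimizing curves stay in $\cG(\delta_\circ)\setminus\ccB$, \Cref{lem:lengthcomp} makes $\pi_\lambda$ a $(1+\eps_\circ)$-Lipschitz map there, and composing with the $1$-Lipschitz intrinsic distance gives the gradient bound. No issues there.

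For the first bullet, however, there is a genuine flaw in your quantitative bookkeeping. You derive $\dB^\mu(x)\le(1+\eps_\circ)\dB^\lambda(x)+o_{\delta_\circ}(1)$, with an \emph{additive} error coming from a supposed $o_{\delta_\circ}(1)$ distortion of $\ccB$ across level sets, and then claim this yields $\dB^\mu(x)\le 2^{1/p}\dB^\lambda(x)$ for $\delta_\circ$ small. An additive error that does not vanish cannot be absorbed into a multiplicative factor when $\dB^\lambda(x)$ is small; worse, the target inequality $(r-\dB^\lambda)_+\le(2^{1/p}r-\dB^\mu)_+$ itself would fail for small $r$ if a fixed additive error were truly present, since the lemma is asserted for \emph{all} $r\in(0,\Lambda/8)$ and the slack $(2^{1/p}-1)r$ shrinks to zero with $r$. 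The resolution is that there is no additive error at all: $\ccB_*(\delta_\circ)$ is built by adjoining to $\cS^*_4(\delta_\circ)$ the \emph{full} integral curves of $\nabla u$ through $\partial\cS^*_4(\delta_\circ)\cap\cG(\delta_\circ)$, precisely so that both $\ccB_*(\delta_\circ)\cap\cG(\delta_\circ)$ and its complement in $\cG(\delta_\circ)$ are exactly invariant under the maps $\pi_\mu$ (this is the "invariance under the flow" property stated after \eqref{eq:BS}, applied in both directions). Hence projecting a near-minimizing curve for $\dB^\lambda(x)$ produces a genuine competitor for $\dB^\mu(x)$ ending \emph{in} $\ccB$, and \Cref{lem:lengthcomp} gives the purely multiplicative comparison $\dB^\lambda(x)\ge(1-\eps_\circ)\dB^\mu(x)$. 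With that, the elementary estimate
\[
(r-\dB^\lambda)_+ \le \big(r-(1-\eps_\circ)\dB^\mu\big)_+ \le \big((1+2\eps_\circ)r-\dB^\mu\big)_+ \le \big(2^{1/p}r-\dB^\mu\big)_+
\]
(using $\eps_\circ\dB^\mu\le\tfrac{\eps_\circ}{1-\eps_\circ}\dB^\lambda\le 2\eps_\circ r$ on the set where the left-hand side is nonzero) closes the argument uniformly in $r$, which is exactly the paper's route. So the approach is right, but you must replace the "approximate invariance plus absorption" step by the exact flow-invariance of $\ccB$.
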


\begin{proof}
Let $x \in B_{R_\Lambda+\Lambda}(z_\Lambda)\cap\{|u|< 1\} \cap \{\dB^\lambda < \Lambda/4\}$. We can assume that $\cG(\delta_\circ)\setminus  \ccB$, otherwise $d_\ccB^\mu(x)=0$ for all $\mu$ and the result holds. 

Now, for $0 < \dB^\lambda(x) < r\le \Lambda/8$, it follows from  \Cref{lem:freeannulus} that $x\in B_{R_\Lambda+\Lambda/6}(z_\Lambda)$. Also, by \Cref{lem:lengthcomp} and the definition of $\ccB_*(\delta_\circ)$, we have $\dB^\lambda(x) \ge (1-\eps_\circ)\dB^\mu(x)$. Thus, 
\[
(r-\dB^\lambda(x))_+ \le (r-(1-\eps_\circ)\dB^\mu(x))_+ \le \left(r-\dB^\mu(x)+\frac{\eps_\circ}{1-\eps_\circ} \dB^\lambda(x)\right)_+ \le \big((1+2\eps_\circ)r-\dB^\mu(x)\big)_+ \le (2^{1/p}r-\dB^\mu(x))_+
\]
for $\eps_\circ$  small enough, depending only on $p$. 

The second part is a consequence of \Cref{lem:lengthcomp}. Indeed, recalling the validity of \eqref{eq:dichotomy2}, the distance to $\ccB$ (when nonzero) is achieved along curves fully contained inside $B_{R_\Lambda+\Lambda/2}(z_\Lambda)\cap \cG(\delta_\circ) \setminus \ccB$ (recall \Cref{lem:freeannulus}). So, we can apply \Cref{lem:lengthcomp} to deduce that $\pi_\lambda$ is $(1+\eps_\circ)$-Lipschitz near the support of minimizing curves. Since the intrinsic distance is always $1$-Lipschitz, the result follows. 
\end{proof}

\subsection{Consequences of stability}
Recall that, thanks to \Cref{prop:2W}, we can assume that \eqref{eq:dichotomy2} holds. We now show some 
 first consequences of stability.

\begin{lem}
\label{lem:comp_Joaq}
 Let $\delta_\circ > 0$ and $\Lambda \ge 64$, and let $\ccB$ and $\cS_8$ be as in \eqref{eq:cBdefAC}. Then, for any $\lambda\in (-1, 1)$ we have
 \[
  C|\ccB|\ge \cH^2(\Sigma_\lambda\cap \{0< \dB^\lambda < 2\}) \ge c\int_{\ccB} |\cA(u)|^2 |\nabla u|^2 \,dx \ge  c'\delta_\circ  |\cS_8| \ge c'\delta_\circ  |\ccB|,
 \]
 where $C$, $c,$ and $c'$ are positive universal constants.
\end{lem}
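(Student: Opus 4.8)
\textbf{Proof strategy for Lemma~\ref{lem:comp_Joaq}.}
The statement collects several inequalities. The bounds $c'\delta_\circ|\cS_8|\ge c'\delta_\circ|\ccB|$ and $C|\ccB|\ge\cH^2(\Sigma_\lambda\cap\{0<\dB^\lambda<2\})$ are essentially soft: the first uses $\ccB\subset\cS_8$, and the second follows because the level set $\Sigma_\lambda$ in the region $\{0<\dB^\lambda<2\}$ sits inside (a bounded neighborhood of) $\ccB$ by the definition of $\dB^\lambda$, and $\Sigma_\lambda$ has area bounded by a constant times the volume of that neighborhood, using \Cref{lem:area-level-sets} together with the gradient lower bound $|\nabla u|\ge 1-\eta_\circ$ from \Cref{prop:good}. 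The step $c\delta_\circ|\cS_8|\le c\int_{\ccB}|\cA(u)|^2|\nabla u|^2\,dx$ is exactly \Cref{lem:comp_Joaq0}, which is already proved. So the only genuinely new inequality to establish is
\[
\cH^2(\Sigma_\lambda\cap\{0<\dB^\lambda<2\})\ \ge\ c\int_{\ccB}|\cA(u)|^2|\nabla u|^2\,dx.
\]

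The plan is to prove this by testing the stability inequality \eqref{eq:stab} with a test function adapted to the level set $\Sigma_\lambda$ near $\ccB$, and then using the coarea formula together with \Cref{lem:comp-dist} to transfer the level-set computation back to a full-dimensional integral over $\ccB$. Concretely: first I would choose a Lipschitz cutoff $\zeta$ supported in the neighborhood $\{\dB^\lambda<2\}$ (which, by \Cref{lem:freeannulus}, is compactly contained in $B_{R_\Lambda+\Lambda/2}(z_\Lambda)$), with $\zeta\equiv 1$ on $\ccB$ and $|\nabla\zeta|$ controlled by a universal constant — possible because $\{0<\dB^\lambda<2\}$ is a region of ``thickness $2$'' around $\ccB$ in the intrinsic distance, and by \Cref{lem:comp-dist} the Euclidean gradient of $\dB^\lambda$ is $\le 1+\eps_\circ$. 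Testing \eqref{eq:stab} with this $\zeta$ gives
\[
\int_{\ccB}|\cA(u)|^2|\nabla u|^2\,dx\ \le\ \int_{\R^3}|\cA(u)|^2|\nabla u|^2\zeta^2\,dx\ \le\ \int_{\R^3}|\nabla u|^2|\nabla\zeta|^2\,dx\ \le\ C\bigl|\{0<\dB^\lambda<2\}\bigr|,
\]
using $|\nabla u|\le 1$ (\Cref{lem:Modica}). It then remains to bound the volume $|\{0<\dB^\lambda<2\}|$ by a universal constant times the \emph{area} $\cH^2(\Sigma_\lambda\cap\{0<\dB^\lambda<2\})$. This is where the structure of $\cG(\delta_\circ)$ enters: on $\cG(\delta_\circ)\setminus\ccB$ the normal flow $\pi_\mu$ is, by \Cref{prop:good}, an almost-isometry between level sets, and $|\nabla u|\approx 1$, so by the coarea formula the full-dimensional region $\{0<\dB^\lambda<2\}$ is foliated by the level sets $\{\pi_\mu(x):x\in\Sigma_\lambda\cap\{0<\dB^\lambda<2\}\}$ for $\mu$ ranging over $(-1,1)$, each of area comparable (up to the factor $1\pm\eps_\circ$ from \Cref{lem:lengthcomp}, more precisely its $2$-dimensional analogue for areas) to $\cH^2(\Sigma_\lambda\cap\{0<\dB^\lambda<2\})$. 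Hence $|\{0<\dB^\lambda<2\}|\le C\,\cH^2(\Sigma_\lambda\cap\{0<\dB^\lambda<2\})$, which closes the chain of inequalities.

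The main obstacle I anticipate is the bookkeeping around the set $\{0<\dB^\lambda<2\}$: one must be careful that this set genuinely is contained in $\cG(\delta_\circ)\setminus\ccB$ together with $\ccB$ itself (so that the normal-flow foliation argument applies where $\dB^\lambda>0$, and the $\dB^\lambda=0$ part is just $\ccB$), and that the cutoff $\zeta$ can indeed be built with universally bounded gradient despite $\dB^\lambda$ being only an intrinsic distance — this is precisely what the Euclidean gradient bound $|\nabla\dB^\lambda|\le 1+\eps_\circ$ in \Cref{lem:comp-dist} is for, valid because $\Lambda/4>2$ once $\Lambda\ge 64$. A secondary point is the $2$-dimensional version of \Cref{lem:lengthcomp} (comparing \emph{areas} rather than lengths under $\pi_\mu$), but this follows from exactly the same small-curvature input of \Cref{prop:good} and I would simply invoke it. Everything else is a routine application of the coarea formula and the already-established \Cref{lem:comp_Joaq0}.
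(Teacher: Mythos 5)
Your proposal is correct and follows essentially the same route as the paper: the paper tests stability with $\zeta=(2-\dB^\lambda)_+$ (your cutoff), uses \Cref{lem:comp-dist} for $|\nabla\zeta|\le 1+\eps_\circ$ and \Cref{lem:Modica} for $|\nabla u|\le 1$, compares $\cH^2(\Sigma_\lambda\cap\{0<\dB^\lambda<2\})$ with $|\{0<\dB^\lambda<2\}\cap\{|u|<1\}|$ via the flatness and gradient bounds of \Cref{prop:good}, and invokes \Cref{lem:comp_Joaq0} for the remaining inequalities. No gaps.
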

\begin{proof}
The third and fourth inequalities are from \Cref{lem:comp_Joaq0}. 

    For the second one, we apply the stability inequality \eqref{eq:stab} with $\zeta(x) = (2-\dB^\lambda(x))_+$ (recall Remark~\ref{rmk:test fct}), which is compactly supported in $B_{R_\Lambda+\Lambda/2}(z_\Lambda)$ by \Cref{lem:freeannulus}. Then, thanks to Lemmas~\ref{lem:Modica} and \ref{lem:comp-dist} and  we get
    \[
    4\int_{\ccB} |\cA(u)|^2|\nabla u|^2 \,dx \le \int_{\R^3} |\cA(u)|^2|\nabla u|^2 \zeta^2 \,dx \le \int_{\{0 < \dB^\lambda < 2\}} |\nabla u|^2 |\nabla \zeta|^2 \,dx \le C|\{0 < \dB^\lambda < 2\}\cap \{|u|< 1\}|.
    \]
  Observing that $\cH^2(\Sigma_\lambda\cap \{0 < \dB^\lambda < 2\})$ is comparable to $|\{0 < \dB^\lambda < 2\}\cap \{|u|< 1\}|$ (by the curvature estimates on the level sets and lower bound on $|\nabla u|$ from \Cref{prop:good}), we get the second inequality.
  Finally, since $|\{0 < \dB^\lambda < 2\}\cap \{|u|< 1\}|\le |\cS_{16}|\le C|\cS_{2}|\le C|\ccB|$, also the first inequality follows. 
\end{proof}

We now start bounding level sets near $\ccB$:

\begin{lem}[Stability near $\ccB$, intrinsic]
\label{lem:stab-on-bad}
Let $\lambda\in (-1,1)$, $\delta_\circ > 0$, $\Lambda\geq 64$, and $\ccB = \ccB(\delta_\circ, \Lambda)$ as in \eqref{eq:cBdefAC}. Consider the following set of ``indicator functions'' on $\mathbb N$:
\begin{equation}\label{XiLambda}
\Xi(\Lambda) := 
\bigg\{
\xi : \N  \to \{0,1\} : \mbox{such that}\quad \{\xi =1\} \subset  \big[\tfrac 1 4 \log_2 \Lambda,\log_2 \Lambda-5 \big] \qquad \mbox{and}\qquad \sum_{k\in \mathbb N} \xi(k) =\ceils{\tfrac 14  \log_2 \Lambda}
\bigg\}.
\end{equation}
Then
\begin{equation}
\label{eq:bound bad stability}
\cH^2\big(\Sigma_\lambda \cap \{0<\dB^\lambda<2\}\big)
\leq \frac{C}{\delta_\circ|\log \Lambda|^2} \min_{\xi \in \Xi(\Lambda)} 
\sum_{k} \xi(k)
    \frac{\cH^2(\Sigma_\lambda \cap \{2^k\leq \dB^{\lambda}<2^{k+1}\})}{2^{2k}},
\end{equation}
for a universal constant $C$.
\end{lem}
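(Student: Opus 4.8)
The plan is to use the stability inequality \eqref{eq:stab} with a test function that is an \emph{intrinsic} logarithmic cut-off along the level set $\Sigma_\lambda$, built so that its gradient is supported on a controlled number of dyadic annuli $\{2^k \le \dB^\lambda < 2^{k+1}\}$. Concretely, given a choice function $\xi \in \Xi(\Lambda)$, I would set $\zeta_\xi$ to be (roughly) a continuous piecewise-linear function of $\dB^\lambda$ that equals $1$ on $\{\dB^\lambda < 2\}$, equals $0$ for $\dB^\lambda$ larger than $2^{\log_2\Lambda - 4}$, and whose derivative (as a function of $\dB^\lambda$) is nonzero only on the dyadic annuli $k$ with $\xi(k) = 1$, decreasing by a fixed increment $\sim 1/(\#\{\xi=1\}) \sim 1/|\log\Lambda|$ across each such annulus. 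Since $\{\xi=1\}$ has exactly $\lceil \tfrac14\log_2\Lambda\rceil$ elements, the total drop is $1$, so $\zeta_\xi$ is admissible. By \Cref{lem:freeannulus} the support of $\zeta_\xi$ is contained in $B_{R_\Lambda + \Lambda/2}(z_\Lambda)$, hence $\zeta_\xi \in C^{0,1}_c$ (recall Remark~\ref{rmk:test fct}), provided $\delta_\circ$ is small enough that the curvature and gradient estimates of \Cref{prop:good} hold on the relevant region, so that $\dB^\lambda$ has $|\nabla \dB^\lambda| \le 1 + \eps_\circ$ (\Cref{lem:comp-dist}).

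The left-hand side of \eqref{eq:stab} is bounded below using \Cref{lem:comp_Joaq}: since $\zeta_\xi \equiv 1$ on $\{0 < \dB^\lambda < 2\} \supset \ccB$ (up to the usual comparison between the $3$-dimensional tube $\{0<\dB^\lambda<2\}\cap\{|u|<1\}$ and the $2$-dimensional slice $\Sigma_\lambda\cap\{0<\dB^\lambda<2\}$, controlled by the lower bound $|\nabla u|\ge 1-\eta_\circ$), I get
\[
\int_{\R^3} |\cA(u)|^2|\nabla u|^2 \zeta_\xi^2\,dx \;\ge\; c\int_\ccB |\cA(u)|^2|\nabla u|^2\,dx \;\ge\; c\,\delta_\circ\, \cH^2\big(\Sigma_\lambda\cap\{0<\dB^\lambda<2\}\big),
\]
where the last step again uses \Cref{lem:comp_Joaq} (which says that $\cH^2(\Sigma_\lambda\cap\{0<\dB^\lambda<2\})$ is comparable to $|\ccB|$, which in turn is $\le C\,\delta_\circ^{-1}\int_\ccB|\cA(u)|^2|\nabla u|^2$). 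For the right-hand side, I use $|\nabla u|\le 1$ (\Cref{lem:Modica}), $|\nabla\zeta_\xi| \le (1+\eps_\circ)\,|\partial_t \zeta_\xi|(\dB^\lambda)$, and the fact that on each dyadic annulus $\{2^k \le \dB^\lambda < 2^{k+1}\}$ the slope of $\zeta_\xi$ is $O\big(2^{-k}/|\log\Lambda|\big)$; converting the $3$-dimensional integral over $\{|u|<1\}$ to an integral over $\Sigma_\lambda$ by the coarea formula (using $1-\eta_\circ \le |\nabla u| \le 1$ and the flatness of level sets from \Cref{prop:good}, and that $\pi_\lambda$ is bi-Lipschitz with constant close to $1$ by \Cref{lem:lengthcomp}), I obtain
\[
\int_{\R^3}|\nabla u|^2|\nabla\zeta_\xi|^2\,dx \;\le\; \frac{C}{|\log\Lambda|^2}\sum_k \xi(k)\,\frac{\cH^2(\Sigma_\lambda\cap\{2^k \le \dB^\lambda < 2^{k+1}\})}{2^{2k}}.
\]
Combining the two bounds and dividing by $c\,\delta_\circ$ gives \eqref{eq:bound bad stability} for this particular $\xi$; since $\xi\in\Xi(\Lambda)$ was arbitrary, taking the minimum over $\Xi(\Lambda)$ finishes the proof.

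The main obstacle I anticipate is the bookkeeping in passing between the ambient $3$-dimensional integrals appearing in the stability inequality \eqref{eq:stab} and the intrinsic $2$-dimensional quantities $\cH^2(\Sigma_\lambda\cap\{\cdots\})$ on the single level set $\{u=\lambda\}$: one must be careful that the test function $\zeta_\xi$, which is naturally defined in terms of $\dB^\lambda$ (an intrinsic quantity on $\Sigma_\lambda$), extends to a genuine Lipschitz function on a neighborhood in $\R^3$ with controlled gradient, and that the coarea slicing does not lose more than a factor $1+o_{\delta_\circ}(1)$. Here the flatness estimates of \Cref{prop:good}, the bi-Lipschitz comparison of \Cref{lem:lengthcomp}, and the gradient bound on $\dB^\lambda$ from \Cref{lem:comp-dist} are exactly what make this error controllable — one just needs $\delta_\circ$ (hence $\eta_\circ$) small enough, which is consistent with the running assumptions. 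A secondary, purely combinatorial point is to make sure the class $\Xi(\Lambda)$ is nonempty and that the minimum is actually realized, which is immediate since $\big[\tfrac14\log_2\Lambda, \log_2\Lambda - 5\big]$ contains at least $\lceil\tfrac14\log_2\Lambda\rceil$ integers once $\Lambda$ is large (i.e. $\Lambda \ge \Lambda_0$ for a universal $\Lambda_0$), and $\Xi(\Lambda)$ is a finite set.
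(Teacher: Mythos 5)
Your proposal is correct and follows essentially the same route as the paper's proof: the paper's test function $\zeta=\frac{1}{\sum_k\xi(k)}\sum_k\xi(k)\psi(2^{-k}\dB^\lambda)$ is exactly your normalized piecewise-linear profile in $\dB^\lambda$ (each selected annulus contributing a drop of $1/\lceil\tfrac14\log_2\Lambda\rceil$, hence slope $\sim 2^{-k}/|\log\Lambda|$ there), the left-hand side is bounded below via $\zeta\equiv1$ on $\ccB$ together with \Cref{lem:comp_Joaq}, and the right-hand side is converted to the $2$-dimensional quantities on $\Sigma_\lambda$ using the gradient bound of \Cref{lem:comp-dist} and the adapted coordinates $(y,t)\in\Sigma_\lambda\times[-1,1]$ from \Cref{prop:good}. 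The only cosmetic slip is writing $\{0<\dB^\lambda<2\}\supset\ccB$ (on $\ccB$ one has $\dB^\lambda=0$, but $\zeta_\xi=1$ there regardless), which does not affect the argument.
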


\begin{proof}
Let $\psi:\R\to [0,1]$ be a smooth nonincreasing function satisfying  $\psi(t) =1$ for  $t\le 1$, $\psi(t) =0$ for $t\ge2$, and $|\psi'|\le 2$. Given $\xi \in \Xi(\Lambda)$ fixed, we consider the stability inequality \eqref{eq:stab} with
\[
\zeta(x) := \frac{1}{\sum_k\xi(k)}\sum_k  \xi(k)\psi(2^{-k}\dB^\lambda(x))
\]
(recall Remark~\ref{rmk:test fct}).
Notice that $\zeta$ is supported in $\{\dB^\lambda \le \Lambda/16\}\subset B_{R_\Lambda+\Lambda/8}(z_\Lambda)$ (by \Cref{lem:freeannulus} and \eqref{eq:dichotomy2}), and it is constantly equal to 1 on $\{\dB^\lambda < \Lambda^{1/4}\}$. 

Now, thanks to \Cref{lem:comp-dist}, the right-hand side of the stability inequality \eqref{eq:stab} can be bounded by
\begin{multline*}
\int_{\R^3}|\nabla \zeta|^2|\nabla u|^2 \,dx
\le 
\int_{\{|u|< 1\}}|\nabla \zeta|^2 \,dx
\le \int_{B_{R_\Lambda+\Lambda/4}(z_\Lambda)\setminus \ccB}
|\nabla \zeta|^2 \,dx
\\
\le \frac{(1+\eps_\circ)^2}{(\sum_k\xi(k))^2}\sum_k\frac{\xi(k)}{2^{2k}} \int_{B_{R_\Lambda+\Lambda/4}(z_\Lambda)\setminus \ccB} 
|\psi'(2^{-k}\dB^\lambda(x))|^2 \,dx.
\end{multline*}
(Here we used that, for each $t\ge 0$, $\psi'(2^{-k}t)$ is non-zero for a single $k=k(t)\in \N$.)  

Recalling \Cref{prop:good},
we now consider adapted coordinates   
$(y,t)\in \Sigma_\lambda \times [-1, 1] \longleftrightarrow x=n_y(t) \in \Sigma_t$ (recall \eqref{eq:nxt}), so that $dx\leq (1+C \eta_\circ)\,dt\,d\cH^{2}_y$. Thus
\[
\begin{split}
\int_{B_{R_\Lambda+\Lambda/4}(z_\Lambda)\setminus \ccB}|\psi'_\Lambda(2^{-k}\dB^\lambda(x))|^2 \,dx & \le \big(1+C\eta_\circ) \int_{-1}^1 \int_{\Sigma_\lambda\cap (B_{R_\Lambda+\Lambda/4}(z_\Lambda)\setminus \ccB)}|\psi'_\Lambda(2^{-k}\dB^\lambda(y))|^2d\cH^2_y \, dt \\
& \le 4(1+C\eta_\circ) \cH^2\left( \Sigma_\lambda\cap \{2^k\le \dB^\lambda < 2^{k+1} \}\right),
\end{split}
\]
where we used that $|\psi'|\le 2$. Thus, for $\delta_\circ$ universally small enough (so that both $\eps_\circ$ and $\eta_\circ$ are small), we get
\[
\int_{\R^3}|\nabla \zeta|^2|\nabla u|^2 \,dx
\le \frac{C}{|\log\Lambda|^2} 
\sum_k \xi(k)\frac{\cH^2\left( \Sigma_\lambda\cap \{2^k\le \dB^\lambda < 2^{k+1} \}\right)}{2^{2k}}.
\]
Combining this estimate with \eqref{eq:stab} and \Cref{lem:comp_Joaq}, the result follows. 
\end{proof}
 
Next, we prove a doubling property:

\begin{lem}[Doubling]
\label{lem:doubling-FBAC}
Given $\delta_\circ>0$ small, there exists $\Lambda_0\ge 64$, depending only on $\delta_\circ$, such that the following holds whenever $\Lambda\ge\Lambda_0$.

Let $\ccB = \ccB(\delta_\circ, \Lambda)$ be as in \eqref{eq:cBdefAC}, $\Xi(\Lambda)$ as in \eqref{XiLambda}, and fix $p\geq 16$. Then, for any given  $\lambda\in (-1,1)$ there exists  $r\in  (\Lambda^{1/4}, \Lambda/ 8 )$ such that the following two inequalities hold simultaneously:
\begin{equation}\label{eq_doubl1}
    \cH^2\left(\Sigma_\lambda \cap \set{0<\dB^\lambda < 2^{1/p}r}\right)
\leq 2\cH^2\left(\Sigma_\lambda \cap \set{0<\dB^\lambda < r}\right)
\end{equation}
and
\begin{equation}\label{eq_doubl2}
\frac{1}{|\log_2\Lambda|} \min_{\xi \in \Xi(\Lambda)}
\sum_{k} \xi(k)
    \frac{\cH^2(\Sigma_\lambda \cap \{2^k\le\dB^\lambda<2^{k+1}\})}{2^{2k}}
\leq  16\frac{\cH^2(\Sigma_\lambda \cap \set{0<\dB^\lambda < r})}{r^2}.
\end{equation}
\end{lem}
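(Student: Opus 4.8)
The plan is to extract both inequalities from a single pigeonhole/averaging argument over the dyadic annuli $\{2^k\le \dB^\lambda<2^{k+1}\}$, exploiting that \Cref{lem:stab-on-bad} already gives an upper bound on $\cH^2(\Sigma_\lambda\cap\{0<\dB^\lambda<2\})$ in terms of $\min_{\xi}\sum_k\xi(k)2^{-2k}\cH^2(\Sigma_\lambda\cap\{2^k\le\dB^\lambda<2^{k+1}\})$, with a $\delta_\circ^{-1}|\log\Lambda|^{-2}$ prefactor. Write $a_k:=\cH^2(\Sigma_\lambda\cap\{2^k\le\dB^\lambda<2^{k+1}\})$ and $A_k:=\sum_{j\le k}a_j=\cH^2(\Sigma_\lambda\cap\{0<\dB^\lambda<2^{k+1}\})$. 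First I would establish the doubling inequality \eqref{eq_doubl1} by a standard iteration/pigeonhole: if \eqref{eq_doubl1} failed for \emph{every} dyadic-type radius $r\in(\Lambda^{1/4},\Lambda/8)$, then $A_{k+1}>2^{1-1/p}A_k$ would hold along a chain of $\sim\log_2\Lambda$ consecutive scales, forcing $A_k$ to grow at least like $2^{(1-1/p)(k-k_0)}$; since $p\ge 16$ the exponent $1-1/p\ge 15/16$ is bounded below, so $A_k$ grows faster than any fixed polynomial and in particular $\cH^2(\Sigma_\lambda\cap\{0<\dB^\lambda<\Lambda/8\})\gg \Lambda^{c}\cdot\cH^2(\Sigma_\lambda\cap\{0<\dB^\lambda<2\})$. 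But $\{0<\dB^\lambda<\Lambda/8\}\cap\{|u|<1\}\subset\cS_{\Lambda/4}$, whose volume—and hence (via the curvature bounds of \Cref{prop:good} and the area bound \Cref{lem:area-level-sets}) the corresponding level-set area—is bounded by $C\Lambda^3|\ccB|$; combined with the lower bound $\cH^2(\Sigma_\lambda\cap\{0<\dB^\lambda<2\})\ge c\delta_\circ|\ccB|$ from \Cref{lem:comp_Joaq}, this is a contradiction once $\Lambda$ is large enough depending on $\delta_\circ$. This yields a ``large'' set $\cR_1$ of admissible radii $r$ satisfying \eqref{eq_doubl1}: more precisely the radii in $(\Lambda^{1/4},\Lambda/8)$ that violate doubling can be covered by at most, say, $\tfrac{1}{100}\log_2\Lambda$ dyadic scales.

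Next I would obtain \eqref{eq_doubl2}. The key point is that the quantity $\min_{\xi\in\Xi(\Lambda)}\sum_k\xi(k)\,a_k/2^{2k}$ selects the $\ceil{\tfrac14\log_2\Lambda}$ \emph{smallest} values of $a_k/2^{2k}$ among $k\in[\tfrac14\log_2\Lambda,\log_2\Lambda-5]$. Hence there are at least $\tfrac14\log_2\Lambda$ indices $k$ in that window with $a_k/2^{2k}$ at most (roughly) the average value $\tfrac{4}{\log_2\Lambda}\min_\xi\sum_k\xi(k)a_k/2^{2k}$. For each such ``good'' index $k$, set $r=2^{k+1}$: then $\cH^2(\Sigma_\lambda\cap\{0<\dB^\lambda<r\})\ge A_{k-1}\ge$ (roughly) $\cH^2(\Sigma_\lambda\cap\{0<\dB^\lambda<2\})$, which by \Cref{lem:comp_Joaq} dominates $\tfrac{c\delta_\circ}{C}\cH^2(\Sigma_\lambda\cap\{0<\dB^\lambda<2\})$, while on the other hand $a_k/2^{2k}$ is controlled by the average. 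A short computation—using $r=2^{k+1}$ so $r^2\asymp 2^{2k}$, and that $\cH^2(\Sigma_\lambda\cap\{0<\dB^\lambda<r\})/r^2\gtrsim \cH^2(\Sigma_\lambda\cap\{0<\dB^\lambda<2\})/2^{2k}\cdot 2^{2k}/r^2$... — shows that for such $r$ one gets
\[
\frac{1}{|\log_2\Lambda|}\min_{\xi\in\Xi(\Lambda)}\sum_k\xi(k)\frac{a_k}{2^{2k}}\le \frac{C}{|\log_2\Lambda|}\cdot|\log_2\Lambda|\cdot\frac{a_k}{2^{2k}}\le 16\,\frac{\cH^2(\Sigma_\lambda\cap\{0<\dB^\lambda<r\})}{r^2},
\]
provided $\Lambda$ is large. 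Again this holds for all but at most $\tfrac1{100}\log_2\Lambda$ of the dyadic radii in the window, so the set $\cR_2$ of radii satisfying \eqref{eq_doubl2} is also ``large''.

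Finally I would intersect: since the radii violating \eqref{eq_doubl1} occupy at most $\tfrac1{100}\log_2\Lambda$ dyadic scales and likewise for \eqref{eq_doubl2}, while the window $(\Lambda^{1/4},\Lambda/8)$ contains $\sim\tfrac34\log_2\Lambda$ dyadic scales, there remains (for $\Lambda\ge\Lambda_0(\delta_\circ)$) at least one scale $r\in(\Lambda^{1/4},\Lambda/8)$ for which \eqref{eq_doubl1} and \eqref{eq_doubl2} hold simultaneously, which is the assertion. The main obstacle is bookkeeping the constants carefully enough that the ``bad'' radii for each inequality really occupy only a small fraction of the logarithmically-many available scales—this is where the precise form of $\Xi(\Lambda)$ (window $[\tfrac14\log_2\Lambda,\log_2\Lambda-5]$, cardinality $\ceil{\tfrac14\log_2\Lambda}$) and the interplay with the prefactor $\delta_\circ^{-1}|\log\Lambda|^{-2}$ from \Cref{lem:stab-on-bad} must be matched; choosing $\Lambda_0$ large in terms of $\delta_\circ$ absorbs the remaining slack. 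One should also take mild care that \eqref{eq_doubl1} is stated with the factor $2^{1/p}r$ rather than $2r$, so the doubling iteration uses the ratio $2^{1-1/p}$ and one must verify $p\ge 16$ keeps this bounded away from $1$, which it manifestly does.
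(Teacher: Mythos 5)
Your overall strategy (a median/pigeonhole selection for \eqref{eq_doubl2}, a growth-versus-cubic-bound pigeonhole for \eqref{eq_doubl1}, and then an intersection) is the same as the paper's, but two of your key quantitative steps are wrong as written.

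First, the doubling pigeonhole. If \eqref{eq_doubl1} fails at $r$, then $\Theta(2^{1/p}r)>2\Theta(r)$, i.e.\ $\Theta$ \emph{doubles} over each multiplicative step of size $2^{1/p}$; iterating over one dyadic scale ($p$ such steps) gives $\Theta(2r)>2^{p}\Theta(r)$, so persistent failure forces growth like $(R/R_0)^{p}$ with $p\ge 16$ --- and it is this exponent beating the cubic volume bound $\Theta(r)\le C\delta_\circ^{-1}\Theta(2)r^3$ that produces the contradiction. Your stated rate $A_{k+1}>2^{1-1/p}A_k$, i.e.\ growth $\sim r^{1-1/p}$, is sublinear and does \emph{not} contradict cubic growth, so your pigeonhole is vacuous as written (and the role of $p\ge 16$ is to ensure $p/4>3$, not to keep $2^{1-1/p}$ away from $1$). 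Relatedly, your claim that the radii violating doubling can be covered by $\tfrac1{100}\log_2\Lambda$ dyadic scales is unjustified and generally false: the correct count only shows that at most $\approx 3\log_2\Lambda+\log_2(C/\delta_\circ)$ of the $2^{1/p}$-granularity steps are bad, and these can be spread over \emph{every} dyadic scale in the window. The intersection therefore cannot be done at the dyadic level; one must count at the $2^{1/p}$-granularity \emph{inside} the dyadic scales that are good for \eqref{eq_doubl2}, which supply $\ge \tfrac{p}{4}\log_2\Lambda\ge 4\log_2\Lambda>3\log_2\Lambda$ candidate radii --- exactly the set $L$ in the paper's proof.

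Second, your selection for \eqref{eq_doubl2} goes in the wrong direction. You pick indices $k$ with $a_k/2^{2k}$ \emph{at most} the average, but to dominate $\frac{1}{\log_2\Lambda}\min_\xi\sum_k\xi(k)a_k/2^{2k}$ by $16\,\Theta(r)/r^2$ with $r\in[2^{k+1},2^{k+2}]$ you need $a(k)$ to be \emph{at least} the median $M$ of $\{a(j):j\in K\}$: since the minimizing $\xi$ selects the $\ceil{\tfrac14\log_2\Lambda}$ smallest values out of $\approx\tfrac34\log_2\Lambda$, the normalized minimum is $<M\le a(k)\le \Theta(2^{k+1})/2^{2k}\le 16\,\Theta(r)/r^2$. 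Your displayed inequality chain implicitly uses this above-median choice, contradicting your prose; and your attempted lower bound $\Theta(r)\ge \cH^2(\Sigma_\lambda\cap\{0<\dB^\lambda<2\})$ does not help, because \Cref{lem:stab-on-bad} bounds that quantity from \emph{above} by the minimum, not from below. Once these two steps are corrected, your argument collapses onto the paper's proof.
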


\begin{proof}
Fix $\lambda \in (-1,1)$ and define 
\[
\Theta(r) : = \cH^2(\Sigma_\lambda \cap \{0<\dB^\lambda<r\}).
\]
Note that, by the curvature estimates of the level sets and lower bound on $|\nabla u|$ from \Cref{prop:good}, $\Theta(r)$ is comparable to $|\{0 < \dB^\lambda < r\}\cap \{|u|< 1\}|$. Hence, 
recalling \eqref{eq:cBdefAC} and noticing that $|\{0 < \dB^\lambda< r\}\cap \{|u| < 1\}|\le |\cS_{r+2}|\le C |\cS_r|$, by the Euclidean cubic volume growth of balls and \Cref{lem:comp_Joaq} we get
\begin{equation}
\label{cubicgrowth}
\Theta(r) \le C|\cS_r | \le C|\cS_4 | r^3 \le C \delta_\circ^{-1} \Theta(2)r^3 \qquad \text{for all }4 \le r \le \Lambda/2.
\end{equation}
Recalling the definition of $\Xi(\Lambda)$ in \eqref{XiLambda}, we define 
 \[
K : = \N \cap \big[ \tfrac 1 4 \log_2 \Lambda,\log_2 \Lambda-5 \big],\qquad a(k) : = \frac{\Theta(2^{k+1}) -\Theta(2^{k})}{2^{2k}}\quad \text{for} \quad k\in K. 
\]
Let $M$ be the median value of $a$ within $K$,
\[
M : = {\rm median}\big( \{a(k)\, :\, k\in K \}\big),
\]
and note that, from the definition of  $\Xi(\Lambda)$, we have
\begin{equation}\label{minmedian}
    \frac{1}{\log_2 \Lambda}\min_{\xi \in \Xi(\Lambda)}
\sum_{k} \xi(k)
    \frac{\cH^2(\Sigma_\lambda \cap \{2^k\le\dB^\lambda<2^{k+1}\})}{2^{2k}} =\frac{1}{\log_2 \Lambda}\min_{\xi \in \Xi(\Lambda)}
\sum_{k} \xi(k)a(k)< M
\end{equation}
Define $K' : = \big\{ k\in K \ : \  a(k)\ge M\big\}$
and notice that
\begin{equation}\label{thers}
M \le a(k) \le \frac{\Theta(2^{k+1})}{2^{2k}} \le 16 \frac{\Theta(r)}{r^2} \qquad \mbox{for all $k\in K'$ and $r\in [2^{k+1}, 2^{k+2}]$}.
\end{equation}
Hence, to show that \eqref{eq_doubl1} and \eqref{eq_doubl2} hold simultaneously at some scale $r\in \big(\Lambda^{1/4} , \Lambda/8\big)$, we only need to find an   $r$ as in \eqref{thers} for which \eqref{eq_doubl1} holds. 

To prove it, we will consider $r$ as in \eqref{thers} of the form  $r=2^{\ell/p}$ for $\ell \in \mathbb N$. So, we define  
\[
L : = \{ \ell\in \N \ : \ \ell/p\in [k+1, k+2) \quad \mbox{for some } k \in K'\},
\]
and we notice that 
\begin{equation}
\label{eq:L p}
\# L \ge p\,\# K' \ge\frac p 2 \# K \ge \frac p 4 \log_2 \Lambda.
\end{equation}
To conclude the proof, we claim that there exists $\ell\in L$ such that
$
\Theta(2^{(\ell+1)/p})\le 2 \Theta(2^{\ell/p}).$
Indeed, if the claim were false, 
we would have that $\Theta(2^{(\ell+1)/p})> 2 \Theta(2^{\ell/p})$ for all $\ell \in L$.
Thus, since since $\ell \mapsto \Theta(2^{\ell/p})$ is nondecreasing,
setting $\ell_{*} := p (\ceils{\log_2 \Lambda}-4)$ we would get
\[
\Theta(2^{\ell_*/p}) > 2^{\# L} \Theta(2) > (2^{\ell_*/p})^{p/4}  \Theta(2) \geq  (2^{\ell_*/p})^{4}  \Theta(2),
\]
where we used \eqref{eq:L p} and that $p/4\ge 4$. This quartic growth contradicts the cubic growth bound in \eqref{cubicgrowth} if $2^{\ell_*/p}\sim \Lambda$ is large enough, depending only on $\delta_\circ$, so the claim holds.
\end{proof}

 \subsection{Integrated Gauss--Bonnet result} 
 Our next result is an estimate on the areas of sublevel sets of the distance to a compact set. Here we crucially use the fact that we consider 2-dimensional surfaces, since we exploit  Gauss--Bonnet on level sets of the distance. Our proof is inspired by a classical argument of Pogorelov \cite{Pog81}, but requires a much more refined analysis due to potential singularities of the distance function. We recall that the distance function to a set is always semiconcave (namely, in any chart, it can be written as the sum of a concave and a smooth function), see \cite{MM03}, therefore its distributional Riemannian Hessian is a measure whose singular part is negative definite.

\begin{lem}
\label{lem:GB-rigor}
Let $\Sigma$ be a smooth $2$-dimensional Riemannian surface,  $\cK\subset \Sigma$, and $d_\cK:={\rm dist}_\Sigma(\cdot,\cK)$, where ${\rm dist}_\Sigma$ is the intrinsic distance on $\Sigma$. 
Then, for a.e. $r_1,r_2>0$ such that $r_2 > 2r_1$ and 
$\{r_1 < d_\cK < r_2\}\Subset \Sigma$ (namely,
$\{r_1 < d_\cK < r_2\}$ is compactly contained in $ \Sigma$), we have
\[
    \frac{\cH^2(\{r_1<d_\cK<r_2\})}{r_2-r_1} \leq
\cH^1(\{d_\cK=r_1\})-\fint_{r_1}^{r_2}\hspace{-1mm}\int_{r_1}^s\hspace{-1mm}\int_1^2 \hspace{-1mm}\int_{\{\tau r_1<d_\cK<t\}}\hspace{-2mm}K_\Sigma\,d\cH^2\,d\tau\,dt\, ds
+\frac{1}{r_1}  \int_{\{r_1<d_\cK<2 r_1\}} \hspace{-1mm}(\Delta d_\cK)_a\,d\cH^2,
\]
where $(\Delta d_\cK)_a$ denotes the absolutely continuous part of the (Riemannian) Laplacian of $d_\cK$,  
and $K_\Sigma$ is the Gauss curvature. 
\end{lem}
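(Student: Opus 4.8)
The plan is to work on a fixed level set $\Sigma$ (later applied to $\Sigma=\Sigma_\mu$) and to integrate the Gauss--Bonnet theorem, applied to the regular level sets $\{d_\cK = t\}$ of the distance function, against suitable averaging kernels in the radial variable. First I would fix $r_1<r_2$ with $r_2>2r_1$ and $\{r_1<d_\cK<r_2\}\Subset\Sigma$, and recall (from \cite{MM03}) that $d_\cK$ is semiconcave on the open set $\{d_\cK>0\}$, so its Riemannian Hessian $D^2 d_\cK$ is a (matrix-valued) Radon measure whose singular part is negative semidefinite; in particular $\Delta d_\cK = (\Delta d_\cK)_a + (\Delta d_\cK)_s$ with $(\Delta d_\cK)_s\le 0$, and $|\nabla d_\cK|=1$ a.e. By Sard's theorem, for a.e.\ $t$ the set $\{d_\cK=t\}$ is a $1$-dimensional $C^{1,1}$ submanifold (away from the singular set of $d_\cK$, which has measure zero and is not met by a.e.\ level), and the coarea formula gives $\frac{d}{dt}\cH^2(\{d_\cK<t\}) = \cH^1(\{d_\cK=t\})$ for a.e.\ $t$.

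The key computation is a first-variation / divergence identity: since $|\nabla d_\cK|=1$, the vector field $\nabla d_\cK$ has $\mathrm{div}(\nabla d_\cK)=\Delta d_\cK$, and integrating over the region $\{s<d_\cK<t\}$ and using the coarea formula gives, for a.e.\ $s<t$,
\begin{equation*}
\cH^1(\{d_\cK=t\}) - \cH^1(\{d_\cK=s\}) = \int_{\{s<d_\cK<t\}}(\Delta d_\cK)_a\, d\cH^2 + \big(\text{nonpositive singular term}\big).
\end{equation*}
Next, on a regular level $\{d_\cK=t\}$ the geodesic curvature is exactly $\Delta d_\cK$ restricted to that level (this is the standard identity $\kappa_{\{d=t\}} = \Delta d$ on the smooth part, coming from $\kappa = \mathrm{div}(\nabla d/|\nabla d|)$), so Gauss--Bonnet applied to $\{d_\cK<t\}\setminus\{d_\cK<s\}$ — or rather a careful version valid for the a.e.\ regular sublevel sets, handling the fact that $\{d_\cK<t\}$ need not be smooth — lets me replace the boundary integral of $\kappa = (\Delta d_\cK)$ on $\{d_\cK=t\}$ and $\{d_\cK=s\}$ by $2\pi\chi$ minus $\int_{\{s<d_\cK<t\}}K_\Sigma\,d\cH^2$. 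Combining this with the divergence identity, and using that the Euler characteristic contribution and the singular part of $\Delta d_\cK$ both have favorable sign (this is exactly Pogorelov's observation that sublevel sets of distance are ``topologically simple enough''), I obtain a pointwise-in-$t$ bound
\begin{equation*}
\cH^1(\{d_\cK=t\}) \le \cH^1(\{d_\cK=s\}) - \int_{\{s<d_\cK<t\}}K_\Sigma\,d\cH^2 + \big(\text{controlled error}\big)
\end{equation*}
for a.e.\ $s<t$. I would then: (i) integrate this in $t$ over $(r_1,r_2)$, using coarea on the left to produce $\cH^2(\{r_1<d_\cK<r_2\})$; (ii) to control the anchor term $\cH^1(\{d_\cK=s\})$ for small $s$, integrate once more in $s\in(r_1,2r_1)$ and use the divergence identity to write $\cH^1(\{d_\cK=s\})\le \cH^1(\{d_\cK=r_1\}) + \frac{1}{r_1}\int_{\{r_1<d_\cK<2r_1\}}(\Delta d_\cK)_a\,d\cH^2$ (again the singular part is dropped with the right sign); (iii) average the Gauss-curvature double integral over the auxiliary parameters $\tau\in(1,2)$ and $t\in(r_1,s)$ to match exactly the iterated integral $\fint_{r_1}^{r_2}\int_{r_1}^s\int_1^2\int_{\{\tau r_1<d_\cK<t\}}K_\Sigma$ appearing in the statement. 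Bookkeeping the constants then yields the claimed inequality.

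The main obstacle I anticipate is making the Gauss--Bonnet step rigorous despite the low regularity of $d_\cK$: the sublevel sets $\{d_\cK<t\}$ are open sets with rectifiable $C^{1,1}$ boundary only for a.e.\ $t$, they may be disconnected or multiply connected, and the singular set of $d_\cK$ (the ``cut locus'' relative to $\cK$) carries a negative singular measure for $\Delta d_\cK$ that must be accounted for with the correct sign. The clean way to handle this is to approximate $d_\cK$ by smooth functions $d_\varepsilon$ with $|\nabla d_\varepsilon|\le 1+o(1)$ and $\Delta d_\varepsilon \le (\Delta d_\cK)_a * \rho_\varepsilon + o(1)$ in the sense of measures (possible by semiconcavity, via sup-convolution or mollification in charts), apply classical Gauss--Bonnet and the divergence theorem to the smooth nearly-regular level sets of $d_\varepsilon$, and pass to the limit using the coarea formula and lower semicontinuity; the sign of the singular part ensures no mass is lost in the wrong direction. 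A second, more minor, technical point is justifying all the ``for a.e.\ $r_1,r_2$'' statements and the Fubini interchanges in the iterated averages, which is routine once the one-parameter identities are established for a.e.\ level.
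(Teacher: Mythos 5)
Your proposal follows essentially the same route as the paper: coarea plus the divergence theorem for the distributional Laplacian of the semiconcave function $d_\cK$, Gauss--Bonnet applied to the level sets of a regularization of $d_\cK$ obtained by mollifying in charts, the favorable (negative) sign of the singular part of the Hessian coming from semiconcavity, and averaging over $\tau\in(1,2)$ and the radial variables to produce exactly the iterated integral of $K_\Sigma$ and the $(\Delta d_\cK)_a$ anchor term. The one step you assert without justification is that the Euler characteristic contribution is nonpositive: this is \emph{not} automatic for sublevel sets of a distance function (a disk has $\chi=1$), and the paper obtains $\chi=2-2g-b\le 0$ only after reducing to a single connected component of the annular region and slightly shrinking $r_2$ so that the exterior $\{d_\cK>r_2'\}$ is nonempty there, which forces at least two boundary components ($b\ge 2$); your argument would need this reduction made explicit.
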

\begin{proof}
Throughout the proof, all the differential operators are the Riemmanian ones on $\Sigma$. We divide the proof into two steps:

\medskip 

\noindent{\bf Step 1:} Assume first that $\{r_1 < d_\cK < r_2\}$ is connected and $\{d_\cK > r_2\}\neq \varnothing$.  Since $|\nabla d_\cK|=1$ a.e., by the coarea formula we have 
$$
\cH^2(\{r_1<d_\cK<r_2\})=\int_{r_1}^{r_2}\cH^1(\{d_\cK=s\})\,ds.
$$
We now observe that, for a.e. $s$, $\nabla d_\cK$ is $\cH^1$-a.e. equal to the outer normal of $\{d_\cK<s\}$, hence
$$
\cH^1(\{d_\cK=s\})-\cH^1(\{d_\cK=r_1\})
=\int_{\partial \{r_1<d_\cK<s\}} \nu\cdot \nabla d_\cK\,d\cH^1=\int_{\{r_1<d_\cK<s\}} \Delta d_\cK\, d\cH^2
$$
where $\Delta d_\cK$ is the distributional Laplacian. Recalling that the distance function is always locally semiconcave, $\Delta d_\cK$ is a locally finite measure whose positive part has bounded density with respect to $\cH^2$.

This implies that, for a.e. $r_1<r_2$,
$$
\cH^2(\{r_1<d_\cK<r_2\})=(r_2-r_1)\cH^1(\{d_\cK=r_1\})+\int_{r_1}^{r_2}\int_{\{r_1<d_\cK<s\}} \Delta d_\cK\, d\cH^2\,ds.
$$

Now, given a smooth function $\varphi:\Sigma\to \R$, consider the expression
$$
\int_{\{r_1<\varphi<s\}} |\nabla\varphi|\,{\rm div}\biggl(\frac{\nabla\varphi}{|\nabla\varphi|}\biggr)\,d\cH^2=
\int_{r_1}^s\int_{\{\varphi=t\}}{\rm div}\biggl(\frac{\nabla\varphi}{|\nabla\varphi|}\biggr)\,d\cH^1\,dt,
$$
where the equality follows by the coarea formula.
We  observe that, by Sard's Theorem, for a.e. $t$ the level set $\{\varphi=t\}$ is a smooth curve without critical points of $\varphi$, and ${\rm div}\bigl(\frac{\nabla\varphi}{|\nabla\varphi|}\bigr)$ corresponds to its geodesic curvature. Assume, moreover, that $\{r_1 < \varphi < r_2\}\Subset \Sigma$. Then, by Gauss--Bonnet, for any $\tau \in [1,2]$ it holds
$$
\int_{\{\varphi=t\}}{\rm div}\biggl(\frac{\nabla\varphi}{|\nabla\varphi|}\biggr)\,d\cH^1 = 
-\int_{\{\tau r_1<\varphi<t\}}K_\Sigma\,d\cH^2+
\int_{\{\varphi=\tau r_1\}}{\rm div}\biggl(\frac{\nabla\varphi}{|\nabla\varphi|}\biggr)\,d\cH^1
+2\pi \chi(\{\tau r_1<\varphi<t\}).
$$
 Averaging this bound with respect to $\tau \in [1,2]$, this proves that
\begin{multline}
\label{eq:K GB}
\int_{\{r_1<\varphi<s\}} |\nabla\varphi|\,{\rm div}\biggl(\frac{\nabla\varphi}{|\nabla\varphi|}\biggr)\,d\cH^2\le 
-\int_{r_1}^s\int_1^2 \int_{\{\tau r_1<\varphi<t\}}K_\Sigma\,d\cH^2\,d\tau \,dt  
\\+ (s-r_1)\int_1^2 \int_{\{\varphi=\tau r_1\}}{\rm div}\biggl(\frac{\nabla\varphi}{|\nabla\varphi|}\biggr)\,d\cH^1\,d\tau
+ 2\pi \int_{r_1}^s\int_1^2 \chi(\{\tau r_1<\varphi<t\})\,d\tau\,dt.
\end{multline}
We are now going to apply this identity 
to a smoothed version of the distance function, and then let the regularization parameter go to zero.
More precisely, 
fix a compact neighborhood of $\{r_1<d_\cK<s\}$ and cover it with a finite atlas \( \{(U_m, \phi_m)\}_{m=1}^N \).
Then consider a partition of unity \( \{\psi_m\}_{m=1}^N \) subordinate to this atlas, and fix \( \rho: \mathbb{R}^n \to [0, \infty) \) a smooth compactly supported mollifier.
Then, for \( \eta > 0 \), set \( \rho_\eta(z) := \eta^{-n} \rho(z / \eta) \), we define the following local smoothing operator for functions $f:\Sigma\to \R$:
\[
f \mapsto [f]_\eta(x) := \sum_{m=1}^N \psi_m(x) \big(( f \circ \phi_m^{-1}) * \rho_\eta\big)(\phi_m(x)).
\]
While this regularization does not commute with derivatives, it does in the limit. More precisely we recall that, in local coordinates, the Hessian of a function $f$ is given by 
$
(D^2f)_{ij} = \partial^2_{ij} f - \Gamma_{ij}^k \partial_kf,
$
where $\Gamma_{ij}^k$ are the Christoffel symbols of the Riemannian metric \( g \) on $\Sigma$, and we adopt the Einstein convention of summation over repeated indices. Hence, when locally mollifying in charts as done above, for any given smooth $h: U_m \to \R$,  $x\in \phi_m (U_m)$ and $\eta>0$ sufficiently small, we have
\[
(D^2(h \ast \rho_\eta))_{ij}(x) - (D^2h)_{ij} * \rho_\eta(x) = \int_{\phi_m(U_m)} \left[ \Gamma_{ij}^k(y)  - \Gamma_{ij}^k(x) \right] \partial_k h(y) \rho_\eta(x - y) \, dy.
\]
Now, if we define $\varphi_\eta:=[d_\cK]_\eta$, applying the  formula above to $h=d_\cK \circ \phi_m^{-1}$,  $m=1,\ldots,N$, since $d_K$ is $1$-Lipschitz it follows easily that
\begin{equation}
\label{eq:D2 convolve}
(D^2\varphi_\eta)_{ij}=[(D^2d_\cK)_{ij}]_\eta + O(\eta).
\end{equation}
Now, since $\varphi_\eta \to d_\cK$ locally uniformly and $\Delta \varphi_\eta \rightharpoonup^* \Delta d_\cK$ in the sense of measures, using \eqref{eq:K GB} with $\varphi=\varphi_\eta$, for a.e. $r_1<s$ we have
\begin{align*}
\int_{\{r_1<d_\cK<s\}} \Delta d_\cK\,d\cH^2
&=\lim_{\eta \to 0} \int_{\{r_1<\varphi_\eta<s\}} \Delta \varphi_\eta \,d\cH^2\\
&\hspace{-0.5cm}\leq \limsup_{\eta \to 0} \int_{\{r_1<\varphi_\eta<s\}} |\nabla\varphi_\eta|\,{\rm div}\biggl(\frac{\nabla\varphi_\eta}{|\nabla\varphi_\eta|}\biggr) \,d\cH^2
+\limsup_{\eta \to 0}\int_{\{r_1<\varphi_\eta<s\}}  \bigg\langle D^2\varphi_\eta \cdot \frac{\nabla\varphi_\eta}{|\nabla\varphi_\eta|},\frac{\nabla\varphi_\eta}{|\nabla\varphi_\eta|}\bigg\rangle  \,d\cH^2\\
&\hspace{-0.5cm}\le-\int_{r_1}^s\int_1^2 \int_{\{\tau r_1<\varphi<t\}}K_\Sigma\,d\cH^2\,d\tau\,dt
+ (s-r_1) \limsup_{\eta \to 0}\int_1^2\int_{\{\varphi_\eta=\tau r_1\}}{\rm div}\biggl(\frac{\nabla\varphi_\eta}{|\nabla\varphi_\eta|}\biggr)\,d\cH^1\,d\tau\\
&\hspace{-0.5cm}\quad+ 2\pi \limsup_{\eta \to 0}\int_{r_1}^s\int_1^2 \chi(\{\tau r_1<\varphi_\eta<t\})\,d\tau\,dt
+\limsup_{\eta \to 0}\int_{\{r_1<\varphi_\eta<s\}}  \bigg\langle D^2\varphi_\eta \cdot \frac{\nabla\varphi_\eta}{|\nabla\varphi_\eta|},\frac{\nabla\varphi_\eta}{|\nabla\varphi_\eta|}\bigg\rangle   \,d\cH^2.
\end{align*}
 Recall now that $\chi = 2-2g-b$ for surfaces with $b$ boundary components and genus $g$. Hence, since by assumption $\{d_\cK < r_2\}$ is connected and $\{d_\cK > r_2\}\neq \varnothing$,   it follows that $g\ge 0$ and $b\ge 2$, and therefore $\chi(\{\tau r_1<\varphi_\eta<t\})\le 0 $ for $\eta$ sufficiently small.  
 
 Also, as discussed before,  the semiconcavity of $d_\cK$ implies that $D^2d_\cK$ is a matrix-valued measure whose singular part is negative. Hence,
 if $D^2_ad_\cK$ denotes the absolutely continuous part of the Hessian, recalling \eqref{eq:D2 convolve} we get 
$$
\limsup_{\eta \to 0}\int_{\{r_1<\varphi_\eta<s\}}  \bigg\langle D^2\varphi_\eta \cdot \frac{\nabla\varphi_\eta}{|\nabla\varphi_\eta|},\frac{\nabla\varphi_\eta}{|\nabla\varphi_\eta|}\bigg\rangle  \,d\cH^2
\leq \limsup_{\eta \to 0}\int_{\{r_1<\varphi_\eta<s\}}   [(D^2_ad_\cK)_{ij}]_\eta \cdot \frac{\partial_i\varphi_\eta}{|\nabla\varphi_\eta|}\frac{\partial_j\varphi_\eta}{|\nabla\varphi_\eta|}  \,d\cH^2.
$$
Since $[(D^2_ad_\cK)_{ij}]_\eta \to (D^2_ad_\cK)_{ij}$ in $L^1_{\rm loc}$ (because $(D^2_ad_\cK)_{ij}$ is a locally integrable function) and $\nabla\varphi_\eta \to \nabla d_\cK$ $\cH^2$-a.e., by dominated convergence we deduce that the limsup in the right-hand side above is equal to
$$
\int_{\{r_1<d_\cK<s\}}  \big\langle D^2_ad_\cK\cdot \nabla d_\cK,\nabla d_\cK\big\rangle  \,d\cH^2.
$$
Also, because $D^2_ad_\cK\cdot \nabla d_\cK=(\frac12 \nabla |\nabla d_\cK|^2)_a=0$ a.e. (where the subscript $a$ denotes the absolutely continuous part, and the derivative is zero because $|\nabla d_\cK|^2=1$ a.e.), the integral above is zero.
Hence, we proved that
\begin{align*}
\int_{\{r_1<d_\cK<s\}} \Delta d_\cK \,d\cH^2
&\leq -\int_{r_1}^s\int_1^2 \int_{\{\tau r_1<d_\cK<t\}}K_\Sigma\,d\cH^2\,d\tau\,dt 
+ (s-r_1) \limsup_{\eta \to 0}\int_1^2\int_{\{\varphi_\eta=\tau r_1\}}{\rm div}\biggl(\frac{\nabla\varphi_\eta}{|\nabla\varphi_\eta|}\biggr)\,d\cH^1\,d\tau\\
&\hspace{-0.5cm}=-\int_{r_1}^s\int_1^2 \int_{\{\tau r_1<d_\cK<t\}}K_\Sigma\,d\cH^2\,d\tau\,dt 
+ (s-r_1) \limsup_{\eta \to 0}\frac{1}{r_1}\int_{\{r_1<\varphi_\eta<2 r_1\}}|\nabla\varphi_\eta|\, {\rm div}\biggl(\frac{\nabla\varphi_\eta}{|\nabla\varphi_\eta|}\biggr)\,d\cH^2,
\end{align*}
where the last identity follows by the coarea formula.
Finally, arguing exactly as before, we have
$$
|\nabla\varphi_\eta|\, {\rm div}\biggl(\frac{\nabla\varphi_\eta}{|\nabla\varphi_\eta|}\biggr)
=\bigg\langle D^2\varphi_\eta \cdot \frac{\nabla^\perp\varphi_\eta}{|\nabla\varphi_\eta|},\frac{\nabla^\perp\varphi_\eta}{|\nabla\varphi_\eta|}\bigg\rangle  \leq 
[(D^2_ad_\cK)_{ij}]_\eta \cdot \frac{(\nabla^\perp\varphi_\eta)^i}{|\nabla\varphi_\eta|}\frac{(\nabla^\perp\varphi_\eta)^j}{|\nabla\varphi_\eta|}+O(\eta) ,
$$
and therefore
$$
 \limsup_{\eta \to 0}\frac{1}{r_1}\int_{\{r_1<\varphi_\eta<2 r_1\}}|\nabla\varphi_\eta|\, {\rm div}\biggl(\frac{\nabla\varphi_\eta}{|\nabla\varphi_\eta|}\biggr)\,d\cH^2
 \leq \frac{1}{r_1}\int_{\{r_1<d_\cK<2 r_1\}} \big\langle D^2_ad_\cK \cdot \nabla^\perp d_\cK,\nabla^\perp d_\cK\big\rangle\,d\cH^2.
$$
Noticing that
$$
(\Delta d_\cK)_a=\big\langle D^2_ad_\cK \cdot \nabla^\perp d_\cK,\nabla^\perp d_\cK\big\rangle+\big\langle D_a^2d_\cK \cdot \nabla d_\cK,\nabla d_\cK\big\rangle=\big\langle D^2_ad_\cK \cdot \nabla^\perp d_\cK,\nabla^\perp d_\cK\big\rangle
$$
(recall that $\big\langle D_a^2d_\cK \cdot \nabla d_\cK,\nabla d_\cK\big\rangle= 0$ a.e.), the result follows.
\medskip

 \noindent{\bf Step 2:}  In the general case, we can treat each connected component of $\{r_1< d_\cK < r_2\}$ separately. More precisely, given a connected component $C$ of $\{r_1 < d_\cK < r_2\}$ and $r_1<r_2$, fix $r_2' \in (r_1,r_2)$ and consider a smooth submanifold $\Sigma_C'$ such that $\{r_1 < d_\cK < r_2'\}\cap C \Subset \Sigma_C' \Subset C$.
 Then, we first apply Step 1 with $\{r_1 < d_\cK < r_2'\}\cap C$ inside $\Sigma_C'$ (note that $\{d_\cK > r_2'\}\cap \Sigma_C' \neq\varnothing$) and finally we take the limit as $r_2'\uparrow r_2$. 
This concludes the proof. 
 \end{proof}
 
\subsection{Proof of \Cref{thm:FBAC-main} and its corollaries} \label{proofs:FBAC} We can now proceed with the proof of our main result that, as explained before, directly implies \Cref{thm:FBAC-main}.

\begin{prop}
\label{prop:Xisempty}
For every $\delta_\circ \in (0,1)$, the set $\cX(\delta_\circ)$ is empty. 
\end{prop}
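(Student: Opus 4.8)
The plan is to argue by contradiction: assume $\cX(\delta_\circ)\neq\varnothing$, fix $p=16$, and choose $\delta_\circ$ small enough that all the preceding lemmas apply. Having ruled out the case \eqref{eq:dichotomy1} via \Cref{prop:2W}, we know \eqref{eq:dichotomy2} holds, so there is a clean annulus $B_{R_\Lambda+\Lambda}(z_\Lambda)\setminus B_{R_\Lambda}(z_\Lambda)\subset\{|u|=1\}\cup\cG(\delta_\circ)$ for arbitrarily large $\Lambda$. The first step is to feed the integrated Gauss--Bonnet estimate of \Cref{lem:GB-rigor} into the co-area formula. Applying it on each level set $\Sigma_\mu=\{u=\mu\}$ with $\cK=\ccB$, $r_1\in\{1,\tau\}$ and integrating over $\tau\in(1,2)$ and $r_2=r\in(\Lambda^{1/4},\Lambda/8)$, one obtains (after absorbing the $\cH^1(\{\dB^\mu=1\})$-term and the Laplacian term, both controlled by $C r^2\,\cH^2(\Sigma_\mu\cap\{1<\dB^\mu<2\})$ using the curvature bounds of \Cref{prop:good} and \Cref{lem:comp-dist}) the inequality
\[
\cH^2\big(\Sigma_\mu\cap\{0<\dB^\mu<r\}\big)\le \frac14\int_{\Sigma_\mu\setminus\cX(\delta_\circ)}|A_{\Sigma_\mu}|^2\,(r-\dB^\mu)_+^2\,d\cH^2+C r^2\,\cH^2\big(\Sigma_\mu\cap\{0<\dB^\mu<2\}\big),
\]
where the key point is that the Gauss curvature term $-K_{\Sigma_\mu}$ is converted into $|A_{\Sigma_\mu}|^2$ via Gauss's equation $K_{\Sigma_\mu}=\det A_{\Sigma_\mu}$ together with $|A_{\Sigma_\mu}|^2=(\mathrm{tr}\,A_{\Sigma_\mu})^2-2\det A_{\Sigma_\mu}$ and mean-curvature smallness.

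The second step is to use stability. Because of the clean annulus, the function $x\mapsto(r-\dB^\mu(x))_+$ (with $r<\Lambda/8$) extends by zero to an admissible Lipschitz test function in $C^{0,1}_c(\overline{\{|u|<1\}})$, with $|\nabla\dB^\mu|\le 1+\eps_\circ$ by \Cref{lem:comp-dist}. Plugging it into the Sternberg--Zumbrun inequality \eqref{eq:stab}, and using the co-area formula together with the comparison across levels (\Cref{lem:comp-dist}, first item, with the parameter $p$) to transfer the bulk integral to a single level set $\Sigma_\mu$ up to a factor $2^{1/p}\le 1+C\eta_\circ$, one gets for some level $\nu$:
\[
\int_{\Sigma_\nu\setminus\cX(\delta_\circ)}|A_{\Sigma_\nu}|^2(r-\dB^\nu)_+^2\,d\cH^2\le (1+C\eta_\circ)\,\cH^2\big(\Sigma_\nu\cap\{0<\dB^\nu<r\}\big)+\text{(error)}.
\]
Here the mean-curvature term on the left — which comes from $|\cA(u)|^2$ — absorbs $|A_{\Sigma_\nu}|^2$ minus a small error, and one uses the doubling property \eqref{eq_doubl1} of \Cref{lem:doubling-FBAC} at a suitable scale $r\in(\Lambda^{1/4},\Lambda/8)$ to replace $(r-\dB^\nu)_+$-weighted quantities near $\{d_\ccB<2\}$ by $\cH^2(\Sigma_\nu\cap\{0<\dB^\nu<r\})$. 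Combining the two steps yields
\[
\frac{\cH^2(\Sigma_\nu\cap\{0<\dB^\nu<r\})}{r^2}\le\frac{1+C\eta_\circ}{2}\cdot\frac{\cH^2(\Sigma_\nu\cap\{0<\dB^\nu<r\})}{r^2}+\frac{C}{\delta_\circ|\log\Lambda|}\cdot\frac{\cH^2(\Sigma_\nu\cap\{0<\dB^\nu<r\})}{r^2},
\]
where the last term uses \Cref{lem:stab-on-bad} together with \eqref{eq_doubl2} of \Cref{lem:doubling-FBAC} to bound $\cH^2(\Sigma_\nu\cap\{0<\dB^\nu<2\})$ by $\frac{C}{\delta_\circ|\log\Lambda|}\,\cH^2(\Sigma_\nu\cap\{0<\dB^\nu<r\})$.

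The final step is the closing argument: choose $\eta_\circ$ (hence $\delta_\circ$) so small that $\frac{1+C\eta_\circ}{2}\le\frac34$, and then choose $\Lambda$ so large that $\frac{C}{\delta_\circ|\log\Lambda|}\le\frac18$; this forces $\cH^2(\Sigma_\nu\cap\{0<\dB^\nu<r\})=0$. But $\Sigma_\nu$ passes through points of $\cX(\delta_\circ)$ (indeed, by construction $\ccB$ and hence the set $\{\dB^\nu>0\}$ is nonempty and of positive $\cH^2$-measure on the relevant level set, since $\cX(\delta_\circ)\cap\ccB$ is nonempty and $\cG(\delta_\circ)\setminus\ccB$ carries positive measure near $\partial\ccB$ by \Cref{lem:comp_Joaq}), giving $\cH^2(\Sigma_\nu\cap\{0<\dB^\nu<r\})>0$ — a contradiction. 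Hence $\cX(\delta_\circ)=\varnothing$. The main obstacle I expect is executing the first step rigorously, namely controlling the absolutely continuous part of $\Delta d_\ccB$ and the boundary/genus terms in \Cref{lem:GB-rigor} so that they are genuinely absorbed into $Cr^2\cH^2(\Sigma_\mu\cap\{0<\dB^\mu<2\})$ rather than contributing an uncontrolled error; this requires carefully using that the level sets are $\eta_\circ$-flat in $\cG(\delta_\circ)$ and that the clean annulus guarantees the relevant sublevel sets have at least two boundary components (so $\chi\le 0$).
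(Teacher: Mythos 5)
Your proposal follows the paper's proof essentially step for step: integrated Gauss--Bonnet via \Cref{lem:GB-rigor} with the bound $-K_{\Sigma}\le\tfrac12|A_{\Sigma}|^2$, the Sternberg--Zumbrun inequality tested with the tent function $(2^{1/p}r-\dB^\mu)_+$, transfer across levels via \Cref{lem:comp-dist}, the doubling scale from \Cref{lem:doubling-FBAC}, and \Cref{lem:stab-on-bad} for the $\tfrac{C}{\delta_\circ|\log\Lambda|}$ term, closing by choosing $\eta_\circ$ small and then $\Lambda$ large. The one place you genuinely deviate is the final contradiction: the paper upgrades $\cH^2(\Sigma_\nu\cap\{0<\dB^\nu<r\})=0$ to emptiness on every level, lets $r\to\infty$ to produce a bounded connected component of $\{|u|<1\}$ inside $\ccB$, and kills that with a separate stability test function; you instead invoke the lower bound $\cH^2(\Sigma_\nu\cap\{0<\dB^\nu<2\})\ge c'\delta_\circ|\cS_8|>0$ from \Cref{lem:comp_Joaq} (positive since $z_\Lambda\in\cX(\delta_\circ)\cap\ccB$), which is shorter and equally valid. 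Two cautions on execution: the coefficient $\tfrac{1+C\eta_\circ}{2}$ only comes out right if you track the factor $\tfrac12$ from averaging over $\lambda\in(-1,1)$ (an interval of length $2$) against the factor $2$ in the stability volume-to-area conversion and the factor $2$ from doubling --- your intermediate displayed inequality with coefficient $(1+C\eta_\circ)$ and an unspecified ``(error)'' glosses over exactly this; and the Laplacian term in \Cref{lem:GB-rigor} is controlled by semiconcavity of the intrinsic distance (plus averaging $r_1$ over $[1/2,1]$ to convert $\cH^1(\{\dB=r_1\})$ into an area), not directly by the curvature bounds of \Cref{prop:good}.
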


\begin{proof}
Since the sets $\cX(\delta_\circ)$ are monotonically decreasing, it suffices to prove the result for all $\delta_\circ$ sufficiently small.

So, assume by contradiction that $\cX(\delta_\circ)\neq \varnothing$, then for $\Lambda > 64$ we  construct the set $\ccB\neq\varnothing$ as in \eqref{eq:cBdefAC}. Also, by choosing $\Lambda$ sufficiently large (depending on $\delta_\circ)$, we can assume \eqref{eq:dichotomy2} holds (recall \Cref{prop:2W}).

Now, let $\Sigma$ denote one of the level sets of $u$ inside $B_{R_\Lambda+\Lambda}(z_\Lambda)\cap \cG(\delta_\circ) \setminus \ccB$ (note that this is a smooth surface), and apply \Cref{lem:GB-rigor} with $\cK=\ccB$,
$r_1\in (1/4, 2)$,
and $r_2=r < \Lambda/8$
(recall $\ccB$ surrounds $\cW(\delta_\circ)$ and separates it from $\cG(\delta_\circ)$ on $\Sigma$).
Since $({\rm Hess}\,\dB)_a(\nabla^\perp \dB,\nabla^\perp \dB) \leq C$ on $\{r_1 <\dB < 2r_1\}$ by the semiconcavity of the distance (see for instance \cite{MM03}), for a.e. $r_1<r$ we have
\begin{multline}
\label{eq:boundtousenow}
    \cH^2(\{r_1<\dB<r\})\leq
(r-r_1)\cH^1(\{\dB=r_1\}) \\
-\int_{r_1}^{r}\hspace{-1mm}\int_{r_1}^s\hspace{-1mm}\int_1^2 \hspace{-1mm}\int_{\{\tau r_1<\dB<t\}}\hspace{-4mm}K_\Sigma\,d\cH^2\,d\tau\,dt\, ds
+C{(r-r_1)^2}  \cH^2(\{r_1<\dB<2 r_1\}).
\end{multline}
Since $\int_{r_1}^r \int_{r_1}^s
        \mathbbm{1}_{ \{\dB<t\}}
    \,dt\,ds = \tfrac12 (r-\dB)^2_+$ for $r_1<\dB$ and $|K_\Sigma|\le \tfrac12 |A_\Sigma|^2$ (where $|A_\Sigma|^2$ denotes the sum of squares of principal curvatures), using Fubini we get
\begin{align*}
-\int_{r_1}^r\int_{r_1}^s\int_1^2 \int_{\Sigma\cap \{\tau r_1<\dB<t\}}
    K_\Sigma
\,\,d\cH^2\,d\tau\,dt\,ds
&\leq\frac12\int_{r_1}^r\int_{r_1}^s \int_{\Sigma\cap \{r_1 <\dB<r\}}
    |A_\Sigma|^2\,\mathbbm{1}_{ \{ \dB<t\}} 
\,d\cH^2\,dt\,ds\\
&=\frac14 \int_{\Sigma\cap \{r_1<\dB<r\}}
    |A_\Sigma|^2 (r-d_{\mathcal B})_+^2
\,d\cH^2.
\end{align*}
Thanks to this bound, averaging \eqref{eq:boundtousenow} over $r_1\in [\tfrac12, 1]$, since $\int_{1/2}^{1}
    \cH^1(\Sigma \cap \set{\dB= r_1})
\,d r_1
= \cH^2(\Sigma \cap \set{1/2<\dB<1})$
we obtain
\[
\cH^2(\Sigma\cap \{1 <\dB<r\})
\leq
\frac{1}{4}\int_{\Sigma \setminus \ccB} |A_\Sigma|^2 (r-\dB)_+^2 \,d\cH^2
+Cr^2 \cH^2(\Sigma\cap \{0<\dB<2\}) 
\]
for a.e. $r \in(1,\Lambda/8)$,
which also implies (up to replacing $C$ with $C+1$ in the right-hand side)
\[
\cH^2(\Sigma\cap \{0<\dB<r\})
\leq
\frac{1}{4}\int_{\Sigma \setminus \ccB} |A_\Sigma|^2 (r-\dB)_+^2 \,d\cH^2
+Cr^2 \cH^2(\Sigma\cap \{0<\dB<2\}).
\]
Now choose $\nu \in (-1,1)$ such that
$$
\int_{\Sigma_\nu}
    |A_{\Sigma_\nu}|^2 \,
    \big(r-\dB^\nu\big)_+^2
\,d\cH^2
\leq 
\fint_{-1}^{1}\int_{\Sigma_\lambda}
    |A_{\Sigma_\lambda}|^2\,
    \big(r-\dB^\lambda\big)_+^2
\,d\cH^2\,d\lambda,
$$
and apply the bound above to the level set $\Sigma=\Sigma_\nu$.
Then,
thanks to \Cref{lem:stab-on-bad}  we get
\begin{multline}\label{eq:change-levels-now}
    \frac{\cH^2(\Sigma_\nu \cap \set{0<\dB^\nu<r})}{r^2}
\leq \frac{1}{4r^2} \fint_{-1}^{1}\int_{\Sigma_\lambda}
    |A_{\Sigma_\lambda}|^2\,
    \big(r-\dB^\lambda\big)_+^2
\,d\cH^2\,d\lambda+C \cH^2(\Sigma_\nu\cap \{0<\dB^\nu<2\}) \\
\leq \frac{1}{4r^2} \fint_{-1}^{1}\int_{\Sigma_\lambda}
    |A_{\Sigma_\lambda}|^2\,
    \big(r-\dB^\lambda\big)_+^2
\,d\cH^2\,d\lambda +\frac{C}{\delta_\circ|\log \Lambda|^2} \min_{\xi \in \Xi(\Lambda)} 
\sum_{k} \xi(k)
    \frac{\cH^2(\Sigma_\nu \cap \{2^k\leq \dB^{\nu}<2^{k+1}\})}{2^{2k}}.
\end{multline}
Note now that, thanks to \Cref{lem:comp-dist} with $p=16$, the coarea formula, and \Cref{prop:good}, for any $\mu\in(-1,1)$ it holds
\begin{align*}
\int_{-1}^{1}\int_{\Sigma_\lambda\setminus\ccB}
    |A_{\Sigma_\lambda}|^2 \,
    \big(r-\dB^\lambda\big)_+^2
\,d\cH^2\,d\lambda
&\leq 
\int_{-1}^{1}
\int_{\Sigma_\lambda\setminus\ccB}
    |A_{\Sigma_\lambda}|^2 \,
    \big(2^{1/p}r-\dB^\mu\big)_+^2
\,d\cH^2\,d\lambda\\
&\leq (1+C\eta_\circ) \int_{ \set{|u|<1}\setminus\ccB }
    |A(u)|^2\,
    |\nabla u|^2\,
    \big(2^{1/p}r-\dB^\mu\big)_+^2
\,dx,
\end{align*}
for some universal $C$. 
Next, we apply the stability inequality \eqref{eq:stab}  with test function $\big(2^{1/p}r-\dB^\mu(x)\big)_+$ (which is admissible for $r\le  \Lambda/8$, due to \Cref{lem:freeannulus} for $r\le  \Lambda/8$ and \eqref{eq:dichotomy2}, recall also \Cref{rmk:test fct}), giving
\begin{equation}\label{eq:stab-4}
\begin{split}
\int_{-1}^{1}\int_{\Sigma_\lambda}
    |A_{\Sigma_\lambda}|^2\,
    \big(r-\dB^\lambda\big)_+^2
\,d\cH^2\,d\lambda
&\leq (1+C\eta_\circ)
\int_{ \set{|u|<1} }
    |\nabla u|^2 
    \abs{\nabla\big(2^{1/p}r-\dB^\mu\big)_+}^2
\,dx\\
&\leq (1+C(\eta_\circ+\eps_\circ)) \, \left|\{|u|< 1\}\cap \{0 < \dB^\mu < 2^{1/p} r\}\right|\\
&\leq 2(1+C(\eta_\circ+\eps_\circ))\cH^2(\Sigma_\mu \cap \{0<\dB^\mu<2^{1/p}r\} ),
\end{split}\end{equation}
for some universal $C$ (that can be different line to line). In the last inequality we have again used the flatness of level sets given by \Cref{prop:good}. 
 
Combining this bound with \eqref{eq:change-levels-now}, we obtain
\begin{equation} 
\begin{split}
    \frac{\cH^2(\Sigma_\nu\cap \set{0<\dB^\nu<r})}{r^2}
&\leq \frac{1+C(\eta_\circ+\eps_\circ)}{4}\cdot
    \frac{\cH^2(\Sigma_\nu \cap \set{0<\dB^\nu < 2^{1/p}r})}{r^2}
\\
&\qquad  +\frac{2C}{\delta_\circ|\log \Lambda|^2} \min_{\xi \in \Xi(\Lambda)} 
\sum_{k} \xi(k)
    \frac{\cH^2(\Sigma_\nu \cap \{2^k\leq \dB^{\nu}<2^{k+1}\})}{2^{2k}}.
\end{split}
\end{equation}
Recalling \Cref{lem:doubling-FBAC},
this implies the existence of $r\in (\Lambda^{1/4},\Lambda/8)$ such that
\begin{equation} 
\begin{split}
    \frac{\cH^2(\Sigma_\nu\cap \set{0<\dB^\nu<r})}{r^2}
&\leq \frac{1+C(\eta_\circ+\eps_\circ)}{2}\cdot
    \frac{\cH^2(\Sigma_\nu \cap \set{0<\dB^\nu < r})}{r^2}+\frac{C}{\delta_\circ |\log\Lambda|}
    \frac{\cH^2(\Sigma_\nu \cap \set{0<\dB^\nu < r})}{r^2}.
\end{split}
\end{equation}
Fixing $\eta_\circ$ and $\eps_\circ$ (and thus $\delta_\circ$) sufficiently small so that $C(\eta_\circ+\eps_\circ) < \tfrac14$, and then fixing $\Lambda$ sufficiently large so that $\frac{C}{\delta_\circ|\log\Lambda|}\le \frac18$ and \Cref{prop:2W} holds, we deduce $\cH^2(\Sigma_\nu\cap \set{0<\dB^\nu<r}) = 0$. This means that $\Sigma_\nu\cap \set{0<\dB^\nu<r} = \varnothing$ for some $\nu\in (-1, 1)$, and therefore for all $\nu \in (-1,1)$ (this follows, for instance, by \Cref{lem:lengthcomp}). 

Since $\Lambda$ (and therefore $r$) can be chosen arbitrarily large, we have shown that $\{|u|< 1\}$ has a bounded connected component $\cC$ that is contained inside $\ccB\subset B_R(z)$. However this is impossible, as one can show, for instance, by taking a test function
$\zeta$ in \eqref{eq:stab}  such that  $\cC\subset \{\zeta \equiv 1\}$ and ${\rm supp}(\nabla \zeta)\subset \{|u| = 1\}$. This contradiction proves that $\cX(\delta_\circ)  = \varnothing$, as desired.
\end{proof}

Thanks to this last result, our main theorem follows immediately:

\begin{proof}[Proof of \Cref{thm:FBAC-main}]
Thanks to \Cref{prop:Xisempty},  the sets   $\cX(\delta_\circ) = \varnothing$ are empty for every $\delta_\circ>0$.
Recalling their definition, this implies that locally either $\mathcal A(u)\equiv 0$ or $\nabla u\equiv 0$. Thus, by unique continuation, either the solution is constant, or all level sets are flat and the solution is one-dimensional.
\end{proof}

Next, we want to prove \Cref{cor:DeG_main}. The following lemma will be useful:

\begin{lem} \label{lem:extrconccomp}
Let $n\ge 2$, and let $u$ be a global classical solution to the free boundary Allen--Cahn problem in $\R^n$ (see \Cref{defi:solutionsAC}). 
Given $V$ any connected component of $\{|u|<1\}$, there is a unique global classical solution to the free boundary Allen--Cahn problem  $\tilde u$  such that:\\
- $\tilde u=u$ in $V$;\\ - $\tilde u$ restricted to $\R^n\setminus \overline V$ takes values in $\{\pm 1\}$.
\end{lem}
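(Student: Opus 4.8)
The plan is to show that $V$ is, up to a choice of ``signs'' on the complement, essentially a ``slab-like'' or ``half-space-like'' region, so that one can define $\tilde u$ by setting it equal to $u$ on $\overline V$ and extending it by the appropriate constant $\pm 1$ on each connected component of $\R^n \setminus \overline V$. The key structural fact is that $\partial V = \partial\{|u|<1\}\cap \partial V$ consists of smooth hypersurfaces (by the classical solution assumption and \Cref{lem:global-curv-bd}) on which $|\nabla u| = 1$; moreover, along $\partial V$, the function $u$ takes the value $+1$ on some boundary pieces and $-1$ on others (this is because $u$ is continuous on $\overline{\{|u|<1\}}$ and equals $\pm 1$ on $\FB(u)$). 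So each connected component $W$ of $\R^n\setminus \overline V$ has a boundary $\partial W \subset \partial V$ along which $u$ is locally constant equal to some value $\sigma_W \in \{+1,-1\}$; one should check $\sigma_W$ is well-defined (i.e. constant on all of $\partial W$), which follows from connectedness of $\partial W$ together with the fact that $u$ cannot transition from $+1$ to $-1$ without passing through $\{|u|<1\}$, hence without entering $V$ — but $\partial W$ does not meet $V$. First I would establish this well-definedness carefully, as it is the heart of the matter.

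Once $\sigma_W$ is defined for every component $W$, I would define
\[
\tilde u(x) := \begin{cases} u(x) & x\in \overline V,\\ \sigma_W & x\in W,\ W \text{ a component of }\R^n\setminus\overline V.\end{cases}
\]
Then one checks $\tilde u$ is a classical solution of the free boundary Allen--Cahn problem in the sense of \Cref{defi:solutionsAC}: inside $V$ it is harmonic with $|\nabla u|=1$ on $\partial V = \partial\{|\tilde u|<1\}$, and on each $W$ it is constant $\pm 1$, so $\{|\tilde u|<1\} = V$ is locally a smooth domain, $\Delta\tilde u = 0$ there, and $|\nabla \tilde u| = 1$ on $\partial\{|\tilde u|<1\}$. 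The continuity of $\tilde u$ across $\partial V$ holds because $u = \sigma_W$ on $\partial W$ by construction, matching the extended constant value. The $C^1$-regularity up to $\overline{\{|\tilde u|<1\}}$ is inherited from that of $u$, and the fact that $\tilde u$ takes values in $[-1,1]$ is clear. Thus $\tilde u$ is the desired classical solution; uniqueness is immediate since on $V$ it must equal $u$, and on each component $W$ of the complement it must be $\pm 1$-valued and harmonic hence constant, and continuity across $\partial W$ forces that constant to be $\sigma_W$.

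The main obstacle I anticipate is verifying that $\sigma_W$ is genuinely constant on $\partial W$, i.e., ruling out that a single component $W$ of $\R^n\setminus\overline V$ has some boundary pieces on which $u\equiv +1$ and others on which $u\equiv -1$. The point is topological: such a $W$ would be an open connected set whose boundary touches both the region $\{u=+1\}$ and $\{u=-1\}$, but $W\cap\{|u|<1\}=\varnothing$ (since $\{|u|<1\}$ is open and $V$ is a full connected component of it, so no other component of $\{|u|<1\}$ can lie in $W$ — here one must also argue that $W$ contains no other component of $\{|u|<1\}$, which follows because any such component would have to be separated from $\partial W$, contradicting that $W$ is a component of the complement of the \emph{single} component $V$; more carefully, $\{|u|<1\}\setminus V$ is closed in $\R^n\setminus V$ and $W$ is a connected component of $\R^n\setminus\overline V$, so I would argue $W\cap\{|u|<1\}$ is both open and closed in $W$, hence empty or all of $W$, and it cannot be all of $W$ since $\overline V\cap\overline W\neq\varnothing$). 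Granting $W\subset\{|u|=1\}$, the function $u$ restricted to the connected open set $W$ takes values in $\{+1,-1\}$ and is continuous, hence is constant on $W$; by continuity it equals that constant on $\partial W$ as well. This resolves the obstacle. I would write this step using a clean open-closed decomposition argument rather than invoking any delicate geometry, since the regularity of $\partial V$ and the $C^1$ bound on $u$ (from \Cref{lem:global-curv-bd}) make all the needed separations routine.
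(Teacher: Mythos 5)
There is a genuine gap at exactly the step you yourself flag as the heart of the matter: the well-definedness of $\sigma_W$. Your argument rests on the claim that $W\cap\{|u|<1\}=\varnothing$, justified by asserting that $\{|u|<1\}\setminus V$ is closed in $\R^n\setminus V$, so that $W\cap\{|u|<1\}$ is both open and closed in $W$. This is false: $\{|u|<1\}\setminus V$ is a union of connected components of the open set $\{|u|<1\}$, hence \emph{open} in $\R^n$, and in general it is not closed in $\R^n\setminus V$. Concretely, for a multi-layer solution in which $\{|u|<1\}$ consists of two parallel slabs $V$ and $V'$, the component $W$ of $\R^n\setminus\overline V$ on the side of $V'$ contains $V'$, so $W\not\subset\{|u|=1\}$ and $u$ is certainly not constant on $W$. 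Once this premise is removed, the continuity argument collapses: a path in $W$ joining a neighborhood of a piece of $\partial W$ where $u=+1$ to one where $u=-1$ may legitimately pass through other components of $\{|u|<1\}$ contained in $W$, along which $u$ transitions between the two values, so continuity of $u$ alone does not force $u$ to be constant on $\partial W$. (The earlier appeal to ``connectedness of $\partial W$'' does not help either, since $\partial W$ need not be connected.)

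This is precisely the difficulty the paper's proof is designed to overcome, and it does so by a genuinely topological argument rather than a point-set one. Setting $\Gamma_\pm:=\partial V\cap\{u=\pm1\}$, which are smooth embedded hypersurfaces, one assumes $\partial W$ meets both $\Gamma_+$ and $\Gamma_-$ at points $p_\pm$ and concatenates a path in $W$ joining $p_+-t\nu(p_+)$ to $p_--t\nu(p_-)$, a path in $V$ joining $p_++t\nu(p_+)$ to $p_-+t\nu(p_-)$, and the two short normal segments through $p_\pm$. The resulting closed curve meets $\Gamma_+$ exactly once, contradicting the invariance of the mod $2$ intersection number (every loop in $\R^n$ is null-homotopic and must therefore cross such a hypersurface an even number of times). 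Some argument of this kind --- using that $\Gamma_+$ separates, not merely that $u$ is continuous --- is unavoidable, and your proposal does not supply a valid substitute. The remainder of your plan (the definition of $\tilde u$, the verification that it is a classical solution, and the uniqueness) is fine once the well-definedness of $\sigma_W$ is established.
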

\begin{proof}
    The only delicate part is to show that we can assign a constant value (either $+ 1$ or $-1$) to each connected component $W$ of $\R^n\setminus \overline V$ in a way that it agrees with $u$ on $\partial W$.
    Although intuitive, to rigorously justify it, we use that the two free boundaries $\Gamma_\pm : = \partial V \cap \{u = \pm 1\}$ are smooth submanifolds of $\R^n$ (in particular, they are oriented and embedded).     We want to show that, given a connected component $W$ of $\R^n\setminus \overline V$,  the boundary $\partial W$ is either fully contained in  $\Gamma_+$ (and then we assign $+1$ to $u$ in $W$) or in $\Gamma_-$ (and then we assign $-1$). 
    
    Assume by contradiction the existence of two points $p_\pm\in \Gamma_\pm$  such that $\hat p_\pm := p_\pm  - t \nu(p_\pm) \in W$ for  $t>0$ small enough,  where $\nu$ is the inward unit normal to $V$. 
    Since $W$ is open and connected, there is a smooth curve $\gamma_1$ joining $\hat p_+$ and $\hat p_-$ that does not intersect $\partial W$---in particular, it does not intersect $\Gamma_+$. 
    On the other hand, since $V$ is open and connected, there is another curve $\gamma_2$ contained in $V$ and joining the two points $\tilde p_\pm = p_\pm  + t \nu(p_\pm) \in V$ (for $t$ small) that does not intersect $\partial V$---in particular, it does not intersect $\Gamma_+$. But then the concatenation of $\gamma_1$ and $\gamma_2$ with the two segments $\tilde p_+ \hat p_+$ and  $\tilde p_- \hat p_-$ would give a closed curve intersecting $\Gamma_+$ exactly once  (notice that the segment $\tilde p_- p_-$  intersects $\Gamma_-$ and not $\Gamma_+$). However, by the invariance of the mod 2 self-intersection number (see, e.g., \cite[Chapter 2]{Guillemin-Pollack}), any closed curve has to intersect $\Gamma_+$ an even number of times (being homologous to zero in $\mathbb Z/2$-homology), a contradiction.
\end{proof}

The proof of \Cref{cor:DeG_main} now follows through rather standard arguments, which we sketch for the reader's convenience:
\begin{proof}[Proof of \Cref{cor:DeG_main}]
By \Cref{lem:extrconccomp} we can assume that $\{|u|<1\}$ is connected. Similarly to the proof of \Cref{cor:DeG_Bernoulli} (see Subsection~\ref{ssec:proofcorollaries}), thanks to the monotonicity assumption, the solution is stable and we have universal curvature estimates for the free boundary (see \Cref{prop:main2bis}), so the limits 
    \[
    \underline u (x_1,x_2, x_3) : = \lim_{x_4\to -\infty} u \qquad \mbox{and}\qquad \overline u (x_1,x_2, x_3) : =\lim_{x_4\to +\infty} u.
    \]
    are classical stable solutions in $\R^3$. Thus, by \Cref{thm:FBAC-main}, $\underline u$ and $\overline u$ depend only on one Euclidean variable. 
     
We now claim $u$ is an energy minimizer.
Indeed, since $u$ is monotone in the $x_4$ direction, there are three cases to consider (up to rotation in the first three variables and replacing $u(x',x_4)$ by $-u(x',-x_4)$, if needed):
    \begin{enumerate}[(i)]
    \item  $\underline  u \equiv -1$ and $\overline u \equiv +1 $;
    \item  $\underline  u \equiv -1$ and $\overline u = \overline h(x_1)$ for some $\overline{h}: \R\to [-1,1] $ not constant;
    \item  $\underline  u = \underline h(x_1)$ and $\overline u = \overline h(x_1)$ for some $\underline h, \overline h: \R\to [-1,1]$ not constant.
    \end{enumerate}
    Now, the assumption that $\{|u|<1\}$ is connected and  $-1\le \underline  u\le u\le \overline u\le 1$ imply that $\{\underline  u=1\}$ cannot contain a slab $\{a\le  x_1\le b\}$ with $a\le b$ finite, as otherwise we would have that $u=1$ in such a slab, thus disconnecting $\{|u|<1\}$. 
    Symmetrically, there cannot be a slab  $\{a\le  x_1\le b\}$ where $\overline  u=-1$.

It follows that, in the previous possible scenarios:
either $\underline u$ is a minimizer (identically $-1$, or a 1D monotone solution)  or a maximum of two minimizers, and  $\overline u$ is either a minimizer (identically $+1$, or a 1D monotone solution)  or a minimum of two minimizers.
This ensures that $\underline u$ and $\overline u$  are, respectively, lower and upper barriers for minimizers. Therefore, since the family of translated graphs $\{x_5=u(x + te_4)\}_{t\in \R}$ foliates the region 
\[
\{ \underline u(x) \le x_5 \le \overline u(x)\}\subset \R^4 \times [-1,+1],
\]
via a standard foliation argument (see \cite[Proof of Theorem 1.3]{Jerison-Monneau}) it follows that $u$ must be an energy minimizer in every compact subset of $\R^4$, as claimed. 
Thanks to the energy minimality, we can apply \cite[Theorem 3]{Val06} to conclude that  $u$ is one-dimensional. 
\end{proof}

Finally, we provide the proof of \Cref{cor:curvest}, relying on the $C^{1,1}$ to $C^{2,\alpha}$ estimate established in \cite{An25}. Notably, the estimate in \cite{An25} is significantly more elementary than its Allen–Cahn counterpart in \cite{CM20, WW19}, as it does not need to account for sheet interactions.
\begin{proof}[Proof of \Cref{cor:curvest}]
We claim that
\begin{equation}\label{keyptcurvest}
   \sup_{B_{3/4}\cap \{|u_\eps|<1\}} \ep |D^2 u_\eps|\le C,
\end{equation}
with a universal constant~$C$.
Note that since $|u_\eps|\le 1$ and  $|\nabla u_\eps|=1/\ep$ on $\partial\{|u_\eps|<1\}$, the estimate~\eqref{keyptcurvest} provides universal curvature bounds for the free boundary and all level sets of $u_\eps$ within~$B_{3/4}$, for $\eps$ sufficiently small.

To prove~\eqref{keyptcurvest}, we argue by contradiction, combining the $C^{1,1}$-to-$C^{2,\alpha}$ estimates from~\cite{An25} with \Cref{thm:FBAC-main}. This approach is a standard scaling-compactness argument analogous to the curvature estimate proofs in~\cite{Chodosh19,WW19}.
Indeed, suppose---for the sake of contradiction---that there exists a sequence $u_k$ of classical stable critical points of $\cJ^0_{\eps_k}$ in $B_1$ for some $\eps_k\in (0, 1)$  such that 
    \[
       \sup_{B_1 \,\cap\, \{|u_k| < 1\}} 
           \eps_k|D^2 u_k(x)|\,(1 - |x|) 
       \;\ge\; k \;\longrightarrow\;\infty.
    \]
Let $x_k$ be a point where the maximum is attained and set
\[
h_k := \eps_k|D^2 u_k(x_k)|\big(1 - |x_k|\big) = \max_{x\in B_1\cap \overline{\{|u_k| < 1\}}} \eps_k|D^2 u_k(x)|\big(1 -|x|\big)\to \infty,\quad\text{as}\quad k \to \infty. 
\]
Let $d_k := \eps_k |D^2 u_k(x_k)|$ and $\rho_k = 1-|x_k|$, so that $h_k = d_k \rho_k$ and $d_k \to \infty$. Notice that, by \Cref{lem:global-curv-bd} applied to $u_{\eps_k}(\eps_k\, \cdot )$, we have $\eps_k d_k \le C$ with $C$  universal. 

Now, choose any sequence $\tau_k\downarrow 0$ such that $\tau_k h_k\to \infty$ and define 
\[
\widetilde  u_k(y) :=  u_k\bigg(x_k + \frac{y}{d_k}\bigg)\qquad\text{for}\quad y\in B_{\tau_k d_k \rho_k }. 
\]
Then  $\widetilde u_k$ is  a classical stable critical point of $\cJ_{\epk}^0$ in $B_{\tau_k d_k\rho_k}$, where $\epk = \eps_k d_k \le C$, with $0\in \overline{\{|\widetilde u_k| < 1\}}$ and $\epk |D^2 \widetilde u_k(0)| = 1$. Also, by definition of $h_k$, 
for $x=x_k + \frac{y}{d_k}\in \{u_k>0\}$  with $|y|<  \tau_k d_k \rho_k$ we have 
\[
\eps_k\left|D^2 u_k\left(x_k + \frac{ y}{d_k}\right)\right| \le \eps_k |D^2 u_k(x_k)| \frac{ 1-|x_k|  }{1-|x_k + y/d_k|} \le d_k \frac{\rho_k}{\rho_k -\tau_k\rho_k}.
\] 
Therefore, 
\[
\widetilde \ep_k |D^2 \widetilde u_k(y)| = \frac{\eps_k }{d_k} \abs{D^2 u_k\left(x_k + \frac{ y}{d_k}\right)}
\le \frac{1}{1- \tau_k} \qquad\text{for}\quad y\in B_{\tau_kd_k \rho_k} \cap \overline {\{\widetilde u_k > 0\}}. 
\]
By construction, the radius of the ball  $\tau_k d_k\rho_k = \tau_k h_k$ goes to infinity as $k \to \infty$.

We now distinguish two cases: 
\begin{enumerate}
    \item[(a)] If  $\lim_{k\to \infty } \widetilde \eps_k =0$,  then using the $C^{1,1}$-to-$C^{2,\alpha}$ estimates in \cite{An25}---similarly to \cite{CM20, WW19}---we obtain that the free boundaries of $\widetilde u_k$ converge (with local graphical $C^2$ convergence) to a complete stable minimal surface in the Euclidean space $\R^3$ with non-zero second fundamental form at the origin. This contradicts the classical classification of stable minimal surfaces in $\R^3$ (see \cite{DP79, FS80, Pog81}), stating that such surfaces must be flat.
    
    \item[(b)]  Otherwise, up to passing to a subsequence, we have $\widetilde\eps : = \lim_{k\to \infty }\widetilde \eps_k >0$. Then the functions $\widetilde u_k(\widetilde \eps\,\cdot\,)$ must converge---similar to the proof of \Cref{lem:reduction}---to a classical stable critical point of $\cJ_1^0$  in the whole  $\R^3$ with nonzero Hessian at the origin, contradicting \Cref{thm:FBAC-main}.
\end{enumerate}
This completes the proof.
\end{proof}

\begin{remark}
    With arguments similar to the ones above, one could also show the following Riemannian version of \Cref{cor:curvest}: \\
{\it Let $g$ be a Riemannian metric on the Euclidean ball $B_1 \subset \mathbb{R}^3$, and assume that $\|g\|_{C^2(B_1)}+\|g^{-1}\|_{C^2(B_1)} \leq M$. Let $u_\varepsilon: B_1 \to [-1,1]$ be a classical stable critical point of 
$$
\mathcal{J}_{g,  \eps}^0(u;B_1)=\int_{B_1}\left\{ \varepsilon \,g^{ij}\partial_iu\partial_ju+ \frac{1}{\varepsilon} \mathbbm{1}_{(-1,1)}(u)\right\} \, d{\rm vol}_g,\qquad \eps \in (0,1).
$$
 Then the principal curvatures of the level sets of $u_\varepsilon$ are bounded in $B_{1/2}$ by a constant depending only on $M$.}
\end{remark}

\appendix

\section{Some classical results}
\label{app:classical-results}

We begin by recalling the following quantitative version of Hopf's lemma.

\begin{lem}[Hopf]
\label{lem:Hopf-quant}
Suppose $B_1(e_n)$ touches $\Omega\subset\R^n$ from the interior at $0$. Suppose
\begin{equation*}
\left\{
\begin{array}{rcll}
	-\Delta u &\geq& 0 & \text{ in } B_1(e_n),\\
	u&>&0 & \text{ in } B_{1}(e_n),\\
	u(0)&=&0.&
\end{array}
\right.
\end{equation*}
Then there exist dimensional constants $c_1,c_2>0$ such that
\begin{equation*}
\partial_\nu u(0)\geq c_1\inf_{B_{1/2}(e_n)}u\geq c_2 \fint_{B_{1/2}(e_n)}u\,dx.
\end{equation*}
Here $\nu$ is the inward unit normal of $\Omega$ (as consistent with the Bernoulli problem). 
\end{lem}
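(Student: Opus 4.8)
The statement to prove is the quantitative Hopf lemma (Lemma~\ref{lem:Hopf-quant}): if $B_1(e_n)$ touches $\Omega$ from the interior at $0$, and $u$ is superharmonic and positive in $B_1(e_n)$ with $u(0)=0$, then $\partial_\nu u(0) \geq c_1 \inf_{B_{1/2}(e_n)} u \geq c_2 \fint_{B_{1/2}(e_n)} u\,dx$.

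\begin{proof}[Proof plan]
The plan is to use the classical barrier construction underlying Hopf's lemma, but tracking constants explicitly. The annulus on which we build the barrier is $A := B_1(e_n)\setminus \overline{B_{1/2}(e_n)}$, whose outer boundary passes through the origin. First I would introduce the standard subharmonic comparison function on $A$: writing $r = |x - e_n|$, set
\[
\phi(x) := \frac{e^{-\alpha r^2} - e^{-\alpha}}{e^{-\alpha/4} - e^{-\alpha}},
\]
where $\alpha > 0$ is a dimensional constant to be chosen. A direct computation gives $\Delta(e^{-\alpha r^2}) = (4\alpha^2 r^2 - 2n\alpha)e^{-\alpha r^2}$, which is positive on $A$ (where $r \geq 1/2$) provided $\alpha > 2n$; fix such an $\alpha$ (e.g.\ $\alpha = 2n+1$). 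Thus $\phi$ is subharmonic in $A$, with $\phi \equiv 1$ on $\partial B_{1/2}(e_n)$ and $\phi \equiv 0$ on $\partial B_1(e_n)$.

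Next I would compare $u$ with $m\phi$ on $A$, where $m := \inf_{B_{1/2}(e_n)} u \geq 0$. On the inner boundary $\partial B_{1/2}(e_n)$ we have $u \geq m = m\phi$, while on the outer boundary $\partial B_1(e_n) \subset \overline{B_1(e_n)}$ we have $u \geq 0 = m\phi$. Since $u$ is superharmonic and $m\phi$ is subharmonic in $A$, the maximum principle yields $u \geq m\phi$ throughout $A$. (If $m = 0$ the first inequality of the lemma is trivial, so we may assume $m > 0$; note also that if $u \equiv 0$ then $m = 0$, so we are in the nontrivial regime where positivity of $u$ is genuinely used.) Now evaluate the normal derivative at $0$, where the inward normal $\nu$ to $B_1(e_n)$ (which is also the inward normal to $\Omega$ since $B_1(e_n)$ touches from inside) is $\nu(0) = e_n$. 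Since $u(0) = 0 = m\phi(0)$ and $u \geq m\phi$ near $0$ from inside, we get
\[
\partial_\nu u(0) = \liminf_{t\downarrow 0}\frac{u(te_n) - u(0)}{t} \geq m\,\partial_\nu \phi(0) = m \cdot \frac{2\alpha e^{-\alpha}}{e^{-\alpha/4} - e^{-\alpha}} =: c_1 m,
\]
using $\partial_\nu(e^{-\alpha r^2})(0) = -\partial_r(e^{-\alpha r^2})|_{r=1} \cdot (\text{sign}) = 2\alpha e^{-\alpha}$ (the inward direction $e_n$ decreases $r$ from its value $1$ at $0$). The constant $c_1 = c_1(n)$ is dimensional. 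This establishes the first inequality $\partial_\nu u(0) \geq c_1 \inf_{B_{1/2}(e_n)} u$.

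For the second inequality, $\inf_{B_{1/2}(e_n)} u \geq c_2 \fint_{B_{1/2}(e_n)} u\,dx$, I would invoke the weak Harnack inequality for non-negative superharmonic functions: since $u \geq 0$ is superharmonic in $B_1(e_n)$, there is a dimensional constant $c = c(n) > 0$ with $\inf_{B_{1/2}(e_n)} u \geq c\, \fint_{B_{3/4}(e_n)} u\,dx \geq c' \fint_{B_{1/2}(e_n)} u\,dx$ (the last step just comparing averages over nested balls, using again $u \geq 0$). Taking $c_2 := c_1 c'$ in the combined chain finishes the proof. I do not expect a genuine obstacle here: the only mild care needed is in fixing $\alpha$ so that $\phi$ is subharmonic on the annulus and in correctly orienting the normal derivative; the weak Harnack step is entirely standard (see e.g.\ Gilbarg--Trudinger).
\end{proof}
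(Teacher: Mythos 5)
Your proof is correct and follows essentially the same route as the paper's: a comparison with an explicit barrier on the annulus $B_1(e_n)\setminus \overline{B_{1/2}(e_n)}$ for the first inequality (you use the subharmonic exponential barrier $e^{-\alpha r^2}$, the paper uses the harmonic barrier built from the fundamental solution $\Gamma_n$ — both work identically), followed by a standard estimate for non-negative superharmonic functions for the second inequality. If anything, your appeal to the weak Harnack inequality is the more careful citation, since the paper's terse reference to ``the mean value inequality'' does not by itself control $\inf_{B_{1/2}(e_n)}u$ at every point of the half-ball without a short chaining argument.
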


\begin{proof}
Define
$$
\Gamma_n:B_1(e_n) \setminus \{e_n\}\to \R,\qquad \Gamma_n(x):=\begin{cases} 
-\frac{\log|x-e_n|}{\log 2}& \text{if } n=2,\\
\frac{|x-e_n|^{2-n}-1}{2^{n-2}-1} & \text{if }n \geq 3,\\
\end{cases}
$$
so that 
$$
\Delta \Gamma_n=0\quad \text{ in $B_1(e_n)\setminus \{e_n\}$},\qquad \Gamma_n|_{\partial B_1(e_n)}=0,\qquad \Gamma_n|_{\partial B_{1/2}(e_n)}=1.
$$
Then $v(x)=\left(\inf_{\partial B_{1/2}(e_n)} u\right) \Gamma_n(x)$
is a lower barrier for $u$ inside $B_1(e_n)\setminus B_{1/2}(e_n)$, thus $\partial_\nu u(0) \geq \partial_\nu \Gamma_n(0)=c(n)\inf_{\partial B_{1/2}(e_n)} u$. This proves the first inequality. The second follows from the mean value inequality for superharmonic functions.
\end{proof}

We also present a useful interpolation inequality between $L^1$ and ${\rm Lip}$: 

\begin{lem}[Interpolation]
\label{lem:L1_Lip}
Let $n \ge 2$, and let $\Omega := \{(x', x_n)\in \R^{n-1}\times \R : x_n > \phi(x')\}$ with $\phi(0) = 0$ and $|\nabla \phi|\le C_\circ$. Let $u \in {\rm Lip}(\Omega\cap B_1)$. Then, 
\[
\|u \|^{n+1}_{L^\infty(\Omega\cap B_1)}\le C \|u\|_{L^1(\Omega\cap B_1)} \|\nabla u\|^n_{L^\infty(\Omega\cap B_1)}, 
\]
for some $C$ depending only on $n$ and $C_\circ$. In particular, for any $\eps > 0$,
\[
\|u \|_{L^\infty(\Omega\cap B_1)}\le C_\eps \|u\|_{L^1(\Omega\cap B_1)} +\eps \|\nabla u\|_{L^\infty(\Omega\cap B_1)}, 
\]
for some  $C_\eps > 0$ depending only on $n$, $C_\circ$, and $\eps$.
\end{lem}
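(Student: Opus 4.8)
The statement to prove is \Cref{lem:L1_Lip}, the interpolation inequality between $L^1$ and $\dot{W}^{1,\infty}$ on a flat-Lipschitz domain, together with its $\varepsilon$-consequence.

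\textbf{Overall approach.} The plan is to prove the scale-invariant estimate $\|u\|_{L^\infty(\Omega\cap B_1)}^{n+1}\le C\|u\|_{L^1(\Omega\cap B_1)}\|\nabla u\|_{L^\infty(\Omega\cap B_1)}^n$ by a direct pointwise argument: pick a near-maximizer $x_0\in\Omega\cap B_1$, and observe that $|u|$ cannot drop too fast from $|u(x_0)|=:m$ since $\nabla u$ is bounded by $L:=\|\nabla u\|_{L^\infty}$. Hence $|u|\ge m/2$ on the portion of a ball of radius $\rho:=m/(2L)$ around $x_0$ that lies inside $\Omega$. Because $\Omega$ is a $C_\circ$-Lipschitz subgraph domain with $\phi(0)=0$, the relative density $|\Omega\cap B_r(x_0)\cap B_1|\ge c(n,C_\circ)\, r^n$ holds uniformly for $x_0\in\Omega\cap B_1$ and $r\le 1$ (a fixed cone of aperture depending only on $C_\circ$ fits inside $\Omega$ at every boundary point, and interior points are even easier). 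Integrating $|u|\ge m/2$ over this set gives $\|u\|_{L^1(\Omega\cap B_1)}\ge c(n,C_\circ)\,(m/2)\,\min\{\rho,1\}^n$. If $\rho\ge 1$ the bound is immediate (and in fact stronger); if $\rho<1$, substituting $\rho=m/(2L)$ yields $\|u\|_{L^1}\ge c\, m^{n+1}/L^n$, i.e. $m^{n+1}\le C\|u\|_{L^1}L^n$, which is the claim since $m$ can be taken arbitrarily close to $\|u\|_{L^\infty}$.

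\textbf{Key steps, in order.} First I would reduce to the case $L<\infty$ and $\|u\|_{L^1}<\infty$ (otherwise nothing to prove) and fix $x_0$ with $|u(x_0)|\ge \tfrac12\|u\|_{L^\infty(\Omega\cap B_1)}$ (or any fixed fraction; working with a supremum-approximating sequence avoids measurability issues). Second, establish the uniform density lower bound: there is $c_\star=c_\star(n,C_\circ)>0$ with $|\Omega\cap B_r(x_0)\cap B_1|\ge c_\star r^n$ for all $x_0\in\overline{\Omega}\cap B_1$ and $r\in(0,1]$; this follows by placing, at the nearest boundary point, an open cone $\{x_n-\phi(x_0')> C_\circ|x'-x_0'|\}$, which is contained in $\Omega$ by the Lipschitz condition, and intersecting with $B_r(x_0)\cap B_1$ — a fixed proportion of volume. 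Third, the Lipschitz bound gives $|u(x)|\ge |u(x_0)|-L|x-x_0|\ge \tfrac12|u(x_0)|$ for $|x-x_0|\le \tfrac{|u(x_0)|}{2L}$. Fourth, combine: $\|u\|_{L^1(\Omega\cap B_1)}\ge \int_{\Omega\cap B_\rho(x_0)\cap B_1}|u|\ge \tfrac12|u(x_0)|\,c_\star\min\{\rho,1\}^n$ with $\rho=|u(x_0)|/(2L)$, and conclude the power inequality. Fifth, deduce the $\varepsilon$-version by Young's inequality: $a\le C a^{\frac{1}{n+1}}b^{\frac{n}{n+1}}$ with $a=\|u\|_{L^\infty}$, $b=\|u\|_{L^1}^{\frac1n}\|\nabla u\|_{L^\infty}$ — wait, more directly, from $a^{n+1}\le C a_1 b^n$ with $a_1=\|u\|_{L^1}$, $b=\|\nabla u\|_{L^\infty}$, Young gives $a\le C_\varepsilon a_1+\varepsilon b$ for every $\varepsilon>0$.

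\textbf{Main obstacle.} The only non-formal point is the uniform interior density estimate $|\Omega\cap B_r(x_0)\cap B_1|\ge c(n,C_\circ)r^n$ with a constant independent of $x_0\in\Omega\cap B_1$ and of $r\le 1$; one must be a little careful near $\partial B_1$ (the intersection with $B_1$ could in principle be small), but since $\phi(0)=0$ and $|\nabla\phi|\le C_\circ$ one may instead integrate over $\Omega\cap B_r(x_0)$ after noting that a definite fraction of $B_r(x_0)$ lies in $B_1$ whenever $x_0\in B_{1/2}$, and handle $x_0\in B_1\setminus B_{1/2}$ by the same cone argument localized well inside $B_1$ (or simply by replacing $B_1$ with $B_{3/4}$ at the cost of the constant — which is harmless for all the applications in the paper, e.g.\ in the proofs of \Cref{lem:L1Linfty_2} and \Cref{prop:H_to_Hessian}). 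Everything else is elementary. I would also remark that the exponent $n+1$ is sharp (take $u$ a bump of height $m$ and width $m/L$).
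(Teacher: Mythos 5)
Your proof is correct and follows essentially the same route as the paper's: pick a near-maximizer $x_0$, use the Lipschitz bound to keep $|u|\ge \tfrac12|u(x_0)|$ on a ball of radius comparable to $|u(x_0)|/\|\nabla u\|_{L^\infty}$, lower-bound the measure of that ball's intersection with $\Omega$ (and $B_1$) by $c\,r^n$ via the Lipschitz cone condition, integrate, and deduce the $\varepsilon$-version by Young's inequality. You are in fact slightly more careful than the paper, which writes $V\ge \frac{h}{4}|B_r(x_\circ)\cap\Omega|\ge c h r^n$ without commenting on the intersection with $B_1$ or on the uniformity of the density constant — precisely the point you flag and resolve.
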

\begin{proof}
Let $h = \|u \|_{L^\infty(\Omega\cap B_1)}$,  $V = \|u\|_{L^1(\Omega\cap B_1)}$, $L = \|\nabla u\|_{L^\infty(\Omega\cap B_1)}$, and let $x_\circ\in \Omega\cap B_1$ be such that $|u(x_\circ)| \ge \frac{h}{2}$.  
Then, since $u$ is $L$-Lipschitz, we have
\[
|u(x)| \ge \frac{h}{2}- L|x-x_\circ|.
\]
In particular, denoting $r = \frac{h}{4L}$, we have $|u| \ge \frac{h}{4}$ in $B_r(x_\circ)$ and therefore
\[
V = \int_{\Omega\cap B_1} |u|\ge \int_{B_r(x_\circ)\cap \Omega} |u| \ge \frac{h}{4}|B_r(x_\circ)\cap \Omega|\ge c h r^{n}=c4^{-n}\frac{h^{n+1}}{L^n},
\]
which gives the first result. The second estimate then follows from Young's inequality. 
\end{proof}

In the following result, we will use the stability inequality for classical solutions to the Bernoulli problem in $B_1$, which reads as 
\begin{equation}
\label{eq:stab_ineq_H}
\int_{\partial\{u > 0\}} H \xi^2\, d\mathcal{H}^{n-1} \le \int_{\{u > 0\}} |\nabla\xi|^2\, dx,\qquad\text{for all}\quad \xi\in C^\infty_c(B_1), \ \xi\ge 0, 
\end{equation}
(see \cite[Lemma 1]{CJK04}). We recall that $H$ denotes the mean curvature of the free boundary, and $H(x) = -\partial^2_{\nu\nu} u(x)$ for $x\in \FB(u)$, where $\nu$ is the inward unit normal vector field to $\FB(u)$ (cf. \Cref{lem:vbounds}).

Setting $\xi=\bc \eta$ in \eqref{eq:stab_ineq_H} and integrating by parts, we get the following equivalent formulation:
\begin{equation}
\label{eq:stab_ineq_H_2}
\int_{\set{u>0}}
	\bc\Delta\bc\,
	\eta^2
\,dx
+\int_{\partial\{u > 0\}}
	\bc(\bc_\nu+H\bc)
	\eta^2
\, d\mathcal{H}^{n-1}
\le \int_{\{u > 0\}} 
	\bc^2 |\nabla\eta|^2
\, dx,
	\qquad\text{for all}\quad \bc,\eta\in C^\infty_c(B_1), \ \bc\ge 0.
\end{equation}
One can then prove the following version consequence of the stability inequality.

\begin{lem}[Sternberg--Zumbrun inequality]
\label{lem:Stern_Zum_App}
Let $n \ge 2$,  and let $u$ be a classical stable solution to the Bernoulli problem in $B_1\subset \R^n$. Then
\[
\int_{B_1\cap \{u > 0\}} |D^2 u|^2 \eta^2 \,dx \le n \int_{B_1}|\nabla u|^2|\nabla\eta|^2 \,dx\qquad \text{for all}\quad \eta\in C^\infty_c(B_1).
\]
\end{lem}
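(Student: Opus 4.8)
The statement to be proven is the Sternberg--Zumbrun inequality in the form
\[
\int_{B_1\cap \{u > 0\}} |D^2 u|^2 \eta^2 \,dx \le n \int_{B_1}|\nabla u|^2|\nabla\eta|^2 \,dx, \qquad \eta\in C^\infty_c(B_1).
\]
The plan is to use the equivalent form \eqref{eq:stab_ineq_H_2} of the stability inequality with the specific choice $\bc = |\nabla u|$ (suitably truncated away from $\FB(u)$ to be admissible, then passed to the limit), and to unwind the geometric meaning of the left-hand side. Concretely, first I would recall that on $\{u>0\}$ the function $u$ is harmonic, so a direct computation gives $\Delta(|\nabla u|^2) = 2|D^2u|^2$, whence $|\nabla u|\,\Delta|\nabla u| = |D^2u|^2 - |\nabla |\nabla u||^2$. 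Thus with $\bc=|\nabla u|$ the first term in \eqref{eq:stab_ineq_H_2} becomes $\int_{\{u>0\}} \bigl(|D^2u|^2 - |\nabla|\nabla u||^2\bigr)\eta^2\,dx$. Second, I would identify the boundary term: on $\FB(u)$ one has $|\nabla u|=1$, so $\bc_\nu = \partial_\nu |\nabla u|$; and the Sternberg--Zumbrun algebraic identity (as in \cite[Lemma 2.1]{SZ98}, which is exactly the mechanism invoked to prove \Cref{lem:Stern_Zum} and \Cref{lem:SZ_AC}) shows that on a level set $\{u=c\}$ where $\nabla u\neq 0$ one has $|D^2u|^2 - |\nabla|\nabla u||^2 = |\nabla u|^2\bigl(|A|^2 + |\nabla_T\log|\nabla u||^2\bigr)$ and moreover $\partial_\nu|\nabla u| = -H|\nabla u|$ is not quite right in general, but the point is that $\bc(\bc_\nu + H\bc)$ on $\FB(u)$ reduces to a controlled nonnegative quantity; one checks that this boundary term is in fact nonnegative, so it can simply be dropped. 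The right-hand side of \eqref{eq:stab_ineq_H_2} with $\bc=|\nabla u|$ is $\int_{\{u>0\}}|\nabla u|^2|\nabla\eta|^2\,dx$.

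Putting these together gives
\[
\int_{\{u>0\}}\bigl(|D^2u|^2 - |\nabla|\nabla u||^2\bigr)\eta^2\,dx \le \int_{\{u>0\}}|\nabla u|^2|\nabla\eta|^2\,dx,
\]
which is the Sternberg--Zumbrun inequality in its sharp form but with a gradient-of-gradient correction on the left. To conclude the stated (weaker, with constant $n$) version, the key remaining point is the pointwise inequality $|D^2u|^2 \le n\bigl(|D^2u|^2 - |\nabla|\nabla u||^2\bigr)$ on $\{u>0\}$, i.e. $|\nabla|\nabla u||^2 \le \tfrac{n-1}{n}|D^2u|^2$. This is elementary linear algebra: $\nabla|\nabla u| = \tfrac{D^2u\,\nabla u}{|\nabla u|}$ when $\nabla u\neq 0$, so $|\nabla|\nabla u|| = |D^2u\,e|$ for the unit vector $e=\nabla u/|\nabla u|$, and since $u$ is harmonic the symmetric matrix $D^2u$ is trace-free; a trace-free symmetric $n\times n$ matrix $M$ satisfies $\sup_{|e|=1}|Me|^2 \le \tfrac{n-1}{n}|M|^2$ (attained e.g. by $\mathrm{diag}(n-1,-1,\dots,-1)$ after normalization, where $|Me_1|^2 = (n-1)^2$ and $|M|^2 = (n-1)^2 + (n-1) = n(n-1)$). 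On the set $\{u>0, \nabla u = 0\}$ (which has measure zero by harmonicity / unique continuation, or where one uses that $|\nabla|\nabla u||=0$ there anyway) the inequality is trivial. Multiplying by $\eta^2$, integrating, and combining with the displayed inequality yields exactly $\int_{\{u>0\}}|D^2u|^2\eta^2\,dx \le n\int_{\{u>0\}}|\nabla u|^2|\nabla\eta|^2\,dx \le n\int_{B_1}|\nabla u|^2|\nabla\eta|^2\,dx$.

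The step I expect to require the most care is the treatment of the boundary term $\int_{\FB(u)}\bc(\bc_\nu+H\bc)\eta^2\,d\mathcal H^{n-1}$ with $\bc=|\nabla u|$: one must verify it is nonnegative (or at least that \eqref{eq:stab_ineq_H_2} can legitimately be applied with this non-compactly-supported, non-smooth choice of $\bc$). The clean way is to note that $\bc=|\nabla u|$ is not $C^\infty_c$ near $\FB(u)$, so one should instead start from \eqref{eq:stab_ineq_H} with $\xi=|\nabla u|\eta$ — this is admissible by the already-recalled fact (used for \Cref{lem:Stern_Zum_App} itself and in \Cref{lem:vbounds}) that $|\nabla u|$ extends continuously up to $\FB(u)$ with value $1$ there and with bounded derivatives on the positivity side — integrate by parts the resulting $\int_{\{u>0\}}|\nabla(|\nabla u|\eta)|^2$ using $\Delta|\nabla u|^2 = 2|D^2u|^2$, and discover that the boundary contribution that appears is precisely $\int_{\FB(u)} H\eta^2\,d\mathcal H^{n-1}$, which is the same $H$ appearing on the left of \eqref{eq:stab_ineq_H} and hence cancels, leaving no net boundary term. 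This is the standard Sternberg--Zumbrun computation transplanted to the free-boundary setting, and modulo checking the integrability/approximation needed to justify the integration by parts (which follows from \Cref{lem:Lipbound}, the harmonicity of $u$, and the smoothness of $\FB(u)$ assumed for classical solutions), it closes the proof.
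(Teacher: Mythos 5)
Your proposal is correct and follows essentially the same route as the paper's proof: test the stability inequality with $\xi=|\nabla u|\,\eta$, note that the free-boundary term cancels exactly (since $|\nabla u|=1$ and $\partial_\nu|\nabla u|=-H$ on $\FB(u)$), and use the trace-free structure of $D^2u$ to bound $|\nabla|\nabla u||^2\le \tfrac{n-1}{n}|D^2u|^2$ pointwise. Your spectral argument for that last inequality is a clean equivalent of the paper's coordinate computation, and it correctly uses $\nabla|\nabla u|=D^2u\,\nu$ in the interior.
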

\begin{proof}
The proof follows along the lines of \cite[Theorem 1.9]{FR19} (cf. \cites{SZ98, FV09, Ca10} for the semilinear case), using the stability condition \eqref{eq:stab_ineq_H_2} with $\bc=|\nabla u|$. More precisely, by harmonicity of $u$ we have
\[
|\nabla u|\Delta|\nabla u|
=\frac{1}{2}\Delta\big(|\nabla u|^2\big)
	-\big|\nabla |\nabla u|\big|^2
=|D^2 u|^2
	-\big|\nabla |\nabla u|\big|^2
\quad \text{ inside } \set{u>0}\cap \set{|\nabla u|>0}.
\]
Setting $\nu=\frac{\nabla u}{|\nabla u|}$ (which is the inward unit normal of super-level sets of $u$ and extends the inward unit normal on $\FB(u)$ to $\overline{\set{u>0}}\cap\set{|\nabla u|>0}$), we note that
\[
\nabla|\nabla u|
=\frac{(\nabla|\nabla u|^2\cdot \nu)\nu}{2|\nabla u|}
=(\partial^2_{\nu\nu}u)\nu,
	 \quad \text{ therefore }
\qquad
\partial_{\nu}|\nabla u|
=-H
	\quad \text{ on } \partial\{u>0\}.
\]
Thus, thanks to \eqref{eq:stab_ineq_H_2}, we have
\begin{equation}
\label{eq:prev_SZ}
\int_{B_1}|\nabla u|^2|\nabla \eta|^2 \,dx \ge \int_{B_1\cap \{u > 0\}\cap \{|\nabla u| > 0\}}\left(|D^2 u|^2 -\big|\nabla|\nabla u|\big|^2\right)\eta^2 \,dx \qquad\text{for any}\quad \eta\in C^\infty_c(B_1). 
\end{equation}
Notice that, since $u$ is harmonic in $\{u > 0\}$, the set $\{|\nabla u| = 0\}\cap \{u > 0\}$ has zero measure (by unique continuation) and therefore the right integral above is in fact inside $B_1\cap \{u > 0\}$. 
Now, given any point $x_\circ\in \{u > 0\}$, up to a rotation we can assume then $\nabla u(x_\circ) = e_1|\nabla u(x_\circ)|$. Then, at such point, the previous integrand equals
\[
\left(|D^2 u|^2 - \big|\nabla |\nabla u|\big|^2\right)(x_\circ) = \sum_{i, j = 1}^n (\partial_{ij}^2 u(x_\circ))^2 - (\partial_{11}^2 u(x_\circ))^2 = \sum_{\substack{i, j = 1\\(i, j)\neq (1, 1)}}^n (\partial_{ij}^2 u(x_\circ))^2.
\]
Notice that, by harmonicity, 
\[
\begin{split}
(\partial_{11}^2 u(x_\circ))^2 & = \left(\partial_{22}^2 u(x_\circ)+\partial_{33}^2 u(x_\circ)+\dots +\partial_{nn}^2 u(x_\circ)\right)^2 \\
& \le (n-1)\left((\partial_{22}^2 u(x_\circ))^2+(\partial_{33}^2 u(x_\circ))^2+\dots +(\partial_{nn}^2 u(x_\circ))^2\right), 
\end{split}
\]
and so, for any $\tau\in [0, 1]$, 
\[
 \sum_{\substack{i, j = 1\\(i, j)\neq (1, 1)}}^n (\partial_{ij}^2 u(x_\circ))^2 \ge  (1-\tau)\sum_{\substack{i, j = 1\\(i, j)\neq (1, 1)}}^n (\partial_{ij}^2 u(x_\circ))^2 + \frac{\tau}{n-1} (\partial_{11}^2 u(x_\circ))^2.
\]
Choosing $\tau = \frac{n-1}{n}$, this proves that 
\[
 \left(|D^2 u|^2 - \big|\nabla |\nabla u|\big|^2\right)(x_\circ)   \ge  \frac{1}{n}\sum_{i, j = 1}^n (\partial_{ij}^2 u(x_\circ))^2 = \frac{1}{n}|D^2 u(x_\circ)|^2.
\]
Combining this inequality with \eqref{eq:prev_SZ}, we get the desired result.
\end{proof}

\normalcolor
	
\section{Linear estimates for the Bernoulli problem}
\label{app:linear-estimate} 

In this appendix, we prove some linear estimates for the Bernoulli or one-phase problem that are useful throughout the work. 
In the following, we keep in mind the equivalence:

\begin{lem}
\label{lem:equivv}
Let $n \ge 2$, $e\in\mathbb{S}^{n-1}$, and let $u$ be a classical solution to the Bernoulli problem in $B_1\subset \R^n$ with $0\in \FB(u)$. Then, the following are equivalent for any $\eps_\circ = \eps_\circ(n)$ small  enough: 
\begin{enumerate}[(i)]
\item $|u-e\cdot x| \leq \eps_\circ$ in $B_1\cap \set{u>0}$;
\item $(e\cdot x-\eps_\circ)_+ \leq u \leq (e\cdot x+\eps_\circ)_+$ in $B_1$.
\end{enumerate}
\end{lem}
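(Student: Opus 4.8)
\textbf{Proof plan for Lemma \ref{lem:equivv}.}
The plan is to prove the two implications $(ii)\Rightarrow(i)$ and $(i)\Rightarrow(ii)$ separately, exploiting that $u$ is harmonic inside its positivity set and nonnegative everywhere. The implication $(ii)\Rightarrow(i)$ is essentially immediate: if $(e\cdot x-\eps_\circ)_+ \leq u \leq (e\cdot x+\eps_\circ)_+$ in $B_1$, then at any point $x\in B_1\cap\set{u>0}$ one has $u(x)\leq e\cdot x+\eps_\circ$, while $u(x)\geq (e\cdot x-\eps_\circ)_+ \geq e\cdot x - \eps_\circ$; combining these gives $|u(x)-e\cdot x|\leq \eps_\circ$, which is exactly $(i)$. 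Here no regularity or smallness is needed; this direction holds for all $\eps_\circ$.

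For the converse $(i)\Rightarrow(ii)$, assume $|u-e\cdot x|\leq \eps_\circ$ on $B_1\cap\set{u>0}$. The upper bound $u\leq (e\cdot x+\eps_\circ)_+$ is the easy half: since $u\geq 0$ and $u\leq e\cdot x+\eps_\circ$ on $\set{u>0}$, while $u=0\leq (e\cdot x+\eps_\circ)_+$ on $\set{u=0}$, we get $u\leq\max\{0,e\cdot x+\eps_\circ\}=(e\cdot x+\eps_\circ)_+$ pointwise in $B_1$. The genuinely substantive part is the lower bound $u\geq (e\cdot x-\eps_\circ)_+$, equivalently: $u(x)\geq e\cdot x - \eps_\circ$ whenever $e\cdot x>\eps_\circ$ (say, restricting to a slightly smaller ball $B_{1/2}$ or $B_{3/4}$, which is what such equivalences are always used for in the paper). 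The $L^\infty$ hypothesis only controls $u$ from below \emph{inside} $\set{u>0}$, so the content is to show that the region $\set{u>0}$ actually contains the relevant sublevel-slab $\{e\cdot x>\eps_\circ\}\cap B_{1/2}$, i.e. that $u$ cannot vanish too far on the positive side of the hyperplane.

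To establish this, I would use the clean ball / nondegeneracy property together with the flatness of the zero set already recorded in the paper. From $(i)$ one gets directly that $\{u=0\}\cap B_1\subset\{|e\cdot x|\leq\eps_\circ\}$ (as in \eqref{closenessepzero}), so $u>0$ on $\{e\cdot x>\eps_\circ\}\cap B_1$; by harmonicity and the one-sided bound $u\leq e\cdot x+\eps_\circ$, interior Harnack/boundary-Harnack estimates for the harmonic function $u$ in this slab, anchored to a point where $u$ is comparable to $1$ (e.g. near $\tfrac12 e$, where $u\geq \tfrac12-\eps_\circ$), upgrade the a priori lower bound to the linear bound $u(x)\geq e\cdot x-\eps_\circ$ on $B_{1/2}$ up to a universal constant—then one reabsorbs the constant by shrinking $\eps_\circ$. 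Alternatively, and perhaps more cleanly in the spirit of \Cref{treeprelim1}(b), one uses the mean value property: for $x$ with $e\cdot x$ slightly bigger than $\eps_\circ$, $u(x)=\fint_{B_\rho(x)}u \geq \fint_{B_\rho(x)}(e\cdot y-\eps_\circ)_+\,dy \geq e\cdot x-\eps_\circ - C_n\rho$ for admissible $\rho$, combined with the clean-ball property (\Cref{lem:cleanball}) ensuring $\set{u>0}$ fills a definite portion near the hyperplane. The main (and only real) obstacle is this lower bound: one must convert the interior-only control $|u-e\cdot x|\leq\eps_\circ$ on $\set{u>0}$ into genuinely pointwise control in $B_{1/2}$ that also pins down the location of the free boundary, and the cleanest route is to invoke the improvement-of-flatness/nondegeneracy machinery already available (Lemmas \ref{lem:cleanball}, \ref{lem:eps-reg-classical}, \ref{treeprelim1}) rather than redo it by hand; choosing $\eps_\circ=\eps_\circ(n)$ small enough makes all these tools applicable and absorbs the universal constants.
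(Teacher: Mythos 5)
Your outline of the easy parts is fine, but the hard direction contains a genuine gap precisely at the step you declare to be "direct." You write that from (i) "one gets directly that $\{u=0\}\cap B_1\subset\{|e\cdot x|\leq\eps_\circ\}$ (as in \eqref{closenessepzero})." This does not follow: hypothesis (i) is an estimate only on $B_1\cap\set{u>0}$ and says nothing about where $u$ vanishes, so a priori $u$ could be identically zero on a large piece of $\{e\cdot x>\eps_\circ\}\cap B_1$ while (i) holds on a thin positivity set near the hyperplane. (The inclusion \eqref{closenessepzero} in \Cref{treeprelim1} is derived from an $L^\infty$ bound on the \emph{whole} ball, i.e.\ essentially from (ii); invoking it here is circular.) Ruling out this scenario is the entire content of the lemma. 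The missing argument is a connectedness dichotomy: on $\set{u>0}$, (i) forces $u\geq e\cdot x-\eps_\circ$, so $\set{u>0}\cap\{e\cdot x>\eps_\circ\}\cap B_1$ is both open and relatively closed in the connected slab $\{e\cdot x>\eps_\circ\}\cap B_1$ (no sequence in it can converge to a zero of $u$ inside the slab). Hence either $u>0$ on the whole slab—in which case (i) itself already gives the linear lower bound there, and no Harnack, boundary Harnack, or mean-value upgrade is needed—or $u\equiv 0$ on it. The second alternative gives $\set{u>0}\cap B_1\subset\{|e\cdot x|<\eps_\circ\}$, a set of measure $O(\eps_\circ)$, which for $\eps_\circ$ small contradicts the clean-ball/nondegeneracy property (\Cref{lem:cleanball}, \Cref{rem:nondeg}) at $0\in\FB(u)$. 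This is exactly the paper's proof; you have the right tool (\Cref{lem:cleanball}) in hand but deploy it only as an auxiliary ingredient of an unnecessary Harnack/mean-value step, rather than as the decisive input that excludes the degenerate alternative.

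Two smaller points. First, your mean-value alternative is also circular: $u(x)=\fint_{B_\rho(x)}u$ requires harmonicity of $u$ on $B_\rho(x)$ (i.e.\ $B_\rho(x)\subset\set{u>0}$, which is what you are trying to locate), and the inequality $\fint_{B_\rho(x)}u\geq\fint_{B_\rho(x)}(e\cdot y-\eps_\circ)_+\,dy$ presupposes the lower bound you are proving. Second, the lemma asserts the equivalence in $B_1$ itself, not in a shrunken ball; the dichotomy argument above works in all of $B_1$, so no shrinking is needed.
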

\begin{proof}
    All implications are elementary, except that (i) implies $u \ge (e\cdot x - \eps_\circ)_+$ in $B_1$. By contradiction, we would have $u \equiv 0$ in $B_1 \cap \{|x\cdot e|\ge \eps_\circ\}$. However, since $ 0\in \FB(u)$, this contradicts \Cref{lem:cleanball} (or \Cref{rem:nondeg}) if $\eps_\circ$ is small enough.
\end{proof}
\begin{remark}
    In the previous statement, the hypothesis $0\in \FB(u)$ can be replaced,  e.g., with  $B_{7/8}\cap \{u > 0\}\neq \varnothing$.
\end{remark}

\begin{thm} \label{thm:DeSilva-estimate} Given $n \ge 2$, there exists $\eps_\circ> 0$ small enough depending only on $n$  such that the following holds. 

Let $u$ be a classical solution to the Bernoulli problem in $B_1\subset \R^n$, and  suppose that 
\[
\|u - e\cdot x-b\|_{L^\infty(B_1\cap \{u > 0\})} \le \eps_\circ\qquad\text{for some}\quad e\in \mathbb{S}^{n-1}, \ b\in \R. 
\]
Then, for any $\ctta\in (0, 1)$ we have 
\[
\|\nabla u - e\|_{L^\infty(B_{1/2}\cap \{u > 0\})}+[\nabla u]_{C^{\ctta}(B_{1/2}\cap \{u > 0\})}\le C \|u - e\cdot x-b\|_{L^\infty(B_1\cap \{u > 0\})},
\]
for some constant $C$ depending only on $n$ and $\ctta$. 
\end{thm}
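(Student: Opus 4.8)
\textbf{Proof proposal for \Cref{thm:DeSilva-estimate}.}
The plan is to reduce this linear estimate to a standard application of the improvement-of-flatness iteration for the Bernoulli problem, combined with a compactness normalization. First, by replacing $u(x)$ with $u(x+x_0)$ for a suitable free boundary point $x_0\in B_{1/2}$ (the set $\{u=0\}\cap B_{3/4}$ is nonempty by \Cref{rem:nondeg} applied together with the closeness hypothesis, which forces $u$ to vanish somewhere in the strip $\{|e\cdot x+b|\le\eps_\circ\}$), and absorbing $b$ into this translation, we may assume $b=0$ and $0\in\FB(u)$. Then \Cref{lem:equivv} puts us exactly in the setting of the viscosity-solution flatness hypothesis $(e\cdot x-\eps_\circ)_+\le u\le (e\cdot x+\eps_\circ)_+$ in $B_1$.

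Next I would set $\eps:=\|u-e\cdot x\|_{L^\infty(B_1\cap\{u>0\})}$ and argue by a dichotomy. If $\eps\ge c_0$ for a small universal $c_0$, then the desired bound is just the nonlinear regularity estimate of \Cref{lem:eps-reg-classical} (with $k=1$ and $k=2$, giving $C^{1,\ctta}$ control of $u-e\cdot x$ on $B_{1/2}$), with the constant absorbed into $C$ since $\eps$ is bounded below. If $\eps<c_0$, the quantitative improvement-of-flatness lemma (De Silva's lemma, see \cite{DeSilva}) gives, at each dyadic scale $2^{-k}$, a unit vector $e_k$ with $|e_k-e_{k-1}|\le C2^{-k\alpha}\eps$ and
\[
\|u-e_k\cdot x\|_{L^\infty(B_{2^{-k}}\cap\{u>0\})}\le 2^{-k(1+\alpha)}\eps.
\]
Summing the geometric series $\sum_k|e_k-e_{k-1}|$ shows $e_k\to e_\infty$ with $|e_\infty-e|\le C\eps$, and the Campanato-type characterization then yields $\nabla u(0)=e_\infty$ with $|\nabla u(0)-e|\le C\eps$ and the Hölder modulus $[\nabla u]_{C^\alpha}$ at the origin bounded by $C\eps$. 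Carrying this out at every point of $B_{1/2}$ (with the radius of the starting ball adjusted to, say, $1/2$, which only changes constants) gives the full interior estimate; the interior harmonic estimates handle points of $\{u>0\}$ far from $\FB(u)$, where one simply uses that $u-e\cdot x$ is harmonic and small in $L^\infty$.

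The one genuinely delicate point — and the reason the theorem is stated separately rather than cited verbatim — is the \emph{linear} dependence of the final estimate on $\eps=\|u-e\cdot x\|_{L^\infty}$, rather than merely ``$\le C$ whenever $\eps\le\eps_\circ$''. This is what forces the compactness-free, quantitative version of improvement of flatness: one must track that the gain at each step is a fixed fraction of the current flatness with a multiplicative (not additive) constant, so that the iteration rescales cleanly. The cleanest route is to invoke the estimates of \cite{Lian-Zhang} (as the paper does in \Cref{lem:eps-reg-classical}), or equivalently \cite[Proposition~5.1]{DJS22}, which already provide this linear-in-flatness conclusion; the content of the present theorem is then essentially to package those into the stated translation-invariant, $C^{1,\ctta}$ form via \Cref{lem:equivv} and a covering over $B_{1/2}$. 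I expect the main obstacle to be purely bookkeeping: making sure the translation to a free boundary point, the absorption of $b$, and the choice of $\eps_\circ$ are compatible across all base points of $B_{1/2}$ simultaneously, so that a single universal $\eps_\circ$ and a single constant $C$ work; the analytic heart is entirely contained in the cited iteration.
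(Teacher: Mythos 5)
Your proposal is correct and follows essentially the same route as the paper: reduce to a free boundary point (the paper rescales by a factor $8$ around $x_\circ\in B_{3/4}\cap\FB(u)$ and absorbs $b$ using $\bar u(0)=0$), then run De Silva's quantitative improvement-of-flatness iteration, whose multiplicative structure is exactly what yields the linear dependence on $\eps$. The only cosmetic differences are that the paper anchors the gradient by comparing $\nabla\bar u$ at an interior point $z_\circ=\tfrac14 e$ to $e$ via harmonic estimates (rather than via the telescoping sum $\sum_k|e_k-e_{k-1}|$, which is equivalent), and your dichotomy on $\eps\ge c_0$ is redundant since the hypothesis already forces $\eps\le\eps_\circ$.
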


\begin{proof}
Denote $\eps := \|u - e\cdot x - b\|_{L^\infty(B_1\cap \{u > 0\})}\le \eps_\circ$. If there are no free boundary points in $B_{3/4}$ we are done, either by harmonic estimates if $u > 0$ in $B_{3/4}$, or because $ u \equiv 0$ in $B_{3/4}$. Thus, let us assume $x_\circ\in B_{3/4} \cap \FB(u)$, and consider $\bar u(x) = 8u\left(x_\circ+\frac{x}{8}\right)$, which is a classical solution to the Bernoulli problem in $B_1$ such that 
\[
\left\|\bar u - e\cdot (8x_\circ+x) - 8b\right\|_{L^\infty(B_1\cap \{\bar u > 0 \})}\le 8 \eps\le 8 \eps_\circ. 
\]
In particular, since $\bar u(0)=0$, we have $|8e\cdot x_\circ +8b|\le 8\eps$, and therefore
\[
\left\|\bar u - e\cdot x\right\|_{L^\infty(B_1\cap \{\bar u > 0 \})}\le 16 \eps\le 16 \eps_\circ. 
\]
Recalling \Cref{lem:equivv},
for $\eps_\circ$ small enough we can iteratively apply \cite[Lemma 4.1]{DeSilva} (cf. \cite[Proof of Theorem 1.1]{DeSilva}) to get
\[
[\nabla \bar u]_{C^{1,\alpha}(B_{1/2}\cap \{\bar u > 0\})}\le C \eps.
\]
In particular, setting $z_\circ:=\frac{1}{4}e$,
\[
\|\nabla \bar u - \nabla \bar u(z_\circ)\|_{L^\infty(B_{1/2}\cap \{u > 0\})}\le C\eps. 
\]
Since the free boundary is flat, we can use harmonic estimates for $\bar u -e\cdot x$ in $B_{1/8}(z_\circ)$, to deduce 
\[
|\nabla \bar u(z_\circ) - e|
\le C\|\bar u - e\cdot x\|_{L^\infty(B_{1/8}(z_\circ))} 
\le C\eps.
\]
Combining this bound with the above estimate at $z_\circ$, we obtain
\[
\|\nabla \bar u - e\|_{L^\infty(B_{1/2}\cap \{\bar u > 0\})}
\le \|\nabla\bar  u - \nabla \bar u(z_\circ) \|_{L^\infty(B_{1/2}\cap \{\bar u > 0\})} 
+ |\nabla \bar u(z_\circ) - e| \le C \eps.
\]
By rescaling back and a covering argument, we get the desired result. 
\end{proof}
 \begin{remark}
 \label{rem:DeSilva}
Since $u$ is  Lipschitz, the estimate
 \[
\|\nabla u - e\|_{L^\infty(B_{1/2}\cap \{u > 0\})} \le C \|u - e\cdot x-b\|_{L^\infty(B_1\cap \{u > 0\})},
\]
holds with $C=2\eps_\circ^{-1}$ when the right-hand side is not smaller than $\eps_\circ$. 
 \end{remark}
 
  By a standard interpolation argument, we can now show that $L^1$-flatness implies $L^\infty$-flatness: 

\begin{prop} \label{prop:L1estimate} Let $n \ge 2$, and let $u$ be a classical solution to the Bernoulli problem in $B_1\subset \R^n$. There exists a dimensional constant $C$ such that, for any $e\in\bS^{n-1}$ and $b\in\R$, 
\[
\|u - e\cdot x-b\|_{L^\infty(B_{1/2}\cap \{u > 0\})} \le C \|u - e\cdot x-b\|_{L^1(B_{1}\cap \{u > 0\})} ,
\]
for some constant $C$ depending only on $n$.
\end{prop}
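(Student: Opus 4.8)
The plan is to prove Proposition~\ref{prop:L1estimate} by a standard interpolation/contradiction argument, using the Lipschitz regularity of $u$ together with Theorem~\ref{thm:DeSilva-estimate} (and Remark~\ref{rem:DeSilva}) to pass from an $L^1$-smallness hypothesis to an $L^\infty$ one. Fix $e\in\mathbb{S}^{n-1}$, $b\in\mathbb{R}$ and write $v:=u-e\cdot x-b$. By Lemma~\ref{lem:Lipbound}, $u$ is $1$-Lipschitz, so $v$ is $2$-Lipschitz on $B_1\cap\{u>0\}$; moreover $\{u>0\}$ is a domain with ``one-sided'' regularity (locally a subgraph near the free boundary). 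The key point is: a $2$-Lipschitz function that is small in $L^1$ on a set of near-full density (which $\{u>0\}\cap B_1$ always is, by the clean-ball/nondegeneracy property of Lemma~\ref{lem:cleanball} and Remark~\ref{rem:nondeg}) must be small in $L^\infty$ on a slightly smaller ball.

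First I would make this quantitative directly: set $\eps:=\|v\|_{L^1(B_1\cap\{u>0\})}$ and $h:=\|v\|_{L^\infty(B_{3/4}\cap\{u>0\})}$, pick $x_\circ\in B_{3/4}\cap\overline{\{u>0\}}$ with $|v(x_\circ)|\ge h/2$, and use the $2$-Lipschitz bound to get $|v|\ge h/4$ on $B_{h/8}(x_\circ)\cap\{u>0\}$ (assuming $h/8\le 1/8$, otherwise the estimate is trivial after adjusting constants). The nondegeneracy of the positivity set from Remark~\ref{rem:nondeg} gives $|B_r(x_\circ)\cap\{u>0\}|\ge c_n r^n$ for all $r>0$ and any $x_\circ\in\overline{\{u>0\}}$ (note $x_\circ$ is a limit of points of $\{u>0\}$, so either $x_\circ\in\{u>0\}$ and we apply the remark, or $x_\circ\in\FB(u)$ and we apply Lemma~\ref{lem:cleanball} to the component through nearby interior points). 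Hence
\[
\eps=\int_{B_1\cap\{u>0\}}|v|\,dx\ge \int_{B_{h/8}(x_\circ)\cap\{u>0\}}|v|\,dx\ge \frac{h}{4}\cdot c_n\Bigl(\frac h8\Bigr)^n = c_n' h^{n+1},
\]
so $h\le C_n\eps^{1/(n+1)}$. This is an $L^\infty$ bound in terms of a fractional power of $\eps$; to upgrade it to a linear bound I would now invoke Theorem~\ref{thm:DeSilva-estimate}. Indeed, once $h\le\eps_\circ$ (which holds provided $\eps\le\eps_\circ^{n+1}/C_n^{n+1}$), the $L^\infty$-flatness hypothesis of that theorem is satisfied on $B_{3/4}$, giving $\|\nabla u-e\|_{L^\infty(B_{5/8}\cap\{u>0\})}\le Ch$; but I actually want to go the other direction, from $L^1$ to $L^\infty$ \emph{linearly}, so the cleanest route is: on $B_{5/8}$, the function $v$ is harmonic in $\{u>0\}$ and its free boundary is a $C^{1,\alpha}$ graph with small norm (again Theorem~\ref{thm:DeSilva-estimate} / Lemma~\ref{lem:eps-reg-classical}); reflecting or using boundary regularity, $v$ restricted to $\{u>0\}\cap B_{5/8}$ satisfies interior-type Schauder/mean-value estimates up to the (flat) boundary, so $\|v\|_{L^\infty(B_{1/2}\cap\{u>0\})}\le C\|v\|_{L^1(B_{5/8}\cap\{u>0\})}\le C\eps$ by the usual $L^1$-to-$L^\infty$ estimate for (sub)harmonic-type functions on a Lipschitz domain with controlled geometry. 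If instead $\eps>\eps_\circ^{n+1}/C_n^{n+1}$, then trivially $\|v\|_{L^\infty(B_{1/2}\cap\{u>0\})}\le 2\le C\eps$ since $v$ is $2$-Lipschitz and vanishes somewhere near $\FB(u)\cap B_1$ whenever the free boundary meets $B_1$ (and if it does not, $u$ is harmonic or $\equiv 0$ in $B_1$ and the estimate is classical). Combining the two regimes yields the claimed linear bound.

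The main obstacle I anticipate is the up-to-the-boundary $L^1$-to-$L^\infty$ estimate for $v$ on $\{u>0\}\cap B_{5/8}$: $v$ is not harmonic across $\FB(u)$, so one cannot simply use the mean value property on full balls. The fix is to use that, after Theorem~\ref{thm:DeSilva-estimate}, the free boundary is a $C^{1,\alpha}$ graph with norm $\lesssim h\le\eps_\circ$, so a bi-Lipschitz flattening change of variables turns $\{u>0\}\cap B_{5/8}$ into (roughly) a half-ball on which $v$ solves a uniformly elliptic equation in divergence form with $C^\alpha$ coefficients and vanishing right-hand side; interior and boundary De Giorgi--Nash--Moser estimates then give $\|v\|_{L^\infty(B_{1/2}^+)}\le C\|v\|_{L^1(B_{5/8}^+)}$. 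Alternatively, and perhaps more in keeping with the paper's style, one can avoid coefficients: extend $v$ by its harmonic extension in the (small) region $B_{5/8}\setminus\overline{\{u>0\}}$ bounded by the graph, note the extension is globally Lipschitz with controlled $L^1$ norm and is harmonic in $B_{1/2}$ minus a small-measure set, and conclude by the mean-value inequality plus absorbing the error over the thin excluded slab. Either way the geometric input (flatness of $\FB(u)$) comes for free from the already-established $\eps$-regularity, so this step, while the technical heart, is routine.

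\medskip
\noindent\emph{Remark on logical structure.} It is worth stressing that Proposition~\ref{prop:L1estimate} is used (via Lemma~\ref{lem:L1Linfty_2} and Proposition~\ref{prop:L1estimate} itself) as the bridge between the $L^1$-excess quantities $\bE_\zz$, $\bA_\zz$ that drive the contradiction scheme and the pointwise $C^2$ information needed to run the Jerison--Savin test function and the linearization in Section~\ref{sec:linear}; so the only feature that matters for later sections is the \emph{linear} dependence of the constant $C$ on $n$ alone (and in particular its independence of $e$, $b$, and of the solution $u$), which the argument above delivers.
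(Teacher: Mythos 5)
Your Step 1 (the $L^1$--Lipschitz interpolation via nondegeneracy of $\{u>0\}$, giving $h\le C_n\eps^{1/(n+1)}$) is correct and is essentially \Cref{lem:L1_Lip} in this setting. The genuine gap is in what you yourself call the technical heart, Step 2. The estimate you want there --- $\|v\|_{L^\infty(D\cap B_{1/2})}\le C\|v\|_{L^1(D\cap B_{5/8})}$ for $v=u-e\cdot x-b$ harmonic in the flat epigraph $D=\{u>0\}$ --- does \emph{not} follow from flattening plus De Giorgi--Nash--Moser, nor from a harmonic-extension trick, because $v$ satisfies no homogeneous boundary condition on $\FB(u)$. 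For a harmonic function on a half-ball with unprescribed boundary values the $L^1$-to-$L^\infty$ bound is simply false: $w_\delta(x)=(x_n+\delta)\,\bigl(|x'|^2+(x_n+\delta)^2\bigr)^{-n/2}$ is positive and harmonic in $\{x_n>-\delta\}$, has $\|w_\delta\|_{L^1(B_1^+)}\le C_n$ uniformly in $\delta$, yet $w_\delta(0)=\delta^{1-n}\to\infty$. The only boundary information actually available is $|v|=|e\cdot x+b|\le h$ on $\FB(u)$ (since $u=0$ and $|u-e\cdot x-b|\le h$ there), and feeding this into any maximum-principle or layer-potential argument returns $\|v\|_{L^\infty(B_{1/2})}\le C(h+\eps)$ with $h\sim\eps^{1/(n+1)}\gg\eps$, i.e.\ exactly the sublinear bound you already had. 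Improving the boundary control to $O(\eps)$ would mean showing $\FB(u)$ lies within distance $C\eps$ of $\{e\cdot x+b=0\}$, which is essentially the proposition itself, so this route is circular.

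The missing idea is to use the \emph{gradient} output of \Cref{thm:DeSilva-estimate} together with \Cref{rem:DeSilva}, namely $\|\nabla v\|_{L^\infty(B_{1/2}\cap\{u>0\})}\le C\|v\|_{L^\infty(B_1\cap\{u>0\})}$ (valid for all sizes of the flatness), \emph{inside} the interpolation inequality: this gives $\|v\|_{L^\infty(B_{1/2})}\le C_\delta\|v\|_{L^1(B_{1/2})}+\delta C\|v\|_{L^\infty(B_1)}$ for every $\delta>0$, and the last term is then absorbed by applying the same inequality on all balls $B_r(z)\subset B_1$ and invoking the standard covering/absorption lemma. This is the paper's proof, and you had the needed gradient estimate in hand before turning away from it. Two smaller points: (i) your treatment of the large-$\eps$ regime is wrong as written --- $v=u-e\cdot x-b$ with $b$ arbitrary need not vanish anywhere near $\FB(u)$ and need not satisfy $\|v\|_{L^\infty}\le 2$ --- but that regime does follow from your own Step 1, since $h\le C_n\eps^{1/(n+1)}\le C_n\eps_1^{-n/(n+1)}\eps$ once $\eps\ge\eps_1$; (ii) when $h>1$ one should take $r=1/8$ rather than $r=h/8$ in Step 1, which again yields $h\le C\eps$ directly.
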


\begin{proof}
From \Cref{thm:DeSilva-estimate} and \Cref{rem:DeSilva}), we know that for any $e\in\bS^{n-1}$ and $b\in\R$,
\[
\|\nabla u - e\|_{L^\infty(B_{1/2}\cap \{u > 0\})} \le C \|u - e\cdot x-b\|_{L^\infty(B_{1}\cap \{u > 0\})}.
\]
Combining this bound with the interpolation \Cref{lem:L1_Lip}, for any $e\in\bS^{n-1}$, $b\in\R$, and $\delta > 0$, 
\[
\|u - e\cdot x-b\|_{L^\infty(B_{1/2}\cap \{u > 0\})} \le C_\delta \|u - e\cdot x-b\|_{L^1(B_{1/2}\cap \{u > 0\})} + \delta \|u - e\cdot x-b\|_{L^\infty(B_{1}\cap \{u > 0\})}, 
\]
for some $C_\delta > 0$. Now, for any $B_{r}(z)\subset B_1$ applying this estimate to $u_{r,z}=\frac{u(z+r\cdot)}{r}$ with $b$ replaced by $\frac{b+e\cdot z}{r}$, we deduce that
\[
r^n \|u - e\cdot x-b\|_{L^\infty(B_{r/2}(z)\cap \{u > 0\})} \le C_\delta  \|u - e\cdot x-b\|_{L^1(B_{1/2}\cap \{u > 0\})} + \delta r^n \|u - e\cdot x-b\|_{L^\infty(B_{r}(z)\cap \{u > 0\})}. 
\]
Now, choosing $\delta$ sufficiently small, 
we can apply a standard covering trick to reabsorb the $L^\infty$-term in the right-hand side (see, for example, \cite[Lemma 2.27]{FR22}) and we deduce that, for any $e\in\bS^{n-1}$ and $b\in\R$, it holds
\[
\|u - e\cdot x-b\|_{L^\infty(B_{1/4}\cap \{u > 0\})} 
\le C\|u - e\cdot x-b\|_{L^1(B_{1/2}\cap \{u > 0\})}.
\]
After a final covering and scaling argument, this proves our desired result. 
\end{proof}

Finally, exploiting the recent results in \cite{Lian-Zhang}, one obtains the linear estimates for higher-order derivatives of solutions to the Bernoulli problem:

\begin{prop} \label{prop:linear-estimate} Let $n \ge 2$ and $k\in \N$.  
There exists $\eps_\circ = \eps_\circ(n)> 0$ small enough   such that the following holds. 

Let $u$ be a classical solution to the Bernoulli problem in $B_1\subset \R^n$, and let us suppose that 
\[
\|u - e\cdot x-b\|_{L^\infty(B_1\cap \{u > 0\})} \le \eps_\circ\qquad\text{for some}\quad e\in \mathbb{S}^{n-1}, \ b\in \R. 
\]
Then, we have 
\[
\|D^k u\|_{L^\infty(B_{1/2}\cap \{u > 0\})} \le C \|u - e\cdot x-b\|_{L^\infty(B_1\cap \{u > 0\})},
\]
for some constant $C$ depending only on $n$ and $k$.
\end{prop}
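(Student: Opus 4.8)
The plan is to reduce the higher-order linear estimate to the $C^{1,\alpha}$ linear estimate already available in Theorem~\ref{thm:DeSilva-estimate} (or rather its interior-regularity upgrade), combined with the higher-order regularity theory for the Bernoulli free boundary from \cite{Lian-Zhang} and \cite{Kinderlehrer-Nirenberg}. First I would reduce to the normalized situation: after subtracting the affine function $e\cdot x + b$ and rotating so that $e=e_n$, we have $\|u-x_n\|_{L^\infty(B_1\cap\{u>0\})}\le\eps_\circ$, and by Lemma~\ref{lem:equivv} this is equivalent to $(x_n-\eps_\circ)_+\le u\le (x_n+\eps_\circ)_+$ in $B_1$. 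If $B_{3/4}\cap\FB(u)=\varnothing$, then either $u\equiv 0$ on $B_{3/4}$ (so all derivatives vanish) or $u>0$ there and we simply apply interior harmonic estimates to $u-x_n$ on $B_{3/4}$ to get $\|D^ku\|_{L^\infty(B_{1/2})}\le C\|u-x_n\|_{L^\infty(B_{3/4})}$, which is a linear bound in terms of $\eps$ because $u-x_n$ is itself harmonic. So the only real case is a free boundary point $x_\circ\in B_{3/4}\cap\FB(u)$, and by the same rescaling trick used in the proof of Theorem~\ref{thm:DeSilva-estimate} (replacing $u$ by $8u(x_\circ+\,\cdot\,/8)$, using $u(x_\circ)=0$ to control the shift $e\cdot x_\circ+b$) it suffices to prove the estimate around a free boundary point sitting at the origin, with $\|u-x_n\|_{L^\infty(B_1\cap\{u>0\})}\le C\eps_\circ$.

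Next, the core step: from the $\eps$-flatness and $\eps_\circ$ small, the improvement-of-flatness machinery of De Silva (already invoked in Theorem~\ref{thm:DeSilva-estimate}) gives that $\FB(u)\cap B_{1/2}$ is a $C^{1,\alpha}$ graph with $C^{1,\alpha}$-seminorm bounded by $C\eps$, and $\|\nabla u - e_n\|_{L^\infty(B_{1/2}\cap\{u>0\})}\le C\eps$. Then the higher-order boundary Schauder/analyticity theory for the over-determined problem $\Delta u=0$ in $\{u>0\}$, $u=0$, $|\nabla u|=1$ on $\FB(u)$ --- as established in \cite{Kinderlehrer-Nirenberg} and, in the sharp linear form, in \cite{Lian-Zhang} --- bootstraps this to $\FB(u)$ being $C^{k,\alpha}$ (indeed analytic) with norms controlled, and $\|u-x_n\|_{C^k(B_{1/4}\cap\{u>0\})}\le C\eps$. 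The only thing to be careful about is that the estimate comes out \emph{linear} in $\eps$: one runs the argument on the difference $w:=u-x_n$, which satisfies a linear elliptic equation in the (flattened) domain with linear-in-$\eps$ boundary data, so that the Schauder estimates for $w$ scale linearly. Concretely, one flattens $\FB(u)$ by the $C^{1,\alpha}$ (hence $C^{k,\alpha}$, by bootstrap) diffeomorphism $\Phi$ with $\|\Phi-\mathrm{id}\|_{C^{k,\alpha}}\le C\eps$, writes the transformed equation for $\tilde w = w\circ\Phi$ on the half-ball, and applies the linear estimate $\|D^k\tilde w\|_{L^\infty}\le C(\|\tilde w\|_{L^\infty}+\text{(data)})\le C\eps$; undoing $\Phi$ and using $\|D\Phi\|_{C^{k-1}}\le C$ gives $\|D^k u\|_{L^\infty(B_{1/4}\cap\{u>0\})}=\|D^k w\|_{L^\infty(B_{1/4}\cap\{u>0\})}\le C\eps$. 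A final covering argument (passing from $B_{1/4}(x_\circ)$ for each $x_\circ\in B_{3/4}\cap\FB(u)$ to all of $B_{1/2}\cap\{u>0\}$, together with the interior harmonic case away from $\FB(u)$) yields the stated bound on $B_{1/2}$.

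I expect the main obstacle to be purely bookkeeping rather than conceptual: keeping track that every estimate used --- improvement of flatness, the flattening diffeomorphism, and the iterated Schauder/analyticity estimates --- is genuinely \emph{linear} in the smallness parameter $\eps=\|u-e\cdot x-b\|_{L^\infty(B_1\cap\{u>0\})}$, rather than merely giving a qualitative bound of the form ``$C^k$ with some norm.'' This is exactly the role of \cite{Lian-Zhang}, whose linear estimates are designed for this; so in practice the proof is short, and the bulk is citing \Cref{thm:DeSilva-estimate}, \cite{Kinderlehrer-Nirenberg}, and \cite{Lian-Zhang} in the right order, plus the two elementary reductions (rescaling to a free boundary point at the origin, and the interior harmonic case).
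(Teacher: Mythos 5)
Your proposal is correct and follows essentially the same route as the paper: reduce via \Cref{thm:DeSilva-estimate} to a linear-in-$\eps$ $C^{1,\alpha}$ bound on $u-x_n$ with a $C^{1,\alpha}$ graphical free boundary, then bootstrap to order $k$ using the linear higher-order estimates of \cite{Lian-Zhang} (the paper invokes their Theorem 1.29 iteratively, exactly the flatten-and-apply-linear-Schauder scheme you sketch). The preliminary reductions (interior harmonic case, rescaling to a free boundary point) and the emphasis on tracking linearity in $\eps$ match the paper's argument.
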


\begin{proof}
Let us denote $\eps := \|u - e\cdot x - b\|_{L^\infty(B_1\cap \{u > 0\})} \le \eps_\circ$. The proof follows by tracking the dependence on $\eps$ in \cite[Theorem 1.31]{Lian-Zhang} (with right-hand side $f=0$). 
More precisely, after a translation, rotation, and a covering argument, thanks to \Cref{thm:DeSilva-estimate} we can assume that 
\[
\norm[C^{1,1/2}(B_{5/6} \cap \{u>0\})]{u-x_n} \le C \eps, 
\]
where $\{u > 0\}$ coincides inside $B_{5/6}$ with the epigraph $\{x_n>\varphi(x')\}$, where $\varphi$ is uniformly $C^{1,1/2}$. 
From this point, the proof follows by induction as in \cite[Theorem 1.31]{Lian-Zhang}. Namely, by repeated applications of \cite[Theorem 1.29]{Lian-Zhang} we deduce that, for any $k\geq 2$,
\[
\norm[C^{k,1/2}(\Omega\cap B_{\rho_k})]{u-x_n}
\leq C_{k}\eps,
	\qquad \text{ for some radii } \quad 
\tfrac12<\rho_{k+1}<\rho_k<1,
\]
which gives the desired result.
\end{proof}

\section{Compactness of stable solutions}
\label{sec:compact-stable-sol}

In this appendix we show the compactness of sequences of stable solutions to the Bernoulli problem, as stated in \Cref{lem:compact}. Before that, we need an auxiliary lemma:

\begin{lem}
\label{lem:halfspacestable}
    Let $n\ge 2$, $\delta>0$, and let $u$ be a classical stable solution to the Bernoulli problem in $B_2\subset \R^n$ with $0\in \FB(u)$. Assume, in addition:
    \[
    \begin{cases}
     u > 0 \quad & \mbox{in }B_2\cap \{x_n>\delta\} \\
     u\le \delta &\mbox{in } B_2\cap \{x_n<-\delta\}.
    \end{cases}
    \]
    Then $u=0$ in $B_1\cap \{x_n<-C\delta\}$, where $C$ is a dimensional constant. 
    \end{lem}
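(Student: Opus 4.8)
The goal is to upgrade the one-sided smallness $u\le\delta$ on $B_2\cap\{x_n<-\delta\}$ into the conclusion that $u$ vanishes identically on (most of) the lower half-ball. I would argue by contradiction and compactness. Suppose the statement fails. Then there is a sequence $u_k$ of classical stable solutions in $B_2$, with $0\in\FB(u_k)$, satisfying the two hypotheses with $\delta=\delta_k\downarrow0$, yet $\{u_k>0\}\cap B_1\cap\{x_n<-C\delta_k\}\neq\varnothing$ for every large $C$ (say $C=C_k\uparrow\infty$ with $C_k\delta_k\to0$, using a diagonal choice). By \Cref{lem:Lipbound} the $u_k$ are uniformly $1$-Lipschitz, so after passing to a subsequence \Cref{lem:compact} gives convergence (in $H^1_{\rm loc}\cap C^{0,\alpha}_{\rm loc}$, and Hausdorff convergence of the free boundaries) to a stable solution $u_\infty$ in $B_2$. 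The hypotheses pass to the limit: $u_\infty>0$ in $B_2\cap\{x_n>0\}$ (harmonicity and the $C^0$ bound force this, since $u_\infty\ge0$ and $u_\infty$ cannot vanish on an open set of its positivity region by unique continuation—alternatively use \Cref{lem:cleanball}/\Cref{rem:nondeg} together with $0\in\FB(u_\infty)$ coming from Hausdorff convergence), and $u_\infty=0$ on $B_2\cap\{x_n<0\}$. Hence $u_\infty$ is the half-space solution $(x_n)_+$ in $B_2$.

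\textbf{The key mechanism.} The contradiction must now come from the stability of $u_k$. The natural test function is $\xi=|\nabla u_k|$ (or $\xi=\eta|\nabla u_k|$ with a fixed cutoff $\eta$), which is exactly the one used to derive the Sternberg--Zumbrun inequality (\Cref{lem:Stern_Zum_App}, \Cref{lem:Stern_Zum}). From \Cref{lem:Stern_Zum} we get the uniform bound
\[
\int_{B_{3/2}\cap\{u_k>0\}}|D^2u_k|^2\,dx\le C,
\]
with $C$ dimensional. The crucial point is to show that, under our hypotheses, this integral is in fact bounded \emph{below} by a positive constant as soon as the positivity set reaches down into $\{x_n<-C_k\delta_k\}$, which contradicts nothing directly—so instead the argument should run as in the proof of \Cref{lem:W-compact-H1}, Step~3 (the ``$\tilde a<1$'' case): using Fubini in the $x_n$-direction along vertical segments $t\mapsto u_k(t,\sigma)$, one detects that on the portion of $B_1$ where the free boundary of $u_k$ lies below $-C_k\delta_k$, the derivative $\partial_n u_k$ must change from roughly $0$ (where $u_k$ is trapped under $\delta_k$, so $|\nabla u_k|$ is small by harmonic estimates away from the boundary) up to $1$ (on $\FB(u_k)$, where $|\nabla u_k|=1$). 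This forces $\int |\partial_n^2 u_k|$ to be bounded below over a set of $(x',x_n)$ whose $x'$-projection has definite measure, and then Cauchy--Schwarz against the uniform $L^2$-Hessian bound yields that this projection has measure $\to0$. Combined with the complementary estimate (where $u_k$ is positive all the way up, so $|\nabla u_k|$ passes from small at level $x_n=-\delta_k$ to $1$ at the boundary, again costing $\int|\partial_n^2 u_k|\gtrsim1$) one gets that the $x'$-projection of $\{x_n<-C_k\delta_k\}\cap\{u_k>0\}\cap B_1$ has measure $\le C/C_k\to0$. Since by the nondegeneracy/clean-ball property \Cref{lem:cleanball} any point of $\FB(u_k)$ in $B_1\cap\{x_n<-C_k\delta_k\}$ forces a ball of positivity of radius $\gtrsim (C_k-1)\delta_k$ hence a projected set of positive measure bounded below in terms of $C_k\delta_k$... here one must be careful, because $C_k\delta_k$ itself may tend to $0$.

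\textbf{The main obstacle.} The delicate point is precisely that the ``penetration depth'' $C_k\delta_k$ need not be bounded below, so one cannot directly extract a fixed-size positivity ball. The right way around this is \emph{not} to take a single limit but to rescale: let $\rho_k:=\sup\{\rho>0:\{u_k>0\}\cap B_1\cap\{x_n<-\rho\}\neq\varnothing\}$, and pick $y_k\in\FB(u_k)\cap B_1$ with $(y_k)_n\approx-\rho_k$; by hypothesis $\rho_k\ge C_k\delta_k$, so $\delta_k/\rho_k\to0$. Consider $\tilde u_k(x):=\rho_k^{-1}u_k(y_k+\rho_k x)$, a classical stable solution in a ball of radius $\sim\rho_k^{-1}\to\infty$ (using $\rho_k\le 2$) with $0\in\FB(\tilde u_k)$, $\tilde u_k>0$ in the half-space $\{(x-y_k/\rho_k)_n>\delta_k/\rho_k\}\cap$(domain) and $\tilde u_k\le\delta_k/\rho_k$ on the complementary half-space. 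Now the normalized gap $\delta_k/\rho_k\to0$, so \Cref{lem:compact} produces a \emph{global} classical stable solution $\tilde u_\infty$ that is positive on an open half-space $\{x\cdot e>0\}$, vanishes on $\{x\cdot e<0\}$, yet has a free boundary point at $0$ with—by construction of $\rho_k$ and the definition of $y_k$—the origin sitting at the ``deepest'' point, which combined with $\tilde u_\infty\le\liminf\delta_k/\rho_k=0$ on $\{x\cdot e<0\}$ forces $\tilde u_\infty=(x\cdot e)_+$; but the very definition of $\rho_k$ as a supremum, transferred to $\tilde u_k$, says $\{\tilde u_k>0\}$ reaches the slab $\{x_n<-c\}$ for some fixed $c>0$ inside its domain (after re-examining which direction ``down'' is), contradicting $\tilde u_\infty=(x\cdot e)_+$ once we check $e=e_n$ in the limit via the trapping hypotheses. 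Making this rescaling-and-extraction dichotomy airtight—ensuring $y_k$ can be chosen with $(y_k)_n$ genuinely near $-\rho_k$ and that the limiting hyperplane is horizontal—is the part requiring the most care; everything else (the uniform Hessian bound, the Fubini slicing, Cauchy--Schwarz, and the clean-ball nondegeneracy) is routine given the tools already established in the excerpt.
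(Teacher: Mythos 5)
Your proposal has a genuine gap, and in fact two related ones.

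First, the compactness scaffolding is circular. The step you ultimately rely on --- ``$\tilde u_\infty=(x\cdot e)_+$, hence $\{\tilde u_k>0\}$ cannot reach the slab $\{x_n<-c\}$'' --- requires knowing that where the limit vanishes on an open set, the approximating solutions must also vanish nearby. Uniform ($C^0$ or $H^1_{\rm loc}$) convergence only gives $\tilde u_k\le \eps_k\to0$ there, not $\tilde u_k=0$; and the Hausdorff convergence of the sets $\{v_k=0\}$ and $\{v_k>0\}$ in \Cref{lem:compact}(2) is proved in the paper precisely by invoking \Cref{lem:halfspacestable}. So any argument that concludes by comparing $\{\tilde u_k>0\}$ with $\{\tilde u_\infty>0\}$ assumes the statement being proved: the lemma is exactly the assertion that a sequence of stable solutions cannot have positivity sets overshooting the positivity set of their limit. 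The rescaling by $\rho_k$ does not remove this difficulty; it only relocates it.

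Second, the quantitative mechanism you sketch (Fubini slicing in the $x_n$-direction against the uniform bound $\int|D^2u|^2\le C$ from \Cref{lem:Stern_Zum_App}) is the right one --- and is how the paper proceeds, directly, with no compactness --- but you close it incorrectly. You derive that the $x'$-projection of the deep positivity set is small and then try to bound it from below via nondegeneracy at scale $C_k\delta_k$, which, as you note yourself, fails when $C_k\delta_k\to0$. The correct closure needs no lower bound at all: by nondegeneracy the free boundary is trapped in the strip $\{-C_0\delta\le x_n\le\delta\}$, so below it $u$ is either identically zero (the conclusion) or everywhere positive; assume the latter. Split $B'_{1/2}$ into the set $A$ of $\sigma$ whose vertical segment meets $\FB(u)$ and its complement. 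On $A$, $|\nabla u|^2$ jumps from $\le 1/2$ at height $-\bar C\delta$ (harmonic estimates, since $u\le\delta$ below) to $1$ on the free boundary, over a vertical interval of length $O(\delta)$; Cauchy--Schwarz against the Hessian bound gives $|A|\le C\delta$. On $B'_{1/2}\setminus A$, $u$ is positive along the whole segment, and by Harnack plus a barrier $u\ge c(x_n-\delta)$ for $x_n>\delta$ while $u\le\delta$ for $x_n<-\delta$; comparing difference quotients of $\partial_n u$ across a window of width $\delta^{1/2}$ forces $\int|\partial^2_{nn}u|\ge c$ there, whence $|B'_{1/2}\setminus A|\le C\delta^{1/2}$. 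Adding the two bounds contradicts $|B'_{1/2}|\ge c_n$ once $\delta$ is small (for $\delta$ bounded below the conclusion is vacuous after enlarging $C$). The estimate your ``complementary'' case is missing is the smallness of the projection of the lines that \emph{never} meet the free boundary; with it, the whole of $B'_{1/2}$ is exhausted and the contradiction is immediate.
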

\begin{proof}
    Combining our assumption with \Cref{lem:nondegen}, it follows that the free boundary of $u$ is contained in a strip  $\{-C_0\delta \le x_n\le \delta\}$ inside $B_{7/4}$, with $C_0$ dimensional. 
    Hence, to prove the result, we assume by contradiction that 
    \begin{equation}
    \label{eq:contr}
    0<u\leq \delta\qquad\text{inside}\quad  \{x_n<-C_0\delta\}\cap B_1.
    \end{equation}
    By \Cref{lem:nondegen} and  Lipschitz estimates (see, e.g., \cite[Lemma 11.19]{CS05}),  we have  $\sup_{B_1} u \ge c_1>0$ and  $|\nabla u|<C_2$ in $B_{3/2}$, where $c_1$ and $C_2$ are dimensional constants. Hence, there exists $y\in B_1$ such that $\min_{ B_{r/2}(y)} u \ge c_1/2$ where $r= c_1/C_2>0$. Assuming that $\delta$ is sufficiently small (if not, we take $C=1/\delta$ in the conclusion) and recalling that  $u\le \delta$ in  $B_2\cap \{x_n<-\delta\}$, it follows that $y_n>r/4$.
    
    Since $\Delta u =0$ in  $B_2\cap \{x_n>\delta\}\subset \{u>0\}$, 
    by Harnack's inequality we obtain
    \[
    u \ge c>0 \qquad \mbox{in } B_{7/4}\cap \{x_n>1/8\}
    \]
    and therefore, by a standard barrier argument,
    \begin{equation}\label{recallthis}
          u(x) \ge c (x_n-\delta) >0  \qquad \mbox{for all } x\in B_{3/2}\cap \{x_n>\delta\}.
    \end{equation}
  We now want to exploit \Cref{lem:Stern_Zum_App}.
  Recall that, by contradiction, we are assuming \eqref{eq:contr}. Thus, by Fubini's theorem we have
\[
\begin{split}
\int_{B_1\cap\{u  >0\}}|D^2 u|^2\,dx
& \ge \int_{B'_{1/2}}\int_{[-1/2, 1/2]\cap \{u(\sigma,t) > 0\}} |D^2u|^2(\sigma, t)\, dt\, d\sigma \ge \int_{B'_{1/2}}\int_{-1/2}^{t_{\sigma}} |D^2u|^2(\sigma,t)\, dt\, d\sigma,
\end{split}
\]
where:\\
- $B_r'\subset \R^{n-1}$  denotes the ball of radius $r$ in $\R^{n-1}$;\\
- given $\sigma \in B_{1/2}'$, $t_{\sigma}$ denotes the maximal value $t_*\in [-1/4, 1/4]$   such that $(-1/2, t_*)\subset \{u(\sigma, \cdot) > 0\}$. \\
Now, let $\Pi_n:\R^n\to \R^{n-1}$ denote the orthogonal projection onto the first $n-1$ variables, and define 
  \[
  A : = \Pi_n\big(\FB(u)\cap(B'_{1/2}\times (-1/2,1/2))\big)\subset B_{1/2}'\subset \R^{n-1}.
  \]
Notice now that, by \eqref{eq:contr} and harmonic estimates, there exists $\overline C>1$ dimensional such that $|\nabla u(\sigma, -\overline C\delta) |^2\le 1/2$ for all $\sigma \in B_{1/2}'$.
Also, for $\sigma \in A$, we have $-C_0\delta \le t_\sigma\le \delta$.
Hence, since $|\nabla u|^2=1$ on $\FB(u)$ and
 $\bigl|\nabla |\nabla u|^2\bigr| \le 2|D^2 u|$, 
 it follows that
\[
\frac 1 2 |A| \le \int_{A} \int_{-\overline C\delta }^{t_{\sigma}}
\bigl|
	\partial_n |\nabla u|^2(\sigma, t)\big|  dt\, d\sigma\le  2 \int_{A} \int_{-\overline C\delta }^{\delta}
|D^2u|  dt\,d\sigma.
\]
Thus, applying Cauchy--Schwarz and \Cref{lem:Stern_Zum_App}, we obtain
\[
\frac 1 2 |A| \le C (|A|\delta)^{1/2}\left( \int_{B_1\cap\{u  >0\}}|D^2 u|^2\,dx\right)^{1/2} \le C(|A|\delta)^{1/2} \qquad \Longrightarrow\qquad |A|\le C\delta.
\]
On the other hand, for $\sigma \in B_{1/2}' \setminus A$ we have
$t_\sigma =\tfrac14 \ge \delta^{1/2}$. Thus,  from \eqref{recallthis} and the fact that $0 \le u \le \delta$ for $\{x_n < -\delta\}$, for $\delta$ sufficiently small we obtain
\[
\begin{split}
\int_{-\delta^{1/2}}^{\delta^{1/2}} |\partial^2_{nn} u(\sigma, t)|dt & \ge  \left|\frac{u(\sigma, \delta^{1/2}) - u(\sigma, -C \delta)}{\delta^{1/2} + C\delta} - \frac{u(\sigma, -C\delta) - u(\sigma, -\delta^{-1/2})}{- C\delta + \delta^{1/2}} \right|\\ 
& \ge  \left(\frac{c (\delta^{1/2} - \delta) - \delta}{2\delta^{1/2}}-\frac{\delta-0}{\tfrac12\delta^{1/2}}\right)\ge \frac{c}{4}.
\end{split}
\]
Hence, arguing similarly to before, we get
\[
\frac{c}{4} |B'_{1/2}\setminus A| \le \int_{B'_{1/2}\setminus A}\hspace{-1mm}\int_{-\delta^{1/2}}^{\delta^{1/2}}  |\partial^2_{nn} u(\sigma, t)|dt\le  C \big(|B'_{1/2}\setminus A|\delta^{1/2}\big)^{1/2}  
\left( \int_{B_1\cap\{u  >0\}}\hspace{-2mm}|D^2 u|^2\,dx\right)^{1/2}\hspace{-2mm}\le  C \big(|B'_{1/2}\setminus A|\delta^{1/2}\big)^{1/2},  
\]
which proves that $|B_{1/2}'\setminus A|\le C \delta^{1/2}$.
Combining the bounds that we have obtained, we get $|B_{1/2}'|\leq |A|+|B_{1/2}'\setminus A| \leq C(\delta+\delta^{1/2})$, a contradiction for $\delta$ small enough.  
\end{proof}

We can now give the proof of \Cref{lem:compact}. This fixes a small gap in  \cite[Theorem 1.2]{kamburov2022nondegeneracy}, since the authors rely on \cite[Lemma 1.21]{CS05} and the proof there is incomplete, as one can see by comparing their argument with ours below.

\begin{proof}[Proof of \Cref{lem:compact}] We prove the three points separately.

\smallskip

\noindent {\it (1)}
Since $\norm[L^\infty(B_{k/2})]{\nabla v_k} \leq C$ (by  Lipschitz regularity of classical solutions, see e.g. \cite[Lemma 11.19]{CS05}), for any $\ctta\in(0,1)$ we have
$
v_k \to v_\infty
$ in $C^{0,\ctta}_{\loc}(\R^n),
$
where $\|\nabla v_\infty\|_{L^\infty(\R^n)}\le C(n)$ (in fact, $v_\infty$ is 1-Lipschitz by \Cref{lem:Lipbound}). Also, since $v_k$ is subharmonic, so is $v_{\infty}$
and we have
\[\nabla v_k \to \nabla v_{\infty} \qquad \mbox{strongly in } L^1_{\rm loc}(\R^n),
\]
(see for instance \cite[Lemma A.1(b1)]{CFRS}).
Hence, thanks to the bound $\|\nabla (v_k - v_\infty)\|_{L^\infty(B_{k/2})} \le   C$, it follows by interpolation that $v_k \to v_{\infty}$ strongly in $H^1_{\loc}(\R^n)$. 

\smallskip

\noindent {\it (2)} We now prove the Hausdorff convergence of the different sets. \\
{\it $\bullet$ Hausdorff convergence of free boundaries.}\\
Thanks to  \eqref{eq:density-B}, given $x_k\in \FB(v_k)$ with $x_k \to z_\infty$ we have
\[
\norm[L^\infty(B_r(x_k))]{v_k}\geq c(n)r\qquad \Longrightarrow \qquad \norm[L^\infty(B_r(x_\infty))]{v_\infty}\geq c(n)r.
\]
In particular, since $v_\infty(x_\infty)=\lim_{k\to\infty}v_k(x_k)=0$, it follows that $x_\infty\in \FB(v_\infty)$. 

Conversely, let $x_\infty\in \FB(v_\infty)$ and assume by contradiction that there is no free boundary point for $v_k$ in a neighborhood, for all $k$ large. Then the functions $v_k$ are all harmonic around $x_\circ$ (they are either identically zero, or positive and harmonic), and thus $v_\infty$ would be harmonic in a uniform neighborhood around $x_\circ$; impossible.

\smallskip

\noindent
{\it $\bullet$ Hausdorff convergence of $\overline{\{v_k = 0\}}$ to $\overline{\{v_\infty = 0\}}$.}\\
If $x_k \in \{v_k = 0\}$ and $x_k\to x_\infty$, then $v_\infty(x_\infty) = 0$. Conversely, if $v_\infty(x_\infty) = 0$, we want to prove that there exist points $x_k\in \{v_k = 0\}$ such that $x_k\to x_\infty$. This is the main part of the proof. 

Let $\cC\subset \{v_\infty=0\}$ denote the set of zero points of $v_\infty$ that are also accumulation points of convergent sequences  $x_k$ with $v_k(x_k)>0$. Note that $\cC$ is closed and that, by the Hausdorff convergence of the free boundaries, $\partial \cC\subset  \FB(v_\infty)$. We need to prove that the interior of $\cC$ is empty. 

If not, by contradiction, there exists $x_\circ \in \partial \cC$ such that the open sets ${\rm int}\, \cC\cap B_\varrho(x_\circ)$ and $\{v_\infty>0\}\cap B_\varrho(x_\circ)$ are both nonempty. 
By \Cref{lem:perbound} and \Cref{lem:nondegen}, the sets $\FB(v_k)$ are ``equi-uniformly'' Alfohrs--David regular: namely, there exists a dimensional constant $C_1>1$ such that, for all $k$, 
\[
\frac{1}{C_1} r^{n-1} \le \cH^{n-1} (\FB(v_k)\cap B_r(y)) \le C_1 r^{n-1}\qquad \text{for all}\quad y\in \FB(v_k), \ r > 0.
\]
This implies, by standard covering arguments  using Besicovitch theorem,
\[
\cH^{n} ((\FB(v_k) +B_t)\cap B_r(y)) \le C_2 r^{n-1} t\qquad \text{for all}\quad y\in \R^n, \ r > 0,\ t\in (0, r).
\]
This last inequality   is stable under Hausdorff convergence, giving
\[
 \cH^{n} ((\FB(v_\infty) +B_t)\cap B_r(y)) \le C_2 r^{n-1} t\qquad \text{for all}\quad y\in \R^n, \ r > 0,\ t\in (0, r).
\]
This proves that the $(n-1)$-dimensional upper Minkowski content of the boundary
\[
\partial \cC\cap B_\varrho(x_\circ) = \FB(v_\infty)\cap B_\varrho(x_\circ)
\] 
is finite, and therefore the set $\{v_\infty>0\}\cap B_\varrho(x_\circ)$ has finite (relative) perimeter in $B_\varrho(x_\circ)$. 
Thus, as a consequence of De Giorgi's structure theorem, \cite[Chapter 15]{Mag12}, there is a point $y_\circ$ belonging to the reduced boundary of $\cC$ in $B_{\rho/2}(x_\circ)$, $y_\circ\in \partial^*\cC\cap B_{\rho/2}(x_\circ)$. Hence, by zooming enough around $y_\circ,$  both $\cC$ and $\{v_\infty >0\}$ look locally like half-spaces around $y_\circ$, and therefore we can apply \Cref{lem:halfspacestable} 
to deduce that the functions $v_k$ have to vanish somewhere near $y_\circ$, for $k$ large enough. This provides a contradiction and concludes the proof.

\smallskip

\noindent
{\it $\bullet$   Hausdorff convergence of $\{v_k > 0\}$ to $\{v_\infty > 0\}$.}\\
This follows from the convergence of the free boundaries and the closures of the contact sets.

\smallskip

\noindent {\it (3)}  Given $u\in H^1_{\rm loc}(\R^n)$ and a smooth compactly supported vector field $\Psi\in C^\infty_c(\R^n; \R^n)$, it is a direct computation to obtain the first and  second inner variation of the energy in the direction $\Psi$: 
\[
\frac{d}{dt}\bigg|_{t = 0} \hspace{-0.5mm}(E(u(\cdot+t\Psi )); \R^n)   =  \int_{\R^n}  \left\{ -2\nabla u \, D\Psi (\nabla u)^\top +|\nabla u|^2{\rm div}(D\Psi) \right\}\,dx+\int_{\{u > 0\}} {\rm div}(\Psi) \,dx,
\]
and 
\begin{multline*}
\frac{d^2}{dt^2}\bigg|_{t = 0} \hspace{-0.5mm}(E(u(\cdot+t\Psi )); \R^n) 
 = \int_{\R^n}  \hspace{-1mm}
\nabla u  \hspace{-0.5mm}
\left[
	4(D\Psi)^2
	+2D\Psi (D\Psi)^\top 
	-4(\Div\Psi)D\Psi 
	+(\Div\Psi)^2{\rm Id}
	-{\rm tr}\big((D\Psi)^2\big)
\right](\nabla u)^\top \,dx
\\
+ \int_{\{u > 0\}} \left(
	(\Div\Psi)^2
	-{\rm tr}\big((D\Psi)^2\big)
\right)\,dx.
\end{multline*}
Thanks to the convergences proved in points (1) and (2) above, together with the fact that $\norm[L^\infty(B_{k/2})]{\nabla v_k} \leq C$,
we can let $k \to \infty$ in the formulas for the first and second variation to deduce that $v_\infty$ is stationary and that
\[
0\le \frac{d^2}{dt^2}\bigg|_{t = 0} \hspace{-0.5mm}(E(v_k(\cdot+t\Psi )); \R^n) \to \frac{d^2}{dt^2}\bigg|_{t = 0} \hspace{-0.5mm}(E(v_\infty(\cdot+t\Psi )); \R^n)\qquad\text{as}\quad k\to \infty, 
\]
for any $\Psi\in C^\infty_c(\R^n; \R^n)$ fixed. This proves that $v_\infty$ is a stable solution.  
\end{proof}

\section{Estimates for positive harmonic functions in a flat-Lipschitz domain}
\label{appendixD}

\begin{proof}[Proof of \Cref{lem_basic1}]
The statement is scale-invariant, so we can fix $r = 1$. Let $B^{+, t}_2 := B_2\cap \{x_n \ge t\}$, and consider $P_t(x, y):B_2^{+, t}\times \partial B_2^{+, t} \to [0, \infty)$ the Poisson kernel for the domain $B_2^{+, t}$. We note that there exists some dimensional constant $c_n > 0$ such that $P_t\left(\tfrac54 e_n, y\right) \ge c_n > 0$ for any $t\in [0, 1]$ and $y\in B_{3/2}\cap \{x_n = t\}$ (this can be seen, for example, by comparing $P_t$ to the Poisson kernel of the half-space). Therefore, 
\[
w\left(\tfrac54 e_n\right) \ge \int_{|y'|\le 3/2} P_t\left(\tfrac54 e_n, (y', t)\right)\, w(y', t)\, dy' \ge c_n \int_{{|y'|\le 3/2}}  w(y', t)\, dy'.
\]
Since the values $w\left(\tfrac54 e_n\right)$ and $w(e_n)$ are comparable (by Harnack inequality), the result follows.
\end{proof}

\medskip

{  
\begin{proof}[Proof of \Cref{lem_basic2}]
We divide the proof into two steps. 
\medskip 

\noindent{\bf Step 1:} By scaling invariance, we fix $r = 1$. Let $r_k=2^{-k}$ and split $B_1 \cap D = \bigcup_{k\ge 1} S_k$, where $S_k = \{x\in B_1:r_k< \dist(x, D^c) \le r_{k-1}\}$ can be covered by a union of balls $\bigcup_{i\in I_k}B_{r_{k+1}}(x_i)$ with bounded overlapping. In particular, $\# I_k\leq Cr_k^{-(n-1)}$. 

Now, using Harnack inequality and interior estimates, \eqref{eq:twoeqs} holds in $S_1$ (in place of $B_1\cap D$). Also, again by interior estimates,  for $k\geq 2$ we have
\begin{align*}
\int_{B_1\cap S_k}|D^2 w|^{\gamma'} \,dx &
\le \sum_{i\in I_k} 
\int_{B_{r_{k+2}}(x_i)} |D^2 w|^{\gamma'} \,dx 
\le C \sum_{i\in I_k} r_{k}^{-2\gamma'} \int_{B_{r_{k+1}}(x_i)} w^{\gamma'} \,dx 
\\ 
&\le C  r_{k}^{-2\gamma'+n}  \# I_k\,|B_{r_{k+1}}| \bigg( \frac{1}{\# I_k\,|B_{r_{k+1}}|} \sum_{i\in I_k} \int_{B_{r_{k+1}}(x_i)} w \,dx\bigg)^{\gamma'}
\\
&\le C  r_{k}^{-2\gamma'} \big(\# I_k\,|B_{r_{k+1}}|\big)^{1-\gamma'}  
\bigg(  \int_{S_{k-1}\cup S_k \cup S_{k+1}} w \,dx\bigg)^{\gamma'}\\
&\leq   C  r_{k}^{1-3\gamma'}  
\bigg(  \int_{S_{k-1}\cup S_k \cup S_{k+1}} w \,dx\bigg)^{\gamma'},
\end{align*}
where, in the second line, we applied Jensen's inequality (note that $t\mapsto t^\gamma$ is concave).

\noindent{\bf Step 2:} Let $\tau\ll 1$ be universally small (to be fixed later) and assume that $c_\circ \ll \tau^2$. 
For $i\in\N$, consider the scales $\rho_i := \frac{1}{8}   \tau^i$ and indices $j\in I^{(i)}$ so that the ``graphical lattice'' $p_j^{(i)} \in \partial D \cap B_{3/2}$ projects along $e_n$ to $(p_j^{(i)})' \in \frac{1}{16}\rho_i \Z^{n-1}\subset \{x_n=0\}$. Then, consider the covering by spherical caps,
\begin{equation}\label{eq:W1p-A1-100}
D\cap B_{3/2}\cap  \set{x_n\leq \tfrac{1}{16}} \subset \bigcup_{i=0}^{\infty}\bigcup_{j\in I^{(i)}}D_j^{(i)},
    \quad \mbox{ for } \quad 
D_j^{(i)}:=p_j^{(i)}+\set{x_n\geq \tau^2\rho_i} \cap B_{\rho_i}\subset D.
\end{equation}
Using \Cref{lem_basic1}  at scale $\rho_i$ and integrating over $t\in[0,1]$, since $c_\circ \ll \tau^2$) we deduce that
\begin{equation}\label{eq:W1p-A1-200}
\int_{D_j^{(i)}}w\,dx
\leq C\int_{\tilde{D}_j^{(i)}}w\,dx,
    \quad \mbox{ for } \quad
\tilde{D}_j^{(i)}:=p_j^{(i)}+\set{x_n \geq \tfrac{1}{4}\rho_i} \cap B_{2\rho_i}.
\end{equation}
We now define the slabs 
$$
S_j^{(i)}:=p_j^{(i)}+\set{\tfrac{\tau}{8}\rho_i \leq x_n \leq 4\tau \rho_i} \cap B_{\rho_i/2}
$$
and note that 
\[
S_j^{(i)}
\supset \bigcup_{\ell\in I^{(i+1)}_j}\tilde{D}_\ell^{(i+1)}\quad
    \mbox{ for some family of indices $I_j^{(i+1)}$ satisfying } 
\bigcup_{j\in I^{(i)}}I_j^{(i+1)}=I^{(i+1)}.
\]
Applying \Cref{lem_basic1} again at scale $\rho_i$, but this time integrating over $t \in [0,4\tau]$), we have
\begin{equation*}
\sum_{\ell\in I_j^{(i+1)}}
    \int_{\tilde{D}_\ell^{(i+1)}}w\,dx 
\leq C\int_{S_j^{(i)}}w\,dx
\leq \tilde{C} \tau  \int_{\tilde{D}_j^{(i)}}w\,dx,
\end{equation*}
so $\sum_{j\in I^{(i)}}\int_{\tilde{D}_j^{(i)}}w\,dx$ decays geometrically as long as $\tau< \frac{1}{\tilde{C}}$.
Hence, recalling \eqref{eq:W1p-A1-200} we get
\[
\int_{B_{3/2}\cap \tilde S^\tau_i} w\le \sum_{j\in I^{(i)}} \int_{D_J^{(i)}} w\, dx \le (\tilde C\tau)^{i+1} \int_{\tilde{D}_j^{(0)}}w\,dx
\leq C\int_{\{x_3 \ge 1/{64}\}\cap B_{7/4}} w\, dx \le C (\tilde C\tau)^{i+1} w(e_n),
\]
where $\tilde S_i^\tau =  \{\tau^{i+2}/8 < \dist(\cdot, D^c) < \tau^i/16\}$, and where we have also used  Harnack inequality. 

Observe now that  $S_{k-1}\cup S_k\cup S_{k+1}\subset B_{3/2}\cap (\tilde S^\tau_i\cup \tilde S^\tau_{i+1})$ as long as $2^{-k-1} > \tau^{i+2}/8$ and $2^{-k+1}< \tau^{i-1}/16$. This holds, for instance, for $i = \lfloor  {k}/{|\log_2(\tau)|}\rfloor$ with $\tau$ universally small.
Hence, by the previous inequality, we get
\[
\int_{S_{k-1}\cup S_k\cup S_{k+1}} w\, dx \le \int_{B_{3/2}\cap (\tilde S^\tau_i\cup \tilde S^\tau_{i+1})} w \, dx \le C(\tilde C \tau)^{k/|\log_2(\tau)|} w(e_n) \le C 2^{k\frac{C}{|\log \tau|}} 2^{-k} w(e_n).
\]
Applying Step 1 and adding over $k$, we finally
\[
\int_{B_1\cap D}|D^2 w|^{\gamma'} \, dx \le C (w(e_n))^{\gamma'} \sum_{k \ge 1} r_k^{1-2\gamma'-\frac{C\gamma'}{|\log \tau|}}.
\]
Note that previous sum is finite as long as $-2\gamma'+1-\frac{C\gamma'}{|\log \tau|}>0$, which holds for any $\gamma' < \tfrac12$ by choosing $\tau$ sufficiently small (depending on $\gamma'$). This concludes the proof.
\end{proof}

\begin{proof}[Proof of \Cref{lem:abst_osc1}]
 By scale invariance, we fix $r = 1$. Proceeding exactly as in Step 2 of the proof of \Cref{lem_basic2} (see above), by applying \Cref{lem:abst_osc2} instead of \Cref{lem_basic1} (which is integrable in $t$ as long as $(n-1) (1-q) > -1$) we obtain 
\begin{equation*}
\sum_{\ell\in I_j^{(i+1)}}
    \int_{\tilde{D}_\ell^{(i+1)}}|\nabla w|^q\,dx 
\leq C\int_{S_j^{(i)}}|\nabla w|^q\,dx
\leq C_q \tau^{n-(n-1)q} \int_{\tilde{D}_j^{(i)}}|\nabla w|^q\,dx.
\end{equation*}
As before,  $\sum_{j\in I^{(i)}}\int_{\tilde{D}_j^{(i)}}|\nabla w|^q\,dx$ decays geometrically (now for $q\in(1,\frac{n}{n-1})$ and $\tau=\tau(n,q)$ fixed). Thus,
\[
\begin{split}
\int_{D\cap B_{3/2} \cap \set{x_n\leq \frac{1}{16}}}|\nabla w|^q\,dx
& \leq C\sum_{i=0}^{\infty}\sum_{j\in I^{(i)}} \int_{\tilde{D}_j^{(i)}}|\nabla w|^q\,dx
\\
& \leq C\sum_{i=0}^{\infty}\left(C_q \tau^{n-(n-1)q}\right)^{i} \sum_{j\in I^{(0)}}
\int_{\tilde{D}_j^{(0)}}|\nabla w|^q\,dx
\leq C_q\int_{B_{7/4}\cap \set{x_n \ge \frac{1}{64}}} |\nabla w|^q\, dx,
\end{split}
\]
from which it follows that
$$
\int_{D\cap B_{3/2}}|\nabla w|^q\,dx
\leq C_q\int_{B_{7/4}\cap \set{x_n \ge \frac{1}{64}}} |\nabla w|^q\, dx.
$$
Using again \Cref{lem:abst_osc2} (to replace $\set{x_n \ge \frac{1}{64}}$ with $\set{x_n \ge \frac{1}{4}}$), the result follows.
\end{proof}
}

\bibliographystyle{plain}
\bibliography{mybib.bib}

@article {Jerison-Savin,
	AUTHOR = {Jerison, D. and Savin, O.},
	TITLE = {Some remarks on stability of cones for the one-phase free
		boundary problem},
	JOURNAL = {Geom. Funct. Anal.},
	FJOURNAL = {Geometric and Functional Analysis},
	VOLUME = {25},
	YEAR = {2015},
	NUMBER = {4},
	PAGES = {1240--1257},
	ISSN = {1016-443X},
	MRCLASS = {35R35 (35J05 49J10 49N60)},
	DOI = {10.1007/s00039-015-0335-6},
	URL = {https://doi.org/10.1007/s00039-015-0335-6},
}

@incollection {AAC01,
    AUTHOR = {Alberti, G. and Ambrosio, L. and Cabr\'e, X.},
     TITLE = {On a long-standing conjecture of {E}. {D}e {G}iorgi: symmetry
              in 3{D} for general nonlinearities and a local minimality
              property},
      NOTE = {Special issue dedicated to Antonio Avantaggiati on the
              occasion of his 70th birthday},
   JOURNAL = {Acta Appl. Math.},
  FJOURNAL = {Acta Applicandae Mathematicae},
    VOLUME = {65},
      YEAR = {2001},
    NUMBER = {1-3},
     PAGES = {9--33},
       DOI = {10.1023/A:1010602715526},
       URL = {https://doi.org/10.1023/A:1010602715526},
}

@book {Vel23,
    AUTHOR = {Velichkov, B.},
     TITLE = {Regularity of the One-phase Free Boundaries},
    SERIES = {Lecture Notes of the Unione Matematica Italiana},
    VOLUME = {28},
 PUBLISHER = {Springer Cham},
      YEAR = {2023},
      ISBN = {978-3-031-13237-7},
}

@book {CS05,
    AUTHOR = {Caffarelli, L. and Salsa, S.},
     TITLE = {A Geometric Approach to Free Boundary Problems},
    SERIES = { Graduate Studies in Mathematics},
    VOLUME = {68},
 PUBLISHER = { American
              Mathematical Society, Providence, RI},
      YEAR = {2005},
}

@article{Jerison-Monneau,
    AUTHOR = {Jerison, D. and Monneau, R.},
     TITLE = {Towards a counter-example to a conjecture of {D}e {G}iorgi in high dimensions},
   JOURNAL = {Ann. Mat. Pura Appl.},
    VOLUME = {183},
      YEAR = {2004},
    NUMBER = {4},
     PAGES = {439--467},
      ISSN = {0373-3114},
       DOI = {10.1007/s10231-002-0068-7},
       URL = {https://link.springer.com/article/10.1007/s10231-002-0068-7}
}

@article {kamburov2022nondegeneracy,
	title={Nondegeneracy for stable solutions to the one-phase free boundary problem}, 
    JOURNAL = {Math. Ann.},
	author={N. Kamburov and K. Wang},
    VOLUME = {388},
      YEAR = {2024},
     PAGES = {2705--2726},
}

@article {DeSilva,
    AUTHOR = {{De Silva}, D.},
     TITLE = {Free boundary regularity for a problem with right hand side},
   JOURNAL = {Interfaces Free Bound.},
  FJOURNAL = {Interfaces and Free Boundaries. Mathematical Analysis,
              Computation and Applications},
    VOLUME = {13},
      YEAR = {2011},
    NUMBER = {2},
     PAGES = {223--238},
      ISSN = {1463-9963},
       DOI = {10.4171/IFB/255},
       URL = {https://doi.org/10.4171/IFB/255},
}

@article {Kinderlehrer-Nirenberg,
    AUTHOR = {Kinderlehrer, D. and Nirenberg, L.},
     TITLE = {Regularity in free boundary problems},
   JOURNAL = {Ann. Scuola Norm. Sup. Pisa Cl. Sci. (4)},
  FJOURNAL = {Annali della Scuola Normale Superiore di Pisa. Classe di
              Scienze. Serie IV},
    VOLUME = {4},
      YEAR = {1977},
    NUMBER = {2},
     PAGES = {373--391},
      ISSN = {0391-173X},
       URL = {http://www.numdam.org/item?id=ASNSP_1977_4_4_2_373_0},
}

@article {Lian-Zhang,
    AUTHOR = {Lian, Y. and Zhang, K.},
     TITLE = {Boundary pointwise regularity and applications to the
              regularity of free boundaries},
   JOURNAL = {Calc. Var. Partial Differential Equations},
  FJOURNAL = {Calculus of Variations and Partial Differential Equations},
    VOLUME = {62},
      YEAR = {2023},
    NUMBER = {8},
     PAGES = {Paper No. 230, 32},
      ISSN = {0944-2669,1432-0835},
       DOI = {10.1007/s00526-023-02573-9},
       URL = {https://doi.org/10.1007/s00526-023-02573-9},
}

@article {SZ98,
    AUTHOR = {Sternberg, P. and Zumbrun, K.},
     TITLE = {Connectivity of phase boundaries in strictly convex domains},
   JOURNAL = {Arch. Ration. Mech. Anal.},
    VOLUME = {141},
      YEAR = {1998},
    NUMBER = {4},
     PAGES = {375--400},
       DOI = {10.1007/s002050050081},
       URL = {https://doi.org/10.1007/s002050050081},
}

@book{Guillemin-Pollack,
    AUTHOR = {Guillemin, V. and Pollack, A.},
     TITLE = {Differential {T}opology},
 PUBLISHER = {Prentice-Hall, Inc.},
   ADDRESS = {Englewood Cliffs, NJ},
      YEAR = {1974},
     PAGES = {xi+222},
      ISBN = {978-0132126052},
   MRCLASS = {57-01 (57Dxx)},
}

@book {FR22,
    AUTHOR = {Fern\'andez-Real, X. and Ros-Oton, X.},
     TITLE = {Regularity {T}heory for {E}lliptic {PDE}},
    SERIES = {Zurich Lectures in Advanced Mathematics},
 PUBLISHER = {European Mathematical Society Press},
      YEAR = {2022},
      ISBN = {978-3-98547-028-0},
       KEY = {FR22},
}

@incollection{CJK04,
   AUTHOR = {Caffarelli, L. and Jerison, D. and Kenig, C. E.},
    TITLE = {Global energy minimizers for free boundary problems and full regularity in three dimensions},
BOOKTITLE = {Noncompact problems at the intersection of geometry, analysis, and topology},
PUBLISHER = {American Mathematical Society},
     YEAR = {2004},
    PAGES = {83--97},
  ADDRESS = {Providence, RI},
     ISBN = {0-8218-3635-8},
   SERIES = {Contemp. Math.},
   VOLUME = {350},
}

@article{HutchinsonTonegawa00,
    title={Convergence of phase interfaces in the van der {W}aals–{C}ahn–{H}illiard theory},
    author={Hutchinson, J. E. and Tonegawa, Y.},
    journal={Calculus of Variations and Partial Differential Equations},
    volume={10},
    year={2000},
    pages={49--84},
    doi={10.1007/s005260050039},
}

@article{FR19,
    AUTHOR = {Fern\'andez-Real, X. and Ros-Oton, X.},
     TITLE = {On global solutions to semilinear elliptic equations related to the one-phase free boundary problem},
   JOURNAL = {Discrete Contin. Dyn. Syst.},
    VOLUME = {39},
      YEAR = {2019},
    NUMBER = {12},
     PAGES = {6945--6959},
       DOI = {10.3934/dcds.2019238},
       URL = {https://doi.org/10.3934/dcds.2019238},
       KEY = {FR19},
}

@incollection{FV09,
   AUTHOR = {Farina, A. and Valdinoci, E.},
    TITLE = {The state of the art for a conjecture of {D}e {G}iorgi and related problems},
BOOKTITLE = {Recent progress on reaction-diffusion systems and viscosity solutions},
PUBLISHER = {World Scientific Publishing Co. Pte. Ltd.},
     YEAR = {2009},
    PAGES = {74–-96},
  ADDRESS = {Hackensack, NJ},
     ISBN = {978-981-283-473-7},
}

@article{Ca10,
    AUTHOR = {Cabr\'e, X.},
     TITLE = {Regularity of minimizers of semilinear elliptic problems up to dimension 4},
   JOURNAL = {Comm. Pure Appl. Math.},
    VOLUME = {63},
      YEAR = {2010},
    NUMBER = {10},
     PAGES = {1362-–1380},
       DOI = {10.1002/cpa.20327},
       URL = {https://doi.org/10.1002/cpa.20327},
}

@article{WangWei18,
    title={Finite {M}orse index implies finite ends},
    author={Wang, K. and Wei, J.},
    journal={Commun. Math. Phys.},
    volume={361},
    year={2018},
    pages={1--34},
    doi={10.1007/s00220-018-3157-5},
}

@article {WW19,
    AUTHOR = {Wang, K. and Wei, J.},
     TITLE = {Second order estimate on transition layers},
   JOURNAL = {Adv. Math.},
  FJOURNAL = {Advances in Mathematics},
    VOLUME = {358},
      YEAR = {2019},
     PAGES = {106856, 85},
      ISSN = {0001-8708,1090-2082},
   MRCLASS = {35J91 (35B25 35B65)},
       DOI = {10.1016/j.aim.2019.106856},
       URL = {https://doi.org/10.1016/j.aim.2019.106856},
}

@article {AF77,
    AUTHOR = {Adams, R. A. and Fournier, J.},
     TITLE = {Cone conditions and properties of {S}obolev spaces},
   JOURNAL = {J. Math. Anal. Appl.},
    VOLUME = {61},
      YEAR = {1977},
     PAGES = {713--734},
       DOI = {10.1016/0022-247X(77)90173-1},
       URL = {https://doi.org/10.1016/0022-247X(77)90173-1},
}

@article{Weiss1998,
  author = {G. S. Weiss},
  title = {Partial Regularity for Weak Solutions of an Elliptic Free Boundary Problem},
  journal = {Communications in Partial Differential Equations},
  volume = {23},
  number = {3-4},
  pages = {439--455},
  year = {1998},
  publisher = {Taylor & Francis},
  doi = {10.1080/03605309808821352},
  url = {https://www.tandfonline.com/doi/abs/10.1080/03605309808821352}
}

@article {AC81,
    AUTHOR = {H. Alt and L. Caffarelli},
     TITLE = {Existence and regularity for a minimum problem with free boundary},
   JOURNAL = {J. Reine Angew. Math.},
    VOLUME = {325},
      YEAR = {1981},
     PAGES = {105--144},
}

@article {ACF82,
    AUTHOR = {H. Alt and L. Caffarelli and A. Friedman},
     TITLE = {Asymmetric jet flows},
   JOURNAL = {Comm. Pure Appl. Math.},
    VOLUME = {35},
      YEAR = {1982},
     PAGES = {29--68},
}

@article {ACF82b,
    AUTHOR = {H. Alt and L. Caffarelli and A. Friedman},
     TITLE = {Jet flows with gravity},
   JOURNAL = {J. Reine Angew. Math.},
    VOLUME = {331},
      YEAR = {1982},
     PAGES = {58--103},
}

@article {ACF83,
    AUTHOR = {H. Alt and L. Caffarelli and A. Friedman},
     TITLE = {Axially symmetric jet flows},
   JOURNAL = {Arch. Rational Mech. Anal.},
    VOLUME = {331},
      YEAR = {1983},
     PAGES = {97--149},
}

@article {CFRS,
    AUTHOR = {Cabr\'{e}, X. and Figalli, A. and Ros-Oton, X. and
              Serra, J.},
     TITLE = {Stable solutions to semilinear elliptic equations are smooth
              up to dimension 9},
   JOURNAL = {Acta Math.},
  FJOURNAL = {Acta Mathematica},
    VOLUME = {224},
      YEAR = {2020},
    NUMBER = {2},
     PAGES = {187--252},
      ISSN = {0001-5962},
   MRCLASS = {35J61 (35B65 35J91)},
       DOI = {10.4310/acta.2020.v224.n2.a1},
       URL = {https://doi.org/10.4310/acta.2020.v224.n2.a1},
}

@article {Caf89,
    AUTHOR = {L. Caffarelli},
     TITLE = {A {H}arnack inequality approach to the regularity of free boundaries. {II}. {F}lat free boundaries are {L}ipschitz},
   JOURNAL = {Comm. Pure Appl. Math.},
    VOLUME = {42},
      YEAR = {1989},
     PAGES = {55--78},
}

@book {BL82,
    AUTHOR = {J. D. Buckmaster and G. S. Ludford},
     TITLE = {Theory of {L}aminar {F}lames},
 PUBLISHER = {Cambridge Univ. Press, Cambridge},
      YEAR = {1982},
}

@article {MM03,
    AUTHOR = {Mantegazza, C. and Mennucci, A. C.},
     TITLE = {Hamilton-{J}acobi equations and distance functions on
              {R}iemannian manifolds},
   JOURNAL = {Appl. Math. Optim.},
  FJOURNAL = {Applied Mathematics and Optimization},
    VOLUME = {47},
      YEAR = {2003},
    NUMBER = {1},
     PAGES = {1--25},
      ISSN = {0095-4616},
   MRCLASS = {49L20 (49Q15 53C22)},
MRREVIEWER = {Pierre Cardaliaguet},
       DOI = {10.1007/s00245-002-0736-4},
       URL = {https://doi.org/10.1007/s00245-002-0736-4},
}

@article {DJ09,
    AUTHOR = { {De Silva}, D. and  Jerison, D.},
     TITLE = {A singular energy minimizing free boundary},
   JOURNAL = {J. Reine Angew. Math.},
    VOLUME = {635},
      YEAR = {2009},
     PAGES = {1--22},
}

@article {ESV20,
    AUTHOR = {M. Engelstein and L. Spolaor and B. Velichkov},
     TITLE = {Uniqueness of the blowup at isolated singularities
for the {A}lt--{C}affarelli functional},
   JOURNAL = {Duke Math. J.},
    VOLUME = {169},
      YEAR = {2020},
     PAGES = {1541--1601},
}

@article {EFY23,
    AUTHOR = {M. Engelstein and X. Fern\'andez-Real and H. Yu},
     TITLE = {Graphical solutions to one-phase free boundary problems},
   JOURNAL = {J. Reine Angew. Math.},
    VOLUME = {804},
      YEAR = {2023},
     PAGES = {155--195},
}

@misc {FY24,
	title={Generic properties in free boundary problems}, 
	author={X. Fern\'andez-Real and H. Yu},
	eprint={2308.13209},
	archivePrefix={arXiv},
	primaryClass={math.AP},
    note={Preprint arXiv:2308.13209}
}

@article {FGKS24,
    AUTHOR = {A. Figalli and A. Guerra and S. Kim and H. Shahgholian},
     TITLE = {Constraint maps with free boundaries: the {B}ernoulli case},
   JOURNAL = {J. Eur. Math. Soc.},
     PAGES = {to appear},
}

@article {KW24,
	title={Rectifiability, finite {H}ausdorff measure, and compactness for non-minimizing {B}ernoulli free boundaries}, 
	author={D. Kriventsov and G. Weiss},
   JOURNAL = {Comm. Pure Appl. Math.},
     PAGES = {to appear},
}

@article {JK16, 
	title={Structure of one-phase free boundaries in the plane}, 
	author={D. Jerison and N. Kamburov},
    JOURNAL = {Int. Math. Res. Not. IMRN},
    ISSUE = {19},
      YEAR = {2016},
     PAGES = {5922--5987},
}

@article {Tra14, 
	title={Classification of the solutions to an overdetermined elliptic problem in the plane}, 
	author={M. Traizet},
    JOURNAL = {Geom. Funct. Anal.},
    VOLUME = {24},
      YEAR = {2014},
     PAGES = {690--720},
}

@article {HHP11, 
	title={On an overdetermined elliptic problem}, 
	author={L. Hauswirth and F. H\'elein and F. Pacard},
    JOURNAL = {Pacific J. Math.},
    VOLUME = {250},
      YEAR = {2011},
     PAGES = {319--334},
}

@article {BSS76, 
	title={Structure of a linear array of hollow vortices of finite
cross-section}, 
	author={G. R. Baker and P. G. Saffman and J. S. Sheffield},
    JOURNAL = {J. Fluid. Mech.},
    VOLUME = {74},
      YEAR = {1976},
     PAGES = {469--476},
}

@article {JK19, 
	title={Free boundaries subject to topological constraints}, 
	author={D. Jerison and N. Kamburov},
    JOURNAL = {Discrete Contin. Dyn. Syst.},
    VOLUME = {39},
      YEAR = {2019},
     PAGES = {7213--7248},
}

@article{HLSWW21,
    title={Half-space theorems for the {A}llen--{C}ahn equation and related problems},
    author={Hamel, F. and Liu, Y. and Sicbaldi, P. and Wang, K. and Wei, J.},
    journal={J. Reine Angew. Math.},
    volume={767},
    year={2021},
    pages={1--25},
    doi={10.1515/crelle-2020-0007},
}

@article{Wang17,
    title={A new proof of {S}avin's theorem on {A}llen--{C}ahn equations},
    author={Wang, K.},
    journal={J. Eur. Math. Soc.},
    volume={19},
    year={2017},
    pages={2997--3051},
}

@article {LWW21, 
	title={On smooth solutions to one phase-free boundary problem in $\mathbb{R}^n$}, 
	author={Y. Liu and K. Wang and J. Wei},
    JOURNAL = {Int. Math. Res. Not. IMRN},
    ISSUE = {20},
      YEAR = {2021},
     PAGES = {15682--15732},
}

@article{TonegawaWickramasekera2012,
    AUTHOR = {Tonegawa, Y. and Wickramasekera, N.},
    TITLE = {Stable phase interfaces in the van der {W}aals--{C}ahn--{H}illiard theory},
    JOURNAL = {J. Reine Angew. Math.},
    VOLUME = {668},
    YEAR = {2012},
    PAGES = {191--210},
    DOI = {10.1515/CRELLE.2011.134},
    URL = {https://doi.org/10.1515/CRELLE.2011.134}
}

@article {Kam13, 
	title={A free boundary problem inspired by a conjecture of {D}e {G}iorgi}, 
	author={N. Kamburov},
    JOURNAL = {Comm. Partial Differential
Equations},
    VOLUME = {38},
      YEAR = {2013},
     PAGES = {477-528},
}

@incollection{Sav10,
   AUTHOR = {O. Savin},
    TITLE = {Phase transitions, minimal surfaces and a conjecture of {D}e {G}iorgi},
BOOKTITLE = {Current developments in mathematics, 2009},
PUBLISHER = {International Press},
     YEAR = {2010},
    PAGES = {59--113},
  ADDRESS = {Somerville, MA},
     ISBN = {978-1-57146-146-9},
}

@article {LWW17,
	author={Y. Liu and K. Wang and J. Wei},
    TITLE = {On a free boundary problem and minimal surfaces},
    JOURNAL = {Ann. Inst. H. Poincar\'e Anal.
Non Lin\'eaire},
VOLUME = {35},
      YEAR = {2017},
     PAGES = {993--1017},
}

@article {GG98,
	author={N. Ghoussoub and C. Gui},
    TITLE = {On a conjecture of {D}e {G}iorgi and some related problems},
    JOURNAL = {Math. Ann.},
VOLUME = {311},
      YEAR = {1998},
     PAGES = {481--491},
}

@article {AC00,
	author={L. Ambrosio and X. Cabr\'e},
    TITLE = {Entire solutions of semilinear elliptic equations in $\mathbb{R}^3$ and a conjecture of {D}e {G}iorgi},
    JOURNAL = {J. Amer. Math. Soc.},
VOLUME = {13},
      YEAR = {2000},
     PAGES = {725--739},
}

@article {Sav09,
	author={O. Savin},
    TITLE = {Regularity of level sets in phase transitions},
    JOURNAL = {Ann. of Math.},
VOLUME = {169},
      YEAR = {2009},
     PAGES = {41--78},
}

@article{Savin17,
    AUTHOR = {Savin, O.},
    TITLE = {Some remarks on the classification of global solutions with asymptotically flat level sets},
    JOURNAL = {Calc. Var. Partial Differential Equations},
    VOLUME = {56},
    YEAR = {2017},
    NUMBER = {5},
    PAGES = {Art. 141},
    DOI = {10.1007/s00526-017-1228-3},
    URL = {https://doi.org/10.1007/s00526-017-1228-3}
}

@article {DKW11,
	author={M. del Pino and M. Kowalczyk and J. Wei},
    TITLE = {A conjecture by {D}e {G}iorgi in large dimensions},
    JOURNAL = {Ann. of Math.},
VOLUME = {174},
      YEAR = {2011},
     PAGES = {1485--1569},
}

@incollection{DeG78,
   AUTHOR = {E. De Giorgi},
    TITLE = {Convergence problems for functional and operators},
BOOKTITLE = {Proc. Int. Meeting on Recent Methods
in Nonlinear Analysis},
     YEAR = {1978},
    PAGES = {131--188},
  ADDRESS = {Rome},
}

@article {LWW17b,
	author={Y. Liu and K. Wang and J. Wei},
    TITLE = {Global minimizers of the {A}llen–{C}ahn equation in dimension $n = 8$},
    JOURNAL = {J. Math. Pures Appl.},
VOLUME = {108},
      YEAR = {2017},
     PAGES = {818--840},
}

@article {CL24,
	author={O. Chodosh and C. Li},
    TITLE = {Stable minimal hypersurfaces in $\mathbb{R}^4$},
    JOURNAL = {Acta Math.},
VOLUME = {233},
      YEAR = {2024},
     PAGES = {1--31},
}

@misc{CLMS24,
	author={O. Chodosh and C. Li and P. Minter and D. Stryker},
    TITLE = {Stable minimal hypersurfaces in $\mathbb{R}^5$},
	archivePrefix={arXiv},
	primaryClass={math.DG},
    note={Preprint arXiv:2401.01492}
}

@misc{Maz24,
	author={Mazet, L.},
    TITLE = {Stable minimal hypersurfaces in $\mathbb{R}^6$},
	eprint={2405.14676},
	archivePrefix={arXiv},
	primaryClass={math.DG},
    note={Preprint arXiv:2405.14676}
}

@article {DGW22,
    AUTHOR = {Z. Du and C. Gui and K. Wang},
     TITLE = {Four end solutions of a free boundary problem},
   JOURNAL = {Adv. Math.},
    VOLUME = {404},
      YEAR = {2022},
     PAGES = {108395},
}

@article {CC95,
    AUTHOR = {L. Caffarelli and A. C\'ordoba},
     TITLE = {Uniform convergence of a singular perturbation problem},
   JOURNAL = {Comm. Pure Appl. Math.},
    VOLUME = {XLVII},
      YEAR = {1995},
     PAGES = {1--12},
}

@article {CC06,
    AUTHOR = {L. Caffarelli and A. C\'ordoba},
     TITLE = {Phase transitions: {U}niform regularity of the intermediate layers},
   JOURNAL = {J. Reine Angew. Math.},
    VOLUME = {593},
      YEAR = {2006},
     PAGES = {209--235},
}

@misc{Wan17,
      title={The structure of finite {M}orse index solutions to two free boundary problems in $\mathbb{R}^2$}, 
      author={K. Wang},
      eprint={1506.00491},
      archivePrefix={arXiv},
      primaryClass={math.AP},
      url={https://arxiv.org/abs/1506.00491}, 
      note={Preprint arXiv:1506.00491}
}

@article{CM20,
    AUTHOR = {O. Chodosh and C. Mantoulidis},
    TITLE = {The multiplicity-one conjecture for min-max minimal surfaces},
    JOURNAL = {Ann. of Math. (2)},
    VOLUME = {192},
    YEAR = {2020},
    PAGES = {767--820},
    DOI = {10.4007/annals.2020.192.3.2}
}

@misc{Chodosh19,
    AUTHOR = {O. Chodosh},
    TITLE = {Lecture {N}otes on the {A}llen–{C}ahn {E}quation},
    HOWPUBLISHED = {Summer School on Geometric Analysis, 2019},
    YEAR = {2019},
    NOTE = {Available at \url{https://web.stanford.edu/~ochodosh/AllenCahnSummerSchool2019.pdf}},
}

@article {Val04,
    AUTHOR = {E. Valdinoci},
     TITLE = {Plane-like minimizers in periodic media: jet flows and {G}inzburg-{L}andau-type functionals},
   JOURNAL = {J. Reine Angew. Math.},
    VOLUME = {574},
      YEAR = {2004},
     PAGES = {147--185},
}

@article {Val06,
    AUTHOR = {E. Valdinoci},
     TITLE = {Flatness of {B}ernoulli jets},
   JOURNAL = {Math. Z.},
    VOLUME = {254},
      YEAR = {2006},
     PAGES = {257--298},
}

@article {AC72,
    AUTHOR = {S. Allen and J. Cahn},
     TITLE = {Ground state structures in ordered binary alloys with second neighbor interactions},
   JOURNAL = {Acta Metall.},
    VOLUME = {20},
      YEAR = {1972},
     PAGES = {423--433},
}

@article {CH58,
    AUTHOR = {J. Cahn and J. Hilliard},
     TITLE = {Free energy of a nonuniform system. {I}. {I}nterfacial free energy},
   JOURNAL = {J. Chem. Phys.},
    VOLUME = {28},
      YEAR = {1958},
     PAGES = {258--267},
}

@article {PW13,
    AUTHOR = {Pacard, F. and Wei, J.},
     TITLE = {Stable solutions of the {A}llen-{C}ahn equation in dimension 8
              and minimal cones},
   JOURNAL = {J. Funct. Anal.},
  FJOURNAL = {Journal of Functional Analysis},
    VOLUME = {264},
      YEAR = {2013},
    NUMBER = {5},
     PAGES = {1131--1167},
}

@article {Pei40,
    AUTHOR = {R. Peierls},
     TITLE = {The size of a dislocation},
   JOURNAL = {Proc. Phys. Soc.},
    VOLUME = {52},
      YEAR = {1940},
     PAGES = {34},
}

@article {Nab47,
    AUTHOR = {F. Nabarro},
     TITLE = {Dislocations in a simple cubic lattice},
   JOURNAL = {Proc. Phys. Soc.},
    VOLUME = {59},
      YEAR = {1957},
     PAGES = {256},
}

@incollection{GL50,
    AUTHOR = {V. Ginzburg and L. Landau},
     TITLE = {On the Theory of Superconductivity},
BOOKTITLE = {On {S}uperconductivity and {S}uperfluidity},
PUBLISHER = {Springer-Verlag, Berlin, 2009},
     YEAR = {2009},
}

@book {Mag12,
    AUTHOR = {F. Maggi},
     TITLE = {Sets of {F}inite {P}erimeter and {G}eometric {V}ariational {P}roblems: {A}n {I}ntroduction to {G}eometric {M}easure {T}heory},
    SERIES = {Cambridge studies in advanced mathematics},
    VOLUME = {135},
 PUBLISHER = {Cambridge University Press},
      YEAR = {2012}
}

@article{Ericksen61,
    AUTHOR = {J. L. Ericksen},
    TITLE = {Conservation laws for liquid crystals},
    JOURNAL = {Trans. Soc. Rheology},
    VOLUME = {5},
    YEAR = {1961},
    PAGES = {23--34},
    DOI = {10.1122/1.548884}
}

@article{Leslie68,
    AUTHOR = {F. M. Leslie},
    TITLE = {Some constitutive equations for liquid crystals},
    JOURNAL = {Arch. Ration. Mech. Anal.},
    VOLUME = {28},
    YEAR = {1968},
    PAGES = {265--283},
    DOI = {10.1007/BF00251810}
}

@article {DP79,
    AUTHOR = {M. do Carmo and C. K. Peng},
     TITLE = {Stable complete minimal surfaces in $\mathbb{R}^3$ are planes},
   JOURNAL = {Bull. Amer. Math. Soc. (N.S.)},
    VOLUME = {1},
      YEAR = {1979},
     PAGES = {903--906},
}

@article {FS80,
    AUTHOR = {D. Fischer-Colbrie and R. Schoen},
     TITLE = {The structure of complete stable minimal surfaces in
3-manifolds of nonnegative scalar curvature},
   JOURNAL = {Comm. Pure Appl. Math.},
    VOLUME = {3},
      YEAR = {1980},
     PAGES = {199--211},
}

@article {Pog81,
    AUTHOR = {A. Pogorelov},
     TITLE = {On the stability of minimal surfaces},
   JOURNAL = {Soviet Math. Dokl.},
    VOLUME = {24},
      YEAR = {1981},
     PAGES = {274--276},
}

@article {MM77,
    AUTHOR = {L. Modica and S. Mortola},
     TITLE = {Un esempio di ${\Gamma}$-convergenza},
   JOURNAL = {Boll. Unione Mat. Ital., V. Ser. B.},
    VOLUME = {14},
      YEAR = {1977},
     PAGES = {285--299},
}

@article{Mantoulidis21,
    title={Allen–{C}ahn min-max on surfaces},
    author={Mantoulidis, C.},
    journal={J. Differ. Geom.},
    volume={117},
    number={3},
    year={2021},
    pages={411--461},
    doi={10.4310/jdg/1622664278},
}

@article{Modica85,
    title={A gradient bound and a {L}iouville theorem for nonlinear {P}oisson equations},
    author={Modica, L.},
    journal={Comm. Pure Appl. Math.},
    volume={38},
    year={1985},
    pages={679--684},
    doi={10.1002/cpa.3160380506},
}

@incollection{Modica85b,
    title={Monotonicity of the energy for entire solutions of semilinear elliptic equations},
    author={Modica, L.},
    booktitle={Partial Differential Equations and the Calculus of Variations},
    pages={843--850},
    year={1985},
    publisher={Springer},
    doi={10.1007/978-1-4615-9831-2_14},
}

@article {Buc12,
    AUTHOR = {D. Bucur},
     TITLE = {Minimization of the $k$-th eigenvalue of the {D}irichlet {L}aplacian},
   JOURNAL = {Arch. Ration. Mech. Anal.},
    VOLUME = {206},
      YEAR = {2012},
     PAGES = {1073--1083},
}

@misc{CEL24,
	title={Improved regularity for minimizing capillary hypersurfaces}, 
	author={O. Chodosh and N. Edelen and C. Li},
	eprint={2401.08028},
	archivePrefix={arXiv},
	primaryClass={math.DG},
    note={Preprint arXiv:2401.08028}
}

@article {KL19,
    AUTHOR = {Kriventsov, D. and Lin, F.},
     TITLE = {Regularity for Shape Optimizers: The Degenerate Case},
   JOURNAL = {Comm. Pure Appl. Math.},
    VOLUME = {72},
      YEAR = {2019},
     PAGES = {1678--1721},
}

@article {KL18,
    AUTHOR = {Kriventsov, D. and Lin, F.},
     TITLE = {Regularity for Shape Optimizers: The Nondegenerate Case},
   JOURNAL = {Comm. Pure Appl. Math.},
    VOLUME = {71},
      YEAR = {2018},
     PAGES = {1535--1596},
}

@article {DSV21,
    AUTHOR = {De Philippis, G. and Spolaor, L. and  Velichkov, B.},
     TITLE = {Regularity of the free boundary for the two-phase {B}ernoulli problem},
   JOURNAL = {Invent. Math.},
    VOLUME = {225},
      YEAR = {2021},
     PAGES = {347--394},
}

@article {DJS22,
    AUTHOR = {D. {De Silva} and D. Jerison and H. Shahgholian},
     TITLE = {Inhomogeneous global minimizers to the one-phase free boundary problem},
   JOURNAL = {Comm. Partial Differential Equations},
    VOLUME = {47},
      YEAR = {2022},
     PAGES = {1193--1216},
}

@misc{BK23,
	author={Basulto, J. and Kamburov, N.},
    TITLE = {One-phase free boundary solutions of finite {M}orse index},
	eprint={2311.10185},
	archivePrefix={arXiv},
	primaryClass={math.AP},
    note={Preprint arXiv:2311.10185}
}

@misc{An25,
	author={J. An},
    TITLE = {Second order estimates for a free boundary phase transition},
	note={Forthcoming preprint}
}

@misc{Vassilev,
	author={M. Vassilev},
    TITLE = {A perturbation of the area functional with non-flat stable critical points},
	note={Semester project, ETH Zurich.}
}

@article {CMR24,
    AUTHOR = {Catino, G. and Mastrolia, P. and Roncoroni, A.},
     TITLE = {Two rigidity results for stable minimal hypersurfaces},
   JOURNAL = {Geom. Funct. Anal.},
  FJOURNAL = {Geometric and Functional Analysis},
    VOLUME = {34},
      YEAR = {2024},
    NUMBER = {1},
     PAGES = {1--18},
}

@article {CL23,
    AUTHOR = {Chodosh, O. and Li, C.},
     TITLE = {Stable anisotropic minimal hypersurfaces in {${\bf R}^4$}},
   JOURNAL = {Forum Math. Pi},
  FJOURNAL = {Forum of Mathematics. Pi},
    VOLUME = {11},
      YEAR = {2023},
     PAGES = {Paper No. e3, 22},
      ISSN = {2050-5086},
}

\end{document}